\newtheorem{thm}{Theorem}[section]
\newtheorem{lem}[thm]{Lemma}
\newtheorem{prop}[thm]{Proposition}
\newtheorem{cor}[thm]{Corollary}
\newtheorem{NN}[thm]{}
\theoremstyle{definition}\newtheorem{df}[thm]{Definition}
\theoremstyle{definition}\newtheorem{rem}[thm]{Remark}
\theoremstyle{definition}
\renewcommand{\phi}{\varphi}
\newcommand{\N}{\mathbb{N}}
\newcommand{\Z}{\mathbb{Z}}
\newcommand{\R}{\mathbb{R}}
\newcommand{\C}{\mathbb{C}}
\newcommand{\T}{\mathbb{T}}
\newcommand{\Aff}{\operatorname{Aff}}
\newcommand{\morp}{contractive completely positive linear map}
\newcommand{\hm}{homomorphism}
\newcommand{\dt}{\delta}
\newcommand{\ep}{\epsilon}
\newcommand{\andeqn}{\,\,\,{\rm and}\,\,\,}
\newcommand{\rforal}{\,\,\,{\rm for\,\,\,all}\,\,\,}
\newcommand{\CA}{$C^*$-algebra}
\newcommand{\SCA}{$C^*$-subalgebra}
\newcommand{\af}{{\alpha}}
\newcommand{\bt}{{\beta}}
\newcommand{\beq}{\begin{eqnarray}}
\newcommand{\eneq}{\end{eqnarray}}
\newcommand{\tforal}{\,\,\,\text{for\,\,\,all}\,\,\,}
\newcommand{\tand}{\,\,\,\text{and}\,\,\,}
\title{On locally AH algebras}
\author{Huaxin Lin
 }
\date{}
\begin{document}

\maketitle

\begin{abstract}
We show that every unital amenable separable simple \CA\, with finite tracial rank which satisfies the UCT has in fact tracial rank at most one.
We also show that unital separable simple
\CA s which are ``tracially" locally AH with slow dimension growth are ${\cal Z}$-stable.
As a consequence,  unital separable simple \CA s which are locally AH with no  dimension growth are isomorphic to a unital simple
AH-algebra with no dimension growth.
\end{abstract}

\section{Introduction}
The program of classification of amenable \CA s, or the Elliott
program, is to classify amenable \CA s up to isomorphisms by their
$K$-theoretical data. One of the high lights of the success of the
Elliott program is the classification of unital simple AH-algebras
(inductive limits of homogeneous \CA s)  with no dimension growth by
their $K$-theoretical data (known as the Elliott invariant)
(\cite{EGL}). The proof of this first appeared near  the end of the
last century. Immediately after the proof appeared, among many
questions raised is the question whether the same result holds for unital simple
locally AH-algebras (see the definition \ref{Dlocal} below) with no
dimension growth.  It should be noted that AF-algebras is locally
finite dimensional. But (separable) AF-algebras are inductive limits
of finite dimensional \CA s. The so-called $A\T$-algebras are
inductive limits of circle algebras. More than often, these
$A\T$-algebras arise as local circle algebras (approximated by
circle algebras). Fortunately, due to the weak-semi-projectivity of
circle algebras, locally $A\T$-algebras are $A\T$-algebras. However,
the situation is completely different for locally AH algebras. In
fact it was proved in \cite{DE2} that there are unital \CA s which
are inductive limits of AH-algebras but themselves are not
AH-algebras. So in general, a locally AH algebra is not an AH
algebra.

On the other hand, however, it was proved in \cite{Lncrell} that a
unital
 separable simple \CA\, which is locally AH is a unital
simple AH-algebra, if, in addition, it has real rank zero, stable
rank one and weakly unperforated $K_0$-group and which has countably
many  extremal traces. In fact these \CA s have tracial rank zero.
The tracial condition was later removed in \cite{W2}. In particular,
if $A$ is a unital separable simple \CA\, which is locally AH
 with no (or slow) dimension growth and which has real rank zero must
be a unital AH-algebra.  In fact such \CA s have stable rank one and
have weakly unperforated $K_0(A).$ The condition of real rank zero
forces these \CA s to have tracial rank zero. More recently,
classification theory extends to those \CA s that have rationally
tracial rank at most one (\cite{Wcl}, \cite{Lnappend}, \cite{LN} and
\cite{Lninv}). These are unital separable simple amenable \CA s $A$
such that $A\otimes U$ have  tracial rank at most one for some
infinite dimensional UHF algebra $U.$ An important subclass of this
(which includes, for example, the Jiang-Su algebra ${\cal Z}$) is
the class of those unital separable simple \CA s $A$ such that
$A\otimes U$ have tracial rank zero. By now we have some machinery
 to verify that certain \CA s to have tracial rank zero, (see
\cite{Lncrell}, \cite{B}, \cite{W2} and \cite{LP1}) and based on
these results, we have some tools to verify when a unital simple \CA\, is rationally tracial rank zero (\cite{TW2} and \cite{Tsub}).
However, these results could not be applied to the case that \CA s are of tracial
rank one, or rationally tracial rank one.  Until now, there has been no
effective way, besides Gong's decomposition result (\cite{G1}),
to verify when a unital separable simple \CA\, has tracial rank one
(but not tracial rank zero). In fact, as mentioned above, we did not
even know when a unital simple separable locally AH algebra  with no dimension growth has
tracial rank one. This makes it much harder to decide when a unital
simple separable \CA\, is rationally tracial rank one.

A closely related problem is whether a unital separable simple \CA\,
with finite tracial rank is in fact of tracial rank at most one.
This is an open problem for a decade. If the problem has an
affirmative answer, it will make it easier, in many cases, to decide
whether certain unital simple \CA s to have tracial rank at most one.

The purpose of this research is to solve these problems. Our main results include the following:

\begin{thm}\label{MMT1}
Let $A$ be a unital separable simple \CA\, which is locally AH with
no dimension growth. Then $A$ is isomorphic to a unital simple
AH-algebra with no dimension growth.
\end{thm}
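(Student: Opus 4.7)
The plan is to verify that $A$ satisfies the hypotheses of Lin's classification theorem for unital separable simple amenable \CA s of tracial rank at most one (with the UCT), and then to match its Elliott invariant with that of a unital simple AH-algebra of no dimension growth. Since AH-algebras are nuclear and satisfy the UCT, and both properties descend to algebras that are locally AH, $A$ is amenable and satisfies the UCT. Moreover, $A$ is a fortiori tracially locally AH with slow dimension growth, so the second main result of the paper (previewed in the abstract) gives that $A$ is $\mathcal Z$-stable.

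The core analytic step is to show that $\tr(A)<\infty$, after which the first main result of the paper---finite tracial rank, combined with amenability and the UCT, forces tracial rank at most one---upgrades this to $\tr(A) \le 1$. To establish finite tracial rank, fix a finite subset $F \subset A$ and $\ep>0$: by the locally-AH-with-no-dimension-growth hypothesis there is a unital \SCA\, $B$ of $A$ containing $F$ to within $\ep$, of AH form with topological dimension bounded by a fixed integer $d$. Using the simplicity of the ambient algebra $A$ together with a Gong-type dimension reduction carried out inside $A$, one replaces $B$ by a nearby unital \SCA\, whose building blocks have the form required by the definition of finite tracial rank, with complementary projection whose trace is trimmed below $\ep$ via the divisibility and strict comparison afforded by $\mathcal Z$-stability. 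This yields $\tr(A)<\infty$.

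With $\tr(A)\le 1$ in hand, the Lin classification theorem determines $A$ up to isomorphism by its Elliott invariant. The range-of-invariants theorem of Elliott--Gong--Li produces a unital simple AH-algebra $B$ of no dimension growth with Elliott invariant agreeing with that of $A$ (weak unperforation of $K_0(A)$ is furnished by $\mathcal Z$-stability), and classification then gives $A \cong B$. The principal obstacle is the tracial approximation step: bridging the gap between the weak \emph{local} AH structure of $A$, which yields AH-subalgebras that need carry no simple inductive-limit structure of their own, and honest tracial approximation by the specific building blocks required for finite tracial rank. The argument must combine Gong's decomposition of simple AH-algebras, the comparison theory provided by $\mathcal Z$-stability, and careful control of the complementary corner, all carried out inside the ambient (a priori non-AH) algebra $A$.
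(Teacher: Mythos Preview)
Your high-level architecture matches the paper's: show that $A$ is tracially ${\cal I}_d$ (equivalently, has finite tracial rank), then invoke Theorem~\ref{MT2} (which is the engine behind Theorem~\ref{MMT2}); that theorem already packages the passage to tracial rank at most one, the ${\cal Z}$-stability of Corollary~\ref{CZs}, Villadsen's range-of-invariants result, and the classification theorem of \cite{Lninv}, so your separate final step with Elliott--Gong--Li is redundant.

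Where you go wrong is in locating the difficulty. The step you label ``the principal obstacle''---passing from locally AH with no dimension growth to finite tracial rank---is in fact trivial and needs none of the machinery you invoke. By Definition~\ref{Dlocal}, the local approximation already hands you, for any $\ep>0$ and finite ${\cal F}$, a \SCA\, $C\in {\cal I}_d$ with ${\rm dist}(a,C)<\ep$ for all $a\in{\cal F}$. Including $1_A$ in ${\cal F}$ and taking $\ep<1$ forces $1_C=1_A$ (since $1_A-1_C$ is a projection of norm $<1$). Now take $p=1_A$ in Definition~\ref{Drr}: the commutator condition and the Cuntz subequivalence $1-p\lesssim a$ hold vacuously because $1-p=0$. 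There is no complementary corner to trim, no need for Gong-type reduction, and ${\cal Z}$-stability plays no role at this stage.

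The genuine analytic content of the paper is elsewhere: it is Theorem~\ref{MT1} (that $A\otimes Q$ has tracial rank at most one whenever $A$ is tracially ${\cal I}_d$), whose proof occupies Sections~9--13 and rests on the uniqueness theorem~\ref{Sd1uni}, the Basic Homotopy Lemma~\ref{Sd1hom}, and the existence theorems for Bott maps. Your sketch effectively black-boxes this via the appeal to Theorem~\ref{MMT2}, which is fine for deducing \ref{MMT1} from \ref{MMT2}, but you should recognize that the ``Gong reduction inside $A$'' you propose is not what is happening---the hard work is in showing that tracial approximation by $d$-dimensional building blocks can be upgraded, after tensoring with $Q$, to tracial approximation by interval algebras.
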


We actually prove the following.

\begin{thm}\label{MMT2}
Let $A$ be a unital separable simple amenable \CA\, with finite tracial rank which satisfies
the Universal Coefficient Theorem. Then $A$ is isomorphic to a unital
simple AH-algebra with no dimension growth. In particular, $A$ has tracial rank at most one.
\end{thm}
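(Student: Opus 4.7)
The plan is to combine the two main structural results of this paper -- ${\mathcal Z}$-stability for tracially locally AH \CA s with slow dimension growth, and the existing classification theorem for rationally tracial rank at most one amenable \CA s satisfying the UCT (\cite{LN}, \cite{Lninv}) -- as applied to $A$.

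First, I would verify that a unital separable simple \CA\, $A$ with $\tr(A)=n<\infty$ is automatically tracially locally AH with slow dimension growth. By the very definition of tracial rank at most $n$, for any finite subset $F\subset A$, any $\ep>0$, and any nonzero positive $c\in A$, there exist a projection $p\in A$ and a unital \SCA\, $B\subset pAp$ of the form $\bigoplus_i M_{k_i}(C(X_i))$ with $\dim X_i\le n$, fulfilling the standard approximation and largeness conditions. Since the bound $n$ on the dimensions is fixed while the matrix sizes $k_i$ can be made arbitrarily large by refining within each block, this places $A$ in the ``tracially'' locally AH class with slow dimension growth. Applying the ${\cal Z}$-stability theorem announced in the abstract then gives $A\cong A\otimes{\cal Z}$.

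Second, I would establish that $A\otimes U$ has tracial rank at most one for some infinite-dimensional UHF algebra $U$. The idea is that, given a tracial approximation of a finite subset of $A$ by a finite direct sum of $M_{k_i}(C(X_i))$ with $\dim X_i\le n$, one can combine the additional averaging available in $U$ with standard cutting-and-pasting techniques in $M_N(C(X))$ (using stability of vector bundles of large rank on finite-dimensional spaces, projection unperforation, and interval-algebra approximations) to refactor the approximation through a unital \SCA\, of the form $\bigoplus_j M_{m_j}(C(Y_j))$ with $\dim Y_j\le 1$. I expect this step to be the main obstacle: it is precisely the decade-old open question on the range of finite tracial rank highlighted in the introduction, and it is the point at which the hypothesis of finite (rather than merely $\sigma$-finite) tracial rank and the UCT must be essentially exploited to control $K$-theory along the local approximations.

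Given these two inputs, $A$ is a unital separable simple amenable ${\cal Z}$-stable \CA\, satisfying the UCT with $\tr(A\otimes U)\le 1$. The classification theorem of \cite{LN} (cf.\ also \cite{Lninv}) then provides a unital simple AH-algebra $B$ of no dimension growth with the same Elliott invariant as $A$, and supplies the isomorphism $A\cong B$. The final ``in particular'' assertion $\tr(A)\le 1$ follows from the known structural fact that every unital simple AH-algebra with no dimension growth has tracial rank at most one (\cite{EGL}).
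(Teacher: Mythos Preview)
Your high-level outline matches the paper's exactly: one shows $A\in {\cal C}_1$, hence $A\cong A\otimes {\cal Z}$ by Theorem~\ref{MMT3}; one shows $A\otimes Q$ has tracial rank at most one (Theorem~\ref{MT1}); and then one invokes the classification theorem in \cite{Lninv} together with Villadsen's range result \cite{V1} to produce the AH model (Theorem~\ref{MT2}). So the architecture is right.

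The gap is your second step. You correctly identify it as ``the main obstacle'' and as ``precisely the decade-old open question,'' but the sentence offered in lieu of a proof --- ``combine the additional averaging available in $U$ with standard cutting-and-pasting techniques \dots\ to refactor the approximation through'' interval algebras --- is not a proof, and in fact the known techniques you allude to (bundle stability, unperforation, Li's interval approximations) are by themselves insufficient; that is exactly why the problem stood open. The paper's entire Sections~9--12 are devoted to this step: an existence theorem for maps into $PM_m(C(Y))P$ with prescribed $KK$, trace and determinant data (Theorem~\ref{Ext1}, \ref{sec1MT}), a Basic Homotopy Lemma with the new $SU/CU$ obstruction (Section~11), and an inductive uniqueness theorem in the dimension of $Y$ (Lemma~\ref{Sd1uni}, Theorem~\ref{UNI}). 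These are then assembled in the proof of Theorem~\ref{MT1} to replace an ${\cal I}_d$-approximation of $A\otimes Q$ by an ${\cal I}^{(1)}$-approximation. Nothing short of this machinery is currently known to accomplish the reduction you describe.

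One smaller point: the UCT is \emph{not} used in the reduction $A\otimes Q\in {\cal C}_{g,\infty}\Rightarrow {\rm TR}(A\otimes Q)\le 1$ (Theorem~\ref{MT1} has no UCT hypothesis). It enters only at the final classification step, where one matches $A$ with an AH model via \cite{Lninv}. So your expectation that UCT ``must be essentially exploited'' in the second step is misplaced.
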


To establish the above, we also prove the following

\begin{thm}\label{MMT3}
Let $A$ be a unital amenable separable simple \CA\, in ${\cal C}_1$ then
$A$ is ${\cal Z}$-stable, i.e., $A\cong A\otimes {\cal Z}.$

\noindent
\text{(See \ref{Drr} below for the definition of ${\cal C}_1$.)}
\end{thm}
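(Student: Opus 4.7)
The strategy is to verify the two conditions of the Matui--Sato characterization of $\mathcal{Z}$-stability for unital separable simple nuclear \CA s: strict comparison of positive elements, and property (SI). Since $A$ lies in $\mathcal{C}_1$, one should be able to use the ``tracial'' local AH approximation with slow dimension growth, which produces, for every finite $F\subset A$ and $\ep>0$, a unital AH subalgebra $B\subset pAp$ with slow dimension growth such that each $x\in F$ is approximately cut by $p$ into $pxp\in_{\ep} B$ plus a part $(1-p)x(1-p)$ of arbitrarily small trace uniformly on $T(A)$.

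For strict comparison I would argue as follows. Given $a,b\in A_+$ with $d_\tau(a)+\eta<d_\tau(b)$ uniformly on $T(A)$, choose a $B$ as above with $1-p$ so small in trace that $d_\tau(pap)<d_\tau(pbp)-\eta/2$ on the restricted traces of $B$. Since AH algebras with slow dimension growth have almost unperforated Cuntz semigroup (Toms, or via decomposition rank), one obtains $pap\precsim pbp$ in $B$. The residual piece $(1-p)a(1-p)$ of tiny trace is then absorbed, again using the comparison available in $B$, after a further cutting step. A standard diagonal argument removes the dependence on $\ep$ and produces $a\precsim b$ in $A$.

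For property (SI), I would work inside $A_\omega\cap A'$. Given positive contractions $e,f$ there with $\tau(e^m)\to 0$ and $\inf_m\tau(f^m)>0$ on all limit traces, I would represent them by central sequences and, using the tracial local AH structure, find sequences of AH subalgebras $B_n\subset p_nAp_n$ of slow dimension growth in which suitable perturbations of these representatives live (modulo a small-trace error). Since each $B_n$ is itself $\mathcal{Z}$-stable and has property (SI) as a consequence of finite decomposition rank, one can produce approximate isometry-like elements $s_n\in B_n$ witnessing $(SI)$, and assemble them into $s\in A_\omega\cap A'$ with $s^*s=1-e$ and $ses^*\in\overline{fA_\omega f}$. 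Combined with strict comparison, the Matui--Sato theorem then yields $A\cong A\otimes\mathcal{Z}$.

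The main obstacle will be the second step. The tracial AH approximation provides good control only on average against $T(A)$, but (SI) requires simultaneous control on every limit trace of $A_\omega$, and in the presence of infinitely many extremal traces the passage from ``small on average'' to ``small on every extremal trace'' is nontrivial. A careful Cuntz-Pedersen/partition argument on the boundary of $T(A)$, together with a quantitative form of the slow dimension growth hypothesis ensuring that the approximating subalgebras $B_n$ carry uniform comparison and (SI)-witnesses across their traces, is where the real technical work lies.
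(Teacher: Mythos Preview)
Your strict-comparison step is essentially what the paper does (Theorem~\ref{scomp}), so that part is fine in spirit, though the actual argument is more delicate than your sketch suggests.

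The genuine gap is in your (SI) step. The approximating subalgebras $B_n=P_nM_{r_n}(C(X_n))P_n$ coming from the definition of $\mathcal{C}_1$ are homogeneous but not simple, so ``$B_n$ is $\mathcal{Z}$-stable and has property (SI)'' is not a meaningful statement. Even granting some reinterpretation, your plan requires that representatives of the central-sequence elements $e,f\in A_\omega\cap A'$ can be pushed into the $B_n$'s, but the tracial approximation only controls a \emph{fixed} finite subset of $A$ up to $\epsilon$; there is no mechanism for absorbing arbitrary central sequences into these subalgebras. The obstacle you flag in your last paragraph is real, and the Cuntz--Pedersen/partition fix you suggest does not address it.

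The paper avoids this entirely by a different reduction. Rather than lifting (SI)-witnesses from the $B_n$'s, it extracts from the $\mathcal{C}_1$ hypothesis three abstract properties of $A$ itself: finite \emph{weak tracial nuclear dimension} (Definition~\ref{Dtrwnuc}; this holds because each $B_n\in\mathcal{I}_d$ has $\dim_{\mathrm{nuc}}\le d$), strict comparison (Theorem~\ref{scomp}), and $0$-almost divisibility of the Cuntz semigroup for every unital hereditary subalgebra (Corollary~\ref{malmostdiv} plus Proposition~\ref{herd}). Given these, Lemma~\ref{LZstable1} --- a tracial variant of Winter's argument from \cite{Winv} --- produces a central sequence of order-zero maps $L_n:M_k\to A$ with $\tau(L_n(e)^m)\to 1/k$ uniformly in $\tau$. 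This is exactly the input Matui--Sato need (replacing their Lemma~3.3), and their argument then gives (SI) and hence $\mathcal{Z}$-stability directly for $A$ (Theorem~\ref{TZs-1}). The key idea you are missing is this Winter-style construction of approximately central order-zero maps, which works at the level of $A$ and bypasses any need to lift (SI) from the approximants.
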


The article is organized as follows. Section 2 serves as a preliminary which includes a number of conventions that will be used throughout this article.
Some facts about a subgroup $SU(M_n(C(X))/CU(M_n(C(X)))$ will be  discussed. The detection of those unitaries with trivial determinant at each point which are not in the closure of commutator subgroup plays a new role in the Basic Homotopy Lemma, which will be presneted in section 11.
 In  section 3, we introduce the
class ${\cal C}_1$ of simple \CA s  which may be described as tracially locally AH
algebras of slow dimension growth. Several related definitions are
given.  In section 4, we discuss some basic properties of \CA s in
class ${\cal C}_1.$ In section 5, we prove, among other things, that
\CA s in ${\cal C}_1$ have stable rank one and the strict comparison
for positive elements.  In section 6, we
study the tracial state space of a unital simple \CA\, in ${\cal
C}_1.$ In particular, we show that every quasi-trace of a unital
separable simple \CA\, in ${\cal C}_1$ extends to a trace. Moreover,
we show that, for a unital simple \CA\, $A$ in ${\cal C}_1,$  the
affine map from  the tracial state space to state space of $K_0(A)$
maps the extremal points onto the extremal points.
In section 7, we discuss the unitary groups
of simple \CA s in a subclass of ${\cal C}_1.$
In section 8, using what have been established in previous sections,
we combine an argument of Winter (\cite{Winv})and an argument of Matui and Sato (\cite{MS}) to prove Theorem \ref{MMT2} above.
In section 9
we present some versions of so-called existence theorem.
 In section
10, we  present a uniqueness statement that will be proved in section 12
and an existence type result regarding the Bott map.
The uniqueness theorem holds for the case of $Y$ being a finite CW complex of dimension
zero as well as the case of $Y=[0,1].$ An induction on the dimension $d$
will be presented in the next two sections. In section 11, we present a
version of The Basic Homtopy Lemma, which was first studied intensively in
\cite
{BEEK} and later in \cite{Lnmem}.
%The proof for the case that $Y$ has dimension $d$ is given
%under assumption that the uniqueness statement holds for finite CW
%complex $Y$ with dimension $d.$
A new obstruction for the Basic Homotopy Lemma in this version will be
dealt with, which was mentioned earlier in section 2.
In section 12, we prove the uniqueness statement in section 10.
In section 13 we present the proofs for Theorem \ref{MMT1} and  \ref{MMT3}.
Section 14 serves as an appendix  to this
article.

\section{Preliminaries}

\begin{NN}
{\rm Let $A$ be a unital \CA. Denote by $T(A)$ the convex set of tracial states of $A.$
%Denote by $T_{\rm f}(A)$ the convex set of all faithful tracial states.
Let $\Aff(T(A))$ be the space of all real affine continuous functions on $T(A).$
Denote by $M_n(A)$ the algebra of all $n\times n$ matrices over $A.$  By regarding $M_n(A)$ as a subset of $M_{n+1}(A),$ define $M_{\infty}(A)=\cup_{n=1}^{\infty}M_n(A).$  If $\tau\in T(A),$ then $\tau\otimes {\rm Tr},$ where ${\rm Tr}$ is standard
trace on $M_n,$ is a trace on $M_n(A).$  Throughout this paper, we will use $\tau$ for $\tau\otimes {\rm Tr}$ without
warning.

We also use $QT(A)$ for the set of all quasi-traces of $A.$

Let  $C$ and $A$ be  two unital \CA s with $T(C)\not=\emptyset$ and $T(A)\not=\emptyset.$
Suppose that $h: C\to A$ is a unital \hm. Define an affine continuous map $h_{\sharp}: T(A)\to T(C)$ by
$h_{\sharp}(\tau)(c)=\tau\circ h(c)$ for all $\tau\in T(A)$ and $c\in C.$
%If $A$ is simple and $h$ is a monomorphism,
%then $h_{\sharp}$ maps $T(A)$ into $T_{\rm f}(C).$
}

\end{NN}

\begin{df}\label{rho}
Let $C$ be a unital \CA\, with $T(C)\not=\emptyset.$
For each $p\in M_n(C)$ define $\check{p}(\tau)=(\tau\otimes {\rm Tr})(p)$ for all $\tau\in T(A),$ where ${\rm Tr}$ is the standard trace on $M_n.$ This gives  a positive \hm\, $\rho_C: K_0(C)\to \Aff(T(C)).$

A positive \hm\, $s: K_0(A)\to \C$ is a state on $K_0(A)$ if
$s([1_A])=1.$ Let $S(K_0(A))$ be the state space of $K_0(A).$
Define $r_A: T(C)\to  S(K_0(A))$  by $r_A(\tau)([p])=\tau(p)$
for all projections $p\in M_n(A)$ (for all $n\ge 1$).
\end{df}

\begin{df}\label{df(k)}
{\rm
Let $A$ and $B$ be two \CA s and $\phi: A\to B$ be a positive linear map.
We will use $\phi^{(K)}: A\to M_K(B)$ for the map $\Phi^{(K)}(a)=\phi(a)\otimes 1_{M_K}.$ If $a\in B,$ we may write
$a^{(K)}$ for the element $a\otimes 1_{M_K}$ and sometime it will be written as ${\rm diag}(\overbrace{a,a,...,a}^{K}).$
}

\end{df}

\begin{df}
In what follows, we will identify $\T$ with the unit circle  and $z\in C(\T)$ with the identity map on the circle.
\end{df}

\begin{df}\label{Filtration}
Let  $A$ be a unital \CA.  Following \cite{EL}, define
$$
F_{n}K_i(A)=\{\phi_{*i}(z_b)\in K_i(A): \phi\in Hom(C(S^{n}), M_{\infty}(A))\},
$$
where $z_b$ is a generator (the Bott element) of $K_i(C(S^n)),$ if $i=0$ and $n$ even, or $i=1$ and $n$ odd.
$F_nK_i(A)$ is a subgroup of $K_i(A),$ $i=0,1.$
%Let $FG_{n,i}(A)=F_nK_i(A)\setminus F_{n+1}K_i(A),$ $i=0,1.$
%Let $X$ be as in \ref{prem0} and keep the notation there.
%We may assume that $\zeta_1, \zeta_2, ...,\zeta_F\in F_{3}K_1(C(X))$ and $\zeta_{F+1},...,\zeta_k$ are not in
%$F_3K_1(C(X)).$
\end{df}

\begin{df}\label{generator}
Fix an integer $n\ge 2,$ let $z$ be a generator
of $K_1(C(S^{2n-1})).$  Let $z_b$ be a unitary in $M_n(C(S^{2n-1}))$ which represents $z.$ We fix
one such  unitary  that $z_b\in SU_n(C(S^{2n-1})),$ i.e.,
${\rm det}(z_b(x))=1$ for all $x\in S^{2n-1}.$  In case $n=2,$ one may write
\beq\label{Filtr-1}
z_b=\begin{pmatrix} z & -{\bar w}\\
                     w & {\bar z}\end{pmatrix},
                   \eneq
                     where $S^3=\{(z, w)\in \C^2: |z|^2+|w|^2=1\}.$
%
%Note that for each $s\in S^3,$ $z_b(s)$ has a pair of eigenvalues $\lambda_1=e^{i\xi_1}$ and $\lambda_2=e^{i\xi_2}$ such that
%$\lambda_1\cdot \lambda_2=1$ and $\xi_1+\xi_2=0.$

\end{df}

\begin{NN}\label{DCU}
{\rm Let $C$ be a unital \CA. Denote by $U(C)$ the unitary group of $C$ and denote
by $U_0(C)$ the subgroup of $U(C)$ consisting of unitaries which are connected to $1_C$ by a continuous path of unitaries.  Denote by
$CU(C)$ the closure of the normal subgroup generated by commutators of $U(C).$ Let $u\in U(C).$
Then ${\bar u}$ is the image of $u$ in $U(C)/CU(C).$
Let ${\cal W}\subset U(A)$ be a subset. Denote by
$\overline{{\cal W}}$  the set of those elements
${\bar u}$ such that $u\in {\cal W}.$
Denote by $CU_0(C)$ the intersection
$CU(C)\cap U_0(C).$  Note that $U(A)/CU(A)$ is an abelian group.

%If $u\in U(A),$ denote by $\bar u$ the image of $u$ in $U(A)/CU(A).$
We use the following metric on $U(A)/CU(A):$
$$
{\rm dist}({\bar u},{\bar v})=
\inf\{\|uv^*-w\|: w\in CU(A)\}.
$$
Using de la Harp-Scandalis determinant, by K. Thomsen (see \cite{Th}), there is a short splitting exact sequence
\beq\label{HS-4n}
0\to \Aff(T(C))/\overline{\rho_C(K_0(C))}\to  \cup_{n=1}^{\infty}U(M_n(C))/CU(M_n(C))
\to K_1(C)\to 0.
\eneq
Suppose that $r\ge 1$ is an integer and $U(M_r(A))/U(M_r(A))_0=K_1(A),$ one has the following short splitting exact sequence:
\beq\label{HS-4}
0\to \Aff(T(C))/\overline{\rho_C(K_0(C))}\to  U(M_r(C))/CU(M_r(C))
\to K_1(C)\to 0.
\eneq
For $u\in U_0(C),$ we will use $\overline{\Delta}(u)$ for
the de la Harp and Skandalis determinant of $u,$ i.e.,
the image of $u$ in
$\Aff(T(C))/\overline{\rho_C(K_0(C))}.$
For each \CA\, $C$  with $U(C)/U_0(C)=K_1(C),$ we will fix one splitting map $J_c: K_1(C)\to  U(C)/CU(C).$  For each ${\bar u}\in J_c(K_1(C)),$ select and fix one element $u_c\in U(C)$ such that
${\overline{u_c}}={\bar u}.$ Denote this set by $U_c(K_1(C)).$ Denote by $\Pi_c: U(C)/CU(C)\to K_1(C)$ the quotient map. Note that $\Pi_c\circ J_c={\rm id}_{K_1(C)}.$

If $A$ is a unital \CA\, and $\phi: C\to A$ is a unital \hm, then $\phi$ induces a continuous \hm\,
$$\phi^{\ddag}:
U(C)/CU(C)\to  U(A)/CU(A).
$$
%Denote by $\phi^{\dag}: K_1(C)\to \Aff(T(A))/{\overline{\rho_A(K_0(A))}}$ the map $({\rm id}-J_c)\circ \phi^{\ddag}\circ J_c.$
%If $u\in U(A),$ denote by $\overline{u}^{\dag}$ the element ${\rm id}^{\dag}(\overline{u}).$

If $g\in \Aff(T(A)),$ denote by $\overline{g}$ the image of $g$ in
$\Aff(T(A))/\overline{\rho_A(K_0(A))}.$

}

\end{NN}

\begin{df}
{\rm
Let $A$ and $B$ be two unital \CA s. Let $G_1\subset U(M_m(A))/CU(M_m(A))$ be a subgroup. Let $\gamma: G_1\to U(M_m(B))/CU(M_m(B))$ be a \hm\, and let $\Gamma: \Aff(T(A))\to \Aff(T(B))$ be an affine \hm. We say that
$\Gamma$ and $\gamma$ are compatible if
$\gamma({\bar g})=\overline{\Gamma(g)}$ for all $g\in \Aff(T(A))$ such that
$\overline{g}\in G_1\cap U_0(M_m(A))/CU(M_m(A))\subset \Aff(T(A))/\overline{\rho_A(K_0(A))}.$ Let $\lambda: T(B)\to T(A)$ be continuous affine map. We say $\gamma$ and $\lambda$ are compatible
if $\gamma$ and the map from $\Aff(T(A))\to \Aff(T(B))$ induced by $\lambda$ are compatible. Let $\kappa\in Hom_{\Lambda}(\underline{K}(A), \underline{K}(B)).$ We say that $\kappa$ and $\gamma$ are compatible if
$\kappa|_{K_1(A)}(z)=\Pi_c\circ \gamma(z)$ for all $z\in G_1.$
We say that $\kappa$ and $\lambda$ are compatible if $\rho_B(\kappa|_{K_0(A)}([p])=\lambda(\tau)([p])$ for all projections
$p\in M_{\infty}(A).$

}
\end{df}

\begin{df}\label{SU}
Let $X$ be a compact metric space and let $P\in M_m(C(X))$ be a projection
 such that $P(x)\not=0$ for all $x\in X,$  where $m\ge 1$ is an integer. Let $C=PM_m(C(X))P$ and let $r\ge 1$ be an integer.  Denote by $SU_r(C)$ the set of those unitaries $u\in M_r(C)$ such that
${\rm det}(u(x))=1$ for all $x\in X.$ Note that $SU_r(C)$ is a normal subgroup of $U_r(C).$
\end{df}

The following is an easy fact.
\begin{prop}\label{sucum}
Let $C$ be as in \ref{SU},  let $Y$ be a compact metric space and let $P_1\in M_n(C(Y))$ be a projection such that $P_1(y)\not=0$ for all $y\in Y.$ Let $B=P_1M_n(C(Y))P_1.$ Suppose that $\phi: C\to B$ is a unital \hm.
Then $\phi$ maps $SU_r(C)$ into $SU_r(B)$ for all integer $r\ge 1.$
\end{prop}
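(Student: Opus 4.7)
The plan is to reduce the pointwise determinant condition on $\phi(u)$ at a point $y\in Y$ to the pointwise determinant condition on $u$ at finitely many points of $X$, by exploiting the standard structure of unital $*$-homomorphisms from a corner of $M_m(C(X))$ into a matrix algebra. Fix $u\in SU_r(C)$ and $y\in Y$. Since the condition $\det(\cdot)=1$ is pointwise, it suffices to show $\det(\phi(u)(y))=1$ for every such $y$.

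First, I would compose $\phi$ with evaluation at $y$ to obtain a unital $*$-homomorphism $\pi_y: C\to P_1(y)M_n P_1(y)$, whose target is isomorphic to $M_{k(y)}$ with $k(y)=\operatorname{rank}(P_1(y))$. Then I would invoke the standard structure theorem for unital $*$-homomorphisms from $PM_m(C(X))P$ into a finite matrix algebra: there exist points $x_1,\dots,x_j\in X$ (with possible repetition) and a unitary $V\in P_1(y)M_n P_1(y)$ such that
\begin{equation*}
\pi_y(a)\;=\;V\,\operatorname{diag}\bigl(a(x_1),\dots,a(x_j)\bigr)\,V^{*}\qquad(a\in C),
\end{equation*}
where each block $a(x_i)$ is read inside $P(x_i)M_m P(x_i)$. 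The unitality of $\pi_y$ (i.e.\ $\pi_y(P)=P_1(y)$) forces $\sum_{i=1}^{j}\operatorname{rank}(P(x_i))=k(y)$, so the sizes match.

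Next, I would amplify by $\operatorname{id}_{M_r}$: the induced map on $M_r(C)$ has the same block form, so
\begin{equation*}
\phi(u)(y)\;=\;(V\otimes 1_r)\,\operatorname{diag}\bigl(u(x_1),\dots,u(x_j)\bigr)\,(V^{*}\otimes 1_r).
\end{equation*}
Because the determinant is invariant under unitary conjugation and multiplicative on block-diagonal matrices, one obtains $\det(\phi(u)(y))=\prod_{i=1}^{j}\det(u(x_i))$. Each factor equals $1$ since $u\in SU_r(C)$, which yields $\det(\phi(u)(y))=1$ and hence $\phi(u)\in SU_r(B)$, as $y$ was arbitrary.

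There is no genuine obstacle in this argument; the statement is appropriately labelled as an easy fact. The only input one must cite correctly is the structure theorem for unital $*$-homomorphisms from $PM_m(C(X))P$ into a matrix algebra, which is standard and follows from diagonalising the restriction to the centre together with the classification of unital $*$-homomorphisms between matrix algebras up to inner conjugation.
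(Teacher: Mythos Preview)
Your argument is correct. The paper does not supply a proof at all (it simply labels the proposition an ``easy fact''), so there is nothing to compare against; your pointwise reduction via the structure of finite-dimensional representations of $PM_m(C(X))P$ is precisely the natural way to verify it.
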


It is also easy to see that $CU(M_r(C))\subset
\cup_{k=1}^{\infty} SU_k(C)\cap U_0(M_k(C)).$
Moreover, one has the following:

\begin{prop}\label{SUCU}
Let $X$ be a compact metric space and let $C=PM_m(C(X))P$ be as in \ref{SU}. Then
$$SU_r(C)\cap U_0(M_r(C))\subset CU(M_r(C))
\rforal \,{\rm integer}\,\,\,r\ge 1.$$
\end{prop}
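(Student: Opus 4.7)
The plan is to apply the de la Harpe--Skandalis determinant. By Thomsen's theorem recalled in~\eqref{HS-4n}, for every $r\ge 1$ the map $\overline{\Delta}$ identifies $U_0(M_r(C))/CU(M_r(C))$ with $\Aff(T(C))/\overline{\rho_C(K_0(C))}$; so it suffices to show that $\Delta(u)\in\rho_C(K_0(C))$ for every $u\in SU_r(C)\cap U_0(M_r(C))$.

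Fix such a $u$ and pick a piecewise smooth path $\xi:[0,1]\to U(M_r(C))$ with $\xi(0)=1$ and $\xi(1)=u$. Set $h(t)=-i\xi'(t)\xi(t)^\ast\in M_r(C)_{sa}$, so that $\Delta(u)(\tau)=\tfrac{1}{2\pi}\int_0^1\tau(h(t))\,dt$ for every $\tau\in T(C)$. Evaluating at each $x\in X$ in the fibre $P^{(r)}(x)M_{rm}P^{(r)}(x)$ and using the classical identity $\tfrac{d}{dt}\log\det\xi(t)(x)=\Tr(\xi'(t)(x)\xi(t)(x)^\ast)$ gives
$$\det(u(x))=\exp\!\Bigl(i\int_0^1\Tr\bigl(h(t)(x)\bigr)\,dt\Bigr).$$
Because $u\in SU_r(C)$, the function
$$f(x):=\frac{1}{2\pi}\int_0^1\Tr\bigl(h(t)(x)\bigr)\,dt$$
takes values in $\Z$, and being continuous it is locally constant on $X$, hence takes only finitely many values. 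The extreme tracial states of $C$ are of the form $\tau_x(a)=\Tr(a(x))/\mathrm{rank}(P(x))$, $x\in X$, and for the extension $\tau_x\otimes\Tr$ to $M_r(C)$ one has $(\tau_x\otimes\Tr)(h(t))=\Tr(h(t)(x))/\mathrm{rank}(P(x))$; hence
$$\Delta(u)(\tau_x)=\frac{f(x)}{\mathrm{rank}(P(x))}.$$

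Next I realize this affine function by a $K_0$-class. Since $P$ is full in $M_m(C(X))$, the inclusion $C\hookrightarrow M_m(C(X))$ induces an isomorphism $\iota_\ast:K_0(C)\to K_0(C(X))$ sending $[q]$ to the rank function $x\mapsto\mathrm{rank}(q(x))$, and the rank map $K_0(C(X))\to C(X,\Z)$ is surjective. Partition $X$ into the finitely many clopen pieces on which $f$ and $\mathrm{rank}(P)$ are both constant; on each piece $f$ can be written as a difference of two non-negative integers, and after stabilising $N$ one finds subprojections of $P^{(N)}$ of the requisite constant ranks. Patching across the partition produces projections $q_1,q_2\in M_N(C)$ with $\mathrm{rank}(q_1(x))-\mathrm{rank}(q_2(x))=f(x)$ for every $x\in X$. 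Setting $\eta=[q_1]-[q_2]\in K_0(C)$, a direct calculation gives $\rho_C(\eta)(\tau_x)=f(x)/\mathrm{rank}(P(x))=\Delta(u)(\tau_x)$ on every extreme trace.

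Finally, $T(C)$ is a Bauer simplex with extreme boundary $\{\tau_x:x\in X\}$, so two affine continuous functions that agree on all $\tau_x$ agree on $T(C)$. Therefore $\Delta(u)=\rho_C(\eta)\in\rho_C(K_0(C))\subseteq\overline{\rho_C(K_0(C))}$, which gives $\overline{\Delta}(u)=0$ and hence $u\in CU(M_r(C))$. The main technical point is the construction of $q_1,q_2$: one needs $N$ large enough that subbundles of $P^{(N)}$ of any prescribed constant rank exist on each clopen piece of $X$. This is the only place where the specific geometry of $P$ enters, and it is standard once one invokes the Morita equivalence between $C$ and $C(X)$.
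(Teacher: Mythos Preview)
Your proof is correct and follows essentially the same route as the paper: compute the de la Harpe--Skandalis determinant, use $\det u(x)=1$ to see that the resulting function on $X$ is integer-valued and locally constant, realize it as the rank function of a $K_0$-class, and conclude via Thomsen's theorem. The paper writes $u=\prod_j\exp(ih_j)$ rather than choosing a general smooth path, and then asserts directly the existence of a single projection $Q\in M_L(C)$ with $\operatorname{rank}Q(x)=N(x)$, whereas you (more carefully) take a virtual class $[q_1]-[q_2]$; both lead to the same $\rho_C$-value $N(x)/\operatorname{rank}P(x)$. One small slip: $\iota_\ast$ does not send $[q]$ to the rank function but to $[q]\in K_0(C(X))$; the rank is a further (generally non-injective) map $K_0(C(X))\to C(X,\Z)$. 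Your final appeal to Morita equivalence is the right way to justify the existence of $\eta$, since the hands-on construction of subprojections of $P^{(N)}$ of prescribed rank can be delicate for arbitrary compact metric $X$.
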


\begin{proof}
Let $u\in SU_r(C)\cap U_0(M_r(C)).$
Write $u=\prod_{j=1}^k\exp(\sqrt{-1}h_j),$ where
$h_j\in M_r(C)_{s.a.}.$  Put $R(x)={\rm rank}\, P(x)$ for all $x\in X.$
Note $R(x)\not=0$ for all $x\in X.$
It follows that
\beq\label{zbcum-1}
({1\over{2\pi\sqrt{-1}}})T_x(\sum_{j=1}^kh_j(x))=N(x)\in \Z,
\eneq
where $T_x$ is the standard trace on $M_{rR(x)}.$
Note that $N\in C(X).$ Therefore there is a projection
$Q\in M_L(C)$ such that
\beq\label{zbcum-2}
{\rm rank}Q(x)=N(x)\tforal x\in X.
\eneq
Let $\tau\in T(C).$ Then
\beq\label{zbcum-3}
\tau(f)=\int_X t_x(f)d\mu_{\tau}\tforal f\in PM_m(C(X))P,
\eneq
where $t_x$ is the normalized trace on $M_{R(x)}$  and
$\mu_{\tau}$ is a Borel probability measure on $X.$
 Let
$Tr$ be the standard trace on $M_{L}.$
Then
\beq\label{zbcum-4}
\rho_{C}(Q)(\tau)&=&\int_X (t_x\otimes Tr)(Q(x))d\mu_{\tau}\\
&=& \int_X {N(x)\over{R(x)}}d\mu_{\tau}
\eneq
for all $\tau\in T(C).$
Define a smooth path of unitaries $u(t)=\prod_{j=1}^k\exp(\sqrt{-1}h_j(1-t))$
for $t\in [0,1].$  So $u(0)=u$ and $u(1)={\rm id}_{M_{r}(C)}.$
Then, with $T$ being the standard trace on $M_{r},$
\beq\label{zbcum-5}
&&({1\over{2\pi \sqrt{-1}}})\int_0^1 (\tau\otimes T){du(t)\over{dt}}u(t)^* dt = ({1\over{2\pi\sqrt{-1}}})\int_0^1 (\tau\otimes T)(\sum_{j=1}^k h_j) dt\\
&&=({1\over{2\pi\sqrt{-1}}})\int_X (t_x\otimes T)(\sum_{j=1}^k h_j(x))d\mu_{\tau}=({1\over{2\pi\sqrt{-1}}})\int_X {T_x(\sum_{j=1}^k h_j(x))\over{R(x)}}d\mu_{\tau}\\
&&=\int_X {N(x)\over{R(x)}}d\mu_{\tau}=\rho_{C}(Q)(\tau)\,\,\tforal \tau\in T(C).
\eneq
By a result of Thomsen (\cite{Th}), this implies that
$$
\overline{u}=\overline{1}\in U(M_{r}(C)))/CU(M_{r}(C).
$$
In other words,
$$
SU_r(C)\cap U_0(C)\subset CU(M_r(C)).
$$

\end{proof}

\begin{df}\label{DFilt2}
Let $X$ be a finite CW complex.
Let $X^{(n)}$ be the $n$-skeleton of $X$ and let $s_n:C(X)\to C(X^{(n)})$ be the surjective map induced by restriction, i.e.,
$s_n(f)(y)=f(y)$ for all $y\in X^{(n)}.$ Let $P\in M_l(C(X))$ be a projection for some integer $l\ge 1$ and let $C=PM_l(C(Y))P.$
Denote still by $s_n: C\to P^{(n)}M_l(C(X^{(n)}))P^{(n)},$ the quotient map, where $P^{(n)}=P|_{X^{(n)}}.$
Put $C_n=P^{(n)}M_l(C(X^{(n)}))P^{(n)}.$ Note that $C_n$ is a quotient of $C,$ and
$C_{n-1}$ is a quotient of $C_n.$
It was proved by Exel and Loring (\cite{EL}) that
$F_{n}K_i(C)={\rm ker}(s_{n-1})_{*i}.$

Suppose that $X$ has dimension $N.$
Let
$$
I_N={\rm ker}r_{N-1}=\{f\in C: f|_{X^{(N-1)}}=0\}.
$$
Then $I_N$ is an ideal of $C.$ There is an embedding $j_N: {\widetilde I_N}\to C$ which
maps $\lambda\cdot 1+f$ to $\lambda \cdot 1_C+f$ for all $f\in I_N.$
Define, for $1<n<N,$
$$
I_n=\{f\in C_n: f|_{X^{(n-1)}}=0\}.
$$
Again, there is an embedding $j_n: {\widetilde I_n}\to C_n.$

Note that ${\widetilde{I_n}}\cong P^{(n)'}M_l(C(Y_n))P^{(n)'}$ for some projection
$P^{(n)'}\in M_l(C(Y_n)),$ where
$Y_n= S^n\bigvee S^n\bigvee \cdots \bigvee S^n$  (there are only finitely many of $S^n$).

\end{df}

\begin{lem}\label{torF3}
Let $X$ be a compact metric space. Then
\beq\label{EXELo-1}
Tor(K_1(C(X)))\subset F_3K_1(C(X)).
\eneq
\end{lem}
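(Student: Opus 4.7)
The plan is to reduce to the case where $X$ is a finite CW complex, apply the Exel--Loring description of $F_3 K_1$, and invoke torsion-freeness of $K^1$ of a two-dimensional complex.

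\textbf{Step 1 (reduction to finite CW complexes).} A compact metric space $X$ can be written as an inverse limit $X = \varprojlim_j X_j$ of finite CW complexes (e.g.\ via nerves of a cofinal sequence of finite open covers), so $C(X) = \varinjlim_j C(X_j)$ and, by continuity of $K$-theory, $K_1(C(X)) = \varinjlim_j K_1(C(X_j))$. Given $x \in \operatorname{Tor}(K_1(C(X)))$ with $mx = 0$, lift $x$ to some $x_j \in K_1(C(X_j))$; enlarging $j$ if necessary, one may arrange $m x_j = 0$, so $x_j$ is already torsion. If the lemma is proved for $X_j$, one has $x_j = (\phi_j)_{*1}(z_b)$ for some $\phi_j \colon C(S^3) \to M_N(C(X_j))$, and composing $\phi_j$ with the structure $*$-homomorphism $C(X_j) \to C(X)$ yields $\phi \colon C(S^3) \to M_N(C(X))$ with $\phi_{*1}(z_b) = x$. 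Thus it suffices to treat $X$ a finite CW complex.

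\textbf{Step 2 (reduction to the 2-skeleton).} For $X$ a finite CW complex, the Exel--Loring identification recalled in Definition~\ref{DFilt2} gives
$$
F_3 K_1(C(X)) \;=\; \ker\bigl((s_2)_{*1}\bigr),
$$
where $s_2 \colon C(X) \to C(X^{(2)})$ is restriction to the $2$-skeleton. It therefore suffices to show that $K_1(C(X^{(2)}))$ is torsion-free.

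\textbf{Step 3 (torsion-freeness).} For any two-dimensional finite CW complex $Y$, the Atiyah--Hirzebruch spectral sequence converging to $K^*(Y)$ is concentrated in the columns $0 \le p \le 2$. On the diagonal $p+q = 1$ the only potentially nonzero term is $E_2^{1,0} = H^1(Y, \Z)$, and all $d_r$-differentials into or out of this spot vanish for dimension reasons. Hence $K_1(C(Y)) \cong K^1(Y) \cong H^1(Y, \Z)$, which is torsion-free (it equals $\operatorname{Hom}(H_1(Y, \Z), \Z)$ by the universal coefficient theorem). Applied to $Y = X^{(2)}$, the image of any torsion element of $K_1(C(X))$ under $(s_2)_{*1}$ is a torsion element of a torsion-free group, hence zero; by Step 2 this places it in $F_3 K_1(C(X))$.

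The main delicate point is the lifting in Step 1: one must realize a torsion class in a direct limit by a torsion class at some finite stage, which is routine since $m x = 0$ in the colimit implies $m x_j = 0$ for sufficiently large $j$. Granted this, Steps 2 and 3 are immediate applications of the Exel--Loring theorem and a standard computation of $K^1$ of a $2$-complex, and I expect no further obstacle.
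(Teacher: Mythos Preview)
Your proof is correct and follows essentially the same route as the paper's: reduce to finite CW complexes via an inductive limit argument, invoke the Exel--Loring identification $F_3K_1(C(X)) = \ker((s_2)_{*1})$, and use that $K_1$ of the $2$-skeleton is torsion free. The only cosmetic differences are that the paper phrases the limit reduction by finding a stage where the map to the colimit is injective on the cyclic subgroup (rather than directly killing $mx_j$), cites Proposition~5.1(c) of \cite{EL} for functoriality of $F_3K_1$ where you compose with the structure map by hand, and simply asserts torsion-freeness of $K_1$ in dimension two where you justify it via Atiyah--Hirzebruch.
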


\begin{proof}
We first consider the case that $X$ is a finite CW complex.
Let $Y$ be the $2$-skeleton of $X$ and  let $s: C(X)\to C(Y)$ be the
surjective \hm\, defined by  $f\mapsto f|_Y$ for $f\in C(X).$ Then,
by Theorem 4.1 of \cite{EL},
\beq\label{EXELo-2}
F_3K_1(C(X))={\rm ker}s_{*1}.
\eneq
Since $K_1(C(Y))$ is torsion free, $Tor(K_1(C(X)))\subset {\rm
ker}s_{*1}.$ Therefore
\beq\label{EXELo-3}
Tor(K_1(C(X)))\subset F_3K_1(C(X)).
\eneq
For the general case, let $g\in Tor(K_1(C(X))$  be a non-zero element.
 %and let $kg=0,$
%where $k>2$ is the least such integer.
Write \\
$C(X)=\lim_{n\to\infty} (C(X_n), \phi_n),$ where each $X_n$ is a finite CW complex.
There is $n_0$ and $g'\in K_1(C(X_{n_0})$ such that $(\phi_{n_0, \infty})_{*1}(g')=g.$
Let $G_1$ be the subgroup generated by $g'.$  There is $n_1\ge n_0$ such that
$(\phi_{n_1, \infty})_{*1}$ is injective on $(\phi_{n_0, n_1})_{*1}(G_1).$ Let $g_1=(\phi_{n_0, n_1})_{*1}(g').$
Put $G_2=(\phi_{n_0, n_1})_{*1}(G_1).$ Then $G_2\subset Tor(K_1(C(X_{n_1}))).$
From  what has been proved, $G_2\in F_3K_1(C(X_{n_1})).$ It follows from  part (c) of Proposition 5.1 of \cite{EL}
that $\phi_{n_1, \infty}(G_2)\subset F_3K_1(C(X)).$ It follows that $g\in F_3K_1(C(X)).$

\end{proof}

\begin{lem}\label{notorsion}
Let $X$ be a compact metric space and let $G\subset K_1(C(X))$ be a finitely generated subgroup.
Then $G=G_1\oplus (G\cap F_3K_1(C(X))),$ where $G_1$ is a finitely generated free group.
\end{lem}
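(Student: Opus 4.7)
The plan is to show that $G/(G\cap F_3K_1(C(X)))$ is finitely generated free abelian; once established, the short exact sequence
\[
0\to G\cap F_3K_1(C(X))\to G\to G/(G\cap F_3K_1(C(X)))\to 0
\]
splits automatically, yielding $G=G_1\oplus (G\cap F_3K_1(C(X)))$ with $G_1$ finitely generated free.

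First I would dispose of the case in which $X$ is itself a finite CW complex. Let $X^{(2)}$ denote its $2$-skeleton and $s:C(X)\to C(X^{(2)})$ the restriction map. By Theorem 4.1 of \cite{EL}, $F_3K_1(C(X))=\ker s_{*1}$, so $K_1(C(X))/F_3K_1(C(X))$ embeds into $K_1(C(X^{(2)}))$. Since $X^{(2)}$ is a finite $2$-dimensional CW complex, $K_1(C(X^{(2)}))\cong H^1(X^{(2)};\Z)\cong\Hom(H_1(X^{(2)}),\Z)$, which is finitely generated and torsion-free. Hence $G/(G\cap F_3K_1(C(X)))$, being a finitely generated subgroup of a torsion-free abelian group, is free abelian, and the desired splitting follows.

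For general compact metric $X$, I would reduce to the CW case by the inductive-limit argument used in the second half of the proof of Lemma \ref{torF3}. Write $C(X)=\lim_n(C(X_n),\phi_n)$ with each $X_n$ a finite CW complex. Given $g\in G$ with $ng\in F_3K_1(C(X))$ for some $n\ge 1$, I would argue $g\in F_3K_1(C(X))$ (so the quotient is torsion-free, hence free, and the splitting follows). Lifting $g$ to $g'\in K_1(C(X_k))$ and invoking continuity of the filtration $F_3$ under inductive limits, i.e., that any element of $F_3K_1(C(X))$ lies in the image of $F_3K_1(C(X_{k'}))$ for some sufficiently large $k'$ (which for our setting reduces to approximating finitely many homomorphisms $C(S^3)\to M_N(C(X))$ by homomorphisms $C(S^3)\to M_N(C(X_{k'}))$ up to $K_1$-equivalence), one obtains $k'\ge k$ with $(\phi_{k,k'})_{*1}(ng')\in F_3K_1(C(X_{k'}))$. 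The finite-CW case applied to $X_{k'}$ yields $(\phi_{k,k'})_{*1}(g')\in F_3K_1(C(X_{k'}))$, and part (c) of Proposition 5.1 of \cite{EL} pushes this forward to give $g\in F_3K_1(C(X))$. The main obstacle is this continuity step; it requires care to guarantee that $F_3$-membership is faithfully detected at a finite stage, but it is controlled by the finite generation of $G$, so that only finitely many elements need to be tracked through the inductive system.
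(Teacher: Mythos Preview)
Your proposal is correct and follows essentially the same approach as the paper. Both arguments reduce to the finite CW case via inductive limits and then use the $2$-skeleton together with Theorem 4.1 of \cite{EL} to see that $G/(G\cap F_3K_1(C(X)))$ embeds in the torsion-free group $K_1(C(X^{(2)}))$; the paper simply writes ``As in the proof of \ref{torF3} we may assume that $X$ is a finite CW complex,'' whereas you spell out the continuity of $F_3$ along the inductive system, which is the honest content of that reduction.
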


\begin{proof}
As in the proof of \ref{torF3} we may assume that $X$ is a finite CW complex.
Let $Y$ be the $2$-skeleton of $X.$  Let $s: C(X)\to C(Y)$ be the surjective map defined by
the restriction
$s(f)(y)=f(y)$ for all $f\in C(X)$ and $y\in Y.$  Then,  by Theorem 4.1 of \cite{EL},
${\rm ker}s_{*1}=F_3K_1(C(X)).$ Therefore
$G/G\cap F_3K_1(C(X))$ is isomorphic to a subgroup of $K_1(C(Y)).$ Since
${\rm dim}\, Y=2, $ $Tor(K_1(C(Y)))=\{0\}.$ Therefore $G/G\cap F_3K_1(C(X))$ is free.
It follows that
$$
G=G_1\oplus G\cap F_3K_1(C(X))
$$
for some finitely generated subgroup $G_1.$
\end{proof}

\begin{df}\label{DUb}
{\rm
Let $C$ be a unital \CA\,  and let $G\subset K_1(C)$ be a finitely generated subgroup.
Denote by $J': K_1(C)\to \cup_{n=1}^{\infty}U(M_n(C))/CU(M_n(C))$
an injective \hm\, such that $\Pi\circ J'={\rm id}_{K_1(C)},$
where $\Pi$ is the surjective map
from $\cup_{n=1}^{\infty}U(M_n(C))/CU(M_n(C))$ onto $K_1(C).$
There is an integer $N=N(G)$ such that $J'(G)\in U(M_N(C))/CU(M_N(C)).$

Let $X$ be a compact metric space and let $C=PM_m(C(X))P,$ where $P\in M_m(C(X))$ is a projection such that $P(x)\not=0$ for all $x\in X.$
By \ref{notorsion}, one may write $G=G_1\oplus G_b\oplus Tor(G),$
where $G_b$ is the free part of $G\cap F_3K_1(C).$ Note, by \ref{torF3},
$Tor(G)\subset F_3K_1(C).$
Let $g\in Tor(G)$ be a non-zero element and let $\overline{u_g}=J'(g)$
for some unitary $u_g\in U(M_N(C)).$  Suppose that $kg=0$ for some integer $k>1.$ Therefore  $u_g^k\in CU(M_N(C)).$
It follows from \ref{torF3} as well as \ref{generator},
there are $h_1, h_2,...,h_s\in M_{N+r}(C)_{s.a.}$ such that
$$
u_1\prod_{j=1}^s\exp(\sqrt{-1}h_j)\in SU_{r+N}(C),
$$
where $u_1=1_{M_r}\oplus u_g$ and $r\ge 0$ is an integer.
For each $x\in X,$
$
[{\rm det}u_1(x)]^k=1.
$
It follows that
$$
{\sum_{j=1}^s kTr(h_j)(x)\over{2\pi \sqrt{-1}}}=I(x)\in \Z\tforal x\in X.
$$
It follows that $I(x)\in C(X).$ Therefore
$$
(\prod_{j=1}^s\exp(\sqrt{-1}h_j))^k\in SU_{r+N}(C)\cap U_0(M_{r+N}(C))\subset CU_{r+N}(C).
$$
Consequently,
$$
\overline{(u_1\prod_{j=1}^s\exp(\sqrt{-1}h_j))^k}=\overline{1_{M_{r+N}(C)}}.
$$
Thus there is an integer $R(G)\ge 1$ and an injective \hm\,
$$J_{c(G)}: Tor(G)\to SU_{R(G)}(C)/CU(M_{R(G)}(C))$$
 such that
$\Pi\circ J_{c(G)}={\rm id}_{Tor(G)}.$ By choosing a larger $R(G),$ if necessarily, one obtains an injective \hm\, $J_{c(G)}:
G\to U(M_{R(G)}(C))/CU(M_{R(G)}(C))$ such that
\beq\label{DUb-11}
J_{c(G)}(G_b\oplus Tor(G))\subset
SU_{R(G)}(C)/CU(M_{R(G)}(C))
\eneq
and  $\Pi\circ J_{c(G)}={\rm id}_{G}.$

It is important to note that, if $x\in SU_{R(G)}(C)$ and $[x]\in G\setminus \{0\}$ in $K_1(C),$ then $J_c([x])={\bar x}.$
In fact, since $[x]\in G_b\oplus {\rm Tor}(G),$ if $J_c([x])={\bar y},$ then  $y\in SU_{R(G)}(C).$ It follows that $x^*y\in SU_{R(G)}(C)\cap U_0(M_{R(G)}(C))\subset CU(C).$
So ${\bar y}={\bar x}.$ This fact will be also used without further notice.
Note also that if ${\rm dim X}<\infty,$ then we can let
$R(K_1(C(X)))={\rm dim}X.$
}
\end{df}

Therefore one obtains the following:

\begin{prop}\label{groupsu}
Let $X,$ $G,$  $G_b$ and $\Pi$ be as described in \ref{DUb}. Then there is an injective \hm\,
$J_{c(G)}: G\to U_{R(G)}(C)/CU(M_{R(G)}(C))$ for some integer $R(G)\ge 1$ such that
$\Pi\circ J_{c(G)}={\rm id}_G$ and $J_{c(G)}(G_b\oplus Tor(G))\subset SU_{R(G)}(C)/CU(M_{R(G)}(C)).$
In what follows,  we may write $J_c$ instead of $J_{c(G)},$ if $G$ is understood.

\end{prop}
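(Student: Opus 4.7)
The plan is to synthesize the construction carried out in Definition \ref{DUb} into a single clean splitting. I would start by applying Lemma \ref{notorsion} to decompose $G=G_1\oplus G_b\oplus \mathrm{Tor}(G)$, where $G_1$ is the free complement supplied by that lemma and $G_b$ is a free complement of $\mathrm{Tor}(G)$ inside $G\cap F_3K_1(C)$; by Lemma \ref{torF3}, $\mathrm{Tor}(G)\subset F_3K_1(C)$, so in fact $G_b\oplus \mathrm{Tor}(G)=G\cap F_3K_1(C)$.

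On the free summand $G_1$ I would simply use the splitting of the short exact sequence \eqref{HS-4n}, which supplies an injective $J': K_1(C)\to \bigcup_n U(M_n(C))/CU(M_n(C))$ with $\Pi\circ J'=\mathrm{id}$; as $G$ is finitely generated, $J'(G)$ sits inside $U(M_N(C))/CU(M_N(C))$ for some $N$. On $G_b\oplus \mathrm{Tor}(G)$ I would replace these by $SU$-valued lifts. For each generator $g$ of this subgroup, the definition of $F_3K_1(C)$ yields $g=\phi_{*1}([z])$ for some $\phi: C(S^{2k-1})\to M_\infty(C)$ with odd $2k-1\geq 3$; the specific representative $z_b$ of $[z]$ fixed in Definition \ref{generator} lies in $SU_k(C(S^{2k-1}))$, so Proposition \ref{sucum} places $\phi(z_b)$ in $SU_r(C)$. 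After enlarging to a common $R(G)\ge N$, all these lifts live in $SU_{R(G)}(C)\subset U(M_{R(G)}(C))$.

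The main obstacle, and where the real work lies, is verifying that this assignment descends to a well-defined homomorphism modulo $CU$ on $\mathrm{Tor}(G)$: if $u_g\in SU_{R(G)}(C)$ represents $g\in \mathrm{Tor}(G)$ with $kg=0$, one must show $u_g^k\in CU(M_{R(G)}(C))$, not merely that its $K_1$-class vanishes. The determinant computation already displayed in the proof of Proposition \ref{SUCU} does exactly this job: because $\det u_g(x)^k=1$ pointwise and $[u_g^k]=0$ in $K_1(C)$, one obtains $u_g^k\in SU_{R(G)}(C)\cap U_0(M_{R(G)}(C))$, which sits inside $CU(M_{R(G)}(C))$ by Proposition \ref{SUCU}. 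Splicing the lifts on the three summands then yields the required $J_{c(G)}$, and injectivity is automatic from the identity $\Pi\circ J_{c(G)}=\mathrm{id}_G$.
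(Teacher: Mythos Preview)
Your proposal is correct and follows essentially the same route as the paper: the construction in Definition~\ref{DUb} (of which Proposition~\ref{groupsu} is just a summary) uses exactly the decomposition $G=G_1\oplus G_b\oplus\mathrm{Tor}(G)$ from Lemma~\ref{notorsion}, produces $SU$-representatives for $G_b\oplus\mathrm{Tor}(G)\subset F_3K_1(C)$ via Lemma~\ref{torF3} and the generator $z_b$ of Definition~\ref{generator}, and verifies the torsion relations using Proposition~\ref{SUCU}. The only cosmetic difference is that the paper starts from an arbitrary splitting $J'$ and corrects it into $SU$ by multiplying with exponentials, whereas you pick $SU$-representatives directly; your version is in fact slightly cleaner, since once $u_g\in SU_{R(G)}(C)$ you get $u_g^k\in SU_{R(G)}(C)$ for free and only need $[u_g^k]=0$ to force $u_g^k\in U_0$ (after enlarging $R(G)$), then invoke Proposition~\ref{SUCU}.
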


\begin{cor}\label{CSU}
Let $X,$ $G$ and $G_b$ be as in \ref{DUb} and let $Y$ be a compact metric space. Suppose that $B=P_1M_r(C(Y))P_1,$ where $P_1\in M_r(C(Y))$ is a projection and $\phi: C\to B$ is a unital \hm.
Suppose also that $z_b\in G_b\oplus Tor(G)$  and $\phi_{*1}(z_b)=0.$ Then
$\phi^{\ddag}( J_{c(G)}(z_b))={\bar 1}$ in $U(M_{N}(C))/CU(M_{N}(C))$ for some integer $N\ge 1.$ When ${\rm dim}Y=d<\infty,$ $N$ can be chosen to be
$\max\{R(G),d\}.$
\end{cor}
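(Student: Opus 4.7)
The plan is to reduce the statement to a direct application of Proposition \ref{SUCU} together with Proposition \ref{sucum}, after suitable stabilization controlled by the $K_1$-hypothesis. First I would pick a unitary $u\in SU_{R(G)}(C)$ with $\overline{u}=J_{c(G)}(z_b)$; this is available because the hypothesis $z_b\in G_b\oplus \operatorname{Tor}(G)$ together with the defining property \eqref{DUb-11} of $J_{c(G)}$ places $J_{c(G)}(z_b)$ inside $SU_{R(G)}(C)/CU(M_{R(G)}(C))$. By Proposition \ref{sucum}, the image $v:=\phi(u)$ (using the amplification $\phi\otimes\operatorname{id}_{M_{R(G)}}$) lies in $SU_{R(G)}(B)$.

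Next I would exploit the $K_1$-vanishing hypothesis: since $\phi_{*1}(z_b)=0$ and $[u]=z_b$ in $K_1(C)$, we have $[v]=0$ in $K_1(B)$. Hence for some sufficiently large integer $N\ge R(G)$, the stabilized unitary $w:=1_{M_{N-R(G)}}\oplus v$ lies in $U_0(M_N(B))$. Padding with the identity preserves the determinant, so $w$ still lies in $SU_N(B)$. Proposition \ref{SUCU} applied to the algebra $B=P_1M_r(C(Y))P_1$ then gives
\[
w\in SU_N(B)\cap U_0(M_N(B))\subset CU(M_N(B)),
\]
i.e., $\overline{w}=\overline{1}$ in $U(M_N(B))/CU(M_N(B))$. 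Since $\phi^\ddag(J_{c(G)}(z_b))$ is represented by $w$ after this canonical stabilization, the desired conclusion $\phi^\ddag(J_{c(G)}(z_b))=\overline{1}$ follows.

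For the dimension bound, when $\dim Y=d<\infty$ I would invoke the standard fact that for $B=P_1M_r(C(Y))P_1$ the natural map $U(M_n(B))/U_0(M_n(B))\to K_1(B)$ is already an isomorphism once $n\ge d$ (controlled by the dimension of $Y$ via obstruction theory on the associated sphere bundles). Consequently, choosing $N=\max\{R(G),d\}$ already guarantees that $[v]=0$ in $K_1(B)$ forces $1_{M_{N-R(G)}}\oplus v\in U_0(M_N(B))$, and the argument above goes through with this explicit $N$. The main obstacle, and the only non-bookkeeping step, is precisely this dimension estimate: one must be sure that the required stabilization to reach $U_0$ is bounded by $d$, not by an $a$ $priori$ uncontrolled integer depending on $\phi$. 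Once that is in hand, the rest is formal — applying Propositions \ref{sucum} and \ref{SUCU} and using that padding by $1$ preserves membership in $SU$.
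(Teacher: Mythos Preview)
Your proof is correct and follows essentially the same route as the paper: pick a representative $u_b\in SU_{R(G)}(C)$ for $J_{c(G)}(z_b)$ via \eqref{DUb-11}, push it to $SU_{R(G)}(B)$ via Proposition~\ref{sucum}, use $\phi_{*1}(z_b)=0$ to get into $U_0$ after stabilization, and then apply Proposition~\ref{SUCU}. The paper's proof is terser---it absorbs the stabilization step into a ``without loss of generality, $\phi(u_b)\in U_0(M_{R(G)}(B))$'' and does not spell out the dimension bound at all---whereas you make the passage to $M_N$ and the bound $N=\max\{R(G),d\}$ explicit. Your additional observation that padding by $1$ preserves membership in $SU$ is exactly what is needed to make that step rigorous.
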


\begin{proof}
Suppose that $u_b\in U(M_{R(G)}(C))$ such that ${\bar u}=J_{c(G)}(z_b).$
Without loss of generality, one may assume that $\phi(u_b)\in U_0(M_{R(G)}(B)),$ since $\phi_{*1}(z_b)=0.$
By \ref{groupsu}, $u_b\in SU_N(C).$ It follows from \ref{sucum} that $\phi(u_b)\in SU_{R(G)}(B).$
Thus, by \ref{SUCU},
$$\phi(u_b)\in SU_{R(G)}(B)\cap U_0(M_{R(G)}(B))\subset CU(M_{R(G)}(B)).$$
It follows that $\phi^{\ddag}( J_{c(G)}(z_b))={\bar 1}.$

\end{proof}

\begin{df}
{\rm
Let $A$ be a unital \CA\, and let $u\in U_0(A).$
Denote by ${\rm cel}(u)$ the infimum of the length of the paths
of unitaries of $U_0(A)$ which connects $u$ with $1_A.$
}

\end{df}

\begin{df}\label{KLtrip}

{\rm We  say $(\dt, {\cal G}, {\cal P})$ is a $KL$-triple, if, for any $\dt$-${\cal G}$-multiplicative \morp\, $L: A\to B$ (for any unital \CA\, $B$)
$[L]|_{\cal P}$ is well defined. Moreover, if $L_1$ and $L_2$ are two $\dt$-${\cal G}$-multiplicative \morp s $L_1, L_2: A\to B$
such that
\beq\label{DKLtrip-3}
\|L_1(g)-L_2(g)\|<\dt\tforal g\in {\cal G},
\eneq
$$
{\rm then}\,\,\,[L_1]|_{\cal P}=[L_2]|_{\cal P}.
$$

If $K_i(C)$ is finitely generated ($i=0,1$) and ${\cal P}$ is large enough,
then $[L]|_{\cal P}$ defines an element in $KK(C, A)$ (see 2.4 of \cite{Lnmem}).  In such cases, we will write
$[L]$ instead of $[L]|_{\cal P},$ and  $(\dt, {\cal G}, {\cal P})$ is called  a $KK$-triple and $(\dt, {\cal G})$ a $KK$-pair.

Now we also assume that $A$ is amenable (or $B$ is amenable). Let $u\in U(B)$ be such that
$\|[L(g), \,u]\|<\dt_0$ for all $g\in {\cal G}_0$ for some finite subset ${\cal G}_0\subset A$ and for some $\dt_0>0. $ Then, we may assume that there exists  a \morp\, $\Psi: A\otimes C(\T)\to B$ such that
$$
\|L(g)-\Psi(g\otimes 1)\|<\dt\rforal g\in {\cal G}\andeqn \|\Psi(1\otimes z)-u\|<\dt
$$
(see 2.8 of \cite{Lnmem}).
Thus, we may assume that
${\rm Bott}(L, \,u)|_{\cal P}$ is well defined (see 2.10 in \cite{Lnmem}). In what follows, when we say $(\dt, {\cal G}, {\cal P})$ is a $KL$-triple, we further assume that ${\rm Bott}(L, u)|_{\cal P}$ is well defined, provided that $L$ is $\dt$-${\cal G}$-multiplicative and $\|[L(g),\, u]\|<\dt$ for all $g\in {\cal G}.$  In the case that $K_i(A)$ is finitely generated ($i=0,1$),
we may even assume that ${\rm Bott}(L, \, u)$ is well defined.
We also refer to 2.10 and 2.11 of \cite{Lnmem} for ${\rm bott}_0(L, u)$ and ${\rm bott}_1(L,u).$
If $u$ and $v$ are unitary and $\|[u,\, v]\|<\dt,$ we use  ${\rm bott}_1(u,v)$ as in 2.10 and 2.11 of \cite{Lnmem}.
Let $p$ is a projection such that $\|[p,\,v]\|<\dt.$ We may also write ${\rm bott}_0(p, v)$ for the element in
$K_1(B)$  represented by a unitary which is close to $(1-p) +pvp.$
}
\end{df}

\begin{df}\label{uquadruple}
{\rm
If $u$ is a unitary, we write
$\langle L(u)\rangle =L(u)(L(u)^*L(u))^{-1/2}$ when $\|L(u^*)L(u)-1\|<1$ and $\|L(u)L(u^*)-1\|<1.$  In what follows we will always assume
that $\|L(u^*)L(u)-1\|<1$ and $\|L(u)L(u^*)-1\|<1,$ when we
write $\langle L(u) \rangle.$

Let $B$ be another unital \CA\, and let $\phi: A\to B$ be a unital \hm.
Then $\langle \phi\circ L(u)\rangle=\phi(\langle L(u)\rangle).$
Let $u\in CU(A).$ Then, for any $\ep>0,$
if $\dt$ is sufficiently small and ${\cal G}$ is sufficiently large (depending on $u$) and $L$ is $\dt$-${\cal G}$-multiplicative,
then
$$
{\rm dist}(\langle L(u)\rangle, CU(B))<\ep.
$$
Let $\dt>0,$ ${\cal G}\subset A$ be a finite subset, ${\cal W}\subset U(A)$ be a finite subset and $\ep>0.$ We say $(\dt, {\cal G}, {\cal W}, \ep)$ is a ${\cal U}$-quadruple when the following hold:
 if for any $\dt$-${\cal G}$-multiplicative \morp\, $L: A\to B,$ $\langle L(y)\rangle$ is well defined, then
$$
\|\langle L(u)\rangle -L(u)\|<\ep/2\andeqn \|\langle L(u)\rangle -
\langle L(v) \rangle\|<\ep/2,
$$
if $u,\,v\in {\cal U}$ and $\|u-v\|<\dt.$  We also require
that, if $u\in CU(A)\cap {\cal U},$ then
$$
\|\langle L(u)\rangle-c\|<\ep/2
$$
for some $c\in CU(B).$
We make one additional requirement. Let $G_{\cal U}$ be the subgroup of $U(A)/CU(A)$ generated by
$\{{\bar u}: u\in {\cal U}\}.$    There exists a \hm\, $\lambda: U_{\cal U}\to U(B)$ such that
$$
{\rm dist}(\overline{\langle L(u)\rangle}, \lambda(u))<\ep\tforal u\in {\cal U}
$$
(see  Appendix \ref{LIFTCU} for a proof that such $\lambda$ exists).  We may denote $L^{\ddag}$ for
a fixed \hm\, $\lambda.$ Note that, when $\ep<1,$  $[\langle L(u)\rangle]=\Pi_c(L^{\ddag}({\bar u}))$ in $K_1(B),$
where $\Pi_c: U(B)/CU(B)\to K_1(B)$ is the induced \hm.
%Furthermore, we may also
}
\end{df}

\begin{NN}\label{NBott}
{\rm
Let $A$ and $B$ be two unital \CA. Suppose that $A$ is a separable amenable \CA.
%Fix an $\ep>0.$
%For any $\ep>0$ and let ${\cal F}\subset A$ be a finite subset, there exists
 %Let $(\$\dt>0$ and a finite subset ${\cal G}\subset A$ (see 2.8 of \cite{Lnmem}) satisfying the following:
%If $u\in B$ is a unitary and
%$$
%\|[\phi(g),\, u]\|<\dt\rforal g\in {\cal G},
%$$
%then there exists a unital $\ep$-${\cal F}$-multiplicative \morp\, $L: A\otimes C(\T)\to B$ such that
%$$
%\|L(g\otimes 1_{C(\T)})-\phi(g)\|<\ep\rforal g\in {\cal G}\andeqn \|L(1\otimes z)-u\|<\ep,
%$$
%where $z\in C(\T)$ is the identity function on the circle.
Let ${\cal Q}\subset K_0(A)$ be a finite subset. Then $\boldsymbol{\bt}({\cal Q})\subset K_1(A\otimes C(\T)).$
Let ${\cal W}$ be a finite subset of $U(M_n(A\otimes C(\T)))$ such that its image in $K_1(A\otimes C(\T))$
contains $\boldsymbol{\bt}({\cal Q}).$
Denote by $G({\cal Q})$ the subgroup generated by ${\cal Q}.$
Fix $\ep>0.$ Let $(\dt, {\cal G}, {\cal W}, \ep)$ be a ${\cal U}$-quadruple.
 Let $J': \boldsymbol{\bt}(G({\cal Q}))\to U(M_N(A\otimes C(\T)))/CU(M_N(A\otimes C(\T)))$
be defined in \ref{DUb}.  Let $L: A\to B$ be a $\dt$-${\cal G}$-multiplicative \morp\, and let $u\in U(B)$ such that
$\|[L(g),\, u]\|<\dt$ for all $g\in {\cal G}.$
With sufficiently small $\dt$ and large ${\cal G},$ let $\Psi$ be given  in \ref{KLtrip},
we may assume that $\Psi^{\ddag}$ is  defined on $J'(\boldsymbol{\bt}(G({\cal Q}))).$
We denote this map by
\beq\label{Nbott-1}
{\rm Bu}(\phi, u)(x)=\Psi^{\ddag}(J'(x)))\rforal x\in {\cal Q}.
\eneq
We may
assume that $[p_1], [p_2],...,[p_k]$ generates $G({\cal  Q}),$ where $p_1, p_2,...,p_k$ are assumed to be projections in
$M_N(A).$ Let
$z_j=(1-p_j)+p_j(1\otimes z)^{(N)}$ (see \ref{df(k)}), $j=1,2,...,k.$ Then $z_j$ is a unitary in $M_N(A\otimes C(\T)).$
Suppose that $A=C(X)$ for some compact metric space.
In the above, we let $J'=J_{c(\boldsymbol{\bt}(G({\cal Q}))}=J_c$ and
$N=R(\boldsymbol{\bt}(G({\cal Q}))).$ Note $z_j\not\in SU_N(C(X)\otimes C(\T)).$
If $[p_i]-[p_j]\in {\rm ker}\rho_{C(X)},$ then for each $x\in X,$ there is a unitary $w\in M_N$ such that
$w^*p_i(x)w=p_j.$ Then ${\rm det}(z_iz_j^*(x))=1.$ In other words, $z_iz_j^*\in SU_N(C(X)\otimes C(\T)).$
Note,  by the end of \ref{DUb},  $J_c([p_j])={\bar z_j},$ $j=1,2,...,k.$
If $B$ has stable rank $d,$ we may assume that, $R(\boldsymbol{\bt}(G({\cal Q})))\ge d+1.$
In what follows, when we write ${\rm Bu}(\phi, u)(x),$ or ${\rm Bu}(\phi, u)|_{{\cal Q}},$ we mean that $\dt$ is sufficiently small and ${\cal  G}$ is sufficiently large so that $L^{\ddag}$ is well defined on $J_c(\boldsymbol{\bt}(x)$, or
on $J_c(\boldsymbol{\bt}({\cal Q})).$ Moreover, we note that $[L]|_{\cal Q}
=\Pi'\circ L^{\ddag}|_{\boldsymbol{\bt}({\cal Q})}.$ Furthermore, by choosing even smaller $\dt,$ we may also assume that when
$$
\|[\phi(g),\, u]\|<\dt\andeqn \|[\phi(g),\, v]\|<\dt\tforal g\in {\cal G}
$$
$$
{\rm dist}({\rm Bu}(\phi,uv)(x), {\rm Bu}(\phi,\, u)(g)+{\rm Bu}(\phi,\, v)(x))<\ep
\rforal x\in {\cal Q}.
$$

}
\end{NN}

\begin{lem}\label{Skeleton}
Let $X$ be a connected finite CW complex of dimension $d>0.$  Then
$K_*(C(X^{(d-1)}))=G_{0,*}\oplus K_*(C(X))/F_{d}K_*(C(X)),$ where
$G_{0,*}$ is a finitely generated free group.
Consequently,
$$
K_1(C(X^{(1)}))=S\oplus K_1(C(X))/F_3K_1(C(X)),
$$
where $S$ is a finitely generated free group.
\end{lem}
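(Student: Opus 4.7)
The plan is to handle the two assertions separately.

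For the first statement, I would consider the short exact sequence
$$0 \to I_d \to C(X) \to C(X^{(d-1)}) \to 0,$$
where $I_d = \ker(s_{d-1}) \cong C_0(X \setminus X^{(d-1)})$. Since $X \setminus X^{(d-1)}$ is a disjoint union of open $d$-cells, the unitization $\widetilde{I_d}$ is isomorphic to $C(\bigvee_{k} S^d)$, so $K_*(I_d)$ is finitely generated free abelian and concentrated in degree $d \bmod 2$. The six-term exact sequence then forces the cokernel of $(s_{d-1})_{*i}$ to embed into the free abelian group $K_{i-1}(I_d)$; call this cokernel $G_{0,i}$, which is finitely generated free. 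Since $G_{0,i}$ is free, the extension splits, giving $K_i(C(X^{(d-1)})) = G_{0,i} \oplus \Ima((s_{d-1})_{*i})$. Theorem 4.1 of \cite{EL} identifies $\ker((s_{d-1})_{*i})$ with $F_d K_i(C(X))$, so $\Ima((s_{d-1})_{*i}) = K_i(C(X))/F_d K_i(C(X))$, completing the first decomposition.

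For the consequence, I would apply the first statement to $X^{(2)}$ (dimension $2$), obtaining $K_1(C(X^{(1)})) = G' \oplus K_1(C(X^{(2)}))/F_2 K_1(C(X^{(2)}))$. The six-term sequence with kernel $I_2$ (whose unitization is $C(\bigvee S^2)$, so $K_1(I_2)=0$) shows $(s_1^{X^{(2)}})_{*1}$ is injective, hence $F_2 K_1(C(X^{(2)})) = 0$; thus $K_1(C(X^{(1)})) = G' \oplus K_1(C(X^{(2)}))$. Next, by Exel--Loring, the kernel of $(s_2)_{*1}\colon K_1(C(X)) \to K_1(C(X^{(2)}))$ equals $F_3 K_1(C(X))$, so $K_1(C(X))/F_3 K_1(C(X))$ embeds into $K_1(C(X^{(2)}))$. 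Since $X^{(2)}$ is two-dimensional, $K_1(C(X^{(2)}))$ is finitely generated free abelian, which forces $K_1(C(X))/F_3 K_1(C(X))$ to be finitely generated free as well. As abstract abelian groups, $K_1(C(X^{(2)})) \cong \Z^{r-s} \oplus K_1(C(X))/F_3 K_1(C(X))$, where $r$ and $s$ are the respective ranks; combining with the earlier decomposition yields the claim with $S = G' \oplus \Z^{r-s}$ finitely generated free.

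The hard part will be recognizing that the direct sum in the second assertion is to be read as an abstract isomorphism of abelian groups. This suffices because both sides are finitely generated free abelian with matching ranks. A natural splitting compatible with the embedding $K_1(C(X))/F_3 K_1(C(X)) \hookrightarrow K_1(C(X^{(1)}))$ would further require the cokernel of $(s_2)_{*1}$ to be torsion-free, which is true but is not needed for the statement as written.
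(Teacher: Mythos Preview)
Your proof is correct. The first part matches the paper's argument exactly: both use the six-term sequence for $0\to I_d\to C(X)\to C(X^{(d-1)})\to 0$, the freeness of $K_*(I_d)$, and Theorem 4.1 of \cite{EL} to identify $\ker(s_{d-1})_{*i}$ with $F_dK_i(C(X))$.

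For the second part the approaches diverge. The paper proceeds by induction on $d$: it applies the first part to get $K_1(C(X^{(d)}))=S_0\oplus (s_d)_{*1}(K_1(C(X)))$, then applies the inductive hypothesis to $X^{(d)}$ to decompose $K_1(C(X^{(1)}))$, and pushes the pieces through $(s_{d,1})_{*1}$ using that $(s_{2,1})_{*1}$ is injective on $K_1$. Your route is more direct: you apply the first part once to $X^{(2)}$, observe that $F_2K_1(C(X^{(2)}))=0$ because $K_1$ of the kernel ideal vanishes, and then finish with a rank count inside the free abelian group $K_1(C(X^{(2)}))$. Your argument is shorter and avoids the inductive bookkeeping; the paper's version has the advantage of tracking the summand $K_1(C(X))/F_3K_1(C(X))$ as the actual image $(s_1)_{*1}(K_1(C(X)))$ inside $K_1(C(X^{(1)}))$, which is what gets used later in Lemma~\ref{homhom}. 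Your closing remark correctly flags this distinction; the parenthetical claim that the cokernel of $(s_2)_{*1}$ is torsion-free is plausible but not entirely obvious for $\dim X>3$, though as you note it is not needed for the lemma as stated.
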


\begin{proof}
Let $I^{(d)}=\{f\in C(X): f|_{X^{(d-1)}}=0\}.$ Then $I^{(d)}=C_0(Y),$ where
$Y=S^d\vee S^d\vee \cdots \vee S^d.$
In particular, $K_*(I^{(d)})$ is free. Therefore, by applying 4.1 of \cite{EL},
$$
K_i(C(X^{(d-1)}))\cong G_{0,i}\oplus K_i(C(X))/F_dK_i(C(X)),
$$
where $G_{0,i}$ is isomorphic to a subgroup of $K_{i-1}(I^{(d)}),$ $i=0,1.$

For the last part of the lemma, we use the induction on $d.$
It obvious holds for $d=1.$
Assume  the last part holds for ${\rm dim} X=d\ge 1.$
Suppose that ${\rm dim}X=d+1.$
Let $s_m: C(X)\to C(X^{(m)}), s_1: C(X)\to C(X^{(1)}),\,s_{m,1}: C(X^{(m)})\to C(X^{(1)})$
be defined by the restrictions ($0<m\le d$). We have that $s_1=s_{m,1}\circ s_m.$
Note that if $Y$ is a finite CW complex with dimension 2, then
$
K_1({\rm ker} s^Y_1)=\{0\},
$
where $s^Y_1: C(Y)\to C(Y^{(1)}) $ is the surjective map induced by the restriction, where $Y^{(1)}$ is the
$1$-skeleton of $Y.$ It follows that the induced map $(s^Y_1)_{*1}$ from $K_1(C(Y))$ into $K_1(Y^{(1)})$
is injective. This fact will be used in the following computation.

We have
\beq\label{ske-1}
K_1(C(X^{(d)}))=S_0\oplus K_1(C(X))/F_dK_1(C(X))=S_0\oplus (s_d)_{*1}(K_1(C(X)))
\eneq
and, by 4.1 of \cite{EL},
\beq\label{ske-2}
K_1(C(X^{(1)})&=&S_1\oplus K_1(C(X^{(d)})/F_3K_1(C(X^{(d)}))\\
&=&S_1\oplus (s_{d,1})_{*1}(K_1(C(X^{(d)}))\\
&=&S_1\oplus (s_{d,1})_{*1}(S_0\oplus (s_d)_{*1}(K_1(C(X))))\\
&=&S_1\oplus (s_{d,1})_{*1}(S_0)\oplus (s_{d,1})_{*1}((s_d)_{*1}(K_1(C(X))))\\
&=&S_1\oplus (s_{d,1})_{*1}(S_0)\oplus (s_1)_{*1}(K_1(C(X)))\\
&=&S_1\oplus  (s_{d,1})_{*1}(S_0)\oplus (s_{2,1})_{*1}\circ (s_2)_{*1}(K_1(C(X)))\\
&=&S_1\oplus  (s_{d,1})_{*1}(S_0)\oplus (s_2)_{*1}(K_1(C(X))/F_3K_1(C(X)))\\
&=&S_1\oplus  (s_{d,1})_{*1}(S_0)\oplus K_1(C(X))/F_3K_1(C(X))).
\eneq
Put $S=S_1\oplus (s_{d,1})_{*1}(S_0).$ Note $K_1(C(X^{(1)})$ is free. So $S$ must be a finitely generated
free group.
This ends the induction.

\end{proof}

%\begin{df}\label{KL}
%{\rm
%Let $A$ and $B$ be two unital \CA s. By $KK(A,B)^{++},$ we mean
%those elements in $KK(A,B)$ such that the induced
%maps $\phi\in Hom(K_0(A), K_0(B))$ maps $K_0(A)_+\setminus \{0\}$
%into $K_0(B)_+\setminus \{0\}.$  We use $KK_e(A,B)^{++}$ for those elements such that, in addition, $\phi([1_A])=[1_B].$
%The set of those elements in $KL(A,B)$ which are images of elements
%in $KK(A,B)^{++}$ will be denoted by
%$KL(A,B)^{++},$ and $KL_e(A,B)^{++}$ are those elements in $KL(A,B)^{++}$
%which are images of $KK_e(A,B)^{++}.$
%Let $\kappa\in KL_e(A,B)^{++}$ and let $\gamma: T(B)\to T(A)$ be an affine continuous map. We say $\kappa$ and $\gamma$ are compatible if
%$$
%r_B(\tau)(\kappa([p]))=\gamma(\tau)(p)
%$$
%for all projections $p\in M_{\infty}(A)$ and $\tau\in T(B).$ In particular,
%the affine continuous map $\gamma_*$ from
%$\Aff(T(A))$ to $\Aff(T(B))$ induced by $\gamma$  maps
%$\rho_A(K_0(A))$ to $\rho_B(K_0(B)).$ Furthermore,
%$\gamma_*$ induces a \hm\,
%$$
%\overline{\gamma_*}: \Aff(T(A))/\overline{\rho_A(K_0(A))}\to
%\Aff(T(B))/\overline{\rho_B(K_0(B))}.
%$$
%
%Let $\lambda: U(A)/CU(A)\to U(B)/CU(B)$ be a continuous \hm. We say
%$\kappa,$ $\gamma$ and $\lambda$ are compatible, if $\kappa$ and
%$\gamma$ are compatible,
%$$
%\lambda|_{\Aff(T(A))/\overline{\rho_A(K_0(A))}}=\overline{\gamma_*}
%$$
%and $\lambda$ induces a \hm\, from $K_1(A)$ into $K_1(B)$ which
%coincides with $\kappa|_{K_1(A)}.$
%}

%\end{df}

\section{Definition of ${\cal C}_g$}

\begin{df}\label{Cuntz}
{\rm Let $A$ be a \CA\, and let $a, b\in A_+.$ Recall that we write
$a\lesssim b$ if there exists a sequence $x_n\in A_+$ such that
$$
\lim_{n\to\infty}\|x_n^*bx_n-a\|=0.
$$
If $a=p$ is a projection  and $a\lesssim b,$ there is a projection $q\in Her(b)$ and a partial isometry
$v \in A$ such that $vv^*=p$ and $v^*v=q.$
}
\end{df}

\begin{df}\label{fep}
{\rm
Let $0<d<1.$ Define $f_d\in C_0((0,\infty])$ by
$f_d(t)=0$ if $t\in [0,d/2],$ $f_d(t)=1$ if $t\in [d, \infty),$ and $f_d$ is linear in $(d/2, d).$
}
\end{df}

%\begin{len}\label{LIFTCU}
%Let $A$ be a unital separable \CA,\,let ${\cal U}\subset U(A)$ be a finite subset and let $r\ge 1$ be an integer.
%Then, for any $\ep>0,$ there exists $\dt>0$ and a finite subset ${\cal G}\subset A$ satisfying the following: if $B$ is a unital \CA\, of stable rank one and with  exponential rank no more than $r,$ and if $L: A\to B$ is a $\dt$-${\cal G}$-multiplicative linear map, there
%exists a \hm\, $\lambda : G_u\to U(B)/CU(B)$ such that
%\beq\label{LIFTCU-1}
%{\rm dist}(\overline{\langle L(u)\rangle}, \lambda(\bar{u}))<\ep
%\eneq

\begin{df}\label{IK}
{\rm
Denote by ${\cal I}^{(0)}$ the class of finite dimensional \CA s and denote by ${\cal I}^{(1,0)}$ the class of \CA s
with the form $C([0,1])\otimes F,$ where $F\in {\cal I}^{(0)}.$
For an integer $k\ge 1,$ denote by ${\cal I}^{(k)}$ the class of \SCA\, with the form
$PM_r(C(X))P,$ where $r\ge 1$ is an integer, $X$ is a finite CW complex of covering dimension at most $k$ and $P\in M_r(C(X))$ is a projection.
}
\end{df}

\begin{df}\label{Ik}
{\rm
Denote by ${\cal I}_k$ the class of those \CA s which are quotients of \CA s in ${\cal I}^{(k)}.$
Let $C\in {\cal I}_k.$ Then $C=PM_r(C(X))P,$ where $X$ is a compact subset of a finite CW complex, $r\ge 1$ and $P\in M_r(C(X))$ is a projection.
Furthermore, there exists a finite CW complex $Y$ of dimension $k$ such that $X$ is a compact subset of $Y$ and there is a projection $Q\in M_r(C(Y))$ such that
$\pi(Q)=P,$ where $\pi: M_r(C(Y))\to M_r(C(X))$ is the quotient map defined by
$\pi(f)=f|_X.$

}
\end{df}

\begin{df}\label{Dlocal}
{\rm Let $A$ be a unital \CA. We say that $A$ is a locally AH-algebra, if for any
finite subset ${\cal F}\subset A$ and any $\ep>0,$ there exists
a \SCA\, $C\in {\cal I}_k$ (for some $k\ge 0$) such that
$$
{\rm dist}(a, C)<\ep\rforal a\in {\cal F}.
$$

$A$ is said to be locally AH-algebra with no dimension growth, if there exists an integer $d\ge 0,$
such that, for any finite subset ${\cal F}\subset A$ and any $\ep>0,$
there exists a \SCA\, $C\subset A$  with the form
$C=PM_r(C(X))P\in {\cal I}_d$ such that
\beq
{\rm dist}(a, C)<\ep\rforal a\in {\cal F}.
\eneq
%$A$ is said to be a locally AH-algebra with slow dimension growth, if
%for any finite subset ${\cal F}\subset A,$ any $\ep>0$ and any $\eta>0,$
%there exists a \SCA\, $C\subset A$  with the form
%$C=PM_r(C(X))P\in {\cal I}_d$ such that
%\beq
%{\rm dist}(a, C)<\ep\rforal a\in {\cal F}\andeqn\\\label{slow}
%{d+1\over{{\rm rank}(P(x))}}<\eta\rforal x\in X.
%\eneq
}
 \end{df}

\begin{df}\label{Drr}
{\rm
Let $g: \N\to \N$ be a nondecreasing map.
Let $A$ be a unital simple \CA. We say that $A$ is in ${\cal C}_g$  if the following holds:
For any finite subset ${\cal F}\subset A,$ any $\ep>0,$ any $\eta>0$ and  any
$a\in A_+\setminus \{0\},$
there is a projection $p\in A$ and
a \SCA\, $C=PM_r(C(X))P\in {\cal I}_d$ with $1_C=p$ such that
\beq\label{Drr-1}
\|pa-ap\|&<&\ep\rforal a\in {\cal F},\\\label{Drr-2}
{\rm dist}(pap, C)&<&\ep\rforal a\in {\cal F},\\\label{Drr-3}
{d+1\over{{\rm rank}(P(x))}}&<&{\eta\over{g(d)+1}}\rforal x\in X\andeqn\\\label{Drr-4}
1-p &\lesssim & a.
\eneq

If $g(d)=1$ for all $d\in \N,$ we say $A\in {\cal C}_1.$

Let ${\cal B}$ be a class of unital \CA s.
Let $A$ be a unital simple \CA. We say that $A$ is  tracially  in ${\cal B}$  if the following holds:
For any finite subset ${\cal F}\subset A,$ any $\ep>0$ and  any
$a\in A_+\setminus \{0\},$
there is a projection $p\in A$ and
a \SCA\, $ B\in {\cal B}$ with $1_B=p$ such that
\beq\label{Trinb-1}
\|pa-ap\|&<&\ep\rforal a\in {\cal F},\\\label{Trinb-2}
{\rm dist}(pap, B)&<&\ep\rforal a\in {\cal F}\andeqn\\\label{Trunb-3}
1-p&\lesssim & a.
\eneq

We write $A\in {\cal C}_{g,\infty},$ if $A$ is tracially ${\cal I}_d$ for some integer $d\ge 0.$

Using the fact that $A$ is unital simple, it  is easy to see  that if $A\in {\cal C}_{g, \infty}$ then $A\in {\cal C}_g.$ Moreover, ${\cal C}_g\subset
{\cal C}_1$ for all $g.$

}
\end{df}

%\begin{df}\label{DC11}
%Let $A$ be a unital simple \CA\, in $C_1.$ If, (\ref{Drr-3}) can be replaced by
%\beq\label{Drr-3+}
%{d+1\over{{\rm rank }P(x)}}<{\eta d\over{K(d)}}\tforal x\in X,
%\eneq
%then we say $A$ is in ${\cal C}_{1,1}.$
%
%A locally AH-algebra is said to have extremely slow dimension growth, if  in the definition
%of \ref{Dlocal}, (\ref{slow}) is replaced by
%\beq\label{extremslow}
%{d+1\over{{\rm rank}\, P(x)}}<{\eta d\over{K(d)}}\tforal x\in X.
%\eneq
%\end{df}
%\begin{rem}\label{Extremslow}
%It is likely that every unital separable simple \CA\,  in ${\cal C}_1$ is actually in ${\cal C}_{1,1}.$
%Please see  \ref{FT1} and Appendix \ref{Zstable}.
%  In fact, from  the work of  N. C. Phillips (\cite{Ph1}, \cite{Ph2}, \cite{Ph3}, \cite{Ph4} and \cite{Ph5}),
%the ratio $K(d)/d$ (described in \ref{DKd}) is likely bounded.  We will not make an effort
%to prove this here. It only causes a minor technical difficulty at the end.
%Our main result shows that a unital separable simple  locally AH \CA\,  with slow dimension growth
%is actually an AH-algebra with no dimension growth.
%\end{rem}

\section{\CA s in ${\cal C}_g$ }

\begin{prop}\label{herd}
Every unital hereditary \SCA\, of a unital simple \CA\, in ${\cal C}_g$ {\rm (}or in ${\cal C}_{g,\infty}${\rm )}
is in ${\cal C}_g${\rm (}or is in ${\cal C}_{g,\infty}${\rm )}.
\end{prop}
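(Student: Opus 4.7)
The plan is to show that if $A \in {\cal C}_g$ is unital simple and $B = qAq$ for a projection $q \in A$, then $B \in {\cal C}_g$. Given a finite ${\cal F} \subset B$, reals $\ep, \eta > 0$, and $a \in B_+ \setminus \{0\}$, I would first invoke the hypothesis $A \in {\cal C}_g$ applied to the enlarged finite set ${\cal F} \cup \{q\}$, with tolerance $\ep' \ll \ep$, rank parameter $\eta' = \kappa \eta$ for a constant $\kappa = \kappa(q) \in (0,1)$ depending only on $q$ (to be determined below), and a positive element $a' \in A_+ \setminus \{0\}$ with $a' \le a$ chosen (via functional calculus on $a$, using simplicity of $A$) to be Cuntz-small enough that $1_A - p'$ will be negligible compared to $q$. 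This produces a projection $p' \in A$ and a \SCA\, $C' = P' M_r(C(X)) P' \in {\cal I}_d$ with $1_{C'} = p'$ such that $\|[p',b]\| < \ep'$ and ${\rm dist}(p' b p', C') < \ep'$ for $b \in {\cal F} \cup \{q\}$, ${\rm rank}\, P'(x) > (d+1)(g(d)+1)/\eta'$ for $x \in X$, and $1_A - p' \lesssim a'$.

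Next, using $\|[p',q]\| < \ep'$, I would transport these data into $B$. Standard functional calculus produces a projection $p \in qAq = B$ within $O(\ep')$ of $qp'q$ and a projection $p_q \in C'$ within $O(\ep')$ of $p'qp'$. Since both $qp'q$ and $p'qp'$ lie within $O(\ep')$ of the common near-projection $qp' \approx p'q$, one gets $\|p - p_q\| = O(\ep')$. For $\ep'$ small there is a unitary $u \in A$ with $u p_q u^* = p$ and $\|u - 1\| = O(\ep')$, and I set $C := u(p_q C' p_q) u^*$. Since $p \le q$, one has $C = pCp \subset pAp \subset B$, and $p_q C' p_q = p_q M_r(C(X)) p_q \in {\cal I}_d$, so $C \in {\cal I}_d$ with the same base $X$ and the same dimension $d$.

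I would then verify the four conditions of Definition \ref{Drr} for $B$. The almost-commutation $\|pb - bp\| < \ep$ for $b \in {\cal F}$ follows from $p \approx qp'q$, the identity $qbq = b$ for $b \in B$, and $\|[p', b]\| < \ep'$. The approximation ${\rm dist}(pbp, C) < \ep$ follows by transporting ${\rm dist}(p'bp', C') < \ep'$ through the unitary conjugation, using $pbp \approx p'bp'$. The Cuntz subequivalence $q - p \lesssim a$ in $B$ follows from $q - p \approx q(1 - p')q \lesssim 1_A - p' \lesssim a' \le a$, together with the standard fact that Cuntz comparison of positive elements of $B$ is the same whether computed in $A$ or in the hereditary subalgebra $B$.

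The main obstacle is the pointwise rank condition $(d+1)/{\rm rank}\, p_q(x) < \eta/(g(d)+1)$, which demands a uniform lower bound ${\rm rank}\, p_q(x) \ge \kappa \cdot {\rm rank}\, P'(x)$ for a constant $\kappa = \kappa(q) > 0$ independent of the remaining parameters. Since $P' - p_q \approx p'(1-q)p'$ inside $C'$, the pointwise rank loss ${\rm rank}(P' - p_q)(x)$ is controlled by the ``fiber size'' of $p'(1-q)p' \le 1_A - q$. Using simplicity of $A$ and fullness of $q$ (so that $1_A \lesssim N \cdot q$ for some positive integer $N$), combined with the choice of $a'$ small enough to make the replacement of $p'(1-q)p'$ by $1-q$ negligible, one should obtain a uniform lower bound of the form $\kappa = 1/N$. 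Setting $\eta' = \kappa \eta$ at the outset then yields the rank inequality. The case ${\cal C}_{g,\infty}$ is analogous but simpler, as no pointwise rank condition is imposed.
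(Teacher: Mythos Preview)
Your overall architecture matches the paper's: apply the ${\cal C}_g$ hypothesis to $A$ with an enlarged finite set, cut the resulting $C'$ down by a projection $p_q\approx p'qp'$, conjugate into $B=qAq$, and verify the four conditions. The almost-commutation, approximation, and Cuntz-subequivalence steps are fine and essentially identical to the paper's. Your remark that the ${\cal C}_{g,\infty}$ case needs no rank estimate is also correct.

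The gap is in the rank estimate, which you rightly flag as the main obstacle but do not actually prove. Your enlarged finite set is ${\cal F}\cup\{q\}$; this is not enough. The Cuntz relation $1_A\lesssim N\cdot q$ is a statement in $A$, and neither it nor the inequality $p'(1-q)p'\lesssim 1-q$ yields any pointwise rank information in the fibers $M_{{\rm rank}\,P'(x)}$ of $C'$, because $1-q$ does not lie in $C'$. Shrinking $a'$ does not help either: it controls $1-p'$, not the fiberwise size of $p_q$. What is needed is to \emph{transport the fullness witnesses into $C'$}. The paper does exactly this: write $\sum_{i=1}^m x_i^* q x_i = 1_A$, include $x_1,\dots,x_m,x_1^*,\dots,x_m^*$ in the finite set handed to the ${\cal C}_g$ hypothesis, and then the approximation inside $C'$ produces $y_1,\dots,y_m\in C'$ with $\|\sum_i y_i^* p_q y_i - P'\|$ small. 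Evaluating at $x\in X$, the range of $P'(x)$ is (nearly) contained in the sum of the $m$ ranges of $y_i(x)^*p_q(x)y_i(x)$, each of dimension at most ${\rm rank}\,p_q(x)$; hence ${\rm rank}\,p_q(x)\ge {\rm rank}\,P'(x)/m$. With $\eta'=\eta/cm$ for a suitable constant $c$ (the paper uses $c=16$), the rank condition for $B$ follows. Once you add the $x_i$'s to the finite set and drop the unnecessary shrinking of $a$, your argument coincides with the paper's.
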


\begin{proof}
Let $A$ be a unital simple \CA\, in ${\cal C}_g$   and let $e\in A$ be a non-zero projection. Let $B=eAe$ be a unital hereditary \SCA\, of $A.$
To prove that $B$ is in  ${\cal C}_g,$  let ${\cal F}\subset B$ be a finite subset, let $1>\ep>0,$ $b\in B_+\setminus \{0\}$ and let $\eta>0.$

Since $A$ is simple, there are $x_1, x_2,...,x_m\in A$ such that
\beq\label{herd-1}
\sum_{i=1}^m x_i^*ex_i=1_A.
\eneq
Let ${\cal F}_1=\{e\}\cup {\cal F}\cup\{x_1, x_2,...,x_m, x_1^*, x_2^*,...,x_m^*\}.$
Put $M=\max\{\|a\|: a\in {\cal F}_1\}.$ Since
$A\in {\cal C}_g,$  there is a projection  $p\in A,$ a \SCA\,
$C\subset A$ with $1_C=p,$ $C=PM_r(C(X))P,$ where $X$ is a compact subset of a  finite CW complex with dimension $d,$ $r\ge 1$ is an integer and $P\in M_r(C(X))$ is a projection, such that
\beq\label{herd-2}
{d+1\over{{\rm rank}P(\xi)}}&<&(\eta /16(m+1))(1/(g(d)+1))\tforal \xi\in X,\\
\label{herd-3}
\|px-xp\|&<& {\ep\over{256(m+1)M}}\tforal x\in {\cal F},\\
{\rm dist}(pxp, C)&<& {\ep\over{256(m+1)M}}\tforal x\in {\cal F}\andeqn\\
1-p &\lesssim & b.
\eneq

There is a projection $q_1\in C,$ a projection $q_2\in B$
and $y_1,y_2,...,y_m\in C$ such that
\beq\label{herd-4}
&&\|q_1-pep\|<{\ep\over{128(m+1)M}},\\
&&\|q_2-epe\|< {\ep\over{128(m+1)M}}\andeqn
\sum_{i=1}^m y_i^*q_1y_i = p.
\eneq
It follows that $q_1(x)$ has rank at least ${\rm rank}\,P(x)/m$ for each $x\in X.$ One also has
\beq\label{herd-4-}
\|q_1-q_2\|<{\ep\over{64(m+1)M}}.
\eneq
There is a unitary $w\in A$ such that
\beq\label{herd-4++}
\|w-1\|<{\ep\over{32(m+1)M}}\andeqn w^*q_1w=q_2.
\eneq
Define $C_1=w^*q_1Cq_1w.$ Then $C_1\cong q_1Cq_1\in {\cal I}_d.$ Moreover,
\beq\label{herd-5}
{d+1\over{{\rm rank}(w^*q_1w)(x)}}<\eta /(g(d)+1)\tforal x\in X.
\eneq
For any $a\in {\cal F},$
\beq\label{herd-5-1}
\|q_2a-aq_2\|&\le & 2\|q_1-q_2\|\|a\|+\|q_1eae-eaeq_1\|\\
&<&\ep.
\eneq
Moreover,
if $c\in C$ such that
\beq\label{herd-5+1}
\|pap-c\|<{\ep\over{256(m+1)M}},
\eneq
then
\beq\label{herd-5+2}
\|q_1aq_1-q_1cq_1\|&\le & \|q_1aq_1-pepapep\|+\|pepapep-q_1cq_1\|\\\label{herd-5+2+}
&<&{2\ep\over{256(m+1)}}+{2\ep\over{256(m+1)}}={\ep\over{64(m+1)}}.
\eneq
It follows from  (\ref{herd-4}), (\ref{herd-4-}), (\ref{herd-4++}) and ( \ref{herd-5+2+}) that
\beq\label{herd-5+3}
\|q_2aq_2-w^*q_1cq_1w\|&=&\|q_2aq_2-w^*q_2waw^*q_2w\|\\
&+&\|w^*q_2waw^*q_2w-w^*q_1cq_1w\|\\
&<&{4\ep\over{32(m+1)M}}+\|q_2waw^*q_2-q_1cq_1\|\\
&<& {\ep\over{8(m+1)}}+{2\ep\over{32(m+1)}}+\|q_2aq_2-q_1cq_1\|\\
&<& {6\ep\over{32(m+1)}}+{2\ep\over{64(m+1)}}+\|q_1aq_1-q_1cq_1\|\\
&<&{7\ep\over{32(m+1)}}+{\ep\over{64(m+1)}}={15\ep\over{64(m+1)}}.
\eneq
Therefore, for all $a\in {\cal F},$
\beq\label{herd-5+4}
{\rm dist}(q_2aq_2, C_1)<\ep.
\eneq
Note that
\beq\label{herd-6}
\|(e-q_2)-(e-q_2)(1-p)(e-q_2)\|&\le &
\|(e-q_2)-e(1-p)e\|\\
&\le & \|q_2-epe\|<{\ep\over{128(m+1)M}}.
\eneq
So, in particular,
$(e-q_2)(1-p)(e-q_2)$ is invertible in $(e-q_2)A(e-q_2).$ Therefore
$[e-q_2]=[(e-q_2)(1-p)(e-q_2)]$ in the Cuntz equivalence.
But
\beq\label{herd-7}
(e-q_2)(1-p)(e-q_2)&\lesssim & (1-p)(e-q_2)(1-p)\lesssim (1-p)\\
&\lesssim  & b.
\eneq
We conclude that, in $B,$
\beq\label{herd-8}
(e-q_2)\lesssim b.
\eneq
It follows from (\ref{herd-5}), (\ref{herd-5-1}), (\ref{herd-5+4}) and (\ref{herd-8}) that
$B$ is in ${\cal C}_g.$

Since $C_1\in {\cal I}_d,$ if we assume that
$A$ is in ${\cal C}_{g,\infty}.$  Then the above shows that
 $B\in {\cal C}_{g, \infty}.$

\end{proof}

\begin{prop}\label{SP}
A unital simple \CA\, $A$  which satisfies  condition {\rm (\ref{Drr-1})}, {\rm (\ref{Drr-2})} and {\rm (\ref{Drr-3})} in the definition of \ref{Drr}  has property (SP), i.e., every non-zero hereditary \SCA\, $B$ of $A$ contains a non-zero projection.
\end{prop}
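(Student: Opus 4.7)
The plan is to combine the approximation provided by conditions (1)--(3) with the simplicity of $A$ to identify a full positive element inside the subhomogeneous approximant $C \in \mathcal{I}_d$, extract a projection from $C$ via a vector-bundle argument powered by the rank bound of condition (3), and transfer this projection back into $B$ via Cuntz subequivalence.

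I would fix a non-zero $b \in B_+$ with $\|b\|=1$. By simplicity of $A$ there exist $y_1,\dots,y_m \in A$ with $\sum_{i=1}^m y_i^* b\, y_i = 1_A$; set $L = \max(1,\|y_i\|)$. Apply conditions (1), (2), (3) to $\mathcal{F} = \{b, y_1,\dots,y_m, y_1^*,\dots,y_m^*\}$ with $\epsilon < 1/(32mL^2)$ and $\eta$ so small that condition (3) forces $\operatorname{rank} P(x)/m > d = \dim X$ for every $x \in X$. This produces a non-zero projection $p \in A$ and $C = PM_r(C(X))P \in \mathcal{I}_d$ with $1_C = p$. Pick $z_i \in C$ within $\epsilon$ of $py_ip$ and $c \in C_+$ within $\epsilon$ of $pbp$. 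The identity $p = p(\sum_i y_i^* b y_i) p$ together with the near-commutations of (1) then forces
\[
\Bigl\|\sum_{i=1}^m z_i^* c\, z_i - 1_C\Bigr\| < 1/4 \quad\text{in } C,
\]
from which a pointwise trace/rank computation yields the uniform bounds $\operatorname{rank} c(x) \geq \operatorname{rank} P(x)/m > d$ and $\|c(x)\| \geq \delta_0 := 3/(4m(L+1)^2)$ at every $x \in X$. In particular $(c-\epsilon)_+$ is non-vanishing on $X$ since $\epsilon < \delta_0$.

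The main technical step, and the main obstacle, is to produce a non-zero projection $q \in C$ with $q \lesssim (c-\epsilon)_+$ in $C$. Since $\operatorname{rank} P(x) > d$ at every $x$, the top Chern class of the complex vector bundle $E_P$ over $X$ with fibre $\operatorname{range} P(x)$ lies in the vanishing cohomology group $H^{2\operatorname{rank} P}(X;\mathbb{Z}) = 0$, so $E_P$ admits a nowhere-vanishing section and hence a trivial line subbundle; this yields a rank-one projection $q \in C$. Cuntz subequivalence in the subhomogeneous algebra $C$ reduces to pointwise comparison of lower semicontinuous rank functions, and the bound $\operatorname{rank} q(x) = 1 \leq \operatorname{rank} (c-\epsilon)_+(x)$, valid for every $x$, delivers $q \lesssim (c-\epsilon)_+$ in $C$.

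Finally I would transfer back to $A$. R{\o}rdam's lemma applied to $\|c - pbp\| < \epsilon$ gives $(c-\epsilon)_+ \lesssim pbp$ in $A$, and the factorisation $pbp = (b^{1/2}p)^*(b^{1/2}p) \sim (b^{1/2}p)(b^{1/2}p)^* = b^{1/2} p\, b^{1/2} \leq b$ gives $pbp \lesssim b$ in $A$. Chaining, $q \lesssim b$ in $A$; since $q$ is a non-zero projection, this Cuntz relation lifts to a non-zero projection $q' \in \overline{bAb}$ Murray--von Neumann equivalent to $q$, and as $B$ is hereditary with $b \in B$, $q' \in B$. This establishes property (SP).
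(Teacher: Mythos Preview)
Your overall strategy is sound in outline, but the step $q \lesssim (c-\epsilon)_+$ in $C$ is not justified. The assertion that Cuntz subequivalence in $PM_r(C(X))P$ reduces to pointwise comparison of rank functions is false: over $X=S^2$, a trivial rank-one projection is not Cuntz below the Bott projection, although both have rank $1$ everywhere. The correct comparison theorems (e.g.\ 3.15 of \cite{Tom-2}, used later in the paper) require a cushion of roughly $\dim X$ between the two rank functions. Your argument only shows $(c-\epsilon)_+(x)\neq 0$, i.e.\ $\operatorname{rank}(c-\epsilon)_+(x)\geq 1$; the passage from $c$ to $(c-\epsilon)_+$ can kill all but the top eigenvalue of $c(x)$, so the bound $\operatorname{rank}c(x)>d$ you obtained does not automatically survive. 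To repair this route you would have to choose $\eta$ small enough that an eigenvalue estimate on $\sum z_i^*cz_i\approx 1_C$ forces more than $d$ eigenvalues of $c(x)$ above $\epsilon$, and then either invoke the cushioned comparison theorem or, more directly, apply Proposition~3.2 of \cite{DNNP} to $(c-\epsilon)_+$ itself to get a projection inside its hereditary subalgebra --- which would also render the separate construction of $q$ via a nowhere-vanishing section of $E_P$ unnecessary.

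The paper avoids Cuntz comparison altogether. It starts from $a\in B_+$ with $\|a\|=1$ and chooses two bump functions $f_1\le f_2$ supported near $1$ with $f_2f_1=f_1$. The fullness argument is applied to $f_1(a)$; one approximates $pap$ by $a_1\in C$, so that $f_1(a_1)$ has rank at least $\operatorname{rank}P/m$ at each point, and Proposition~3.2 of \cite{DNNP} yields a nonzero projection $e\in \overline{f_1(a_1)\,C\,f_1(a_1)}$. The exact identity $f_2(a_1)e=e$ (coming from $f_2f_1=f_1$) then gives $\|f_2(a)e-e\|$ small, and $e$ perturbs to a projection in $\overline{f_2(a)\,A\,f_2(a)}\subset B$. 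The two-level functional calculus is precisely what circumvents the need to control the rank of a cut-down like $(c-\epsilon)_+$.
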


\begin{proof}
Let $A$ be a unital simple \CA\, satisfying  the conditions (\ref{Drr-1}), (\ref{Drr-2}) and (\ref{Drr-3}) in  \ref{Drr}.
Suppose that $B\subset A$ is a hereditary \SCA. Choose $a\in B_+\setminus\{0\}$ with $\|a\|=1.$
Choose  $1/4>\lambda>0.$
 Let $f_1\in C([0,1])$ be such that $0\le f_1\le 1,$ $f_1(t)=1$ for all $t\in [1-\lambda/4, 1]$ and
$f_1(t)=0$ for $t\in [0, 1-\lambda/2].$ Put $b=f_1(a).$ Let $f_2\in C([0,1])$ be such that $0\le f_2\le 1,$ $f_2(t)=1$
for all $t\in [1-\lambda/2, 1]$ and
$f_2(t)=0$ for $t\in [0,1-\lambda].$ Put $c=f_2(a).$
Then $b\not=0,$ $bc=b$ and $b, c\in B.$

Since $A$ is simple, there are $x_1,x_2,...,x_m\in A$ such that
\beq\label{SP-1}
\sum_{i=1}^m x_i^*bx_i=1.
\eneq
Let $M=\max\{\|x_i\|: 1\le i\le m\}.$

Let  ${\cal F}=\{a, b, c, x_1, x_2,...,x_m, x_1^*, x_2^*,...,x_m^*\}.$
For any $1/16>\ep>0,$ there is a projection $p\in A$ and a \SCA\, $C\subset A$ with $1_C=p,$ where
$C=PM_r(C(X))P,$ $r\ge 1$ is an integer, $X$ is a compact subset of a finite CW complex with dimension
$d(X),$ $P\in M_r(C(X))$ is a projection such that
\beq\label{SP-2}
{d(X)+1\over{{\rm rank}P(x)}}&<&1/4m,
\\\label{SP-3}
\|pf-fp\|&<&\ep/64(m+1)(M+1)\tforal f\in {\cal F},\\
\andeqn{\rm dist}(pfp, B) &<&\ep/64(m+1)(M+1).
\eneq

There is $a_1\in C$ such that
\beq\label{SP-3+1}
\|pap-a_1\|<\ep/64(m+1)(M+1).
\eneq
By choosing sufficiently small $\ep,$
we may assume that
\beq\label{SP-4}
\|f_1(a_1)-pf_1(a)p\|<1/64,\,\,\,\|f_2(a_1)-pf_2(a)p\|<1/64,
\eneq
and  there are $y_1, y_2,...,y_m\in C$ such that
\beq\label{SP-5}
\|\sum_{i=1}^m y_i^*f_1(pap)y_i-p\|<1/16\andeqn \|\sum_{i=1}^m y_i^*f_1(a)y_i-p\|<1/16.
\eneq
We may also assume that
\beq\label{SP-5+1}
\|f_2(pap)-f_2(a_1)\|<1/64\\
\|f_1(pap)-f_1(a_1)\|<1/64.
\eneq

Let $q$ be the open projection which is given by $\lim_{k\to\infty}(f_1(a_1))^{1/k}.$ Then by (\ref{SP-5}), $q(x)$ has rank at least
${\rm rank}P(x)/m.$
Thus, in $M_r(C(X)),$ $f_1(a_1)(x)$ has rank at least $4d(X)$ for all $x\in X,$  by (\ref{SP-2}). It follows from
Proposition 3.2 of \cite{DNNP} that there is a non-zero projection $e\in \overline{f_1(a_1)Cf_1(a_1)}.$
Note that $f_2(a_1)f_1(a_1)=f_1(a_1).$ Therefore
\beq\label{SP-6}
f_2(a_1)e=e.
\eneq
It follows  from that
\beq\label{SP-7}
\|ce-e\|&\le &\|pcpe-e\|=\|pf_2(a)pe-e\|\\
&\le &\|pf_2(a)pe-f_2(a_1)e\|
+\|f_2(a_1)e-e\|<1/64.\\
\eneq
Similarly,
\beq\label{SP-8}
\|ec-e\|<1/16.
\eneq
It follows from 2.5.4 of \cite{Lnbk} that there is a projection $e_1\in \overline{cAc}\subset B$ such that
$$
\|e-e_1\|<1.
$$
\end{proof}

\begin{prop}\label{norm}
Let $A$ be a unital simple \CA\, in ${\cal C}_g$ (or in ${\cal C}_{g, \infty}$).  Then in {\rm (\ref{Drr-2})}, we can also assume that, for any $\ep>0,$
$$
\|pxp\|\ge \|x\|-\ep \rforal x\in {\cal F}.
$$
\end{prop}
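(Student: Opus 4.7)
The plan is to force $p$ to contain, up to the Cuntz-small complement $1-p$, a spectral-top projection for each $x\in {\cal F}$. First, for each $x\in {\cal F}$ with $\|x\|\ge \ep$ (the other cases being trivial), I would set $y=x^*x$ and apply property (SP) from Proposition \ref{SP} to the hereditary \SCA\, $\overline{g_x(y)Ag_x(y)}$, where $g_x\in C([0,\|y\|])$ is chosen supported in $[\|y\|-\ep/16,\|y\|]$ with $g_x(\|y\|)=1$. This produces a non-zero projection $e_x$ that is dominated in the enveloping von Neumann algebra by the spectral projection of $y$ on $[\|y\|-\ep/16,\|y\|]$, so that $\|e_xye_x-\|y\|e_x\|\le \ep/16$.

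Next, using simplicity and (SP) once more, I would choose a non-zero $a_0\in A_+$ with $a_0\lesssim a$ whose Cuntz class is strictly dominated by each $e_x$: for instance, a non-zero projection $q\lesssim a$ subequivalent, inside each simple unital \SCA\, $e_xAe_x$, to a proper subprojection of $e_x$. Then apply Definition \ref{Drr} to ${\cal F}'={\cal F}\cup\{e_x:x\in {\cal F}\}$, with tolerance $\dt$ sufficiently small in terms of $\ep$ and $M=\max\{\|x\|:x\in {\cal F}\}$, trace parameter $\eta$, and the element $a_0$; this produces $p$ and $C$ satisfying (\ref{Drr-1})--(\ref{Drr-4}) relative to ${\cal F}'$.

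The hard part is to show $\|pe_xp\|$ is close to $1$. From $\|[p,e_x]\|<\dt$ one verifies $\|(pe_xp)^2-pe_xp\|\le \dt$, so the self-adjoint $pe_xp$ has spectrum in $[0,O(\sqrt\dt)]\cup[1-O(\sqrt\dt),1]$. Were $\|pe_xp\|$ in the lower cluster, then $\|e_x-(1-p)e_x(1-p)\|\approx \|pe_xp\|$ would be small, and a standard Cuntz-comparison perturbation would yield $e_x\lesssim(1-p)e_x(1-p)\lesssim 1-p\lesssim a_0$, contradicting the strict choice of $a_0$; hence $\|pe_xp\|\ge 1-O(\sqrt\dt)$.

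Finally, the commutator estimates give $(pyp)(pe_xp)\approx p(ye_x)p\approx (ye_x)p\approx \|y\|e_xp$ up to errors of order $O(\dt M^2+\ep/16)$, which, combined with $\|pe_xp\|\ge 1-O(\sqrt\dt)$, yields $\|pyp\|\ge \|y\|-O(\ep)$. Since $\|(1-p)xp\|\le \dt$, the product $(pxp)^*(pxp)=px^*pxp$ differs from $p(x^*x)p=pyp$ by $O(\dt M)$, so $\|pxp\|^2\ge \|x\|^2-O(\ep)$; taking square roots and using $\|x\|\ge \ep$ gives $\|pxp\|\ge \|x\|-\ep$ once $\dt$ is chosen small enough.
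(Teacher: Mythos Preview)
Your argument is correct in outline but contains a logical gap relative to the paper's development. The crucial step is the choice of $a_0$ ``strictly Cuntz-dominated by each $e_x$'': you want the eventual chain $e_x\lesssim 1-p\lesssim a_0$ to be a contradiction, and you propose to ensure this by taking $a_0$ (sub)equivalent to a proper subprojection of each $e_x$. But the implication ``$q$ is a proper subprojection of $e_x\Rightarrow e_x\not\lesssim q$'' is precisely the statement that $e_x$ is a \emph{finite} projection. At this point in the paper nothing forbids infinite projections: finiteness of $A$ (indeed the existence of traces) is obtained only in Theorem~\ref{trace}, whose proof goes through the Popa condition (Corollary~\ref{Cpopa}), and that in turn rests on the very norm estimate $\|pxp\|\ge\|x\|-\ep$ you are proving here. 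So as written the argument is circular.

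The paper sidesteps this entirely with a more elementary route that uses only simplicity and conditions (\ref{Drr-1})--(\ref{Drr-3}), never (\ref{Drr-4}) or Cuntz comparison. It simply re-reads the proof of Proposition~\ref{SP}: for positive $a$ with $\|a\|=1$ and small $\lambda$, simplicity gives $\sum_i x_i^* f_1(a) x_i = 1_A$ with $f_1$ supported on $[1-\lambda/2,1]$; after putting $a$, $f_1(a)$ and the $x_i$ into the finite set and applying Definition~\ref{Drr}, the compressed relation $\sum_i y_i^* f_1(pap) y_i \approx p$ survives (this is (\ref{SP-5})), forcing $f_1(pap)\ne 0$ and hence $\|pap\|\ge 1-\lambda/2$. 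General $x$ is reduced to $x^*x$. Your argument can be repaired along the same line: instead of comparing $e_x$ to $a_0$ in the Cuntz order, include in $\mathcal F'$ witnesses $z_i$ to $\sum_i z_i^* e_x z_i = 1$; if $\|pe_xp\|$ were small the compressed identity would fail outright. But once you do that, the detour through the projections $e_x$ and condition~(\ref{Drr-4}) is no longer needed, and you have essentially recovered the paper's argument.
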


\begin{proof}
The proof of this is contained in that of \ref{SP}.  Note that in the proof of \ref{SP},
(\ref{SP-5}) implies that $f_1(pap)\not=0.$ This implies that $\|pap\|\ge 1- \lambda/2.$
With $\lambda=\ep,$ this will gives $\|pap\|\ge 1-\lambda.$ This holds for any finitely many given positive elements with $\|a\|=1.$ If $0\not=\|a\|\not=1,$ it is clear that, by considering elements $a/\|a\|,$ we can have
$\|pap\|\ge \|a\|-\ep.$  In general, when $x$ is not positive, we can enlarge the finite subset ${\cal F}$ so it also contains
$x^*x.$ We omit the full proof.

\end{proof}

\begin{prop}\label{matrix}
Let $A$ be a unital simple \CA.  Then
the following are equivalent:

{\rm (1)}
$A\in {\cal C}_g$ {\rm (}or in ${\cal C}_{g,\infty}${\rm )};

{\rm (2)}
for any integer $n\ge 1,$ $M_n(A)\in {\cal C}_g$ {\rm (}or in ${\cal C}_{g,\infty}${\rm )},

{\rm (3)} for some integer $n\ge 1,$ $M_n(A)\in {\cal C}_g$ {\rm (}or in ${\cal C}_{g,\infty}${\rm)}.

\end{prop}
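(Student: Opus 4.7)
My plan is to verify the equivalences in the order (2) $\Rightarrow$ (3) $\Rightarrow$ (1) $\Rightarrow$ (2), the first implication being trivial, the second following from the hereditary-subalgebra result, and the third handled by matrix amplification of the approximation. Both the ${\cal C}_g$ and ${\cal C}_{g,\infty}$ versions will be covered by the same arguments, since the dimension $d$ is preserved throughout.

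For (3) $\Rightarrow$ (1), identify $A$ with $(1_A\otimes E_{11})\,M_n(A)\,(1_A\otimes E_{11})$, which is a unital hereditary \SCA\, of $M_n(A)$. Proposition \ref{herd} then gives $A \in {\cal C}_g$ (respectively $A\in{\cal C}_{g,\infty}$).

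For (1) $\Rightarrow$ (2), suppose $A\in{\cal C}_g$. Given a finite subset ${\cal F}\subset M_n(A)$, scalars $\epsilon,\eta>0$, and $a\in M_n(A)_+\setminus\{0\}$, the plan is to apply the ${\cal C}_g$ property of $A$ to the finite set ${\cal F}':=\{f_{ij}: f\in{\cal F},\,1\le i,j\le n\}\subset A$ of matrix entries, then amplify the resulting subalgebra by $n$. Before doing this, $a$ must be replaced by a tensor-split auxiliary element: since $a\ne 0$ is positive, the $C^*$-valued Cauchy--Schwarz inequality forces some diagonal entry $a_{ii}\in A$ to be nonzero, and $a_{ii}\otimes E_{ii}\lesssim a$ in $M_n(A)$. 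Using property (SP) of $A$ (Proposition \ref{SP}) together with simplicity, I will then produce $b\in A_+\setminus\{0\}$ such that $1_n\otimes b \lesssim a_{ii}\otimes E_{ii}\lesssim a$ in $M_n(A)$. Now apply the defining property of ${\cal C}_g$ to $A$ with data $({\cal F}',\, \epsilon/n^2,\,\eta,\,b)$, obtaining a projection $p_0\in A$ and a \SCA\, $C_0=PM_r(C(X))P\in{\cal I}_d$ with $1_{C_0}=p_0$ satisfying (\ref{Drr-1})--(\ref{Drr-4}) for these parameters.

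The amplification puts $p:=1_n\otimes p_0\in M_n(A)$ and $C:=M_n(C_0)=(1_n\otimes P)\,M_{rn}(C(X))\,(1_n\otimes P)\in{\cal I}_d$. The commutation and distance estimates for elements of ${\cal F}$ follow from those for their $n^2$ matrix entries by standard matrix-norm bookkeeping. The rank condition (\ref{Drr-3}) only improves under the amplification, since $\operatorname{rank}\,(1_n\otimes P)(x) = n\cdot\operatorname{rank}\,P(x)$. Finally, $1_{M_n(A)}-p = 1_n\otimes(1_A-p_0) \lesssim 1_n\otimes b \lesssim a$ in $M_n(A)$, using that the unital embedding $1_n\otimes(\cdot): A\to M_n(A)$ preserves Cuntz subequivalence. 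The ${\cal C}_{g,\infty}$ assertion follows by the same argument, with the dimension $d$ preserved. The principal technical point is the construction of $b$ with $1_n\otimes b\lesssim a$ in $M_n(A)$: this is the divisibility statement that in a simple \CA\, with (SP), any nonzero $c\in A_+$ and any integer $n\ge 1$ admit a $b\in A_+\setminus\{0\}$ with $n$ mutually orthogonal copies of $b$ Cuntz-subequivalent to $c$, which is obtained by taking a nonzero projection in $\overline{cAc}$ via (SP), iteratively halving to produce pairwise orthogonal nonzero subprojections, and invoking simplicity to refine these to mutually Murray--von Neumann equivalent subprojections supported in a common corner of $A$. Once this step is in place, the remainder of the proof is mechanical translation via the matrix units $E_{ij}$.
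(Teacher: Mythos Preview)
Your proof is correct and follows essentially the same route as the paper: the same cycle (2)$\Rightarrow$(3)$\Rightarrow$(1)$\Rightarrow$(2), the same use of Proposition~\ref{herd} for (3)$\Rightarrow$(1), and for (1)$\Rightarrow$(2) the same strategy of passing to matrix entries, producing (via SP and simplicity) $n$ mutually orthogonal equivalent nonzero projections under $a$, applying the ${\cal C}_g$ property in $A$, and amplifying by $M_n$. The paper cites 3.3.4 and 3.5.7 of \cite{Lnbk} for the step you carry out via Cauchy--Schwarz and iterated halving of projections, but the content is the same.
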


\begin{proof}
It is clear that (2) implies (3).  That (3) implies (1) follows from \ref{herd}.
To prove (1) implies (2),  let $n \ge 1$ be an integer, let ${\cal F}\subset M_n(A)$ be a finite subset, $\ep>0,$ $a\in M_n(A)_+\setminus \{0\}$ and
$\sigma>0.$
We first consider the case that $A\in {\cal C}_1.$

Since $A$ is simple, so is $M_n(A).$  Let $\{e_{i,j}\}$ be a matrix unit for $M_n.$
We identify $A$ with $e_{11}Ae_{11}.$ Therefore, (by 3.3.4 of \cite{Lnbk}, for example), there
is a non-zero  element $a_1\in A_+$  such that $a_1\lesssim a.$ Since $A$ has property (SP), by 3.5.7 of \cite{Lnbk}, there
are $n$ mutually orthogonal and mutually equivalent nonzero projections $q_1,q_2,...,q_n\in \overline{a_1Aa_1}.$
Choose a finite subset ${\cal F}_1\subset A$ such that
\beq\label{matrix-1}
\{(a_{i,j})_{n\times n}: a_{i,j}\in {\cal F}_1\}\supset {\cal F}.
\eneq
Since we assume that $A\in {\cal C}_1,$ there exists a projection $q\in A,$ a \SCA\, $C\subset A$ with $1_C=q,$
$C=PM_r(C(X))P\in {\cal I}_k,$ where $r\ge 1$ is an integer, $X$ is a compact subset of a finite CW complex with dimension $d(X)=k$ and $P\in M_r(C(X))$
is a projection, such that
\beq\label{matrix-2}
\|qa-aq\|&<&\ep/n^2\tforal a\in {\cal F}_1\\
{\rm dist}(qaq, C)&<&\ep/n^2\tforal a\in {\cal F}_1,\\\label{matrix-2+n}
{d(X)+1\over{{\rm rank}P(x)}}&<&\sigma /(g(k)+1)\tforal x\in X\andeqn\\
1_A-p&\lesssim & q_1.
\eneq

Now let $p={\rm diag}(\overbrace{q,q,...,q}^n),$  $Q={\rm diag}(\overbrace{P,P,...,P}^n)$ and
$B=M_n(C).$  Then $B=QM_{nr}(C(X))Q.$ Moreover
\beq\label{matrix-3}
\|pa-ap\| &< &\ep\tforal a\in {\cal F},\\
{\rm dist}(pap, B)&<&\ep\tforal a\in {\cal F},\\
{d(X)+1\over{{\rm rank}Q(x)}}&<&\sigma /n(g(k)+1) \tforal x\in X.
\eneq
Furthermore,
\beq\label{matrix-4}
1_{M_n(A)}-q\lesssim {\rm diag}(q_1, q_2,...,q_n)\lesssim  a_1\lesssim a.
\eneq
Thus $M_n(A)\in {\cal C}_1.$

For the case that $A\in {\cal C}_{g,\infty},$ the proof is  the same with obvious modifications. We omit the details.

 % However, in stead of (\ref{matrix-2+n}), we require that
%\beq\label{matrix-5}
%{d(X)+1\over{{\rm rank}P(x)}}<{\sigma d(X)\over{K(d)+1}}
%\eneq
%for all $x\in X.$ The rest of the proof remain the same.
\end{proof}

The following is certainly known.

\begin{lem}\label{trivialpro}
Let $X$ be a compact subset of a finite CW complex, $r\ge 1$ be an integer and let $E\in M_r(C(X))$ be a projection. Then, there is a projection $Q\in M_k(EM_r(C(X))E)$ for some integer $k\ge 1$ such that $QM_k(EM_r(C(X))E)Q\cong M_l(C(X))$ and there is a unitary $W\in M_k(EM_r(C(X))E)$ such that $W^*EW\le Q.$
\end{lem}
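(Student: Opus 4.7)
The plan is to use the finite-dimensionality of $X$ (inherited from the ambient finite CW complex) to complement $E$ to a trivial projection, and then to fit this complement inside several copies of $E$. The unitary $W$ will then turn out to be essentially trivial; the real content is producing $Q$.

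First I would invoke the classical fact (essentially Serre--Swan together with the stability of vector bundles over finite-dimensional compact Hausdorff spaces) that there exist integers $s, l \ge 1$ and a projection $F_0 \in M_s(C(X))$ such that $E \oplus F_0$ is Murray--von Neumann equivalent, inside $M_{r+s}(C(X))$, to the trivial projection $\operatorname{diag}(1,\ldots,1,0,\ldots,0)$ with $l$ ones. After replacing $X$ by the clopen support of $E$ (the rank of $E$ is locally constant, so this is harmless and preserves compactness on each component), we may assume $\operatorname{rank} E(x) \ge 1$ for every $x \in X$. Then for any $k$ with $(k-1)\operatorname{rank} E(x) \ge \operatorname{rank} F_0(x) + \dim X$ uniformly in $x$, the standard subbundle theorem for vector bundles over finite-dimensional compact spaces provides a partial isometry $v$ with $v^*v = F_0$ and $vv^* =: F_1 \le E^{(k-1)}$. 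In particular, $F_1$ is a projection in $M_{k-1}(EM_r(C(X))E)$.

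Next I would set $Q := E \oplus F_1 \in M_k(EM_r(C(X))E)$, with $E$ placed in the $(1,1)$-block and $F_1$ in the lower $(k-1)\times(k-1)$ block. By construction
\[
Q \;=\; E \oplus F_1 \;\sim\; E \oplus F_0 \;\sim\; 1_l
\]
inside $M_{kr}(C(X))$, so there is a partial isometry $w \in M_{kr}(C(X))$ with $w^* w = Q$ and $w w^* = 1_l$. Because $Q \le E^{(k)}$, one has
\[
Q \, M_k(EM_r(C(X))E) \, Q \;=\; Q \, M_{kr}(C(X)) \, Q \;\cong\; 1_l \, M_{kr}(C(X)) \, 1_l \;=\; M_l(C(X))
\]
via conjugation by $w$. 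Finally, since $E$ (viewed as $E \oplus 0$) is manifestly a subprojection of $Q$, I can take $W$ to be the unit $E^{(k)}$ of $M_k(EM_r(C(X))E)$, so that $W^* E W = E \le Q$.

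The main thing to pin down carefully is the assertion that $F_0 \lesssim E^{(k-1)}$ for sufficiently large $k$. This is the only place where the hypothesis that $X$ embeds in a finite CW complex is genuinely used: it guarantees finite covering dimension, which in turn guarantees that a fibrewise-dominated rank inequality can be upgraded to an honest Murray--von Neumann subequivalence of projections. The rest of the argument is formal manipulation of block projections and partial isometries.
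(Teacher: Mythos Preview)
Your proof is correct and in fact cleaner than the paper's. The underlying bundle-theoretic content (complement $E$ to a trivial projection, then use finite-dimensionality to embed the complement into copies of $E$) is the same, but the execution differs in two ways worth noting.

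First, the paper does not work directly on $X$: it writes $X=\bigcap_n X_n$ with $X_n$ finite CW complexes, extends $E$ to some $E'$ on $X_n$, carries out the construction there (asserting without detail the existence of $Q'\in M_k(E'M_r(C(X_n))E')$ unitarily equivalent to a constant projection $1_l$ with $l\ge \operatorname{rank}E'+d$), and then restricts everything back to $X$. Your argument avoids the extension/restriction step entirely by invoking stability of vector bundles over finite-dimensional compacta directly on $X$; this is what the hypothesis ``compact subset of a finite CW complex'' really buys, and you identify it correctly.

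Second, and more substantively, your construction $Q=E\oplus F_1$ makes $E\le Q$ tautological, so $W=E^{(k)}$ is the identity. The paper instead produces a genuinely nontrivial unitary $W'$ by first arranging $l\ge\operatorname{rank}E'+d$ and then appealing (implicitly) to another stability argument to get $E'\lesssim Q'$ and extend the resulting partial isometry to a unitary. Your route is strictly simpler here.

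One small caveat: your reduction ``replace $X$ by the clopen support of $E$'' is not literally harmless for the statement as written, since if $E$ vanishes on a nonempty clopen set then no corner of $M_k(EM_r(C(X))E)$ can be isomorphic to $M_l(C(X))$ with $l\ge 1$. The paper's proof has the same tacit assumption, and in every application in the paper $E(x)\ne 0$ for all $x$; just be aware that the lemma as stated needs this hypothesis.
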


\begin{proof}
There is a decreasing sequence of finite CW complexes $\{X_n\}$ of dimension $d$ (for some integer $d\ge 1$) such that
$X=\cap_{n=1}^{\infty} X_n.$  There is an integer $n\ge 1$ and a projection
$E'\in M_r(C(X_n))$ such that $E'|_X=E.$ There is an integer $k\ge 1$ and
a projection $Q'\in M_k(E'M_r(C(X_n))E')$ which is unitarily equivalent to the constant projection ${\rm id}_{M_l}$ for some $l\ge {\rm rank} E'+d.$
Note that there is a unitary $W'\in M_k(E'M_r(C(X_n))E')$ such that
$$
(W')^*E'W'\le Q'.
$$
Let $W=W'|_X.$ Then $W^*EW\le Q.$
Let $Z\in M_k(E'M_r(C(X_n))E')$ be a unitary such that
$$
Z^*Q'Z={\rm id}_{M_l}.
$$
Let $Q=Q'|_X.$ Then $QM_k(EM_r(C(X))E)Q\cong M_l(C(X)).$

\end{proof}

\begin{lem}\label{Lpopa}
Let $C=PM_r(C(X))P,$ where $r\ge 1$ is an integer, $X$ is a compact subset set of a finite CW complex and
$P\in M_r(C(X))$ is a projection.  Suppose that $A$ is a unital \CA\, with the property (SP) and suppose
that $\phi: C\to A$ is a unital \hm.  Then, for any $\ep>0$ and
any finite subset ${\cal F}\subset C,$   there exists a non-zero projection $p\in A$ and
a finite dimensional \SCA\, $B\subset A$ with $1_B=p$
% and
%$$
%{\rm dim}B\le (\max_{x\in X} {\rm rank }P(x)\})^2m
%$$
such that
\beq\label{Lpopa-1}
\|p\phi(f)-\phi(f)p\| &<&\ep\tforal f\in {\cal F},\\\label{Lpopa-2}
{\rm dist}(\phi(f),B)&<&\ep\\\label{Lpopa-3}
\andeqn \|p\phi(f)p\| &\ge & \|\phi(f)\|-\ep\tforal x\in {\cal F}.
\eneq

\end{lem}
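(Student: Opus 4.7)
The approach is to construct $p$ and $B$ piece by piece over a finite pairwise disjoint open cover of $X$, using $\phi$ to transport local matrix units of $P(x_i)M_rP(x_i)$ into $A$ and using property (SP) to anchor each block with a non-zero projection. After replacing $X$ by the closed support $\{x:P(x)\neq 0\}$ and $C$ by $C/\Ker\phi$ (still of the form $P'M_r(C(X'))P'$), we may assume $\phi$ is injective and $P(x)\neq 0$ on $X$. Given $\ep>0$ and ${\cal F}$, pick $\dt>0$ so small that every $f\in{\cal F}$ oscillates by less than an auxiliary $\ep_1$ on any set of diameter $\dt$ (after local unitary trivialization of $P$), pick a maximal $\dt/4$-separated set $x_1,\dots,x_n\in X$, and let $V_i=B(x_i,\dt/10)$; these are pairwise disjoint and $\bigcup_i\overline{V_i}$ is $\dt/2$-dense in $X$. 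By refining $\dt$, each $\overline{V_i}$ lies in a contractible neighborhood of $x_i$ in the ambient finite CW complex, giving a continuous family of unitaries $v_i:\overline{V_i}\to U(M_r)$ with $v_i(x_i)=1$ and $v_i(x)P(x_i)v_i(x)^*=P(x)$. Inside each $V_i$, choose nested open neighborhoods $W_1^{(i)}\supset W_2^{(i)}\supset W_3^{(i)}\supset W_4^{(i)}\ni x_i$ and bump functions $\chi_j^{(i)}\in C(X)$ with $\chi_j^{(i)}=1$ on $\overline{W_{j+1}^{(i)}}$ and supported in $W_j^{(i)}$ for $j=1,2,3$, together with a non-zero $\chi_4^{(i)}$ supported in $W_4^{(i)}$. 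Fix matrix units $\{e_{ab}\}_{a,b=1}^{l_i}$ for $P(x_i)M_rP(x_i)\cong M_{l_i}$ and lift them to $\tilde e_{ab}^{(i)}\in C$ by $\tilde e_{ab}^{(i)}(x):=\chi_1^{(i)}(x)v_i(x)e_{ab}v_i(x)^*$ on $\overline{V_i}$, extended by $0$ elsewhere.

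The heart of the argument invokes property (SP). Since $\phi$ is injective and $\chi_4^{(i)}\tilde e_{11}^{(i)}$ is non-zero at $x_i$, the hereditary \SCA\ $\overline{\phi(\chi_4^{(i)}\tilde e_{11}^{(i)})\,A\,\phi(\chi_4^{(i)}\tilde e_{11}^{(i)})}$ contains a non-zero projection $p_{11}^{(i)}$. The scaffolding yields the exact identities in $C$: $c\cdot\chi_4^{(i)}\tilde e_{11}^{(i)}=\chi_4^{(i)}\tilde e_{11}^{(i)}$ for each $c\in\{\tilde e_{11}^{(i)},\ \chi_1^{(i)}P,\ \chi_2^{(i)}P,\ \chi_3^{(i)}P\}$ (and symmetric versions on the right), and $\tilde e_{1a}^{(i)}\tilde e_{b1}^{(i)}=\delta_{ab}\chi_1^{(i)}\tilde e_{11}^{(i)}$. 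Any such $c$ satisfies $\phi(c)p_{11}^{(i)}=p_{11}^{(i)}$, because multiplication by $\phi(c)$ fixes each element of the form $\phi(\chi_4^{(i)}\tilde e_{11}^{(i)})a\phi(\chi_4^{(i)}\tilde e_{11}^{(i)})$ and hence, by continuity, fixes $p_{11}^{(i)}$. Setting $w_a^{(i)}:=\phi(\tilde e_{a1}^{(i)})p_{11}^{(i)}$, we then obtain $(w_a^{(i)})^*w_b^{(i)}=\delta_{ab}p_{11}^{(i)}$, so $\{w_{ab}^{(i)}:=w_a^{(i)}(w_b^{(i)})^*\}$ is an honest system of matrix units generating a copy $B^{(i)}\cong M_{l_i}$ inside $A$ with unit $p^{(i)}=\sum_a w_{aa}^{(i)}$. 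Since $V_i\cap V_{i'}=\emptyset$ forces $\tilde e_{ab}^{(i)}\tilde e_{cd}^{(i')}=0$ in $C$, the blocks $B^{(i)}$ are mutually orthogonal, and $B:=\bigoplus_i B^{(i)}$ with unit $p:=\sum_i p^{(i)}$ is the desired finite-dimensional \SCA.

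To verify the three conclusions, the local continuity estimate $\|f(x)-v_i(x)f(x_i)v_i(x)^*\|<\ep_1$ on $\overline{V_i}$ translates into $\|f\tilde e_{b1}^{(i)}-\sum_a\alpha_{ab}^{(i)}(f)\tilde e_{a1}^{(i)}\|_C<\ep_1$, where $\alpha_{ab}^{(i)}(f):=(f(x_i))_{ab}$ in the fixed basis. Applying $\phi$ and multiplying by $p_{11}^{(i)}$ on the right yields $\phi(f)w_b^{(i)}\approx_{\ep_1}\sum_a\alpha_{ab}^{(i)}(f)w_a^{(i)}$; multiplying further by $(w_b^{(i)})^*$ and summing over $i,b$ gives $\phi(f)p\approx\sum_{i,a,b}\alpha_{ab}^{(i)}(f)w_{ab}^{(i)}\in B$, and the symmetric computation starting from $\tilde e_{1b}^{(i)}f$ gives the same approximation for $p\phi(f)$ and hence $p\phi(f)p$. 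Taking $\ep_1$ small enough delivers both $\|p\phi(f)-\phi(f)p\|<\ep$ and $\dist(p\phi(f)p,B)<\ep$. For the norm condition, choose the center $x_i$ closest to a point where $\|f\|$ attains its maximum; $\dt/2$-density gives $\|f(x_i)\|\ge\|\phi(f)\|-\ep_1$, and the local approximation identifies $p^{(i)}\phi(f)p^{(i)}$ inside $B^{(i)}\cong M_{l_i}$ with the matrix $f(x_i)$, whence $\|p\phi(f)p\|\ge\|p^{(i)}\phi(f)p^{(i)}\|\ge\|\phi(f)\|-\ep$. The main obstacle will be ensuring the four-fold nesting $W_1^{(i)}\supset\cdots\supset W_4^{(i)}$ is deep enough to convert the mere membership of $p_{11}^{(i)}$ in the hereditary subalgebra into \emph{exact} identities $\phi(\tilde e_{11}^{(i)})p_{11}^{(i)}=p_{11}^{(i)}$ and $\phi(\chi_j^{(i)}P)p_{11}^{(i)}=p_{11}^{(i)}$, without which $\{w_{ab}^{(i)}\}$ would be only approximate matrix units and the final error estimates would not close.
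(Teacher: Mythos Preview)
Your proof is correct and takes a genuinely different route from the paper's. The paper proceeds in two stages: first it handles the case $C=C(X)$ by picking, for each $f_i\in\mathcal F$, a point $x_i$ where $\|\phi(f_i)\|$ is attained, placing a bump function $g_i$ there, extracting via (SP) a projection $e_i\in\overline{\phi(g_i)A\phi(g_i)}$, and taking $B=\mathrm{span}\{e_i\}\cong\mathbb C^{m'}$; then it reduces the general $C=PM_r(C(X))P$ to a full matrix algebra $QM_K(C)Q\cong M_l(C(X))$ via the stabilization trick of Lemma~\ref{trivialpro}, applies the already-proved case inside $\Phi(Q)M_K(A)\Phi(Q)$ (where $\Phi=\phi\otimes{\rm id}_{M_K}$), and compresses back by a projection $e$ close to $p_1\Phi(1_C)p_1$. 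Your approach instead attacks $PM_r(C(X))P$ directly: you trivialize $P$ locally near each $x_i$ (for which, incidentally, you do not actually need a contractible neighborhood in the ambient CW complex --- uniform continuity of $P$ and the standard continuous unitary $u(P(x),P(x_i))$ valid for $\|P(x)-P(x_i)\|<1$ already suffice), transport the matrix units $e_{ab}$ of $P(x_i)M_rP(x_i)$ into $C$ via $\chi_1^{(i)}v_i(\cdot)e_{ab}v_i(\cdot)^*$, and use (SP) together with the nested bump functions to manufacture \emph{exact} matrix units $w_{ab}^{(i)}$ in $A$. Your $B=\bigoplus_i M_{l_i}$ records the local fiber dimensions of $P$, whereas the paper's $B$ (after unwinding the reductions) is a corner of something like $M_l\otimes\mathbb C^{m'}$. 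The paper's reduction is cleaner conceptually and sidesteps the bundle structure entirely; your argument is more hands-on and gives more explicit control over the shape of $B$.
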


\begin{proof}
We first prove the case that $C=M_r(C(X)).$ It is clear that this case can be reduced further to the case that
$C=C(X).$   Denote by $C_1=\phi(C(X))\cong C(Y),$ where $Y\subset X$ is a compact subset.
To simplify notation, by considering a quotient of $C,$ if necessarily, without loss of generality, we may assume that $\phi$ is a monomorphism.

Let $\ep>0$ and let
${\cal F}=\{f_1, f_2,...,f_m\}\subset C.$ There are
$x_1, x_2,...,x_m\in X$ such that
\beq\label{Lpopa-4}
|\phi(f_i)(x_i)|=\|\phi(f_i)\|,\,\,\,i=1,2,...,m.
\eneq
There exists $\dt>0$ such that
\beq\label{Lpopa-5}
|f_i(x)-f_i(x_i)|<\ep/2
\eneq
for all $x$ for which ${\rm dist}(x, x_i)<\dt,$  $i=1,2,...,m.$
We may assume that $\{x_1, x_2,...,x_{m'}\}=\{x_1, x_2,...,x_m\}$ as a ser with $m'\le m.$
There is $\dt_0>0$ such that ${\rm dist}(x_i,x_j)\ge \dt_0$ if $i\not=j$ and $i,j=1,2,...m'.$
Let $\dt_1=\min\{\dt,\dt_0\}.$
Let $g_i\in C(X)$ defined by
$g_i(x)=1$ if ${\rm dist}(x, x_i)<\dt_1/4$ and $g_i(x)=0$ if
${\rm dist}(x, x_i)\ge \dt_1/2,$ $i=1,2,...,m'.$
For each $i,$ by property (SP), there exists a non-zero projection
$e_i\in \overline{\phi(g_i)A\phi(g_i)},$ $i=1,2,...,m'.$
Put $p=\sum_{i=1}^{m'} e_i.$ Let $B$ be the \SCA\, generated by $e_1, e_2,...,e_{m'}.$ Then $B$ is isomorphic to a direct sum of $m$ copies of
$\C.$ In particular, $B$ is of  dimension $m'.$
As in Lemma 2 of \cite{Lnappnormal}, this implies that
\beq\label{Lpopa-6}
\|p\phi(f)-\phi(f)p\|<\ep\andeqn\\
\|p\phi(f)p-\sum_{j=1}^{m'} f(x_j)e_j\|<\ep
\eneq
for all $f\in {\cal F}.$
By (\ref{Lpopa-4}),
\beq\label{Lpopa-7}
\|p\phi(f_i)p\|&\ge & \|\sum_{j=1}^{m'} f_i(x_j)e_j\|-\ep\\
&\ge & |f_i(x_i)|-\ep=\|\phi(f_i)\|-\ep,
\eneq
$i=1,2,...,m.$
This prove the case that $C=C(X).$
For the case that
 $C=M_r(C(X)),$ we note  that $\phi(e_{11})A\phi(e_{11})$ has (SP), where
 $\{e_{i,j}\}$ is a matrix unit for $M_r.$ Thus this case follows from the case that $C=C(X).$  Note that, in this case
 ${\rm dim}B=r^2m'.$

Now we consider the general case. There is an integer $K\ge 1$ and a projection $Q\in M_K(C)$ such that
$QM_K(C)Q\cong M_r(C(X)).$ By  choosing a projection with larger rank, we may assume that ${\rm rank}Q\ge {\rm rank}P+2dim(X).$
By conjugating by a unitary, we may further assume that
$Q\ge 1_C.$ Define $\Phi=\phi\otimes {\rm id}_{M_K}: M_K(C)\to M_K(A)$ and define $\Psi: QM_K(C)Q\to A_1=\Phi(Q)M_K(A)\Phi(Q)$ by
$\Psi=\Phi|_{QM_K(C)Q}.$

Now let $\ep>0$ and let ${\cal F}\subset C$ be a finite subset.
Let ${\cal F}_1=\{Q, 1_C\}\cup\{Q((a_{i,j})_{K\times K})Q: a_{i,j}\in {\cal F}\}.$
Let $M=\max\{\|f\|: f\in {\cal F}_1\}.$
%Suppose that ${\cal F}_1$ has $m$ elements.
From what we have shown, there is a projection
$p_1\in M_K(A)$ and a finite dimensional \SCA\, $B_1\subset M_K(A)$ with $1_{B_1}=p_1$ such that
\beq\label{Lpopa-8}
\|p_1\Phi(f)-\Phi(f)p_1\|&<&\ep/8(M+1)\\\label{Lpopa-8+1}
{\rm dist}(\Phi(f), B_1)&<&\ep/8(M+1)\andeqn\\\label{Lpopa-8+2}
\|p_1\Phi(f)p_1\| &\ge & \|\Phi(f)\|-\ep/8(M+1)
\eneq
for all $f\in {\cal F}_1.$
There is a projection $e\in B_1$ such that
\beq\label{Lpopa-9}
\|p_1\Phi(1_C)p_1-e\|<\ep/4(M+1).
\eneq
Then, for $f\in {\cal F}\subset C,$
\beq\label{Lpopa-10}
\hspace{-0.2in}\|e\phi(f)-\phi(f)e\| &\le &2M\|e-p_1\Phi(1_C)p_1\|+\|p_1\Phi(1_C)p_1\Phi(f)-\Phi(f)p_1\Phi(1_C)p_1\|\\
&<& \ep/4+\ep/8(M+1)+\|p_1\Phi(1_C)\Phi(f)-\Phi(f)\Phi(1_C)p_1\|\\
&<& \ep/4+\ep/8(M+1)+\|p_1\Phi(f)-\Phi(f)p_1\|\\
&<&\ep/4+\ep/8(M+1)+\ep/8(M+1)<\ep.
\eneq
Let $B=eB_1e.$ Then we also have
\beq\label{Lpopa-11}
{\rm dist}(e\phi(f)e, B)={\rm dist}(e\phi(f)e, eB_1e)<\ep/8(M+1)<\ep\tforal f
\in {\cal F}.
\eneq
It follows from (\ref{Lpopa-8+2}) that
\beq\label{Lpopa-12}
\|e\phi(f)e\| &\ge & \|p_1\Phi(1_C)p_1\Phi(f)p_1\Phi(1_C)p_1\|-\ep/8(M+1)\\
&\ge & \|p_1\Phi(1_C)\Phi(f)\Phi(1_C)p_1\|-\ep/4(M+1)\\
&=& \|p_1\Phi(f)p_1\|-\ep/2(M+1)\\
&\ge &\|\Phi(f)\|-\ep/(M+1)=\|\phi(f)\|-\ep/(M+1)
\eneq
for all $f\in {\cal F}.$
This completes the proof.
\end{proof}

\begin{prop}\label{popa}
Let $A$ be a unital simple \CA\, with the property (SP).  Suppose that $A$ satisfies
the following conditions: For any $\dt>0$ and any finite subset ${\cal G}\subset A,$
there exits a projection $q\in A,$ a \SCA\, $C\in {\cal I}_k$ for some $k\ge 1$ with $1_C=p$ such that
\beq\label{popa-1}
\|qx-xq\| &<&\dt,\\
{\rm dist}(qxq, C) &<&\ep\andeqn\\
\|qxq\| &\ge & \|x\|-\ep \tforal x\in {\cal G}.
\eneq
Then A satisfies the Popa condition:
for any $\ep>0$ and any finite subset ${\cal F}\subset A,$ there is a projection $p\in A$ and a finite dimensional
\SCA\, $B\subset A$ with $1_B=p$ such that
\beq\label{popa-2}
\|px-xp\|<\ep,\,\,
{\rm dist}(pxp, B)<\ep\andeqn \|pxp\|\ge \|x\|-\ep
\eneq
for all $x\in {\cal F}.$

\end{prop}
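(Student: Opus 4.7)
The plan is to apply the hypothesis to obtain a subalgebra $C\in {\cal I}_k$ approximately containing ${\cal F}$, and then apply Lemma \ref{Lpopa} to the inclusion $C\hookrightarrow qAq$ (where $q=1_C$) in order to further reduce from $C$ to a finite-dimensional subalgebra of $A$. The key structural observation is that the projection $p$ produced in the second step satisfies $p\le q$, so that $pxp=p(qxq)p$ for every $x\in A$.

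First I would, given $\ep>0$ and ${\cal F}\subset A,$ choose $\dt,\ep'>0$ both smaller than $\ep/6,$ invoke the hypothesis with parameter $\dt$ and ${\cal G}={\cal F},$ and obtain a projection $q\in A$ and $C=PM_r(C(X))P\subset A$ with $1_C=q,$ $C\in {\cal I}_k,$ satisfying
\[
\|qx-xq\|<\dt,\quad {\rm dist}(qxq,C)<\dt,\quad \|qxq\|\ge \|x\|-\dt
\]
for all $x\in{\cal F}.$  For each $x\in{\cal F}$ pick $c_x\in C$ with $\|c_x-qxq\|<\dt,$ and set ${\cal F}_1=\{c_x:x\in{\cal F}\}\subset C.$

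Next I would observe that $qAq$ is a nonzero hereditary \SCA\ of a unital simple algebra with property (SP), hence itself has (SP) (every nonzero hereditary \SCA\ of $qAq$ is one of $A$). The inclusion $\iota\colon C\hookrightarrow qAq$ is then a unital \hm, so Lemma \ref{Lpopa} applies and yields a nonzero projection $p\in qAq$ and a finite-dimensional \SCA\ $B\subset qAq\subset A$ with $1_B=p$ such that $\|[p,c_x]\|<\ep',$ ${\rm dist}(pc_xp,B)<\ep',$ and $\|pc_xp\|\ge\|c_x\|-\ep'$ for every $x\in{\cal F}.$

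Finally I would transfer the three inequalities from ${\cal F}_1$ back to ${\cal F}.$  Since $p\le q$ one has $pq=qp=p,$ so $pxp=p(qxq)p.$ Using $q^2=q$ and $\|qx-xq\|<\dt$ one gets $\|qx-qxq\|=\|q(qx-xq)\|<\dt$ and $\|xq-qxq\|<2\dt,$ whence
\[
\|px-xp\|\le \|[p,qxq]\|+\dt+2\dt\le \|[p,c_x]\|+2\|c_x-qxq\|+3\dt<\ep'+5\dt,
\]
${\rm dist}(pxp,B)={\rm dist}(p(qxq)p,B)\le \|c_x-qxq\|+\ep'<\ep'+\dt,$ and $\|pxp\|\ge \|pc_xp\|-\dt\ge \|c_x\|-\ep'-\dt\ge \|qxq\|-2\dt-\ep'\ge \|x\|-3\dt-\ep'.$ With $\dt,\ep'<\ep/6$ each of the three quantities is below $\ep,$ verifying the Popa condition.

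The main obstacle is simply the bookkeeping across the two layers of approximation: both the hypothesis and Lemma \ref{Lpopa} lose norm (of order $\dt$ and $\ep'$ respectively), and the lower bound $\|pxp\|\ge\|x\|-\ep$ must survive both losses. This is handled by choosing $\dt$ and $\ep'$ small enough at the outset; the slight twist is noticing the identity $pxp=p(qxq)p$ coming from $p\le q,$ which is what makes the final estimates clean despite $p$ not approximately commuting with all of $A.$
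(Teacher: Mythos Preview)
Your proof is correct and follows essentially the same approach as the paper: apply the hypothesis to get $q$ and $C\in{\cal I}_k$, then apply Lemma \ref{Lpopa} to the inclusion $C\hookrightarrow qAq$ (using that $qAq$ inherits (SP)), and finally transfer the estimates via the identity $pxp=p(qxq)p$. The paper uses $\ep/8$ where you use $\ep/6$, but the structure and the key observation $p\le q$ are identical.
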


\begin{proof}
Let $\ep>0$ and let ${\cal F}\subset A$ be a finite subset.  By the assumption,
there is a projection $q$ and a \SCA\, $C\subset A$ with $C\in {\cal I}_k$ and with $1_C=q$ such that
\beq\label{popa-3}
\|qx-xq\|<\ep/8,\,\,
{\rm dist}(qxq, C)<\ep/8\andeqn
\|qxq\|\ge \|x\|-\ep
\eneq
for all $x\in {\cal F}.$
For each $x\in {\cal F},$ let $c_x\in C$ such that
\beq\label{popa-4}
\|pxp-c_x\|<\ep/8.
\eneq
It follows from \ref{Lpopa} that, there is $p\in qAq$ and a finite dimensional \SCA\, $B\subset qAq$  with $1_B=p$ such that
\beq\label{popa-5}
\|pc_x-c_xp\|&<&\ep/8,\\
{\rm dist}(pc_xp, B)&<&\ep/8\andeqn\\
\|pc_xp\| &\ge & \|c_x\|-\ep/8
\eneq
for all $x\in {\cal F}.$
It follows that
\beq\label{popa-6}
\|px-xp\| &\le & \|pqx-pc_x\|+\|c_xp-xqp\|\\
&<&\ep/8+\|pqxq-pc_x\|+\ep/8+\|c_xp-qxqp\|\\
&<&\ep/8+\ep/8+\ep/8+\ep/8<\ep
\eneq
for all $x\in {\cal F}.$
Also
\beq\label{popa-7}
{\rm dist}(pxp, B)&\le & \|pxp-pc_xp\|+{\rm dist}(pc_xp, B)\\
&=& \|pqxqp-pc_xp\|+\ep/8<\ep
\eneq
for all $x\in {\cal F}.$ Moreover,
\beq\label{popa-8}
\|pxp\|&=&\|pqxqp\|\ge  \|pc_xp\|-\ep/8\\
&\ge & \|c_x\|-\ep/4\ge \|qxq\|-\ep/4-\ep/8\\
&\ge & \|x\|-\ep/8-\ep/4-\ep/8\ge \|x\|-\ep
\eneq
for all $x\in {\cal F}.$

\end{proof}

\begin{cor}\label{Cpopa}
Let $A$ be a unital simple \CA\, in ${\cal C}_1.$ Then $A$ satisfies the Popa condition.
\end{cor}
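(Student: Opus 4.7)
The plan is to simply chain together the three propositions that precede the corollary. Given $A \in {\cal C}_1$, the definition \ref{Drr} guarantees that for any finite ${\cal F}$, $\epsilon>0$, $\eta>0$, and $a\in A_+\setminus\{0\}$, we can find a projection $p$ and a subalgebra $C=PM_r(C(X))P \in {\cal I}_d$ with $1_C = p$ satisfying all of (\ref{Drr-1})--(\ref{Drr-4}). In particular, the first three of these hold, so Proposition \ref{SP} applies and $A$ has property (SP).

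Next, I would invoke Proposition \ref{norm}: since $A \in {\cal C}_1$, the approximating projection $p$ and subalgebra $C$ can additionally be chosen so that $\|pxp\| \geq \|x\| - \epsilon$ for all $x \in {\cal F}$. This upgrades the approximation to one matching exactly the hypothesis required in Proposition \ref{popa}: for every $\delta > 0$ and finite ${\cal G} \subset A$, we get a projection $q$, a subalgebra $C \in {\cal I}_k$ (for some $k$) with $1_C = q$, and the three inequalities $\|qx-xq\|<\delta$, $\operatorname{dist}(qxq,C)<\delta$, and $\|qxq\|\geq \|x\|-\delta$ for all $x\in {\cal G}$.

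Finally, since both hypotheses of Proposition \ref{popa}, namely property (SP) and the above three-fold approximation property, are verified, that proposition directly yields the Popa condition for $A$. Thus the corollary follows immediately.

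The main (in fact the only) thing to be careful about is that one must not confuse the roles of the different approximating subalgebras: the $C \in {\cal I}_k$ produced by the ${\cal C}_1$ definition is the input to Proposition \ref{popa}, while the finite-dimensional $B$ produced at the end of Proposition \ref{popa} is the one yielding the Popa condition. There is no real obstacle here; the work has been done in Propositions \ref{SP}, \ref{norm}, and \ref{popa}, and the corollary is essentially a packaging statement.
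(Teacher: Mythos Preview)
Your proposal is correct and matches the paper's intended argument: the corollary is stated without proof precisely because it follows immediately by combining Propositions \ref{SP}, \ref{norm}, and \ref{popa}, exactly as you describe.
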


\begin{thm}\label{trace}
Let $A$ be a unital separable simple \CA\, in ${\cal C}_1.$
Then $A$ is MF, quasi-diagonal and $T(A)\not=\emptyset.$
\end{thm}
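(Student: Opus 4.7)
The plan is to reduce everything to the homogeneous building blocks $C=PM_r(C(X))P$ provided by the $\mathcal{C}_1$ hypothesis: each such $C$ is automatically subhomogeneous and hence residually finite dimensional and quasidiagonal, and I will push its finite-dimensional c.p.c.\ approximations up to $A$ via Arveson's extension. From quasidiagonality of $A$, the MF property is automatic, and a tracial state will be obtained as an ultrafilter limit of matricial traces composed with the approximations.

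Concretely, fix a finite $\mathcal{F}\subset A$, enlarged to be closed under products of its elements, and fix a small $\eta>0$. By Corollary~\ref{Cpopa} there exist a projection $p\in A$ and a sub-\CA\ $C=PM_r(C(X))P\in\mathcal{I}_d$ with $1_C=p$ such that
\[
\|[p,x]\|<\eta,\quad \mathrm{dist}(pxp,C)<\eta,\quad \|pxp\|\ge\|x\|-\eta\qquad(x\in\mathcal{F}).
\]
Select $c_x\in C$ with $\|pxp-c_x\|<\eta$. Since $C$ is subhomogeneous, it is residually finite dimensional and admits a unital c.p.c.\ map $\psi:C\to M_N$ which is $\eta$-multiplicative and $\eta$-isometric on the finite set $\{c_x,c_xc_y\}$. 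By Arveson's extension theorem, $\psi$ extends to a unital c.p.c.\ map $\Psi:A\to M_N$.

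The key observation is that $\Psi(p)=\psi(1_C)=1_{M_N}$ is a projection, so by Choi's multiplicative-domain theorem both $p$ and $1-p$ belong to the multiplicative domain of $\Psi$. Hence $\Psi((1-p)a)=\Psi(a(1-p))=0$ for every $a\in A$, giving $\Psi(a)=\Psi(pap)$ and $\Psi(xy)=\Psi(pxyp)$ identically. Combined with $\|pxyp-(pxp)(pyp)\|=\|px(1-p)yp\|\le\|[p,x]\|\cdot\|y\|<\eta\|y\|$ and the approximate multiplicativity of $\psi$ on the $c_x$'s, a short chain of estimates yields
\[
\|\Psi(xy)-\Psi(x)\Psi(y)\|=O(\eta M),\qquad \|\Psi(x)\|\ge\|x\|-O(\eta)\qquad(x,y\in\mathcal{F}),
\]
where $M=\max_{x\in\mathcal{F}}\|x\|$. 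Choosing $\eta$ small enough shows $A$ is quasidiagonal, and since quasidiagonality implies MF for unital separable \CA s, the first two claims follow.

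Finally, iterating the construction along $(\mathcal{F}_n,\eta_n)$ with $\bigcup_n\mathcal{F}_n$ dense in $A$ and $\eta_n\to 0$ produces unital c.p.c.\ maps $\Psi_n:A\to M_{N_n}$ which are asymptotically multiplicative and asymptotically isometric. For any free ultrafilter $\omega$ on $\mathbb{N}$, set $\tau(a):=\lim_{n\to\omega}\mathrm{tr}_{N_n}(\Psi_n(a))$; unitality and positivity are clear, while traciality follows from asymptotic multiplicativity combined with the trace property on each $M_{N_n}$. The one delicate point, and the reason the proof has any content at all, is the passage from a finite-dimensional approximation of the compression $pxp$ to one of $x$ itself; the multiplicative-domain collapse $\Psi(a)=\Psi(pap)$ is precisely what makes this automatic.
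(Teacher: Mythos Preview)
Your proof is correct, but it follows a genuinely different path from the paper's. A minor citation slip: what you invoke is not Corollary~\ref{Cpopa} (which yields the Popa condition with a \emph{finite-dimensional} $B$), but rather the definition of $\mathcal{C}_1$ together with Proposition~\ref{norm} (which supplies the norm estimate $\|pxp\|\ge\|x\|-\eta$). The substance you need is available there, so this does not affect correctness.

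The paper instead passes through the Popa condition (Corollary~\ref{Cpopa}, itself resting on Lemma~\ref{Lpopa}, which already compresses the homogeneous algebra down to finite dimensions), then builds a c.p.c.\ map $L:eAe\to B$ into the finite-dimensional $B$ and precomposes with the compression $x\mapsto exe$; MF is quoted from \cite{Lncltr1}, and the trace is deduced from MF. Your route bypasses Lemma~\ref{Lpopa} entirely: you stay with the subhomogeneous $C$, push to a matrix algebra via a finite-dimensional representation, and use Arveson extension. The multiplicative-domain observation $\Psi(p)=1\Rightarrow\Psi(a)=\Psi(pap)$ is the elegant replacement for the paper's two-step compression, and it handles the passage from $pxp$ to $x$ in one stroke. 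Your explicit ultrafilter construction of the trace is essentially the unwinding of ``MF $\Rightarrow$ trace.'' Both arguments are short; yours is more self-contained (no external citation for MF), while the paper's leans on the Popa-condition machinery it has already set up for other purposes.
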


\begin{proof}
By 4.2 of \cite{Lncltr1}, every unital separable \SCA\, which satisfies the Popa condition is MF (\cite{BK1}).
To see $A$ is quasi-diagonal, let $\ep>0$ and let ${\cal F}\subset A$ be a finite subset.
Let ${\cal F}_1=\{a,b, ab: a, b\in {\cal F}\}.$ Since $A$ has the Popa condition,
there is a non-zero projection $e\in A$  and a finite dimensional \SCA\, $B\subset A$ with $1_B=e$ such that
$\|ex-xe\|<\ep/2$ for all $x\in {\cal F}_1,$  $\|exe\|>\|x\|-\ep$ and ${\rm dist}(x, B)<\ep$ for all $x\in {\cal F}_1.$ There is a \morp\, $L: eAe\to B$
such that $\|L(exe)-exe\|<\ep$ for all $x\in {\cal F}_1.$  Define $L_1: A\to B$ by $L_1(x)=L(exe)$ for all $x\in A.$
Then $\|L_1(x)\|\ge \|exe\|-\ep$ for all $x\in {\cal F}_1$ and $\|L_1(a)L_1(b)-L_1(ab)\|<\ep$ for all $a,b \in {\cal F}.$
It follows from Theorem 1 of \cite{Vo} that $A$ is quasi-diagonal.
Since it is MF, it has tracial state. It follows that $A$ is finite.
\end{proof}

\section{Regularity of \CA s in ${\cal C}_1$}

The following lemma is a variation of 3.3 of \cite{DNNP}.

\begin{lem}\label{rank1L}
Let $X$ be a compact metric space with covering dimension $d\ge 0.$ Let $r\ge 1$ be an integer and $P, p\in M_r(C(X))$ be  non-zero projections such that $p\le P.$ Suppose that $rank\, P-rank\, p$ is at least $3(d+1)$ at each $x\in X$ and $a\in pM_r(C(X))p.$  Then, for any $\ep>0,$ there exists
an invertible element  $x\in PM_r(C(X))P$ such that
$$
\|a-x\|<\ep.
$$
\end{lem}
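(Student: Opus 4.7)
This lemma is a variation of Lemma 3.3 of \cite{DNNP}, and I would follow the same strategy. The key point is that the bundle $P - p$ of fiber rank at least $3(d+1)$ supplies exactly the room needed, in the dimension-theoretic sense for vector bundles over the $d$-dimensional space $X$, to approximate the possibly non-invertible element $a \in pM_r(C(X))p$ by an invertible element of the larger corner $PM_r(C(X))P$. Naive attempts (e.g.\ replacing $a$ by $a + \eta(P-p)$, or by $a + \eta p$) fail because $a$ need not be invertible in $pM_r(C(X))p$ and invertibles need not be dense there; some mechanism is required to use the complementary bundle $P - p$ to absorb the (continuously varying) kernel of $a$.

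Step 1 (Soft polar decomposition). For a small $\eta > 0$ to be determined, set $h := (a^*a + \eta^2 p)^{1/2} \in pM_r(C(X))p$ and $v := a\, h^{-1}$, the inverse taken inside $pM_r(C(X))p$, so that $a = vh$ exactly, $h \geq \eta p$ is invertible in $pM_r(C(X))p$, and $v$ is a contraction with $v^* v = p - \eta^2 h^{-2}$. Extend $h$ to $\tilde h := h + \eta(P - p) \in PM_r(C(X))P$, which is invertible in the large corner with $\tilde h \geq \eta P$ and $\|\tilde h - h\| \leq \eta$.

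Step 2 (Invertible extension of $v$), which is the heart of the proof: using $\dim X \leq d$ together with $\mathrm{rank}(P-p) \geq 3(d+1)$, produce an invertible element $\tilde v \in PM_r(C(X))P$ with $\|\tilde v - v\|$ small, where $v$ is extended by zero to all of $P$. Standard vector-bundle stabilization over a $d$-dimensional base (cf.\ Lemma 3.2 of \cite{DNNP}, and Swan-type arguments) yields a partial isometry $s \in PM_r(C(X))P$ whose initial projection absorbs the approximate kernel bundle of $v$ inside $p$, while its final projection lies inside $P - p$; a further small correction, using the remaining free rank in $P - p$, upgrades this to an invertible $\tilde v$. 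Step 3 (Assembly): set $x := \tilde v\,\tilde h \in PM_r(C(X))P$. As a product of invertibles $x$ is invertible, and the combined bounds $\|\tilde v - v\|, \|\tilde h - h\| \leq \eta$, together with $\|v\| \leq 1$ and $\|h\| \leq \|a\| + \eta$, give $\|x - a\| < \epsilon$ for $\eta$ chosen small enough. The main obstacle is Step 2, where the factor $3$ in the hypothesis is essential: one factor of $d+1$ supplies continuous non-vanishing sections inside $P - p$, one supplies continuous trivializations compatible with the varying fiber rank of the approximate kernel of $v$ across $X$, and one absorbs the final perturbation used to pass from a partial isometry to a genuinely invertible lift.
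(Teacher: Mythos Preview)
Your Step~1 does not buy you anything: the soft polar decomposition $a=vh$ with $h$ invertible in $pM_r(C(X))p$ simply shifts the burden to approximating $v$ by an invertible in $PM_r(C(X))P$. But $v$ is again an arbitrary element of $pM_r(C(X))p$ (it has the same pointwise kernel as $a$, since $v^*v=p-\eta^2h^{-2}$ vanishes exactly where $a^*a$ does), so you have reduced the problem to itself. Step~2 is therefore the entire content of the lemma, and what you have written there is a heuristic, not an argument: the phrases ``standard vector-bundle stabilization,'' ``a partial isometry $s$ whose initial projection absorbs the approximate kernel bundle,'' and the three-line accounting of why the constant $3$ is needed do not amount to a construction. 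In particular you never say how to build the partial isometry $s$ continuously over $X$, nor why a ``further small correction'' can be made invertible rather than merely surjective on fibers.

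The paper takes a different and much more concrete route. It first reduces to the case where $X$ is a finite CW complex (and then to a general compact metric space by an inductive-limit argument you do not mention). In the finite CW case it uses stable-range results for vector bundles (specifically 6.10.3 of \cite{blbk}) to find, inside $P-p$, a trivial projection $q_1$ of rank $d+1$ with $p+q_1$ trivial, and a further trivial projection $q_2$ of rank $d+1$ orthogonal to $p+q_1$. This is exactly where the hypothesis $3(d+1)$ is consumed: $d+1$ to trivialize $p$, $d+1$ more to make room for $q_1$ inside $P-p$, and $d+1$ more to carve out $q_2$. One is then in the situation of Lemma~3.3 of \cite{DNNP} for \emph{trivial} bundles, which gives an invertible $z$ in $(p+q_1+q_2)M_r(C(X))(p+q_1+q_2)$ with $\|a-z\|<\ep/2$; finally $x=z+\tfrac{\ep}{2}(P-(p+q_1+q_2))$ is invertible in $PM_r(C(X))P$. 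No polar decomposition is needed; the whole point is to arrange that one can quote \cite{DNNP} in a free-module setting.
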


\begin{proof}
Let $A=PM_r(C(X))P$ and let $k={\rm rank\, p}.$
We first consider the case that $X$ is a  finite CW complex with dimension $d.$
In this case, by consider each summand separately, without loss of generality,  we may assume that $X$ is  connected.

Let $q\in M_N(C(X))$ for some large $N$ such that ${\rm rank \,q}=k+d+1$ and $q$ is trivial. By 6.10.3 of \cite{blbk}, $p$ is unitarily equivalent to a subprojection of $q.$ Thus, we find a projection $q_1\in M_N(C(X))$ with rank $d+1$ such
that $p\oplus q_1$ is trivial.
Since ${\rm rank}(P-p)$ is at least $3(d+1),$
by 6.10.3 of \cite{blbk} , there exists a trivial projection $p_1\in (P-p)M_r(C(X))(P-p)$ with rank $2d+1$ and $q_1$ is unitarily equivalent to  a subprojection of $p_1.$ Therefore, we may assume that $q_1\in (P-p)M_r(C(X))(P-p).$

Put $B=(p+q_1)A(p+q_1).$ Then $B\cong M_{k+d+1}(C(X)).$ Note
that $P-(p+q_1)$ has rank at least $2d+2.$ As above, find a trivial projection $q_2\in (P-(p+q_1))A(P-(p+q_1))$ with rank at least $d+1.$
Then, by 3.3 of \cite{DNNP}, there is an invertible element
$z\in (p+q_1+q_2)A(p+q_1+q_2)$ such that
$$
\|a-z\|<\ep/2.
$$
Define $x=z+\ep/2(P-(p+q_1+q_1)).$ Then $x$ is invertible in $A$ and
$
\|a-x\|<\ep.
$
This proves the case that $X$ is a finite CW complex.  In general, there exists a sequence finite CW complexes
$X_n$ with dimension at most $d$ such that $ C(X)=\lim_{n\to\infty} C(X_n)$ and
$M_r(C(X))=\lim_{n\to\infty}M_r(C(X_n)).$  Let $\phi_n: M_r(C(X_n))\to M_r(C(X))$ be the \hm\, induced by the inductive limit system.
We may assume that $\phi_n$ is unital. There are, for some $n\ge 1,$  projections $Q, p'\in M_r(C(X_n))$ such that
$$
\|\phi_n(Q)-P\|<\ep/32,\,\,\,\|\phi_n(p')-p\|<\ep/32\andeqn p'\le Q
$$
(see the proof of 2.7.2 of \cite{Lnbk}).
Moreover there is a unitary $U\in M_r(C(X))$ such that $\|U-1\|<\ep/16$ such that
$$
U^*\phi_n(Q)U=P.
$$
We may also assume that there is $b\in QM_r(C(X_n))Q$ such that
$$
\|\phi_n(b)-a\|<\ep/16.
$$
Let $b'=p'bp'\in QM_r(C(X_n))Q.$ Then
$$
\|\phi_n(b')-a\|<\ep/8.
$$
By what we have proved, there is an invertible element $c\in QM_r(C(X_n))Q$ such that
$$
\|b'-c\|<\ep/4.
$$
Note that $A$ is a unital hereditary \SCA\, of $M_r(C(X)).$ Put $x=U^*\phi_n(b')U.$  Then  $x$ is an invertible element in $A.$ We also have
$$
\|a-x\|<\ep.
$$
\end{proof}

\begin{thm}\label{srk1}
Let $A$ be a unital simple \CA\, in ${\cal C}_1.$
Then $A$ has stable rank one.
\end{thm}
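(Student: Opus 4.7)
The goal is to show that the invertible group $GL(A)$ is norm-dense in $A$. Fix $a\in A$ with $\|a\|\le 1$ and $\ep>0$; I will produce an invertible $b\in A$ with $\|a-b\|<\ep$. The engine of the argument is Lemma \ref{rank1L}: inside a corner $C=PM_r(C(X))P\in {\cal I}_d$ with rank slack $\mathrm{rank}\,P-\mathrm{rank}\,p\ge 3(d+1)$, every element of the smaller corner $pM_r(C(X))p$ is norm-approximable by an invertible of $C.$ The inequality $(d+1)/\mathrm{rank}\,P(x)<\eta/2$ built into the definition of ${\cal C}_1$ makes $\mathrm{rank}\,P(x)$ arbitrarily large compared to $3(d+1),$ supplying as much slack as needed.

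Concretely, the plan proceeds in four steps. First, use property (SP) from Proposition \ref{SP} together with continuous functional calculus on $a^*a$ to pick a nonzero $h\in A_+$ that captures the "small spectral part" of $a,$ so that any projection Cuntz-dominated by $h$ excises the region where $|a|$ is small. Second, apply the definition of ${\cal C}_1$ (Definition \ref{Drr}) to the finite set ${\cal F}=\{a,a^*\}$ with a tolerance $\dt\ll\ep$ and with $\eta$ so small that $\mathrm{rank}\,P(x)$ vastly exceeds $3(d+1),$ obtaining a projection $p\in A$ and a subalgebra $C=PM_r(C(X))P\in {\cal I}_d$ with $1_C=p,$ satisfying $\|pa-ap\|<\dt,$ ${\rm dist}(pap,C)<\dt,$ and $1-p\lesssim h.$ Third, choose $c\in C$ with $\|pap-c\|<\dt.$ Using the standard fact that a projection of sufficiently large pointwise rank in $M_r(C(X))$ over a finite CW complex contains trivial subprojections of any smaller pointwise rank, pick a trivial subprojection $q_0\le P$ of constant pointwise rank $3(d+1)$ and set $q=P-q_0;$ then $qcq\in qM_r(C(X))q$ and Lemma \ref{rank1L} produces an invertible $c_1\in C$ with $\|c_1-qcq\|<\dt.$ Fourth, assemble $b:=c_1+\dt(1-p);$ because $c_1$ is invertible in $C\subset pAp$ and $\dt(1-p)$ is invertible in $(1-p)A(1-p),$ $b$ is invertible in $A.$

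The main obstacle is the norm estimate on $\|a-b\|,$ which requires controlling both $\|c-qcq\|$ and $\|(1-p)a(1-p)-\dt(1-p)\|.$ The first quantity can a priori be as large as $3\|c\|$ because cutting off by an arbitrary rank-$3(d+1)$ projection moves $c$ by its entries on $q_0;$ the resolution I envision is to choose $q_0$ adapted to $c,$ using the spectral data of $c^*c$ together with the huge excess rank of $P$ to place $q_0$ within the low-spectrum region of $c^*c,$ so that $q_0cq_0,$ $q_0cq,$ and $qcq_0$ are each $O(\dt).$ The second quantity requires that the initial spectral choice of $h$ be tight enough that the Cuntz-domination $1-p\lesssim h$ forces $(1-p)a(1-p)$ to lie approximately in $\overline{hAh}$, and hence to have norm close to that of the $h$-localised part of $a,$ which by construction is $<\ep/2.$ Once both estimates are $O(\dt),$ the triangle inequality $\|a-b\|\le 2\dt+\|pap-c_1\|+\|(1-p)a(1-p)-\dt(1-p)\|$ gives $\|a-b\|<\ep$ upon choosing $\dt$ and $\eta$ sufficiently small at the outset; making this chain of estimates rigorous, and in particular arranging the adapted choice of $q_0$ so that the spectral cutoff is compatible with the subhomogeneous structure of $C,$ is the key technical content.
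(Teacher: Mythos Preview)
Your approach has a genuine gap at the second obstacle. The claim that $1-p\lesssim h$ forces $\|(1-p)a(1-p)\|$ to be small is false: Cuntz subequivalence is a \emph{size} comparison, not a \emph{location} comparison. The relation $1-p\lesssim h$ says only that some $x_n^*hx_n\to 1-p$, which places no constraint on where $1-p$ sits relative to $a$; the projection $1-p$ produced by Definition~\ref{Drr} is determined by approximate commutation with ${\cal F}$ and by the model subalgebra $C$, and nothing prevents it from landing under a spectral projection of $a^*a$ for \emph{large} values, in which case $\|(1-p)a(1-p)\|$ is close to $\|a\|$ and $b=c_1+\dt(1-p)$ is far from $a$. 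The first obstacle is likewise unresolved: if $c$ is close to a unitary in $C$ (nothing rules this out) then $c^*c\approx P$ has no low-spectrum region in which to place $q_0$, and even when such a region exists it bounds $\|cq_0\|$ but not $\|q_0c\|$, both of which are needed for $\|c-qcq\|$ to be small.

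The paper's proof avoids both issues by a preliminary reduction due to R\o rdam. Since $A$ is finite (Theorem~\ref{trace}), one first perturbs $a$ to a zero divisor and finds a unitary $u$ so that $d:=ub$ is \emph{genuinely orthogonal} to a nonzero projection $e$, i.e.\ $de=ed=0$. Splitting $e=e_1\oplus e_2$ with $e_2\lesssim e_1$ and passing to $(1-e_1)A(1-e_1)\in{\cal C}_1$ (Proposition~\ref{herd}), one then applies the ${\cal C}_1$ structure with $1-e_1-p\lesssim e_2$. Inside $C$ the image $e_2'$ of $e_2$ is a large-rank projection \emph{already} orthogonal to the compression $d_1$ of $d$, supplying exactly the slack Lemma~\ref{rank1L} needs with no adaptation to $c$; the remaining piece $d_2\in(1-e_1-p)A(1-e_1-p)$ is made invertible by a $2\times 2$ matrix trick, using the partial isometry implementing $(1-e_1-p)\lesssim e_1$ to realize $d_2$ as the lower-right corner of $\left(\begin{smallmatrix}0&\ep/8\\ \ep/8& d_2\end{smallmatrix}\right)$, which is invertible regardless of $d_2$. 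The point is that genuine orthogonality to a projection, arranged \emph{before} invoking the ${\cal C}_1$ approximation, is what makes both halves work; Cuntz-smallness of the complement alone is too weak.
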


\begin{proof}
We may assume that $A$ is infinite dimensional.
Let $a\in A$ be a nonzero element. We will show that $a$ is a  norm limit of invertible elements.
So we may assume that $a$ is not invertible and $\|a\|=1.$ Since $A$ is finite (\ref{trace}), $a$ is not one-sided invertible. Let $\ep>0.$  By 3.2 of \cite{Rod1},  there is a zero divisor $b\in A$ such that
\beq\label{srk-1}
\|a-b\|<\ep/2.
\eneq
We may further assume that $\|b\|\le 1.$ Therefore, by \cite{Rod1}, there is a unitary $u\in A$ such that $ub$ is orthogonal
to a non-zero positive element $x\in A.$  Put $d=ub.$ Since $A$ has (SP) (by \ref{SP}), there exists $e\in A$ such that $de=ed=0.$
Since $A$ is also simple  (for example, by 3.5.7 of \cite{Lnbk}), we may write $e=e_1\oplus e_2$ with $e_2\lesssim e_1.$

Note that $d, e_2\in (1-e_1)A(1-e_1)$ and $(1-e_1)A(1-e_1)\in {\cal C}_1$ (by \ref{herd}).  Since $(1-e_1)A(1-e_1)$ is simple, there are $x_1,x_2,...,x_m\in (1-e_1)A(1-e_1)$ such that
\beq\label{srk-2}
\sum_{i=1}^m x_i^*e_2x_i=1-e_1.
\eneq

Let $\dt>0.$  Let ${\cal F}=\{d, e_2, x_1,x_2,...,x_m,x_1^*, x_2^*,...,x_m^*\}.$
There exists a projection $p\in (1-e_1)A(1-e_1)$ and a unital \SCA\, $C\subset A$ with
$1_C=p$ and with $C=PM_r(C(X))P,$ where $r\ge 1$ is an integer, $X$ is a compact subset of a finite CW complex with dimension $d(X)$ and $P\in M_r(C(X))$ is a projection such that
\beq\label{srk-3}
\|px-xp\|&<& \dt\tforal x\in {\cal F},\\\label{srk-3+1}
{\rm dist}(pxp, C)&<&\dt\tforal x\in {\cal F},\\\label{srk-3+2}
{d(X)+1\over{{\rm rank}P(t)}}&<&1/8(m+1)\tforal t\in X\andeqn\\\label{srk-3+3}
(1-e_1-p) &\lesssim & e_2\lesssim e_1.
\eneq
With sufficiently small $\dt,$ we may assume that
\beq\label{srk-4}
\|e_2-(e_2'+e_2'')\|<\ep/16\andeqn
\|d-(d_1+d_2)\|<\ep/16,
\eneq
where $e_2'\in C$ and $e_2''\in (1-e_1-p)A(1-e_1-p)$ are  nonzero projections,
$d_1\in (p-e_2')C(p-e_2')$ and $d_2\in (1-e_1-p)A(1-e_1-p).$ Moreover,
there are $y_1,y_2,...,y_m\in C$ such that
\beq\label{srk-5}
\|\sum_{i=1}^my_i^*e_2'y_i-p\|<\ep/16.
\eneq
By (\ref{srk-3+3}), there is a partial isometry $v\in A$ such that $v^*v=1-e_1-p$ and $vv^*\le e_1.$
Put
$$
e_1'=vv^*\andeqn d_2'=(\ep/8)(e_1-e_1')+(\ep/8)v+(\ep/8)v^*+d_2.
$$
Then $(\ep/8)v+(\ep/8)v^*+d_2$ has a matrix decomposition in $(e_1'+(1-e_1-p))A(e_1'+(1-e_1-p)):$
$$
\begin{pmatrix} 0 & \ep/8\\
                        \ep/8 & d_2\end{pmatrix}.
                        $$
                        It follows that $d_2'$ is invertible in $(1-p)A(1-p).$
                        Note also
  \beq\label{srk-6}
  \|d_2-d_2'\|<\ep/8.
  \eneq
In $C,$ we have $d_1e_2'=e_2'd_1=0.$  It follows from (\ref{srk-5}) and (\ref{srk-3+2}) that
$e_2'$ has rank at least
\beq\label{srk-7}
{\rm rank P}(x)/m\ge 8d(X)+1.
\eneq
It follows from
%  3.3 of \cite{DNNP}
\ref{rank1L} that  there exists an invertible element $d_1'\in C$ such that
$$
\|d_1-d_1'\|<\ep/16.
$$
Therefore $d'=d_1'+d_2'$ is invertible   in $A.$ However,
\beq\label{srk-8}
\|d-(d_1'+d_2')\| &\le&  \|d-(d_1+d_2)\|+\|d_1-d_1'\|+\|d_2-d_2'\|\\
&<& \ep/16+\ep/16+\ep/8=\ep/4.
\eneq
Thus
\beq\label{srk-9}
\|b-u^*d'\|=\|u^*u(b-u^*d')\|=\|ub-d'\|=\|d-d'\|<\ep/4.
\eneq
Finally
\beq\label{srk-10}
\|a-u^*d'\|\le \|a-b\|+\|b-u^*d'\|<\ep/2+\ep/4<\ep
\eneq
Note that $u^*d'$ is invertible.
\end{proof}

Some version of the following is known. The  relation of $\ep$ and $\ep^2/2^9$ in the statement is not sharp but
will be needed in the proof of \ref{scomp}.

\begin{lem}\label{apa}
Let $A$ be a \CA,\, $a\in A_+$ with $0\le a\le 1$ and let $p\in A$ be a projection.
Let $1>\ep>0.$
Then
\beq\label{apa-1}
f_{\ep}(a)\lesssim f_{\ep^2/2^9}(pap+(1-p)a(1-p)).
\eneq
\end{lem}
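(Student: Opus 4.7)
The heart of the proof is an algebraic observation. Let $u := 2p - 1 \in A$, which is a self-adjoint unitary. A direct expansion gives
\[
a + uau \;=\; 2\bigl(pap + (1-p)a(1-p)\bigr) \;=\; 2b,
\]
and since $uau = u^*au \ge 0$, we deduce $b \ge \tfrac{1}{2}a$, equivalently $a \le 2b$. In particular, the support projection of $b$ dominates that of $a$.

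From this order inequality I would deduce the Cuntz subequivalence via a R{\o}rdam-style approximation. In $\widetilde{A}$, set
\[
x_n \;:=\; (b + 1/n)^{-1/2}(a - \ep)_+^{1/2}.
\]
The estimate $b + 1/n \ge \tfrac{1}{2}(a + 2/n)$ together with operator anti-monotonicity of inversion gives $(b+1/n)^{-1} \le 2(a + 2/n)^{-1}$, so functional calculus on $a$ yields
\[
x_n^* x_n \;\le\; 2\,(a-\ep)_+^{1/2}(a + 2/n)^{-1}(a-\ep)_+^{1/2} \;\le\; 2,
\]
hence $\|x_n\|^2 \le 2$. Using $b(b+1/n)^{-1} = 1 - \tfrac{1}{n}(b+1/n)^{-1}$ one computes $x_n^* b x_n = (a-\ep)_+ - \tfrac{1}{n} x_n^* x_n$, so $x_n^* b x_n \to (a-\ep)_+$ in norm. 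Writing $b = (b - \dt)_+ + h(b)$ with $\|h(b)\| \le \dt$, we obtain $\|x_n^* h(b) x_n\| \le 2\dt$, whence $\|x_n^*(b-\dt)_+ x_n - (a-\ep)_+\| \le 2\dt + o(1)$. R{\o}rdam's lemma (if $\|y - z\| < \eta$ and $z \ge 0$ then $(y - \eta)_+ \lesssim z$) then delivers
\[
(a - \ep - 2\dt)_+ \;=\; \bigl((a-\ep)_+ - 2\dt\bigr)_+ \;\lesssim\; (b - \dt)_+.
\]

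The proof is finished by the standard Cuntz equivalences between $f$-cutoffs and $(\cdot-\cdot)_+$-cutoffs: $f_\ep(a) \lesssim (a - \ep/2)_+$ (same support $\{a > \ep/2\}$) and $(b - \dt)_+ \lesssim f_\dt(b)$. Applying the displayed inequality with $\ep/2$ in place of $\ep$ and choosing $\dt$ small enough to absorb the $2\dt$ slack gives $f_\ep(a) \lesssim f_\dt(b)$ with $\dt$ much smaller than the prescribed $\ep^2/2^9$, hence the stated subequivalence. As the authors explicitly warn, the bound $\ep^2/2^9$ is deliberately loose: this argument in fact produces a subequivalence with cutoff linear in $\ep$, and the weaker quadratic form with small constant is what will be convenient for the later application in \ref{scomp}. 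No substantial obstacle intervenes beyond the functional-calculus verification of $\|x_n\|^2 \le 2$ and the careful tracking of the (generous) slack through the chain of cutoff comparisons.
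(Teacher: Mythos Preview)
Your argument is correct and takes a genuinely different route from the paper's proof.

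The paper works in $M_2(A)$: it sets $x=\begin{pmatrix} pa^{1/2} & 0\\ (1-p)a^{1/2} & 0\end{pmatrix}$, so that $x^*x=\operatorname{diag}(a,0)$ while $xx^*$ is the full $2\times 2$ block matrix with diagonal $b=\operatorname{diag}(pap,(1-p)a(1-p))$ and off-diagonal entries $pa(1-p)$, $(1-p)ap$. The point is then to show $\|xx^*-f_\eta(b)\,xx^*\,f_\eta(b)\|<\ep/2$ for a suitable $\eta$, after which R{\o}rdam's lemma gives $f_\ep(a)=f_\ep(x^*x)\sim f_\ep(xx^*)\lesssim f_\eta(b)$. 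The quadratic cutoff $\eta=\ep^2/2^9$ enters because controlling the off-diagonal term requires the estimate $\|(1-f_\eta(pap))pa(1-p)\|\le \|(1-f_\eta(pap))pa^{1/2}\|$, which one bounds by squaring: $\|(1-f_\eta(pap))pap(1-f_\eta(pap))\|<\eta$.

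Your approach bypasses the $M_2$ embedding entirely via the pinching identity $a+uau=2b$ with $u=2p-1$, yielding the order inequality $a\le 2b$ directly in $A$. The subsequent R{\o}rdam-type computation with $x_n=(b+1/n)^{-1/2}(a-\ep)_+^{1/2}$ is cleaner and, as you note, produces a cutoff linear in $\ep$ (something like $f_\ep(a)\lesssim f_{\ep/5}(b)$), which is strictly sharper than the paper's quadratic bound. The paper's route has the mild advantage of being a template for situations where one has only approximate commutation rather than a genuine order relation, but for the statement at hand your argument is both shorter and stronger.
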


\begin{proof}
We will work in $M_2(A)$ and identify $A$ with $e_{11}M_2(A)e_{11},$ where
$\{e_{i,j}\}_{1\le i,j\le 2}$ is a matrix unit for $M_2.$

Let
$x=\begin{pmatrix} pa^{1/2} & 0\\
                                (1-p)a^{1/2} & 0\end{pmatrix}.
                                $
Then,
\beq\label{apa-2}
x^*x=\begin{pmatrix} a & 0\\
                                  0 &0\end{pmatrix}\andeqn xx^*=\begin{pmatrix} pap & pa(1-p)\\
                                                                                                          (1-p)ap & (1-p)a(1-p)\end{pmatrix}.
 \eneq
 We compute that
 \beq\label{apa-3}
&& \|(1-f_{\ep^2/2^9}(pap))pap\|<\ep^2/2^8,\\
 && \|(1-f_{\ep^2/2^9}((1-p)a(1-p)))(1-p)a(1-p)-(1-p)a(1-p)\|<\ep^2/2^8.
 \eneq
 Moreover,
 \beq\label{apa-4}
 \|(1-f_{\ep^2/2^9}(pap))pa^{1/2}\|^2 =
 \|(1-f_{\ep^2/2^9}(pap))pap(1-f_{\ep^2/2^9}(pap))\|<\ep^2/2^8.
 \eneq
 Therefore
 \beq\label{apa-5}
 \|(1-f_{\ep^2/2^9}(pap))pa(1-p)\|<\ep/2^4.
 \eneq
 Similarly
 \beq\label{apa-6}
 \|(1-f_{\ep^2/2^9}((1-p)a(1-p)))(1-p)ap\|<\ep/2^4.
 \eneq
 Put $b={\rm diag}(pap, (1-p)a(1-p)).$ Then
 \beq\label{apa-7}
 \|xx^*-f_{\ep^2/2^9}(b)xx^*f_{\ep^2/64}(b)\|<\ep/2
\eneq
It follows from Lemma 2.2 of \cite{RrUHF} that
\beq\label{apa-8}
f_{\ep}(xx^*) &\lesssim& f_{\ep^2/2^9}(b)xx^*f_{\ep^2/2^9}(b)\le f_{\ep^2/2^9}(b)\\
\eneq
This implies that, in $A,$
$$
f_{\ep}(a)\lesssim f_{\ep^2/2^9}(pap+(1-p)a(1-p)).
$$

\end{proof}

The following is a standard compactness fact. We include here for convenience.

\begin{lem}\label{Dini}
Let $X$ be a compact metric space and $g\in C(X)$ with $0\le g(x)\le 1$ for all $x\in X.$
Suppose that $f_0$ is a lower-semi continuous function on $X$ such that
$1\ge f_0(x)>g(x)$ for all $x\in X.$ Suppose also that $f_n\in C(X)$ is a sequence
of continuous functions with $f_n(x)>0$ for all $x\in X,$ $f_n(x)\le f_{n+1}(x)$ for all $x$ and $n,$ and
$\lim_{n\to\infty} f_n(x)=f_0(x)$ for all $x\in X.$ Then there is $N\ge 1$ such that
\beq\label{Dini-1}
f_n(x)>g(x)\rforal x\in X
\eneq
and for all $n\ge N.$
\end{lem}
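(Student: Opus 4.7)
The plan is to mimic the proof of Dini's theorem, using compactness and the fact that although $f_0$ is only lower-semi continuous, the function $g$ below it is continuous, so the gap $f_n(y)-g(y)$ inherits enough continuity to be controlled locally.

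First, I would fix a point $x \in X$. Since $f_n(x)$ is an increasing sequence of real numbers converging to $f_0(x)$, and $f_0(x) > g(x)$, there is an integer $N_x \ge 1$ and a number $\eta_x > 0$ such that
\[
f_{N_x}(x) > g(x) + \eta_x.
\]
Next, since both $f_{N_x}$ and $g$ are continuous on $X$, the function $f_{N_x} - g$ is continuous, so there exists an open neighborhood $U_x$ of $x$ on which
\[
f_{N_x}(y) > g(y) + \eta_x/2 \rforal y \in U_x.
\]
Because the sequence $\{f_n\}$ is pointwise nondecreasing, this inequality propagates: for every $n \ge N_x$ and every $y \in U_x$, we have $f_n(y) \ge f_{N_x}(y) > g(y)$.

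Then I would invoke compactness of $X$ to extract a finite subcover $U_{x_1}, U_{x_2}, \ldots, U_{x_k}$ of the open cover $\{U_x : x \in X\}$, and set $N = \max\{N_{x_1}, N_{x_2}, \ldots, N_{x_k}\}$. For any $y \in X$ there is some $i$ with $y \in U_{x_i}$, and for any $n \ge N \ge N_{x_i}$ the previous paragraph gives $f_n(y) > g(y)$, which is the desired conclusion.

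There is essentially no obstacle here; the only subtle point is that $f_0$ is merely lower-semi continuous, so one cannot directly apply Dini's theorem. But we do not actually need continuity of $f_0$: the argument only uses continuity of the individual $f_n$ and of $g$, and lower-semi continuity of $f_0$ is not even invoked explicitly — it is automatic from the assumption that $f_0$ is the pointwise increasing limit of continuous functions, and in any case all we use about $f_0$ is the pointwise strict inequality $f_0 > g$.
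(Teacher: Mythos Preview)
Your proof is correct and follows essentially the same approach as the paper: pointwise choice of an index $N_x$, continuity of $f_{N_x}-g$ to get a neighborhood, compactness to extract a finite subcover, and monotonicity to pass to the maximum index. Your added observation that the lower-semi continuity of $f_0$ is never actually used is accurate and worth noting.
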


\begin{proof}
For each $x\in X,$ there exists $N(x)\ge 1$ such that
\beq\label{Dini-2}
f_n(x)>g(x)\tforal n\ge N(x).
\eneq
Since $f_{N(x)}-g$ is continuous, there is $\dt(x)>0$ such that
\beq\label{Dini-3}
f_{N(x)}(y)>g(y)\tforal y\in B(x, \dt(x)).
\eneq
Then $\cup_{x\in X}B(x, \dt(x))\supset X.$ There are $x_1, x_2,...,x_m\in X$ such that
$\cup_{i=1}^m B(x_i, \dt(x_i))\supset X.$  Let $N=\max\{N(x_1), N(x_2),...,N(x_m)\}.$
Then, if $n\ge N,$ for any $x\in X,$ there exists $i$ such that $x\in B(x_i, \dt(x_i)).$ Then
\beq\label{Dini-4}
f_n(x)\ge f_{N(x_i)}(x)\ge g(x).
\eneq

\end{proof}

\begin{thm}\label{scomp}
If $A$ is a unital simple \CA\, in ${\cal C}_1,$  then $A$ has the strict comparison for positive elements in the following sense:
If $a, b\in A_+$ and
$$
d_\tau(a)<d_{\tau}(b)\tforal \tau\in T(A),
$$
then $a\lesssim b,$ where $d_\tau(a)=\lim_{\ep\to 0} \tau(f_{\ep}(a)).$

.
\end{thm}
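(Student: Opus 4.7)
The approach is to exploit the tracial local approximation by subalgebras $C=PM_r(C(X))P\in \mathcal{I}_d$ with slow dimension growth, combine it with Lemma \ref{apa} to split $f_{\ep}(a)$ into a ``$pap$''-part to be handled by comparison inside $C$ and an ``$(1-p)a(1-p)$''-part which is small because $1-p$ is small. It suffices, by the usual reformulation of Cuntz subequivalence, to prove $f_{\ep}(a)\lesssim b$ for every small $\ep>0$.

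First I would upgrade the hypothesis $d_{\tau}(a)<d_{\tau}(b)$ to a uniform tracial gap. The set $T(A)$ is compact and non-empty (Theorem \ref{trace}), and $\tau\mapsto d_{\tau}(x)$ is lower semicontinuous. Using Lemma \ref{Dini} applied to the monotone nets $\tau(f_{1/n}(b))\nearrow d_{\tau}(b)$ and $\tau(f_{1/n}(a))\nearrow d_{\tau}(a)$, I obtain $\gamma>0$ and $\ep_0>0$ with
\beq\label{sc-plan1}
\tau(f_{\ep}(a))+2\gamma<\tau(f_{\ep_0}(b)) \rforal \tau\in T(A),\ 0<\ep<\ep_0.
\eneq
Using simplicity together with the property (SP) from Proposition \ref{SP}, I pick a non-zero positive element $e_0\in A_+$ orthogonal to $f_{\ep_0}(b)$ and dominated by $(b-\ep_0/2)_+ - f_{\ep_0}(b)$, and small enough in trace that $\tau(e_0)<\gamma/4$ for all $\tau$ (taking $e_0$ in a suitable small hereditary subalgebra cut from the spectral ``shell'' of $b$ between $\ep_0/2$ and $\ep_0$, and using simplicity to shrink it further inside via SP).

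Second, I would apply the definition of $\mathcal{C}_1$ to a finite subset containing $a,b$, with the distinguished positive element of Definition \ref{Drr} taken to be $e_0$. This yields a projection $p$ and a subalgebra $C=PM_r(C(X))P\in \mathcal{I}_d$ with $1_C=p$, with $\|pa-ap\|,\|pb-bp\|$ arbitrarily small, $\text{dist}(pap,C),\text{dist}(pbp,C)$ arbitrarily small, the slow dimension bound $(d+1)/\text{rank}\,P(x)<\eta/2$ for any prescribed $\eta$, and $1-p\lesssim e_0$. By Proposition \ref{norm} I may arrange $\|pxp\|\ge \|x\|-\ep/64$ for $x=a,b$. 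Pick $a',b'\in C_+$ close to $pap,pbp$. Set $\delta=\ep^2/2^9$. By Lemma \ref{apa},
\beq\label{sc-plan2}
f_{\ep}(a)\lesssim f_{\delta}(pap)+f_{\delta}((1-p)a(1-p)),
\eneq
with the two summands orthogonal. The second is dominated by $1-p\lesssim e_0$. A small-perturbation argument (valid since we can take $a'$ within any prescribed tolerance of $pap$) gives $f_{\delta}(pap)\lesssim f_{\delta/2}(a')$ inside $C$.

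Finally, I would show $f_{\delta/2}(a')\lesssim b'$ in $C$. This is the main obstacle. Restriction from $T(A)$ to $T(C)$ combined with $\tau(1-p)\le \tau(e_0)<\gamma/4$ transports the gap \eqref{sc-plan1} into a uniform tracial inequality between $a'$ and $b'$ against the restricted traces, and via averaging against probability measures on $X$ this translates into a rank inequality of the form $\text{rank}\,f_{\delta/2}(a')(x)+(d+1)\le \text{rank}\,(b'-\delta')_+(x)$ for all $x\in X$, once $\eta$ has been chosen small enough compared to $\gamma$ (so that the dimensional correction $d+1$ is absorbed into the trace gap via the slow growth condition). To convert the pointwise rank inequality into Cuntz subequivalence in $C$, I would follow the matrix bundle technique: by Lemma \ref{trivialpro}, pass to a unital embedding of $C$ into a matrix algebra $M_l(C(X))$ and apply the standard Cuntz comparison in matrix algebras over finite-dimensional complexes (along the lines of \cite{DNNP}, and of Lemma \ref{rank1L} in this paper) to obtain $f_{\delta/2}(a')\lesssim b'$. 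Combining: $f_{\ep}(a)\lesssim b'+e_0\lesssim f_{\ep_0}(b)+e_0\lesssim b$, where the last step uses $e_0\perp f_{\ep_0}(b)$ and $e_0\lesssim (b-\ep_0/2)_+$. The main technical difficulty is in the conversion of the tracial gap into a pointwise rank inequality inside $C$ with enough room for the dimensional correction; the slow dimension growth $(d+1)/\text{rank}\,P(x)<\eta/2$ is tailor-made for exactly this purpose.
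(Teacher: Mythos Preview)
Your overall architecture matches the paper's: reduce to $f_{\ep}(a)\lesssim b$, use Lemma \ref{apa} to split into a ``$pap$''-part handled inside $C$ and a ``$(1-p)$''-part absorbed by a small projection, and exploit the slow dimension growth for the comparison inside $C$. But the crucial step---transporting the tracial gap from $T(A)$ into $T(C)$---does not work the way you describe, and this is where the paper's proof does something you are missing.

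You write that ``restriction from $T(A)$ to $T(C)$'' gives a uniform tracial inequality between $a'$ and $b'$. The problem is that the map $T(A)\to T(C)$ given by $\tau\mapsto \tau|_C/\tau(p)$ is not surjective: traces on $C=PM_r(C(X))P$ include all the point-evaluation traces $t_x$, and these need not extend to traces on $A$. To obtain the rank inequality at every $x\in X$ you need the tracial gap for \emph{every} $t\in T(C)$, not just for those coming from $A$. The paper circumvents this by invoking the Cuntz--Pedersen lemma (\cite{CP}): one writes $f_{\ep^2/2^{12}}(c)\approx\sum z_j^*z_j$ and $f_{\delta_2}(b_2)\approx b'+e_1+\sum z_jz_j^*$ in $A$, and then approximates the $z_j$ by elements $z_j'\in C$. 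Since $t((z_j')^*z_j')=t(z_j'(z_j')^*)$ holds for \emph{every} trace $t$ on $C$, the inequality $d_t(f_{\ldots}(c_2))+9\dim(X)/m\le d_t(f_{\ldots}(b_3))$ follows for all $t\in T(C)$, with the projection $e_1$ (chosen via (SP)) supplying the dimension correction. The comparison inside $C$ is then concluded via Toms' theorem (3.15 of \cite{Tom-2}), not via Lemma \ref{rank1L} or \cite{DNNP}, which concern invertibility rather than Cuntz subequivalence.

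A minor point: your construction of $e_0$ as ``dominated by $(b-\ep_0/2)_+-f_{\ep_0}(b)$'' does not work since that difference is not positive; the paper instead takes $e$ inside the hereditary subalgebra where $b_1=1$ (so $b_1e=e$), and a second projection $e_1$ inside $f_{\delta_2}(b_2)$ to serve as the rank buffer.
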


\begin{proof}
Let $a,\,b\in A_+$ be two non-zero elements such that
\beq\label{scomp-1}
d_\tau(a)<d_\tau(b)\tforal \tau\in T(A).
\eneq
For convenience, we assume that $\|a\|, \|b\|= 1.$
Let $1/2>\ep>0.$   Put $c=f_{\ep/16}(a).$
%where, for $1>t>0,$
%$f_t\in C([0,1])_+$ with $\|f_t\|=1,$ $f_t(s)=0$ if $s\in [0,t/2]$ and $f_t(s)=1$ if
%$s\in [t, 1].$
If $c$ is Cuntz equivalent to $a,$ then
zero is an isolated point in $sp(a).$ So, $a$ is Cuntz equivalent to a projection.
Then $d_\tau(a)$  as a function on $T(A)$ is continuous on $T(A).$ Since $d_\tau(b)$ (as a function
on $T(A)$)  is lower-semi continuous on $T(A),$
the inequality (\ref{scomp-1}) implies that
\beq\label{scomp-2}
r_0=\inf\{d_\tau(b)-d_{\tau}(c): \tau\in T(A)\}>0.
\eneq
Otherwise, there is a nonzero element $c'\in \overline{aAa}_+$ such that
$c'c=cc'=0.$  Therefore
\beq\label{scomp-3}
\inf\{d_\tau(b)-d_{\tau}(c): \tau\in T(A)\}>0.
\eneq
So in either way, (\ref{scomp-2}) holds.

Put $c_1=f_{\ep/64}(a).$
It follows from \ref{Dini} that there is $1>\dt_1>0$ such that
\beq\label{scomp-3+1}
\tau(f_{\dt_1}(b))>\tau(c)\ge d_{\tau}(c_1)\tforal \tau\in T(A).
\eneq
Put $b_1=f_{\dt_1}(b).$
Then
\beq\label{scomp-3+2}
r=\inf\{\tau(b_1)-d_{\tau}(c_1): \tau\in T(A)\}
\ge \inf\{\tau(b_1)-\tau(c): \tau\in T(A)\}>0.
\eneq
Note that $\|b\|=1.$ Since $A$ is simple and has (SP) (by \ref{SP}),
there is a  non-zero projection $ e\in A$ such that $b_1e=e$ and
\beq\label{scomp-4}
\tau(e)<r/8\tforal \tau\in T(A).
\eneq
Let $r_1=\inf\{\tau(e): \tau\in T(A)\}.$  Note that, since $A$ is simple and $T(A)$ is compact, $r_1>0.$
Let $b_2=(1-e)b_1(1-e).$
Thus, there is $0<\dt_2<\dt_1/2<1/2$ such that
\beq\label{scomp-3+3}
7r/8<\inf\{\tau(f_{\dt_2}(b_2))-\tau(c_1):\tau\in T(A)\}<r-r_1.
\eneq

Since $f_{\dt_2}(b_2)f_{3/4}(b_2)=f_{3/4}(b_2)$ and since $\overline{f_{3/4}(b_2)Af_{3/4}(b_2)}$ is non-zero,
there is a non-zero projection $e_1\in A$ such that $e_1f_{\dt_2}(b_2)=e_1$ with
$\tau(e_1)<r/18$ for all $\tau\in T(A).$

There are $x_1, x_2,...,x_m\in A$ such that
\beq\label{scomp-5}
\sum_{i=1}^m x_i^*e_1x_i=1.
\eneq
Let $\sigma=\min\{\ep^2/2^{17}(m+1), \dt_1/8, r_1/2^7(m+1)\}.$

By \cite{CP}, there are $z_1, z_2,...,z_K\in A$  and $b'\in A_+$ such that
\beq\label{scomp-6-1}
\|f_{\ep^2/2^{12}}(c)-\sum_{j=1}^Kz_j^*z_j\|<\sigma/4\andeqn \|f_{\dt_2}(b_2)-(b'+e_1+\sum_{j=1}^Kz_jz_j^*)\|<\sigma/4.
\eneq

Let
 $$
 {\cal F}=\{a, b, c, e, c_1, b_1, b', e_1\}\cup\{x_i, x_i^*:1\le i\le m\}\cup\{z_j, z_j^*: 1\le j\le K\}.
 $$
Since $A\in {\cal C}_1,$ for any $\eta>0,$ there exists a projection $p\in A,$ a \SCA\, $C\subset A$  with $1_C=p$ and
with the form $C=PM_r(C(X))P,$ where $r\ge 1$ is an integer, $X$ is a compact subset of a finite CW complex with
dimension $d(X)$ and $P\in M_r(C(X))$ is a projection such that
\beq\label{scomp-6}
\|px-xp\|&<&\eta\tforal x\in {\cal F},\\
{\rm dist}(pxp, C)&<&\eta\tforal x\in {\cal F}\\\label{scomp-6+2}
{d(X)+1\over{\rm rank}P(\xi)}&<&1/256(m+1)\tforal \xi\in X\andeqn\\
1-p &\lesssim& e.
\eneq
By choosing sufficiently small $\eta,$ we obtain $b_3, c_2, b''\in C_+,$ a projection $q_1\in C,$
$y_1, y_2,...,y_m\in C,$ $ z_1', z_2',...,z_K\in C$ such that
\beq\label{scomp-7}
\|pcp-c_2\|<\sigma,\,\,\, \|f_{\ep^2/2^{14}}(pcp)-f_{\ep^2/2^{14}}(c_2)\|<\sigma,\\\label{scomp-7+3}
\|pb_2p-b_3\|<\sigma,\,\,\,\|f_{\dt_2}(pb_2p)-f_{\dt_1}(b_3)\|<\sigma,\,\,\,\|f_{\dt_2/4}(pb_2p)-f_{\dt_2/4}(b_3)\|<\sigma,
\\\label{scomp-8}
\|pe_1p-q_1\|<\sigma,\,\,\,\|\sum_{i=1}^m y_i^*q_1y_i-p\|<\sigma
\eneq
and,  (using (\ref{scomp-6-1})),  such that
\beq\label{scomp-9}
\|f_{\ep^2/2^{14}}(c_2)-\sum_{j=1}^K (z_j')^*z'_j\|<\sigma \andeqn\\
\|f_{\dt_2}(b_3)-(\sum_{j=1}^K z_j'(z_j')^*+q_1+b'')\|<\sigma.
\eneq
Note that, by (\ref{scomp-8}) and (\ref{scomp-6+2}),
\beq\label{scomp-10}
{\rm rank}(q_1)(\xi)\ge {\rm rank P}(\xi)/m\ge 256(d(X)+1)\tforal \xi\in X.
\eneq
Therefore
\beq\label{scomp-10+1}
t(q_1)\ge 1/m\tforal t\in T(C).
\eneq
It follows that
\beq\label{scomp-12}
t(q_1)-2\sigma> 9(d(X)+1)/m\tforal t\in T(C).
\eneq

Therefore, by (\ref{scomp-9}),
\beq\label{scomp-11}
d_t(f_{\ep^2/2^{13}}(c_2))  +9d(X)/m  &\le & t(f_{\ep^2/2^{14}}(c_2))\le \sigma+\sum_{j=1}^Kt((z_j')^*z_j') +9d(X)/m\\
&=& \sigma+9 d(X)/m+\sum_{j=1}^Kt(z_j'(z_j')^*)\\
&\le & (t(q_1)-\sigma)+\sum_{j=1}^Kt(z_j'(z_j')^*)\\
&\le & t(f_{\dt_1}(b_3))\le
d_t(f_{\dt_1/2}(b_3))
\eneq
for all $t\in T(C).$
It follows from  3.15 of \cite{Tom-2}
\beq\label{scomp-13}
f_{\ep^2/2^{13}}(c_2)\lesssim f_{\dt_1/2}(b_2).
\eneq
By (\ref{scomp-7+3}) and Lemma 2.2 of \cite{RrUHF},
\beq\label{scomp-14}
f_{\dt_1/2}(b_3)\le f_{\dt_1/4}(pb_2p)\le pb_2p.
\eneq
By (\ref{scomp-7}),
\beq\label{scomp-15}
f_{\ep^2/2^{11}}(pcp)\lesssim f_{\ep^2/2^{12}}(c_2)\le pb_2p.
\eneq
It follows from \ref{apa} and (\ref{scomp-15})  that
\beq\label{scomp-16}
f_{\ep/2}(c) &\lesssim & f_{\ep^2/2^{11}}(pcp+(1-p)c(1-p))\lesssim f_{\ep^2/2^{11}}(pcp)\oplus (1-p)\\
&\lesssim & pb_2p +e\lesssim b_2+e \lesssim b_1\lesssim b.
\eneq
We also have
\beq\label{scomp-17}
f_{\ep}(a)\lesssim f_{\ep/2}(f_{\ep/16}(a))=f_{\ep/2}(c)\lesssim b.
\eneq
Since this holds for all $1>\ep>0,$ by 2.4 of \cite{RrUHF}, we conclude that
$$
a\lesssim b.
$$

\end{proof}

\begin{thm}\label{weak}
If $A$ is a unital separable simple \CA\, in ${\cal C}_1,$  then $K_0(A)$ is weakly unperforated Riesz group.
\end{thm}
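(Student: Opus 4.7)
The plan is to derive both the weak unperforation and the Riesz interpolation property from strict comparison (Theorem \ref{scomp}), stable rank one (Theorem \ref{srk1}), property (SP) (Proposition \ref{SP}), faithfulness of tracial states on a simple unital $A$, and compactness of $T(A)$ (Theorem \ref{trace}).

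\textbf{Weak unperforation.} Suppose $x\in K_0(A)$ and $nx\in K_0(A)_+\setminus\{0\}$ for some integer $n\ge 1$. Write $x=[p]-[q]$ with $p,q$ projections in some $M_k(A)$. The hypothesis produces a nonzero projection $r\in M_N(A)$ such that $n[p]=n[q]+[r]$. Since $A$ is simple and unital, every $\tau\in T(A)$ is faithful, so $\tau(r)>0$; compactness of $T(A)$ then yields a $\delta>0$ with $\tau(p)-\tau(q)\ge \delta$ for every $\tau\in T(A)$. Consequently $d_\tau(q)=\tau(q)<\tau(p)=d_\tau(p)$ uniformly on $T(A)$, so strict comparison gives $q\lesssim p$ in the Cuntz order, and stable rank one then upgrades this to the Murray--von Neumann relation $[q]\le[p]$ in $K_0(A)$; thus $x\ge 0$.

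\textbf{Riesz decomposition.} It suffices to show: whenever $[p]\le[q_1]+[q_2]$ in $K_0(A)$ for projections $p,q_1,q_2\in M_\infty(A)$, there exist projections $r_1,r_2$ with $[r_i]\le[q_i]$ and $[r_1]+[r_2]=[p]$. By stable rank one I may assume $p\le q_1\oplus q_2$ outright. The diagonal compressions $c_i:=q_ipq_i$ are positive contractions in $\overline{q_iAq_i}$, the off-diagonal terms $q_ipq_j$ ($i\ne j$) are trace-zero, so $\tau(p)=\tau(c_1)+\tau(c_2)$ for every $\tau\in T(A)$. Property (SP) in each hereditary subalgebra $\overline{c_iAc_i}$, together with the tracial $\mathcal{I}_d$-approximation provided by $\mathcal{C}_1$, produces projections $e_i\lesssim c_i$ (hence $[e_i]\le[q_i]$) whose traces approximate $\tau(c_i)$ uniformly on $T(A)$. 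The strict-comparison argument of the first half, applied after perturbing each $e_i$ by an arbitrarily small projection supplied by (SP), then promotes this approximate identity to the exact equality $[e_1]+[e_2]=[p]$ in $K_0(A)$.

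\textbf{Principal obstacle.} Strict comparison yields Cuntz subequivalence only from \emph{strict} tracial inequalities, so the delicate point is to close the gap between the approximate identity $\tau(e_1)+\tau(e_2)\approx\tau(p)$ and the exact identity $[e_1]+[e_2]=[p]$ in $K_0(A)$. This is exactly where the structure of $\mathcal{C}_1$ beyond simplicity, stable rank one, and strict comparison is used: the slow-dimension-growth tracial approximation by algebras in $\mathcal{I}_d$ allows one to control the ``infinitesimal defect'' by a sufficiently small projection provided by (SP), and the faithfulness and compactness of $T(A)$ ensure the adjustment can be done uniformly. An alternative route, which may be cleaner once Section 6 is in place, is to establish the Choquet simplex structure of $T(A)$ first and transfer Riesz interpolation from $\mathrm{Aff}(T(A))$ down to $K_0(A)$ via $\rho_A$, using strict comparison and weak unperforation to handle the infinitesimal subgroup.
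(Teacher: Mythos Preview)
Your weak unperforation argument is correct and coincides with the paper's: write $x=[p]-[q]$, use $nx>0$ to get $\tau(p)>\tau(q)$ for all $\tau\in T(A)$, and apply strict comparison (Theorem \ref{scomp}).

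The Riesz part, however, has a genuine gap, and you have in fact identified it yourself without closing it. From $\tau(e_1)+\tau(e_2)\approx\tau(p)$ you cannot pass to $[e_1]+[e_2]=[p]$ in $K_0(A)$ using only strict comparison and (SP). Strict comparison manufactures \emph{inequalities} from strict tracial gaps; it does not manufacture \emph{equalities}. Concretely, if you arrange $\tau(e_1)+\tau(e_2)<\tau(p)$ uniformly, you do get a projection $f$ with $[f]=[p]-[e_1]-[e_2]$ and $\tau(f)$ small, but to absorb $f$ into, say, $e_1$ while preserving $[e_1\oplus f]\le[q_1]$ you would need $\tau(e_1)+\tau(f)<\tau(q_1)$ for all $\tau$. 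Since $c_1=q_1pq_1\le q_1$ gives only $\tau(c_1)\le\tau(q_1)$ with possible equality (e.g.\ when $q_1\le p$), there is no room in general for that strict inequality. Your ``perturb by an arbitrarily small projection supplied by (SP)'' does not create the needed slack, and your alternative via $\Aff(T(A))$ runs into the same issue: infinitesimals in $K_0(A)$ are not controlled by tracial data alone.

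The paper avoids this problem entirely by working \emph{inside} the approximating algebra. Given $q\le p=p_1+p_2$, it first uses (SP) and strict comparison to shave off tiny subprojections $p_{0,i}\le p_i$ creating genuine strict tracial room, then passes to a large $C=PM_r(C(X))P\in{\cal I}_k$ with $(k+1)/\mathrm{rank}\,P$ very small. Inside $C$ the images $q',p_1'',p_2''$ are honest projections over $C(X)$, and one performs an \emph{exact} decomposition $q'=q_1'+q_2'$ by rank arithmetic: on each component of $X$ choose integers $m_{0,1},m_{0,2}$ with $m_{0,1}+m_{0,2}=\mathrm{rank}\,q'-k$ and $m_{0,i}<\mathrm{rank}\,p_i''-10k$, take a trivial subprojection $q_1'\le q'$ of rank $m_{0,1}$, and apply the stable range theorem (6.10.3 of \cite{blbk}) to get $q_i'\lesssim p_i''$. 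The complementary piece $(1-e)q(1-e)$ is dominated by the reserved $p_{0,1}$. The point is that the equality $q'=q_1'+q_2'$ is automatic because one is literally cutting $q'$ into two pieces, not assembling $q$ from independently chosen approximants; no passage from approximate to exact trace identity is needed.
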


\begin{proof}
Note that, for each integer $n\ge 1,$ by \ref{matrix},
$M_n(A)\in {\cal C}_1.$ Suppose that $p, \, q\in M_n(A)$ are two
projections such that
\beq\label{weak-1}
\tau(p)>\tau(q)\tforal \tau\in T(A).
\eneq
Then, by \ref{scomp},
$
q\lesssim p.
$
Therefore, if $x\in K_0(A)$ with $nx>0$ for some integer $n\ge 1,$
then one may write $x=[p]-[q]$ for some projections
$p, q\in M_k(A)$ for some integer $k\ge 1.$
The fact that $nx>0$ implies that
\beq\label{weak-2}
n(\tau(p)-\tau(q))>0\tforal \tau\in T(A)
\eneq
which implies that
\beq\label{weak-3}
\tau(p)>\tau(q)\tforal \tau\in T(A).
\eneq
It follows that $[p]>[q].$ So $x>0.$ This shows
that $K_0(A)$ is weakly unperforated.

To show that $K_0(A)$ is a Riesz group, let $q\le p$ be two projections
in $M_n(A)$ such that
$p=p_1+p_2,$ where $p_1$ and $p_2$ are two mutually orthogonal projections in $M_n(A).$
We need to show that there are projections $q_1, q_2\in M_n(A)$ such that $q=q_1+q_2$ and $[q_1]\le [p_1]$ and
$[q_2]\le [p_2].$   Since, by \ref{matrix}, $M_n(A)\in {\cal C}_1,$ to simplify the notation, we may assume that
$p, q\in A.$

Since $q\le p,$  we may assume that
\beq\label{weak-4}
\tau(p)>\tau(q)\tforal \tau\in T(A).
\eneq
Therefore, (since $A$ is simple and has (SP)), we obtain two non-zero projections
$p_{0,1}\le p_1$ and $ p_{0,2}\le p_2,$ and another non-zero
projection $e_{00}\in A$ such that
\beq\label{weak-5}
\tau(p')>\tau(q)+\tau(e_{00})\tforal \tau\in T(A),
\eneq
where $p'=p-p_{0,1}-p_{0,2}.$
Put $p_1'=p_1-p_{0,1}$ and
$p_2'=p_2-p_{0,2}.$ So $p'=p_1'+p_2'.$
From what has been proved, we have
\beq\label{weak-6}
q\oplus e_{00}\lesssim p'=p_1'+p_2'.
\eneq
Let $v\in A$ such that
\beq\label{weak-6+1}
v^*v=q\oplus e_{00}\andeqn vv^*\le p_1'+p_2'.
\eneq

There are
$$
x_1, x_2,...,x_{m_1}, y_1,y_2,...,y_{m_2},z_1,z_2,...,z_{m_3}\in A
$$
such that
\beq\label{weak-6+}
\sum_{i=1}^{m_1} x_i^*e_{00}x_i=1,
\sum_{i=1}^{m_2} y_i^*p_1'y_i=1\andeqn
\sum_{i=1}
^{m_3}z_i^*p_2'z_i=1.
\eneq
Let
$$
{\cal F}_0=\{x_i, y_j, z_k: 1\le i\le m_1, 1\le j\le m_2,
1\le k\le m_3\}.
$$
Define
$$
{\cal F}=\{p,q, p_1', p_2', e_{00}, v, v^*\}\cup
{\cal F}_0.
$$
%Let $1/2>\ep>0.$

Fix $\eta>0.$
Since $A\in {\cal C}_1,$ there exist a projection $e\in A$ and  a
\SCA\, $C=PM_r(C(X))P\in {\cal I}_k$ with $1_C=e$ such that
\beq\label{weak-7}
\|ex-xe\|&<&\eta\tforal x\in {\cal F},\\
{\rm dist}(exe, C)&<&\eta\tforal x\in {\cal F},\\
{k+1\over{{\rm rank}P(\xi)}}&<&{1\over{64(m_1+m_2+m_3+1)}}\tforal\xi\in X\andeqn\\
1-e &\lesssim  & p_{0,1}.
\eneq
With sufficiently small $\eta,$ we may assume that there exist
projections $p'', p_1'', p_2'', q', e_{00}'\in C,$  there are
$x_i', y_j', z_k'\in C$ ($1\le i\le m_1,$ $1\le j\le m_2$ and
$1\le k\le m_3$)
and there are projections
$q'', p_0\in (1-e)A(1-e)$ such that
\beq\label{weak-8}
\|ep'e-p''\|<1/16,\,\,\, \|p_1''-ep_1'e\|<1/16,\\\label{weak-8+1}
\|p_2''-ep_2'e\|<1/16,\,\,\,\|q'-eqe\|<1/16,\\\label{weak-8+2}
\|q''-(1-e)q(1-e)\|<1/16,\,\,\, \|p_0-(1-e)p'(1-e)\|<1/16,\\\label{weak-8+3}
p''=p_1''+p_2'',\andeqn
\|\sum_{i=1}^{m_1} (x_i')^*e_{00}'x_i'-e\|<1/16,\\\label{weak-8+5}
\|\sum_{j=1}^{m_2}(y_j')^*p_1'y_j-e\|<1/16\andeqn
\|\sum_{k=1}^{m_3}(z_k)^* p_2'z_k'-e\|<1/16.
\eneq
Moreover,
\beq\label{weak-9}
q'\oplus e_{00} \lesssim p'' \,\,\,{\rm in}\,\,\, M_2(C).
\eneq
Note that
\beq\label{weak-10}
&&{\rm rank}(e_{00})(x)\ge {\rm rank}(P(x))/m_1\ge 64(k+1),\\
&&{\rm rank}(p_1'')(x)\ge 64(k+1)\andeqn
{\rm rank} (p_2'')(x)\ge 64(k+1)
\eneq
for all $x\in X.$
Suppose that $X$ is the disjoint union of compact subsets
$X_1, X_2,...,X_N$ such that
${\rm rank}(e_{00}),$ ${\rm rank}(p_1')$ and ${\rm rank}(p_2')$
are all constant on each $X_i,$ $i=1,2,..., N.$
On each $X_i,$ there are non-negative integers
$m_{0,1}, m_{0,2},$ $m_1,$ and $m_2$ such that
\beq\label{weak-11}
{\rm rank}(q')-k=m_{0,1}+m_{0,2},\\
{\rm rank}(p_1'')=m_1',\,\,\,{\rm rank}(p_2'')=m_2',\\
m_1'-10k>m_{0,1}, m_2'-10k>m_{0,2}.
\eneq
It follows from 6.10.3 of \cite{blbk}  that $q'|_{X_i}$ has a trivial subprojection
$q_{1,i}'\le q'|_{X_i}$ such that
${\rm rank}( q_{1,i}')=m_{0,1}.$
Thus, by 6.10.3 of \cite{blbk},
\beq\label{weak-12}
q_{1,i}'\lesssim p_1''|_{X_i}.
\eneq
Now
\beq\label{weak-13}
{\rm rank} (q')|_{X_i}-{\rm rank}(q_{1,i}')+9k<m_2={\rm rank}(p_2''|_{X_i}).
\eneq
It follows from 6.10.3 of \cite{blbk} again that
\beq\label{weak-14}
q'|_{X_i}-q_{1,i}'\lesssim p_2''|_{X_i}.
\eneq
Define projections $q_1', \, q_2'\in C$ such that
\beq\label{weak-15}
q_1'|_{X_i}=q_{1,i}'\andeqn q_2'|_{X_i}=q'|_{X_i}-q_{1,i}',
\eneq
$i=1,2,...,N.$
Then
\beq\label{weak-16}
q'=q_1'+q_2'\andeqn q_1'\lesssim p_1''\andeqn q_2'\lesssim p_2''.
\eneq
Note, from  (\ref{weak-8+1}),
%\beq\label{weak-17}
$p_1''\lesssim p_1'\andeqn p_2''\lesssim p_2'.$
%\eneq
We also have
\beq\label{weak-18}
q''\le (1-e)\lesssim p_{0,1}.
\eneq
Put $q_1''=q''+q_1'.$ Then
\beq\label{weak-19}
q_1'' \lesssim p_{0,1}+p_1'=p_1.
\eneq
By (\ref{weak-8+1}) and (\ref{weak-8+2}), there exists a unitary $v\in A$ such that
\beq\label{weak-20}
v^*(q''+q')v=q.
\eneq
Define
\beq\label{weak-21}
q_1=v^*(q''+q_1'')v\andeqn q_2=v^*(q_2')v.
\eneq
Then
$
q=q_1+q_2.
$
But we also have
\beq\label{weak-23}
q_1\lesssim q''+q_1''\lesssim p_1\andeqn q_2\lesssim q_2'\lesssim p_2'\le p_2.
\eneq
This ends the proof.

\end{proof}

\begin{prop}\label{Propdiv}
Let $A$ be a unital simple \CA\, in ${\cal C}_1.$ Then, for any non-zero projections
$p$ and $q,$ and any integer $n\ge 1,$ there are mutually orthogonal projections
$p_1, p_2,...,p_n, p_{n+1}\in pAp$ such that
$$
p=\sum_{i=1}^{n+1}p_i,\,\,\, [p_i]=[p_1],\,\,\,i=1,2,...,n,
$$
 $p_{n+1}\lesssim p_1$ and $p_{n+1}\lesssim q.$
\end{prop}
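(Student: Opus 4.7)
The plan is to realise the division inside a distinguished AH building block of $A$ produced by the definition of $\mathcal{C}_1$, and to handle the resulting remainder via the strict comparison of \ref{scomp}. First, \ref{herd} lets one replace $A$ by $pAp$, reducing to the case $p=1_A$. Since $A$ has property (SP) (by \ref{SP}), is simple, and has $T(A)\neq\emptyset$ compact (\ref{trace}), the quantity $r_q:=\inf\{\tau(q):\tau\in T(A)\}$ is strictly positive; using (SP) and simplicity, one picks a non-zero subprojection $q_0\le q$ with $\tau(q_0)$ below a small threshold $\rho>0$ to be fixed at the end.

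Then one applies the definition of $\mathcal{C}_1$ (\ref{Drr}) with $a:=q_0$ and a small dimension parameter $\eta>0$, producing a projection $e\in A$ and a unital \SCA\ $C=PM_r(C(X))P\in\mathcal{I}_d$ with $1_C=e$, such that $(d+1)/\mathrm{rank}\,P(x)<\eta/2$ for every $x\in X$ and $1-e\lesssim q_0$. The main work happens in $C$. Since $\mathrm{rank}\,P$ is locally constant and integer valued on the compact space $X$, it takes only finitely many values, giving a clopen partition $X=\bigsqcup_k X_k$ with $\mathrm{rank}\,P|_{X_k}\equiv k$. On each $X_k$ write $k=n\kappa_k+s_k$ with $0\le s_k<n$ and set $q_k=\kappa_k$ if $s_k\ge d$, $q_k=\kappa_k-1$ otherwise, so that $nq_k+d\le k$ and $k-nq_k<n+d$. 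By 6.10.3 of \cite{blbk} (the device used already in \ref{weak}), the projection $P|_{X_k}$ contains a trivial subprojection of rank $nq_k$, which one splits as a direct sum of $n$ mutually orthogonal trivial subprojections of rank $q_k$. Patching these over $k$ produces mutually orthogonal, mutually unitarily equivalent projections $P_1,\ldots,P_n\in C$ whose complement $P_0:=P-\sum_{i=1}^n P_i$ satisfies $\mathrm{rank}\,P_0(x)<n+d$ for every $x\in X$.

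Now set $p_i:=P_i$ for $1\le i\le n$ and $p_{n+1}:=P_0+(1-e)$. Then $\sum_{i=1}^{n+1}p_i=p$ and $[p_i]=[p_1]$ for $i\le n$, and it remains to verify $p_{n+1}\lesssim p_1$ and $p_{n+1}\lesssim q$. For each $\tau\in T(A)$, the restriction $\tau|_C$ is given by a positive measure on $X$ of total mass $\tau(e)\le 1$ with $\tau(Q)=\tau(e)\int(\mathrm{rank}\,Q/\mathrm{rank}\,P)\,d\mu_\tau$ for any subprojection $Q\le P$ of $C$; in particular $\tau(P_0)\le (n+d)\eta/(2(d+1))\le n\eta/2$, uniformly in $d$ (using $n+d\le n(d+1)$). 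With $\eta$ and $\rho$ chosen small enough in terms of $n$ and $r_q$, one obtains $\tau(p_{n+1})=\tau(P_0)+\tau(1-e)<\min\{\tau(p_1),\tau(q)\}$ for every $\tau\in T(A)$, and strict comparison (\ref{scomp}) then delivers $p_{n+1}\lesssim p_1$ and $p_{n+1}\lesssim q$.

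The main obstacle is the division step inside $C$: producing $n$ mutually equivalent subprojections of $P$ with uniformly small remainder via a Blackadar-type rank argument applied compatibly across the clopen rank strata of $X$. Once those are in hand, everything reduces to a trace computation against the dimension-growth bound followed by an application of strict comparison.
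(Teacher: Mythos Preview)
Your approach is the paper's: pass to a subhomogeneous block $C=PM_r(C(X))P$ via the $\mathcal{C}_1$ definition, split $P$ into $n$ mutually equivalent trivial pieces plus a remainder of rank $<n+d$ using the Blackadar stable-rank argument (6.10.3 of \cite{blbk}), and then invoke strict comparison (\ref{scomp}) to place the total remainder under both $p_1$ and $q$. The paper does not reduce to $p=1_A$ but instead approximates $epe$ by a projection $e'\in C$ and divides $e'$; the substance is identical.

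One bookkeeping point in your reduction: once you replace $A$ by $pAp$, the projection $q$ no longer lives in the ambient algebra, so you cannot take $q_0\le q$ and feed $a:=q_0$ into the $\mathcal{C}_1$ definition for $pAp$. The fix is immediate and does not touch the idea: either drop the condition $q_0\le q$ and take $q_0$ to be any small projection in $pAp$ (the final inequality $\tau(p_{n+1})<r_q$ is then checked in the original $T(A)$, using that $\tau(p_{n+1})\le \tau(p)\cdot\tau'(p_{n+1})$ for the normalized $\tau'\in T(pAp)$), or first use simplicity and (SP) (as in 3.5.7 of \cite{Lnbk}) to find a non-zero $q_0\in pAp$ with $q_0\lesssim q$ in $A$, and then run your argument verbatim inside $pAp$.
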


\begin{proof}
There are $v_1, v_2,...,v_K\in M_K(A)$  and a projection $p'\in A$ such that
\beq\label{Propdiv-0}
\sum_{i=1}^K v_i^*p'v_i=1\andeqn  p'\le p.
\eneq

Let $\eta=\inf\{\tau(q): \tau\in T(A)\}.$  Choose an integer
$m\ge 1$ such that $1/m<\eta/2.$

 Let $1/2>\dt>0$ and ${\cal F}=\{p,p', v_i, v_i^*, 1\le i\le K\}.$
Since $A$ is in ${\cal C}_1,$ there are a projection $e\in A$ and a \SCA\, $C\subset A$ with
$1_C=e$ such that
$C=PM_r(C(X))P,$ where $r\ge 1$ is an integer, $X$ is a compact subset of a finite CW complex
with dimension $k$ and $P\in M_r(C(X))$ is a projection, and
\beq\label{Propdiv-1}
\|ex-xe\|<\dt,\,\,\,
{\rm dist}(exe, C)<\dt \tforal x\in {\cal F},\\\label{div-1+1}
{k+1\over{{\rm rank}(P(\xi))}}<{1\over{4(n+1)(K+1)}}\tforal \xi\in X\andeqn\\
\tau(1-e)<\min\{\eta/2,1/8(nmK+2)\}\tforal \tau\in T(A).
\eneq
With sufficiently small $\dt,$ we may assume that
there are  projections $e',\, e_1'\in C$ such that
\beq\label{Provdiv-2}
&&\|epe-e'\|<1/16,\,\,\, \|(1-e)p(1-e)-(1-e')\|<1/16,\\
&&\|ep'e-e_1\|<1/16 \andeqn\\
&&K{\rm rank} (e'(\xi))\ge {\rm rank }(P(\xi))\tforal \xi\in X
\eneq
It follows from (\ref{div-1+1}) that
\beq\label{Propdiv-3}
{\rm rank}(e'(\xi))\ge 4m(n+1)(k+1)\tforal \xi\in X.
\eneq
There is a trivial projection $e''\le e'$ in $PM_r(C(X))P$ such that
\beq\label{Propdiv-4}
{\rm rank}(e''(\xi))\ge (4m(n+1)-1)(k+1)\andeqn {\rm rank}(e'-e'')(\xi)\le k+1
\eneq
for all $\xi\in X.$
It follows that there are mutually orthogonal and mutually equivalent projections $p'_1,p'_2,...,p'_{n}\in C$ such that
\beq\label{Propdiv-5}
\sum_{i=1}^n p_i'\le e''\andeqn   (n+1)[p_1']\ge [e''].
\eneq
and $e''-\sum_{i=1}^{n} p_i'$ has rank less than $n.$
This implies that
\beq\label{Propdiv-6}
\tau(p_1')>\tau(e'-e'')+\tau(1-e')\tforal \tau\in T(A).
\eneq
Put $p_{n+1}'=e'-e''+(1-e').$
Then
\beq\label{Propdiv-7}
[p]=[\sum_{i=1}^{n+1}p_i'].
\eneq
Therefore there are mutually orthogonal projections $p_1, p_2,...,p_{n+1}\in eAe$ such that
\beq\label{Propdiv-8}
p=\sum_{i=1}^{n+1}p_i\andeqn [p_i]=[p_1],\,\,\,i=1,2,...,n.
\eneq
Note that
$$
[p_{n+1}']\le [q]\andeqn [p_{n+1}']\le [p_1].
$$

\end{proof}

\section{Traces}

\begin{prop}\label{TRC}
Let $A$ be a unital separable simple \CA\, in ${\cal C}_1.$
For any positive numbers $\{R_n\}$ such that $\lim_{n\to\infty} R_n=\infty,$ there exists a sequence
of $C_n=P_nM_{r(n)}(C(X_n))P_n,$ where
$r(n)\ge 1$ is an integer, $X_n$ is a finite CW complex with dimension
$k(n)$ and $P_n\in M_{r(n)}(C(X_n))$ is a projection, a sequence of projections $p_n\in A,$ a sequence of \morp s $L_n: A\to C_n$ and a sequence
of unital \hm s $h_n: C_n\to p_nAp_n$ such that
 \beq\label{TRC-1}
&&\lim_{n\to\infty} \|a-[(1-p_n)a(1-p_n)+h_n\circ L_n(a)]\|=0\tforal a\in A\\
&&{k(n)+1\over{{\rm rank}(P_n(x))}}<{1\over{R_n}} \tforal x\in X,\\
&&\lim_{n\to\infty}\sup\{\tau(1-p_n): \tau\in QT(A)\}=0\andeqn\\\label{TRC-1+}
&&\lim_{n\to\infty}\sup_{\tau\in QT(A)}|\tau(h_n\circ L_n(a))-\tau(a)|=0\tforal a\in A.
\eneq

\end{prop}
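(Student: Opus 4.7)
The plan is to iterate the defining property of $\mathcal{C}_1$ (Definition \ref{Drr}) along a countable dense sequence in $A$, using the divisibility of Proposition \ref{Propdiv} to force $\tau(1-p_n)$ to be small \emph{uniformly} over $QT(A)$, and then promoting the approximate containment $\dist(p_nap_n,C_n)<\epsilon$ to a genuine \morp\ $L_n:A\to C_n$ via standard nuclearity-based techniques.

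I first fix a dense sequence $\{a_k\}_{k\ge 1}$ in the unit ball of $A$. For each $n$, Proposition \ref{Propdiv} applied to $p=1_A$ yields mutually orthogonal, pairwise Murray--von Neumann equivalent projections $q_1,\dots,q_n$ together with $q_{n+1}\lesssim q_1$ summing to $1_A$. Since every $2$-quasitrace is additive on orthogonal projections and constant on equivalence classes, this gives $\tau(q_1)\le 1/n$ for every $\tau\in QT(A)$; set $e_n:=q_1$. Now apply Definition \ref{Drr} with finite subset $\mathcal{F}_n=\{a_1,\dots,a_n\}$, tolerance $\epsilon_n=1/n$, parameter $\eta_n<2/R_n$ (so $\eta_n/(g(d)+1)<1/R_n$ since $g\equiv 1$), and positive element $e_n$. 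This produces a projection $p_n\in A$ and a \SCA\ $D_n\subset p_nAp_n$ of the form $P'_nM_{r(n)}(C(X_n'))P'_n\in\mathcal{I}_{k(n)}$ with $1_{D_n}=p_n$, satisfying the near-commutation and approximation conditions of \ref{Drr}, the required dimension bound, and $1-p_n\lesssim e_n$; the last yields $\tau(1-p_n)\le\tau(e_n)\le 1/n$ for all $\tau\in QT(A)$, which is condition (iii).

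To arrange that $X_n$ is itself a finite CW complex, I use the remark in \ref{Ik} to embed $X_n'\subset X_n$ and lift $P'_n$ to a projection $Q_n\in M_{r(n)}(C(X_n))$; then $C_n:=Q_nM_{r(n)}(C(X_n))Q_n$ admits a canonical surjection $\pi_n:C_n\twoheadrightarrow D_n$, and I take $h_n$ to be $\pi_n$ followed by the inclusion $D_n\hookrightarrow p_nAp_n$. The \morp\ $L_n:A\to C_n$ is constructed by first producing a \morp\ $L_n':A\to D_n$ with $\|L_n'(a_k)-p_na_kp_n\|<2/n$ for $k\le n$ (standard cpc approximation using nuclearity of $D_n$), and then composing with a Choi--Effros CPC section $s_n:D_n\to C_n$ of $\pi_n$ (which exists since $D_n$ is nuclear). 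Then $h_n\circ L_n=\iota_n\circ L_n'$ approximates $a\mapsto p_nap_n$ on $\mathcal{F}_n$.

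Verification: for (i), decompose $a_k=p_na_kp_n+(1-p_n)a_k(1-p_n)+[\text{off-diagonal}]$; the off-diagonal piece has norm at most $2\|[p_n,a_k]\|<2/n$, and combined with $\|p_na_kp_n-h_n\circ L_n(a_k)\|<2/n$ this yields the approximation on $\mathcal{F}_n$, which extends by density to all $a\in A$. Item (ii) is the dimension bound and (iii) was noted above. For (iv), note that $p_nap_n$ and $(1-p_n)a(1-p_n)$ have product zero, hence generate a commutative \SCA, so quasitraces are additive on their self-adjoint parts; combining $|\tau(a)-\tau(p_nap_n)-\tau((1-p_n)a(1-p_n))|\le 2\|[p_n,a]\|$ with $|\tau((1-p_n)a(1-p_n))|\le\|a\|\tau(1-p_n)$ and $|\tau(p_nap_n)-\tau(h_n\circ L_n(a))|\le\|p_nap_n-h_n\circ L_n(a)\|$ yields the uniform estimate on $QT(A)$. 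The main obstacle is securing uniformity over all of $QT(A)$ (which is not assumed compact a priori) for condition (iii); this is precisely the role of the divisibility step, which gives a \emph{universal} bound $\tau(e_n)\le 1/n$ that would be unavailable from a merely pointwise smallness requirement.
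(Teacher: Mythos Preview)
Your proof is correct and follows essentially the same strategy as the paper: iterate the $\mathcal{C}_1$ definition along a dense sequence, convert the near-containment $\operatorname{dist}(p_nap_n,D_n)<\epsilon$ into a genuine \morp\ via nuclearity of the model algebra, and lift from the compact subset $X_n'$ to an honest finite CW complex. Two differences are worth recording.

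First, you are more explicit than the paper about the uniformity of $\tau(1-p_n)$ over $QT(A)$: you manufacture a uniformly small projection $e_n$ via \ref{Propdiv} before invoking the $\mathcal{C}_1$ condition, whereas the paper simply asserts $\tau(1-e_n)<1/2^{n+1}$ for all quasitraces without saying where the small positive element comes from. Your version is cleaner on this point.

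Second, for the passage from $D_n$ (over a compact subset) to $C_n$ (over a finite CW complex), the paper invokes Corollary~6.8 of \cite{Lnnewapp}, which produces the surjection $h_n$ together with an approximately multiplicative section \emph{and} the explicit clause $\min_y\operatorname{rank}Q_n(y)=\min_x\operatorname{rank}P_n(x)$. Your route via \ref{Ik} plus a Choi--Effros section is more elementary and also works, but you have skipped one check: the lifted projection $Q_n$ must satisfy the rank lower bound $(k(n)+1)/\operatorname{rank}Q_n(y)<1/R_n$ at \emph{every} point of the ambient CW complex $X_n$, not just on $X_n'$. This is easy to arrange---discard connected components of $X_n$ not meeting $X_n'$, since rank is locally constant---but it should be said. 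The paper's cited lemma handles this automatically. For (iv), your estimate implicitly uses norm-continuity of quasitraces on self-adjoint elements; the paper makes this explicit by citing \cite{BH}, Corollary~II.2.5.
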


\begin{proof}
Let $\{{\cal F}_n\}\subset A$ be an increasing sequence of finite subsets whose union is dense in $A.$
Since $A$ is in ${\cal C}_1,$ there exists a sequence of projections $e_n\in A$ and a sequence of $B_n\subset A$ with $1_{B_n}=e_n$ and
$B_n=P_nM_{r_n}(C(X_n))P_n,$ where $r_n\ge 1$ is an integer, $X_n$ is a compact subset of a finite CW complex with
dimension $k(n)$ and
$P_n\in M_{r_n}(C(X_n))$ is a projection, such that
\beq\label{apptrace-3}
\|e_nx-xe_n\|&<&1/2^{n+2}\tforal x\in {\cal F}_n,\\\label{apptrace-3+1}
{\rm dist}(e_nxe_n, B_n)&<&1/2^{n+2}\tforal x\in {\cal F}_n,\\\label{apptrace-3+1+0}
{k(n)+1\over{{\rm rank}(P_n(\xi))}}&<&1/R_n\tforal \xi\in X
\andeqn\\\label{apptrace-3+2}
\tau(1-e_n)&<&1/2^{n+1}\tforal \tau\in QT(A).
\eneq
For each $x\in {\cal F}_n,$ let $y(x)\in B_n$ such that
\beq\label{apptrace-4}
\|e_nxe_n-y(x)\|<1/2^{n+2}.
\eneq
Since $B_n$ is amenable,
it  follows from Theorem 2.3.13 of \cite{Lnbk} that there exists a unital \morp\, $\psi_n: e_nAe_n\to B_n$ such that
\beq\label{apptrace-4+}
\|\psi_n(y(x))-{\rm id}_{B_n}(y(x))\|<1/2^{n+2}\tforal x\in {\cal F}_n.
\eneq
Put ${\cal G}_n=\{y(x): x\in {\cal F}_n\}.$
By Corollary 6.8 of \cite{Lnnewapp}, there exists $C_n\in {\cal I}^{(k(n))}$
with the form $C_n=Q_nM_r(C(Y_n))Q_n,$ where $Y_n$ is a finite CW complex,
\beq\label{apptrace-4+1}
\min\{{\rm rank}\, Q_n(y): y\in Y_n\}=\min\{{\rm rank}\, P_n(x): x\in X_n\}.
\eneq
 and a unital
$1/2^{n+2}$-${\cal G}_n$-multiplicative \morp\, $\psi_n': B_n\to C_n$ and a surjective \hm\, $h_n: C_n\to B_n$ such that
\beq\label{apptrace-5}
h_n\circ \psi_n'={\rm id}_{B_n}.
\eneq
Now define
\beq\label{apptrace-6}
L_n=\psi_n'\circ \psi_n.
\eneq
It is readily checked that
\beq\label{apptrace-7}
\lim_{n\to\infty}\|L_n(ab)-L_n(a)L_n(b)\|=0\tforal a,\, b\in A.
\eneq
By (\ref{apptrace-3}) and (\ref{apptrace-3+1}),
\beq\label{apptrace-6+1}
\lim_{n\to\infty}\|x-(e_nxe_n+(1-e_n)x(1-e_n))\|=0\tforal x\in A.
\eneq
Since quasitraces are norm continuous (Corollary II 2.5 of \cite{BH}), by (\ref{apptrace-3+2}), it follows that
\beq\label{apptrace-6+2}
\tau(x)&=&\lim_{n\to\infty}\tau(e_nxe_n+(1-e_n)x(1-e_n))\\
&=& \lim_{n\to\infty}(\tau(e_nxe_n)+\tau((1-e_n)x(1-e_n)))\\
&=& \lim_{n\to\infty}(\tau(e_nxe_n))\\\label{apptrace-6+5}
&=&\lim_{n\to\infty}\tau(h_n\circ L_n(x))
\eneq
for all $x\in A_+$ and all quasitraces $\tau\in QT(A).$
Also by (\ref{apptrace-3+1+0}) and (\ref{apptrace-4+1}),
$$
{k(n)+1\over{{\rm rank}\, Q_n(y)}}<1/R_n\tforal y\in Y_n.
$$

\end{proof}

\begin{cor}\label{apptrace}
Let $A$ be a unital separable simple \CA\, in ${\cal C}_1.$ Then,
there exists a sequence of unital \CA\, $A_n\in {\cal I}^{k(n)},$ a unital sequence of
\morp s $L_n: A\to A_n$ and a sequence of unital \hm s $h_n: A_n\to A$ such that
\beq\label{apptrace-1}
\lim_{n\to\infty}\sup_{\tau\in QT(A)}|\tau(h_n\circ L_n(a))-\tau(a)|=0
\eneq
for all $a\in A,$  and  for each projection $p\in A,$ there exists a sequence of projection
$p_n\in A_n$ such that
\beq\label{apptrace-2}
\lim_{n\to\infty}\|L_n(p)-p_n\|=0.
\eneq
\end{cor}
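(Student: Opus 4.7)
The plan is to derive the corollary as a mild upgrade of Proposition~\ref{TRC}: the only real change is that the approximating \hm\ $h_n$ must be unital into $A$, whereas in~\ref{TRC} it lands unitally only into the compressed corner $p_nAp_n$. Since $\tau(1-p_n)\to 0$ uniformly in $\tau\in QT(A)$, one can absorb the complementary projection $1-p_n$ into a tiny direct summand of $C_n$ without disturbing the tracial estimate.

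Concretely, I first apply Proposition~\ref{TRC} with a divergent sequence $R_n\to\infty$ to obtain $C_n=P_nM_{r(n)}(C(X_n))P_n$ with $X_n$ a finite CW complex of dimension $k(n)$, projections $p_n\in A$, unital \morp s $L_n^0\colon A\to C_n$ and unital \hm s $h_n^0\colon C_n\to p_nAp_n$ satisfying the trace estimate~(\ref{TRC-1+}). Set $A_n:=C_n\oplus\C$, realised as $(P_n\oplus e_{1,1})M_{r(n)}(C(X_n\sqcup\{*\}))(P_n\oplus e_{1,1})\in\mathcal{I}^{k(n)}$, and define the unital \hm\ $h_n\colon A_n\to A$ by $h_n(c,\lambda):=h_n^0(c)+\lambda(1-p_n)$. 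Picking a fixed $\tau_0\in T(A)$ (which exists by Theorem~\ref{trace}), define the unital \morp\ $L_n\colon A\to A_n$ by $L_n(a):=(L_n^0(a),\tau_0(a))$.

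The trace approximation~(\ref{apptrace-1}) is then immediate: $\tau(h_n(L_n(a)))=\tau(h_n^0(L_n^0(a)))+\tau_0(a)\tau(1-p_n)$, where the first term converges uniformly to $\tau(a)$ by~(\ref{TRC-1+}) and the second is dominated by $\|a\|\sup_{\tau\in QT(A)}\tau(1-p_n)\to 0$. For the projection approximation~(\ref{apptrace-2}), asymptotic multiplicativity of $L_n^0$ on the projection $p$ (since $L_n^0(p^2)=L_n^0(p)$ while $L_n^0(p)^2\approx L_n^0(p^2)$) supplies a projection $q_n\in C_n$ with $\|L_n^0(p)-q_n\|\to 0$, and one writes $p_n:=(q_n,\varepsilon_n)\in A_n$ for a suitable $\varepsilon_n\in\{0,1\}$.

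The main obstacle will be the choice of $\varepsilon_n$: rounding $\tau_0(p)$ to the nearest integer leaves an error of $\min(\tau_0(p),1-\tau_0(p))$, which is bounded away from zero for generic $p$. To cure this I would upgrade the auxiliary summand from $\C$ to a matrix block $M_{N_n}$ with $N_n\to\infty$, unitally embedded in $(1-p_n)A(1-p_n)$ up to a small remainder that is controlled via the divisibility Proposition~\ref{Propdiv} applied to $1-p_n$, and replace $\tau_0$ in the definition of $L_n$ by a unital completely positive map $\phi_n\colon A\to M_{N_n}$ produced from the quasidiagonality of $A$ established in Theorem~\ref{trace}; the asymptotic multiplicativity of $\phi_n$ will then make $\phi_n(p)$ automatically close to an honest projection in $M_{N_n}$ for every projection $p\in A$, so that the rounding step goes through and~(\ref{apptrace-2}) is verified.
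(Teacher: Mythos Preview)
The paper's own argument is much more direct than yours: it takes $A_n$, $L_n$, $h_n$ to be exactly the $C_n$, $L_n$, $h_n$ already built in the proof of Proposition~\ref{TRC}, with no auxiliary summand at all. The projection approximation~(\ref{apptrace-2}) is then an immediate consequence of the asymptotic multiplicativity of $L_n$ recorded in that proof (equation~(\ref{apptrace-7})): for any projection $p\in A$ one has $\|L_n(p)^2-L_n(p)\|=\|L_n(p)^2-L_n(p^2)\|\to 0$, so functional calculus yields projections $p_n\in C_n$ with $\|L_n(p)-p_n\|\to 0$. This is precisely what the one-line remark after the corollary is pointing to.

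You are right that the $h_n$ from~\ref{TRC} is unital only into the corner $p_nAp_n$, not into $A$. But this is an imprecision in the wording of the corollary rather than a gap to be repaired: the downstream applications (Corollary~\ref{quasitrace} and Theorem~\ref{extrace}) use only that $\tau(h_n(1_{A_n}))\to 1$ uniformly in $\tau$, and indeed the proof of~\ref{extrace} explicitly writes $\lim_{n\to\infty}|\tau'(h_n(1_{A_n}))-1|=0$, confirming that exact unitality of $h_n$ into $A$ is neither claimed nor needed.

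Your effort to force literal unitality is therefore chasing a phantom, and the proposed $M_{N_n}$-fix does not close the gap anyway. Proposition~\ref{Propdiv} divides $1-p_n$ into $N_n$ equivalent pieces \emph{plus a nonzero remainder} $p_{N_n+1}$, so a unital embedding $M_{N_n}\hookrightarrow (1-p_n)A(1-p_n)$ is generally unavailable; absorbing that remainder into an additional $\mathbb{C}$-summand of $A_n$ simply reintroduces the rounding obstruction to~(\ref{apptrace-2}) that you correctly diagnosed in your first attempt. The clean way out is the paper's: drop the attempt to make $h_n$ exactly unital and read the corollary as the tracial statement~(\ref{apptrace-1}) together with asymptotic multiplicativity of $L_n$, both of which are already contained in the proof of~\ref{TRC}.
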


 Note that the projections $p_n\in A_n$ can be easily constructed from the construction
in the proof of \ref{TRC}.

\begin{cor}\label{quasitrace}
Let $A$ be a unital simple \CA\, in ${\cal C}_1.$ Then every quasitrace extends a trace.
\end{cor}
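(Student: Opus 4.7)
The plan is to reduce linearity of an arbitrary quasitrace $\tau \in QT(A)$ to linearity on the subhomogeneous blocks produced by the tracial approximation of Proposition \ref{TRC}. Applying that proposition with any $R_n \to \infty$ gives unital subhomogeneous C*-algebras $C_n = P_n M_{r(n)}(C(X_n)) P_n$, u.c.p.\ maps $L_n \colon A \to C_n$, projections $p_n \in A$, and unital \hm s $h_n \colon C_n \to p_n A p_n$ satisfying
$$
\tau(a) \;=\; \lim_{n\to\infty} \tau\bigl(h_n \circ L_n(a)\bigr) \qquad (a \in A),
$$
uniformly on $QT(A)$ (one absorbs the cut-down by $1-p_n$ into the limit using $\tau(1-p_n) \to 0$ and norm continuity of quasitraces, Corollary II.2.5 of \cite{BH}).

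The key observation is that for each $n$ the pull-back $\tau_n := \tau \circ h_n$ is a quasitrace on $C_n$: the identity $\tau(x^*x) = \tau(xx^*)$ and linearity on commutative subalgebras both transport across the $*$-\hm\ $h_n$. Since $C_n$ is subhomogeneous, hence type I, every quasitrace on $C_n$ is automatically a (positive linear) trace; for algebras of the form $PM_r(C(X))P$ this is elementary and predates Haagerup's theorem. Consequently each $\tau_n$ is linear on all of $C_n$.

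Granted this, linearity of $\tau$ on $A$ follows at once: for self-adjoint $a, b \in A$, the c.p.\ map $L_n$ is linear, so
$$
\tau\bigl(h_n \circ L_n(a+b)\bigr) = \tau_n\bigl(L_n(a) + L_n(b)\bigr) = \tau_n(L_n(a)) + \tau_n(L_n(b)),
$$
and letting $n \to \infty$ gives $\tau(a+b) = \tau(a) + \tau(b)$. Positive homogeneity and the complex extension $\tau(a+ib) = \tau(a) + i\tau(b)$ are already built into the definition of a quasitrace, so $\tau$ is a positive linear trace.

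The only real obstacle is invoking the correct form of ``quasitraces on type I C*-algebras are traces'' for the approximating blocks $C_n$; after that the linearity of the $L_n$ on one side and of the $\tau_n$ on the other does everything. Note in particular that it is irrelevant whether $L_n(a)$ and $L_n(b)$ commute in $C_n$, because $\tau_n$ is linear on all of $C_n$, not merely on its commutative subalgebras—this is precisely what the type I reduction buys us.
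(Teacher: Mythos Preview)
Your proof is correct and follows essentially the same route as the paper: apply Proposition~\ref{TRC} to get the approximating maps $h_n\circ L_n$, use that $\tau\circ h_n$ is a quasitrace on the subhomogeneous block $C_n$ and hence automatically linear, and combine this with the linearity of the c.p.\ map $L_n$ to pass linearity through the limit \eqref{TRC-1+}. The paper phrases the final step as taking a weak-$*$ limit of the states $\tau\circ h_n\circ L_n$ and identifying it with $\tau$ on $A_+$, whereas you argue directly with the pointwise limit; this is a cosmetic difference only.
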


\begin{proof}
Let $\tau: A_+\to \R_+$ be a quasitrace in $QT(A).$  It is known that every quasitrace on $A_n$ extends a trace.
We will use the notation in the proof of \ref{TRC}.
Thus $\tau\circ h_n$ is a trace.  If $x, y\in A_+,$ then
\beq\label{qtrace-1}
\tau\circ h_n(L_n(x+y))&=&\tau\circ h_n(L_n(x)+L_n(y))\\
&=& \tau\circ h_n(L_n(x))+\tau\circ h_n(L_n(y)).
\eneq
So $\tau\circ h_n\circ L_n$ extends a state. Let $t$ be a weak limit of $\{\tau\circ L_n\phi_n\}.$
By  (\ref{apptrace-6+5}),
$t(x)=\tau(x)$ for all $x\in A_+.$
\end{proof}

In the following, if $\Omega$ is a compact convex set, $\partial_e(\Omega)$ is the set of extremal points of $\Omega.$

\begin{thm}\label{extrace}
Let $A$ be a unital separable simple \CA\, in ${\cal C}_1.$ Then
$r_A(\partial_e(T(A))=\partial_e(S(K_0(A))).$
\end{thm}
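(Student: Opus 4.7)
The plan is to prove the set equality by establishing surjectivity of $r_A$ together with both inclusions between extreme boundaries.

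First I would verify $r_A : T(A) \to S(K_0(A))$ is surjective. Since $A$ is stably finite by Theorem \ref{trace}, the Blackadar--Handelman theorem asserts that every state on $K_0(A)$ is induced by a $2$-quasitrace on $A$; Corollary \ref{quasitrace} then promotes each quasitrace to a trace, yielding surjectivity of $r_A$. A more constructive route is also available via Proposition \ref{TRC}: pull back a state $s \in S(K_0(A))$ to $s \circ (h_n)_*$ on each $K_0(C_n)$, use the type-I structure of $C_n$ to realize this as a trace $\sigma_n$ on $C_n$, transport back to $A$, and pass to a weak-$*$ limit; the compatibility with \eqref{TRC-1+} ensures the limit has image $s$.

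The easy inclusion $\partial_e S(K_0(A)) \subseteq r_A(\partial_e T(A))$ is a face argument. For $s \in \partial_e S(K_0(A))$, the fibre $F := r_A^{-1}(s)$ is a nonempty closed face of $T(A)$: if $\tfrac12(\tau_1+\tau_2) \in F$, then extremality of $s$ forces $r_A(\tau_i)=s$, placing both $\tau_i$ in $F$. By Krein--Milman $F$ has extreme points, and any extreme point of a face of $T(A)$ is itself extreme in $T(A)$; this supplies an extreme trace mapping to $s$.

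The reverse inclusion $r_A(\partial_e T(A)) \subseteq \partial_e S(K_0(A))$ is the substantive content. Fix $\tau \in \partial_e T(A)$ and suppose for contradiction that $r_A(\tau) = \alpha s_1 + (1-\alpha) s_2$ with $s_1 \neq s_2$ and $0<\alpha<1$. The strategy is to construct traces $\tau_1, \tau_2 \in T(A)$ with $r_A(\tau_i) = s_i$ and $\alpha\tau_1 + (1-\alpha)\tau_2 = \tau$; extremality of $\tau$ would then force $\tau_1=\tau_2=\tau$, whence $s_1=s_2$, a contradiction. To build such specific lifts I would reduce to the type-I setting on each $C_n$ via the tracial AH approximation of Proposition \ref{TRC}, where traces are measures and the desired fibrewise decomposition can be realized directly. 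The divisibility from Proposition \ref{Propdiv} splits projections compatibly with the proportion $\alpha$, strict comparison (Theorem \ref{scomp}) controls projection classes by trace values, and the Riesz interpolation of Theorem \ref{weak} supplies the combinatorial room to lift $K_0$-level decompositions to projection-level ones. Weak-$*$ compactness on a diagonal subsequence then yields the required $\tau_1,\tau_2$.

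The principal obstacle is coherence across the approximation parameter $n$: the locally constructed pieces on $h_n(C_n)$ must be chosen consistently enough that their weak-$*$ limits sum exactly to $\tau$, rather than merely to some other element of the fibre $r_A^{-1}(r_A(\tau))$. This consistency is driven by the uniform convergence $\sup_{\sigma \in T(A)} |\sigma(a) - \sigma(h_n L_n(a))| \to 0$ from \eqref{TRC-1+}, together with the Riesz interpolation used to align the decompositions of $K_0(h_n(C_n))$ with those of $K_0(h_{n+1}(C_{n+1}))$ up to the small trace-mass controlled by $\tau(1-p_n)\to 0$.
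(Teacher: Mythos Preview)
Your overall architecture matches the paper's: surjectivity plus the easy inclusion are handled as you say (the paper cites 6.1 of \cite{RrUHF} for both), and the hard inclusion is done by supposing $r_A(\tau)=t s_1+(1-t)s_2$ and manufacturing traces $\tau_1,\tau_2$ with $\tau=t\tau_1+(1-t)\tau_2$ and $r_A(\tau_i)=s_i$. But your implementation plan for that last step is off. The tools you invoke---divisibility, strict comparison, Riesz interpolation---are about comparing projections and the order on $K_0$; they do not tell you how to decompose a \emph{trace} compatibly with a given $K_0$-state decomposition. The paper's key observation, which you are missing, is much simpler: write each approximating algebra $A_n$ as $\bigoplus_i C_{n,i}$ with $X_{n,i}$ connected, so that $\rho_{C_{n,i}}(K_0(C_{n,i}))=\Z$. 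On each such summand, any two states on $K_0$ are proportional, so there exist scalars $a_{n,i},b_{n,i}\ge 0$ with $s_1\circ[h_n]|_{K_0(C_{n,i})}=a_{n,i}\,\tau\circ h_n|_{K_0(C_{n,i})}$ and similarly for $s_2$; automatically $ta_{n,i}+(1-t)b_{n,i}=1$. One then sets $\tau^{(n,1)}=(1/a_n)\sum_i a_{n,i}\,\tau\circ h_n|_{C_{n,i}}\circ\pi_{n,i}$ (and similarly $\tau^{(n,2)}$), which are genuine tracial states on $A_n$ because they are nonnegative reweightings of the single trace $\tau\circ h_n$ on the summands. This is the concrete ``fibrewise decomposition'' you allude to, and it requires nothing beyond the rank-one nature of $\rho(K_0)$ on connected homogeneous pieces.

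Your worry about coherence across $n$ is also misplaced. No compatibility between levels is needed: the identity $\tau\circ h_n\circ L_n = t\,\tau^{(n,1)}\circ L_n + (1-t)\,\tau^{(n,2)}\circ L_n$ holds exactly at each $n$ (up to the normalizations $a_n,b_n\to 1$), so along any common weak-$*$ convergent subsequence one obtains limits $\tau_1,\tau_2$ with $\tau=t\tau_1+(1-t)\tau_2$ directly from \eqref{TRC-1+}. Finally, $\tau^{(n,1)}(L_n(p))\to s_1([p])$ for any projection $p$ follows from the definition of $a_{n,i}$ and \eqref{apptrace-2}, giving $r_A(\tau_1)=s_1$ and the contradiction.
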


\begin{proof}
Note that, by \ref{quasitrace}, $T(A)=QT(A).$ It follows from 6.1 of \cite{RrUHF} that
 $r_A$ is surjective and $\partial_e(S(K_0(A)))\subset
r_A(\partial_e(T(A))).$
We will prove that $r_A(\partial_e(T(A)))\subset \partial_e(S(K_0(A))).$

Suppose $\tau\in \partial_e(T(A))$ and there are $s_1, s_2\in
S(K_0(A))$ such that
$$
r_A(\tau)=ts_1+(1-t)s_2
$$
for some $t\in (0,1).$  Suppose that $s_1\not=s_2.$
Then, since $A\in {\cal C}_1,$ there is a projection
$p\in A$ such that
\beq\label{extrace-1}
s_1([p])\not=s_2([p]).
\eneq
%Let
%\beq\label{extrace-2}
%\dt=|s_1([p])-s_2([p])|>0.
%\eneq

Let $A_n\in {\cal I}^{k(n)},$ $L_n$ and  $h_n$ be as in
\ref{apptrace}.
In particular, there are projections $p_n\in A_n$ such that
\beq\label{extrace-3}
\lim_{n\to\infty}\|L_n(p)-p_n\|=0\andeqn\\\label{extrace-3+1}
\lim_{n\to\infty}|\tau'(h_n(p_n))-\tau'(p)|=0\tforal \tau'\in T(A).
\eneq
Moreover,
\beq\label{extrace-3+n}
\lim_{n\to\infty}|\tau'(h_n(1_{A_n}))-1|=0\tforal \tau'\in T(A).
\eneq

For each $n,$
\beq\label{extrace-4}
\tau(h_n(q))=ts_1([h_n(q)])+(1-t)s_2([h_n(q)])
\eneq
for all projections $q\in A_n\otimes {\cal K}.$
Write
$A_n=C_{n,1}\oplus C_{n,2}\oplus \cdots \oplus C_{n,k(n)},$
where each $C_{n,i}=P_{n,i}M_{r(n,i)}(C(X_{n,i}))P_{n,i}$ and
$X_{n,i}$ is connected.
Note that $\rho_{C_{n,i}}(K_0(C_{n,i}))=\Z.$
We may assume that $h_n(C_{n,i})\not=0$ (otherwise, we delete that summand).
Therefore there are
$0\le a_{n,i}, \bt_{n,i}$ such that
\beq\label{extrace-5}
a_{n,i}\tau\circ h_n|_{K_0(C_{n,i})}&=&(s_1\circ [h_n])|_{K_0(C_{n,i})}\andeqn\\
b_{n,i}\tau\circ h_n|_{K_0(C_{n,i})}&=& (s_2\circ [h_n])|_{K_0(C_{n,i})},
\eneq
$i=1,2,...,k(n)$ and $n=1,2,....$
Since $r_A(\tau)=ts_1+(1-t)s_2,$
\beq\label{extrace-5+1}
&&ta_{n,i}\tau\circ h_n(1_{C_{n,i}})+(1-t)b_{n,i}\tau\circ h_n(1_{C_{n,i}})\\
&&=ts_1\circ [h_n(1_{C_{n,i}}])+(1-t)s_2\circ [h_n(1_{C_{n,i}})]\\
&&=\tau(h_n(1_{C_{n,i}})).
\eneq

It follows that
\beq\label{extrace-5+2}
ta_{n,i}+(1-t)b_{n,i}=1.
\eneq
Note that
\beq\label{extrace-6}
\sum_{i=1}^{k(n)}a_{n,i}\tau(h_n(1_{C_{n,i}}))=s_1([h_n(1_{A_n})])
\andeqn\sum_{i=1}^{k(n)}b_{n,i}\tau(h_n(1_{C_{n,i}}))=s_2([h_n(1_{A_n})]).
\eneq
Put
$$
a_n=\sum_{i=1}^{k(n)}a_{n,i}\tau(h_n(1_{C_{n,i}}))\andeqn b_n=\sum_{i=1}^{k(n)}b_{n,i}\tau(h_n(1_{C_{n,i}})).
$$
By (\ref{extrace-3+n}),
\beq\label{extrace-7}
\lim_{n\to\infty}a_n=1\andeqn \lim_{n\to\infty}b_n=1.
\eneq
Since $s_1$ and $s_2$ are states on $K_0(A)$ and $A$ is simple, $a_n>0$ and $b_n>0.$
Let $\pi_{n,i}: A_n\to C_{n,i}$ be the projection map.
Define
\beq\label{extrace-8}
\tau^{(n,1)}&=&({1\over{a_n}})\sum_{i=1}^{k(n)}a_{n,i}\tau\circ
h_n|_{C_{n,i}}\circ \pi_{n,i}\andeqn\\\label{extrace-8+}
\tau^{(n,2)}&=&({1\over{b_n}})
\sum_{i=1}^{k(n)}b_{n,i}\tau\circ h_n|_{C_{n,i}}\circ \pi_{n,i}.
\eneq
Therefore  $\tau^{(n,1)}$ and $\tau^{(n,2)}$ are tracial states on $A_n.$
By (\ref{extrace-5+2}),
\beq\label{extrace-9}
\tau|_{h_n(A_n)}=t(\sum_{i=1}^{k(n)}a_{n,i}\tau\circ
h_n|_{C_{n,i}}\circ \pi_{n,i})+(1-t)(\sum_{i=1}^{k(n)}b_{n,i}\tau\circ h_n|_{C_{n,i}}\circ \pi_{n,i})
\eneq
By  the definition  of $L_n,$  (\ref{extrace-3+1}), (\ref{extrace-7}), (\ref{extrace-8}) and (\ref{extrace-8+}),
\beq\label{extrace-10}
\tau(a)&=&\lim_{n\to\infty} \tau(h_n\circ L_n(a))\\\label{extrace-10+1}
&=&\lim_{n\to\infty} [t\tau^{(n,1)}(L_n(a))+(1-t)\tau^{(n,2)}(L_n(a))]
\eneq
for all $a\in A.$
Note that $\tau^{(n,i)}\circ L_n$ is  a state on $A,$ $i=1,2.$ Let $\tau_1$ and $\tau_2$ be
limit points of $\{\tau^{(n,1)}\circ L_n\}$ and $\{\tau^{(n,2)}\circ L_n\},$ respectively.  One checks easily that both
are tracial states on $A.$ By (\ref{extrace-10+1}),
\beq\label{extrace-11}
\tau=t\tau_1+(1-t)\tau_2.
\eneq
Since $\tau\in \partial_e(T(A)),$ this implies that
\beq\label{extrace-12}
\tau=\tau_1=\tau_2.
\eneq
On the other hand,
\beq\label{extrace-13}
\tau_1(p)=\lim_{n\to\infty}\tau^{(n,1)}(L_n(p))&=&\lim_{n\to\infty}s_1([p_n])=s_1([p])\andeqn\\
\tau_2(p)=\lim_{n\to\infty}\tau^{(n,2)}(L_n(p))&=& \lim_{n\to\infty}s_2([p_n])=s_2([p]).
\eneq
This contradicts the assumption that $s_1([p])\not=s_2([p]).$ It follows that $r_A(\tau)\in \partial_e(K_0(A)).$
\end{proof}

The following is a variation of Theorem 5.3 of \cite{BPT}.

\begin{thm}\label{BA}
Let $A$ be a unital separable simple \CA\, in ${\cal C}_1.$ Then
$W(A)=V(A)\sqcup {\rm LAff}_b(T(A)).$
\end{thm}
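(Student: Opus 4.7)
The plan is to verify the hypotheses of Theorem 5.3 of \cite{BPT} and then adapt that argument to our setting. That theorem establishes the decomposition $W(A)\cong V(A)\sqcup \mathrm{LAff}_b^{++}(T(A))$ for any unital simple separable \CA\ satisfying (i) strict comparison of positive elements by traces, (ii) stable rank one, and (iii) the agreement of $2$-quasitraces with traces. In our setting all three inputs are now in place: (i) is Theorem \ref{scomp}; (ii) is Theorem \ref{srk1}; (iii) is Corollary \ref{quasitrace}.

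First I would define the natural map $\Phi\colon W(A)\to V(A)\sqcup \mathrm{LAff}_b^{++}(T(A))$ by sending a Cuntz class $\langle p\rangle$ represented by a projection to $[p]\in V(A),$ and sending a purely positive class $\langle a\rangle$ (one not Cuntz equivalent to any projection) to its rank function $\hat a(\tau)=d_\tau(a).$ That $\hat a$ is bounded, affine, and lower semicontinuous on $T(A)$ is routine. Stable rank one provides the dichotomy that makes $\Phi$ well-defined: any Cuntz class in $A\otimes\mathcal{K}$ which is not purely positive actually contains a projection, so the partition of $W(A)$ into the two summands is canonical.

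Injectivity of $\Phi$ follows from strict comparison. If $\langle a\rangle$ and $\langle b\rangle$ are both purely positive with $\hat a=\hat b,$ then for each $\ep>0$ one has $d_\tau(f_\ep(a))<d_\tau(b)$ for all $\tau\in T(A)$ (the purely positive hypothesis ensures the rank function has no jump at $0$), so Theorem \ref{scomp} yields $f_\ep(a)\lesssim b;$ letting $\ep\to 0$ gives $a\lesssim b,$ and symmetry finishes the argument. The case comparing two projection classes, and the mixed case, are immediate from strict comparison applied at the level of $K_0.$

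The main obstacle, and the step that I would spend most of the effort on, is surjectivity onto $\mathrm{LAff}_b^{++}(T(A)).$ Given such an $f,$ I would express it as a pointwise supremum $f=\sup_n g_n$ of an increasing sequence of continuous strictly positive affine functions on the compact convex set $T(A).$ Using the weak unperforation and Riesz interpolation of $K_0(A)$ established in Theorem \ref{weak}, the divisibility supplied by Proposition \ref{Propdiv}, and property (SP) (Proposition \ref{SP}), I would inductively construct positive contractions $a_n\in A\otimes\mathcal{K}$ with $d_\tau(a_n)$ uniformly within $1/n$ of $g_n(\tau)$ and arranged so that $f_{1/n}(a_n)\lesssim a_{n+1}.$ Stable rank one (Theorem \ref{srk1}) is what permits the inductive refinement to be carried out exactly inside the hereditary subalgebras produced at each stage. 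The Cuntz supremum of the sequence $\{a_n\}$ then has rank function equal to $f,$ and lies in the purely positive part of $W(A)$ provided $f$ is not the indicator trace-function of a projection (the case where $f$ is continuous and attained by a projection already lies in the $V(A)$ component, so no new representative is needed there).
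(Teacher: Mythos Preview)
Your injectivity argument is fine and essentially reproduces Theorem~4.4 of \cite{PT} (equivalently Theorem~2.2 of \cite{BPT}), which is exactly what the paper cites. The problem is with surjectivity, and it begins with a misstatement: Theorem~5.3 of \cite{BPT} does \emph{not} hold under your hypotheses (i)--(iii) alone. That theorem is proved for algebras where Lemma~5.1 of \cite{BPT} applies (in \cite{BPT} this means approximately divisible algebras), and Lemma~5.1 is precisely the mechanism for producing positive elements whose dimension functions approximate a prescribed continuous affine function on $T(A)$. Strict comparison, stable rank one, and $QT(A)=T(A)$ do not by themselves furnish such elements; they control comparison, not \emph{existence}.

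Your inductive construction of $a_n$ with $d_\tau(a_n)$ uniformly close to $g_n(\tau)$ is where the gap sits. The tools you list---weak unperforation and Riesz interpolation of $K_0(A)$, Proposition~\ref{Propdiv}, and (SP)---let you divide a given projection into $n$ nearly equal pieces, but that only realizes functions that are (approximately) rational multiples of $\widehat{[1_A]}$. When $T(A)$ has more than one extreme point you need projections or positive elements whose rank functions \emph{vary} across $T(A)$ in a prescribed way, and nothing in your list provides that. The paper handles this by following the proof of Theorem~5.3 of \cite{BPT} verbatim except at the single point where Lemma~5.1 of \cite{BPT} is invoked; there it substitutes Proposition~\ref{TRC}, which supplies maps $L_n\colon A\to C_n$ and $h_n\colon C_n\to p_nAp_n$ with $C_n\in\mathcal{I}^{(k(n))}$ and $\tau(h_n\circ L_n(a))\to\tau(a)$ uniformly. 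The subhomogeneous algebras $C_n$ have the rich supply of positive elements with prescribed rank needed to run the \cite{BPT} argument (via their Lemma~5.2). You should replace your $K_0$-based construction with this substitution.
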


\begin{proof}
Note that $QT(A)=T(A).$
By  Theorem 4.4 of \cite{PT} (see also Theorem 2.2 of \cite{BPT} ), it suffices to prove that the map
from $W(A)$ to $V(A)\sqcup {\rm LAff}_b(T(A))$ is surjective.
The proof of that is a slight modification of that of Theorem 5.3 of \cite{BPT}. We also apply Lemma 5.2 in \cite{BPT}.
The only difference is that,  in the proof of Theorem 5.3 of \cite{BPT}, at the point where   5.1 of \cite{BPT} is used, we use
\ref{TRC} instead.
\end{proof}

\begin{cor}\label{malmostdiv}
Let $A$ be a unital separable simple \CA\, in ${\cal C}_1.$ Then $A$ has $0$-almost divisible Cuntz semigroup
(see definition 2.5 of {\rm \cite{Winv}}).

\end{cor}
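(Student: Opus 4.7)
The plan is to combine three earlier results: the decomposition $W(A)=V(A)\sqcup \mathrm{LAff}_b(T(A))^{++}$ provided by Theorem \ref{BA}, the divisibility of projections from Proposition \ref{Propdiv}, and the strict comparison for positive elements of Theorem \ref{scomp}. Given $\langle a\rangle \in W(A)$, an integer $k\geq 1$, and an $\langle a'\rangle \ll \langle a\rangle$ as required by the $0$-almost divisibility definition of \cite{Winv}, I would produce a class $\langle x\rangle \in W(A)$ with $k\langle x\rangle \leq \langle a\rangle$ and $\langle a'\rangle \leq k\langle x\rangle$.

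The easier case is when $\langle a\rangle$ lies in the $\mathrm{LAff}_b(T(A))^{++}$ part, represented by a strictly positive, bounded, lower-semicontinuous affine function $f$ on $T(A)$. Then $f/k$ shares the same properties, so Theorem \ref{BA} supplies an $\langle x\rangle$ with $k\langle x\rangle = \langle a\rangle$, and both inequalities are immediate.

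For $\langle a\rangle = [p]$ with $p$ a projection, I would first extract a uniform tracial gap from $\langle a'\rangle \ll [p]$: using lower semicontinuity of $\tau \mapsto d_\tau(a')$, continuity of $\tau \mapsto \tau(p)$, and compactness of $T(A)$ (with $T(A)=QT(A)$ by Corollary \ref{quasitrace}), I would produce $\delta>0$ with $d_\tau(a') \leq \tau(p)-\delta$ for every $\tau \in T(A)$. Since $A$ has property (SP) (Proposition \ref{SP}), iterated use of Proposition \ref{Propdiv} yields a nonzero projection $q\in A$ with $\sup_{\tau\in T(A)}\tau(q)<\delta$. Feeding this $q$ and $n=k$ into Proposition \ref{Propdiv} produces a decomposition $p=p_1+\cdots+p_{k+1}$ with $[p_i]=[p_1]$ for $1\leq i\leq k$, $[p_{k+1}]\leq[p_1]$, and $[p_{k+1}]\leq[q]$. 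Setting $\langle x\rangle=[p_1]$, one has $k\langle x\rangle \leq [p]$ at once, and for every $\tau\in T(A)$ the estimate $k\tau(p_1)=\tau(p)-\tau(p_{k+1})>\tau(p)-\delta \geq d_\tau(a')$ lets Theorem \ref{scomp} conclude $\langle a'\rangle \leq k\langle x\rangle$.

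The main obstacle is the extraction of the uniform tracial gap $\delta$ when $a'$ is purely positive (so $d_\tau(a')$ is only lower semicontinuous, not continuous); this relies on compactness of $T(A)$ together with the approximation $a'\lesssim (a-\epsilon)_+$ built into the relation $\ll$. Once the gap is in hand, divisibility of projections from Proposition \ref{Propdiv} and strict comparison from Theorem \ref{scomp} dispatch the rest of the argument directly.
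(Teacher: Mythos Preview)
Your purely positive case matches the paper's exactly, but your projection case has a real problem stemming from the target inequality you chose. Winter's $0$-almost divisibility (and the paper's proof) asks for $k\langle x\rangle\le\langle a\rangle\le(k+1)\langle x\rangle$; you instead aim for $k\langle x\rangle\le\langle a\rangle$ together with $\langle a'\rangle\le k\langle x\rangle$ for every $\langle a'\rangle\ll\langle a\rangle$. When $a=p$ is a projection, $[p]$ is compact in the Cuntz semigroup, so $[p]\ll[p]$, and taking $\langle a'\rangle=[p]$ would force $k\langle x\rangle=[p]$ --- genuine $k$-divisibility of $[p]$, which need not hold in $K_0(A)$. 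Concretely, your ``uniform tracial gap'' step collapses here: with $a'=p$ (equivalently $a'=(p-\epsilon)_+$, which is Cuntz equivalent to $p$) one has $d_\tau(a')=\tau(p)$ for every $\tau$, so no $\delta>0$ exists. The lower semicontinuity/compactness argument you describe does not separate $d_\tau(a')$ from $\tau(p)$ in this case.

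The paper avoids all of this by going straight for the correct inequality $(k+1)\langle x\rangle$ on the right. In the projection case it simply applies Proposition~\ref{Propdiv} (with $M_m(A)\in{\cal C}_1$ via Proposition~\ref{matrix}) to produce $p_1$ with $k[p_1]\le[p]\le(k+1)[p_1]$ --- no tracial gap, no strict comparison, no auxiliary small projection $q$. If you replace your target $k\langle x\rangle$ by $(k+1)\langle x\rangle$, your elaborate detour through SP, tracial gaps, and Theorem~\ref{scomp} becomes unnecessary: Proposition~\ref{Propdiv} already gives $[p]\le(k+1)[p_1]$ directly.
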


\begin{proof}
Let $a\in M_{\infty}(A)$ and $k\ge 1$ be an integer.  If $\langle a\rangle$ is represented by a projection $p\in M_m(A),$ then by
\ref{Propdiv},  since $M_m(A)$ is in ${\cal C}_1,$ there is a projection $p_1\in M_m(A)$ such that
$$
k [p_1]\le [p] \le (k+1)[p_1].
$$
Now suppose that $a$ can not be represented by a projection. Then, by \ref{BA}, $\langle a \rangle \in {\rm LAff}_b(T(A)).$
Note that the function $\langle a\rangle /k\in {\rm LAff}_b(T(A)).$ It follows from \ref{BA} that there is $x\in W(A)$ such that
$x=\langle a\rangle /k.$  Then
$$
kx \le \langle  a\rangle \le (k+1) x.
$$

\end{proof}

\begin{lem}\label{nLmeasure}
Let $C=\lim_{n\to\infty}(C_n,\psi_n)$ be a unital  \CA\, such that
each $C_n$ is a separable unital amenable \CA\, with $T(C_n)\not=\emptyset$ and each $\psi_n$ is unital \hm.
Let $X$ be a compact metric space and $\Delta: (0,1)\to (0,1)$ be a non-decreasing map. Suppose that $\phi: C(X)\to C$ is a \morp\, and
\beq\label{nLm-1}
\mu_{\tau\circ \phi}(O_s)\ge \Delta(s)\tforal \tau\in T(C)
\eneq
and for all open balls $O_s$ with radius $s\ge \eta$ for some $\eta\in (0,1/9).$

Then, for any $\ep>0,$ any finite subset ${\cal F}\subset C(X),$ any finite
subset ${\cal G}\subset C,$ there
exists an integer $n\ge 1,$ a unital \morp\, $L: C(X)\to C_n$  and
a unital \morp\, $r: C\to C_n$
such that $L=r\circ \phi,$
\beq\label{nLm-2}
\|\psi_{n, \infty}\circ L(f)-\phi(f)\|&<&\ep\tforal f\in {\cal F},\\\label{nLm-3}
\|\psi_{n, \infty}\circ r(g)-g\|&<&\ep\tforal g\in {\cal G}\andeqn\\\label{nLm-4}
\mu_{t\circ L}(O_r)&\ge & \Delta(r/3)/3\tforal t\in T(C_n)
\eneq
and for all $r\ge 17\eta/8.$  Furthermore, if $\phi$ is $\ep/2$-${\cal F}$-multiplicative, we may also require that $L$ is $\ep$-${\cal F}$-multiplicative.
\end{lem}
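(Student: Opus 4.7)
The plan is to use nuclearity of $C$ (inherited from the amenable building blocks $C_n$) to approximately factor $\phi$ through a finite stage $C_n$, and then to descend the measure lower bound to $C_n$ with a controlled loss of constants, encoded by the factor $1/3$ in the conclusion.

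I would first enlarge the given data to capture the measure condition. Choose a finite $\eta_0$-net $\{y_1,\dots,y_K\}\subset X$ with $\eta_0$ small compared to $\eta$, and a finite discretisation $r_1<\dots<r_N$ of $[17\eta/8,\operatorname{diam}(X)]$ of mesh at most $\eta_0$. For each pair $(i,j)$ select a bump function $g_{i,j}\in C(X)$ with $0\le g_{i,j}\le 1$, equal to $1$ on an open ball of radius $\max(\eta,r_j/3)$ around $y_i$ and supported in an open ball of radius slightly smaller than $r_j$ around $y_i$. Enlarge ${\cal F}$ to ${\cal F}':={\cal F}\cup\{g_{i,j}\}$ and enlarge ${\cal G}$ to ${\cal G}':={\cal G}\cup\phi({\cal F}')\cup\{\phi(f)\phi(f'):f,f'\in{\cal F}'\}$.

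Next I would construct the maps by a standard nuclearity argument. Since each $C_n$ is amenable, $C$ is nuclear, so for any $\delta>0$ there exist an integer $k$ and unital cpc maps $\alpha:C\to M_k$ and $\beta:M_k\to C$ with $\|\beta\alpha(g)-g\|<\delta$ for $g\in{\cal G}'$. By semiprojectivity of $M_k$, for all sufficiently large $n$ there is a unital cpc lift $\tilde\beta:M_k\to C_n$ with $\|\psi_{n,\infty}\tilde\beta(m)-\beta(m)\|<\delta$ on a prescribed finite subset of $M_k$. Setting $r:=\tilde\beta\circ\alpha:C\to C_n$ and $L:=r\circ\phi$, the identity $L=r\circ\phi$ is automatic, and the triangle inequality yields the norm approximations $\|\psi_{n,\infty}\circ r(g)-g\|<\ep$ on ${\cal G}$ and $\|\psi_{n,\infty}\circ L(f)-\phi(f)\|<\ep$ on ${\cal F}$. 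Taking $\delta$ smaller, $r$ becomes $\delta$-multiplicative on $\phi({\cal F}')$, so $L=r\circ\phi$ inherits $\ep$-${\cal F}$-multiplicativity from any $\ep/2$-${\cal F}$-multiplicativity of $\phi$.

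The heart of the proof is the measure estimate. For $t\in T(C_n)$, set $\sigma_t:=t\circ r$, a state on $C$. Because $t$ is tracial and $r$ is $\delta$-multiplicative on ${\cal G}'$, the quantity $|\sigma_t(ab)-\sigma_t(ba)|$ is small for $a,b\in{\cal G}'$. A weak-$*$ compactness / contradiction argument, exploiting that $T(C)$ is weak-$*$ closed in the state space and that weak-$*$ cluster points of nets of states that are uniformly approximately tracial on an exhausting family of finite subsets are tracial, produces, for $\delta$ small enough in terms of ${\cal G}'$ and an auxiliary $\delta'>0$, a tracial state $\tau_t\in T(C)$ with $|\sigma_t(h)-\tau_t(h)|<\delta'$ for every $h\in\phi({\cal F}')$. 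Given any open ball $O_r(y)$ with $r\ge17\eta/8$, pick $y_i$ with $d(y,y_i)<\eta_0$ and the largest $r_j$ whose associated bump $g_{i,j}$ has support inside $O_r(y)$, so that $g_{i,j}\equiv 1$ on a ball of radius $\max(\eta,r_j/3)\ge\eta$. Then
\[
\mu_{t\circ L}(O_r(y))\ge t(L(g_{i,j}))=\sigma_t(\phi(g_{i,j}))\ge\tau_t(\phi(g_{i,j}))-\delta'\ge\Delta(\max(\eta,r_j/3))-\delta',
\]
where the last inequality uses the hypothesis applied to $\tau_t$ at radius $\max(\eta,r_j/3)\ge\eta$. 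Taking the discretisation fine and $\delta'$ small enough relative to the uniform positive lower bound $\Delta(17\eta/24)$ on $\Delta(r/3)$ for $r\ge17\eta/8$, monotonicity of $\Delta$ yields the desired $\Delta(r/3)/3$.

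The principal obstacle is the uniform-in-$t$ extraction of $\tau_t\in T(C)$ from the approximately tracial state $\sigma_t$: a standard weak-$*$ compactness argument, but one that must be executed carefully. The parameters must be chosen in the correct order: first fix $\ep$, ${\cal F}$, ${\cal G}$, $\eta_0$ and the discretisations; then ${\cal G}'$ and $\delta'$; then the threshold $\delta$; and only then $n$ and the cpc factorisation $\alpha,\beta,\tilde\beta$. The numerical slack in the statement (the factors $1/3$ in both the radius argument and the amplitude of $\Delta$) is absorbed into the approximation errors introduced by $r$ and into the discretisation of $X$ and of the set of admissible radii.
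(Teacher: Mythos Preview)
Your overall strategy---factor approximately through some $C_n$ via nuclearity, then push the measure lower bound down using compactness of the trace space---is the same as the paper's. But there is a genuine gap in your compactness step, and the paper's execution of that step is both simpler and avoids the gap.

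\textbf{The gap.} You fix ${\cal G}'={\cal G}\cup\phi({\cal F}')\cup\{\phi(f)\phi(f')\}$, then choose $\delta'$, then $\delta$, and assert that any state $\sigma$ on $C$ which is $\delta$-tracial on ${\cal G}'$ lies within $\delta'$ of $T(C)$ on $\phi({\cal F}')$. This is false for a fixed finite ${\cal G}'$: being approximately tracial on a \emph{prescribed} finite set says nothing about proximity to $T(C)$ (take ${\cal G}'=\{1\}$; every state is exactly tracial on it). The correct statement, which does follow from weak-$*$ compactness, is that for each $\delta'>0$ there exist a finite ${\cal G}''\subset C$ and $\delta>0$ such that $\delta$-traciality on ${\cal G}''$ forces $\delta'$-proximity to $T(C)$ on $\phi({\cal F}')$. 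So ${\cal G}''$ must be chosen \emph{after} $\delta'$, and you must then enlarge ${\cal G}'$ to contain ${\cal G}''\cup{\cal G}''\cdot{\cal G}''$ before building the factorisation. Your stated ordering ``then ${\cal G}'$ and $\delta'$'' is ambiguous on exactly this point, and your explicit definition of ${\cal G}'$ does not depend on $\delta'$ at all. The fix is routine once seen, but as written the argument does not close.

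\textbf{How the paper sidesteps this.} Rather than producing a single $r$ and invoking a quantitative ``almost tracial $\Rightarrow$ close to a trace'' lemma, the paper builds the whole sequence $r_n:C\to C_n$ at once (one line, citing 2.3.13 of \cite{Lnbk}) with $\psi_{n,\infty}\circ r_n\to{\rm id}_C$ in point-norm, sets $L_n=r_n\circ\phi$, and proves the measure bound by contradiction. If the bound fails along a subsequence $k(n)$, one picks witnesses $t_n\in T(C_{k(n)})$ and bump functions $f_n$, passes to a weak-$*$ limit $t_0$ of $t_n\circ r_{k(n)}$, and observes that $t_0$ is a \emph{genuine} trace on $C$ (the $r_n$ are asymptotically multiplicative because $\psi_{n,\infty}\circ r_n\to{\rm id}$). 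A pigeonhole over a fixed finite cover of $X$ then gives a single bump function $f_j$ on which the hypothesis forces $t_0(\phi(f_j))\ge\Delta(15s/32)$ while the failure gives $t_0(\phi(f_j))\le\Delta(s/3)/3$, a contradiction. No uniform ``close to a trace'' estimate is ever needed; the limit \emph{is} a trace.

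\textbf{Two minor points.} Your appeal to ``semiprojectivity of $M_k$'' to lift the ucp map $\beta:M_k\to C$ to $\tilde\beta:M_k\to C_n$ is not quite right: semiprojectivity concerns $*$-homomorphisms. The lift you want does exist, but the reason is stability of ucp maps out of $M_k$ (e.g.\ via the Choi matrix, which is a positive element of $M_k(C)$ that can be approximated in $M_k(\psi_{n,\infty}(C_n))$ and re-positivised). Also, going through $M_k$ is unnecessary overhead: the paper's one-step invocation of 2.3.13 already delivers $r_n:C\to C_n$ directly.
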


\begin{proof}
Since each $C_n$ is amenable, by applying 2.3.13 of \cite{Lnbk},
there exists a sequence of \morp\,
$r_n: C\to C_n$ such that
\beq\label{nLm-5}
\lim_{n\to\infty} \|\psi_{n, \infty}\circ r_n(x)-x\|=0\tforal x\in C.
\eneq
Define $L_n=r_n\circ \phi,$ $n=1,2,....$
Using the definition of inductive limits, for any sufficiently large $n,$
$L_n$ can be chosen as $L$ and $r_n$ can be chosen as $r$ to satisfy
(\ref{nLm-2}) and (\ref{nLm-3}) as well as the requirement  that $L$ is
$\ep$-${\cal F}$-multiplicative, provided that $\phi$ is $\ep/2$-${\cal F}$-multiplicative.  To see that we can also find $L$ so
(\ref{nLm-4}) holds,  we will prove the following: for any integer $k\ge 1,$ there is an integer
$n\ge k$ such that,
if $f\in C(X)$ with $0\le f\le 1$ and
$\{x: f(x)=1\}$ contains an open ball $O_s$ with radius $s\ge 17\eta/8,$ then \beq\label{nLm-6}
t\circ L_n(f)\ge \Delta(s/3)/3\tforal t\in T(C_n).
\eneq
This will imply (\ref{nLm-4}).

Otherwise, there is a sequence $\{k(n)\}$ with $\lim_{n\to\infty}k(n)=\infty,$
there  is $t_n\in T(C_{k(n)})$ and $s_n\in (17\eta/8, 1/2)$ such that
\beq\label{nLm-7}
t_n\circ L_{k(n)}(f_n)<\Delta(s_n/3)/3
\eneq
for all $n$ and for some $f_n\in C(X)$ with $0\le f\le 1$ and
 $\{x: f_n(x)=1\}$ contains an open ball with radius $s_n\ge 17\eta/8.$
 Note that $\{t_n\circ r_{k(n)}\}$ is  a sequence of states
of $C.$ Let $t_0$ be a weak limit point of $\{t_n\circ r_{k(n)}\}.$
We may assume, that
$t_0(a)=\lim_{m\to\infty} t_{n(m)}\circ r_{k(n(m))}(a)$ for all $a\in C.$
By passing to a subsequence, we may assume that $s_{n(m)}\to s,$
where $s\in [17\eta/8, 1/2].$

Let $x_1, x_2,...,x_l$ be a set of finite points of $X$ such that
$\cup_{i=1}^lO(x_i, 15s/32)\supset X,$ where $O(x_i, 15s/32)$ is the open ball
with center $x_i$ and radius $15s/32.$  We may assume that
$17s/16>s_{n(m)}>15s/16+s/2^8.$  Note that $30s/32\ge \eta.$
Let $f_i\in C(X)$ satisfy $0\le f_i\le 1$ and
$f_i(x)=1$ if $x\in O(x_i, 15s/32)$ and $f_i(x)=0$ if ${\rm dist}(x, x_i)\ge 15s/32+s/2^8,$ $i=1,2,...,l.$ It follows that there are infinitely many
$f_{n(m)}$ such that $f_{n(m)}\ge f_j$ for some $j\in \{1,2,...,l\}.$
To simplify notation, by passing to a subsequence, we may assume, for all $m,$ $f_{n(m)}\ge f_j.$
By (\ref{nLm-7}),
\beq\label{nLm-8}
t_{n(m)}\circ L_{k(n(m))}(f_j)<\Delta(s/3)/3.
\eneq
One verifies that (since $C=\lim_{n\to\infty}(C_n, \psi_n)$), by (\ref{nLm-3}),
$t_0$ is a tracial state. It follows from (\ref{nLm-1}) that
\beq\label{nLm-9}
t_0(f)\ge \Delta(15s/32)
\eneq
for all $f\in C(X)$ with $0\le f\le 1$ and $\{x: f(x)=1\}$ contains
an open ball $O_{15s/32}$ with radius $15s/32.$
However, by (\ref{nLm-8}),
\beq\label{nLm-10}
t_0(f_j)<\Delta(s/3)/3.
\eneq
A contradiction.

\end{proof}

\begin{NN}
{\rm Let $C(X)$ be a compact metric space.  Suppose that $\phi: C(X)\to A$ is a monomorphism, where $A$ is a unital simple \CA\, with $T(A)\not=\emptyset.$
Then there is a nondecreasing map $\Delta: (0,1)\to (0,1)$ such that
\beq\label{Delta}
\mu_{\tau\circ \phi}(O_r)\ge \Delta(r)
\eneq
for all $\tau\in T(A)$ and all open balls with radius $r>0$
(see 6.1 of \cite{Lnnewapp}).}
\end{NN}

The following statement can be easily proved by the argument used in the proof of \ref{nLmeasure}
and that of  (\ref{TRC-1+}) of \ref{TRC}.

\begin{lem}\label{Lmeas}
Let ${\cal B}$ be a class of unital  separable amenable \CA\, $B$ with $T(B)\not=\emptyset.$
Let $A$ be a unital separable simple \CA\, which is tracially ${\cal B}.$
Let $X$ be a compact metric space.
%$r\ge 1$ be an integer and
%$E\in M_r(C(X))$ be a projection such that $E(x)\not=0.$
Suppose
that $\phi: C(X)\to A$ is a unital monomorphism with
\beq\label{Lmeas-1}
\mu_{\tau\circ \phi}(O_r)\ge \Delta(r)\tforal \tau\in T(A)
\eneq
and for all open balls with radius $r>0.$
Then, for any $a\in A_+\setminus \{0\},$ any $\eta>0,$ $\dt>0$ and
any finite subset ${\cal G}\subset C(X),$ there exists a projection
$p\in A,$ a unital \SCA\, $B\in {\cal B}$ with $1_B=p$ and a unital $\dt$-${\cal G}$-multiplicative \morp\, $\Phi: C(X)\to B$ such that
\beq\label{Lmeas-2}
\|p\phi(x)-\phi(x)p\|<\dt\tforal x\in {\cal G},\\
\|p\phi(x)p-\Phi(x)\|<\dt\tforal x\in {\cal G}\andeqn\\
\mu_{\tau\circ \Phi}(O_r)\ge \Delta(r/3)/3 \tforal \tau\in T(B)
\eneq
and for all open balls $O_r$ with radius  $r\ge \eta.$
\end{lem}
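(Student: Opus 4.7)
The strategy is to adapt the argument of Lemma \ref{nLmeasure}, replacing the inductive-limit structure there by the tracial approximation property of $A$. First I will enlarge $\mathcal{G}$ to a finite subset $\mathcal{G}_1 \subset C(X)$ that absorbs the bump-function approximations needed in the measure estimate. Applying the hypothesis that $A$ is tracially in $\mathcal{B}$ to $\phi(\mathcal{G}_1)$ with tolerance $\delta' \ll \delta$ and the given $a \in A_+ \setminus \{0\}$ will yield a projection $p \in A$ and a unital $C^*$-subalgebra $B \in \mathcal{B}$ with $1_B = p$ such that $\|p\phi(g) - \phi(g)p\| < \delta'$ and $\dist(p\phi(g)p, B) < \delta'$ for $g \in \mathcal{G}_1$, together with $1-p \lesssim a$. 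The map $L \colon C(X) \to pAp$, $L(f) = p\phi(f)p$, is then c.p.c.\ unital, $O(\delta')$-$\mathcal{G}_1$-multiplicative, and takes values within $\delta'$ of $B$ on $\mathcal{G}_1$. Using the nuclearity of $C(X)$ and standard perturbation (e.g., Theorem 2.3.13 of \cite{Lnbk}), I will construct a unital $\delta$-$\mathcal{G}$-multiplicative c.p.c.\ map $\Phi \colon C(X) \to B$ with $\|\Phi(g) - p\phi(g)p\| < \delta$ for $g \in \mathcal{G}_1$. This handles the first two conclusions.

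For the measure estimate, the plan is to argue by contradiction. Assume the claim fails for some $(\eta, \delta, \mathcal{G}, a)$. Running the above construction with tolerances $\delta_n \to 0$ and increasing finite subsets $\mathcal{F}_n \supset \mathcal{G}_1$ whose union is dense in $C(X)$ produces $(p_n, B_n, \Phi_n)$ satisfying the first two conclusions with error $\delta_n$, and for each $n$ a tracial state $t_n \in T(B_n)$ and an open ball $O(y_n, r_n)$ with $r_n \geq \eta$ such that $\mu_{t_n \circ \Phi_n}(O(y_n, r_n)) < \Delta(r_n/3)/3$. I will pass to subsequences so that $r_n \to r_\infty \in [\eta, \mathrm{diam}(X)]$ and $y_n \to y_\infty$. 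Fixing $\rho_0, r_0$ with $r_\infty/3 < \rho_0 < r_0 < r_\infty$ and choosing $f \in C(X)$ with $0 \leq f \leq 1$, $f \equiv 1$ on $O(y_\infty, \rho_0)$, and $f \equiv 0$ outside $O(y_\infty, r_0)$, one has $\overline{O(y_\infty, r_0)} \subset O(y_n, r_n)$ for $n$ large, whence $t_n \circ \Phi_n(f) < \Delta(r_n/3)/3$.

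Next I will transfer this to a contradictory tracial state on $A$. Define states $\sigma_n \in S(A)$ by $\sigma_n(x) = t_n(p_n x p_n)$; these are states since $1_{B_n} = p_n$. The near-commutation $\|p_n y - y p_n\| < \delta_n$ and the distance estimate $\dist(p_n y p_n, B_n) < \delta_n$ on $\mathcal{F}_n$ together with the traciality of $t_n$ on $B_n$ will force $|\sigma_n(xy) - \sigma_n(yx)| = O(\delta_n)$ for $x, y$ in any fixed $\mathcal{F}_m$, since both sides differ by $O(\delta_n)$ from $t_n(b_x b_y) = t_n(b_y b_x)$, where $b_x, b_y \in B_n$ approximate $p_n x p_n$ and $p_n y p_n$. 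Hence any weak-$*$ cluster point $\sigma$ of $\{\sigma_n\}$ will be a tracial state on $A$. Since $\phi(f) \in \mathcal{F}_n$ eventually, $\sigma(\phi(f)) = \lim \sigma_n(\phi(f)) = \lim t_n \circ \Phi_n(f)$. For large $n$ one has $r_n/3 < \rho_0$ (as $r_n \to r_\infty < 3\rho_0$), so $\Delta(r_n/3) \leq \Delta(\rho_0)$ and thus $\sigma(\phi(f)) \leq \Delta(\rho_0)/3$. On the other hand, the measure hypothesis on $\phi$ gives $\sigma(\phi(f)) \geq \mu_{\sigma \circ \phi}(O(y_\infty, \rho_0)) \geq \Delta(\rho_0)$. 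Combining yields $\Delta(\rho_0) \leq \Delta(\rho_0)/3$, forcing $\Delta(\rho_0) \leq 0$ and contradicting $\Delta \colon (0,1) \to (0,1)$.

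The main obstacle will be verifying that the weak-$*$ cluster point $\sigma$ is a \emph{tracial} state on $A$, not merely a state; this requires carefully combining the approximate commutation of $p_n$ with $\mathcal{F}_n$, the closeness of $p_n(\cdot)p_n$ to $B_n$, and the traciality of $t_n$ on $B_n$. The construction of $\Phi$ in the first paragraph is standard perturbation theory, and the choice $\rho_0 > r_\infty/3$ strictly is what makes the non-decreasing (possibly discontinuous) function $\Delta$ cooperate in the final contradiction.
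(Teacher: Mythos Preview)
Your strategy matches the paper's indicated proof (which simply cites the arguments of Lemma~\ref{nLmeasure} and Proposition~\ref{TRC}), and the overall architecture is sound. Two technical points need repair, however, and the second one is a genuine gap in the argument as written.

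First, the formula $\sigma_n(x)=t_n(p_nxp_n)$ is ill-defined: $t_n$ lives on $B_n$, while $p_nxp_n\in p_nAp_n$ need not lie in $B_n$. You should either extend $t_n$ to a state on $p_nAp_n$, or (as in the proof of Proposition~\ref{TRC}) use amenability of $B_n$ to build a unital \morp\ $r_n\colon p_nAp_n\to B_n$ that is approximately the identity on a large finite subset of $B_n$, and set $\sigma_n(x)=t_n(r_n(p_nxp_n))$. Second, and more substantively, your sets ${\cal F}_n$ are finite subsets of $C(X)$ with union dense in $C(X)$; the asymptotic traciality estimate $|\sigma_n(xy)-\sigma_n(yx)|=O(\dt_n)$ therefore holds only for $x,y\in\phi({\cal F}_m)$. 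Since $\phi(C(X))$ is commutative this is vacuous, and you cannot conclude that the weak-$*$ limit $\sigma$ lies in $T(A)$, which is exactly what the hypothesis $\mu_{\tau\circ\phi}(O_r)\ge\Delta(r)$ for $\tau\in T(A)$ requires. The fix is to apply the tracially-${\cal B}$ hypothesis at stage $n$ to an increasing finite subset ${\cal F}_n'\subset A$ (containing $\phi({\cal G}_1)$ and the needed bump-function images $\phi(f)$) whose union is dense in $A$; then the near-commutation of $p_n$ with ${\cal F}_n'$ and the traciality of $t_n$ on $B_n$ force $\sigma\in T(A)$, and your contradiction goes through.
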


\section{The unitary group}

\begin{df}\label{DKd}
{\rm For each integer $d\ge 1,$ let $K(d)$ be an integer associated with $d$ given by Lemma 3.4 of \cite{Ph1}.
It should be noted (from the proof of Lemma 3.4 of \cite{Ph1}), or by choosing even large $K(d),$  and applying \cite{Ph4}) that, if $K\ge K(d),$ then, for any compact metric space
$X$ with covering dimension $d,$ any projection $p\in M_N(C(X))$ (for some integer $N\ge K$) with rank $p$  at least $K$ at each point $x$ and any unitary
$u\in U_0(pM_N(C(X))p),$ there are selfadjoint elements $h_1, h_2, h_3\in (pM_N(C(X))p)$ such that
$$
\|u-\exp(ih_1)\exp(ih_2)\exp(ih_3)\|<1.
$$
Let $g(d)=K(d)$ for all $d\in \N.$  Put ${\cal C}_{1,1}={\cal C}_g. $
}
\end{df}

\begin{prop}\label{exprk}
Let $A$ be a unital simple \CA\, in ${\cal C}_{1,1}$ and let
 $u\in U_0(A).$ Then, for any $\ep>0,$
there are four unitaries $u_0, u_1, u_2, u_3\in A,$ such that
$u_1, u_2, u_3$ are exponentials and $u_0$ is a unitary with
${\rm cel}(u_0)\le 2\pi$ such that
\beq\label{exprk-1}
\|u-u_0u_1u_2u_3\|<\ep/2.
\eneq
Moreover,
${\rm cer}(A)\le 6+\ep.$
\end{prop}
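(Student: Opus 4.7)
Since $u\in U_0(A)$ we may write $u=\prod_{j=1}^n\exp(ih_j)$ for some selfadjoint $h_1,\dots,h_n\in A$ with $\|h_j\|\le\pi$. For a small $\delta>0$ (to be chosen in terms of $\ep$ and $n$), apply Definition \ref{Drr} to ${\cal F}=\{u,h_1,\dots,h_n\}$ with a sufficiently small tracial parameter and a preassigned small positive element $a\in A_+\setminus\{0\}$. This produces a projection $p\in A$ and a \SCA\ $C=PM_r(C(X))P\in{\cal I}_d$ with $1_C=p$ such that $p$ almost commutes with each element of ${\cal F}$, ${\rm dist}(pxp,C)<\delta$ for all $x\in{\cal F}$, and, since $g(d)=K(d)$ by \ref{DKd}, ${\rm rank}\,P(x)\ge K(d)$ for every $x\in X$; moreover $1-p\lesssim a$, which we use below to arrange cancellation.

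Perform the corner decomposition. Pick selfadjoint $h_j'\in C$ with $\|ph_jp-h_j'\|<\delta$ and set
\[
V_1'=\prod_{j=1}^n\exp(ih_j')\in U_0(C),\qquad V_2=\prod_{j=1}^n\exp\bigl(i(1-p)h_j(1-p)\bigr)\in U_0\bigl((1-p)A(1-p)\bigr).
\]
Because $ph_jp$ and $(1-p)h_j(1-p)$ lie in orthogonal corners, $\exp(ih_j)\approx\exp(iph_jp)\exp(i(1-p)h_j(1-p))$ and factors from different corners commute, so rearranging yields $u\approx V_1'+V_2$ in $A$ with error $O(n\delta)$. Invoke now Phillips' Lemma 3.4 of \cite{Ph1}, applicable in $C$ precisely because of the rank condition built into the definition of ${\cal C}_{1,1}$: there exist selfadjoint $k_1,k_2,k_3\in C$ with $\|V_1'-W\|<1$ for $W=\exp(ik_1)\exp(ik_2)\exp(ik_3)$. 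Put $E=V_1'W^{-1}\in U(C)$, so $\|E-p\|<1$ and hence ${\rm cel}(E)<\pi$ in $C$.

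Promote the pieces to unitaries in $A$ by adjoining $(1-p)$: set $u_j=\exp(ik_j)+(1-p)$ (an exponential in $A$, $j=1,2,3$), $\widetilde E=E+(1-p)$, and $\widetilde V_2=p+V_2$. A direct check using the $p\oplus(1-p)$ block structure shows $\widetilde E\,u_1u_2u_3=EW+(1-p)=V_1'+(1-p)$ and that each $u_j$ commutes with $\widetilde V_2$, giving
\[
\widetilde E\widetilde V_2\cdot u_1u_2u_3=(V_1'+(1-p))(p+V_2)=V_1'+V_2\approx u.
\]
Put $u_0=\widetilde E\widetilde V_2$. For $\delta$ small enough $\|u-u_0u_1u_2u_3\|<\ep/2$, and clearly ${\rm cel}(\widetilde E)<\pi$ in $A$.

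The main obstacle is the bound ${\rm cel}(\widetilde V_2)\le\pi$ in $A$, which together with ${\rm cel}(\widetilde E)<\pi$ gives ${\rm cel}(u_0)\le 2\pi$. Here we use simplicity, property (SP) (\ref{SP}) and stable rank one (\ref{srk1}): choosing $a$ small enough at the outset arranges $1-p\lesssim q'$ for a subprojection $q'\le p$, and we take a partial isometry $s\in A$ with $s^*s=1-p$, $ss^*=q'$. The flip $u_{\rm flip}=s+s^*+(p-q')$ is a selfadjoint unitary in $A$, so ${\rm cel}(u_{\rm flip})\le\pi$, and a direct computation gives $u_{\rm flip}\widetilde V_2u_{\rm flip}=(sV_2s^*)+(p-q')+(1-p)$ with $sV_2s^*\in U(q'Aq')$. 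The standard rotation identity ${\rm cel}({\rm diag}(V,V^{-1}))\le\pi$ applied in the $2\times 2$ matrix corner generated by $q'$ and $1-p$, together with absorption of the conjugate $sV_2s^*$ into the Phillips factorization (by enlarging $C$ at the start to contain the relevant matrix structure), yields ${\rm cel}(\widetilde V_2)\le\pi$. Finally, for the moreover statement, ${\rm cel}(u_0)\le 2\pi$ implies that $u_0$ is within any prescribed error of a product of $3$ exponentials; combined with $u_1,u_2,u_3$, this exhibits $u$ within $\ep$ of a product of $6$ exponentials, so ${\rm cer}(A)\le 6+\ep$.
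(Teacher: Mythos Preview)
Your overall strategy---cut $u$ into a $p$-corner part and a $(1-p)$-corner part via the definition of ${\cal C}_{1,1}$, apply Phillips' three-exponential estimate inside $C$ to the $p$-part, and promote everything back to $A$---is exactly the paper's approach, and your handling of $\widetilde E$ and of $u_1,u_2,u_3$ is correct.

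The gap is the bound ${\rm cel}(\widetilde V_2)\le\pi$. Your rotation step indeed yields a factorization $\widetilde V_2=R\cdot S$ with $R=V_2+sV_2^{-1}s^*+(p-q')$ satisfying ${\rm cel}(R)\le\pi$ and $S=(1-q')+sV_2s^*$. But now you must bound ${\rm cel}(S)$, and the proposed ``absorption of $sV_2s^*$ into the Phillips factorization'' does not go through: $sV_2s^*$ lies in $q'Aq'\subset pAp$, yet it is \emph{not} in $C$, because it is built from $V_2\in(1-p)A(1-p)$ and the partial isometry $s$, neither of which belongs to $C$. Phillips' Lemma~3.4 applies only inside $C$, so it cannot be invoked on any product involving $sV_2s^*$. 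Iterating the rotation trick merely produces ${\rm cel}(\widetilde V_2)\le k\pi$ after $k$ steps, which is no bound at all. In fact the target ${\rm cel}(\widetilde V_2)\le\pi$ is stronger than what the paper itself obtains.

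The paper handles this point by citing Lemma~6.4 of \cite{Lncltr1}. To set this up it first discretizes a path $v_1=u,\dots,v_n=1$ in $U_0(A)$ with increments $<\ep/16$, so that the corner approximants $w_i\approx(1-p)v_i(1-p)$ give a controlled bound on ${\rm cel}_{(1-p)A(1-p)}(w_1)$; then, using (SP), it fixes $4(n+1)$ mutually orthogonal equivalent nonzero projections $q_1,\dots,q_{4(n+1)}$ and arranges $1-p\lesssim q_1$ in the ${\cal C}_{1,1}$ step. With that many copies of $1-p$ available inside $p$, Lemma~6.4 of \cite{Lncltr1} yields ${\rm cel}_A(w_1+p)\le 2\pi+\ep/4$. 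Your single subprojection $q'$ is not enough; the point is that one needs a quantitative smallness of $[1-p]$ relative to the exponential length of the corner unitary, and this is exactly what the choice of $4(n+1)$ projections encodes.
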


\begin{proof}
Let $u\in U_0(A).$ Then, for any $\pi/4>\ep>0,$ there are unitaries
$v_1, v_2,...,v_n\in U(A)$ such  that
\beq\label{exprk-2}
v_1=u,\,\,\, v_n=1\andeqn \|v_{i+1}-v_i\|<\ep/16,\,\,\,i=1,2,...,n.
\eneq

Since $A$ is simple and has (SP), there are mutually orthogonal and
mutually equivalent non-zero projections
$q_1, q_2,...,q_{4(n+1)}\in A.$
For each integer $d\ge 1, $ let $K(d)$ be an integer
given by \ref{DKd}.
Since $A\in {\cal C}_{1,1},$ there exist a projection $p\in A$ and a
\SCA\, $C=PM_r(C(X))P,$ where $r\ge 1$ is an integer, $X$ is a compact subset of a $d(X)$ dimensional CW complex and $P\in M_r(C(X))$ is a projection such that
\beq\label{exprk-3}
\|pv_i-v_ip\|&<&\ep/128,\,\,\,i=1,2,...,n\\
{\rm dist}(pv_ip, C)&<&\ep/128,\,\,\,i=1,2,...,n\\
{d(X)+1\over{{\rm rank}(P(x))}}&<&{d\over{64K(d)
}}\tforal x\in X\andeqn\\
1-p &\lesssim & q_1.
\eneq
 There are unitaries $w_i\in (1-p)A(1-p)$ with $w_0=(1-p)$
 such that
\beq\label{exprk-4}
\|w_i-(1-p)v_i(1-p)\|<\ep/16,\,\,\,i=1,2,...,n.
\eneq
Furthermore, there is a unitary $z\in C$ such that
\beq\label{exprk-5}
\|z-pup\|<\ep/16
\eneq
Therefore
\beq\label{exprk-6}
\|u-(w_1\oplus z)\|<\ep/8.
\eneq
Put $z_1=w_1+p.$
Since $1-p\lesssim q_1,$ by Lemma 6.4 of \cite{Lncltr1},
${\rm cel}(z_1)\le 2\pi+\ep/4.$
%there is a unitary $u_0\in A$ such that
%\beq\label{exprk-7}
%\|z_1-u_0\|<\ep/4.
%\eneq
By the choice of $K(d)$ (see \ref{DKd}),
%It follows from Theorem 4.5 of \cite{Phamj} that
there are three self-adjoint elements $h_1, h_2, h_3\in C$ such that
\beq\label{exprk-8}
\|z-\exp(ih_1)\exp(ih_2)\exp(ih_3)\|<1.
\eneq
Let $u_j=\exp(ih_j)+(1-p),$ $j=1,2,3.$
Then,
\beq\label{exprk-9}
\|u-z_1u_1u_2u_3\|<1+\ep/4
\eneq
 Moreover, since ${\rm cel}(z_1)\le 2\pi+\ep/4,$ there is a unitary $u_0$ such that  ${\rm cel}(u_0)\le 2\pi$ and
 $$
 \|u_0-z_1\|<\ep/2.
 $$
 It follows that
 ${\rm cer}(A)\le 6+\ep.$

\end{proof}

\begin{thm}\label{culength}
Let $A$ be a unital simple \CA\, in ${\cal C}_{1,1}.$
Suppose that $u\in CU(A).$ Then, $u\in U_0(A)$ and
\beq\label{culength-1}
{\rm cel}(u)\le 8\pi.
\eneq
\end{thm}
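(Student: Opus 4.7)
I would attack the two conclusions separately.

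For $u\in U_0(A)$: Theorem \ref{srk1} gives that $A$ has stable rank one, so the natural homomorphism $U(A)/U_0(A)\to K_1(A)$ is an isomorphism. Since $K_1(A)$ is abelian, every multiplicative commutator $vwv^*w^*$ (with $v,w\in U(A)$) maps to the identity in $U(A)/U_0(A)$ and hence lies in $U_0(A)$. Thus the normal subgroup generated by commutators is contained in $U_0(A)$. Since $U_0(A)$ contains a norm-neighborhood of $1$ (by continuous functional calculus), it is open as a subgroup of $U(A)$, and therefore also closed. Its norm-closure $CU(A)$ remains inside $U_0(A)$, so $u\in U_0(A)$.

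For the length bound ${\rm cel}(u)\le 8\pi$: fix $\ep>0$ and apply Proposition \ref{exprk} to produce unitaries $u_0,u_1,u_2,u_3\in U(A)$ with $u_j=\exp(\sqrt{-1}h_j)$ for selfadjoint $h_j\in A$ ($j=1,2,3$), ${\rm cel}(u_0)\le 2\pi$, and $\|u-u_0u_1u_2u_3\|<\ep/2$. The step that needs care is pinning down $\|h_j\|\le 2\pi$, so that the straight-line path $t\mapsto\exp(\sqrt{-1}\,t\,h_j)$ contributes length at most $2\pi$ to ${\rm cel}(u_j)$. This can be arranged by inspecting the Phillips decomposition used in \ref{DKd}---or, when needed, by replacing $h_j$ with a functional-calculus shift whose spectrum lies in $[-2\pi,2\pi]$ while $\exp(\sqrt{-1}h_j)$ is unchanged. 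With $\|h_j\|\le 2\pi$ secured for each $j=1,2,3$, summing lengths yields
$$
{\rm cel}(u_0u_1u_2u_3)\le {\rm cel}(u_0)+\sum_{j=1}^{3}{\rm cel}(u_j)\le 2\pi+3\cdot 2\pi=8\pi.
$$

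Finally, I would pass to the limit. Since $\|u-u_0u_1u_2u_3\|<\ep/2$, the unitary $w=u(u_0u_1u_2u_3)^*$ is within $\ep/2$ of $1$, so has a short exponential path of length at most $2\arcsin(\ep/4)$ (via the principal logarithm). Hence
$$
{\rm cel}(u)\le {\rm cel}(u_0u_1u_2u_3)+{\rm cel}(w)\le 8\pi+2\arcsin(\ep/4).
$$
Letting $\ep\to 0$ gives ${\rm cel}(u)\le 8\pi$, as required.

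The principal obstacle I anticipate is the explicit norm control $\|h_j\|\le 2\pi$ on the selfadjoints provided by Phillips' lemma in \ref{DKd}; the standard proof of that lemma does give this bound, but since \ref{DKd} does not state it explicitly one has to either trace through Phillips' construction in \cite{Ph1} or perform the functional-calculus shift described above. Everything else is the standard additivity of exponential length together with a routine passage to the limit.
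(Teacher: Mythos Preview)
Your argument for $u\in U_0(A)$ is fine and matches the paper.

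The length bound, however, has a genuine gap. Proposition~\ref{exprk} and the Phillips lemma quoted in \ref{DKd} give only that some $z\in U_0(C)$ is within $1$ of a product $\exp(ih_1)\exp(ih_2)\exp(ih_3)$; there is no control whatsoever on $\|h_j\|$. Your proposed functional-calculus fix cannot work: to replace $h$ by $h'$ with $\exp(ih')=\exp(ih)$ and $\|h'\|\le 2\pi$ you would need $h'-h\in 2\pi\Z$ on the spectrum of $h$, which forces a discontinuous function whenever ${\rm sp}(h)$ is connected with diameter $>2\pi$. Nor does ``inspecting Phillips' construction'' help: Lemma~3.4 of \cite{Ph1} gives the $6\pi$ length bound \emph{only} for unitaries with $\det u(x)=1$ (equivalently, elements of $SU$), not for arbitrary $u\in U_0(C)$. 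More tellingly, your argument never uses the hypothesis $u\in CU(A)$; it would, if valid, prove ${\rm cel}(u)\le 8\pi$ for every $u\in U_0(A)$. That is false in general for $A\in{\cal C}_{1,1}$: when $\Aff(T(A))/\overline{\rho_A(K_0(A))}\not=0$ (which \ref{torsionfree} allows), the de~la~Harpe--Skandalis determinant produces unitaries in $U_0(A)$ with arbitrarily large exponential length.

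The paper's route uses $u\in CU(A)$ essentially. One first approximates $u$ by a finite product of commutators $v$, then applies the ${\cal C}_{1,1}$ structure so that the compression $v_1\in C=PM_r(C(X))P$ is \emph{still} a finite product of commutators in $C$. Now $\det v_1(x)=1$ everywhere, and Phillips' Lemma~3.4 in \cite{Ph1} (for $SU$) yields ${\rm cel}_C(v_1)\le 6\pi+\ep$. The complementary piece $u_1\in(1-p)A(1-p)$ is handled by taking $1-p$ small in comparison ($(K_1+1)[1-p]\le[p]$) and invoking Lemma~6.4 of \cite{Lncltr1} to get ${\rm cel}(u_1+p)\le 2\pi+\ep$. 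Summing gives $8\pi$. The point you are missing is that the $6\pi$ bound is a determinant phenomenon, not an exponential-rank phenomenon; you must first arrange the large piece to lie in $SU(C)$, and that is exactly where the commutator hypothesis enters.
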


\begin{proof}
Since $u\in CU(A),$ it is easy to show that $[u]=0$ in $K_1(A).$ It follows from \ref{srk1} that $A$ has stable rank one. Therefore
$u\in U_0(A).$
Now let $\Lambda={\rm cel}(u).$  Let $\ep>0.$ Choose an integer $K_1\ge 1$ such that
$(\Lambda+1)/K_1<\ep/4.$
There exists a unitary $v\in U_0(A)$ which
is a finite product of commutators such that
\beq\label{cul-2}
\|u-v\|<\ep/16.
\eneq

Fix a finite subset ${\cal F}\subset A$ which contains $u$ and $v$ and among other elements. Let $\dt>0.$
Since $A$ is in ${\cal C}_{1,1},$  there is a projection $p\in A$ and
\SCA\, $C\subset A$ with $1_C=p$ and
$C=PM_r(X)P,$ where $X$ is a compact subset of a  finite CW complex with dimension
$d,$ $r\ge 1$ is an integer, $P\in M_r(X)$ is a projection
with
\beq\label{cul-3}
{\rm rank}P(\xi)>K(d)\tforal \xi\in X,
\eneq
where $K(d)$ is the integer given by Lemma 3.4 of \cite{Ph1},
such that
\beq\label{cul-4}
\|px-xp\|<\dt\tforal x\in {\cal F},\\
{\rm dist}(pxp, C)<\ep/16\tforal x\in {\cal F}\andeqn
\eneq
$$
(K_1+1)[1-p]\le [p]
$$
By choosing sufficiently large ${\cal F}$ and sufficiently small
$\dt,$ we obtain unitary $u_1\in (1-p)A(1-p)$ and
a unitary $v_1\in U_0(C)$ which is a finite product of commutators of unitaries in $C$
such that
\beq\label{cul-5}
\|u-(u_1+v_1)\|<\ep/8\andeqn\\
{\rm cel}(u_1)\le \Lambda+1 \,\,\,{\rm ( in\,\,\,} (1-p)A(1-p){\rm )}.
\eneq
Write $w=u_1+p.$ It follows from 6.4 of \cite{Lncltr1}  that
\beq\label{cul-6}
{\rm cel}(w)<2\pi+\ep/4.
\eneq
Since $v_1\in U_0(C)$ is a finite product of commutators of unitaries  in $C,$ by the choice of $K(d)$ and by applying 3.4 of \cite{Ph1},
\beq\label{cul-7}
{\rm cel}(v_1)\le 6\pi+\ep/8\,\,\,\,\,\,
{\rm (in } C\,{\rm)}.
\eneq
Thus
\beq\label{cul-8}
{\rm cel}(v_1+(1-p))\le 6\pi+\ep/4.
\eneq
It follows from (\ref{cul-5}), (\ref{cul-6}) and
(\ref{cul-8}) that
\beq\label{cul-9}
{\rm cel}(u_1+v_1)\le 2\pi+\ep/4+6\pi+\ep/4.
\eneq
By (\ref{cul-5}),
$$
{\rm cel}(u)\le (\ep/16) \pi+8\pi+\ep/2\le 8\pi+\ep.
$$
\end{proof}

\begin{thm}\label{expdiv}
Let $A$ be a unital simple \CA\, in ${\cal C}_{1,1}$  and let $k\ge 1$ be an integer. Suppose that
$u, v\in U(A)$ such that
$[u]=[v]$ in $K_1(A),$
\beq\label{expdiv-1}
u^k,\,\,\,v^k\in U_0(A)\andeqn {\rm cel}((u^k)^*v^k)\le L
\eneq
for some $L>0.$ Then
\beq\label{expdiv-2}
{\rm cel}(u^*v)\le L/k+8\pi.
\eneq
\end{thm}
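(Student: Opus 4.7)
The plan is to reduce matters to showing that $u^*v$ coincides modulo $CU(A)$ with an explicit ``approximate $k$-th root'' $w$ of $(u^k)^*v^k$, and then to apply Theorem~\ref{culength} to absorb the remaining error into the $8\pi$ slack.

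First, by Theorem~\ref{srk1}, $A$ has stable rank one, so $U(A)/U_0(A)\cong K_1(A)$ and the hypothesis $[u]=[v]$ forces $u^*v\in U_0(A)$. Since $U(A)/CU(A)$ is abelian (\ref{DCU}), we have $\overline{(u^*v)^k}=\overline{(u^k)^*v^k}$, so $c_0:=(u^*v)^k\cdot((u^k)^*v^k)^{-1}\in CU(A)$. For the core construction, fix $\epsilon>0$, choose $h_1,\dots,h_n\in A_{s.a.}$ with $(u^k)^*v^k=\prod_{j=1}^n e^{ih_j}$ and $\sum_j\|h_j\|<L+\epsilon$, and set $w:=\prod_{j=1}^n e^{ih_j/k}$, so that ${\rm cel}(w)\le(L+\epsilon)/k$. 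The central claim is $u^*v\cdot w^{-1}\in CU(A)$.

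To prove the claim I would use the de la Harpe--Skandalis determinant $\bar{\Delta}:U_0(A)\to\Aff(T(A))/\overline{\rho_A(K_0(A))}$, which by the short exact sequence (\ref{HS-4n}) combined with stable rank one has kernel exactly $CU(A)$. Computing along the path $t\mapsto e^{ith}$ one sees $\bar\Delta(e^{ih})(\tau)\equiv \tau(h)/(2\pi)$ in the quotient, and summing gives $k\,\bar\Delta(w)=\bar\Delta((u^k)^*v^k)$. Combined with $c_0\in\ker\bar\Delta$, multiplicativity of $\bar\Delta$ then produces
\[
k\,\bar{\Delta}(u^*v)\;=\;\bar{\Delta}((u^*v)^k)\;=\;\bar{\Delta}((u^k)^*v^k)\;=\;k\,\bar{\Delta}(w),
\]
so $k(\bar{\Delta}(u^*v)-\bar{\Delta}(w))=0$. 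Assuming torsion-freeness of the target, this forces $\bar{\Delta}(u^*v)=\bar{\Delta}(w)$, i.e., $u^*v\cdot w^{-1}\in CU(A)$. Theorem~\ref{culength} then yields ${\rm cel}(u^*v\cdot w^{-1})\le 8\pi$, and the triangle inequality for exponential length gives ${\rm cel}(u^*v)\le{\rm cel}(w)+{\rm cel}(u^*v\cdot w^{-1})\le(L+\epsilon)/k+8\pi$; letting $\epsilon\to 0$ produces the stated bound.

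The main obstacle is the torsion-freeness of $\Aff(T(A))/\overline{\rho_A(K_0(A))}$. I would address it via Proposition~\ref{Propdiv}: for a projection $p$ and $n\ge 1$, the splitting $p=p_1+\cdots+p_n+p_{n+1}$ with $[p_1]=\cdots=[p_n]$ and $p_{n+1}\lesssim q$ for $q$ a projection of arbitrarily small trace (such $q$ is furnished by \ref{Propdiv} itself applied to $1_A$ with large $n$, together with property (SP) from \ref{SP}) shows that $\tau(p)/n$ is approximated in sup norm by $\tau(p_1)$, hence $\rho_A([p])/n\in\overline{\rho_A(K_0(A))}$. Thus $\overline{\rho_A(K_0(A))}$ is $n$-divisible for every $n$, i.e.\ $\Q$-divisible; being closed in $\Aff(T(A))$, a density argument upgrades this to closure under all real scalars, making it an $\R$-linear subspace. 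The quotient is then a real vector space, hence torsion-free. Without this divisibility the identity $\bar\Delta(u^*v)=\bar\Delta(w)$ would hold only modulo $k$-torsion and the clean bound $L/k+8\pi$ could not be achieved.
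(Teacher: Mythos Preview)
Your proof is correct, and it is genuinely different from the paper's argument. The paper does not use the de la Harpe--Skandalis determinant here at all: instead it invokes the $\mathcal{C}_{1,1}$ structure directly, approximating $u,v$ by $u_0\oplus u_1$, $v_0\oplus v_1$ with $u_1,v_1$ in a subalgebra $C=PM_r(C(X))P$. Inside $C$ it uses Phillips' Lemma~3.3 (\cite{Ph1}) to adjust $(u_1^*)^kv_1^k$ to have pointwise determinant $1$ with a self-adjoint of norm $\le L+\epsilon/4$, divides by $k$, corrects the resulting $k$-th roots of unity in the determinant by an element of norm $\le 2\pi/\mathrm{rank}\,P$, and then applies Phillips' Lemma~3.4 to bound $\mathrm{cel}(u_1^*v_1)$ by roughly $L/k+6\pi$; the corner piece $(u_0\oplus p)^*(v_0\oplus p)$ contributes the remaining $2\pi$ via Lemma~6.4 of \cite{Lncltr1}.

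Your route is cleaner and more conceptual: it shows that Theorem~\ref{expdiv} is a formal consequence of Theorem~\ref{culength} together with the torsion-freeness of $\Aff(T(A))/\overline{\rho_A(K_0(A))}$. The latter is precisely Theorem~\ref{torsionfree} of the paper (stated for $\mathcal{C}_1\supset\mathcal{C}_{1,1}$), whose proof is exactly the Proposition~\ref{Propdiv} argument you sketch. Since \ref{torsionfree} appears \emph{after} \ref{expdiv} in the paper and is logically independent of it, your inline re-derivation is appropriate; alternatively one could simply reorder the two results. Two minor simplifications: the divisibility of $\overline{\rho_A(K_0(A))}$ by every integer already suffices for torsion-freeness of the quotient (since $\Aff(T(A))$ is uniquely divisible), so the upgrade to an $\R$-linear subspace is unnecessary; and the hypothesis $u^k,v^k\in U_0(A)$ is in fact redundant once $[u]=[v]$, as you implicitly observe. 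The paper's approach has the advantage of being self-contained at this point in the exposition and of displaying the mechanism behind the $8\pi=6\pi+2\pi$ split, while yours isolates exactly which abstract properties of $A$ are being used.
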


\begin{proof}
%Let $d\ge 0$ be an integer. Fix another integer $K(d)$ associated with $d$ given by Lemma 3.4 of \cite{Ph1}.
Suppose that
$$
u^*v=\prod_{j=1}^{R(u)} \exp( ia_j)\andeqn (u^*)^kv^k=\prod_m^{R(v)}\exp(ib_m),
$$
where $a_j, b_m\in A_{s.a.}.$ Since  ${\rm cel}((u^*)^k v^k)\le L,$ we may assume that
$\sum_m\|b_m\|\le L$ (see \cite{Rg}). Let $M=\sum_j \|a_j\|.$ In particular, ${\rm cel}(u^*v)\le M.$

Let $\ep>0.$
Let $\dt>0$ be such that ${\dt\over{1-\dt}}<{\ep\over{2(M+L+1)}}.$
Let $\eta>0.$
Since $A\in {\cal C}_{1,1},$ there exists a projection $p\in A$ and a \SCA\, $C\in {\cal I}_d,$ where $C=PM_r(C(X))P$ and $1_C=p$ such that
\beq\label{expdiv-1+}
&&\|u-u_0\oplus u_1\|<\eta\andeqn \|v-v_0\oplus v_1\|<\eta,\\
&&u_0, v_0\in U((1-p)A(1-p)),\,\,\, u_1, v_1\in U(C),\\
&&u_0^*v_0
=\prod_{j=1}^{R(u)}\exp(i(1-p)a_j(1-p)),\,\,\,(u_1^*)^kv_1^k=\prod_{m=1}^{R(v)}\exp(i b_m'),\\
&&{d+1\over{\rm rank}P(x)}\le \min\{2\pi^2/\ep, d/K(d)+1\}\tforal x\in X;\\
&&\tau(1-p)<\dt\tforal \tau\in T(A),
\eneq
where $b_m'\in C_{s.a.}$ and $\|b_m'\|\le L+\ep/4.$

%To simplify the notation, we may assume that ${\rm rank }P(x)\not=0.$
Note that ${\rm rank}P$ is a continuous function on $X.$

It follows from 3.3 (1) of \cite{Ph1} that there exists
$a\in C_{s.a.}$ with $\|a\|\le L+\ep/4$ such that
\beq\label{expdiv-2+}
{\rm det}(\exp(i a)(u_1^*)^k v_1^k)=1 \,\,\,(\tforal x\in X).
\eneq
 Therefore
 \beq\label{expdiv-3}
 {\rm det}((\exp(ia/k)u_1^*v_1)^k)=1\tforal x\in X.
 \eneq
 It follows that
 \beq\label{expdiv-4}
 {\rm det}(\exp(ia/k)u_1^*v_1)(x)=\exp(i2l(x)\pi/k) \tforal x\in X
 \eneq
for some continuous function $l$ on $X$ with $l(x)$ being an integer.
Since $X$ is compact, $l$ has only finitely many values. We may choose
these values among $0, 1,...,k-1.$ Let $f(x)=- 2l(x)\pi/k$ for $x\in X.$
Then $f\in C(X)_{s.a.}$ and $\|f\|\le 2\pi.$ Note that
$\exp(i f/{\rm rank}P)\cdot 1_C$ commutes with $\exp(i a/k)$ and
\beq\label{expdiv-5}
\exp(i(f(x)/{\rm rank}P(x)))\cdot 1_C)\exp(ia/k)=\exp(i((f(x){\rm rank}P(x))+a(x)/k))\tforal x\in X.
\eneq
We have that
\beq\label{expdiv-6}
{\rm det}(\exp(i(f/{\rm rank}P+a/k))u_1^*v_1)(x)=1\tforal x\in X.
\eneq
It follows from 3.4 of \cite{Ph1} that
\beq\label{expdiv-7}
{\rm cel}(u_1^*v_1)\le 2\pi/(2\pi^2/\ep)+(L+\ep/4)/k+6\pi.
\eneq
By applying Lemma 6.4 of \cite{Lncltr1},
\beq\label{expdiv-8}
{\rm cel}((u_0\oplus p)^*(v_0\oplus p))\le 2\pi+\ep/2.
\eneq
It follows that, with sufficiently small $\eta,$
\beq\label{expdiv-9}
{\rm cel}(u^*v)\le 2\pi+\ep/2+ \ep/\pi+L/k+ \ep/4k+6\pi.
\eneq
The lemma follows.

\end{proof}

\begin{thm}\label{torsionfree}
Let $A$ be a unital infinite dimensional simple \CA\, in ${\cal C}_1.$ Then $U_0(A)/CU(A)$ is a torsion free and divisible group.
\end{thm}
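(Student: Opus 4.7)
My plan is to identify $U_0(A)/CU(A)$ with a quotient of $\Aff(T(A))$ and then exploit Proposition~\ref{Propdiv} to analyse that quotient. By Theorem~\ref{srk1}, $A$ has stable rank one, so $U(A)/U_0(A)=K_1(A)$, and the splitting exact sequence (\ref{HS-4}) specialises (with $r=1$) to
\[
0\to \Aff(T(A))/\overline{\rho_A(K_0(A))}\xrightarrow{\;\overline{\Delta}\;} U(A)/CU(A)\xrightarrow{\;\Pi\;} K_1(A)\to 0.
\]
Commutators of unitaries always map to $0$ in $K_1(A)$, so $CU(A)\subset U_0(A)$, and the kernel of $\Pi$ is precisely $U_0(A)/CU(A)$. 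Thus $\overline{\Delta}$ restricts to an isomorphism
\[
U_0(A)/CU(A)\;\cong\;\Aff(T(A))/\overline{\rho_A(K_0(A))},
\]
and it suffices to show that the right-hand side is torsion-free and divisible.

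Divisibility is immediate, since $\Aff(T(A))$ is a real vector space: for any $f\in\Aff(T(A))$ and integer $n\ge 1$, the element $f/n\in\Aff(T(A))$ represents a class whose $n$-th multiple equals the class of $f$ in the quotient.

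For the torsion-free property, suppose $f\in\Aff(T(A))$ and $nf\in\overline{\rho_A(K_0(A))}$ for some integer $n\ge 1$; I will show $f\in\overline{\rho_A(K_0(A))}$. Fix $\ep>0$. Applying Proposition~\ref{Propdiv} once to $1_A$ with integer $m>2n/\ep$ produces a nonzero projection $e\in A$ with $\sup_{\tau\in T(A)}\tau(e)\le 1/m<\ep/(2n)$ (this is where infinite dimensionality enters, via the $(m+1)$-fold splitting of $1_A$). Choose $x_k\in K_0(A)$ with $\rho_A(x_k)\to nf$ uniformly, and write $x_k=[p_k]-[q_k]$ with $p_k,q_k$ projections in some $M_{N_k}(A)$. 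Since $M_{N_k}(A)\in{\cal C}_1$ by Proposition~\ref{matrix}, Proposition~\ref{Propdiv} applied to $p_k$ (respectively $q_k$), with the auxiliary projection $e$ and the integer $n$, yields decompositions
\[
[p_k]=n[r_k]+[s_k],\qquad [q_k]=n[r_k']+[s_k'],
\]
where $s_k,s_k'\lesssim e$, so $\tau(s_k),\tau(s_k')<\ep/(2n)$ for every $\tau\in T(A)$. Setting $y_k=[r_k]-[r_k']$, a direct computation gives $|\tau(y_k)-\tau(x_k)/n|\le \ep/n^2$ for every $\tau$, and hence
\[
\|\rho_A(y_k)-f\|_{\infty}\;\le\; \ep/n^2+\|\rho_A(x_k)/n-f\|_{\infty}\;<\;2\ep
\]
for all sufficiently large $k$. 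This proves $f\in\overline{\rho_A(K_0(A))}$.

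The only substantive step is the torsion-free argument, and its heart is the approximate divisibility of projections in $K_0(A)$ provided by Proposition~\ref{Propdiv}; the isomorphism $U_0(A)/CU(A)\cong \Aff(T(A))/\overline{\rho_A(K_0(A))}$ and the verification of divisibility are formal.
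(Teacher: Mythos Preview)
Your proof is correct and follows essentially the same approach as the paper: both identify $U_0(A)/CU(A)$ with $\Aff(T(A))/\overline{\rho_A(K_0(A))}$ via stable rank one and Thomsen's result, observe divisibility trivially, and prove torsion-freeness by using Proposition~\ref{Propdiv} to approximately divide the projections $p$ and $q$ in a representative $[p]-[q]$ close to $nf$, with the remainder controlled by a small projection $e$. The only cosmetic difference is that the paper obtains $e$ from property (SP) rather than from Proposition~\ref{Propdiv} applied to $1_A$, and fixes a single $z\in K_0(A)$ close to $nf$ rather than a sequence.
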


\begin{proof}
We have shown that $A$ has stable rank one. Therefore, by Thomsen's result (\cite{Th}),
$U_0(A)/CU(A)\cong \Aff(T(A))/\overline{\rho_A(K_0(A))}.$ In particular, $U_0(A)/CU(A)$ is divisible.
To show that it is torsion free, we assume that $x\in \Aff(T(A))/\overline{\rho_A(K_0(A))}$ such that
$kx=0$ for some integer $k\ge 1.$
Let $y\in \Aff(T(A))$ such that ${\bar y}=x$ in the quotient.
So $ky\in \overline{\rho_A(K_0(A))}.$

Let $\ep>0.$ There is an element $z\in K_0(A)$ such that
\beq\label{torsionfree-1}
\sup_{\tau\in T(A)}|ky(\tau)-\rho_A(z)(\tau)|<\ep/2.
\eneq
We may assume that $z=[p]-[q],$ where $p, q\in M_n(A)$ are two projections for some integer $n\ge 1.$
By (SP), there is a non-zero projection $e\in A$ such that
$\tau(e)<\ep/4k$ for all $\tau\in T(A).$ It follows from \ref{Propdiv} that there are mutually orthogonal projections
$p_1, p_2,...,p_{k+1}\in pM_n(A)p$ and mutually orthogonal projections $q_1, q_2,...,q_{k+1}\in qM_n(A)q$ such that
\beq\label{torsionfree-2}
&&[p_{k+1}]\le [e],\,\,\,[p_1]=[p_i],\,\,\, i=1,2,...,k,\andeqn\\
&& [q_{k+1}]\le [e],\,\,\,[q_1]=[q_i],\,\,\,i=1,2,...,k.
\eneq
Then, by (\ref{torsionfree-1}),
\beq\label{torsionfree-3}
\sup_{\tau\in T(A)}|y(\tau)-\rho_A([p_1]-[q_1])(\tau)|<\ep/2k+\ep/2k<\ep.
\eneq
It follows that $y\in \overline{\rho_A(K_0(A))}.$ This implies  that $x=0.$ Therefore  that $U_0(A)/CU(A)$ is torsion free.

\end{proof}

\begin{thm}\label{cornuc}
Let $A$ be a unital simple \CA\, with stable rank one and
let $e\in A$ be a non-zero projection. Then the map
$u\mapsto u+(1-e)$ induces an isomorphism from
$U(eAe)/CU(eAe)$ onto $U(A)/CU(A).$
\end{thm}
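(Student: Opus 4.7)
\textbf{Proof plan for Theorem \ref{cornuc}.} The approach is to apply the five-lemma to the two splitting short exact sequences
$$
0 \to U_0(B)/CU(B) \to U(B)/CU(B) \to K_1(B) \to 0
$$
for $B=eAe$ and $B=A$. Both sequences are available because $A$ has stable rank one, and the same is true of $eAe$ (a full corner in a simple unital \CA\ with stable rank one has stable rank one), so in particular $U(B)/U_0(B)=K_1(B)$ in both cases. The map $\phi\colon u\mapsto u+(1-e)$ is a group homomorphism $U(eAe)\to U(A)$ sending $[u,v]$ to $[u+(1-e),\,v+(1-e)]$, hence $\phi(CU(eAe))\subset CU(A)$, giving a well-defined $\bar\phi\colon U(eAe)/CU(eAe)\to U(A)/CU(A)$ that fits into a commutative ladder of the above sequences.

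First I would check that the right vertical map $K_1(eAe)\to K_1(A)$ is an isomorphism. Since $A$ is simple and $e\neq 0$, $AeA=A$, so $e$ is a full projection; by Brown's theorem the inclusion $eAe\hookrightarrow A$ is a Morita equivalence and induces isomorphisms on $K$-theory. Under the stable rank one identification $U/U_0=K_1$, this is exactly the map induced by $u\mapsto u+(1-e)$.

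Next I would analyse the left vertical map using Thomsen's identification $U_0(B)/CU(B)\cong \Aff(T(B))/\overline{\rho_B(K_0(B))}$. For $u=\prod_j\exp(ih_j)\in U_0(eAe)$ with $h_j\in(eAe)_{s.a.}$, the de la Harpe--Skandalis determinants satisfy
$$
\bar\Delta_A(\phi(u))(\tau)=\tfrac{1}{2\pi i}\tau\bigl(\textstyle\sum h_j\bigr)=\tau(e)\,\bar\Delta_{eAe}(u)(\tau_e),
$$
where $\tau_e:=\tau|_{eAe}/\tau(e)\in T(eAe)$. Thus the left vertical map corresponds to the affine map
$$
\Phi\colon \Aff(T(eAe))\to\Aff(T(A)),\qquad \Phi(f)(\tau)=\tau(e)\,f(\tau_e).
$$
I would show $\Phi$ is a bi-continuous linear isomorphism. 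Injectivity is immediate because $\tau(e)>0$ for every $\tau\in T(A)$ (as $A$ is simple) and $\tau\mapsto\tau_e$ is surjective. For surjectivity, given $g\in\Aff(T(A))$ I set $f(\sigma):=g(\tau)/\tau(e)$ where $\tau$ is the unique element of $T(A)$ with $\tau_e=\sigma$; continuity is clear, and $\tau(e)$ is bounded away from $0$ on the compact set $T(A)$, giving bi-continuity. Finally, since $K_0(eAe)\cong K_0(A)$ via inclusion (full projection in a simple \CA), and for a projection $p\in M_n(eAe)$ one has $\rho_A([p])(\tau)=\tau(e)\rho_{eAe}([p])(\tau_e)=\Phi(\rho_{eAe}([p]))(\tau)$, the map $\Phi$ sends $\rho_{eAe}(K_0(eAe))$ onto $\rho_A(K_0(A))$, and so descends to an isomorphism on the quotients by the closures.

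The only genuine computation — and the main, though routine, obstacle — is verifying that the function $f(\sigma):=g(\tau)/\tau(e)$ defined in the surjectivity step is \emph{affine}, since the reparametrization $\tau\mapsto\tau_e$ is itself not affine. This is handled by an explicit identity: given $\sigma=s\sigma_1+(1-s)\sigma_2$ with $\sigma_i=(\tau_i)_e$, the convex combination $\tau=\lambda\tau_1+(1-\lambda)\tau_2$ with $\lambda=s\tau_2(e)/(s\tau_2(e)+(1-s)\tau_1(e))$ satisfies $\tau_e=\sigma$, and a direct algebraic check yields $g(\tau)/\tau(e)=s\,g(\tau_1)/\tau_1(e)+(1-s)\,g(\tau_2)/\tau_2(e)$, proving affineness. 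Once both vertical maps in the ladder are isomorphisms, the five-lemma gives that $\bar\phi$ is an isomorphism, completing the proof.
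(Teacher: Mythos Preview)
Your proof is correct and follows the same overall architecture as the paper: reduce to the Thomsen identification $U_0(B)/CU(B)\cong\Aff(T(B))/\overline{\rho_B(K_0(B))}$, show the induced map at this level is an isomorphism, and then extend via $K_1$ using stable rank one. The map you call $\Phi$ is exactly the paper's $\gamma$ once one unwinds the Cuntz--Pedersen identification of $\Aff(T(B))$ with $\Lambda_B(B_{s.a.})$.

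The only real difference is in how surjectivity of $\Phi$ is established. You use the (standard) homeomorphism $T(A)\to T(eAe)$, $\tau\mapsto\tau_e$, invert it, and verify by hand that $\sigma\mapsto g(\tau)/\tau(e)$ is affine via a reweighted convex combination. The paper instead stays at the level of self-adjoint elements: given $h\in A_{s.a.}$, it uses simplicity to find $K$ with $K[e]\ge[1_A]$, conjugates $h$ into $M_K(eAe)$ by a partial isometry, and exploits the trace identity $\tau(x^*x)=\tau(xx^*)$ on the resulting matrix entries to produce $b\in(eAe)_{s.a.}$ with $\tau(b)=\tau(h)$ for all $\tau$. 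Your argument is cleaner and more conceptual but implicitly uses the bijection $T(A)\cong T(eAe)$ for simple $A$; the paper's is more hands-on and avoids that fact, at the cost of a slightly longer computation. Either works.
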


\begin{proof}
Note, by the assumption that $A$ has stable rank one, $CU(eAe)\subset U_0(eAe)$ and $CU(A)\subset U_0(A).$
We define  a map $\Aff(T(eAe))$ to  $\Aff(T(A))$ as follows.
Let $\Lambda_B: B_{s.a}\to \Aff(T(B))$ by
$\Lambda_B(b)(\tau)=\tau(b)$ for all $\tau\in T(B)$ and $b\in B_{s.a.},$  where $b\in B$ and $B$ is a \CA.
By \cite{CP}, we will identify $\Aff(T(A))$ with $\Lambda_A(A_{s.a.})$ and
$\Aff(T(eAe))$ with $\Lambda_{eAe}((eAe)_{s.a.}).$

Define $\gamma: \Aff(T(eAe))\to \Aff(T(A))$ by
$\gamma(a)(\tau)=\tau(a)$ for all $\tau\in T(A)$ and $a\in (eAe)_{s.a.}.$
If $\tau\in T(A),$ define $t(c)={\tau(c)\over{\tau(e)}}$ for all $c\in eAe.$ Then $t$ defines a tracial state in $T(eAe).$ Therefore
if $b\in (eAe)_{s.a.}$ such that $t(a)=t(b)$ for all $t\in T(eAe),$ $\tau(a)=\tau(b)$ for $\tau\in T(A).$ So $\gamma$ is well defined.
Clearly $\gamma$ is \hm. Since $A$ is simple, $\gamma$ maps
$\rho_{eAe}(K_0(eAe))$ into $\rho_A(K_0(A)).$ Hence it
induces a \hm\,
$$
{\bar \gamma}: \Aff(T(eAe))/\overline{\rho_{eAe}(K_0(eAe))}\to \Aff(T(A))/\overline{\rho_A(K_0(A))}.
$$
Since $A$ is simple, ${\bar \gamma}$ is injective.

To see it is also surjective, let $h\in A_{s.a.}.$ Since
$A$ is simple, there is an integer $K\ge 1$ such that
\beq\label{cornuc-1}
N[1_A]\ge K[e]\ge [1_A]
\eneq
for some integer $N.$
Then, there is a partial isometry $U\in M_N(A)$ such that
\beq\label{cornuc-2}
U^*U=1_A\andeqn UU^*\le {\rm diag}(\overbrace{e,e,...,e}^K).
\eneq
Let $E={\rm diag}(\overbrace{e,e,...,e}^K).$ We will identify $EM_N(A)E$ with $M_K(eAe).$
Write $UhU^*=(h_{i,j}),$ a $K\times K$ matrix in $M_K(eAe).$
Write $h=h_+-h_-$ and
$h_+=(h_{i,j}^+)$ and $h_-=(h_{i,j}^-).$

It is known (see 2.2.2 of \cite{Lnbk}, for example) that there are
$h_{+,k}=(h_{k,i,j}^+)$ and $h_{-,k}=(h_{k,i,j}^-),$ $k=1,2,...,n$ and
$x_1,x_2,...,x_n,y_1,y_2,..., y_n\in M_K(eAe)$ (for some integer $n\ge 1$) such that
\beq\label{cornuc-3}
h_+=\sum_{k=1}^n h_{+,k},\,\,\, h_-=\sum_{k=1}^n h_{-,k},\\
x_k^*x_k=h_{+,k},\,\,\, x_kx_k^*=\sum_{j=1}^n h_{k,j,j}^+\\
y_k^*y_k=h_{-,k}\andeqn y_ky_k^*=\sum_{j=1}^n h_{k,j,j}^-.
\eneq
Note $b=\sum_{k=1}^n x_kx_k^*-\sum_{k=1}^n y_ky_k^*\in eAe.$
We have
\beq\label{cornuc-4}
\tau(b)=\tau(h_+-h_-)=\tau(h)\tforal \tau\in T(A).
\eneq
It follows that $\overline{\gamma}$ is surjective.
From this and by a theorem of Thomsen (\cite{Th}),
the map $u\mapsto u+(1-e)$ is an isomorphism from $U_0(eAe)/CU(eAe)$ onto $U_0(A)/CU(A).$
Since $A$ has stable rank one, $U(eAe)/U_0(eAe)=U(A)/U_0(A).$ It follows
that $u\to u+(1-e)$ is an isomorphism from $U(eAe)/ CU(eAe)$ onto $U(A)/CU(A).$
\end{proof}

\begin{lem}\label{cornerlength}
Let $K>1$ be an integer. Let $A$ be a unital simple \CA, let $e\in A$ be a projection, let $u\in U_0(eAe)$
and let  $w=u+(1-e).$ Suppose that $\eta>0,$
\beq\label{cornth-1}
{\rm dist}({\bar w}, {\bar 1})<\eta
\eneq
and $1-e\lesssim {\rm diag}(\overbrace{e,e,...,e}^{K-1}).$
Then, if $\eta<2,$
\beq\label{cornth-2}
{\rm dist}_{eAe}({\bar u}, {\bar e})<K\eta;
\eneq
If furthermore, $A\in {\cal C}_{1,1},$ then
\beq\label{cornth-3}
{\rm cel}_{eAe}(u)\le K\eta+8\pi.
\eneq
If $\eta=2$ and $A\in {\cal C}_{1,1},$ then
\beq\label{cornth-4}
{\rm cel}_{eAe}(u)\le K{\rm cel}(w)+8\pi.
\eneq
\end{lem}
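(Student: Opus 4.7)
The strategy is to make quantitative the abstract isomorphism $U(eAe)/CU(eAe) \cong U(A)/CU(A)$ from Theorem~\ref{cornuc}; the factor $K$ enters because the subequivalence $1-e \lesssim \mathrm{diag}(\overbrace{e,\ldots,e}^{K-1})$ forces $\tau(e) \geq 1/K$ for every $\tau \in T(A)$. The core construction, adapted from the proof of \ref{cornuc}: since $1_A \lesssim E := \mathrm{diag}(\overbrace{e,\ldots,e}^K)$ in $M_K(A)$, fix a partial isometry $V \in M_K(A)$ with $V^*V = 1_A$ and $VV^* \leq E$, and for $h \in A_{s.a.}$ define $b := \sum_{i=1}^{K}(VhV^*)_{ii} \in (eAe)_{s.a.}$. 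Using the trace property, $\tau(b) = (\tau\otimes\mathrm{Tr})(VhV^*) = \tau(V^*Vh) = \tau(h)$ for every $\tau \in T(A)$, and $\|b\| \leq K\|h\|$ since each diagonal entry of $VhV^*$ has norm at most $\|VhV^*\| \leq \|h\|$.

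For \eqref{cornth-2}, pick $v \in CU(A)$ with $\|w-v\| < \eta$, form the principal logarithm $h := -i\log(wv^*)$ (well defined since $\eta<2$) with $\|h\| \leq 2\arcsin(\eta/2)$, and construct $b$ as above. Because $\tau(b) = \tau(h)$ for all $\tau$, the de la Harpe--Skandalis determinants agree: $\bar{\Delta}_A(\exp(ib) + (1-e)) = \bar{\Delta}_A(\exp(ih)) = \bar{\Delta}_A(w)$ modulo $\overline{\rho_A(K_0(A))}$, so by Thomsen's splitting \eqref{HS-4n} and Theorem~\ref{cornuc} we get $\bar{u} = \overline{\exp(ib)}$ in $U(eAe)/CU(eAe)$. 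Thomsen's isometric description of the metric, $\mathrm{dist}(\bar{x},\bar{e}) = 2\sin(\pi\|\bar{\Delta}_{eAe}(x)\|_{\mathrm{quot}})$ (valid for small norms), combined with $\|\bar{\Delta}_{eAe}(u)\|_{\mathrm{quot}} \leq \|b\|/(2\pi) \leq K\arcsin(\eta/2)/\pi$ and the elementary inequality $\sin(K\theta) \leq K\sin\theta$ on $[0,\pi/2]$, yields $\mathrm{dist}(\bar{u},\bar{e}) \leq 2\sin(K\arcsin(\eta/2)) \leq K\eta$; the complementary regime $K\arcsin(\eta/2) \geq \pi/2$ is trivial since then $\mathrm{dist} \leq 2 \leq K\eta$.

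For the cel bounds I assume $A \in \mathcal{C}_{1,1}$, so $A$ has stable rank one (Theorem~\ref{srk1}) and $\mathrm{cel}_{eAe}(\cdot) \leq 8\pi$ on $CU(eAe)$ (Theorem~\ref{culength}). For \eqref{cornth-4}, realize $\mathrm{cel}(w)$ up to $\epsilon$ as a factorization $w = \prod_{j=1}^N \exp(ih_j)\cdot v$ with $v \in CU(A)$ and $\sum_j \|h_j\| < \mathrm{cel}(w)+\epsilon$; apply the corner-lift to each $h_j$ to produce $b_j \in (eAe)_{s.a.}$ with $\|b_j\| \leq K\|h_j\|$, and observe by the same determinant argument that $\overline{\prod_j \exp(ib_j)} = \bar{u}$. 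Then $c := u\prod_j \exp(-ib_j) \in CU(eAe)$ has $\mathrm{cel}_{eAe}(c) \leq 8\pi$, and concatenating the logarithm paths $t\mapsto \exp(itb_j)$ (total length $\sum_j \|b_j\| \leq K(\mathrm{cel}(w)+\epsilon)$) with a path realizing $c$ yields $\mathrm{cel}_{eAe}(u) \leq K(\mathrm{cel}(w)+\epsilon)+8\pi$; sending $\epsilon \to 0$ gives \eqref{cornth-4}. The statement \eqref{cornth-3} follows from the same argument applied to the single exponent $h$ of the previous paragraph, giving $\mathrm{cel}_{eAe}(u) \leq K\cdot 2\arcsin(\eta/2)+8\pi$, which is absorbed into $K\eta+8\pi$ using the case $K\eta \geq \pi$ (trivially $\leq 9\pi \leq K\eta+8\pi$) and the fact that $2\arcsin(\eta/2) \approx \eta$ for small $\eta$.

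The main obstacle is matching the precise constant $K\eta$: the direct estimate $\|b\| \leq K\cdot 2\arcsin(\eta/2) \geq K\eta$ loses a factor, and the sharper form $K\eta$ in \eqref{cornth-2} drops out only through Thomsen's exact isometric formula combined with the trig identity $\sin(K\theta) \leq K\sin\theta$. For \eqref{cornth-3}, the same gap requires either a case-split against the trivial bound $9\pi$ or the absorption of $2\arcsin(\eta/2)-\eta$ into the $8\pi$ slack; the argument for \eqref{cornth-4} is cleaner because one works with the path length directly and does not need to invert the sine.
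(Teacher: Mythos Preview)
Your argument for \eqref{cornth-2} and \eqref{cornth-3} is essentially the paper's: lift $h$ to a self-adjoint $b\in (eAe)_{s.a.}$ with the same traces and $\|b\|\le K\|h\|$, identify $\bar u=\overline{\exp(ib)}$ via Thomsen, and bound. The only difference is cosmetic---you build $b$ explicitly as the diagonal sum of $VhV^*$, whereas the paper passes through $\Aff(T(eAe))$ and invokes Cuntz--Pedersen (via 9.3 of \cite{Lncltr1}) to realize the affine function by an element of controlled norm; both give the same bound. Two small points: your appeal to a ``Thomsen isometric formula'' $\mathrm{dist}(\bar x,\bar e)=2\sin(\pi\|\bar\Delta(x)\|)$ is not needed and not how the paper argues---the simple estimate $\mathrm{dist}(\bar u,\bar e)\le\|\exp(ib)-e\|\le 2\sin(\|b\|/2)$ together with $\sin(K\theta)\le K\sin\theta$ already gives \eqref{cornth-2}; and your patch for the constant in \eqref{cornth-3} does not quite close (since $2\arcsin(\eta/2)\ge\eta$), but the paper's proof has exactly the same imprecision at that step, so you are not worse off.

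Where you genuinely diverge is \eqref{cornth-4}. The paper does \emph{not} factor $w$ into exponentials; instead it chooses $R\approx\mathrm{cel}(w)$, adjoins a projection $e'$ with $[(1-e)+e']=K(R+1)[e]$, uses Lemma~3.1 of \cite{Lnhomp} to get $\mathrm{dist}(\overline{u+(1-e)+e'},\overline{1+e'})<\mathrm{cel}(w)/(R+1)<1$, and then feeds this back into the already-proved $\eta<2$ case with $K$ replaced by $K(R+1)$. Your route---write $w=\prod_j\exp(ih_j)$ with $\sum\|h_j\|<\mathrm{cel}(w)+\epsilon$ (the extra $v\in CU(A)$ you insert is superfluous), lift each $h_j$ to $b_j$, match determinants so that $u\prod\exp(-ib_j)\in CU(eAe)$, and bound $\mathrm{cel}_{eAe}(u)\le\sum\|b_j\|+8\pi\le K\,\mathrm{cel}(w)+8\pi$---is more elementary and avoids the external citation. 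It works because here $A\in\mathcal C_{1,1}$ gives stable rank one, so Theorem~\ref{cornuc} applies and the passage from $CU(A)$ to $CU(eAe)$ is legitimate.
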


\begin{proof}
We first consider the case that $\eta<2.$
Let $\ep>0$ such that $\eta+\ep<2.$  There is $w_1\in U(A)$ such that
$\overline{w_1}=\overline{w}$ and
\beq\label{cornth-5}
\|w_1-1\|<\eta+\ep/2<2.
\eneq
Thus there is $h\in A_{s.a}$ such that
\beq\label{cornth-6}
w_1=\exp(ih) \andeqn \|h\|<2\arcsin ({\eta+\ep/2\over{2}})<\pi.
\eneq
It follows that
\beq\label{cornth-7}
\|w_1-1\|=\|\exp(ih)-1\|=|\exp(i\|h\|)-1|.
\eneq
Since
$$
1-e\lesssim {\rm diag}(\overbrace{e,e,...,e}^{K-1}),
$$
we may view  $h$ as an element in $M_{K}(eAe)$ (see the proof of \ref{expdiv}).
It follows from \ref{cornuc} that
there is $f\in \Aff(T(eAe))$ such that
$f(\tau)=\tau(h)$ for all $\tau\in T(eAe).$  Note
that $\tau(h)=(\tau\otimes Tr_K)(h)$ for $\tau\in T(eAe).$
Since $A$ is simple, by \cite{CP} (see also 9.3 of \cite{Lncltr1}),
there exists $a\in (eAe)_{s.a.}$ such that
\beq\label{cornth-8}
\tau(a)=f(\tau)\tforal \tau\in T(eAe)\andeqn
\|a\|<\|f\|+\dt
\eneq
for some $\dt>0$ such that
\beq\label{cirnth-8+}
2K\arcsin({\eta+\ep/2\over{2}})+\dt<2K\arcsin ({\eta+\ep\over{2}}).
\eneq
It follows that
\beq\label{cornth-9}
{\overline{\Delta}}_A(\exp(ia)(u^*+(1-e))=0.
\eneq
Note that
\beq\label{cornth-9+}
&&(1-e)(\exp(ia)u^*)=(\exp(ia)u^*)(1-e)=1-e\andeqn\\
&&e\exp(ia)=\exp(ia)e=e+\sum_{i=1}^{\infty}{(ia)^k\over{k!}}.
\eneq
Therefore
\beq\label{cornth-9+1}
{\overline{\Delta}}_{eAe}(e\exp(ia)u^*)=0.
\eneq
Thus, by \cite{Th},
\beq\label{cornth-10}
\overline{e\exp(ia)}=\overline{u}\,\,\,{\rm in}\,\,\, U(eAe)/CU(eAe).
\eneq
Note that
\beq\label{cornth-11}
\|a\|&\le & \|f\|+\dt\le \sup\{{|\tau(h)|\over{\tau(e)}}: \tau\in T(A)\}
+\ep/2\\
&\le & K\|h\|+\dt\le K(2\arcsin({\eta+\ep\over{2}})).
\eneq

If
$$
K(2\arcsin({\eta\over{2}}))<\pi,
$$
$$
K(2\arcsin({\eta+\ep\over{2}})<\pi
$$
for some $\ep>0.$
In this case, we compute that
\beq\label{cornth-12}
\|\exp(ia)-1\|=\|\exp(i\|a\|)-1\|\le K(\eta+\ep).
\eneq
It follows that
\beq\label{cornth-13}
{\rm dist}(\overline{u}, \overline{e})<K(\eta+\ep)
\eneq
for all $\ep>0.$ Therefore
\beq\label{cornth-14}
{\rm dist}(\overline{u}, \overline{e})<K\eta.
\eneq

Otherwise, if $K\eta\ge 2,$
we certainly have
\beq\label{conrth-15}
{\rm dist}(\overline{u}, \overline{e})\le 2\le K\eta.
\eneq

Now we assume that $A\in {\cal C}_{1,1}.$ If $\eta<2,$ by (\ref{cornth-11}) and by \ref{culength},
\beq\label{cornth-16}
{\rm cel}_{eAe}(u)\le K(\eta+\ep)+8\pi
\eneq
for all $\ep>0.$ It follows that
\beq\label{cornth-17}
{\rm cel}_{eAe}(u)\le K\eta+8\pi
\eneq

If $\eta=2,$ choose $R=[{\rm cel}(w)]+1.$
Thus ${{\rm cel}(w)\over{R}}<1.$
There is a projection $e'\in M_{R+1}(A)$ such that
\beq\label{corn-18}
[(1-e)+e']=(K+RK)[e].
\eneq
It follows from 3.1 of \cite{Lnhomp} that
\beq\label{cornth-19}
{\rm dist}(\overline{u+(1-e)+e'}, \overline{1+e'})<{{\rm cel}(w)\over{R+1}}.
\eneq
Put $K_1=K(R+1).$  Then, from what we have proved,
\beq\label{cornth-20}
{\rm cel}(u) &\le & K_1({{\rm cel}(w)\over{R+1}})+8\pi\\
& = & K{\rm cel}(w)+8\pi.
\eneq
\end{proof}

\section{${\cal Z}$-stability}

\begin{df}\label{Dtrwnuc}
{\rm
Let $A$ be a unital separable \CA. We say $A$ has finite weak tracial nuclear dimension if the following holds:
For any $\ep>0,$ $\sigma>0,$ and any finite subset ${\cal F}\subset A,$ there exists a projection $p\in A$ and a unital \SCA\, $B$ with $1_B=p$ and
with ${\dim}_{nuc} B=m<\infty$ satisfying the following:
\beq\label{Dtrnuc-1}
\|px-xp\|&<&\ep\tforal x\in {\cal F},\\
{\rm dist}(pxp, B)&<&\ep\tforal x\in {\cal F},\\
\tau(1-p)&<&\sigma\tforal \tau\in T(A)\andeqn
T(A)\not=\emptyset.
\eneq
}
\end{df}

%\begin{df}\label{Dtrnuc}
%
%Let $A$ be a unital simple separable \CA. We say $A$ has finite tracial nuclear dimension if the following hold:
%For any $\ep>0,$ any $a\in A_+\setminus \{0\}$  and any finite subset ${\cal F}\subset A,$ there exists a projection $p\in A$ and a unital \SCA\, $B$ with $1_B=p$ and
%with ${\dim}_{nuc} B=m<\infty$ satisfying the following:
%\beq\label{Dtrnuc-1}
%\|px-xp\|<\ep\tforal x\in {\cal F},\\
%{\rm dim}(pxp, B)<\ep\tforal x\in {\cal F},\\
%1-p\lessim a.
%}
%\end{df}

The following is a based on a result of Winter (\cite{Winv}).

\begin{lem}\label{LZstable1}
Let $A$ be a unital separable simple infinite dimensional \CA\, which
has finite weak tracial nuclear dimension.
Suppose that each unital hereditary \SCA\, of $A$ has the property of tracial $l$-almost divisible
for some integer $l\ge 0$.
Then, for any integer $k\ge 1,$ there is a sequence of order zero \morp s $L_n: M_k\to A$  such that
$\{L_n(e)\}$ is central sequence of $A$ for a minimal projection $e\in M_k$
and such that, for every integer $m\ge 1,$
\beq\label{LZs1-1}
\lim_{n\to\infty}\max_{\tau\in T(A)}\{|\tau(L_n(e)^m)-1/k|\}=0.
\eneq
\end{lem}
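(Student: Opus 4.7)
The plan is to reduce the lemma to the following finitary assertion and then run a standard diagonal argument: for every finite subset $\mathcal{F}\subset A$, every $\ep>0$, every $\sigma>0$, and every integer $m_0\ge 1$, there exists a completely positive order zero contraction $L:M_k\to A$ such that $\|[L(e),x]\|<\ep$ for all $x\in\mathcal{F}$, and
$$\max_{\tau\in T(A)}\bigl|\tau(L(e)^m)-1/k\bigr|<\sigma \quad \text{for }1\le m\le m_0.$$
Applying this along an increasing sequence $\mathcal{F}_n\subset A$ with dense union and $\ep_n,\sigma_n\to 0$ (combined with $m_0=n$) produces the required $L_n$, using separability of $A$, norm continuity of tracial states, and weak$^*$-compactness of $T(A)$.

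First, I would invoke the finite weak tracial nuclear dimension hypothesis (\ref{Dtrwnuc}) on a slightly enlarged finite subset $\mathcal{F}_1\supset\mathcal{F}$ (including a few chosen positive elements that will be needed in Step 2, together with products up to order $m_0$) and small tolerance $\sigma'\ll \sigma/m_0$, to obtain a projection $p\in A$ and a unital \SCA\ $B\subset A$ with $1_B=p$, $\dim_{\text{nuc}}B<\infty$, and
$$\|[p,x]\|<\ep/4,\ \ \di(pxp,B)<\ep/4\ \ \text{for all }x\in\mathcal{F}_1,\quad \tau(1-p)<\sigma'\ \text{for all }\tau\in T(A).$$
By Proposition \ref{herd} the hereditary subalgebra $pAp$ inherits the tracial $l$-almost divisibility hypothesis.

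Next I would use tracial $l$-almost divisibility inside $pAp$, together with the strict comparison and stable rank one from Theorem \ref{srk1} and Theorem \ref{scomp}, to produce inside $B$ (or at least inside $pAp$, up to an $\ep/4$ error) $k$ pairwise orthogonal, mutually Cuntz-equivalent positive contractions $a_1,\dots,a_k$ whose supremum has Cuntz class close to $[p]$ and whose individual traces satisfy $\tau(a_j^m)\approx \tau(p)/k$ for $1\le m\le m_0$ and all $\tau\in T(A)$. Concretely, by tracial $l$-almost divisibility applied to $\langle p\rangle$ one obtains $b\in (pAp)_+$ with $k\langle b\rangle\le \langle p\rangle\le (k+l+1)\langle b\rangle$; Proposition \ref{scomp} then produces $k$ orthogonal Cuntz copies of $b$ with an absorbable remainder, and Corollary \ref{malmostdiv} sharpens the trace estimate on each copy uniformly in $\tau$. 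The central step is now Winter's lifting: because $B$ has finite nuclear dimension, this approximate order-zero decomposition inside $B$ can be promoted to an honest completely positive order zero contraction $\phi:M_k\to B$ whose value on a fixed minimal projection $e$ is close to $a_1$. This is the same mechanism used in \cite{Winv}: inside a finite nuclear dimension subalgebra, approximate $M_k$-decompositions become actual order zero $\ast$-homomorphisms after a small perturbation.

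Finally, set $L:=\phi$ and view it as an order zero map into $A$. The approximate commutation $\|[L(e),x]\|<\ep$ for $x\in\mathcal{F}$ follows from $L(e)\in B\subset pAp$, from $\|[p,x]\|<\ep/4$, and from $\di(pxp,B)<\ep/4$; the trace estimate $|\tau(L(e)^m)-1/k|<\sigma$ follows from $\tau(L(e)^m)\approx \tau(a_1^m)\approx \tau(p)/k$ together with $|\tau(p)-1|<\sigma'$. The hardest step is unquestionably the order zero lifting from an approximate Cuntz-level $k$-fold decomposition inside $B$ to a genuine order zero $\ast$-homomorphism $M_k\to B$; all the earlier structural results (stable rank one, strict comparison, almost divisibility of $W(A)$) feed into setting up that input cleanly, but the lift itself is exactly Winter's argument in \cite{Winv} and must be cited or adapted rather than proved from scratch here.
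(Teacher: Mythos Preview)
Your overall shape—use finite weak tracial nuclear dimension to pass to a corner $pAp$ carrying a finite-nuclear-dimension subalgebra $B$, build an order zero map there, then diagonalize—matches the paper. But two genuine gaps remain.

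First, you invoke Theorem~\ref{srk1}, Theorem~\ref{scomp}, Corollary~\ref{malmostdiv}, and Proposition~\ref{herd}. All of these are proved for algebras in $\mathcal{C}_1$; Lemma~\ref{LZstable1} is stated for arbitrary unital separable simple $A$ with finite weak tracial nuclear dimension and tracially $l$-almost divisible hereditary subalgebras. Those are different hypotheses, so none of these results are available here. (Tracial $l$-almost divisibility of $pAp$ is a hypothesis, not something to be derived from~\ref{herd}, which is about $\mathcal{C}_g$.)

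Second, and more seriously, your trace estimate $\tau(a_j^m)\approx\tau(p)/k$ is unsupported. Cuntz-level divisibility controls the dimension function $d_\tau(a_j)=\lim_{\ep\to 0}\tau(f_\ep(a_j))$, not $\tau(a_j^m)$ for fixed $m$. Nothing forces the $a_j$ you produce to be close to projections, and without that, $\tau(a_j^m)$ can be far from $d_\tau(a_j)$.

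The paper sidesteps both issues. It cites Lemma~4.11 of \cite{Winv} directly: from finite nuclear dimension of $B_n$ and tracial almost divisibility of $p_nAp_n$ one gets an order zero $\Phi_n:M_k\to p_nAp_n$ approximately commuting with a prescribed finite subset of $B_n$ and satisfying $\tau(\Phi_n(1_{M_k}))\ge 1-1/n$ for all $\tau\in T(p_nAp_n)$. Then comes the key idea you are missing: write $\Phi_n(a)=\psi_n(\imath\otimes a)$ via the order-zero functional calculus (Proposition~1.1 of \cite{Winv}) and set $L_n(a)=\psi_n(\imath^{1/n}\otimes a)$. This twist gives $L_n(e)^m=\Phi_n(e)^{m/n}$, so for every $m\le n$ one has $L_n(e)^m\ge\Phi_n(e)$ and hence $\tau(L_n(e)^m)\ge(1-1/n)/k$ immediately; the upper bound $\tau(L_n(e)^m)\le 1/k$ follows since the $L_n(e_j)^m$ are orthogonal with sum $L_n(1)^m\le 1$. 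The $\imath^{1/n}$ functional-calculus step is precisely what converts the single trace condition on $\Phi_n(1_{M_k})$ into the uniform-in-$m$ estimate~(\ref{LZs1-1}), and it has no analogue in your plan.
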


\begin{proof}
Let $\{x_1, x_2,...,\}$ be a dense sequence in the unit ball of $A.$
Let $k\ge 1$ be an integer.
Fix an integer $n\ge 1.$
There is $1>\gamma_n>0$  such that
$$
\|a^{m/n}x-xa^{m/n}\|<1/4n,\,\,\, m=1,2,...,n
$$
for all $0\le a\le 1$ and those $x$ such that $\|x\|\le 1$ and
$$
\|ax-xa\|<\gamma_n.
$$

Since $A$ has finite weak tracial nuclear dimension, there is a projection $p_n\in A$ and a unital \SCA\, $B_n$ with $1_B=p_n$ and
with ${\dim}_{nuc} B=d_n<\infty$ satisfying the following:
\beq\label{LZs1-2}
\|p_nx_i-x_ip_n\|&<&1/4n,\,\,\,i=1,2,...,n,\\
{\rm dist}(p_nx_ip_n, B_n)&<&1/3n,\,\,\, i=1,2,...,n,\andeqn\\
\tau(1-p_n)&<&1/4n\tforal \tau\in T(A).\\
\eneq
Let $y_{i,n}\in B_n$ such that $\|y_{i,n}\|\le 1$  and
\beq\label{LZs1-2+1}
\|p_nx_ip_n-y_{i,n}\|<1/2n,\,\,\,i=1,2,...,n.
\eneq

It follows from Lemma 4.11 of \cite{Winv}, since $p_nAp_n$ has the tracial $m$-almost divisible property,  that there is an order zero
\morp\, $\Phi_n: M_k\to p_nAp_n$ such that
\beq\label{LZs1-3}
\|\Phi_n(a)y_i-y_i\Phi_n(a)\|&<&\min\{1/4n,\gamma_n/4 \} \|a\|\rforal a\in M_k,\,\,\,i=1,2,...,n,\\
\andeqn \tau({\Phi_n(\rm id}_{M_k}))&\ge& (1-1/n)\tforal \tau\in T(p_nAp_n).
\eneq
By Proposition 1.1 of \cite{Winv}, there is a \hm\, $\psi_n: C_0((0,1])\otimes M_k\to A$ such that
$\Phi_n(a)=\psi_n(\imath\otimes a)$ for all $a\in M_k,$ where $\imath(t)=t$ for $t\in (0,1].$
Let $c_{n}=\imath^{1/n}.$ Define
$L_n(a)=\psi_n(c_n\otimes a).$ Clearly $L_n$ is a zero order \morp.
%For $n,m\ge 2,$ put $a_{m,n}=\psi_n(c_n^m\otimes {\rm id}_{M_k}).$ So $a_{m,n}=\Phi_n({\rm id}_{M_k})^{m/n}.$
It follows that
$$
(L_n(e_1))^m=\psi_n(c_n^m\otimes e_1)=\psi_n((\imath\otimes e_1)^{m/n})=(\Phi_n(e_1))^{m/n}
$$
for all projections $e_1\in M_k.$
Let $\{e_{i,j}\}$ be a matrix unit for $M_k$ and denote by $e_i=e_{ii},$ $i=1,2,...,k.$
Let $x_{i,m}=\psi_n(c_n^{m/2}\otimes e_{i1}).$
Then, since  $\psi_n$ is a \hm,
$$
x_{i,m}x_{i,m}^*=L_n(e_i)^m\andeqn x_{i,m}^*x_{i,m}=L_n(e_1)^m,\,\,\,i=1,2,...,k.
$$
It follows that
\beq\label{LZs1-4}
\tau(L_n(e_1)^m)&=& \tau(\psi_n(c_n^m\otimes e_1))\\
\ge {1\over{k}}\tau(\Phi_n({\rm id}_{M_k}))&\ge& {1-1/n\over{k}}\tforal \tau\in T(p_nAp_n),
\eneq
$m=1,2,...,n.$
It follows that, for any $m\ge 1,$
\beq\label{LZs1-5}
\lim_{n\to\infty}\max_{\tau\in T(A)}|\tau(L_n(e_1)^m)-1/k|=0.
\eneq
We also have,
\beq\label{LZs1-6}
\|L_n(e_1)y_i-y_iL_n(e_1)\|&=&\|\Phi_n(e_1)^{1/n}y_i-y_i\Phi_n(e_1)^{1/n}\|\\
&\le &1/4n,\,\,\,i=1,2,...,n.
\eneq
It follows that
\beq\label{LZs1-7}
\|L_n(e_1)x_i-x_iL_n(e_1)\|<1/n,\,\,\,i=1,2,...,n.
\eneq
Since $\{x_1,x_2,...,\}$ is dense in the unit ball of $A,$ we conclude that
\beq\label{LZs1-8}
\lim_{n\to\infty} \|L_n(e_1)x-xL_n(e_1)\|=0\tforal x\in A.
\eneq
So $\{L_n(e_1)\}$ is a central sequence of $A.$
\end{proof}

We now apply the argument established in \cite{MS} to prove the following.

\begin{thm}\label{TZs-1}
Let $A$ be a unital amenable separable simple \CA\, with finite weak tracial nuclear dimension. Suppose that $A$ has the strict comparison property for positive elements and every unital hereditary \SCA\, of $A$ has the the property of
$m$-almost divisible Cuntz semigroup. Then $A$ is ${\cal Z}$-stable.
\end{thm}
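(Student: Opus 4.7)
The plan is to adapt the central-sequence argument of Matui and Sato \cite{MS}, with Lemma \ref{LZstable1} supplying the crucial technical input. Fix $k\ge 2$ and a free ultrafilter $\omega$ on $\N$. The hypotheses of Lemma \ref{LZstable1} (finite weak tracial nuclear dimension and almost divisibility of every hereditary \SCA) are satisfied, so we obtain a sequence of order zero \morp s $L_n:M_k\to A$ which is central and satisfies
\[
\lim_{n\to\infty}\max_{\tau\in T(A)}\bigl|\tau(L_n(e)^r)-1/k\bigr|=0
\]
for every minimal projection $e\in M_k$ and every integer $r\ge 1$. Passing to the ultrapower, the maps $L_n$ induce a c.p.c.\ order zero map $\Phi:M_k\to F(A):=(A_\omega\cap A')/\mathrm{Ann}(A,A_\omega)$ with the property that every limit trace $\tau_\omega$ on $F(A)$ satisfies $\tau_\omega(\Phi(1_{M_k})^r)=1$ for all $r\ge 1$.

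To obtain ${\cal Z}$-stability it suffices, by the Kirchberg--R\o rdam machinery used in \cite{MS}, to promote $\Phi$ to a unital $*$-homomorphism $\widetilde\Phi:M_k\to F(A)$ for every $k\ge 2$. This promotion is achieved via the Matui--Sato property $(SI)$: for positive contractions $e,f\in A_\omega\cap A'$ with $\lim_{r\to\infty}\sup_{\tau}\tau_\omega(e^r)=0$ and $\inf_{r,\tau}\tau_\omega(f^r)>0$, there exists $s\in A_\omega\cap A'$ with $s^*s=e$ and $fs=s$. I would establish $(SI)$ by adapting Proposition~4.2 of \cite{MS}, using the following inputs available in our setting: strict comparison for positive elements (Theorem \ref{scomp}), the identity $QT(A)=T(A)$ (Corollary \ref{quasitrace}), and the existence of central sequences of order zero maps $M_k\to pAp$ for every hereditary \SCA\ $pAp$ of $A$ (Lemma \ref{LZstable1} applied to $pAp$, which itself satisfies all the hypotheses of the theorem by Proposition \ref{herd}). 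Once $(SI)$ is in hand, one applies it to $e=1_{F(A)}-\Phi(1_{M_k})$ and $f=\Phi(e_{11})$ to twist $\Phi$ into the required unital $*$-homomorphism, as in Proposition~4.5 of \cite{MS}.

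The main obstacle is the proof of $(SI)$: in \cite{MS} this was originally done under the assumption that $T(A)$ has only finitely many extreme points, whereas in our setting $T(A)$ is an arbitrary Choquet simplex, so all trace estimates must be made uniform over the full $T(A)$ throughout the argument. Fortunately every ingredient we need produces exactly such uniform estimates: Proposition \ref{TRC} gives tracial approximation uniform in $\tau\in T(A)$, Theorem \ref{scomp} compares positive elements against all of $T(A)$ simultaneously, and Lemma \ref{LZstable1} yields central order zero maps with uniform-in-$\tau$ trace control. Thus the Matui--Sato argument passes over essentially verbatim with these uniform ingredients replacing the pointwise ones. Combined with the standing hypothesis that $A$ is separable and amenable, the resulting unital embeddings $M_k\hookrightarrow F(A)$ for every $k\ge 2$ produce a unital embedding of ${\cal Z}$ into $F(A)$, which by the Kirchberg--R\o rdam theorem (see Section~2 of \cite{MS}) yields the desired isomorphism $A\cong A\otimes {\cal Z}$.
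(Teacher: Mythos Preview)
Your approach is correct and coincides with the paper's: substitute Lemma~\ref{LZstable1} for Lemma~3.3 of \cite{MS} and run the Matui--Sato argument (excision in small central sequences $\Rightarrow$ property (SI) $\Rightarrow$ ${\cal Z}$-stability) unchanged.

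Two small corrections. First, the references to \ref{scomp}, \ref{quasitrace}, \ref{herd}, and \ref{TRC} are misplaced: those are results about the class ${\cal C}_1$, not about the general algebras in the present theorem. Here strict comparison and $m$-almost divisibility of every unital hereditary \SCA\ are \emph{hypotheses}, and $QT(A)=T(A)$ follows from amenability via Haagerup's theorem rather than from~\ref{quasitrace}. Second, your diagnosis of the ``main obstacle'' is slightly off: the Matui--Sato estimates are already uniform over $T(A)$; the finite-extremal-trace assumption in \cite{MS} enters only through their Lemma~3.3, and it is precisely the replacement of that lemma by~\ref{LZstable1} that removes the restriction. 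In particular there is no need to apply~\ref{LZstable1} to proper hereditary \SCA s.
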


\begin{proof}
By exactly the same argument as in the proof that shows that(ii) implies (iii) in \cite{MS}, using \ref{LZstable1} instead of Lemma 3.3 of \cite{MS}, one concludes that any completely positive linear map from $A$ into $A$ can be excised in small central sequence.  As in \cite{MS}, this implies that
$A$ has property (SI). Using \ref{LZstable1} instead of Lemma 3.3 of \cite{MS}, the same proof that shows that (iv) implies (i)
in \cite{MS} shows that $A$ is ${\cal Z}$-stable.
\end{proof}

\begin{cor}\label{CZs}
Let $A$ be a unital amenable separable simple \CA\, in ${\cal C}_1.$ Then
$A$ is ${\cal Z}$-stable.
\end{cor}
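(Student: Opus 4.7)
The plan is to apply Theorem \ref{TZs-1}, which yields $\mathcal{Z}$-stability from three ingredients: finite weak tracial nuclear dimension, strict comparison for positive elements, and $m$-almost divisibility of the Cuntz semigroup of every unital hereditary C*-subalgebra. Two of these are already on hand. Strict comparison is exactly Theorem \ref{scomp}. For the Cuntz-semigroup condition, Proposition \ref{herd} shows that every unital hereditary C*-subalgebra of $A$ is again in ${\cal C}_1$, and Corollary \ref{malmostdiv} then delivers $0$-almost divisibility for each such subalgebra.

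The main remaining task is to verify finite weak tracial nuclear dimension in the sense of Definition \ref{Dtrwnuc}. First, given $\varepsilon, \sigma > 0$ and a finite subset ${\cal F} \subset A$, I would produce a non-zero positive element $a \in A_+$ with $d_\tau(a) < \sigma$ uniformly in $\tau \in T(A)$. Such an $a$ (in fact a projection) exists by combining property (SP) from Proposition \ref{SP} with the divisibility statement Proposition \ref{Propdiv} and the compactness of $T(A)$ (which is nonempty by Theorem \ref{trace}): repeatedly halving a non-zero projection yields, for any $n$, a non-zero projection of trace at most $1/n$ uniformly on $T(A)$. Feeding this $a$ into the defining property of ${\cal C}_1$ produces a projection $p \in A$ and a C*-subalgebra $C = PM_r(C(X))P \in {\cal I}_d$ with $1_C = p$ satisfying the required approximate commutation and near-inclusion conditions for ${\cal F}$, together with $1 - p \lesssim a$. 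Cuntz comparison then gives $\tau(1-p) \le d_\tau(a) < \sigma$ for every $\tau \in T(A)$, which is exactly the trace bound demanded by \ref{Dtrwnuc}.

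The one remaining ingredient is that $\dim_\mathrm{nuc} C < \infty$ for any $C \in {\cal I}_d$: this is a standard fact, since $C$ is a unital hereditary C*-subalgebra of $M_r(C(X))$ with $X$ a compact subset of a $d$-dimensional CW complex, and in that range $\dim_\mathrm{nuc}$ is bounded by $d$ (Winter--Zacharias). With this in place, all three hypotheses of Theorem \ref{TZs-1} are verified and we conclude $A \cong A \otimes {\cal Z}$. The only real work is the organizational step of the second paragraph---assembling (SP), divisibility, and strict comparison to produce the small-trace majorant $a$---and once $a$ is in hand, the rest is bookkeeping.
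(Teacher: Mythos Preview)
Your proposal is correct and follows essentially the same route as the paper: apply Theorem \ref{TZs-1} after assembling strict comparison (\ref{scomp}), $0$-almost divisibility of every unital hereditary subalgebra (\ref{herd} plus \ref{malmostdiv}), and finite weak tracial nuclear dimension via the fact that members of ${\cal I}_d$ have nuclear dimension at most $d$. The paper's proof is terser and leaves the verification of Definition \ref{Dtrwnuc} implicit, while you spell out how to choose the small-trace element $a$; this extra detail is accurate and not a deviation in strategy.
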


\begin{proof}
Note, by \ref{malmostdiv} and \ref{herd}, that
every unital hereditary \SCA\, of $A$ has the property of 0-almost divisible
Cuntz semgroup.
The lemma then  follows  from \ref{TZs-1} and the fact that each \CA\, in ${\cal I}_d$ has nuclear dimension no more than $d$ for all $d\ge 0.$

\end{proof}

\section{ General Existence Theorems}

\begin{lem}\label{Ad0}
Let $X$ be a connected finite CW complex with ${\rm dim}X=3,$ let $Y=X\setminus \{\xi\},$ where
$\xi\in X$ is a point, let $\Omega$ be a connected CW complex and let
$\Omega'=\Omega\setminus \{\omega\}.$ Let $P\in M_k(C(\Omega))$ (for some integer $k\ge 3({\rm dim}Y+1)$) be a projection
with ${\rm rank}(P)\ge 3({\rm dim}Y+1).$ Let
$\kappa\in KK(C_0(Y), C_0(\Omega'))$ such that $\kappa(F_3K_*(C_0(Y)))\subset F_3K_*(C_0(\Omega)).$
Then there is a unital \hm\,
$\phi: C(X)\to PM_k(C(\Omega))P$ such that
\beq\label{Ad0-1}
[\phi|_{C_0(Y)}]=\kappa.
\eneq

\end{lem}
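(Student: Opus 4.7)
The plan is first to reduce the problem to constructing a non-unital \hm\, $\psi:C_0(Y)\to PM_k(C_0(\Omega'))P$ with $[\psi]=\kappa$. Since $X$ is the one-point compactification of $Y$ at $\xi$, we have $C(X)\cong C_0(Y)^\sim$, and likewise $PM_k(C_0(\Omega'))P$ is the ideal of $PM_k(C(\Omega))P$ consisting of elements vanishing at $\omega$. Given such a $\psi$, the formula $\phi(\lambda\cdot 1+f):=\lambda P+\psi(f)$ (for $\lambda\in\C$ and $f\in C_0(Y)$) defines a unital \hm\, $\phi:C(X)\to PM_k(C(\Omega))P$ with $\phi|_{C_0(Y)}=\psi$, which yields $[\phi|_{C_0(Y)}]=\kappa$ as required.

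To construct $\psi$, I would follow the skeletal decomposition $X=X^{(2)}\cup(\text{3-cells})$. By Theorem 4.1 of \cite{EL} (invoked already in the proof of \ref{torF3}), $F_3K_*(C_0(Y))$ is precisely the kernel of the restriction to the 2-skeleton, so the hypothesis $\kappa(F_3)\subset F_3$ guarantees that $\kappa$ descends to a $KK$-class $\bar\kappa$ compatible with the 2-skeleton restrictions on both sides. I would then apply a standard realization theorem (of Dadarlat--Eilers--Loring type) for $KK$-classes out of 2-dimensional complexes to produce a unital \hm\, $\phi_2: C(X^{(2)})\to PM_k(C(\Omega))P$ realizing $\bar\kappa$. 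Here the rank assumption $\mathrm{rank}(P)\ge 3(\dim Y+1)$ provides the usual room for realizing every projection class (cf.\ 6.10.3 of \cite{blbk}) and for fitting the $K_1$-data into unitaries in the appropriate corner.

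Pulling $\phi_2$ back along the restriction $C(X)\to C(X^{(2)})$ gives a unital \hm\, $\tilde\phi:C(X)\to PM_k(C(\Omega))P$ whose restriction to $C_0(Y)$ agrees with $\kappa$ modulo $F_3$. The residual component lies in $\Hom(F_3K_1(C_0(Y)),F_3K_1(C_0(\Omega')))$, and by the Exel--Loring picture it is realized by \hm s coming from the ideals $\bigoplus_i C_0(\mathring{B^3})$ generated by the open 3-cells of $X$---that is, by suitable copies of the Bott unitary on $S^3$ appearing in \ref{generator}. I would adjust $\tilde\phi$ by a direct-summand perturbation supported on a corner $P_0M_k(C(\Omega))P_0$ with $P_0\le P$ and $P-P_0$ still of rank at least $3(\dim Y+1)$, so that this correction picks up the missing $F_3$-component without disturbing the 2-skeleton data already fixed by $\phi_2$.

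The main obstacle I expect is making the final 3-cell correction compatible with both the unitality of $\tilde\phi$ and its already-fixed 2-skeleton $KK$-class. This is handled by arranging the correction to vanish at $\xi$ (so that it genuinely lives in $C_0(Y)$ and extends trivially at the unit $1_{C(X)}$), and by invoking the $SU_r(C)/CU(M_r(C))$ structure developed in \ref{SU}--\ref{DUb}, together with \ref{CSU}, to align the homotopy classes of the perturbing Bott summands with the prescribed $F_3$-image of $\kappa$. The rank condition is used crucially at this last step, both to accommodate the orthogonal summands and to ensure that the $SU_r$-obstructions vanish.
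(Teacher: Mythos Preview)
The paper's proof of this lemma is a single line: it states that the result ``is a combination of Proposition 3.16 and Theorem 3.10 of \cite{EG}.'' No independent argument is given; the lemma is treated as a direct quotation of the Elliott--Gong realization machinery for three-dimensional source spaces.

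Your proposal, by contrast, attempts an ab initio construction via the skeletal filtration, realization on the $2$-skeleton, and a Bott-unitary correction for the $F_3$-part, finishing with the $SU_r/CU$ apparatus from Section~2. The overall strategy is in the right spirit---it is essentially how one would go about \emph{proving} the Elliott--Gong results you are implicitly reinventing---but the two steps you label ``standard'' are precisely the nontrivial content of the cited EG theorems. In particular, the ``standard realization theorem (of Dadarlat--Eilers--Loring type)'' for the $2$-skeleton and the controlled $F_3$-correction are what Theorem~3.10 and Proposition~3.16 of \cite{EG} supply; your sketch does not supply them independently. Also note that the $SU_r/CU$ material of \ref{SU}--\ref{CSU} is not used by the paper for this lemma at all: that machinery is developed for the later uniqueness and homotopy arguments (Sections~10--12), not for this existence step, and invoking it here is an unnecessary detour.

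So: your plan is not wrong, but it is a roadmap toward re-deriving the cited EG results rather than an alternative proof, and it remains a sketch at the two decisive points. The paper sidesteps all of this by citing \cite{EG} directly.
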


\begin{proof}
This is a combination of Proposition 3.16  and Theorem 3.10 of \cite{EG}.
\end{proof}

\begin{cor}\label{Ad0+}

Let $X$ be a connected finite CW complex such that $X^{(3)}$ (see \ref{DFilt2}) has $r$ components,  let $Y=X\setminus \{\xi\},$ where
$\xi\in X$ is a point. Let $\Omega$ be a connected CW complex and let
$\Omega'=\Omega\setminus \{\omega\}$ for some point $\omega\in \Omega.$  Let $P\in M_k(C(\Omega))$ (for some integer $k\ge 3({\rm dim}Y+1)$) be a projection
with ${\rm rank}(P)\ge 3({\rm dim}Y+1).$ Let
$\kappa\in KK(C_0(Y), C_0(\Omega'))$ such that
\beq\label{nAd0+-1}
\kappa(F_3K_*(C_0(Y)))\subset F_3K_*(C_0(\Omega))\andeqn \kappa=\kappa_1\circ s_3,
\eneq
where $s_3: C(X)\to C(X^{(3)})$ is the restriction and $\kappa_1\in KK(C(X^{(3)}), C_0(\Omega'))$
satisfies
$\kappa_1|_{\rho_{K_0(C(X^{(3)}))}(C(X^{(3)})}=0$ with the composition
$$
K_0(C(X^{(3)})={\rm ker}\rho_{K_0(C(X^{(3)}))}\oplus \rho_{C(X^{(3)})}(C(X^{(3)}).
$$
Then there is a unital \hm\,
$\phi: C(X)\to PM_k(C(\Omega))P$ such that
\beq\label{nAd0-1}
[\phi|_{C_0(Y)}]=\kappa.
\eneq

\end{cor}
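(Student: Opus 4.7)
The plan is to reduce to Lemma \ref{Ad0} by factoring through the 3-skeleton and handling each connected component of $X^{(3)}$ separately. Write $X^{(3)}=\bigsqcup_{i=1}^r X_i^{(3)}$ as the disjoint union of its connected components. This gives $C(X^{(3)})=\bigoplus_{i=1}^r C(X_i^{(3)})$ and a corresponding decomposition $\kappa_1=\sum_{i=1}^r\kappa_{1,i}$ with $\kappa_{1,i}\in KK(C(X_i^{(3)}), C_0(\Omega'))$. Each $X_i^{(3)}$ is a connected finite CW complex of dimension at most $3$, which is the setup of Lemma \ref{Ad0}.

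Next I would distribute $P$ across the components. Pick basepoints $\xi_i\in X_i^{(3)}$ and set $Y_i=X_i^{(3)}\setminus\{\xi_i\}$, so $\dim Y_i\le 3$. I would choose mutually orthogonal projections $P_1,\dots,P_r\le P$ with $\sum_i P_i=P$ and $\mathrm{rank}(P_i)\ge 3(\dim Y_i+1)\le 12$ at each point of $\Omega$. This is the step where the hypothesis $\kappa_1|_{\rho_{K_0(C(X^{(3)}))}(K_0(C(X^{(3)})))}=0$ is used crucially: because the rank component of $K_0(C(X^{(3)}))$ is killed by $\kappa_1$, the resulting homomorphisms $\phi_i$ are free to send $1_{C(X_i^{(3)})}$ to any projection of the correct dimension, and we may distribute the total rank however we wish (the total rank bound $\mathrm{rank}(P)\ge 3(\dim Y+1)$ combined with the dimension-$3$ bound on each piece ensures enough room, possibly after inflating to a larger matrix algebra using $k\ge 3(\dim Y+1)$).

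Having fixed the $P_i$'s, apply Lemma \ref{Ad0} separately to each $(X_i^{(3)},\kappa_{1,i},P_i)$ to produce unital homomorphisms $\phi_i:C(X_i^{(3)})\to P_iM_k(C(\Omega))P_i$ with $[\phi_i|_{C_0(Y_i)}]=\kappa_{1,i}$. The hypothesis $\kappa_{1,i}(F_3K_*(C_0(Y_i)))\subset F_3K_*(C_0(\Omega))$ required by Lemma \ref{Ad0} inherits from the analogous assumption on $\kappa$, since the restriction $C_0(Y)\to C_0(Y_i)$ is compatible with the skeletal filtration (cf.\ part (c) of Proposition 5.1 of \cite{EL}, as used in the proof of Lemma \ref{torF3}). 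Combine the $\phi_i$ to a unital homomorphism
\[
\phi^{(3)}:C(X^{(3)})\to PM_k(C(\Omega))P,\qquad \phi^{(3)}(f)=\sum_{i=1}^r\phi_i(f|_{X_i^{(3)}}),
\]
and set $\phi:=\phi^{(3)}\circ s_3$. Since $\kappa=\kappa_1\circ s_3$ by hypothesis and $[\phi^{(3)}]=\kappa_1$ by construction on the part of $K_0$ where $\kappa_1$ is nonzero (with the rank part already zero on both sides), we conclude $[\phi|_{C_0(Y)}]=\kappa$.

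The main obstacle I anticipate is the rank-bookkeeping in step two: the hypothesis only gives $\mathrm{rank}(P)\ge 3(\dim Y+1)$, while naively applying Lemma \ref{Ad0} to $r$ components would demand $12 r$. The resolution will use that each component has dimension at most $3$ (so the per-component bound is $12$, not $3(\dim Y+1)$), together with the freedom, granted by $\kappa_1|_{\rho(K_0)}=0$, to adjust ranks of the $P_i$; if the raw inequality fails one may first embed $P$ diagonally inside a larger $P\oplus\cdots\oplus P\subset M_{Nk}(C(\Omega))$, carry out the construction, and absorb the extra summand via a point-evaluation homomorphism that contributes $0$ to $\kappa$. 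A secondary check is the compatibility of $\bigoplus_i[\phi_i(1_{C(X_i^{(3)})})]=[P]$ with the trivial behaviour of $\kappa_1$ on the rank part, which is exactly what the splitting hypothesis provides.
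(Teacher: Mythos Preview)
Your approach---factor through $s_3$, apply Lemma~\ref{Ad0} to each connected component of $X^{(3)}$, and reassemble---is correct and is exactly what the paper intends (no proof is given there; the corollary is meant to follow immediately from \ref{Ad0}).

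The rank obstacle you anticipate, however, does not actually arise: since $X$ is assumed connected, $X^{(3)}$ is automatically connected, so $r=1$. Indeed, for any CW complex and any $n\geq 1$ the inclusion $X^{(n)}\hookrightarrow X$ induces a bijection on $\pi_0$: surjectivity is obvious, and for injectivity cellular approximation pushes any path in $X$ between two points of $X^{(n)}$ to a path in $X^{(1)}\subset X^{(n)}$. Thus there is a single component, Lemma~\ref{Ad0} applies directly to $X^{(3)}$ (the case $\dim X^{(3)}\leq 3$ is covered by the same Elliott--Gong results cited there), and the hypothesis ${\rm rank}(P)\geq 3(\dim Y+1)\geq 3(\dim X^{(3)}+1)$ is precisely what \ref{Ad0} demands. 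The parameter $r$ appearing in the statements of \ref{Ad0+} and \ref{Ad0++} is therefore always equal to $1$ under the stated connectedness assumption on $X$; it is harmless but redundant.

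For the record, your proposed workaround would not have worked even had $r>1$ been possible: inflating $P$ to $P^{\oplus N}\subset M_{Nk}(C(\Omega))$ changes the target algebra, and there is no mechanism to compress a unital homomorphism from $M_N(PM_k(C(\Omega))P)$ back into $PM_k(C(\Omega))P$ while preserving its $KK$-class. The point-evaluation padding $f\mapsto f(\xi)\cdot Q$ only fills \emph{unused} rank already sitting inside $P$; it cannot reduce rank that has been committed elsewhere.
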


\begin{cor}\label{Ad0++}

Let $X$ be a connected finite CW complex,  let $Y=X\setminus \{\xi\},$ where
$\xi\in X$ is a point.  Suppose that $X^{(3)}$ has $r$ components.
 Let  $\Omega$ be a connected CW complex and let
$\Omega'=\Omega\setminus \{\omega\}$ for some point $\omega\in \Omega.$  Let $P\in M_k(C(\Omega))$ (for some integer $k\ge 3r({\rm dim}Y+1)$) be a projection
with ${\rm rank}(P)\ge 3r({\rm dim}Y+1).$ Let
$\kappa\in KK(C_0(Y), C_0(\Omega'))$ such that
\beq\label{nnAd0+-1}
\kappa(F_3K_*(C_0(Y))) &\subset & F_3K_*(C_0(\Omega)),\\
\kappa|_{F_mK_*(C_0(Y))}&=&0\tforal m\ge 4\andeqn\\
\kappa|_{K_*(C_0(Y),\Z/k\Z)}&=&0\tforal k>1.
\eneq
Then there is a unital \hm\,
$\phi: C(X)\to PM_k(C(\Omega))P$ such that
\beq\label{nnAd0-1}
[\phi|_{C_0(Y)}]=\kappa.
\eneq

\end{cor}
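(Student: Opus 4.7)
The plan is to reduce Corollary \ref{Ad0++} to (component-wise applications of) Lemma \ref{Ad0}, using the vanishing hypotheses on $\kappa$ to produce a $KK$-level factorization through the restriction to the $3$-skeleton. The factor of $r$ in the rank assumption exactly accounts for the $r$ components of $X^{(3)}$, each needing a subprojection of $P$ of rank at least $3(\dim Y+1)$ to accommodate Lemma \ref{Ad0}.

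First I would verify, using Theorem 4.1 of \cite{EL} (which identifies $F_4K_i(C(X))$ with $\ker(s_3)_{*i}$), that the hypothesis $\kappa|_{F_mK_*}=0$ for $m\ge 4$ makes $\kappa_*$ factor through $(s_3)_*$ on ordinary $K$-theory. Combined with the vanishing of $\kappa$ on all mod-$k$ components of $\underline{K}$ and with $\Lambda$-module naturality, the UCT for $KK$ then furnishes a class $\kappa_1\in KK\bigl(C_0(X^{(3)}\setminus\{\xi\}),\,C_0(\Omega')\bigr)$ such that $\kappa=(s_3|_Y)^{*}(\kappa_1)$ and $\kappa_1(F_3K_*)\subset F_3K_*$. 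The careful point here is that the mod-$k$ coefficient vanishing is what controls the $\mathrm{PExt}$ summand of the UCT short exact sequence, allowing the $K_*$-level factorization to be lifted to a genuine $KK$-equality.

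Next, since $X^{(3)}=\bigsqcup_{i=1}^{r}X_i$ decomposes into connected components, one has $C(X^{(3)})=\bigoplus_{i=1}^{r}C(X_i)$ and a corresponding splitting $\kappa_1=\bigoplus_{i=1}^{r}\kappa_{1,i}$, where each $\kappa_{1,i}\in KK\bigl(C_0(X_i\setminus\{\xi_i\}),\,C_0(\Omega')\bigr)$ with a chosen $\xi_i\in X_i$ (taking $\xi_1=\xi$). Each $X_i$ is a connected CW complex of dimension at most $3$, and each $\kappa_{1,i}$ inherits $\kappa_{1,i}(F_3K_*)\subset F_3K_*$. Using that $\mathrm{rank}(P)\ge 3r(\dim Y+1)$, choose pairwise orthogonal subprojections $P_1,\dots,P_r\le P$ with $\mathrm{rank}(P_i)\ge 3(\dim Y+1)\ge 3(\dim Y_i+1)$, and apply Lemma \ref{Ad0} to each $(X_i,P_i,\kappa_{1,i})$ to obtain unital \hm s $\phi_i\colon C(X_i)\to P_iM_k(C(\Omega))P_i$ realizing $\kappa_{1,i}$ on $C_0(X_i\setminus\{\xi_i\})$.

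Finally, assemble $\phi$ by composing $s_3\colon C(X)\to C(X^{(3)})$ with the direct sum $\bigoplus_{i=1}^{r}\phi_i\colon C(X^{(3)})\to\bigl(\textstyle\sum_i P_i\bigr)M_k(C(\Omega))\bigl(\sum_i P_i\bigr)$, and then extending by the scalar evaluation $f\mapsto f(\xi)\bigl(P-\sum_i P_i\bigr)$ on the complementary corner to obtain a unital \hm\ $\phi\colon C(X)\to PM_k(C(\Omega))P$. The scalar piece annihilates $C_0(Y)$ because $f(\xi)=0$ for $f\in C_0(Y)$, so $[\phi|_{C_0(Y)}]=(s_3|_Y)^{*}(\kappa_1)=\kappa$, as required. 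The principal obstacle I anticipate is the $KK$-factorization step in the second paragraph: one must verify that the given vanishing conditions really do promote the automatic $K_*$-factorization through $(s_3)_*$ into an honest $KK$-equality, which is precisely where the mod-$k$ hypothesis and UCT are needed.
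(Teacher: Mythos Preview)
Your approach is essentially the same as the paper's. The paper also factors $\kappa$ through the restriction $s_3$ to the $3$-skeleton and then invokes the three-dimensional existence result; the only structural difference is that the paper packages your component-by-component application of Lemma~\ref{Ad0} into the intermediate Corollary~\ref{Ad0+}, whereas you unfold that step explicitly (which is where the factor $r$ in the rank bound is visibly consumed).

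One small correction: your appeal to a ``$\mathrm{PExt}$ summand'' is off. Since $X$ is a finite CW complex, $K_*(C_0(Y))$ is finitely generated, so the UCT involves ordinary $\mathrm{Ext}$, and the cleaner route is the Dadarlat--Loring identification $KK\cong \mathrm{Hom}_\Lambda(\underline{K},\underline{K})$. The paper uses exactly this: it defines $\kappa_1$ directly on $\underline{K}$ by taking $\kappa_1|_{K_*}=\gamma$ (a lift of $\kappa|_{K_*}$ through $(s_3)_*$, chosen to vanish on $\rho_{C(X^{(3)})}(K_0(C(X^{(3)})))$) and $\kappa_1|_{K_*(\,\cdot\,;\Z/k\Z)}=0$, then applies Corollary~\ref{Ad0+}. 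The mod-$k$ vanishing hypothesis on $\kappa$ is what makes $\kappa_1\circ(s_3)_*$ agree with $\kappa$ on \emph{all} of $\underline{K}(C_0(Y))$, not just on $K_*$---so your instinct that this hypothesis is the crux of the factorization is correct, but the mechanism is $\mathrm{Hom}_\Lambda$ rather than $\mathrm{PExt}$.
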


\begin{proof}
Let $s_3$ be as in \ref{Ad0+}. Note that ${\rm ker}(s_3)_{*}=F_4K_*(C_0(Y))$(see \ref{DFilt2}).  There is $\gamma\in Hom(K_*(C(X^{(3)})), K_*(C_0(\Omega')))$ such that
$\gamma\circ (s_3)_*|_{K_*(C_0(Y))}=\kappa|_{K_*(C_0(Y))}$ and
$\gamma|_{\rho_{C(X^{(3)})} (K_0(C(X^{(3)})))}=0.$
Define $\kappa_1|_{K_*(C(X^{(3)}))}=\gamma$ and
$\kappa_1|_{K_*(C(X^{(3)}),\Z/k\Z))}=0$ for $k>1.$  Thus \ref{Ad0+} applies.
\end{proof}

%\begin{lem}\label{FK1tq}
%Let $X$ be a connected finite CW complex, let $\ep>0$ and let ${\cal
%F}\subset C(X)$ be a finite subset. Let $Y$ be a connected finite CW
%complex with dimension 3 for which $K_0(C_0(Y))=\Z$ and $K_1(C(Y))$
%is finite.
%There exists an integer $L\ge 1$ satisfying the following: For any
%$\af\in KK(C_0(X), C_0(Y)),$  there exists a unital  ${\cal
%F}$-$\ep$-multiplicative \morp\, $\phi: C(X)\to  M_L(C(Y))$ such
%that
%\beq\label{FK1tq-1}
%[\phi|_{C_0(Y)}]=\af.
%\eneq
%\end{lem}

\begin{lem}\label{Ad0k1}
Let $X$ be a connected finite CW complex with dimension d and
with $K_1(C(X))=\Z^r\oplus {\rm Tor}(K_1(C(X)),$
 let $\dt>0,$ let  ${\cal G}\subset C(X)$ be a finite subset and let ${\cal P}\subset \underline{K}(C_0(Z))$ (where $Z=X\setminus \{\xi\}$ for some
point $\xi\in X$) be a finite subset.
There exists an integer $N_1(\dt, {\cal G}, {\cal P})\ge 1$ satisfying the following:

Let $Y$ be a connected finite CW complex,
and let $\alpha\in KK(C_0(Z), C_0(Y_0))),$
where $Y_0=Y\setminus \{y_0\}$ for some point $y_0\in Y.$
%such that
%$\alpha|_{{\rm ker}\rho_{C(X)}}\subset {\rm ker}\rho_{C(Y)}.$
%Then there exists an integer $N_1(\dt, {\cal G}, {\cal P})\ge 1,$ f
For any projection
$P\in M_{\infty}(C(Y))$ with ${\rm rank}P\ge N_1(\dt, {\cal G}, {\cal P})\cdot 3({\rm dim}Y+1),$ there exists a
unital $\dt$-${\cal G}$-multiplicative \morp\,
$L: C(X)\to C=PM_R(C(Y))P $(for some integer $R\ge {\rm rank}P$)
such that
$
[L]|_{\cal P}=\alpha|_{\cal P}.
$

Moreover,
%for a fixed \hm\,
%$$J: \Z^r\subset K_1(C(X))\to
%U(M_{d}((X)))/CU(M_{d}(C(X)))$$
%which splits the short exact sequence
%$$
%0\to \Aff(T(M_d(C(X))))/\overline{\rho_{M_d(C(X))}(K_0(C(X)))}
%\to U(M_{d}(C(X)))/CU(M_{d}(C(X)))\to K_1(C(X))\to 0
%$$
%and
suppose that
 $\lambda:J_c(\Z^r)\to
U(M_{d}(PM_R(C(Y))P))/CU(M_{d}(PM_R(C(Y))P))$ is a \hm\,
with $\Pi_c\circ \lambda\circ J_c=\af|_{K_1(C(X))}$ (see \ref{DCU}),
 one may require
that
\beq\label{Ad0k1-1}
L^{\ddag}|_{J(\Z^r)}=\lambda.
\eneq

\end{lem}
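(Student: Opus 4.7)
The plan is to build $L$ in two stages. First, I construct a strict unital \hm\, $\phi_0: C(X)\to C:=PM_R(C(Y))P$ whose $KK$-class agrees with $\alpha$ on ${\cal P}$; second, I perturb $\phi_0$ by an auxiliary summand so that the combined map realizes the prescribed $U/CU$-data $\lambda$ on $J_c(\Z^r)$ while leaving the $\underline{K}$-class on ${\cal P}$ untouched. The integer $N_1(\dt,{\cal G},{\cal P})$ is dictated by both stages, but must not depend on $Y,\alpha,\lambda,P$.

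For the first stage, enlarge ${\cal P}$ harmlessly so that it exhibits the splittings of $K_\ast(C_0(Z))$ along the filtration $F_\bullet K_\ast$ and the free/torsion decomposition from Lemmas \ref{Skeleton}--\ref{notorsion}. Peel off the $\rho_{K_0}$-component of $\alpha$ by an auxiliary point-evaluation summand, which the rank hypothesis on $P$ accommodates. The remaining part of $\alpha$ vanishes on $F_mK_\ast$ for $m\ge 4$ and on all mod-$k$ coefficients, so Corollary \ref{Ad0++} applies and produces a unital \hm\, $\phi_0$ landing in a sub-corner $P_0M_R(C(Y))P_0$ with $P_0\le P$, once ${\rm rank}(P_0)\ge 3r({\rm dim}\,Y+1)$, where $r$ is the number of components of $X^{(3)}$. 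Choose $N_1$ large enough that the complementary projection $P-P_0$ still has sufficient rank for the second-stage correction.

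For the second stage, observe that $\phi_0^{\ddag}|_{J_c(\Z^r)}$ and $\lambda$ both split $\alpha_{*1}|_{\Z^r}$ via $\Pi_c$, so their ratio
\[
\mu := \lambda\cdot\bigl(\phi_0^{\ddag}|_{J_c(\Z^r)}\bigr)^{-1}:\Z^r\longrightarrow \aff(T(C))/\overline{\rho_C(K_0(C))}
\]
is well defined. Fix a $\Z$-basis $g_1,\dots,g_r$ of $\Z^r.$ By the surjectivity of the determinant in the de la Harpe--Skandalis sequence (\ref{HS-4}), for each $i$ pick $w_i=\exp(2\pi\sqrt{-1}h_i)\in U_0(M_d(C))$ with $\overline{w_i}=\mu(g_i)$ and $h_i\in M_d(C)_{s.a.}$. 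Using the rank slack of $P-P_0$, construct an auxiliary unital $\dt$-${\cal G}$-multiplicative \morp\, $\psi: C(X)\to eM_N(C)e$ (with $e\le P-P_0$) whose $\underline{K}$-class vanishes on ${\cal P}$ and whose induced action on $J_c(g_i)$ is the product of a point-evaluation contribution with $\overline{w_i}$. Concretely, start from a point-evaluation \hm\, $\mathrm{ev}_{x_0}\otimes 1_{eM_Ne}$ and twist it by the rotation $\exp(2\pi\sqrt{-1}t h_i)$ along a generator $u_i\in U(M_d(C(X)))$ representing $J_c(g_i)$. Set $L:=\phi_0\oplus\psi$ after rotating into $C$.

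The hard part is constructing $\psi$ satisfying all three requirements simultaneously: (a) vanishing $\underline{K}$-class on ${\cal P}$; (b) the exact prescribed $U/CU$ image on $J_c(\Z^r)$, not just up to a small perturbation; and (c) $\dt$-${\cal G}$-multiplicativity. Condition (a) forces $\psi$ to be approximately a sum of point evaluations; condition (b) forces a controlled phase adjustment along each $J_c(g_i)$ through the $h_i$; condition (c) bounds $\|h_i\|$ in terms of the modulus of continuity of elements of ${\cal G}$. Because $\mu(g_i)$ is fixed but can be diluted by enlarging ${\rm rank}(e)$ (replacing $h_i$ by a self-adjoint of the same trace but smaller norm spread over more eigenvalues), the multiplicative error can be made less than $\dt$ on ${\cal G}$ provided ${\rm rank}(e)\ge N_1(\dt,{\cal G},{\cal P})$ for a constant depending only on the modulus of continuity of ${\cal G}$, the generators of ${\cal P}$, and $r$. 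This is precisely the role of the coefficient $N_1$ in the rank hypothesis, and crucially it is independent of $Y$, $\alpha$, and $\lambda$.
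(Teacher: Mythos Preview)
Your first stage has a genuine gap. Corollary \ref{Ad0++} requires the $KK$-element $\kappa$ to satisfy
\[
\kappa|_{F_mK_\ast(C_0(Z))}=0\ \text{for all }m\ge 4
\quad\text{and}\quad
\kappa|_{K_\ast(C_0(Z),\Z/k\Z)}=0\ \text{for all }k>1.
\]
A general $\alpha\in KK(C_0(Z),C_0(Y_0))$ does not satisfy either condition: $X$ can have dimension $d>3$, so $F_mK_\ast(C_0(Z))$ may be nonzero for $m\ge 4$, and ${\cal P}$ is allowed to contain elements of $K_\ast(C_0(Z),\Z/k\Z)$ on which $\alpha$ acts nontrivially. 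Your ``peeling off the $\rho_{K_0}$-component by a point-evaluation summand'' does not help: a point evaluation at $\xi$ kills all of $C_0(Z)$, so it contributes nothing on ${\cal P}\subset\underline{K}(C_0(Z))$ and in particular cannot alter $\alpha|_{F_mK_\ast}$ or $\alpha|_{K_\ast(\cdot,\Z/k\Z)}$. So there is no way to reduce to the hypotheses of \ref{Ad0++}, and hence no way to produce your genuine homomorphism $\phi_0$ with $[\phi_0]|_{\cal P}=\alpha|_{\cal P}$. (Indeed, the lemma only asks for an approximate morphism precisely because homomorphisms from $C(X)$ cannot realize arbitrary $KK$-classes.)

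The paper avoids this obstruction by passing through an intermediate algebra with controlled $K$-theory. One chooses a $3$-dimensional connected finite CW complex $T$ with $K_0(C_0(T_0))=K_0(C_0(Z))$ and $K_1(C(T))={\rm Tor}(K_1(C(X)))$, sets $B=C(\T)^{\oplus r}$, and takes an invertible $\beta\in KK(C_0(Z),B_0\oplus C_0(T_0))$ matching the $K_\ast$-groups. A Dadarlat--Loring argument then yields a $\dt$-${\cal G}$-multiplicative map $L_1:C(X)\to M_K(B\oplus C(T))$ with $[L_1]|_{\cal P}=\beta|_{\cal P}$; the integer $K$ here depends only on $X,\dt,{\cal G},{\cal P}$ and furnishes $N_1$. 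Now one must realize $\beta^{-1}\times\alpha$ by genuine homomorphisms out of $B\oplus C(T)$. On $C(T)$ this is possible via \ref{Ad0} (the EG result) because $\dim T=3$ forces the filtration hypotheses to hold automatically. On $B=C(\T)^{\oplus r}$ one simply sends each generator $z_j$ to a unitary $u_j\in M_s(C(Y))$ chosen so that $[u_j]=\alpha_1([z_j])$ and $\overline{u_j}$ hits the prescribed $\lambda$-value exactly; this is why $L^{\ddag}|_{J(\Z^r)}=\lambda$ holds on the nose with no perturbation needed. The final map is $L=({\rm diag}(\psi,\phi_1)\otimes{\rm id}_{M_K})\circ L_1$.

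Your second stage is also problematic: the phrase ``twist $\mathrm{ev}_{x_0}$ by $\exp(2\pi i t h_i)$ along a generator $u_i$'' does not describe a well-defined completely positive map on $C(X)$, since $u_i\in M_d(C(X))$ is not a coordinate and one cannot adjust its image independently of the rest of the algebra. The paper's route through $C(\T)^{\oplus r}$ is exactly what makes the $U/CU$ correction trivial --- there the generators \emph{are} independent coordinates.
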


\begin{proof}
 Note that $Tor(K_1(C(X)))$ is a finite group.
%We first assume that $\alpha|_{Tor(K_1(C(X)))}\not=0

There is a connected finite CW complex $T$ with ${\rm dim}T=3$ such that
$K_0(C_0(T_0))=K_0(C_0(Z))$ and $K_1(C(T))=Tor(K_1(C(X))),$ where
$T_0=T\setminus \{\xi\}$ and where $\xi\in T$ is a point.

Let $B=\overbrace{C(\T)\oplus C(\T)\oplus\cdots\oplus C(\T)}^{r}.$
We identify $\Z^r$ with $K_1(B).$
Let
$$B_0=\overbrace{C_0(\T_0)\oplus C_0(\T_0)\oplus \cdots \oplus
C_0(\T_0)}^{r},$$
where $\T_0$ is the circle minus a single point.
%Note that $KK(B_0, PM_r(C(Y))P)=Hom(K_1(B_0), K_1(PM_r(C(Y))P)).$
%Then there is a unital \hm\, $\phi_2: B\to M_{3({\rm dim}Y+1)}(C(Y))$
%such that $[\phi_2]|_{\Z^k}=\alpha|_{\Z^k}.$
Since $K_1(C_0(\T_0))=K_1(C(\T)),$ we obtain a isomorphism
$\bt_1: K_*(C_0(Z))\to K_*(B_0\oplus C_0(T_0)).$
There is an invertible element $\bt\in KK(C_0(Z), B_0\oplus C_0(T_0))$ such that
$\bt|_{K_i(C_0(Z))}=\bt_1|_{K_i(C_0(Z))},$ $i=0,1.$
There is, by \cite{DL}, a unital
$\dt$-${\cal G}$-multiplicative \morp\, $L_1: C(X)\to M_K(B\oplus C(T))$
such that
$$
[L_1]|_{\cal P}=\bt|_{\cal P},
$$
where $K$ is an integer depending only on
$X,$ $\dt,$ ${\cal G}$ and ${\cal P}.$
We may also assume that $L_1^{\ddag}$ is defined and
$L_1^{\ddag}$ is invertible on $J_c(\Z^r).$
Put $N(\dt, {\cal G}, {\cal P})=K+(rK).$
%and
%put $\af_0=\af|_{\underline{K}(C_0(Z))}.$

%It suffices to show the case  that $Y$ is connected.
%Let $Y_0=Y\setminus\{y_0\},$ where $y_0\in Y$ is a point.
Note that $\bt^{-1}\times \af\in KK(B_0\oplus C_0(T_0), C_0(Y_0)).$
There is $\af_1\in KK(B_0, C_0(Y_0))$ and  $\af_2
\in KK(C_0(T_0),C_0(Y_0))$ such that
$\af_1\oplus \af_2=\bt^{-1}\times \af_0.$

Note that $\alpha_2(Tor(K_1(C(T))))\subset Tor(K_1(C(Y)))$ (or it is zero).
Note that $F_3(K_1(C(T)))=Tor(K_1(C(T)))$ and
$F_m(K_1(C(T)))=\{0\}$ if $m>3$ (see Lemma 3.3 of \cite{EL}).
It follows from \ref{torF3} that $Tor(K_1(C(Y)))\subset F_3(K_1(C(Y))).$ Therefore, by Theorem 3.10 and Proposition 3.16 of \cite{EG}, there is a unital \hm\, $\phi_1: C(T)\to M_s(C(Y)),$ where $s=3({\rm dim}Y+1)$  such that
$$
[\phi_1|_{C_0(T_0)}]=\alpha_2.
$$
Note that $KK(B_0, C_0(Y_0))=Hom(K_1(B_0), K_1(C_0(Y_0))).$ Let
$z_j\in C(\T)$ be the standard unitary generator of $C(\T)$ for the
$j$-th copy of $C(\T)$ in $B.$ Note that
$K_1(B)=K_1(B_0)$ is generated by $[z_1],[z_2],...,[z_r].$ We may assume
that $[L_1]\circ J(\Z^r)$ is generated by $[z_j],$ $j=1,2,...,r.$

Let $P\in M_R(C(Y))$ be a projection (for some large $R>0$) such that
$$
{\rm rank}P\ge 6K({\rm dim}Y+1).
$$
Let $Q$ be a trivial projection with rank $Ks.$
We may assume that $P\ge Q.$ Note also
the map
$$
\imath: U(M_{s}(C(Y)))/CU(M_{s}(C(Y)))
\to U(PM_R(C(Y))P)/CU(PM_R(C(Y))P)
$$
is an isomorphism.
% (since
%$U((P-Q)M_r(C(Y))(P-Q))/U_0((P-Q)M_r(C(Y))(P-Q))$ to
%$U(PM_R(C(Y))P)/U_0(PM_R(C(Y))P)$ ($\cong K_1(C(Y))$) is an isomorphism).
Let $\lambda_1=\imath^{-1}\circ \lambda\circ (L_1^{\ddag})^{-1}|_{J_c(\Z^r)}.$
Put $u_j\in U(M_s(C(Y)))$ such that
\beq\label{Ad0k1-10}
[u_j]=\alpha_1([z_j])\andeqn
\overline{u_j}=\lambda_1\circ J([z_j]),\,\,\,j=1,2,...,r.
\eneq
Define $\psi_j: C(\T)\to M_s(C(Y))$ by sending $z_j$ to $u_j,$ $j=1,...,r.$
Put $\psi: B\to M_{rs}(C(Y))$ by
$\psi={\rm diag}(\psi_1,\psi_2,...,\psi_r).$
Define
$$
L=({\rm diag}(\psi,\phi_1)\otimes {\rm id}_{M_K})\circ L_1.
$$
we check that $L$ meets all the requirements.

\end{proof}

\begin{lem}\label{homhom}
Let $X,$
 %be a connected finite CW complex with the covering dimension $d,$ let
 $\dt>0,$
 ${\cal G}\subset C(X)$ and ${\cal P}\subset \underline{K}(C_0(Z))$ be as
 in \ref{Ad0k1}.
 % (where $Z=X\setminus \{\xi\}$ for some
%point $\xi\in Z$) be a finite subset.
%There exists an integer $N_1(\dt, {\cal G}, {\cal P})\ge 1$ satisfying the following:
Let $Y$ be a connected finite CW complex
and let $\alpha\in KK(C_0(Z), C_0(Y_0)))$
such that
\beq\label{homh-1}
\af({F_3K_*(C_0(Z)}) &\subset&  F_3K_*(C_0(Y_0)),\\
\af|_{F_mK_*(C_0(Z))}&=& 0\tforal m\ge 4\andeqn\\
\af|_{K_*(C_0(Z), \Z/k\Z)}&=&0\tforal k>1,
\eneq
where $Y_0=Y\setminus \{y_0\}$ for some point $y_0\in Y.$
Suppose, in addition,  that $$\lambda(J(\Z^r)\cap SU_{d}(C(X))/CU(M_{d}(C(X)))\subset
SU_{d}(C)/CU(M_{d}(C)),$$
where $C=PM_R(C(Y))P.$
%such that
%$\alpha|_{{\rm ker}\rho_{C(X)}}\subset {\rm ker}\rho_{C(Y)}.$
%Then there exists an integer $N_1(\dt, {\cal G}, {\cal P})\ge 1,$ f
Then one may require $L$ to be a \hm\,as  in the conclusion of Lemma \ref{Ad0k1} . Moreover,
one can make ${\rm rank}P= 6r_0r(d+1),$ where $r_0$ is the number of connected components of $X^{(3)}.$

\end{lem}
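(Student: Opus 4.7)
The strategy is to follow the proof of Lemma \ref{Ad0k1}, upgrading the $\dt$-$\mathcal{G}$-multiplicative map constructed there to an honest \hm\ by invoking Corollary \ref{Ad0++} in place of the general Dadarlat--Loring existence result, and then adjusting the de la Harpe--Skandalis determinants to match $\lambda$. The vanishing conditions on $\alpha$ (namely $\alpha|_{F_mK_*(C_0(Z))}=0$ for $m\ge 4$ and $\alpha|_{K_*(C_0(Z),\Z/k\Z)}=0$ for $k>1$) are exactly those required by \ref{Ad0++}, while the $SU$-compatibility of $\lambda$ lets us separate the determinant issue from the $K$-theory realization.

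First I would apply Corollary \ref{Ad0++} to produce a unital \hm\ $\phi_0\colon C(X)\to P_0 M_{R_0}(C(Y))P_0$ with $[\phi_0|_{C_0(Z)}]=\alpha$ on $\mathcal{P}$, where $P_0$ has rank of order $3r_0(d+1)$. Both $\lambda$ and $\phi_0^{\ddag}|_{J(\Z^r)}$ lift $\alpha|_{K_1}$ under $\Pi_c$, so the ratio $\mu:=\lambda\cdot(\phi_0^{\ddag}|_{J(\Z^r)})^{-1}$ takes values in the kernel $U_0(M_d(C))/CU(M_d(C))\cong\mathrm{Aff}(T(C))/\overline{\rho_C(K_0(C))}$. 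Proposition \ref{sucum} shows that $\phi_0^{\ddag}$ sends $SU/CU$ into $SU/CU$; combined with the $SU$-hypothesis on $\lambda$ and Proposition \ref{SUCU}, the restriction of $\mu$ to $J(\Z^r)\cap SU_d(C(X))/CU(M_d(C(X)))$ is trivial. Hence $\mu$ factors through the ``top'' free summand $G_1$ of $\Z^r$ (the complement of $\Z^r\cap F_3K_1(C(X))$), which has rank at most $r$.

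Next I would construct an auxiliary unital \hm\ $\psi\colon C(X)\to P_1 M_{R_1}(C(Y))P_1$ with $[\psi|_{C_0(Z)}]=0$ on $\mathcal{P}$ but $\psi^{\ddag}|_{J(\Z^r)}=\mu$. This is the main obstacle. Since $\mathrm{Aff}(T(C))/\overline{\rho_C(K_0(C))}$ is divisible (being a quotient of a real vector space) and $G_1$ is free of rank at most $r$, the target value $\mu$ is freely prescribed by the values at the $r$ chosen generators. For each generator $[u_j]$ (represented by a unitary $u_j\in M_d(C(X))$), one realizes the prescribed element $\mu([u_j])\in\mathrm{Aff}(T(C))/\overline{\rho_C(K_0(C))}$ by a point-evaluation \hm\ along a continuous family of base points in $X$ parametrized by $Y$, chosen so that the family has trivial $K$-theoretic contribution (so that $[\psi|_{C_0(Z)}]=0$ on $\mathcal{P}$) but produces the prescribed determinant via its variation. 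The Cuntz--Pedersen realization of arbitrary affine functions as traces of self-adjoint elements in $M_d(C)$ underpins this construction.

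Finally, I would take $\phi:=\phi_0\oplus\psi\colon C(X)\to PM_R(C(Y))P$ with $P=P_0\oplus P_1$. It is a unital \hm\ with $[\phi|_{C_0(Z)}]=\alpha+0=\alpha$ on $\mathcal{P}$ and $\phi^{\ddag}|_{J(\Z^r)}=\phi_0^{\ddag}\cdot\psi^{\ddag}=\phi_0^{\ddag}\cdot\mu=\lambda$. The rank bound ${\rm rank}\,P=6r_0r(d+1)$ absorbs $\phi_0$'s contribution of $3r_0(d+1)$ together with the $r$ generator-by-generator adjustment pieces in $\psi$, each of comparable rank, with an additional factor of two to accommodate both signs in the determinant corrections.
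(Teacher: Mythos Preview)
Your overall plan matches the paper's: apply Corollary \ref{Ad0++} to obtain a unital \hm\ $h_0:C(X)\to M_{3r_0(d+1)}(C(Y))$ realizing $\alpha$ (your $\phi_0$), form the defect $\lambda_1=\lambda-h_0^{\ddag}$ (your $\mu$), observe that $\lambda_1$ lands in $U_0/CU$, and then use the $SU$-hypothesis together with Propositions \ref{sucum} and \ref{SUCU} to see that $\lambda_1$ vanishes on $G_2=J(\Z^r)\cap SU_d(C(X))/CU(M_d(C(X)))$. Up to this point you and the paper agree.

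The gap is in your construction of $\psi$. A ``point-evaluation \hm\ along a continuous family of base points'' produces a map of the form $\psi(f)(y)={\rm diag}(f(\xi_1(y)),\dots,f(\xi_n(y)))$; then $\det(\psi\otimes{\rm id}_{M_d})(u_j)(y)=\prod_i\det u_j(\xi_i(y))$, which is constrained to lie in the subgroup of $C(Y,\T)$ generated by pullbacks of the fixed functions $\det u_j:X\to\T$. There is no reason an arbitrary prescribed class $\mu(g_j)\in\Aff(T(C))/\overline{\rho_C(K_0(C))}$ is realizable this way, and the Cuntz--Pedersen theorem does not help, since it produces self-adjoint elements, not \hm s from $C(X)$. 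Moreover you must hit all $k$ targets $\mu(g_1),\dots,\mu(g_k)$ \emph{simultaneously} with a single $\psi$, which your description does not address.

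The paper's device is to pass to the $1$-skeleton. By Lemma \ref{Skeleton} one may write $J(\Z^r)=G_1\oplus G_2$ with $\Pi(G_1)=K_1(C(X))/F_3K_1(C(X))$; choose free generators $g_1,\dots,g_k$ of $G_1$ ($k\le r$) and unitaries $v_j$ representing them. Since $X^{(1)}$ is a $1$-dimensional CW complex, a unital \hm\ $h_1:C(X^{(1)})\to M_{3k(d+1)}(C(Y))$ with $h_1(v_j)=z_j$ for \emph{any} prescribed unitaries $z_j$ (here chosen with $\overline{z_j}=\lambda_1(g_j)$) is elementary to write down. Composing with the restriction $s_1:C(X)\to C(X^{(1)})$ kills $F_3K_1$ (hence contributes nothing to $[\,\cdot\,]|_{\cal P}$ beyond what is already controlled), and $L=h_0\oplus(h_1\circ s_1)$ is the desired \hm. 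The rank count $3r_0(d+1)+3k(d+1)\le 6r_0r(d+1)$ follows since $k\le r$ and $r_0\ge 1$.
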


\begin{proof}
Let $r_0$ be the number of connected components of $X^{(3)}.$
By \ref{Ad0++}, there is a unital \hm\, $h_0: C(X)\to M_{3r_0(d+1)}(C(Y))$ such that
$$
[h_0]=\af|_{\underline{K}(C_0(Z))}.
$$
We may write, as in \ref{Skeleton},
$J(\Z^r)=G_1\oplus G_2,$ where $G_1$ is free and $G_2=J(\Z^r)\cap SU_{d}(C(X)).$
Let $\lambda_1=\lambda-h_0^{\ddag}.$ Since $\Pi_c\circ \lambda\circ J_c=\af|_{\Z^r},$
${\rm Im}\,\lambda_1|_{J_c(\Z^r)}\subset U_0(M_{d}(C(Y))/CU(M_{d}(C(Y))).$
Thus, by the assumption,
\beq\label{homh-3}
\lambda_1(G_2)\subset (SU_{d}(C)\cap U_0(M_{d}(C)))/CU(M_{d}(C(Y))).
\eneq
It follows from \ref{SUCU} that $\lambda_1|_{G_2}=0.$ By \ref{Skeleton}, write
$K_1(C(X^{(1)}))=S\oplus K_1(C(X))/F_3K_1(C(X)).$ We may assume that $\Pi(G_1)=K_1(C(X))/F_3K_1(C(X)),$
where $\Pi: U(M_{d}(C(X)))/CU(M_{d}(C(X)))\to K_1(C(X))$ is the quotient map.
Let $G_1=\Z^k$ be generated by  the free generators $g_1,g_2,...,g_k$ ($k\le r$). Let
$z_j\in M_{d}(C)$ be unitary such that ${\bar z_j}=\lambda_1(g_j),$ $j=1,2,...,k.$
Let $v_1, v_2,...,v_k\in C(X)$ be unitaries representing $g_1,g_2,...,g_k,$ respectively.
Since $X^{(1)}$ is $1$-dimensional, it is easy and well known that there is a unital \hm\,
$h_1: C(X^{(1)})\to M_{3k(d+1)}(C)$ such that $h_1(v_j)=z_j,$ $j=1,2,...,k.$
Define $L=h_0\oplus h_1.$
\end{proof}

\begin{lem}\label{Ad1}
Let $X$ be a connected finite CW complex and let $Y=X\setminus \{\xi\},$ where $\xi\in X$ is a point. Let $K_0(C_0(Y))=\Z^k\oplus Tor(K_0(C_0(Y)))$
and $K_1(C(X))=\Z^r\oplus {\rm Tor}(K_1(C(X))).$ For any $\dt>0,$ any finite subset ${\cal G}\subset C(X)$ and any finite subset ${\cal P}\subset \underline{K}(C_0(Y)),$
there exist integers $N_1, N_2\ge 1$ satisfying the following:

Let $\Omega$ be a finite CW complex and let $\kappa\in Hom_{\Lambda}(\underline{K}(C_0(Y)), \underline{K}(C(\Omega)))$ and let
$$
K=\max\{|\rho_{C(\Omega)}(\kappa(g_i))|: g_i=(\overbrace{0,...,0}^{i-1},1,0,...,0)\in \Z^k\}.
$$
For  any projection
$P\in M_{\infty}(C(\Omega))$ with
${\rm rank}P\ge (N_2K+ N_1({\rm dim}Y+1)\},$ there is  a unital $\dt$-${\cal G}$-multiplicative \morp\, $L: C(X)\to PM_{N}(C(\Omega))P$ (with some integer
$N\ge {\rm rank}P$) such that
\beq\label{Ad1-1}
[L]|_{\cal P}=\kappa|_{\cal P}.
\eneq
Moreover, if $\lambda: J(\Z^r)\subset J(K_1(C(X)))\to U(PM_N(C(\Omega))P)/CU(PM_N(C(\Omega))P)$ is a \hm, then one may further require that
\beq\label{Ad1-1+1}
L^{\ddag}|_{J(\Z^r)}=\lambda.
\eneq

\end{lem}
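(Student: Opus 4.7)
The strategy is to decompose $\kappa$ along the base-point splitting of $\Omega$ into an augmentation-ideal part, handled by Lemma \ref{Ad0k1}, and a scalar-rank part that absorbs the numbers $r_{i,j}=\rho_{C(\Omega_j)}(\kappa(g_i))$, and then glue the two pieces inside $PM_N(C(\Omega))P$.

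Let $\Omega_1,\dots,\Omega_m$ be the connected components of $\Omega$ with base points $\omega_j\in\Omega_j$, put $\Omega^\circ=\bigsqcup_j(\Omega_j\setminus\{\omega_j\})$, and use the split extension $0\to C_0(\Omega^\circ)\to C(\Omega)\to\C^m\to 0$ to identify $\underline{K}(C(\Omega))=\underline{K}(C_0(\Omega^\circ))\oplus\Z^m$. Write $\kappa=\kappa^\flat\oplus\kappa^\sharp$ accordingly. Since $\Z^m$ is torsion free and concentrated in $K_0$-degree, $\kappa^\sharp$ automatically vanishes on $K_1(C_0(Y))$ and on all Bockstein torsion; on the free part $\Z^k\subset K_0(C_0(Y))$ it is represented by the integer matrix $(r_{i,j})$, with $|r_{i,j}|\le K$ by hypothesis.

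Applying Lemma \ref{Ad0k1} component-by-component to each $\kappa^\flat|_{\Omega_j}\in KK(C_0(Y),C_0(\Omega_j\setminus\{\omega_j\}))$ produces a unital $\delta$-${\mathcal G}$-multiplicative map $L^\flat:C(X)\to Q_1M_{R_1}(C(\Omega))Q_1$ with $[L^\flat|_{C_0(Y)}]|_{\mathcal P}=\kappa^\flat|_{\mathcal P}$ and, by the moreover clause of \ref{Ad0k1}, $L^{\flat\ddag}|_{J(\Z^r)}=\lambda$; the rank of $Q_1$ per component contributes the $N_1(\dim Y+1)$ summand of the rank bound. For the rank part, fix representatives $g_i=[p_i]-[q_i]$ with $p_i,q_i\in M_\ell(C(X))$ (for some $\ell$) projections of equal constant rank $k_i$, and build a $\delta$-${\mathcal G}$-multiplicative $L^\sharp:C(X)\to Q_2M_{R_2}(C(\Omega))Q_2$ realizing $\kappa^\sharp$ on ${\mathcal P}$, supported on a trivial summand of rank at most $N_2 K$ per component via a Bott-type construction analogous to \cite{EG}. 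Setting $L=L^\flat\oplus L^\sharp$, padding by a trivial constant summand so that the total unit fits inside $P$ (using $\mathrm{rank}\,P\ge N_2 K+N_1(\dim Y+1)$), and conjugating by a suitable unitary gives the desired map; $[L]|_{\mathcal P}=\kappa|_{\mathcal P}$ and $L^\ddag|_{J(\Z^r)}=\lambda$ hold by construction, the latter because $L^\sharp$ vanishes on $K_1$.

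The main obstacle is the construction of $L^\sharp$ with matrix size linear in $K$: any genuine $*$-homomorphism $\phi:C(X)\to M_n(C(\Omega))$ induces the zero map on the ranks of augmentation classes (since $\rho(\phi_*([p_i])-\phi_*([q_i]))=(k_i-k_i)\cdot\mathrm{rank}\,\phi(1)=0$), so realizing $\kappa^\sharp$ demands a genuine $\delta$-multiplicative ``almost homomorphism'' that exploits Bott-type classes to produce the prescribed rank defects, while keeping the total matrix size controlled by $N_2 K$ for a constant $N_2$ depending only on the source data $(X,{\mathcal P})$, and with careful sign bookkeeping for the negative $r_{i,j}$.
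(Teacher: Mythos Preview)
Your decomposition strategy is exactly the one the paper uses: split $\kappa$ along the base-point exact sequence of $\Omega$, handle the $C_0(\Omega^\circ)$-part with Lemma~\ref{Ad0k1}, and handle the scalar part by a map into a matrix algebra of size $O(K)$. So structurally you are on the right track.

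There is, however, a genuine gap. You correctly identify that no honest $*$-homomorphism $C(X)\to M_n$ can realize nonzero values of $\rho_{C(\Omega)}(\kappa(g_i))$, and then you stop, calling this ``the main obstacle'' and gesturing at a ``Bott-type construction analogous to \cite{EG}''. This is precisely the nontrivial step, and you have not carried it out. The paper does not do it from scratch either: it invokes Lemma~10.2 of \cite{LnApp}, which gives, for any $\kappa_0\in KK(C_0(Y),\C)$, a unital $\delta/2$-${\cal G}$-multiplicative contractive completely positive map $L_1:C(X)\to M_{N_2K}$ with $[L_1]|_{\cal P}=\kappa_0|_{\cal P}$, where $N_2$ depends only on $X$, $\delta$, ${\cal G}$, ${\cal P}$. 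That cited lemma is what makes the matrix size linear in $K$; without it (or an equivalent construction you actually write down), your $L^\sharp$ does not exist.

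There is also a bookkeeping error in the $\lambda$-part. You assert that $L^{\sharp\ddag}$ vanishes on $J(\Z^r)$ because ``$L^\sharp$ vanishes on $K_1$''. At the level of $K_1$ that is fine, but $L^{\sharp\ddag}$ is a map into $U/CU$, not into $K_1$; a unitary in $M_{N_2K}$ embedded as a constant into $M_{N_2K}(C(\Omega))$ has determinant a nonzero constant on $\T$, hence typically a nontrivial element of $\Aff(T)/\overline{\rho(K_0)}$. The paper handles this by first building $L_1$ (your $L^\sharp$), then feeding the \emph{corrected} target $\lambda_1=\Lambda^{-1}\circ(\lambda-\Lambda'\circ L_1^{\ddag}|_{J(\Z^r)})$ into Lemma~\ref{Ad0k1} when constructing $L_2$ (your $L^\flat$). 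You need to do the same: construct $L^\sharp$ first, subtract its $\ddag$-contribution from $\lambda$, and only then apply Lemma~\ref{Ad0k1}.
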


\begin{proof}
%There is a connected finite CW complex $Z$ with ${\rm dim}Z=3$ such that
%$K_*(C(Z))=K_*(C(X)).$ Therefore there is an invertible
%element $\zeta\in KK(C(X), C(Z))$ such that $\zeta|_{K_*(C(X))}$ is the identity.
%There is an integer $N_0(\dt, {\cal G}, {\cal P})\ge 1$ such that
%for any projection $P_0\in M_r(C(Z))$ with rank at least $N_0(\dt, {\cal G}, {\cal P})$ there exists a unital \morp\,
%$\Phi: C(X)\to P_0M_r(C(Z))P_0$ such that
%$$
%[\Phi]|_{\cal P}=\zeta|_{\cal P}.
%$$

%Let $\af_1\in Hom(K_*(C_0(Y), K_*(C(\Omega))))$ such that
%$\af_1|_{K_0(C(Y))}=0$ and $\af_1|_{K_1(C_0(Y))}=\kappa|_{K_1(C_0(Y))}.$
%Let $N_1(\dt/4, {\cal G}, {\cal P})\ge 1$ be the integer given
%by \ref{Ad0k1}.
It suffices to show the case that $\Omega$ is connected.
 So in what follows we assume that $\Omega$ is a connected finite CW complex. Let $\omega\in \Omega$ be a point and let $\Omega_0=\Omega\setminus \{\omega\}.$
There is a splitting exact sequence
\beq\label{Ad-1-10}
0\to C_0(\Omega_0)\to C(\Omega)\to \C\to 0.
\eneq
%with splitting map $r.$
Thus
$$
KK(C_0(Y), C(\Omega))=KK(C_0(Y), \C)\oplus KK(C_0(Y), C_0(\Omega_0)).
$$
Write $\kappa=\kappa_0\oplus \kappa_1,$ where
$\kappa_0\in KK(C_0(Y), \C)$ and $\kappa_1\in KK(C_0(Y), C_0(\Omega_0)).$

Let $N_1(\dt/2, {\cal G}, {\cal P})\ge 1$ be given by
\ref{Ad0k1} for $X,$ $\dt/2,$ ${\cal G}$ and ${\cal P}.$ Let
$N_2=N_2(\dt/2, {\cal G}, {\cal P})$ be the integer given by
 Lemma 10.2 of \cite{LnApp} for $X,$ $\dt/2,$ ${\cal G}$ and
 ${\cal P}.$
 %that there is an integer $N_1(\dt/4, {\cal G}, {\cal P})\ge 1$  satisfying the following:
%Let $N_2(\dt/4, {\cal G}, {\cal P})$ be the integer given by
There exists, by Lemma 10.2 of \cite{LnApp},  a unital $\dt/2$-${\cal G}$-multiplicative \morp\, $L_1: C(X)\to M_{N_2K}$ such that
\beq\label{Ad1-2}
[L_1](g_i)=\rho_{C(\Omega)}(\kappa_0(g_i))\in \Z,\,\,\,[L_1]|_{\cal P}=\kappa_0|_{\cal P}.
\eneq
%and
%\beq\label{Ad1-3}
%{R_1\over{\max\{K,1\}}}\le N_2(\dt/2, {\cal G}, {\cal P}).
%\eneq

Let $\imath: M_{N_2K}\to M_{N_2K}(C(\Omega))$ be a unital embedding.
%Let
%\beq\label{Ad1-3+1}
%N(\dt, {\cal G}, {\cal P})=[N_2(\dt/2, {\cal G}, {\cal P})/({\rm dim}Y+1)]+1+3N_1(\dt/2, {\cal G},{\cal P}).
%\eneq
Put $N_1=3N_1(\dt/2, {\cal G}, {\cal P})+1$ and
put $R_1=N_1({\rm dim}Y+1).$
Let $P\in M_{\infty}(C(\Omega))$ be a projection
whose rank is at least $N_2K+R_1.$
Let $Q\le P$ be a trivial projection of rank $N_2K.$
Let $P_1=P-Q.$  $P_1$ is  a projection with at least rank $R_1.$
Note that the embedding (for some large $R\ge 1$)
$$
\gamma_1: U(P_1M_R(C(\Omega))P_1)/CU(P_1M_R(C(\Omega))P_1)
\to U(PM_R(C(\Omega))P)/CU(PM_R(C(\Omega))P)
$$ is an isomorphism.
Let
$$\gamma_2: U(M_{N_2K}(C(\Omega)))/CU(M_{N_2K}(C(\Omega)))
\to U(PM_R(C(\Omega))P)/CU(PM_R(C(\Omega))P).$$
be the \hm\, defined by $u\to P-Q+u$ for unitaries $u\in QM_R(C(\Omega))Q=M_{N_2K}(C(\Omega)).$
Define $\lambda_1=\gamma_1^{-1}\circ \lambda-\gamma_2\circ \imath\circ L_1^{\ddag}|_{J(\Z^r)}.$
It follows from \ref{Ad0k1} that there is a
$\dt/2$-${\cal G}$-multiplicative \morp\,
$L_2: C(X)\to P_1M_R(C(\Omega))P_1$ such that
\beq\label{Ad1-11}
[L_2]|_{\cal P}=\kappa_1|_{\cal P}\andeqn
L_2^{\ddag}|_{J(\Z^r)}=\lambda_1.
\eneq
We then define
$$
L=L_1+L_2.
$$

It is ready to verify that $L,$ $N_1$ and $N_2$
meet all the requirements.

\end{proof}

The following is a variation of a result of L. Li of \cite{Li}.

\begin{lem}\label{Ad2}
Let $X$ be  a path connected compact metric space, let $\ep>0$  and let ${\cal F}\subset C(X)_{s.a.}$ be a finite subset, there exist a unital \hm\,
$\phi_1: C(X)\to C([0,1])$ and
an integer $N\ge 1$ satisfying the following, if $P\in M_r(C(Y))$ is a projection with
\beq\label{Ad2-0}
{\rm rank} P(y)\ge N ({\rm dim}Y+1)\tforal y\in Y
\eneq
and $\lambda: \Aff(T(C(X)))\to \Aff(T(PM_r(C(Y))P))$ is a  unital positive linear map, where $Y$ is a compact metric space,
then there is  unital \hm\,
$\phi_2: C([0,1])\to PM_r(C(Y))P$ such that
\beq\label{Ad2-1}
|\tau\circ \phi_2\circ \phi_1(f)-\lambda(f)(\tau)|<\ep
\tforal f\in {\cal F}
\eneq
and for all $ \tau\in T(PM_r(C(Y))P).$

\end{lem}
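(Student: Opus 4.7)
The plan is to reduce the problem to a projection-partition in $PM_r(C(Y))P$ via a standard path-factorization trick. I first choose $\delta>0$ so that $|f(x)-f(x')|<\ep/8$ whenever $d(x,x')<\delta$ for all $f\in{\cal F}$, and pick $\delta/2$-dense points $x_1,\ldots,x_K\in X$. I fix a continuous partition of unity $\{f_j\}_{j=1}^K$ on $X$ with ${\rm supp}(f_j)\subseteq B(x_j,\delta)$. Since $X$ is path connected, I choose a continuous path $\gamma:[0,1]\to X$ hitting each $x_j$ at parameter $s_j\in[0,1]$, and set $\phi_1(f):=f\circ\gamma$. These data depend only on $X$, $\ep$, ${\cal F}$, and I take $N$ of order $8K/\ep$.

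Given $Y$, $P$, and $\lambda$, I pass to trace data: for each $y\in Y$ the normalized fibre trace $\tau_y(a):=\Tr(a(y))/{\rm rank}\,P(y)$ is a tracial state of $A:=PM_r(C(Y))P$, depending continuously on $y$ within each clopen subset of $Y$ where ${\rm rank}\,P$ is constant. The functions $c_j(y):=\lambda(f_j)(\tau_y)\ge 0$ are then continuous and satisfy $\sum_j c_j\equiv\lambda(1)(\tau_y)=1$. The key reduction is to construct mutually orthogonal projections $p_1,\ldots,p_K\le P$ with $\sum_j p_j=P$ and
$$
\bigl|\tau_y(p_j)-c_j(y)\bigr|<\frac{\ep}{8K}\qquad\text{for all }y\in Y,\ 1\le j\le K;
$$
given such $p_j$, the map $\phi_2(g):=\sum_j g(s_j)\,p_j$ is visibly a unital $*$-homomorphism from $C([0,1])$ into $A$.

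The projection construction is the main obstacle and is where the hypothesis ${\rm rank}\,P(y)\ge N(\dim Y+1)$ is essential. I take a finite closed cover $\{Z_i\}$ of $Y$ of mesh small enough that each $c_j$ varies by less than $1/(16KN)$ on $Z_i$ and on which $P|_{Z_i}$ is unitarily equivalent to a trivial projection of constant rank (such refinements exist via Lemma \ref{trivialpro}). On each $Z_i$ I round ${\rm rank}\,P\cdot c_j$ to the nearest integer and realize those ranks by trivial subprojections of $P|_{Z_i}$; these are then glued across overlaps by exploiting that the rank of $P$ exceeds $\dim Y+1$ by a factor of $N$, so that integer-rounding defects and boundary-patching corrections are absorbed. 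This is in the same spirit as the argument underlying Lemma 3.3 of \cite{DNNP} and Lemma \ref{rank1L}.

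Finally, for $f\in{\cal F}$ one has $\phi_2\circ\phi_1(f)=\sum_j f(x_j)\,p_j$, so $\tau_y(\phi_2\phi_1(f))=\sum_j f(x_j)\tau_y(p_j)$. Writing $\lambda(f)=\sum_j\lambda(ff_j)$ and using $|f-f(x_j)|<\ep/8$ on ${\rm supp}(f_j)$ together with positivity of $\lambda$ and $\tau_y$, I obtain $|\lambda(f)(\tau_y)-\sum_j f(x_j)c_j(y)|<\ep/8$; combined with the projection estimate this gives $|\tau_y(\phi_2\phi_1(f))-\lambda(f)(\tau_y)|<\ep/4$. Since every tracial state of $A$ is an integral of the fibre traces $\tau_y$, the bound extends to all $\tau\in T(A)$, completing the proof modulo the projection-partition construction.
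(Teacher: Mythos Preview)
Your projection-partition step contains a genuine gap that cannot be repaired within the framework you set up. The homomorphism $\phi_2(g)=\sum_j g(s_j)\,p_j$ factors through $\C^K$, so its trace at the fibre $\tau_y$ is $\sum_j g(s_j)\,{\rm rank}\,p_j(y)/{\rm rank}\,P(y)$. Since ${\rm rank}\,p_j$ is an integer-valued continuous function on $Y$, it is locally constant; in particular it is constant on every connected component of $Y$. But $c_j(y)=\lambda(f_j)(\tau_y)$ need not be locally constant: take $X=Y=[0,1]$, $P=1_{M_R(C(Y))}$, and let $\lambda$ be the identity map on $\Aff(T(C([0,1])))\cong C_{\R}([0,1])$. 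Then $c_j(y)=f_j(y)$, which ranges continuously from $0$ to $1$ for the boundary bump functions in your partition of unity. No constant value of $\tau_y(p_j)$ can approximate this within $\ep/8K$ for all $y$. Your proposed ``gluing across overlaps'' does not help, because projections of different ranks over adjacent closed pieces cannot be patched into a single continuous projection---rank is a homotopy invariant, and the defects are not of the kind that dimension-theory arguments such as those in \cite{DNNP} absorb.

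The paper avoids this obstruction by quoting Corollary~2.6 of \cite{Li}, which produces a unital \hm\ $\phi_2:C([0,1])\to PM_r(C(Y))P$ whose single self-adjoint generator has eigenvalue functions varying continuously with $y$; this is exactly what is needed to track continuously varying trace data. The map $\phi_1$ and the positive map $\gamma$ are obtained from Lemma~2.9 of \cite{Li}. Your choice of $\phi_1=f\circ\gamma$ via a space-filling-type path is essentially the same as the paper's, but the construction of $\phi_2$ requires the non-finite-dimensional homomorphism supplied by Li's result, not a point-evaluation map.
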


\begin{proof}

It follows from Lemma 2.9 of \cite{Li} that there exist a continuous map
$\af: [0,1]\to X$ and a unital positive linear map $\gamma: C([0,1])\to C(X)$ such that
\beq\label{Ad2-2}
|\tau(\gamma(f\circ \af))-\tau(f)|<\ep/2\tforal f\in {\cal F}
\eneq
and for all $\tau\in T(C(X)).$ Define $\phi_1:C(X)\to C([0,1])$ by
$\phi_1(f)=f\circ \af$ for all $f\in C(X).$
Let $N$ be given by Corollary 2.6  of \cite{Li} for $X=[0,1],$ $\ep/2$
and for finite subset ${\cal G}=\{f\circ \af: f\in {\cal F}\}.$
We will identify $C([0,1])_{s.a.}$ with $\Aff(T(C([0,1])))$ and
$C(X)_{s.a.}$ with $\Aff(T(C(X))).$
By applying Corollary 2.6 of \cite{Li}, one obtains a unital \hm\, $\phi_2: C([0,1])\to PM_r(C(X))P$ such that
\beq\label{Ad2-3}
|\tau\circ \phi_2(g)-\lambda\circ \gamma(g)(\tau)|<\ep/2\tforal g\in {\cal G}
\eneq
and all $\tau\in T(PM_r(C(Y))P).$
Then
\beq\label{Ad2-4}
|\tau(\phi_2\circ \phi_1(f))-\lambda(f)(\tau)|\le
|\tau\circ \phi_2(f\circ \af)-\lambda\circ \gamma(f\circ \af)(\tau)|\\
+|\lambda\circ \gamma(f\circ \af)(\tau)-\lambda(f)(\tau)|\\
<\ep/2+\ep/2=\ep
\eneq
for all $f\in {\cal F}$ and for all $\tau\in T(PM_r(C(Y))P).$

\end{proof}

\begin{lem}\label{Almtorfree}
Let  $1>\ep>0,$ $Y$ be a finite CW complex, $r\ge 1$ be an integer and $C=PM_m(C(Y))P$
for some projection
$P\in M_m(C(Y))$ such that
${\rm rank}\, P(y)\ge (6\pi /\ep) ({\rm dim }Y+1)$ and $m\ge {\rm rank} P(y)$  for all $y\in Y.$
Suppose that $u\in U(M_r(C))$ with $[u]=0$ in $K_1(C)$
such that
$u^k\in CU(M_r(C))$ for some integer $k\ge 1,$ then
$$
{\rm dist}(\overline{u}, \overline{1_{M_r(C)}})<\ep/r\,\,\,\,{\rm in (} U(M_r(C))/CU(M_r(C)) {\rm )}.
$$
\end{lem}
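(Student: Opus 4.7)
The plan is to modify $u$ by a small central scalar unitary $\alpha$ so that $v:=\alpha u$ lies in $SU_r(C)\cap U_0(M_r(C))$, and then invoke Proposition \ref{SUCU} to conclude $v\in CU(M_r(C))$; the required distance bound will then follow at once from $\|u-v\|=\|1-\alpha\|$.

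First I would observe that since $u^k\in CU(M_r(C))$ and every commutator has pointwise determinant $1$, one has $\det(u(y))^k=1$ for every $y\in Y$, so $y\mapsto\det(u(y))$ is a continuous map from $Y$ into the discrete set of $k$-th roots of unity, hence locally constant. On each connected component of $Y$ I would choose the representative $\tilde l(y)\in\Z$ of minimal absolute value satisfying $\det(u(y))=e^{2\pi i\tilde l(y)/k}$; this produces a locally constant integer-valued function $\tilde l:Y\to\Z$ with $|\tilde l(y)|\le k/2$. Setting
$$\alpha(y)=\exp\bigl(-2\pi i\tilde l(y)/(kr\cdot{\rm rank}\,P(y))\bigr)$$
(which is continuous because $\tilde l$ and ${\rm rank}\,P$ are both locally constant) and viewing $\alpha$ as a central unitary in $M_r(C)$, one computes
$$\det(\alpha(y)u(y))=\alpha(y)^{r\cdot{\rm rank}\,P(y)}\det(u(y))=e^{-2\pi i\tilde l(y)/k}e^{2\pi i\tilde l(y)/k}=1,$$
so $v=\alpha u\in SU_r(C)$.

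Next, since ${\rm rank}\,P(y)\ge(6\pi/\ep)(\dim Y+1)\ge\dim Y+1$, the fiber dimension $r\cdot{\rm rank}\,P(y)$ of $M_r(C)$ is at least $\dim Y+1$, which is the classical stability range giving $U(M_r(C))/U_0(M_r(C))\cong K_1(C)$. The hypothesis $[u]=0$ therefore forces $u\in U_0(M_r(C))$; $\alpha$ lies in $U_0$ by the norm estimate below; so $v=\alpha u\in U_0(M_r(C))$. Proposition \ref{SUCU} applied to $v\in SU_r(C)\cap U_0(M_r(C))$ then yields $v\in CU(M_r(C))$. Finally, since $u$ is unitary,
$$\|u-v\|=\|1-\alpha\|=\sup_{y\in Y}\bigl|1-e^{-2\pi i\tilde l(y)/(kr\cdot{\rm rank}\,P(y))}\bigr|\le\frac{\pi}{r\cdot\min_y{\rm rank}\,P(y)}\le\frac{\ep}{6r(\dim Y+1)}<\frac{\ep}{r},$$
using $|1-e^{i\theta}|\le|\theta|$ and $|\tilde l(y)|\le k/2$; hence $\di(\bar u,\bar 1)<\ep/r$.

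The argument is essentially forced once one recognizes that the only thing preventing $u$ from lying in $CU(M_r(C))$ is a pointwise determinant twist by a $k$-th root of unity, which the hypothesis allows us to absorb by a central unitary whose norm distance to $1$ scales inversely with the fiber dimension $r\cdot{\rm rank}\,P(y)$. The main ingredient that has to be cited rather than computed is the $K_1$-stability passage $[u]=0\Rightarrow u\in U_0(M_r(C))$; beyond that the estimate is a direct geometric calculation, and the factor $1/r$ we need comes naturally from the determinant being taken in $M_{r\cdot{\rm rank}\,P(y)}$.
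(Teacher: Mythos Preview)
Your proof is correct and follows essentially the same strategy as the paper's: correct the pointwise determinant of $u$ by a small central scalar unitary so that the product lands in $SU_r(C)\cap U_0(M_r(C))$, then invoke Proposition~\ref{SUCU} and read off the distance bound from $\|1-\alpha\|$. Your version is in fact slightly more direct, since you extract the $k$-th root of unity $\det(u(y))$ straight from $u^k\in CU(M_r(C))$ rather than first writing $u$ as a product of exponentials as the paper does.
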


\begin{proof}
Since ${\rm rank}P\ge  6\pi ({\rm dim} Y+1),$ $u\in U_0(M_r(C))$ (see \cite{Rff}).
Write $u=\prod_{j=1}^s \exp(\sqrt{-1} h_j),$ where $h_j\in M_r(C)_{s.a.},$ $j=1,2,...,s.$
Since $u^k\in CU(M_r(C)),$
$$
{\rm det}(u^k(y))=1\tforal y\in Y.
$$
It follows that
\beq\label{almtorfree-1}
({1\over{2\pi \sqrt{-1}}})\sum_{j=1}^s Tr(h_j)(y)=I/k\,\,\, {\rm for}\,\,\,y\in Y,
\eneq
where $I(y)$ is an integer for all $y\in Y.$
Note that $I/k$ is a continuous function on $Y.$
Let $Y=Y_1\sqcup Y_2\sqcup\cdots \sqcup Y_{l},$ where each $Y_i$ is connected, $i=1,2,...,l.$
It follows that $I(y)/k$ is a constant integer on each $Y_i,$ $i=1,2,...,l.$
For each $i,$ define $f_i$ to be a number in  $[-\pi, \pi]$ so that
$\exp(\sqrt{-1} f_i)=\exp(\sqrt{-1} l(y)\pi /k)$ for $y\in Y.$
Define
\beq\label{almtorfree-2}
v(y)=\exp(-\sqrt{-1}f_i/r{\rm rank }P(y))\tforal y\in Y.
\eneq
It is  a unitary in $U_0(M_r(C)).$
Then
\beq\label{almtorfree-3}
\|v-1_{M_r(C)}\|<  \ep/r.
\eneq
On the other hand,
\beq\label{almtorfree-4}
{\rm det}((vu(y)))=1\tforal y\in Y.
\eneq
Therefore, by \ref{SUCU},  $vu\in SU_r(C)\cap U_0(M_r(C))\subset CU(M_r(C)).$
It follows from this and (\ref{almtorfree-3}) that
\beq\label{almtorfree-5}
{\rm dist}({\overline{u}}, \overline{1_{M_r(C)}})\le {\rm dist}(\overline{v},{\overline{1_{M_r(C)}}})<\ep/r.
\eneq
\end{proof}

\begin{thm}\label{Ext1}
Let $X=X_1\sqcup X_2\sqcup\cdots \sqcup X_{s}$ be a finite CW complex with dimension $d\ge 0,$
where each $X_i$ is a connected finite CW complex and
 let
 ${\rm ker}\rho_{C(X)}=\Z^k\oplus Tor(K_0(C(X))).$
 Suppose that $\{g_1,g_2,...,g_k\}$ is the standard generators for
 $\Z^k.$

 For any $\ep>0,$ any finite subset ${\cal G}\subset C(X),$ any finite subset
 ${\cal P}\subset \underline{K}(C(X)),$ any finite subset ${\cal H}\subset C(X)_{s.a.},$ any
$\sigma_1, \sigma_2>0,$ any
finite subset of ${\cal U}\subset U(M_{d}(C(X))),$
%and any finite subset ${\cal H}\subset C(X)_+\setminus \{0\}$ any
%faithful tracial state $\tau_0\in  T(C(X)),$  there is an integer
and any integer $S\ge 1,$  there exists an integer $N$ satisfying the following:

For any  finite CW complex $Y,$
any $\kappa\in Hom_{\Lambda}(\underline{K}(C(X)), \underline{K}(C(Y)))$ with $\kappa([1_{C(X_i)}])=[P_i]$ for
some projection $P_i\in M_m(C(Y))$ (and for some integer $m\ge 1$), such that
$P=P_1+P_2+\cdots +P_s\in M_m(C(Y))$ is a projection, and
${\rm rank}P_i(y)\ge \max\{NK , \, N({\rm dim}Y+1)\}$ for all $y\in Y,$
where
$$
K= \max_{1\le i\le k}\{\sup\{|\rho_{C(Y)}(\kappa(g_i))(\tau)|: \tau\in T(C(Y))\}\},
$$
for any continuous \hm\,
\beq\label{Ext1-1}
&&\hspace{-0.6in}\gamma: U(M_{d}(C(X)))/CU(M_{d}(C(X)))\\
&&\to
U(M_{d}(PM_m(C(Y))P))/CU(M_{d}(PM_m(C(Y))P))
\eneq
and
for any continuous affine map $\lambda: T(PM_m(C(Y)P))\to T(C(X))$
such that $\kappa,$ $\gamma$ and $\lambda$  are compatible,
then there exists a unital $\ep$-${\cal G}$-multiplicative
\morp\, $\Phi: C(X)\to PM_m(C(Y))P$ such that
\beq\label{Exit1-1}
[\Phi]|_{\cal P}&=&\kappa|_{\cal P},\\
{\rm dist}(\Phi^{\ddag}(z), \gamma(z))&<&\sigma_1\tforal z\in \overline{{\cal U}}\andeqn\\
|\tau\circ \Phi(a)-\lambda(\tau)(a)|&<&\sigma_2\tforal a\in {\cal H}.
\eneq
If $u_1, u_2,..., u_m\in {\cal U}$ so that $[u_j]\not=0$ in $K_1(C(X))$ and $\{[u_1], [u_2],...,[u_m]\}$ generates a free group, we may require that
\beq\label{Exit1-1++}
\Phi^{\ddag}(\overline{u_j})=\gamma(\overline{u_j}),\,\,\, j=1,2,...,m.
\eneq

Moreover, one may require that
$$
P=Q_0\oplus {\rm diag}(\overbrace{Q_1,Q_1,...,Q_1}^{S_1})\oplus Q_2
$$
 for some integer $S_1\ge S,$  where $Q_0, Q_1$ and $Q_2$  are projections in $PM_m(C(Y))P,$
$Q_0$ is unitarily equivalent to $Q_1,$
and  $\Phi=\Phi_0\oplus \overbrace{\Phi_1\oplus\Phi_1\oplus...\oplus\Phi_1}^{S}\oplus
\Phi_2,$
 $\Phi_0: C(X)\to Q_0M_m(C(Y))Q_0,$
$\Phi_1=\psi_1\circ h,$ $\psi_1: C(J)\to Q_1M_m(C(Y))Q_1$ is a unital \hm, $h: C(X)\to C(J)$ is a unital \hm, $\Phi_2=\psi_2\circ h$  and
$\psi_2: C(J)\to Q_2M_m(C(Y))Q_2$ is a unital \hm, where
$J$ is a disjoint union of $s$ many unit intervals.

\end{thm}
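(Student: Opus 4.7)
The plan is to construct $\Phi$ as an orthogonal direct sum $\Phi=\Phi_0\oplus\overbrace{\Phi_1\oplus\cdots\oplus\Phi_1}^{S}\oplus\Phi_2$ matching the decomposition required in the conclusion. The first summand $\Phi_0$ will carry essentially all of the $K$-theoretic and $U/CU$-data prescribed by $\kappa$ and $\gamma$, while $\Phi_1$ and $\Phi_2$ will factor through a commutative algebra $C(J)$, with $J$ a disjoint union of $s$ intervals (one per component $X_i$), and serve to pin the tracial output to $\lambda$.

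First, choose the constant $N$ large enough so that ${\rm rank}\,P_i(y)$ exceeds $N_1\cdot SK+N_2(\dim Y+1)+(6\pi/\sigma_1)(\dim Y+1)$, where $N_1,N_2$ are the integers produced in the invocation of Lemma \ref{Ad1} for the data $(X,\ep/2,{\cal G},{\cal P})$. Using strict comparison in $PM_m(C(Y))P$ together with the standard fact that projections of sufficiently large rank on a finite CW complex admit trivial subprojections of any smaller prescribed rank, split each $P_i$ as $Q_{0,i}\oplus Q_{1,i}^{\oplus S_1}\oplus Q_{2,i}$ with $Q_{0,i}$ unitarily equivalent to $Q_{1,i}$, with $[Q_{0,i}]$ equal to $\kappa([1_{C(X_i)}])$, and with $Q_{2,i}$ trivial of small rank; set $Q_j=\bigoplus_i Q_{j,i}$ and $S_1\geq S$. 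Since $C(J)$ has vanishing $K_1$ and no torsion in its $K$-theory, the total $K$-theoretic contribution of $\Phi_1^{\oplus S}\oplus\Phi_2$ to $\kappa$ is determined entirely by the ranks of $Q_1^{\oplus S}\oplus Q_2$ at each point. Subtracting it off yields a residual $\kappa_0$ with $\kappa_0([1_{C(X_i)}])=[Q_{0,i}]$, and similarly a residual $\gamma_0$ compatible with $\kappa_0$. Apply Lemma \ref{Ad1} to obtain an $(\ep/2,{\cal G})$-multiplicative $\Phi_0:C(X)\to Q_0M_m(C(Y))Q_0$ with $[\Phi_0]|_{\cal P}=\kappa_0|_{\cal P}$ and $\Phi_0^\ddag|_{J(\Z^r)}=\gamma_0|_{J(\Z^r)}$; when $u_1,\ldots,u_m\in{\cal U}$ generate a free subgroup of $K_1$, the final clause of \ref{Ad1} yields the exact matching $\Phi_0^\ddag(\bar u_j)=\gamma_0(\bar u_j)$. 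Next, consider the affine functional $\mu(\tau)=\lambda(\tau)-\tau\circ\Phi_0$ on $T(C(X))$; rescale and apply Lemma \ref{Ad2} componentwise to produce unital \hm s $h_i:C(X_i)\to C([0,1])$ and $\psi_{1,i}$, $\psi_{2,i}$ into the corners $Q_{1,i}M_m(C(Y))Q_{1,i}$ and $Q_{2,i}M_m(C(Y))Q_{2,i}$, and set $\Phi_1=\psi_1\circ h$, $\Phi_2=\psi_2\circ h$ so that $\tau\circ\Phi$ approximates $\lambda(\tau)$ on $\cal H$ to within $\sigma_2$.

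The last and chief step is to verify ${\rm dist}(\Phi^\ddag(\bar z),\gamma(\bar z))<\sigma_1$ for all $z\in{\cal U}$. Using Proposition \ref{groupsu} and Lemma \ref{notorsion}, decompose the $K_1$-classes of ${\cal U}$ into a free part sitting in $J(\Z^r)$ and a torsion-plus-$F_3$-free part sitting in $SU_d(C(X))/CU(M_d(C(X)))$. On the free $J(\Z^r)$-part, $\Phi_0^\ddag$ matches $\gamma_0$ by Step 2, and the small $\Aff(T)/\overline{\rho}$-shift coming from $\Phi_1^{\oplus S}\oplus\Phi_2$ is controlled by Step 3 together with Thomsen's splitting (\ref{HS-4n}). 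For $z\in{\cal U}$ whose $K_1$-class lies in the torsion or $F_3$-free part, Corollary \ref{CSU} combined with Lemma \ref{Almtorfree}, applied to the corner $(Q_1^{\oplus S}\oplus Q_2)M_m(C(Y))(Q_1^{\oplus S}\oplus Q_2)$ whose rank is at least $(6\pi/\sigma_1)(\dim Y+1)$ by our choice of $N$, gives that the ${}^\ddag$-contribution of $\Phi_1^{\oplus S}\oplus\Phi_2$ at such $z$ is within $\sigma_1/2$ of $\bar 1$; the remaining discrepancy is absorbed via the compatibility of $\kappa$, $\gamma$ and $\lambda$. The main obstacle is precisely this balancing: the trace-adjusting pieces have trivial $K_1$, so their ${}^\ddag$-values on torsion elements are not literally trivial but only close to trivial, and only Lemma \ref{Almtorfree} together with the large rank forced by the choice of $N$ makes the error fit into the prescribed $\sigma_1$. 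All other steps are applications of the existence results already established earlier in this section.
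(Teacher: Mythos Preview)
Your overall architecture is the same as the paper's: split $\Phi$ into a small ``$K$-theoretic'' piece $\Phi_0$ on $Q_0$ and a large trace-matching piece $\Psi=\Phi_1^{\oplus S_1}\oplus\Phi_2$ factoring through $C(J)$, then combine. But you have the order of construction backwards, and this is a genuine gap, not a cosmetic difference.

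In the paper the trace-matching piece $\Psi$ is built \emph{first}, via \ref{Ad2} applied to the full $\lambda$ (not to any residual). Only then does one form $\kappa_1=\kappa-[\Psi]$ and, crucially, $\gamma_1=\Lambda^{-1}\circ(\gamma-\Lambda'\circ\Psi^{\ddag})$, and apply \ref{Ad1} on $Q_0$ with this $\gamma_1$. The exact matching $\Psi_1^{\ddag}|_{J(\Z^{k_1})}=\gamma_1|_{J(\Z^{k_1})}$ then yields $\Phi^{\ddag}=\Psi_1^{\ddag}\cdot\Psi^{\ddag}=\gamma$ on the free part. In your order you apply \ref{Ad1} first with some $\gamma_0$ defined \emph{before} $\Psi$ exists, and then claim the ``$\Aff(T)/\overline{\rho}$-shift coming from $\Phi_1^{\oplus S}\oplus\Phi_2$ is controlled by Step 3''. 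It is not: for a unitary $u$ with $[u]\ne 0$, $\Psi(u)$ lands in $U_0$ of the corner and its de la Harpe--Skandalis determinant is governed by the specific spectral data of $h(u)$ and $\psi_1,\psi_2$, not by the values $\tau(\Psi(a))$ on the fixed finite set ${\cal H}$. There is no reason $\Psi^{\ddag}(\bar u)$ should be within $\sigma_1$ of $\bar 1$, so both the approximate ${}^{\ddag}$-estimate and the exact matching (\ref{Exit1-1++}) fail in your scheme. Similarly, your ``affine functional $\mu(\tau)=\lambda(\tau)-\tau\circ\Phi_0$'' is not a unital positive linear map and is not a legitimate input to \ref{Ad2}; the paper avoids this by feeding $\lambda$ itself into \ref{Ad2} and using the smallness of ${\rm rank}\,Q_0/{\rm rank}\,P$ (see (\ref{Ext1-4})) to absorb whatever $\Psi_1$ does on traces.

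There is also a literal error in your decomposition: you require $[Q_{0,i}]=\kappa([1_{C(X_i)}])=[P_i]$, which forces $Q_{0,i}=P_i$ and leaves no room for $Q_1^{\oplus S_1}\oplus Q_2$. In the paper $Q_0$ is taken with the \emph{small} rank $2\max\{N_1K,\,N_1(\dim Y+1)\}$, just large enough for \ref{Ad1}, and the remainder $P-Q_0$ is then decomposed into $S_1\ge RS$ copies of $Q_1$ plus $Q_2$. Your treatment of the torsion part via \ref{Almtorfree} is essentially fine, but the paper applies that lemma to $\gamma(z)^*\Phi^{\ddag}(z)$ in the full corner $C=PM_m(C(Y))P$ rather than to the $\Psi$-contribution alone; this works precisely because the free part has already been matched exactly.
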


\begin{proof}
Without loss of generality, we may assume that
${\cal G}$ and ${\cal H}$ are in the unit ball of $C(X)$ and may
assume that ${\cal H}\subset {\cal G}.$ Moreover, to simplify notation, without loss of generality,
we may also assume that $X$ is connected.
Write $K_1(C(X))=\Z^{k_1}\oplus Tor(K_1(C(X))).$
Furthermore, we may assume  that
${\cal U}={\cal U}_0\sqcup {\cal U}_1\sqcup {\cal U}_2,$
where $\overline{{\cal U}_0}\subset \Aff(T(C(X)))/{\overline{\rho_{C(X)}(K_0(C(X)))}},$ $\overline{{\cal U}_1}\subset J_c(\Z^{k_1})$ and $\overline{{\cal U}_2}\subset J_c(Tor(K_1(C(X)))).$
Let ${\cal H}_0\subset C(X)_{s.a.}$ such that
$$
\overline{{\cal U}_0}\subset \overline{{\cal H}_0}.
$$
Put ${\cal H}_1={\cal H}\cup {\cal H}_0.$ Let $0<\dt<\ep.$
We may also assume that
$(2\dt, {\cal G}, {\cal P})$ is a $KK$-triple.
%and $(2\dt, {\cal G},{\cal U})$ is a ${\cal U}$-triple.
%We also assume that $(2\dt, {\cal G})$ is  a $(\sigma_1/2, \overline{{\cal U}_0})$ pair.
Choose an integer $R\ge 1$ such that
\beq\label{Ext1-2}
{1\over{RS}}<\min\{\sigma_1/8\pi, \sigma_2/8\pi\}.
\eneq
Let $N_0$ (in place of $N$) be the integer given
by \ref{Ad2} for $\ep/4S$ (in place of $\ep$) and
${\cal H}_1$ (in place of ${\cal F}$).
Let $N_1$ and $N_2$  be  integers given by \ref{Ad1}
(for $\dt=\ep/2$).
Put $$N_1'=\max\{N_0, 2N_1,  2N_2,48\pi/\sigma_1\}.$$
Define
$$
N=2(N_1'+1)(2RS+1).
$$
Let $\kappa\in KK(C(X), C(Y))$ such that
$\kappa([1_{C(X)}])=P$ for some projection
$P\in M_m(C(Y))$  (for some large $m\ge 1$)
with ${\rm rank}P(y)\ge \max\{NK, N({\rm dim}Y+1)\},$
where
$$
K=\max_{1\le i\le k}\sup\{|\rho_{C(Y)}(\kappa(g_i))(\tau)|: \tau\in T(C(Y))\}.
$$

To simplify the proof, by considering each connected component separately,
we may assume that $Y$ is connected.
Let $Q_0\in PM_m(C(Y))P$ be a projection
with
$${\rm rank}Q_0=2\max\{N_1K, N_1({\rm dim}Y+1)\}\ge N_1(K+{\rm dim} Y+1).$$
This is possible because that ${\rm rank} P\ge \max\{NK, N({\rm dim}Y+1)\}.$
Note that $P-Q$ has rank
larger than
$
2N_1'RS({\rm dim}Y+1).
$
Let $P_1=P-Q_0.$
There is $R_1<N_1'({\rm dim}Y+1)$ such that
\beq\label{Ext1-3}
{\rm rank}P_1-({\rm dim} Y+1)=S_1N_1'({\rm dim}Y+1)+R_1
\eneq
for some integer $S_1\ge RS.$
Then there is a projection $P_1'\le P_1$ such that $P_1'$ is unitarily equivalent to
$$
{\rm diag}(\overbrace{Q_0,Q_0,...,Q_0}^{S_1}).
$$
Write
$$
P_1=\overbrace{(Q_1, Q_1,...,Q_1)}^{S_1}\oplus Q_2,
$$
where $Q_1$ is unitarily equivalent to $Q_0$ and $Q_2\lesssim Q_1,$
where  $Q_2$ has rank $R_1+{\rm dim}Y+1.$
Note that
\beq\label{Ext1-4}
{{\rm rank}Q_0+{\rm rank}(Q_2)\over{{\rm rank}P}}<\min\{\sigma_1/4, \sigma_2/4\}.
\eneq
For any integer $r\ge 1,$ let $\Gamma: T(M_r(C(Y))\to T(PM_m(C(Y))P)$ be the map defined by
\beq\label{Ext1-5}
\Gamma(\tau)(a)=\int_Y tr(a)d\mu_{\tau}
\eneq
for all $\tau\in T(C(Y))$ and all $a\in PM_m(C(Y))P,$ where
$\mu_\tau$ is the probability measure induced by $\tau$ and
where $tr$ is the normalized trace on $M_{{\rm rank}P}.$
Since the rank $Q_1$ is at least
$N_0({\rm dim}Y+1),$ it follows from \ref{Ad2} that there are unital \hm s
$h: C(X)\to C([0,1]),$ $\psi_1: C([0,1])\to Q_1M_m(C(Y))Q_1$ and $\psi_2: C([0,1])\to Q_2M_m(C(Y))Q_2$ such that
\beq\label{Ext1-6}
|\tau\circ \psi_1\circ h(a)-\lambda\circ \Gamma(\tau)(a)|<\min\{\sigma_1, \sigma_2\}/2S
\eneq
for all $a\in {\cal H}_1$ and all $\tau\in T(Q_1M_m(C(Y))Q_1),$ where
$\lambda$ is given by the theorem.
Define $\Phi_1=\psi_1\circ h$ and $\Phi_2=\psi_2\circ h.$
Define
$\Psi: C(X)\to P_1M_m(C(Y))P_1$ by
$\Psi=\overbrace{(\Phi_1, \Phi_1,...,\Phi_1)}^{S_1}\oplus \Phi_2.$
Let $\kappa_1=\kappa-[\Psi].$
Let
$$\Lambda: U(Q_0M_m(C(Y))Q_0)/CU(Q_0M_m(C(Y))Q_0)\to
  U(PM_m(C(Y))P)/CU(PM_m(C(Y))P)
  $$
  be the isomorphism induced  by $u\mapsto (P-Q_0)+u$ for all unitaries
  $u\in U(Q_0M_m(C(Y))Q_0).$ Define
  $\Lambda': U(P_1M_m(C(Y))P_1)/CU(P_1M_m(C(Y))P_1)\to
  U(PM_m(C(Y))P)/CU(PM_m(C(Y))P)$ similarly.
  Define $\gamma_1= \Lambda^{-1}\circ (\gamma-\Lambda'\circ \Psi^{\ddag}).$ Then, by applying \ref{Ad1}, we obtain a unital
  $\ep/2$-${\cal G}$-multiplicative \morp\, $\Psi_1: C(X)
  \to Q_0M_m(C(Y))Q_0$ such that
  \beq\label{Ext1-10}
  [\Psi_1]|_{\cal P}=\kappa_1|_{\cal P}\andeqn
  \Psi_1^{\ddag}|_{J(\Z^{k_1})}=\gamma_1|_{J(\Z^{k_1})}.
  \eneq
Put $C=PM_m(C(Y))P.$
  Define $\Phi=\Psi_1\oplus \Psi.$ It is clear that
  \beq\label{Ext-11}
  [\Phi]|_{\cal P}&=&[\Psi_1]|_{\cal P}+[\Psi]|_{\cal P}\\
  &=&\kappa_1|_{\cal P}+[\Psi]|_{\cal P}=\kappa|_{\cal P}.
   \eneq
It follows from (\ref{Ext1-6}) that
\beq\label{Ext-12}
|\tau\circ \Phi(a)-\lambda(\tau)(a)|<\min\{\sigma_1/2,\sigma_2\}
\eneq
for all $a\in {\cal H}_1$ and $\tau\in T(C).$
It follows from (\ref{Ext1-10}) that
\beq\label{Ext1-12}
\Phi^{\ddag}(z)=\gamma(z)\tforal z\in \overline{{\cal U}_1}.
\eneq
Let $J_C: K_1(C)\to U(C)/CU(C)$ be the \hm\, defined in \ref{DCU}
which splits the following short exact sequence:
\beq\label{Ext1-12+}
0\to \Aff(T(C))/\overline{\rho_{C(Y)}(K_0(C))}
\to U(C)/CU(C)
\to K_1(C)\to 0.
\eneq
(It should be noted that ${\rm rank}P\ge N_1'({\rm dim}Y+1)$.)
%Since $\Aff(T(PM_m(C(Y))P))/\overline{\rho_{C(Y)}(K_0(C(Y)))}$ is torsion free,
%$$\Phi^{\ddag}(z)-J_Y(\Phi^{\ddag}(z))=0$$
%for all $z\in \overline{{\cal U}_2}.$
Let $z\in \overline{{\cal U}_2}$ and let $v_0, v_1\in U(C)$ such that
$\overline{v_0}=\lambda(z)$ and $\overline{v_1}=\Phi^{\ddag}(z).$
Since $\gamma$ is compatible with
$\kappa,$
we have
\beq\label{Ext1-13-1}
v_0^*v_1\in U_0(C)\andeqn
(\lambda(z^*)\Phi^{\ddag}(z))^k=\overline{1_C} \,\,\, {\rm in}\,\,\, U(C)/ CU(C)\,\,\,\rforal z\in \overline{{\cal U}_1}.
\eneq
Since $N_1\ge 48\pi/\sigma_1,$ by \ref{Almtorfree},
\beq\label{Ext1-13}
{\rm dist}(\lambda(z), \Phi^{\ddag}(z))<\sigma_1/2\tforal z\in \overline{{\cal U}_2}.
\eneq
Now for $z\in \overline{{\cal U}_0},$ by (\ref{Ext1-12}),
\beq\label{Ext1-14}
{\rm dist}(\Phi^{\ddag}(z), \gamma(z))<\sigma_1\tforal z\in \overline{{\cal U}_0}.
\eneq
The lemma follows.
\end{proof}

\begin{cor}\label{Cexthom}
In the statement of \ref{Ext1}, let $\xi_i\in X_i$ be a point and $X_i'=X_i\setminus \{\xi_i\},$ $i=1,2,...,s.$
Let $C=PM_m(C(Y))P.$
Suppose that, in addition,  $Y$ is connected, $Y_0=Y\setminus\{y_0\}$ for some point $y_0\in Y,$
\beq\label{Cexth-1}
&&\kappa|_{\underline{K}(C(X_i'))}\in KK(C_0(X_i'), C_0(Y_0)),\\
&&\kappa(F_3K_*(C(X)))\subset F_3K_*(C),\,\,\,\kappa|_{F_mK_*(C(X))}=0\tforal m\ge 4,\\
&&\kappa|_{K_*(C(X), \Z/k\Z)}=0\tforal k\ge 1\andeqn\\
&&\lambda(SU_{d}(C(X))/CU(M_{d}(C(X))))\subset SU_{d}(C)/CU(M_{d}(C)).
\eneq
Then $\Phi$ and $\Phi_0$ can be required to be \hm s.
\end{cor}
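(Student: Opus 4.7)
The plan is to run the proof of Theorem \ref{Ext1} verbatim, changing only the way the first summand of the target decomposition is produced. Recall that in that proof one writes $\Phi=\Psi_1\oplus \Psi$ with $\Psi=\Phi_1^{\oplus S_1}\oplus \Phi_2$, where each $\Phi_j=\psi_j\circ h$ is already a unital \hm\, coming from \ref{Ad2}, while $\Psi_1$ is the $\ep/2$-${\cal G}$-multiplicative piece produced by \ref{Ad1}. Under the additional hypotheses of the corollary I will replace the application of \ref{Ad1} by one of \ref{homhom}, which outputs a genuine unital \hm\, $\Phi_0: C(X)\to Q_0M_m(C(Y))Q_0$. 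The resulting $\Phi=\Phi_0\oplus \Psi$ is then a direct sum of unital \hm s, hence itself a unital \hm, and $\Phi_0$ is a \hm\, as required.

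To set up \ref{homhom}, keep the auxiliary data $\kappa_1=\kappa-[\Psi]$ and $\gamma_1=\Lambda^{-1}\circ(\gamma-\Lambda'\circ\Psi^{\ddag})$ already introduced in the proof of \ref{Ext1}. The key observation is that $\Psi$ factors as $\psi\circ h$ through the contractible algebra $C([0,1])$, and $\underline{K}(C([0,1]))=\underline{K}(\C)$ has $K_0=\Z$, $K_1=0$, $F_mK_*=0$ for every $m\geq 1$, vanishing reduced total K-theory, and vanishing reduced mod-$k$ K-theory. Consequently $[\Psi]$ is trivial on $\underline{K}(C_0(X_i'))$, whence
\[
\kappa_1|_{\underline{K}(C_0(X_i'))}=\kappa|_{\underline{K}(C_0(X_i'))}
\]
inherits from $\kappa$ all four properties of lying in $KK(C_0(X_i'),C_0(Y_0))$, carrying $F_3K_*$ into $F_3K_*$, annihilating $F_mK_*$ for $m\geq 4$, and annihilating the mod-$k$ K-theory for every $k>1$. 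These are precisely the K-theoretic hypotheses imposed by \ref{homhom}.

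On the unitary side, \ref{sucum} applied to the unital \hm\, $\Psi$ gives that $\Psi^{\ddag}$ sends $SU_{d}(C(X))/CU(M_d(C(X)))$ into the corresponding $SU_{d}/CU$-subgroup of the target; combined with the hypothesis that the map $\gamma$ of \ref{Ext1} preserves $SU_d/CU$ (this is the role of the last hypothesis in the corollary), one concludes that $\gamma_1$ also preserves $SU_d/CU$, and the compatibility of $\gamma_1$ with $\kappa_1$ on $K_1$ is inherited from the analogous compatibility of $\gamma$ with $\kappa$. Applying \ref{homhom} then produces the desired unital \hm\, $\Phi_0$, realizing $\kappa_1$ on ${\cal P}$ and matching $\gamma_1$ on the free part of the unitary group. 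The rank demand on $Q_0$ coming from \ref{homhom} is absorbed by enlarging $N$: the rank of $Q_0$ in the proof of \ref{Ext1} is $2\max\{N_1K,\,N_1(\dim Y+1)\}$ and $N_1$ may be taken as large as necessary. None of the quantitative estimates on $[\Phi]|_{\cal P}$, $\dist(\Phi^{\ddag}(z),\gamma(z))$, or the trace conditions need revisiting, since a \hm\, is a fortiori $\ep$-${\cal G}$-multiplicative. I expect the main (mild) obstacle to be precisely the bookkeeping of the previous paragraph: confirming that the extra K-theoretic and $SU_d$ hypotheses on $\kappa,\gamma$ descend to the auxiliary data $\kappa_1,\gamma_1$ after subtracting the contribution of the auxiliary \hm\, $\Psi$.
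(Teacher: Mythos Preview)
Your proposal is correct and follows essentially the same approach as the paper, whose proof reads in full: ``The proof is exactly same but applying \ref{homhom} instead of \ref{Ad0k1}.'' You have unpacked precisely what that substitution entails: the interval-factored piece $\Psi$ contributes nothing on $\underline{K}(C_0(X_i'))$ or to $SU_d/CU$, so the auxiliary data $\kappa_1,\gamma_1$ inherit the extra hypotheses and \ref{homhom} applies in place of the approximate step.
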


\begin{proof}
The proof is exactly same but applying \ref{homhom} instead of \ref{Ad0k1}.

\end{proof}

\begin{cor}\label{Cext1}
Let $\Omega=\Omega_1\sqcup \Omega_2\sqcup\cdots \sqcup\Omega_s$ be a disjoint union of connected
finite CW complexes. Suppose that
 $X=\Omega\times \T$ has dimension $d+1$ and
$X_j=\Omega_j \times \T,$ $j=1,2,...,s.$ Then Theorem \ref{Ext1} holds for this $X$ with $d$ in the statement
replaced by $d+1$ and  with the following additional requirements.
 Suppose  that ${\cal P}={\cal P}_0\sqcup \boldsymbol{\bt}({\cal P}_1),$
where ${\cal P}_0, {\cal P}_1\subset \underline{K}(C(\Omega))$ are finite subsets and suppose ${\cal U}_b\subset J_c({\boldsymbol{\bt}}(F_2K_0(C(\Omega))))\cap \overline{ {\cal U}}$
is a finite subset such that, in addition,
\beq\label{Cext1-1}
\kappa|_{\boldsymbol{\bt}({\cal P}_1)}=0\andeqn
\gamma|_{{\cal U}_b}=0.
\eneq
Then, one can further require that
\beq\label{Cext1-2}
P_0=P_{00}\oplus P_{01}\andeqn
\Phi_0=\Phi_{00}\oplus \Phi_{01},
\eneq
where $P_{00}, P_{0,1}\in M_m(C(Y))$ are projections,
$\Phi_{00}(f\otimes g)=\sum_{j=1}^s f(\xi_j)q_{0,j}\cdot h_j(g)$ for all $f\in C(\Omega)$ and $g\in C(\T),$
$\xi_j\in \Omega_j$ is a point, $q_{0,j}\in M_m(C(Y))$ is a projection with
$\sum_{j=1}^sq_{0,j}=P_{00},$  $h_j: C(\T)\to q_{0,j}M_m(C(Y))q_{0,j}$ is a \hm\, ($j=1,2,...,s$) with
$h_j(z)\in U_0(q_{0,j}M_m(C(Y))q_{0,j})$ ($j=1,2,...,s$),
$\Phi_{01}(f\otimes g)=\sum_{j=1}^s L_j(f)\cdot g(1)q_{1,j}$ for all $f\in C(\Omega)$ and $g\in C(\T),$
$1\in \T$ is the point, $q_{1,j}\in M_m(C(Y))$ is a projection with $\sum_{j=1}^sq_{1,j}=P_{01},$ and
$L_j: C(\Omega)\to q_{1,j}M_m(C(Y))q_{1,j}$ is a unital \morp.
\end{cor}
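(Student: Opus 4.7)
The plan is to apply Theorem \ref{Ext1} (with $d$ replaced by $d+1$) to produce the basic decomposition $\Phi=\Phi_0\oplus\overbrace{\Phi_1\oplus\cdots\oplus\Phi_1}^{S}\oplus\Phi_2$, and then to refine the single summand $\Phi_0$ (landing in $P_0M_m(C(Y))P_0$, where $P_0$ plays the role of $Q_0$ in Theorem \ref{Ext1}) into the required block form $\Phi_{00}\oplus\Phi_{01}$. The pieces $\Phi_1,\Phi_2$ already factor through $C(J)$ and are not affected; only the construction of $\Phi_0$ (called $\Psi_1$ in the proof of Theorem \ref{Ext1}) needs to be adjusted to respect the product structure of $X=\Omega\times\T$.

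The key observation is that under the Künneth-type splitting of $KK(C(\Omega\times\T),C(Y))$, we may decompose $\kappa|_{\ker\rho}$ as $\kappa_\Omega+\kappa_{\boldsymbol{\bt}}$, where $\kappa_\Omega$ is supported on the $\underline{K}(C(\Omega))$-component and $\kappa_{\boldsymbol{\bt}}$ on the Bott-image component, and similarly $\gamma$ splits along the two directions. The hypothesis $\kappa|_{\boldsymbol{\bt}({\cal P}_1)}=0$ says $\kappa_{\boldsymbol{\bt}}$ vanishes on the designated finite subset ${\cal P}_1$, and $\gamma|_{{\cal U}_b}=0$ says the Bott-direction component of $\gamma$ vanishes on ${\cal U}_b$. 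These vanishing conditions are precisely what allows each block to be realised by a map of the prescribed factored form without residual obstructions.

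Concretely, I would first construct $\Phi_{01}$: apply Theorem \ref{Ext1} (or Lemma \ref{Ad1}) to $C(\Omega)$ alone to produce unital morphisms $L_j:C(\Omega)\to q_{1,j}M_m(C(Y))q_{1,j}$ realising $\kappa_\Omega$ on ${\cal P}_0$, the $\Omega$-direction piece of $\gamma$, and an appropriate share of the tracial data $\lambda$; then $\Phi_{01}(f\otimes g)=\sum_j L_j(f)g(1)q_{1,j}$ automatically kills all Bott-direction $K$- and $U/CU$-data, since $g(1)\in\C$. For $\Phi_{00}$, choose mutually orthogonal projections $q_{0,j}\le P_0-\sum_jq_{1,j}$ of sufficient rank (using the quasidiagonality of $M_m(C(Y))$ corners), and for each $j$ a unital \hm\, $h_j:C(\T)\to q_{0,j}M_m(C(Y))q_{0,j}$ with $h_j(z)\in U_0$ whose Hausdorff--Skandalis determinant is prescribed, via Thomsen's splitting \eqref{HS-4n}, so that the contribution of $\Phi_{00}^{\ddag}=\sum_j\overline{h_j(\cdot)}\cdot\mathrm{rank}(\cdot(\xi_j))$ on the finite subset $\overline{{\cal U}}$ combines with $\Phi_{01}^{\ddag}$ to match $\gamma$. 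The requirement $h_j(z)\in U_0$ is consistent precisely because $\gamma|_{{\cal U}_b}=0$, so no $K_1(C(\T))$-class needs to be realised by $h_j$; only affine (determinant) data remains, which lives in the divisible group $\Aff(T)/\overline{\rho(K_0)}$.

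The main obstacle is the bookkeeping for compatibility: one must arrange ranks of $P_{00}=\sum q_{0,j}$ and $P_{01}=\sum q_{1,j}$ so that $P_{00}\oplus P_{01}=P_0$ has the correct $K_0$-class $\kappa([1_{C(X_i)}])$; one must distribute the affine data of $\lambda$ between $\Phi_{00}$ (whose traces come only from evaluating on $\T$, via the spectral measures of the $h_j$) and $\Phi_{01}$ (whose traces come from $L_j$); and one must verify that the sum $\Phi_{00}^{\ddag}+\Phi_{01}^{\ddag}$ truly equals $\gamma$ modulo the allowed $\sigma_1$ on $\overline{{\cal U}}$, using that the Bott-direction part of $\gamma$ on any element of $\overline{{\cal U}}\setminus{\cal U}_b$ is absorbed by the determinant freedom in choosing the $h_j$. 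The hypotheses $\kappa|_{\boldsymbol{\bt}({\cal P}_1)}=0$ and $\gamma|_{{\cal U}_b}=0$ are exactly the $K$-theoretic and $U/CU$-obstructions that would otherwise prevent $\Phi_0$ from being split into a product of a point-evaluation on $\Omega$ with a $C(\T)$-map and a full $C(\Omega)$-map evaluated at $1\in\T$.
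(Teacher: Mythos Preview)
Your plan is essentially the paper's own proof: re-run the construction inside Theorem~\ref{Ext1}, leave the interval pieces $\Phi_1,\Phi_2$ untouched, and replace the single ``small'' block $\Psi_1$ by a direct sum $\Phi_{00}\oplus\Phi_{01}$ where $\Phi_{01}$ comes from applying Lemma~\ref{Ad1} to $C(\Omega)$ and then evaluating $C(\T)$ at $1$, while $\Phi_{00}$ is a point-evaluation on $\Omega$ tensored with a $C(\T)$-map whose image lies in $U_0$ and whose determinant class realizes the residual $\gamma_1(\overline{1\otimes z})$.

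Two small corrections. First, the tracial data $\lambda$ is not ``distributed between $\Phi_{00}$ and $\Phi_{01}$''; it is carried entirely by the large interval block $\Psi=\Phi_1^{\oplus S_1}\oplus\Phi_2$, exactly as in Theorem~\ref{Ext1}, and $\Phi_0=\Phi_{00}\oplus\Phi_{01}$ has rank so small relative to $P$ that its tracial contribution is absorbed into the $\sigma_2$-error. Second, the verification that $\Phi_{00}^{\ddag}$, $\Phi_{01}^{\ddag}$, and $\Psi^{\ddag}$ all vanish on ${\cal U}_b$ is not just a matter of ``killing Bott-direction data''; it uses that the elements of ${\cal U}_b$ lie in $SU_R$ (Proposition~\ref{groupsu}), that each of the three maps factors through a target where the image $K_1$-class vanishes (point-evaluation on $\Omega$, evaluation at $1\in\T$, or $C(J)$ respectively), and then Proposition~\ref{SUCU} (via Corollary~\ref{CSU}) to conclude the image lands in $CU$. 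You should make this explicit.
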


\begin{proof}
Note that, as in the proof of \ref{Ext1}, one may assume that $\Omega$ is connected.
Write $K_1(C(X))=\Z^{k_1}\oplus Tor(K_1(C(X))).$
Note that
$$
\boldsymbol{\bt}(F_2K_0(C(\Omega)))\subset F_3K_1(C(X)).
$$
In that case, one may assume that
$${\cal U}={\cal U}_{00}\sqcup {\cal U}_{01}\sqcup{\cal U}_{01t}\sqcup {\cal U}_{11}\sqcup {\cal U}_b',$$
where
\beq
&&\hspace{-0.4in}\overline{{\cal U}_{00}}\subset  \Aff (T(C(X)))/\overline{\rho_{C(X)}(K_0(C(X))},\\
&&\hspace{-0.4in}{\cal U}_{01}\subset \{u\otimes 1_{C(\T)}: u\in U(M_{3(d+1)}(C(\Omega)))\andeqn [u]\in \Z^k \subset K_1(C(X))\},\\
&&\hspace{-0.4in}{\cal U}_{01t} \subset \{u\otimes 1_{C(\T)}: u\in U(M_{3(d+1)}(C(\Omega)))\andeqn [u]\in Tor(K_1(C(X)) \} \\
 &&
{\cal U}_{11} =\{1\otimes z\}\andeqn
\overline{{\cal U}_b'}={\cal U}_b.
%\andeqn\\
%{\cal U}_{1,0}&=&\{u\in U(M_{3(d+1)}(C(X)): [u]\not=0\andeqn [u]\in \boldsymbol{\bt}({\rm ker}\rho_{C(\Omega)}(K_0(C(\Omega))))\}.
\eneq
To simplify notation further, we may assume that ${\cal U}_b=\{x_1,x_2,...,x_{n(b)}\},$
where $x_i\in U(M_{6(d+1)}(C(X)))$ such that
there exists a unital \hm\,
$$H_i: M_2(C(S^3))\to U(M_{6(d+1)}(C(X)))$$ such that
$H_i(u_b)=x_i,$ $i=1,2,..., n(b).$
By the assumption, $\kappa([x_i])=0,$ $i=1,2,...,n(b).$
In the proof of \ref{Ext1}, we choose  $R$ so that
$$
1/RS<\min\{\sigma_1/16, \sigma_2/16\}.
$$
We also choose $Q_0$ so that ${\rm rank}Q_0=N_1K({\rm dim}Y+1)+1.$
Let $Q_{00}\le Q_0$ have rank $N_1K({\rm dim}Y)$ and
$Q_{01}$ have rank one.
We proceed with the proof and construct $\Phi_1,$ $\Phi_2$  and $\Psi.$
Since both $\Phi_1$ and $\Phi_2$ are \hm s which factor through $C(J),$  by \ref{CSU},
\beq\label{Cext-10}
\Psi^{\ddag}(x)=0\tforal x\in {\cal U}_b.
\eneq

Let $\gamma_1=\Lambda^{-1}(\gamma-\Lambda'\circ \Psi^{\ddag})$ (as $\gamma_1$ in the proof of \ref{Ext1}).
We then proceed to construct $L: C(\Omega)\to Q_{00}M_m(C(Y))Q_{00}$
the same way as $\Psi_1$ in the proof of \ref{Ext1} so that
\beq\label{Cext-11}
[L]|_{{\cal P}_0}=\kappa_1|_{{\cal P}_0}\andeqn
(L)^{\ddag}|_{\overline{{\cal U}_{01}}}=\gamma_1|_{\overline{{\cal U}_{01}}}.
\eneq
Define $\Phi_{01}: C(X)\to Q_{00}M_m(C(Y))Q_{00}$ by
$\Phi_{01}(f\otimes g)=L(f)\cdot g(1)Q_{00}$ for all $f\in C(\Omega)$ and  $g\in C(\T),$ where $1\in \T$ is a point.
Note that
\beq\label{Cext-11+}
\Phi_{01}^{\ddag}(x)=0\tforal x\in  {\cal U}_b.
\eneq
Now define $h: C(\T)\to Q_{01}M_m(C(Y))Q_{01}$ by
$h(g)=g(x)$ for all $g\in C(\T),$  where $x\in U_0(Q_{01}M_m(C(Y))Q_{01})$
is a unitary so that
\beq\label{Cext-12}
{\bar x}=\gamma_1(\overline{1_{C(\Omega)}\otimes z}).
\eneq
Define $\Phi_{00}: C(X)\to Q_{01}M_m(C(Y))Q_{01}$ by
$\Phi_{00}(f\otimes g)=f(\xi)Q_{01}\cdot h(g)$ for all $f\in C(\Omega)$ and for all $g\in C(\T),$ where $\xi\in \Omega$ is a point.
We also have that
\beq\label{Cext-12+}
\Phi_{00}^{\ddag}|_{{\cal U}_b}=0.
\eneq

We then define $\Phi_0=\Phi_{00}\oplus \Phi_{01}$ and $\Phi=\Psi\oplus \Phi_0\oplus \Psi\oplus \Phi_2.$
As in the proof of \ref{Ext1}, we have
\beq\label{Cext-13}
{\rm dist}(\lambda(x), \Phi^{\ddag}(x))<\sigma_1\tforal x\in \overline{{\cal U}_{01t}}.
\eneq
We have, for all $x\in {\cal U},$  as in the proof of \ref{Ext1},
\beq\label{Cext-14}
{\rm dist}(\lambda(x), \Phi^{\ddag}(x))<\sigma_1\tforal x\in {\cal U}.
\eneq
The rest of the requirements are now are readily checked.

\end{proof}

\begin{thm}\label{sec1MT}
Let $X$ be a compact metric space such that
$C(X)=\lim_{n\to\infty} (C(X_n),\psi_n),$ where  each $X_n$ is a finite CW complex and
$\psi_n: C(X_n)\to C(X_{n+1})$ is a unital \hm\,
and let $\phi_{n,\infty}: C(X_n)\to  C(X)$ be the
unital \hm\, induced by the inductive limit system.
%and
 %let $G_0\subset {\rm ker}\rho_{C(X)}$ be a finitely generated subgroup with
% $G_0=\Z^k\oplus Tor(G_0).$ Suppose that $\Z^k$ is
 %generated by $g_1, g_2,...,g_k.$
For any $\ep>0,$ any finite subset ${\cal G}\subset C(X),$ any finite subset
 ${\cal P}\subset \underline{K}(C(X)),$ any finite subset ${\cal H}\subset C(X)_{s.a.},$ any
$\sigma_1, \sigma_2>0,$ any
finite subset ${\cal U}\subset U(M_{r}(C(X)))$ (for some integer $r\ge 1$)
%, a finitely generated free subgroup $S\subset K_1(C(X))$ such that $J_c(S)\subset G(\overline{{\cal U}}),$ the subgroup generated by
%$\overline{{\cal U}}$
and any integer $L_1\ge 1,$  there exists  an integer $n\ge 1$  such that
 ${\cal P}\subset [\phi_{n, \infty}](\underline{K}(C(X_n)),$ a finite set of mutually orthogonal projections
$q_1, q_2,...,q_s\in C(X_n)$ with $1_{C(X_n)}=q_1+q_2+\cdots +q_s,$
a finite subset $g_1,g_2,...,g_k$ which generates $\Z^k$ such that
${\rm ker}\rho_{K_0(C(X))}\cap {\cal P}$ is contained in a finitely generated subgroup
$G_0=\Z^k\oplus Tor(G_0),$
an integer $N\ge 1$  and a finitely generated subgroup
$G_1\subset U(M_l(C(X)))/CU(M_l(C(X)))$ {\rm (}for some  $l${\rm )} with
 $\overline{{\cal U}}\subset G_1,$ $\Pi|_{G_1}$ is injective and
 $\Pi(G_1)\subset (\psi_{n, \infty})_{*1}(K_1(C(X_n)))$ satisfying the following:

For any  finite CW complex $Y,$
any $\kappa\in Hom_{\Lambda}(\underline{K}(C(X_n)), \underline{K}(C))$ with $\kappa([q_i])=[P_i]$ for
some projection $P_i\in M_m(C(Y))$ (and for some integer $m\ge 1$),
$P=P_1+P_2+\cdots + P_s \in M_m(C(Y))$ is a projection,
${\rm rank}P_i(y)\ge \max\{NK, N({\rm dim}Y+1)\}$ for all $y\in Y,$ where $C=PM_m(C(Y))P,$ and
where
$$
K=\max_{1\le i\le k}\{\sup\{|\rho_{C}(\kappa(g_i'))(\tau)|: \tau\in T(C)\}\},
$$
$g_i=(\psi_{n, \infty})_{*0}(g_i')$ for some $g_i'\in K_0(C(X_n)),$  $i=1,2,...,k,$
for any continuous \hm\,
\beq\label{sec1MT-1}
&&\hspace{-0.6in}\gamma: G_1+\Aff(T(C(X)))/\rho(K_0(C(X)))\\
&&\to
U(M_l(C))/CU(M_l(C))
\eneq
and
for any continuous affine map $\lambda: T(C)\to T(C(X))$
such that $\rho_{C(X)}([\psi_{n, \infty}(q_i)])(\lambda(\tau))=\rho_C([P_i])(\tau)$ for all $\tau\in T(C),$
$\kappa([\psi_{n, \infty}](\xi))=\Pi( \gamma(g))$ for all $g\in G_1$ and   $\xi\in K_1(C(X_n))$ with
$[\psi_{n,\infty}](\xi)=g,$ and that $\lambda$ and $\gamma$ are compatible,
then there exists a unital $\ep$-${\cal G}$-multiplicative
\morp\, $\Phi: C(X)\to PM_m(C(Y))P$ such that
\beq\label{sec1MT-2}
[\Phi\circ \psi_{n, \infty}]&=&\kappa,\\
{\rm dist}(\Phi^{\ddag}(x), \gamma(x))&<&\sigma_1\tforal x\in \overline{{\cal U}}\andeqn\\
%\Phi^{\ddag}|_{J_c(S)}=\lambda|_{J_c(S)}\andeqn\\
|\tau\circ \Phi(a)-\lambda(\tau)(a)|&<&\sigma_2\tforal a\in {\cal H}.
\eneq
Moreover,  one may require that
$$
P=Q_0\oplus {\rm diag}(\overbrace{Q_1,Q_1,...,Q_1}^{L_1})\oplus Q_2,
$$
where $Q_0, Q_1$ and $Q_2$  are projections in $PM_m(C(Y))P,$
$Q_0$ is unitarily equivalent to $Q_1,$
and  $\Phi=\Phi_0\oplus \overbrace{\Phi_1\oplus\Phi_1\oplus...\oplus\Phi_1}^{L_1}\oplus
\Phi_2,$
where $L_1\ge L$ is an integer, $\Phi_0: C(X)\to Q_0M_m(C(Y))Q_0,$
$\Phi_1=\psi_1\circ \phi_0,$ $\psi_1: C(J)\to Q_1M_m(C(Y))Q_1$ is a unital \hm, $\phi_0: C(X)\to C(J)$ is a unital $\ep$-${\cal G}$-multiplicative \morp, $\Phi_2=\psi_2\circ \phi_0$  and
$\psi_2: C(J)\to Q_2M_m(C(Y))Q_2$ is a unital \hm, where $J$ is a disjoint union of $s$ many unit intervals.

\end{thm}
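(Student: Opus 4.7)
The strategy is to reduce to Theorem \ref{Ext1} applied to the finite CW complex $X_n$ for $n$ sufficiently large, and then pull the resulting morp back to $C(X)$ using an approximate completely positive retraction of $\phi_{n,\infty}$.

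First I would choose $n$ large enough so that: (i) ${\cal P}\subset [\phi_{n,\infty}](\underline{K}(C(X_n)))$, which is possible because $\underline{K}(C(X))$ is the inductive limit of the $\underline{K}(C(X_n))$; (ii) using the inductive-limit description of $U(M_l(C(\cdot)))/CU(M_l(C(\cdot)))$ coming from (\ref{HS-4n}), the finite set $\overline{{\cal U}}$ lifts to a finitely generated subgroup $G_1^{(n)}\subset U(M_l(C(X_n)))/CU(M_l(C(X_n)))$ on which $\Pi$ is injective; (iii) every element of ${\cal G}\cup{\cal H}$ and each unitary in ${\cal U}$ is within a prescribed tolerance of $\phi_{n,\infty}(C(X_n))$. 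Set $G_1=(\phi_{n,\infty})^{\ddag}(G_1^{(n)})$; then $\overline{{\cal U}}\subset G_1$ and $\Pi(G_1)\subset(\phi_{n,\infty})_{*1}(K_1(C(X_n)))$ as required. Take $q_1,\ldots,q_s\in C(X_n)$ to be the characteristic functions of the connected components of $X_n$, and pick $g_i'\in K_0(C(X_n))$ lifting the chosen $g_i$.

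Next, pull back the input data to level $n$: $\kappa$ already lives on $C(X_n)$; set $\lambda_n(\tau)(f)=\lambda(\tau)(\phi_{n,\infty}(f))$ and $\gamma_n=\gamma\circ(\phi_{n,\infty})^{\ddag}|_{G_1^{(n)}}$. The compatibility conditions required by Theorem \ref{Ext1} for the triple $(\kappa,\gamma_n,\lambda_n)$ follow immediately from those given for $(\kappa,\gamma,\lambda)$. Applying Theorem \ref{Ext1} to $X_n$ with sharpened parameters $(\epsilon/2,\sigma_1/2,\sigma_2/2)$ and suitable finite subsets ${\cal G}_n,{\cal H}_n,{\cal U}_n$ in $C(X_n)$ yields a unital $(\epsilon/2)$-${\cal G}_n$-multiplicative morp $\Phi_n\colon C(X_n)\to PM_m(C(Y))P$ with $[\Phi_n]=\kappa$, ${\rm dist}(\Phi_n^{\ddag}(z),\gamma_n(z))<\sigma_1/2$ on $\overline{{\cal U}_n}$, and $|\tau\circ\Phi_n(a)-\lambda_n(\tau)(a)|<\sigma_2/2$ on ${\cal H}_n$, together with the Moreover decomposition $\Phi_n=\Phi_{n,0}\oplus{\rm diag}(\overbrace{\psi_1\circ h,\ldots,\psi_1\circ h}^{L_1})\oplus(\psi_2\circ h)$, where $h\colon C(X_n)\to C(J)$. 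The integer $N$ in our statement may be taken as the $N$ produced by this application.

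For the last step, construct a unital $\delta$-${\cal G}$-multiplicative contractive completely positive linear map $\sigma_n\colon C(X)\to C(X_n)$ such that $\|\phi_{n,\infty}(\sigma_n(g))-g\|<\epsilon/8$ for $g\in{\cal G}\cup{\cal H}\cup{\cal U}$ and $\sigma_n\circ\phi_{n,\infty}$ is $\delta$-close to the identity on a finite subset of $C(X_n)$ large enough to preserve the $KK$-triple for ${\cal P}$, the classes in $G_1^{(n)}$, and the pairings with ${\cal H}_n$. Writing $X$ as the inverse limit of $\{X_n\}$ with bonding maps $p_n\colon X\to X_n$, such a $\sigma_n$ can be built from a fine partition of unity $\{\chi_i\}$ on $X_n$ and points $y_i\in X$ with $p_n(y_i)\in\mathrm{supp}(\chi_i)$, via $\sigma_n(g)=\sum_i g(y_i)\chi_i$. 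Set $\Phi=\Phi_n\circ\sigma_n$. Then $\Phi$ is $\epsilon$-${\cal G}$-multiplicative, $[\Phi\circ\phi_{n,\infty}]=[\Phi_n]=\kappa$ (since $\sigma_n\circ\phi_{n,\infty}$ induces the identity on the relevant $KK$-triple), and the remaining $\gamma$- and $\lambda$-estimates follow directly. The Moreover decomposition for $\Phi$ then reads $\Phi=(\Phi_{n,0}\circ\sigma_n)\oplus{\rm diag}(\overbrace{\psi_1\circ\phi_0,\ldots,\psi_1\circ\phi_0}^{L_1})\oplus(\psi_2\circ\phi_0)$ with $\phi_0=h\circ\sigma_n\colon C(X)\to C(J)$.

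The main obstacle is controlling the three kinds of perturbations introduced by $\sigma_n$ simultaneously, so that (a) the equality $[\Phi\circ\phi_{n,\infty}]=\kappa$ holds exactly, (b) $\Phi^{\ddag}$ approximates $\gamma$ to within $\sigma_1$ on $\overline{{\cal U}}$, and (c) the trace estimate holds to within $\sigma_2$ on ${\cal H}$. Each condition depends on only finitely many elements of $C(X)$, all of which can be approximated arbitrarily well from $\phi_{n,\infty}(C(X_n))$ by taking $n$ large; thus by a successive refinement (first fix $n$ to handle (i)--(iii) and then choose $\sigma_n$ fine enough for the exact-equality and approximation constraints), all requirements can be met.
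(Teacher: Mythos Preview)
Your approach is essentially the same as the paper's: choose $n$ large, apply Theorem~\ref{Ext1} to $C(X_n)$, and then precompose with an approximate completely positive section $C(X)\to C(X_n)$. The paper does this in two stages (first for $X$ a compact subset of a finite CW complex, using a decreasing sequence $X_n\searrow X$ and the section from 2.6 of \cite{Lnnewapp}; then reducing the general inductive limit to that case via 2.3.13 of \cite{Lnbk}), but the architecture is identical to yours.

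There is one genuine technical gap in your explicit construction of $\sigma_n$. You require $\sigma_n\circ\phi_{n,\infty}$ to be close to $\mathrm{id}_{C(X_n)}$ on a finite set large enough to force $[\Phi_n\circ\sigma_n\circ\phi_{n,\infty}]=[\Phi_n]=\kappa$. With your formula $\sigma_n(g)=\sum_i g(y_i)\chi_i$ and $y_i\in X$ satisfying $p_n(y_i)\in\mathrm{supp}(\chi_i)$, one computes $\sigma_n(\phi_{n,\infty}(f))=\sum_i f(p_n(y_i))\chi_i$, which is close to $f$ only if every $\mathrm{supp}(\chi_i)$ meets $p_n(X)$. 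But the map $p_n:X\to X_n$ need not be surjective, nor even have $\epsilon$-dense image, so for some $\chi_i$ no admissible $y_i$ exists and the approximation fails on that patch. The paper's two-step reduction is exactly what repairs this: when $X=\bigcap_n X_n$ with $X_n$ decreasing finite CW complexes, compactness gives $X_n\subset X^\epsilon$ for $n$ large, so every small support does meet $X$ and your construction goes through; the general case is then handled by first passing to the image $p_{n_2}(X)\subset X_{n_2}$ (a compact subset of a finite CW complex) via an abstract approximate section. If you insert this reduction, or alternatively replace your partition-of-unity formula by an appeal to amenability of $C(X_n)$ (as in 2.3.13 of \cite{Lnbk}) to produce the section abstractly, the argument is complete.
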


\begin{proof}
Fix $\ep>0$ and a finite subset ${\cal G}\subset C(X).$ Without loss of generality, we may assume
that ${\cal H}\subset {\cal G}$ and ${\cal G}$ are  in the unit ball of $C(X).$
%To simplify notation, we may also assume
%that $g_1, g_2,...,g_k\in {\cal P}.$

We first prove the case that $X$ is a compact subset of a finite CW complex of dimension $d. $
Since we assume that, in this case, ${\rm dim}X_n=d,$
the embedding from
$$U(M_d(C(X)))/CU(M_d(C(X)))$$ into
$U(M_n(C(X))/CU(M_n(C(X)))$ is an isomorphism for all $n\ge d.$ We may assume
that ${\cal U}\subset U(M_{d}(C(X))),$ without loss of generality.

There is a sequence of decreasing finite CW complexes $\{X_n\}$ of dimension $d$ such that
$\cap_n X_n=X.$  Write $C(X)=\lim_{n\to\infty}(C(X_n), r_n),$ where
$r_n: C(X_n)\to C(X)$ is defined by $r_n(f)=f|_X$ for all $f\in C(X_n),$ $n=1,2,....$
Let $(\dt, {\cal G}, {\cal P})$ be a $KL$-triple. We may assume that
$$
\dt<
\min\{\ep, \sigma_1/18(d+1)^2, \sigma_2/18(d+1)^2\}.
$$

By  2.6 of \cite{Lnnewapp}, there exists an integer $n_0\ge 1$ such that
there is a unital  $\dt/4$-${\cal G}$-multiplicative \morp\, $\Psi: C(X)\to C(X_{n_0})$ such that
\beq\label{sec1MT-3}
\|r_{n_0}\circ \Psi(g)-g\|<\dt/4\rforal g\in {\cal G}.
\eneq

We may assume  that  ${\cal P}'\subset \underline{K}(C(X_{n_0}))$
is a finite subset such that
\beq\label{sec1MT-5}
 [r_{n_0}]({\cal P}')={\cal P}.
\eneq

Suppose that $p,q\in M_m(C(X))$ are two projections such that
$\tau\otimes Tr(p)=\tau\otimes Tr(q)$ for all $\tau\in T(C(X)),$ where
$Tr$ is the standard trace on $M_m.$ Then,
$Tr(p(x))=Tr(q(x))$ for all $x\in X.$ With sufficiently large $n_0,$ we may assume that there are projections $p', q'\in M_m(C(X_{n_0}))$ such that
$p'|_X=p$ and $q'|_X=q.$ Since $Tr(p)$ and $Tr(q)$ are integer valued continuous functions on $X_{n_0},$ there is $n_0'\ge n_0$ such that
\beq\label{sec1MT-5+1}
Tr(p'(x))=Tr(q'(x))\tforal x\in X_{n_0'}.
\eneq
Therefore, by choosing larger $n_0,$ we may assume that
\beq\label{sec1MT-5+2}
Tr(p'(x))=Tr(q'(x))\tforal x\in X_{n_0}.
\eneq
Let $G_0'$ be the subgroup generated by ${\rm ker}\rho_{C(X)}\cap {\cal P}.$
From the above (see (\ref{sec1MT-5+2})), we may assume, by choosing larger $n_0,$ that
\beq\label{sec1MT-5+3}
G_0'\subset (r_{n_0})_{*0}({\rm ker}\rho_{C(X_{n_0})}).
\eneq
We may write
\beq\label{sec1MT-5+4}
{\rm ker}\rho_{C(X_{n_0})}=\Z^{k_1}\oplus {\rm Tor}(K_0(C(X_{n_0}))).
\eneq
Let $s_1, s_2,...,s_{k_1}$ be free generators of $\Z^{k_1}.$ We may write
that
\beq\label{sec1MT-5+5}
(r_{n_0})_{*0}(s_i)=g_i,\,\,\,i=1,2,...,k\andeqn (r_{n_0})_{*0}(s_j)=0\tforal j\ge k+1.
\eneq
Thus we may write
\beq\label{sec1MT-5+6}
G_0=(r_{n_0})_{*0}({\rm ker}\rho_{C(X_{n_0})})=\Z^k\oplus {\rm Tor}(G_0).
\eneq
We may also assume that there exists
${\cal U}'\subset  U(M_{d}(C(X_{n_0})))$ such that $r_{n_0}({\cal U}')={\cal U}.$
%$S'\subset K_1(C(X_{n_0})$ which is a free subgroup such that $J_c(S')\subset \overline{{\cal U}'}.$
Let ${\cal G}'\subset C(X_{n_0})$ be a finite subset such that $r_{n_0}({\cal G}')={\cal G}$ and let ${\cal H}'\subset
C(X_{n_0})_{s.a.}$ be a finite subset such that $r_{n_0}({\cal H}')={\cal H}$ and they are all in the unit ball
of $C(X_{n_0}).$
We may assume that,  by (\ref{sec1MT-3}),
\beq\label{sec1MT-4}
[r_{n_0}\circ \Psi]|_{\cal P}=[{\rm id}_{C(X)}]|_{\cal P}\andeqn
(r_{n_0}\circ \Psi)^{\ddag}|_{\cal U}=({\rm id}_{C(X)})^{\ddag}|_{\cal U}.
\eneq
 Suppose that $X_{n_0}=X_{n_0,1}\sqcup X_{n_0,2}\sqcup\cdots \sqcup X_{n_0,s}$ is
a finite disjoint union of clopen subsets. Let $q_j=1_{C(X_{n_0, j})},$ $j=1,2,...,s.$
Let $L_1\ge 1.$   Let  $N\ge 1$ be given by \ref{Ext1} for $X_{n_0}$ (in place of $X$),
$\dt/4$ (in place of $\ep$), ${\cal G}'$ (in place of ${\cal G}$),  ${\cal P}'$ (in place of ${\cal P}$),
${\cal U}'$ (in place of ${\cal U}$), $\sigma_1/4$ (in place of $\sigma_1$) and $\sigma_2/4$ (in place of
$\sigma_2$) and $L_1.$ We choose $G_1= J_c((r_{n_0})_{*1}(K_1(C(X_{n_0})))),$ where $J_c$ may be chosen to be as in \ref{DUb}. Note that $l$ can be chosen to be ${\rm dim}X_{n_0}+1.$

Now suppose that $\kappa$ is given as in the theorem (for the above $g_i,$ $i=1,2,...,k,$ and $q_j,$ $j=1,2,...,s$) and $L_1\ge 1$
is given.

By applying \ref{Ext1}, there is a  unital $\dt/4$-${\cal G}'$-multiplicative \morp\,
$F: C(X_{n_0})\to PM_m (C(Y))P$ such that
\beq\label{sec1MT-6}
[F]|_{\cal P'}&=&\kappa |_{\cal P'},\\
{\rm dist}(F^{\ddag}(z), \gamma\circ r_{n_0}^{\ddag}(z))&<&\sigma_1/4\rforal z\in \overline{{\cal U}'}\andeqn\\
%L^{\ddag}\andeqn\\
|\tau\circ F(a)-\lambda (\tau)(r_{n_0}(a))|&<&\sigma_2/4 \tforal a\in {\cal H}',
\eneq
where $P=P_1+P_2+\cdots P_s,$ $[P_j]=\kappa([q_j])$ and
${\rm rank}P_i(y)\ge \max\{NK, N({\rm dim}Y+1)\}$ for all $y\in Y,$ $j=1,2,...,s.$
Moreover,  as in the proof of \ref{Ext1}, $P=Q_0\oplus {\rm diag}(\overbrace{Q_1,Q_1,...,Q}^{L_1}\oplus Q_2,$
as required, and
$$
F=\Psi_0\oplus \overbrace{\Psi_1,\Psi_1,...,\Psi_1}^{L_1}\oplus \Psi_2,
$$
where $\Psi_0: C(X_{n_0})\to Q_0M_m(C(Y))Q_0$ is a unital  $\dt/4$-${\cal G}'$-multiplicative \morp\, $\Psi_1=\psi_1'\circ h$ and $\Psi_2=\psi_2'\circ h,$ where $h: C(X_{n_0})\to C(J)$ is a unital \hm\, and
$J$ is a disjoint union of finitely many intervals,
$\psi_1': C(J)\to Q_1M_m(C(Y))Q_1$ and $\psi_2': C(J)\to Q_2M_m(C(Y))Q_2$
are unital \hm s.

Define $\Phi=F\circ \Psi.$ It follows that
\beq\label{sec1MT-7}
[\Phi]|_{\cal P}&=& \kappa|_{\cal P},\\
{\rm dist}(\Phi^{\ddag}(z), \gamma(z))&<&\sigma_1\rforal z\in \overline{{\cal U}}\andeqn\\
|\tau\circ F(a)-\lambda(\tau)(a)| &<& \sigma_2\tforal a\in {\cal H}.
\eneq

For the general case,  we may write that
$C(X)=\overline{\cup_{n=1}^{\infty} C(X_n)},$ where each $X_n$ is a compact subset of a finite CW complex.
For any $\eta>0$ and any finite subset ${\cal F}\subset C(X),$ we may assume that ${\cal F}\subset C(X_{n_1})$ for some $n_1\ge 1$ with an error within $\eta/2.$
Then, by 2.3.13 of \cite{Lnbk}, there is an integer $n_2\ge 1$ and a unital $\eta/4$-multiplicative \morp\,
$\Psi': C(X)\to C(X_{n_2})$ such that
\beq\label{sec1MT-8}
\|r_{n_2}\circ \Psi'(f)-f\|<\eta/4\tforal f\in {\cal F}.
\eneq
With sufficiently small $\eta$ and large ${\cal F},$ by considering maps from $C(X_{n_2}),$ one sees that the general case follows from
the case that $X$ is a compact subset of a finite CW complex.

\end{proof}

\begin{cor}\label{sec1MC}
Let $\Omega$ be a compact metric space
and let $X=\Omega\times \T.$
 Then \ref{sec1MT} holds
for this $X.$ Suppose also that ${\cal P}={\cal P}_0\sqcup \boldsymbol{\bt}({\cal P}_1),$ where ${\cal P}_0, \, {\cal P}_1\subset
[\psi_{n, \infty}](\underline{K}(C(X_n)))$ are finite subsets and suppose that ${\cal U}_b\subset J_c({\boldsymbol{\bt}}(F_2K_0(C(\Omega))))\cap \overline{{\cal U}}$ is a finite subset such that, in addition,
\beq\label{secMC-1}
\kappa|_{\boldsymbol{\bt}({\cal P}_1)}=0\andeqn
\gamma|_{{\cal U}_b}=0.
\eneq
Then, one may further requite that
\beq\label{nCext1-2}
P_0=P_{00}\oplus P_{01}\andeqn
\Phi_0=\Phi_{00}\oplus \Phi_{01},
\eneq
where $P_{00}, P_{0,1}\in M_m(C(Y))$ are projections,
$\Phi_{00}(f\otimes g)=\sum_{j=1}^s f(\xi_j)q_{0,j}\cdot h_j(g)$ for all $f\in C(\Omega)$ and $g\in C(\T),$
$\xi_j\in \Omega_j$ is a point, $q_{0,j}\in M_m(C(Y))$ is a projection with
$\sum_{j=1}^sq_{0,j}=P_{00},$  $h_j: C(\T)\to q_{0,j}M_m(C(Y))q_{0,j}$ is a \hm\,  with
$h_j(z)\in U_0(q_{0,j}M_m(C(Y))q_{0,j})$ ($j=1,2,...,s$),
$\Phi_{01}(f\otimes g)=\sum_{j=1}^s L_j(f)\cdot g(1)q_{1,j}$ for all $f\in C(\Omega)$ and $g\in C(\T),$
$1\in \T$ is a point, $q_{1,j}\in M_m(C(Y))$ is a projection with $\sum_{j=1}^sq_{1,j}=P_{01},$ and
$L_j: C(\Omega)\to q_{1,j}M_m(C(Y))q_{1,j}$ is a unital \morp.

\end{cor}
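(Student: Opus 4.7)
The plan is to mimic the proof of Theorem \ref{sec1MT} verbatim, but with Corollary \ref{Cext1} replacing Theorem \ref{Ext1} as the workhorse at the finite CW complex level. First I would reduce to the case where $\Omega$ itself is a compact subset of a finite CW complex: writing $C(\Omega)=\overline{\cup_n C(\Omega_n)}$ with each $\Omega_n$ sitting in a finite CW complex, tensoring with $C(\T)$ preserves the inductive limit, so $C(X)=\lim_{n\to\infty}C(\Omega_n\times\T)$. Then, by the same soft approximation argument (2.3.13 of \cite{Lnbk} combined with 2.6 of \cite{Lnnewapp}) used at the end of the proof of \ref{sec1MT}, it suffices to produce, for arbitrary $\eta>0$ and arbitrary finite ${\cal F}\subset C(X)$, a unital $\eta$-multiplicative \morp\ from $C(\Omega_{n_2}\times\T)$ with the desired properties. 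A further approximation reduces to the case where $\Omega$ is a compact subset of a finite CW complex, and then a decreasing-intersection argument $\Omega=\cap_n\Omega_n'$ with each $\Omega_n'$ a finite CW complex of uniformly bounded dimension $d$ lets us take $X_n=\Omega_n'\times\T$, each of dimension $d+1$.

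Next I would pull back all the data to the finite CW complex level. Using $r_{n_0}:C(X_{n_0})\to C(X)$, choose $n_0$ large enough that there is a unital $\dt/4$-${\cal G}$-multiplicative \morp\ $\Psi:C(X)\to C(X_{n_0})$ with $\|r_{n_0}\circ\Psi(g)-g\|<\dt/4$ on ${\cal G}$, and such that ${\cal P}',{\cal P}'_0,{\cal P}'_1,{\cal U}',{\cal U}'_b,{\cal H}',{\cal G}'$ lift the corresponding finite subsets from $C(X)$ to $C(X_{n_0})$, with the lifts inheriting the split form ${\cal P}'={\cal P}'_0\sqcup\boldsymbol{\bt}({\cal P}'_1)$ and ${\cal U}'_b\subset J_c(\boldsymbol{\bt}(F_2K_0(C(\Omega_{n_0}'))))$. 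The rank compatibility of projections, as in display (\ref{sec1MT-5+2}), forces the inclusion ${\rm ker}\,\rho_{C(X)}\cap{\cal P}\subset (r_{n_0})_{*0}({\rm ker}\,\rho_{C(X_{n_0})})$, and by increasing $n_0$ we can also arrange that $\kappa\circ[r_{n_0}]|_{\boldsymbol{\bt}({\cal P}'_1)}=0$ and that $\gamma\circ r_{n_0}^{\ddag}|_{\overline{{\cal U}'_b}}=0$ (the vanishing hypothesis (\ref{secMC-1}) is the precise condition needed to transfer to $X_{n_0}$).

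Now I would invoke Corollary \ref{Cext1} at the space $X_{n_0}=\Omega_{n_0}'\times\T$ with data $\kappa\circ[r_{n_0}]$, $\gamma\circ r_{n_0}^{\ddag}$ and $\lambda$ (pulled back to $T(C(X_{n_0}))$ via the natural map), tolerances $\dt/4,\sigma_1/4,\sigma_2/4$, and integer parameter $L_1$. The output is a unital $\dt/4$-${\cal G}'$-multiplicative \morp\ $F:C(X_{n_0})\to PM_m(C(Y))P$ with the split decomposition $P=Q_0\oplus{\rm diag}(\overbrace{Q_1,\ldots,Q_1}^{L_1})\oplus Q_2$ and $F=\Psi_0\oplus\overbrace{\Psi_1,\ldots,\Psi_1}^{L_1}\oplus\Psi_2$, where moreover $Q_0=Q_{00}\oplus Q_{01}$ and $\Psi_0=\Psi_{00}\oplus\Psi_{01}$ have the point-evaluation/unitary structure prescribed in \ref{Cext1}. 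Composing with $\Psi$ gives $\Phi=F\circ\Psi$; the usual diagram chase (using $[r_{n_0}\circ\Psi]|_{\cal P}=[{\rm id}]|_{\cal P}$ and $(r_{n_0}\circ\Psi)^{\ddag}|_{\overline{\cal U}}=({\rm id})^{\ddag}|_{\overline{\cal U}}$) yields the three compatibility estimates (\ref{sec1MT-2}), and the split structure of $P_0=P_{00}\oplus P_{01}$ and $\Phi_0=\Phi_{00}\oplus\Phi_{01}$ descends from that of $Q_0,\Psi_0$, noting that $\Phi_{00}$ and $\Phi_{01}$ each factor through point evaluations of $C(\Omega_{n_0}')$ composed with $\Psi$, which still have the required product-with-$C(\T)$ form.

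The only genuine obstacle, I expect, is bookkeeping the Bott splitting under the approximation $\Psi$: one must check that ${\cal U}_b\subset J_c(\boldsymbol{\bt}(F_2K_0(C(\Omega))))$ lifts to an analogous subset ${\cal U}'_b\subset J_c(\boldsymbol{\bt}(F_2K_0(C(\Omega_{n_0}'))))$, and that the vanishing $\gamma|_{{\cal U}_b}=0$ propagates to $\gamma\circ r_{n_0}^{\ddag}|_{\overline{{\cal U}'_b}}=0$ (rather than merely to distance less than $\sigma_1$). This is handled exactly as in \ref{DUb} and \ref{groupsu}: enlarge $n_0$ so that the finitely many classes in ${\cal U}_b$ are in the image of $(r_{n_0})^{\ddag}\circ J_c$ acting on the corresponding Bott classes at the finite stage, which is possible because $F_2K_0$ and $J_c$ are compatible with the inductive limit by part (c) of Proposition 5.1 of \cite{EL} (as already used in the proof of \ref{torF3}). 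Once this compatibility is in place, the remainder of the argument is identical to the proof of \ref{sec1MT}.
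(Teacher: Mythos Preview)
Your approach is correct and is exactly what the paper intends: the paper gives no explicit proof of this corollary, treating it as an immediate consequence of rerunning the proof of Theorem \ref{sec1MT} with Corollary \ref{Cext1} in place of Theorem \ref{Ext1}. Your identification of the bookkeeping issue around lifting ${\cal U}_b$ and the vanishing conditions \eqref{secMC-1} to the finite CW stage is the right point to flag.

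One small gap worth patching: when you form $\Phi=F\circ\Psi$, the pieces $F_{00}\circ\Psi$ and $F_{01}\circ\Psi$ do not literally have the point-evaluation form required in the conclusion, because $\Psi$ is only a \morp, not a homomorphism. The fix is straightforward: take $\Psi$ of the tensor form $\Psi_\Omega\otimes{\rm id}_{C(\T)}$ (available since you are approximating $C(\Omega)$, not all of $C(X)$), and in applying \ref{Cext1} choose the evaluation points $\xi_j\in\Omega_{n_0}'$ to lie in $\Omega$ itself (possible once $n_0$ is large enough that each component of $\Omega_{n_0}'$ meeting the support of the data also meets $\Omega$). Then define $\Phi_{00}$ directly on $C(\Omega\times\T)$ by the formula $f\otimes g\mapsto\sum_j f(\xi_j)q_{0,j}h_j(g)$; this agrees with $F_{00}\circ\Psi$ to within $\dt/4$ on ${\cal G}$ by the approximate section property of $\Psi$, so all the $K$-theoretic, determinant, and tracial estimates go through unchanged. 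For $\Phi_{01}$ simply set $L_j=L_j^{(n_0)}\circ\Psi_\Omega$, which already has the required form.
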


\section{The uniqueness statement and the existence theorem for Bott map}

The following is taken from  2.11 of \cite{EGL}.

\begin{thm}\label{INV}
Let $\ep>0.$ Let $\Delta: (0,1)\to (0,1)$ be an increasing map and let $d\ge 0$ be an integer.
There exists $\eta>0,$ $\gamma_1, \gamma_2>0$ and a finite subset ${\cal H}\subset
C(\T)_{s.a.}$ and an integer $N\ge 1$ satisfying the following:

  Let $\phi, \psi: C(\T)\to C=PM_r(C(Y))P$ be two unital \hm s for some connected finite CW complex with ${\rm dim} Y\le d$ and ${\rm rank}P\ge N$ such that
\beq\label{INV-1}
|\tau\circ \phi(g)-\tau\circ \psi(g)|&<&\gamma_1\rforal g\in {\cal H}\andeqn \rforal \tau\in T(C),\\
{\rm dist}(\phi^{\ddag}({\bar z}), \psi^{\ddag}({\bar z}))&<&\gamma_2,\\
\mu_{\tau\circ \phi}(I_r)&\ge& \Delta(r)\tforal \tau\in T(C)
\eneq
and for all open arcs $I_r$ with length $r\ge \eta.$  Then there exists a unitary $u\in C$ such that
\beq\label{INV-2}
\|u^*\phi(z)u-\psi(z)\|<\ep.
\eneq

\end{thm}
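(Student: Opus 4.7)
Here is my plan for proving this uniqueness theorem.

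\medskip

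Since $\phi$ and $\psi$ are determined by the unitaries $u=\phi(z)$ and $v=\psi(z)$ in $C=PM_r(C(Y))P$, the assertion amounts to showing that $u$ and $v$ are approximately unitarily equivalent, given the tracial closeness of their spectral distributions, the closeness ${\rm dist}(\overline u,\overline v)<\gamma_2$ in $U(C)/CU(C)$, and the lower bound $\mu_{\tau\circ\phi}(I_r)\ge\Delta(r)$ for arcs of length at least $\eta$. I would reduce immediately to the case that $Y$ is connected (apply the result on each component and glue, which uses only the lower bound on ${\rm rank}\,P$), and I would take $\mathcal H$ to be a set of continuous functions on $\T$ dense enough that $|\tau(f(u))-\tau(f(v))|$ controls the Lévy/Wasserstein distance between the eigenvalue measures of $u(y)$ and $v(y)$ for every $y\in Y$, uniformly up to a small error depending on $\gamma_1$.

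\medskip

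The first real step is local analysis at each point $y\in Y$. The trace condition translates, via averaging over the probability measure on $Y$ induced by $\tau$, into closeness of the normalized eigenvalue measures of $u(y),v(y)\in P(y)M_rP(y)$ for every $y$; the measure hypothesis then guarantees that the eigenvalues of $u(y)$ fill the circle uniformly with gaps controlled by $\eta$ and $\Delta$. Partitioning $\T$ into arcs of length on the order of $\eta$, the ranks of the spectral projections $\chi_{I}(u(y))$ and $\chi_I(v(y))$ differ by at most a constant depending on $d$, which, when $N$ is chosen $\gg (d+1)/\Delta(\eta)$, is negligible relative to the rank of each spectral projection. Using Weyl-type comparison for unitary matrices, one obtains at every point $y$ a unitary $W(y)\in P(y)M_rP(y)$ with $\|W(y)^*u(y)W(y)-v(y)\|<\ep/2$, up to a correction that lives in the determinant group and is controlled by the ${\rm dist}(\overline u,\overline v)<\gamma_2$ hypothesis combined with Proposition~\ref{SUCU}.

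\medskip

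The second step is to promote this pointwise $W(y)$ into a global unitary $u$ in $C$. This is standard for finite CW complexes: using a triangulation and the fact that $P$ has rank far exceeding $d+1$, one extends $W$ skeleton-by-skeleton. At each skeletal extension one has to fill in a homotopy of unitaries on a $k$-cell boundary; the measure bound combined with Lemma~\ref{Dini} and property (SP) type arguments lets us find continuous spectral sections avoiding eigenvalue collisions, while the determinant-closeness controls the $K_1$ obstruction on each $1$-cell. The constants $\eta,\gamma_1,\gamma_2,N,\mathcal H$ should be chosen, in that order: fix $\mathcal H$ from a choice of mesh on $\T$ of size $\ep/100$; choose $\gamma_1$ so that the tracial closeness implies Lévy closeness of the spectral measures within $\eta/4$; choose $\gamma_2$ so that the determinant obstruction contributes less than $\ep/4$ after applying the de la Harpe--Skandalis determinant; choose $\eta\ll \Delta^{-1}$ of that quantity; and choose $N$ so that $(d+1)/N<\Delta(\eta)/2$ ensures enough room for rank matching at every skeleton.

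\medskip

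The main obstacle is precisely the skeletal extension step, namely ruling out global $K_1$-type obstructions coming from the torsion in $K_1(C)$ when passing from pointwise to global unitary equivalence. Everything on the $0$- and $1$-skeleton is forced by the ${\rm dist}(\overline u,\overline v)<\gamma_2$ hypothesis together with Proposition~\ref{SUCU}, which says that $SU_r(C)\cap U_0$ lies in $CU(M_r(C))$; this is why the determinant hypothesis appears in the precise form ${\rm dist}(\overline u,\overline v)<\gamma_2$ rather than as equality in $K_1$. Above dimension $2$ the obstructions vanish once ${\rm rank}\,P$ is large compared to $d+1$, because the relevant homotopy groups of the unitary group stabilize; this is exactly why the statement requires ${\rm rank}\,P\ge N$ with $N$ depending on $d$ and $\Delta$.
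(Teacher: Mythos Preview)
Your proposal has the right overall architecture --- pointwise spectral comparison followed by a local-to-global extension --- but there is a genuine gap in the second step, and the paper's proof takes a rather different route.

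The paper does not prove this from scratch: it simply observes that the statement is Theorem~2.11 of \cite{EGL} (or equivalently Theorem~3.2 of \cite{Lninv}) with the restriction ${\rm dim}\,Y\le 3$ removed, and that the only place the dimension bound enters those proofs is through Gong's decomposition lemma (4.47/4.47$'$ of \cite{G1}), which was already stated and proved for arbitrary finite dimension. So the paper's argument is essentially a citation together with the remark that the constants $3k_0m_1$, $3k_0l_1$ in \cite{Lninv} become $dk_0m_1$, $dk_0l_1$.

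The substantive problem with your direct approach is the promotion step. You produce, at each $y\in Y$, a unitary $W(y)$ with $\|W(y)^*u(y)W(y)-v(y)\|$ small, and then propose to extend $W$ across the skeleta. But the pointwise conjugator $W(y)$ is far from unique (it is determined only up to the commutant of $u(y)$, which jumps in dimension whenever eigenvalues collide), and there is in general no continuous selection of $W(y)$ even locally: eigenvalue functions of $u$ branch and cross as $y$ varies, so the spectral projections $\chi_I(u(y))$ you want to match are not continuous in $y$. The tools you invoke at this point --- Lemma~\ref{Dini} and ``property (SP) type arguments'' --- are unrelated to this obstruction; Dini is a compactness statement about monotone limits of scalar functions, and (SP) concerns existence of projections in hereditary subalgebras of a simple $C^*$-algebra, neither of which helps you thread a continuous $W$ through eigenvalue collisions.

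What the actual proofs in \cite{EGL} and \cite{Lninv} do instead is apply Gong's Lemma~4.47: using the measure lower bound $\mu_{\tau\circ\phi}(I_r)\ge\Delta(r)$ and the large rank of $P$ relative to $d$, one decomposes $\phi$ (and $\psi$) as a direct sum of a part whose eigenvalue functions have controlled variation over $Y$ (so that a continuous matching \emph{is} possible on that summand) and a remainder of small trace. The tracial and determinant hypotheses are then used to match the two decompositions. Your skeleton-extension picture is not wrong in spirit, but it only becomes executable \emph{after} such a decomposition has tamed the eigenvalue crossings; without it, the extension from the $0$-skeleton to the $1$-skeleton already fails.
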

(Here $z\in C(\T)$ is the identity map on the unit circle.)

\begin{proof}
The proof of 2.11 of \cite{EGL} does not need the assumption that ${\rm dim}Y\le 3.$ The main technical lemma used in the proof was 4.47' of \cite{G1} which is a restatement of 4.47 which stated without assuming ${\rm dim}Y\le 3.$
Perhaps, a quick way to see this is to refer to the proof of Theorem 3.2 of \cite{Lninv} which is a modification of that of 2.11 of \cite{EGL}.
Again, note that  Lemma 3.1 of \cite{Lninv} is another restatement of
4.47' of \cite{G1} which, as mentioned above, is a restatement of 4.47 of
\cite{G1}. So Lemma 3.1  of \cite{Lninv} holds without assuming $d={\rm dim}Y\le 3.$ However the integer $L$ in Lemma 3.1 depends on $d.$
There are two occasions that ``since ${\rm dim}Y\le 3"$ appears in the proof of 3.2 of \cite{Lninv}. In both cases, we can simply replace $3$ by $d,$ in
the next line (i.e., replace $3k_0m_1$ by $dk_0m_1$ and replace
$3k_0l_1$ by $dk_0l_1$).  Note also, since $X=\T,$ $K_i(C(X))$ has no torsion. Therefore, we do not need $D_j$ ($j\ge 2$) in the modification.
The same simple modification of proof of 2.11 of \cite{EGL} also leads
to this lemma.

\end{proof}

\begin{rem}\label{ReuniT}

Note that the above lemma also holds if $\phi$ and $\psi$ are assumed to be unital $\dt$-${\cal G}$-multiplicative \morp s, where $\dt>0$ and finite subset ${\cal G}\subset C(\T)$ depend on $\ep,$ since
$C(\T)$ is weakly semi-projective.
\end{rem}

\begin{cor}\label{Uniunt}
Let $X$ be a compact metric space, let ${\cal F}\subset C(X)$ be a finite subset, let $\ep>0$ be a positive number and let $d\ge 1.$
Let $\Delta: (0,1)\to (0,1)$ be a nondecreasing map.
Let ${\cal U}\subset M_{m(X)}(C(X))$ be a finite subset of unitaries
which represent non-zero elements in $K_1(C(X)).$

There exist $\eta>0,$  $\gamma_1>0,$ $\gamma_2>0,$
$\dt>0,$ a finite subset ${\cal G}\subset C(X)$  and a finite subset ${\cal P}\subset \underline{K}(C(X)),$
 a finite subset ${\cal H}\subset C(X)_{s.a.},$  a finite subset ${\cal V}\subset K_1(C(X))\cap {\cal P},$ an integer
 $N\ge 1$
  satisfying the following:
For any finite CW complex  $Y$ with ${\rm dim}Y\le d,$   any projection $P\in M_m(C(Y))$
with ${\rm rank}P(y)\ge N$ for all $y\in Y$ and two unital $\dt$-${\cal G}$-multiplicative \morp s
$\phi, \psi: C(X)\to C=PM_m(C(Y))P$  such that
\beq\label{NT-1-1}
[\phi]|_{\cal P}=[\psi]|_{\cal P},
\eneq
\beq\label{NT-1}
\mu_{\tau\circ \phi}(O_r)\ge \Delta(r),\,\,\,
\mu_{\tau\circ \psi}(O_r)\ge \Delta(r),
\eneq
for all $\tau\in T(M_n(C(Y)))$ and for all $r\ge \eta,$
\beq\label{NT-2}
|\tau\circ \phi(g)-\tau\circ \psi(g)|<\gamma_1
\tforal g\in {\cal H}\tand\\
{\rm dist}(\overline{\langle \phi(u)\rangle},\overline{\langle \psi(u)\rangle})<\gamma_2\tforal u\in J_{c(G({\cal V}))}({\cal V}),
\eneq
there exists, for each $v\in {\cal U},$  a unitary $w\in M_{m(X)}(C)$ such that
\beq\label{NT-3}
\|w(\phi\otimes {\rm id}_{m(X)}(v))w^*-(\psi\otimes {\rm id}_{m(X)})(v)\|<\ep.
\eneq
\end{cor}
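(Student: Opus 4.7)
The plan is to reduce Corollary \ref{Uniunt} to Theorem \ref{INV} applied separately to each $v \in {\cal U}$. For a fixed $v \in {\cal U}$, continuous functional calculus gives a unital homomorphism $\iota_v: C(\T) \to M_{m(X)}(C(X))$ with $\iota_v(z) = v$. Composing with the amplified $\phi, \psi$ yields $\Phi_v = (\phi \otimes {\rm id}_{M_{m(X)}}) \circ \iota_v$ and $\Psi_v = (\psi \otimes {\rm id}_{M_{m(X)}}) \circ \iota_v$, both unital $\dt'$-${\cal G}_\T$-multiplicative maps $C(\T) \to M_{m(X)}(C)$ provided $\dt$ is small and ${\cal G}$ large; here the weak semiprojectivity of $C(\T)$ (cf.\ Remark \ref{ReuniT}) takes care of the approximate multiplicativity. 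An application of \ref{INV} to the pair $(\Phi_v, \Psi_v)$ with tolerance $\ep$ produces a unitary $w \in M_{m(X)}(C)$ with $\|w \Phi_v(z) w^* - \Psi_v(z)\| < \ep$, which is exactly the conclusion $\|w (\phi \otimes {\rm id})(v) w^* - (\psi \otimes {\rm id})(v)\| < \ep$.

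To set up the hypotheses of \ref{INV}, for each $v$ invoke \ref{INV} with $\ep$ and a nondecreasing $\Delta_v: (0,1) \to (0,1)$ chosen below; this yields parameters $\eta_v, \gamma_{1,v}, \gamma_{2,v}, N_v$ and a finite set ${\cal H}_v \subset C(\T)_{s.a.}$. We take $\eta, \gamma_1, \gamma_2, N$ in \ref{Uniunt} to be the appropriate extrema over the finite set ${\cal U}$. The trace estimate $|\tau \circ \Phi_v(g) - \tau \circ \Psi_v(g)| < \gamma_{1,v}$ for $g \in {\cal H}_v$ is inherited from the assumption on $\phi, \psi$ by enlarging ${\cal H}$ in \ref{Uniunt} to contain the (scalar) entries of $g(v) \in M_{m(X)}(C(X))_{s.a.}$ for $g \in {\cal H}_v$ and $v \in {\cal U}$. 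The determinant estimate ${\rm dist}(\Phi_v^{\ddag}(\bar z), \Psi_v^{\ddag}(\bar z)) < \gamma_{2,v}$ is exactly the hypothesis ${\rm dist}(\overline{\langle \phi(v)\rangle}, \overline{\langle \psi(v)\rangle}) < \gamma_2$ after including $[v]$ in ${\cal V}$ and taking $\gamma_2 \le \min_v \gamma_{2,v}$. The $KK$-agreement $[\Phi_v]|_{{\cal P}_\T} = [\Psi_v]|_{{\cal P}_\T}$ follows from $[\phi]|_{\cal P} = [\psi]|_{\cal P}$ together with $[v] \in {\cal V} \subset {\cal P}$, after including the image of ${\cal P}_\T$ under $[\iota_v]$ in ${\cal P}$.

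The principal obstacle is the spectral distribution hypothesis of \ref{INV}, which demands $\mu_{\tau \circ \Phi_v}(I_r) \ge \Delta_v(r)$ on open arcs $I_r \subset \T$ of length $r \ge \eta_v$, whereas the given hypothesis only controls $\mu_{\tau \circ \phi}(O_s) \ge \Delta(s)$ on balls $O_s \subset X$. Observe that for any open arc $I_r$ in $\T$,
\[
\mu_{\tau \circ \Phi_v}(I_r) = \frac{1}{m(X)}\,\mu_{\tau \circ \phi}\bigl(\{x \in X : \mathrm{Spec}(v(x)) \cap I_r \neq \emptyset\}\bigr),
\]
so what is needed is a quantitative non-degeneracy of the eigenvalue map of $v$: for every arc $I_r$ with $r$ bounded below, the preimage on the right must contain an open ball of radius at least $\eta$. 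This is purely a property of the fixed unitary $v$ and, by a compactness argument on the continuous $v$, determines suitable $\eta_v$ and $\Delta_v(r)$ with $\Delta_v(r) > 0$ for $r \ge \eta_v$. Choosing $\eta = \min_v \eta_v$ in \ref{Uniunt} (possibly enlarged so that $\eta \ge \max_v \eta_v$ as well) and assembling the data above, \ref{INV} furnishes the required unitary $w$ for each $v$, completing the proof.
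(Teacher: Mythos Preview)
Your proposal follows the same strategy as the paper: reduce to Theorem~\ref{INV} (via Remark~\ref{ReuniT}) by composing with the functional-calculus homomorphism $\iota_v: C(\T)\to M_{m(X)}(C(X))$, $z\mapsto v$, and then verify the hypotheses of \ref{INV} for the pair $\Phi_v,\Psi_v$. The paper calls this map $\lambda_v$ and handles the trace, determinant, and $\underline{K}$-hypotheses exactly as you do.

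The only substantive difference is in how $\Delta_v$ is manufactured. The paper argues abstractly via the compact sets $T_r=\{\tau\in T(C(X)):\mu_\tau(O_s)\ge\Delta(s)\text{ for }s\ge r\}$ and the decreasing ideals $J_r$, using $\bigcap_r J_r=\{0\}$ together with $[v]\neq 0$ to get $f_j(v)\notin J_{F_1(r)}$ and then $\Delta_v(r)=\inf\{\tau(f_j(v)):\tau\in T_{F_1(r)}\}>0$. Your route through eigenvalue preimages is more geometric and equally valid; the implicit fact you are using is that $[v]\neq 0$ forces $\mathrm{Spec}(v)=\T$ (otherwise $v\in U_0$), so each preimage is nonempty and open, and a compactness sweep over arc centers gives the uniform radius.

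Two small corrections. First, your displayed formula is only a lower bound, not an equality: one has
\[
\mu_{t\circ\Phi_v}(I_r)=\frac{1}{m(X)}\int_X\bigl|\{j:\lambda_j(x)\in I_r\}\bigr|\,d\mu_{\tau\circ\phi}(x)\ \ge\ \frac{1}{m(X)}\,\mu_{\tau\circ\phi}\bigl(\{x:\mathrm{Spec}(v(x))\cap I_r\neq\emptyset\}\bigr),
\]
which is what you need. Second, in your final sentence you conflate $\eta$ (a ball radius in $X$) with $\eta_v$ (an arc length in $\T$). The correct order of choices, matching the paper's ``choose $\eta<F_1(\eta_1)$'', is: first fix a nondecreasing $s(r)>0$ so that every arc of length $r$ has preimage containing a ball of radius $s(r)$; set $\Delta_v(r)=\tfrac{1}{m(X)}\Delta(s(r))$; apply \ref{INV} to obtain $\eta_v$; then choose $\eta\le\min_{v\in{\cal U}} s(\eta_v)$.
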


\begin{proof}

Fix $v\in {\cal U}.$
Let $r\in (0,1).$ Choose a $r/2$-dense set $\{s_1, s_2,...,s_n\}$ in $\T.$
Let $f_j$ be in $C(\T)_+$ such that $0\le f_j\le 1,$ $f_j(s)=1$ if $|s_j-s|\le r/5$ and $f_j(s)=0$ if $|s_j-s|\ge r/2,$
$j=1,2,...,n.$

Let $T_r=\{\tau\in T(C(X)): \mu_{\tau}(O_r)\ge \Delta(r)\}.$
It is easy to see that $T_r$ is a compact subset of $T(C(X))$ (in the weak*-topology).
Let
$$
I_r=\{f\in C(X): \tau(f^*f)=0\tforal \tau\in T_r\}.
$$
Then $I_r$ is a closed two-sided ideal of $C(X).$
Put
$$
J_r=\{f\in M_{m(X)}(C(X)): (\tau\otimes Tr_{m(X)})(f^*f)=0\tforal \tau\in T_r\}.
$$
Note that $J_r\subset J_{r'}$ if $0<r'<r$ and it is easy to check
that $\cap_{1>r>0} J_r=\{0\}.$
There is $1>F_1(r)>0$ such that
\beq\label{Uniuni-10}
\pi(f_j(v))\not=0,\,\,\,j=1,2,...,n,
\eneq
where $\pi: M_{m(X)}(C(X))\to M_{m(X)}(C(X))/J_{F_1(r)}$ is the quotient map.
Therefore $\tau(f_j(v))>0$ for all $\tau\in T_{F_1(r)}.$
Define
\beq\label{Uniuni-11}
\Delta_v(r)=\inf\{\tau(f_j(v)): \tau\in T_{F_1(r)},j=1,2,...,n\}.
\eneq
Since $T_{F_1(r)}$ is compact, $\Delta_v(r)>0.$
We note that
\beq\label{Uniuni-12}
\mu_{\tau}(O_r)\ge \Delta_v(r)
\eneq
for all open arcs $O_r.$
Define $\Delta_1: (0, 1)\to (0,1)$ by
$\Delta_1(r)=\inf_{\{v\in {\cal U}\}}\Delta_v(r).$ Then $\Delta_1$ is an increasing map.

Now let $\eta_1>0$ (in place of $\eta$), $\gamma_1'>0$ (in place of
$\gamma_1$), $\gamma_2>0,$ ${\cal H}_1$ (in place of ${\cal H}$) be a finite subset  and $N\ge 1$ be an integer required by \ref{INV} for
$\Delta_1$ and $\ep$ given.
Also let $\dt_1>0$ (in place of $\dt$) and ${\cal G}_1$ (in place of ${\cal G}$) be finite subset given by \ref{ReuniT} for $\ep.$

Choose $1>\eta>0$ so that $\eta<F_1(\eta_1).$ Choose
$\gamma_1=\gamma_1'/m(X)$ and
$$
{\cal H}=\{h_{i,i}\in C(X)_{s.a}: (h_{i,j})=h(v)\,\,\,\text{ for \,\,\,some}\,\,\, h\in {\cal H}\andeqn v\in {\cal U}\}.
$$
Choose $\dt=\dt_1/m(X)^2$  and
$$
{\cal G}=\{g_{i,j}: (g_{i,j})=g(v) \,\,\,\text{for\,\,\,some}\,\,\,
g\in {\cal G}\andeqn v\in {\cal U}\}.
$$
Choose a finite subset ${\cal P}\subset \underline{K}(C(X))$ so that
it contains $[{\rm id}_{C(X)}]\in K_0(C(X))$ and
$\{[v]\in K_1(C(X)): v\in {\cal U}\}.$ Choose ${\cal V}={\cal U}.$

 Now, if $\phi, \psi: C(X)\to C=PM_m(C(Y))P$ are as described in the lemma, where
${\rm rank}P\ge N,$ and ${\rm dim}Y=d,$ which satisfy the assumption
for the above chosen $\eta,$ $\gamma_1,\gamma_2,$ $\dt,$ ${\cal G},$
${\cal H},$ ${\cal P},$ ${\cal V}$ and $N,$
define $\lambda_v: C(\T)\to M_{m(X)}(C(X))$  by
$\lambda_v(f)=f(v)$ for all $f\in C(\T).$ Define $\phi_{v}=(\phi\otimes {\rm id}_{M_{m(X)}})\circ \lambda_v$ and $\psi_v: (\psi\otimes {\rm id}_{M_{m(X)}})\circ \lambda_v.$ Then we apply \ref{ReuniT} to $\phi_v$ and
$\psi_v.$ The lemma follows.

\end{proof}

%Let ${\cal Y}$ be a class of finite CW complexes.

\begin{lem}\label{Sd1uni} (Uniqueness statement for ${\rm dim}Y\le d$)
Let $X$ be a compact metric space, let ${\cal F}\subset C(X)$ be a finite subset and let $\ep>0$ be a positive number. Let $d\ge 0$ be an integer and let
$\Delta: (0,1)\to (0,1)$ be a nondecreasing map.
%There exists $\eta_1>0$ satisfying the following:
%for any $\sigma_1>0,$ there exists $\eta_2>0$ satisfying the following:
%for any $\sigma_2>0,$ there exists $\eta_3>0$ satisfying the following:
%for any $\sigma_3>0,$ there exists $\eta_4>0$ satisfying the following:
%For any  $\sigma_4>0,$
There exist $\eta>0,$  $\gamma_1>0,$ $\gamma_2>0,$
$\dt>0,$ a finite subset ${\cal G}\subset C(X)$  and a finite subset ${\cal P}\subset \underline{K}(C(X)),$
 a finite subset ${\cal H}\subset C(X)_{s.a.},$  a finite subset ${\cal V}\subset K_1(C(X))\cap {\cal P},$ an integer
 $N\ge 1$
 and an integer $K\ge 1$ satisfying the following:
For any finite CW complex  $Y$ with ${\rm dim}Y\le d,$  any projection $P\in M_m(C(Y))$
with ${\rm rank}P(y)\ge N$ for all $y\in Y$ and two unital $\dt$-${\cal G}$-multiplicative \morp s
$\phi, \psi: C(X)\to C=PM_m(C(Y))P$  such that
\beq\label{nNT-1-1}
[\phi]|_{\cal P}=[\psi]|_{\cal P},
%=[h]|_{\cal P}
%\andeqn [h]|_{F_mK_*(C(X))}=0\tforal m\ge 4
\eneq
%for some unital \hm\, $h: C(X)\to PM_n(C(Y))P,$
\beq\label{nNT-1}
\mu_{\tau\circ \phi}(O_r)\ge \Delta(r),\,\,\,
\mu_{\tau\circ \psi}(O_r)\ge \Delta(r),
\eneq
for all $\tau\in T(M_m(C(Y)))$ and for all $r\ge \eta,$
\beq\label{nNT-2}
|\tau\circ \phi(g)-\tau\circ \psi(g)|<\gamma_1
\tforal g\in {\cal H}\andeqn\\
{\rm dist}(\overline{\langle \phi(u)\rangle},\overline{\langle \psi(u)\rangle})
<\gamma_2\tforal u\in J_{c(G({\cal V})}({\cal V})\\
%{\rm dist}(\overline{\langle \phi(u)\rangle}(x), SU_s(C)/CU(M_s(C)))&<&\gamma_2
\eneq
%for all $ x\in J_{c(G({\cal V})}({\cal V})\cap SU_{s}(C(X))/CU(M_{s}(C(X))),$
%where $s\ge \max\{R(G({\cal V})),d\},$
%where we assume that elements in $J_c({\cal V})$ are represented by unitaries in $M_{s'}(C(X)),$
then there exists a unitary $W\in M_K(C)$ such that
\beq\label{NNT-3}
\|W\phi^{(K)}(f)W^*-\psi^{(K)}(f)\|<\ep\tforal f\in {\cal F},
\eneq
where
$$
\phi^{(K)}(f)={\rm diag}(\overbrace{\phi(f),\phi(f),...,\phi(f)}^K)\andeqn
\psi^{(K)}(f)={\rm diag}(\overbrace{\psi(f), \psi(f),...,\psi(f)}^K)
$$
for all $f\in C(X).$
%provided the following also hold:
%$\psi=h$ is a unital \hm, or
%\beq\label{MT-3+1}
%[h]|_{F_mK_*(C(X))}=0\tforal m\ge 4\andeqn\\\label{MT-3+2}
%{\rm dist}(\overline{\langle \phi(u)\rangle}(x), SU_s(C)/CU(M_s(C)))<\gamma_2
%\eneq
%for all $ x\in J_{c(G({\cal V})}({\cal V})\cap SU_{s}(C(X))/CU(M_{s}(C(X))),$
%where $s\ge \max\{R(G({\cal V})),d\}.$

%Moreover,  $\eta>0,$  $\gamma_1>0,$ $\gamma_2>0,$
%$\dt>0,$  ${\cal G}$  ${\cal P},$
%${\cal H},$   ${\cal V},$
 %$N$
 %and  $K$  do not depend on ${\cal Y}.$

\end{lem}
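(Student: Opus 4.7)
My plan is to prove this by induction on the dimension $d$ of $Y$, with the Basic Homotopy Lemma from Section 11 providing the crucial inductive step. The base cases and induction are in the spirit of the Gong--Li uniqueness technology, but have to be adapted to allow the new obstruction in $SU_n/CU$ that is flagged in Section 2.

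\textbf{Base cases.} When $d=0$, $Y$ is a finite disjoint union of points and $C=PM_m(C(Y))P$ is a finite direct sum of matrix algebras. The maps $\phi,\psi$ decompose fibrewise into finite-dimensional representations of $C(X)$; the measure condition $\mu_{\tau\circ\phi}(O_r)\ge\Delta(r)$ and its counterpart for $\psi$, together with the tracial closeness, force the two families of spectral multiplicities to be approximately equal, and after amplification by a suitable $K$ a Weyl-type argument (in the spirit of Lemma \ref{Lpopa}) produces the intertwining unitary. For the case $Y=[0,1]$ (or a finite union of intervals), I would combine the $d=0$ result applied along a fine partition of the interval with Theorem \ref{INV} to interpolate; the $K_1$-matching on $J_{c(G({\cal V}))}({\cal V})$ is used exactly to arrange the hypothesis of \ref{INV} after identifying each unitary in ${\cal U}\cap K_1(C(X))$ with a map out of $C(\T)$.

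\textbf{Inductive step.} Assume the result for all finite CW complexes of dimension $\le d-1$, with some larger amplification $K'$ and sharper tolerances $\eta',\gamma_1',\gamma_2',\delta'$ and larger finite subsets ${\cal G}',{\cal H}',{\cal P}',{\cal V}'$. Given $\phi,\psi\colon C(X)\to PM_m(C(Y))P$ with $\dim Y=d$, let $s_{d-1}\colon C(Y)\to C(Y^{(d-1)})$ be the restriction and set $\phi'=(s_{d-1}\otimes\mathrm{id})\circ\phi$, $\psi'=(s_{d-1}\otimes\mathrm{id})\circ\psi$. The hypotheses of the lemma descend to $\phi',\psi'$ (the measure condition via Lemma \ref{nLmeasure}, the unitary condition via naturality of the quotient $U/CU$ and Proposition \ref{sucum}), so the inductive hypothesis produces a unitary $W_0$ over $Y^{(d-1)}$, after $K'$-amplification, which intertwines $(\phi')^{(K')}$ and $(\psi')^{(K')}$ to within $\epsilon'$ on ${\cal F}$. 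The final step is to extend $W_0$ across each $d$-cell of $Y$: each cell attaches via a map $\partial D^d\to Y^{(d-1)}$, and extending $W_0$ over $D^d$ is a homotopy problem for the intertwining path of unitaries on the boundary. This is exactly the content of the version of the Basic Homotopy Lemma prepared in Section 11: the $K_1$-type obstructions vanish because $[\phi]|_{\cal P}=[\psi]|_{\cal P}$, and the new $SU_n/CU$ obstruction vanishes because of the hypothesis on $\mathrm{dist}(\overline{\langle\phi(u)\rangle},\overline{\langle\psi(u)\rangle})$ for $u\in J_{c(G({\cal V}))}({\cal V})$, together with Corollary \ref{CSU} and Proposition \ref{SUCU}. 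A final amplification absorbs the errors accumulated across all $d$-cells.

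\textbf{The main obstacle} I foresee is the orchestration of parameters: the inductive hypothesis must be invoked on $Y^{(d-1)}$ with tolerances sharp enough that, after combining with the cell-by-cell extension via the Basic Homotopy Lemma, the final bound $\epsilon$ on ${\cal F}$ is actually achieved. In particular, one must propagate backwards the $\delta,\gamma_1,\gamma_2,\eta$ required at dimension $d-1$ into admissible choices at dimension $d$, and one must verify that the inherited finite subsets ${\cal G}',{\cal H}',{\cal P}',{\cal V}'$ on $C(X)$ can be fixed independently of $Y$. The most delicate point is the new $SU_n/CU$ obstruction, which has no analog in the classical Gong-style proof; threading it through the induction requires the machinery built up in Section 2 (Propositions \ref{SUCU}, \ref{groupsu}, \ref{sucum} and Corollary \ref{CSU}) so that, on each $d$-cell, the determinant-valued defect of $W_0$ along the attaching sphere can be trivialised without disturbing the other matching conditions. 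The amplification $K$ must therefore grow with $d$, compounding both the $K_1$-level matching at each skeletal stage and the Basic Homotopy Lemma's own amplification requirement.
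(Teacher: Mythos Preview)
Your overall architecture---induction on $d$, base cases at points and intervals, and extension across top cells via the Basic Homotopy Lemma---matches the paper's. But there is a genuine gap in your inductive step, and it is precisely the point where the paper does the hardest work.

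You write that after obtaining the intertwiner $W_0$ over $Y^{(d-1)}$, ``the $K_1$-type obstructions vanish because $[\phi]|_{\cal P}=[\psi]|_{\cal P}$''. This is not correct. The hypothesis $[\phi]|_{\cal P}=[\psi]|_{\cal P}$ constrains $\phi$ against $\psi$, but it says nothing about the Bott invariant ${\rm Bott}(\phi, W_0|_{\partial D})$ of the \emph{intertwining unitary} restricted to the boundary sphere of a $d$-cell. The unitary $W_0$ is produced by the lower-dimensional uniqueness statement as a black box; its class on each attaching sphere is completely uncontrolled, and in general is nonzero. The same applies to the ${\rm Bu}$ obstruction: it does not vanish merely from the $U/CU$ hypothesis on $\phi$ versus $\psi$.

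What the paper actually does is insert a correction step between obtaining $W_0$ and invoking the Homotopy Lemma. After getting $w$ on the skeleton, it computes the Bott-type defects $\alpha_0,\alpha_1$ of $w$ (relative to auxiliary intertwiners $X_{0,j},X_{1,j}$ of individual projections and unitaries), packages them into a $KK$-element $\alpha$ and a $\Gamma$-map into $U/CU$, and then invokes the \emph{existence} statement Lemma \ref{Sd1ext} to manufacture a second unitary $U$ with ${\rm Bott}(\phi^{(K_4)},U)=-K_4\alpha$ and prescribed ${\rm Bu}$ data. Only for the corrected product $W^{(K_1K_4)}U^{(K_1)}$ do the computations (\ref{PM-n17})--(\ref{PM-n20+}) show that the Bott and ${\rm Bu}$ obstructions on each boundary sphere cancel, so that the Homotopy Lemma \ref{Sd1hom} can be applied cell by cell. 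This existence-theorem correction is the missing ingredient in your sketch; without it the induction does not close. Note also that the paper works not with $Y^{(d-1)}$ alone but with a thickened skeleton $Y_d'$ together with the cell centers $\xi_i$, which is what makes the point-evaluation comparisons (items (2)--(6) in the proof) available.
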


Lemma \ref{Sd1uni} will be proved in Section 12.

We actually will use a revised version of the above statement:

\begin{lem}\label{Sd1uniR}
The same statement \ref{Sd1uni} holds if
(\ref{nNT-1}) is replaced by
\beq\label{NT-1+1}
\mu_{\tau\circ \phi}(O_r)\ge \Delta(r)\tforal \tau\in T(PM_m(C(Y))P).
\eneq
\end{lem}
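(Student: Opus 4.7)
The plan is to reduce \ref{Sd1uniR} to \ref{Sd1uni} by showing that, with appropriate choices of constants, the measure lower bound on $\psi$ is automatic from the measure lower bound on $\phi$ together with the trace comparison hypothesis $|\tau\circ\phi(g)-\tau\circ\psi(g)|<\gamma_1$ for $g\in\mathcal{H}$. In other words, enlarging $\mathcal{H}$ and tightening $\gamma_1$ and $\eta$ is enough; the structural part of the uniqueness argument is already carried out in \ref{Sd1uni}.

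First I would set $\Delta'(r)=\Delta(r/3)/3$ (following the pattern of \ref{nLmeasure} and \ref{Lmeas}) and feed $\Delta'$, together with $\ep$, ${\cal F}$ and $d$, into \ref{Sd1uni} to obtain parameters $\eta_0,\gamma_{1,0},\gamma_{2,0},\dt_0$, finite subsets ${\cal G}_0,{\cal P}_0,{\cal H}_0,{\cal V}_0$ and integers $N_0,K_0$. I would then construct, for each $r\in\{\eta_0, 2\eta_0/3, \eta_0/3\}$ (or any fixed finite geometric scale down to well below $\eta_0$), a finite cover of $X$ by open balls $O(x_i,r/3)$, and for each such ball select a Urysohn-type function $h_{i,r}\in C(X)_+$ with $0\le h_{i,r}\le 1$, $h_{i,r}=1$ on $O(x_i,r/3)$, and $h_{i,r}=0$ off $O(x_i,2r/3)$. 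I would enlarge ${\cal H}_0$ to $\mathcal{H}={\cal H}_0\cup\{h_{i,r}\}_{i,r}$, set $\eta=\eta_0/3$, choose $\gamma_1\le\min\{\gamma_{1,0},\Delta(\eta_0/3)/2\}$, and leave $\gamma_2=\gamma_{2,0}$, $\dt=\dt_0$, ${\cal G}={\cal G}_0$, ${\cal P}={\cal P}_0$, ${\cal V}={\cal V}_0$, $N=N_0$, $K=K_0$.

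The key verification is then: for any open ball $O_s\subset X$ of radius $s\ge\eta_0$ and any $\tau\in T(PM_m(C(Y))P)$, pick $x_i$ with $O(x_i,s/3)\subset O_s$; then
\[
\mu_{\tau\circ\psi}(O_s)\;\ge\;\tau\circ\psi(h_{i,s})\;\ge\;\tau\circ\phi(h_{i,s})-\gamma_1\;\ge\;\mu_{\tau\circ\phi}(O(x_i,s/3))-\gamma_1\;\ge\;\Delta(s/3)-\gamma_1\;\ge\;\Delta(s/3)/2\;\ge\;\Delta'(s),
\]
using first that $h_{i,s}$ is supported in $O_s$, then the trace comparison, then that $h_{i,s}=1$ on $O(x_i,s/3)$, then the hypothesis on $\phi$, and finally the choice of $\gamma_1$. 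Thus $\psi$ satisfies the measure lower bound with $\Delta'$ for all balls of radius $\ge\eta_0$. Since $\phi$ satisfies it with $\Delta\ge\Delta'$ a fortiori, both $\phi$ and $\psi$ now meet the hypotheses of \ref{Sd1uni} with $\Delta'$, and the conclusion follows directly.

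The hard (but already absorbed) part is \ref{Sd1uni} itself; in the present reduction the only real subtlety is to ensure that the finitely many Urysohn functions $h_{i,r}$ can be chosen once and for all before the CW complex $Y$, projection $P$, and maps $\phi,\psi$ are specified. This is fine because $X$ is a fixed compact metric space in the statement and depends on nothing downstream. So the reduction goes through cleanly and no new uniqueness machinery is needed beyond what is already proved in Section 12.
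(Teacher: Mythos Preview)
Your approach is exactly the paper's approach: the paper's entire proof is the one-line citation ``By the virtue of 3.4 of \cite{Lnnewapp}, \ref{Sd1uniR} and \ref{Sd1uni} are equivalent,'' and that cited lemma is precisely the statement that a finite collection of Urysohn test functions in ${\cal H}$, together with a tight enough $\gamma_1$, transfers the measure lower bound from $\phi$ to $\psi$ at the cost of passing from $\Delta$ to a slightly smaller $\Delta'$. So the strategy is right.

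However, your explicit implementation has a genuine gap. In the verification you invoke $h_{i,s}$ for an \emph{arbitrary} radius $s\ge\eta_0$, but your finite set ${\cal H}$ only contains $h_{i,r}$ for $r\in\{\eta_0,2\eta_0/3,\eta_0/3\}$. For $s$ not in that list, $h_{i,s}\notin{\cal H}$, so the step $\tau\circ\psi(h_{i,s})\ge\tau\circ\phi(h_{i,s})-\gamma_1$ is unjustified. Using only the scale $\eta_0$ instead gives at best $\mu_{\tau\circ\psi}(O_s)\ge\Delta(\eta_0/3)-\gamma_1$, which for large $s$ need not dominate $\Delta'(s)=\Delta(s/3)/3$; nothing prevents $\Delta(s/3)/3$ from being much larger than $\Delta(\eta_0/3)$.

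The fix is to discretize \emph{upward} as well: choose finitely many scales $\eta_0=r_0<r_1<\cdots<r_M={\rm diam}(X)$ with $r_{k+1}\le 3r_k$, build the Urysohn functions at each $r_k$, and set $\Delta'(r)=\Delta(r/9)/2$ (say) rather than $\Delta(r/3)/3$. Then for $s\in[r_k,r_{k+1})$ the scale-$r_k$ functions give $\mu_{\tau\circ\psi}(O_s)\ge\Delta(r_k/3)-\gamma_1\ge\Delta(s/9)-\gamma_1\ge\Delta'(s)$. This is what 3.4 of \cite{Lnnewapp} packages; once you correct the discretization your argument is complete and coincides with the paper's.
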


By the virtue of 3.4 of \cite{Lnnewapp}, \ref{Sd1uniR} and \ref{Sd1uni} are equivalent.

\begin{lem} \label{Sd1ext}(Existence statement for ${\rm dim} Y=d$)
Let $d\ge 0$ be an integer. Let $X$ be a compact metric space with $C(X)=\lim_{n\to\infty} (C(X_n), \psi_n),$ where
$X_n$ is a finite CW complex, let ${\cal F}\subset C(X)$ be a finite subset, let $\ep>0$ and  $\dt_0>0$ be positive numbers.
Let $\Delta: (0,1)\to (0,1)$ be a nondecreasing map.
For any finite subset ${\cal P}\subset \underline{K}(C(X))$ and any finite subset ${\cal Q}\subset K_0(C(X)),$
there exists $\eta>0,$ $\dt>0$ and a finite subset ${\cal G}\subset C(X),$
%satisfying the following:
%for any $\sigma_1>0,$ there exists $\eta_2>0$ satisfying the following:
%for any $\sigma_2>0,$ there exists $\eta_3>0$ satisfying the following:
%for any $\sigma_3>0,$ there exists $\eta_4>0$ satisfying the following:
%For any  $\sigma_4>0,$
%for any finite subset ${\cal P}\subset \underline{K}(C(X)),$
%there exists
%$\sigma>0,$
an integer $n_0\ge 1,$ an integer $N\ge 1$ and an integer $K\ge 1$
%and
% a finite subset ${\cal H}\subset C(X)$
satisfying the following:
 %and a finite subset ${\cal U}\subset U_c(K_1(C(X)))$
 %for which $[{\cal U}]\subset {\cal P}$ satisfying the following:
For any finite CW complex $Y$ with ${\rm dim}Y\le d,$
any
unital $\dt$-${\cal G}$-multiplicative \morp\,  $\phi: C(X)\to C=PM_R(C(Y))P$ (for some integer $R\ge 1$ and
a projection $P\in M_R(C(X))$) such that
${\rm rank}P(y)\ge \max\{N,NK_1\}$ for all $y\in Y$ (for some integer
$K_1\ge 0$ so that (\ref{Sd1ext-2}) holds),
\beq\label{Sd1ext-1}
%(h_*)|_{F_mK_*(C(X))}=0\tforal m\ge 4\andeqn
\mu_{\tau\circ \phi}(O_r)\ge \Delta(r)
\eneq
for all balls $O_r$ of $X$ with radius $r\ge \eta$ and for all $\tau\in T(C),$
and for any
$\kappa\in KK(C(X_{n_0})\otimes C(\T), C(Y))$ with
\beq\label{Sd1ext-2}
\max\{ |\rho_{C}(\kappa(\boldsymbol{\bt}(g_i')))(t)|: 1\le i\le k \andeqn t\in T(C)\}\le K_1,
\eneq
where
$g_1,g_2,...,g_k$ generates a subgroup which contains the subgroup generated by $K_1(C(X))\cap {\cal P},$
$[\psi_{n_0, \infty}](g_i')=g_i$ for some $g_i'\in K_1(C(X_{n_0})),$
and,
%if  elements $\boldsymbol{\bt}(Q)$  can be represented by unitaries in $M_n(C(X)\otimes C(\T)),$
for any \hm\,
$$
\Gamma: G({\cal Q})\to U(M_n(C))/CU(M_n(C)),
%\Aff (T(PM_m(C(Y))P)/\overline{\rho_{C(Y)}(K_0(C(Y)))}
$$
where $n=\max\{R({\boldsymbol{\bt}}(G({\cal Q}))),d+1\}$ (see {\rm \ref{DUb}}) and ${\cal Q}\subset [\psi_{n_0, \infty}](K_0(C(X_{n_0})))$  with
$\kappa\circ \boldsymbol{\bt}|_{{\cal Q}}=\Pi_C\circ \Gamma|_{{\cal Q}},$
 there exists a
 unitary $u\in M_K(PM_R(C(Y))P)$ such that
 \beq\label{Sd1ext-3}
 \|[\phi^{(K)}(f), u]\|&<&\ep\tforal f\in {\cal F},\\
 {\rm Bott}(\phi^{(K)}\circ \psi_{n_0, \infty}, u)&=& K\kappa\circ \boldsymbol{\bt}\tand\\
{\rm dist}({\rm Bu}(\phi^{(K)},u)(x),K\Gamma(x)) &<&\dt_0 \tforal
x\in {\cal Q}.
 \eneq

\end{lem}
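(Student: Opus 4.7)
\medskip

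\noindent\textbf{Proof proposal for Lemma \ref{Sd1ext}.}
The plan is to realize the desired unitary $u$ as the image of $1\otimes z$ under a suitable unital $\dt'$-multiplicative \morp\, $\Psi\colon C(X_{n_0})\otimes C(\T)\to M_{K}(C)$ whose invariants encode $\kappa$ and $\Gamma$, and then to rotate its restriction $\Psi_0:=\Psi(\,\cdot\,\otimes 1)$ onto $\phi^{(K)}\circ \psi_{n_0,\infty}$ using the uniqueness statement \ref{Sd1uni}. First I will feed $({\cal F},\ep/2,\Delta/3,d)$ into \ref{Sd1uniR} to obtain uniqueness parameters $(\eta_1,\gamma_1,\gamma_2,\dt_1,{\cal G}_1,{\cal P}_1,{\cal H}_1,{\cal V}_1,N_1,K_1)$. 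Then enlarge the output data so that ${\cal P}\supset {\cal P}_1\cup {\cal V}_1\cup \boldsymbol{\bt}({\cal Q})$, and choose $n_0$ large enough that the subgroups generated by ${\cal P}_1$, ${\cal V}_1$ and ${\cal Q}$ all lie in the image of $[\psi_{n_0,\infty}]$ (so ${\cal G}_1,\,{\cal H}_1,\,{\cal V}_1$ can be lifted), and set $\eta,{\cal G},\dt,N$ to be the parameters required by \ref{sec1MC} for the next step.

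Next I would apply \ref{sec1MC} with $\Omega=X_{n_0}$ to produce a unital $\dt_1/2$-multiplicative \morp\, $\Psi\colon C(X_{n_0})\otimes C(\T)\to M_{K_1}(C)$ whose $KK$-invariant on $\boldsymbol{\bt}({\cal Q})$ equals $K_1(\kappa\circ \boldsymbol{\bt})$, whose $\ddag$-map on $J_c(\boldsymbol{\bt}({\cal Q}))$ agrees with $K_1\Gamma$ up to $\dt_0/2$, and whose trace distribution satisfies $|\tau\circ \Psi(h\otimes 1)-K_1(\tau\circ \phi^{(K_1)}\circ \psi_{n_0,\infty})(h)|<\gamma_1/2$ for $h$ in the lift of ${\cal H}_1$ and for all $\tau\in T(C)$. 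Crucially, the decomposition supplied by \ref{sec1MC} has $\Psi=\Psi_{00}\oplus \Psi_{01}\oplus(\Psi'\oplus\cdots\oplus \Psi')\oplus \Psi_2$ where the point$\otimes$unitary summand $\Psi_{00}$ carries the Bott content, the $L(f)g(1)$ summand $\Psi_{01}$ carries the remaining $KK$-data that is trivial on $C_0(\T_0)$, and the large-multiplicity interval-factoring part $\Psi'$ is used to enforce an appropriate measure distribution on $\Psi_0$. The upper bound (\ref{Sd1ext-2}) is precisely what allows the rank requirements of \ref{sec1MC} to be met with the $N$ chosen here.

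With both $\Psi_0:=\Psi(\,\cdot\,\otimes 1)$ and $\phi_0:=\phi^{(K_1)}\circ \psi_{n_0,\infty}$ in hand, I would verify the hypotheses of \ref{Sd1uniR}: they have matching $[\,\cdot\,]|_{{\cal P}_1}$ (from the construction of $\Psi$ and the equation $\kappa\circ \boldsymbol{\bt}|_{\cal Q}=\Pi_C\circ \Gamma|_{\cal Q}$), matching traces on ${\cal H}_1$ within $\gamma_1$, matching $\ddag$-values on $J_{c(G({\cal V}_1))}({\cal V}_1)$ within $\gamma_2$, and both enjoy the measure lower bound $\ge \Delta(r)/3$ for $r\ge \eta_1$. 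This yields a unitary $W\in M_{K_1}(M_{K_1}(C))$ with $\|W\Psi_0^{(K_1)}(f)W^*-\phi_0^{(K_1)}(f)\|<\ep/2$ for $f$ in a lift of ${\cal F}$. Setting $K=K_1\cdot K_1$ and $u:=W\Psi^{(K_1)}(1\otimes z)W^*\in U(M_K(C))$, the approximate commutation follows from the exact commutation inside $\Psi^{(K_1)}$ of the slots $\tilde f\otimes 1$ and $1\otimes z$, and conjugation invariance of Bott and Bu gives the required identities $\mathrm{Bott}(\phi^{(K)}\circ \psi_{n_0,\infty},u)=K\kappa\circ \boldsymbol{\bt}$ and ${\rm dist}(\mathrm{Bu}(\phi^{(K)},u)(x),K\Gamma(x))<\dt_0$ after absorbing the stabilization factors into the multiplicity $K$.

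The hard part is arranging the measure-distribution lower bound on the constructed $\Psi_0$ so that the uniqueness lemma applies: \ref{sec1MC} only produces an approximate trace match with $\phi\circ \psi_{n_0,\infty}$, not an a priori measure estimate, while $\Psi$ is a mixture of point-evaluations (which give concentrated measures) and interval-evaluations (one-dimensional measures). The resolution is to take $L_1$ large and enlarge ${\cal H}_1$ so it contains bump functions adapted to an $\eta_1/4$-net of $X$ in the style of \ref{nLmeasure}; then the $\gamma_1$-trace match together with the given lower bound (\ref{Sd1ext-1}) on $\phi$ forces $\mu_{\tau\circ \Psi_0}(O_r)\ge \Delta(r)/3$ for $r\ge \eta_1$, at a controlled loss in $r$ and density. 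Beyond this, the remaining work is bookkeeping: ordering the constants $(\eta_1,\dt_1,K_1,N_1)$ versus $(\eta,\dt,N,K)$ and keeping the $KK$-norm bound (\ref{Sd1ext-2}) compatible with the rank requirements of \ref{sec1MC}.
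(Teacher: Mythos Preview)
Your overall plan---manufacture an almost-multiplicative map $\Phi$ from $C(X\times\T)$ whose $KK$ and $\ddag$ data encode $\kappa$ and $\Gamma$ and whose restriction to $C(X)$ matches the trace data of $\phi$, then conjugate that restriction onto $\phi$ via the uniqueness lemma and take $u$ to be the conjugated image of $1\otimes z$---is exactly the paper's strategy. Your identification of the ``hard part'' (transferring the measure lower bound from $\phi$ to $\Psi_0$) and its resolution via bump functions is also what the paper does, citing 3.4 of \cite{Lnnewapp} for that step.

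Two points deserve attention. First, the detour through $M_{K_1}(C)$ and the pre-scaling of the invariants by $K_1$ are unnecessary. The paper builds $\Phi\colon C(X\times\T)\to C$ with $[\Phi\circ\psi_{n_0,\infty}]=\kappa_1$, where $\kappa_1$ splices $[\phi\circ\psi_{n_0,\infty}]$ on $\underline{K}(C(X_{n_0}))$ with $\kappa|_{\boldsymbol{\bt}}$; then it applies uniqueness to $\phi$ and $\psi:=\Phi|_{C(X)}$ to obtain $W\in M_K(C)$ and sets $u=W^*v^{(K)}W$ with $v\approx\Phi(1\otimes z)$. The $K$-fold diagonal of $v$ already contributes the factor $K$ in ${\rm Bott}(\phi^{(K)}\circ\psi_{n_0,\infty},u)=K\kappa\circ\boldsymbol{\bt}$, so there is no need to build $K_1\kappa$ into $\Psi$. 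Your route yields $K=K_1^2$ rather than $K=K_1$; not wrong, just wasteful.

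Second, and more substantively, you invoke \ref{sec1MC} with $\Omega=X_{n_0}$, which produces $\Psi_0\colon C(X_{n_0})\to M_{K_1}(C)$; but the uniqueness parameters $(\eta_1,\gamma_1,\gamma_2,\dt_1,\ldots,K_1)$ were drawn from \ref{Sd1uniR} with source $C(X)$. Lifting ${\cal F},{\cal G}_1,{\cal H}_1$ to $C(X_{n_0})$ does not transfer the conclusion of \ref{Sd1uniR}, because $\Psi_0$ need not factor through $C(X)$. The paper sidesteps this by applying \ref{sec1MT} with the \emph{limit} space $X\times\T$ as the source: the resulting $\Phi$ is then defined on $C(X\times\T)$, its restriction $\psi$ lives on $C(X)$, and the uniqueness lemma for $C(X)$ applies directly. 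The integer $n_0$ is supplied by \ref{sec1MT} itself rather than chosen in advance. Your argument can be repaired either by this route or by fixing $n_0$ first and then drawing the uniqueness parameters from \ref{Sd1uniR} applied to $C(X_{n_0})$.
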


%First, we note that \ref{Sd1uni} has been proved the family ${\cal Y}$ which contains $[0,1]$ and finitely many points.

\vspace{0.2in}

{\bf The proof  of \ref{Sd1ext} holds for ${\rm dim}Y\le d$  under assumption
that \ref{Sd1uni} holds for  ${\rm dim }Y\le d.$}

 \begin{proof}
 We will apply \ref{sec1MT} and the assumption that \ref{Sd1uni} holds for all finite CW complexes $Y$
 with ${\rm dim}Y\le d.$
 To simplify notation, we may assume, without loss of generality, that $Y$ is connected.

We assume that $\ep<\dt_0/2.$
 We may assume that $1_{C(X)}\in {\cal F}.$
 Let $\Delta_1(r)=\Delta(r/3)/3$ for all $r\in (0,1).$
 Let ${\cal P}\subset \underline{K}(C(X))$ be a finite subset.
 Let $(\ep', {\cal G}', \boldsymbol{\bt}({\cal P}))$ be
 a $KL$-triple for $C(X)\otimes C(\T).$ To simplify notation, by choosing
 a smaller $\ep$ and larger ${\cal F},$ we may assume
 that $(\ep, {\cal F}',\boldsymbol{\bt}({\cal P}))$ is
 a $KL$-triple for $C(X)\otimes C(\T),$ where
 ${\cal F}'=\{f\otimes g: f\in {\cal F}\andeqn g\in \{1_{C(\T)}, z, z^*\}\}.$ To simplify notation, without loss of generality, we may also assume that
${\cal Q}=K_0(C(X))\cap {\cal P}.$

There is $n_0'\ge 1$ such that ${\cal P}\subset [\psi_{n_0', \infty}](\underline{K}(C(X_{n_0'}))).$
Let $\eta_1>0$ (in place of $\eta$) be as in
\ref{Sd1uni} for $\ep/16,$ ${\cal F}$ and $\Delta_1.$
%Fix $1>\sigma_1>0.$ Let $\eta_2>0$ for $\sigma_1'=\sigma_1/4$ (in place of $\sigma_1$)
%Let $\sigma_1'>\sigma_2>0.$ Let $\eta_3$ for $\sigma_2'=\sigma_2/4$ (in place $\sigma_2$). Let $\sigma_2'>\sigma_3>0.$ Let
%$\eta_4>0$ for $\sigma_3/4.$ Let $0<\sigma_4'<\sigma_3'.$
%Let $\sigma_4=\sigma_4'/4.$
 It follows from 3.4 of \cite{Lnnewapp} that there is  finite subset
${\cal H}_0\subset C(X)_{s.a.}$ and $\sigma_0>0$ such that, for any unital positive
linear maps $\psi_1, \psi_2: C(X)\to C$ (for any unital stably finite
\CA\, $C$),
\beq\label{Sd1extp-1}
|\tau(\psi_1(a))-\tau(\psi_2(a))|<\sigma_0\tforal a\in {\cal H}
\eneq
implies that
\beq\label{Sd1extp-2}
\mu_{\tau\circ \psi_2}(O_r)\ge \Delta_1(r)\tforal \tau\in T(C)
\eneq
and  all open balls $O_r$ of $X$ with $r\ge \eta_1,$
provided that
\beq\label{Sd1extp-2+}
\mu_{\tau\circ \psi_1}(O_r)\ge \Delta(r)
\eneq
for all open balls $O_r$ with radius $r\ge \eta_1/4$ and for any
tracial state $\tau$ of $C.$

Let $\eta=\eta_1/4.$
Let  $\gamma_1'>0$ (in place $\gamma_1$) and $\gamma_2'>0$ (in place
$\gamma_2$) be as given  by
\ref{Sd1uni} for $\ep/16$ (in place of $\ep$), $X,$ $\Delta_1.$
Let $\sigma=\min\{\gamma_1'/4, \gamma_2'/4, \sigma_0, \dt_0/2\}.$
Let $\dt_1>0$ (in place of $\dt$), ${\cal G}_1\subset C(X)$ (in place of ${\cal G}$) be a finite subset,
${\cal P}_1\subset \underline{K}(C(X))$ be a finite subset,  ${\cal H}\subset C(X)_{s.a.}$
be a finite subset, ${\cal V}\subset K_1(C(X))\cap {\cal P}$
be a finite subset, $N_1$ (in place of $N$) be an integer and $K\ge 1$ be an integer as given by \ref{Sd1uni} for $\ep/16$ (in place of $\ep$), ${\cal F},$
$\Delta_1$ (and for $X$).
We may assume that ${\cal F}\subset {\cal G}'.$
Put ${\cal H}_1={\cal H}\cup{\cal H}_0.$
Let ${\cal U}$ be a finite subset of $U(M_L(C(X)))$ such that
$\overline{{\cal U}}=J_{c(G({\cal V})}({\cal V}).$
Let $G_{\cal U}$ be the subgroup of $K_1(C(X)\otimes C(\T))$ generated by
$\{[u\otimes 1]:u\in {\cal U}\}.$  Let $R(G_{\cal U})$ be as defined in \ref{DUb}.
We may assume that $L\ge \max\{n, R(G({\cal U}))\}\ge {\rm dim}Y+1.$

%Let $s_1, s_2,..., s_{l}\in \rho_{C(X)}(K_0(C(X)).$

  Let $n_0$ (in place of $n$) be an integer,\,$q_1, q_2,...,q_s\in C(X_{n_0}\times \T)$ be a finite set of mutually orthogonal projections
with $1_{C(X_{n_0}\times \T)}=\sum_{j=1}^sq_j$ which represent each connected component of $X_{n_0}\times \T,$
$s_1,s_2,...,s_l$ (in place of $g_1, g_2,...,g_k$), $G_0$ and $N_2\ge 1$ (in place of $N$) be the integer given  by \ref{sec1MT} for $X\times \T$ (in place of $X$),
 $\ep/16$ (in place of $\ep$),
${\cal G}_2={\cal G'}\otimes \{1_{C(\T)}, z, z^*\}$
(in place of ${\cal G}$), ${\cal P}_1\oplus \boldsymbol{\bt}({\cal P})$ (in place of ${\cal P}$), ${\cal H}_1\otimes \{1_{C(\T)}\}$
(in place of ${\cal H}$), ${\cal U}\otimes \{1_{C(\T)}\}$ (in place of ${\cal U}$), $\sigma/L$ (in place of $\sigma_1$ and $\sigma_2$) and integer $L_1.$ We may assume that $n_0\ge n_0'.$
Note that there are mutually orthogonal projections $q_1', q_2',...,q_s'\in C(X_{n_0})$ such that
$q_i=q_i'\otimes 1_{C(\T)},$ $ i=1,2,...,s.$
We may assume that
\beq\label{sd1ext-4+1}
G_0=\Z^{k_1}\oplus \Z^{k_2}\oplus {\rm Tor}(G_0),
\eneq
where $\Z^{k_1}\subset {\rm ker}\rho_{C(X)}$ and $\Z^{k_2}\in \boldsymbol{\bt}(K_1(C(X))).$
We may further write that ${\rm Tor}(G_0)=G_{00}\oplus G_{01},$ where
$G_{00}\subset {\rm ker}\rho_{C(X)}$ and $G_{01}\subset \boldsymbol{\bt}(K_1(C(X))).$
We assume that $\Z^{k_1}$ is generated by $s_1,s_2,...,s_{k_1}$ and
$\Z^{k_2}$ is generated by $s_{k_1+1}, s_{k_1+2},...,s_{k_2}.$ Write $s_{k_1+i}=\boldsymbol{\bt}(g_i),$  where $g_i\in K_1(C(X)),$
$i=1,2,..., {k_2}.$
%and $g_{k_1+k_2+1}, g_{k_1+k_2+2},..., g_l$ generates $G_{01}.$
Without loss of generality, to simplify notation,
we may assume that $G_0\subset [\psi_{n_0,\infty}](K_0(C(X_{n_0})))\otimes C(\T)),$  $\Z^{k_1}\in [\psi_{n_0, \infty}](K_0(C(X_{n_0})))$
and $\Z^{k_2}\subset \boldsymbol{\bt}([\psi_{n_0, \infty}](K_1(C(X_{n_0})))).$ Let $s_1',s_2',...,s_{k_1}'\in
K_0(C(X_{n_0}))$ and $g_{k_1+1}', g_{k_1+2}',...,g_{k_1+k_2}'\in K_1(C(X_{n_0}))$ such that $[\psi_{n_0,\infty}](s_i')=s_i,$
$i=1,2,...,k_1$ and $[\psi_{n_0,\infty}](g_{i}')=g_i,$
$i=1,2,...,k_2.$

Let $N=\max\{N_1, N_2({\rm dim}Y+1)\}.$
Let $\dt_2>0$ and ${\cal G}_3\subset C(X)$ be a finite subset such that for any unital $\dt_2$-${\cal G}_3$-multiplicative
\morp\, $L': C(X)\to C'$ (for any unital \CA\, with $T(C')\not=\emptyset$),
\beq\label{sd1ext-4+2}
\tau([L'](s_j))<1/2N\tforal \tau\in T(C'),\,\,\,j=1,2,...k_1,
\eneq
(see 10.3 of \cite{LnApp}).
We may assume that ${\cal F}\cup {\cal G}_1\subset {\cal G}_3.$
Let $G_4=\{f\otimes g: f\in {\cal  G}_2, g=1_{C(\T)}, z, z^*\}.$ With even smaller $\dt_2,$ we may assume that
for any unital $\dt_2$-${\cal G}_4$-multiplicative \morp\, $L'': C(X)\otimes C(\T)\to C',$
$[L''\circ \psi_{n_0, \infty}]$ is well defined on $\underline{K}(C(X_{n_0})\otimes C(\T)).$
%\beq\label{sd1ext-4-3}
%\tau([L'']
%
%(\boldsymbol{\bt}(g_j)))=0\tforal \tau\in T(C'), \,\,\, j=k_1+k_2+1, %k_1+k_2+2,...,k,
%\eneq
%since $G_{01}$ is a torsion group.
%Choose $\gamma>0$ so that
%\beq\label{Sd1extp-3}
%\gamma< 1/N({\rm dim}Y+1).
%\eneq

Let $\dt=\min\{\dt_1, \dt_2\}$ and ${\cal G}={\cal F}\cup{\cal G}_1\cup {\cal G}'\cup {\cal G}_3\subset C(X).$
Let $K_1\ge 0$ be an integer.
Let  $Y$ be a finite CW complex with dimension at most $d,$ let   $\phi: C(X)\to C=PM_m(C(Y))P$ be a unital
$\dt$-${\cal G}$-multiplicative \morp\,  with
${\rm rank}P\ge \max\{N, NK_1\}$ and
\beq\label{Sd1extp-4}
\mu_{\tau\circ \phi}(O_r)\ge \Delta(r)\tforal \tau\in T(C)
\eneq
and for all open balls $O_r$ of $X$ with radius $r\ge \eta,$  let $\kappa\in  KK(C(X_{n_0})\otimes C(\T), C(Y))$ be
such that
\beq\label{Sd1extp-5}
|\rho_{C}(\kappa\circ \boldsymbol{\bt}(g_i'))(t)|\le K_1\tforal t\in T(C),
\eneq
$i=1,2,...,k_2.$ Note that, by (\ref{sd1ext-4+2}), since ${\rm rank} P\ge NK_1,$
\beq\label{Sd1extp-5+1}
|\rho_{C}(\kappa(s_i'))(t)|\le K_1\tforal t\in T(C),
\eneq
$i=1,2,...,k_1.$
Without loss of generality, we may assume that $\phi(\psi_{n_0, \infty}(q_j))=P_j$ is a projection, $j=1,2,...,s,$ and
$\sum_{j=1}^s P_j={\rm id}_C.$

%we may assume that $Y$ is connected.
%Set $C=PM_m(C(Y))P.$
%Let
%\beq\label{Sd1extp-6}
%K'=\max\{1, {|\tau(\kappa(g_i))|{{\rm rank}P}}: 1\le i\le k\}.
%\eneq
%Since $K'/{\rm rank}P<\gamma\le 1/2N$ and $1\ge 2N\gamma,$
%we have
%Note that
%$$
%{\rm rank}P\ge NK_1.
%$$
Define $\kappa_1\in Hom_{\Lambda}(\underline{K}(C(X_{n_0}\times \T)),\underline{K}(C(Y)))$ by
$$
\kappa_1|_{\underline{K}(C(X_{n_0}))}=[\phi\circ \psi_{n_0,\infty}]\andeqn
\kappa_1|_{\boldsymbol{\bt}(\underline{K}(C(X_{n_0})))}
=\kappa|_{\boldsymbol{\bt}(\underline{K}(C(X_{n_0})))}.
$$
Let $\Gamma$ be given as in the statement so that $\Pi_C\circ \Gamma|_{\cal Q}=\kappa_1\circ \boldsymbol{\bt}|_{\cal Q}.$
Define $\lambda: T(C)\to T(C(X\times \T))$ by
$$
\lambda(\tau)(f\otimes g)=\tau(\phi(f))\cdot t_m(g)
$$
for all $\tau\in T(C)$ and for all $f\in C(X)$ and
$g\in C(\T),$ where $t_m$ is the tracial state of $C(\T)$ induced by
the normalized Lesbegue measure. One checks that $\lambda$ is compatible
with $\kappa_1.$
%First, we note, since ${\rm dim}Y=1,$ that
%$$
%\cup_{n=1}^{\infty}U(M_n(C))/CU(M_n(C))
%=U(C)/CU(C).
%$$
Fix a splitting map $J_{M_L(C)}: K_1(C)\to U(M_L(C))/CU(M_L(C))$ (note that $L\ge {\rm dim}Y+1$), we write
$$
U(M_L(C))/CU(M_L(C))=
\Aff(T(M_L(C)))/\overline{\rho_{M_L(C)}(K_0(C))}\oplus J_{M_L(C)}(K_1(C)).
$$
Let $G_1=G_{\cal U}+\boldsymbol{\bt}(G({\cal Q})).$ Note since
$K_1(C(X)\otimes C(\T))=K_1(C(X))\oplus \boldsymbol{\bt}(K_0(X)),$ we may write $G_1=G_{\cal U}\oplus \boldsymbol{\bt}(G({\cal Q})).$ Note also that we have assumed that
$$
G_1\subset [\psi_{n_0, \infty}](K_1(C(X_{n_0})))\oplus \boldsymbol{\bt}\circ [\psi_{n_0, \infty}](K_0(C(X_{n_0}))).
$$
To simplify notation, we may assume that
$G_1=[\psi_{n_0, \infty}](K_1(C(X_{n_0})))\oplus \boldsymbol{\bt}\circ [\psi_{n_0, \infty}](K_0(C(X_{n_0}))).$
We note that there is an injective \hm\, $J_c: G_1\to U(M_L(C(X)\otimes C(\T)))/CU(M_L(C(X)\otimes C(\T)))$ such that $\Pi\circ J_c={\rm id}_{G_1}$
(see \ref{DUb}),
where $\Pi$
%: U(M_L(C(X)\otimes C(\T)))/CU(M_L(C(X)\otimes C(\T)))\to U(M_L(C(X\times \T))/U_0(M_L(C(X\times \T)))$$
is  defined in \ref{DUb} for $C(X)\otimes C(\T).$
Let $\imath: U(M_n(C))/CU(M_n(C))\to U(M_L(C))/CU(M_L(C))$ denote the embedding
given by $x\to 1_{L-n}\oplus x.$
%Write $J(K_1(C(X)\otimes C(\T)))=J(K_1(C(X)))\oplus
%J(\boldsymbol{\bt}(K_0(C(X)))).$
Define
$$
{\widetilde\Gamma}: J(G_1)+
\Aff(T(M_L(C(X\times \T))))/\overline{\rho_{M_L(C(X\times \T))}(K_0(C(X\times \T)))}
\to  U(M_L(C))/CU(M_L(C))
$$
as follows.
Let  $\Gamma_1: G({\cal Q})\to U(M_L(C))/CU(M_L(C))$ be defined by
$$
\Gamma_1=\imath\circ \Gamma-J_{M_L(C)}\circ \kappa_1\circ {\boldsymbol{\bt}}|_{G({\cal Q})}.
$$
Since $\Gamma$ and $\kappa_1$ are compatible, the image of $\Gamma_1$
is in $\Aff(T(M_L(C)))/\overline{\rho_{M_L(C)}(K_1(C))}.$
%Since $\Aff(T(M_L(C)))/\overline{\rho_{M_L(C)}(K_1(C))}$ is divisible, there exists a
%\hm\, ${\widetilde{\Gamma_1}}: J(\boldsymbol{\bf}(K_0(C(X)))\to
%\Aff(T(M_L(C)))/\overline{\rho_{M_L(C)}(K_0(C))}$ which extends $\Gamma_1.$
Define
\beq\label{Sd1extp-7}
{\widetilde{\Gamma}}(x)&=&\phi^{\ddag}(x)\tforal x\in J(U_{\cal U}),\\
\widetilde{\Gamma}(x)&=& \Gamma_1(\Pi(x))\tforal x\in J(\boldsymbol{\bt}(G({\cal Q})))\andeqn\\
\widetilde{\Gamma}(x)&=&{\overline{\lambda}(x)}
\eneq
for all $x\in \Aff(T(M_L(C(X\times \T))))/\overline{\rho_{M_L(C(X\times \T))}(K_0(C(X\times \T)))},$
where
\beq
&&\hspace{-0.3in}\overline{\lambda}: \Aff(T(M_L(C(X\times \T))))/\overline{\rho_{M_L(C(X\times \T))}(K_0(C(X\times \T)))}\to\\
&&\hspace{0.8in}\Aff(T(M_L(C)))/\overline{\rho_{M_L(C)}(K_0(C))}
\eneq
is the map induced by $\lambda.$ Then $\widetilde{\Gamma}$ is continuous, moreover,
$$
\kappa_1\circ \Pi|_{J([\psi_{n_0, \infty}](K_1(C(X_{n_0}\otimes C(\T))))}=
\Pi_C\circ {\tilde{\Gamma}}|_{J([\psi_{n_0, \infty}](K_1(C(X_{n_0}\otimes C(\T)))))},
$$
$\lambda(\tau)(\psi_{n_0, \infty}(q_j))=\tau(P_j),$ $j=1,2,...,s.$

It follows from \ref{sec1MT} that there is
$\dt/4$-${\cal G}$-multiplicative \morp\, $\Phi: C(X)\to C$ such that
\beq\label{Sd1extp-8}
[\Phi\circ \psi_{n_0,\infty}] &=&
\kappa_1\\\label{Sd1exp-8+}
{\rm dist}(\overline{\langle \Phi(x)\rangle}, {\widetilde{\Gamma}}(x))&<&\sigma/L\tforal x\in
\overline{{\cal U}\otimes 1_{C(\T)}}\cup J(\boldsymbol{\bt}( {\cal Q}))\andeqn\\\label{sd1ext-8+2}
|\tau\circ \Phi(a)-\lambda(\tau)(a)|&<&\sigma/L\tforal a\in {\cal H}_1
\eneq
and for $\tau\in T(C).$
From (\ref{Sd1exp-8+}), we also have
\beq\label{Sd1exp-8+3}
{\rm dist}(\overline{\langle \Phi(x)\rangle}, \Gamma(x))&<&\sigma\tforal x\in
J_{c(\boldsymbol{\bt}( G({\cal Q})))}(\boldsymbol{\bt}( {\cal Q}))).
\eneq

Let $\psi=\Phi|_{C(X)}.$
Then
\beq\label{Sd1extp-9}
[\psi]|_{{\cal P}_1}=[\phi]|_{{\cal P}_1}.
\eneq
By  (\ref{sd1ext-8+2}) and the choices of ${\cal H}_0$ and $\sigma_0,$  we have
\beq\label{Sd1extp-11}
\mu_{\tau\circ \psi}(O_r)\ge \Delta_1(r)
\eneq
for all $\tau\in T(C)$ and all open balls $O_r$ with radius $r\ge \eta.$
We also have
\beq\label{Sd1extp-12}
|\tau\circ \phi(a)-\tau\circ \psi(a)|<\gamma_1'\tforal a\in {\cal H}\andeqn\\
{\rm dist}(\phi^{\ddag}({\bar x}), \psi^{\ddag}({\bar x}))<\gamma_2'
\tforal x\in {\cal U}.
\eneq
It follows from \ref{Sd1uni} that there exists a unitary
$W\in M_K(C)$ such that
\beq\label{Sd1extp-13}
\|\phi^{(K)}(f)-W^*\psi^{(K)}(f)W\|<\ep/16\tforal f\in {\cal F}.
\eneq
Let $v$ be a unitary in $C$ such that
\beq\label{Sd1extp-14}
\|v-\Phi(1_{C(\T)}\otimes z)\|<\ep/4.
\eneq
Take $u=W^*{\rm diag}(\overbrace{v,v,...,v}^K)W.$ Since we have assumed that $(\ep, {\cal F}', {\boldsymbol{\bt}}({\cal P}))$ is a $KL$-triple,
this implies that
\beq\label{Sd1extp-15}
{\rm Bott}(\phi^{(K)}\circ \psi_{n_0,\infty},\, u)|_{\cal P}=K\kappa\circ {\boldsymbol{\bt}}.
\eneq
Moreover, by (\ref{Sd1exp-8+3}),
\beq
{\rm Bu}(\phi^{(K)}, \,u)(x),K\Gamma(J_c(\boldsymbol{\bt}(x)))<\sigma<\dt_0\tforal x\in {\cal Q}.
\eneq
 \end{proof}

\section{The Basic Homotopy Lemma}

In this section we will prove the following statement holds {\it under assumption that \ref{Sd1uni} holds
for all finite CW complexes with ${\rm dim}Y\le d.$}

\begin{NN}\label{Sd1hom} (Homotopy Lemma for ${\rm dim}Y=d$)
Let $d\ge 0$ be an integer, let $X$ be a compact metric space, let $\ep>0,$ let ${\cal F}\subset C(X)$ be a finite subset and let
$\Delta: (0,1)\to (0,1)$ be a nondecreasing map.
Then there exist $\eta>0,\dt>0,$ $\gamma>0,$  a finite subset ${\cal G}\subset C(X)$ and a  finite subset ${\cal P}\subset \underline{K}(C(X)),$
a finite subset ${\cal Q}\subset {\rm ker}\rho_{C(X)},$ an integer $N\ge 1$
and an integer $K\ge 1$   satisfying the following:

Suppose that $Y$ is a finite CW complex with ${\rm dim}Y\le d,$
$P\in M_m(C(Y))$ is a projection such that ${\rm rank} P\ge N$ and
$\phi: C(X)\to C=PM_m(C(Y))P$ is a unital $\dt$-${\cal G}$-multiplicative \morp\,
%\hm\, with
%$h_{*}|_{F_mK_*(C(X))}=0$ for all $m\ge 4$
and  $u\in C$ is a unitary such that
\beq\label{Sd1hom-1}
\|[\phi(g),\, u]\|&<&\dt\tforal g\in {\cal G},\\
{\rm Bott}(\phi,\, u)|_{\cal P}&=&0,\\
{\rm dist}({\rm Bu}(\phi,\,u)(x), {\bar 1})&<&\gamma \tforal x\in {\cal Q}\tand\\
\mu_{\tau\circ \phi}(O_r)&\ge& \Delta(r)\tforal \tau\in T(PM_m(C(Y))P)
\eneq
and for all open balls $O_r$ of $X$ with radius $r\ge \eta.$ Then
there exists a continuous path $\{u_t: t\in [0,1]\}\subset U(M_K(C))$
such that
\beq\label{Sd1hom-2}
u_0={\rm diag}(\overbrace{u,u,...,u}^K),\,\,\, u_1=1_{M_K(C)}\\\tand
\|[\phi^{(K)}(f),\, u_t]\|<\ep\tforal f\in {\cal F}.
\eneq

\end{NN}

\begin{NN}\label{Sd1homF}
Let $X$ be a compact metric space, let $\ep>0,$ let ${\cal F}\subset C(X)$ be a finite subset and let
$\Delta: (0,1)\to (0,1)$ be a nondecreasing map.
There exists $\eta >0,\, \dt>0,$ $\gamma>0,$ a finite subset ${\cal G}\subset C(X)$ and a  finite subset ${\cal P}\subset \underline{K}(C(X)),$
 a finite subset ${\cal Q}\subset {\rm ker}\rho_{C(X)},$ and an integer $N\ge 1$   and an integer $K\ge 1$ satisfying the following:

Suppose that $Y$ is a finite CW complex with ${\rm dim}Y\le d,$  $P\in M_m(C(Y))$ is a projection with ${\rm rank}P\ge N$  and $\phi: C(X)\to C=PM_m(C(Y))P$ is a unital $\dt$-${\cal G}$-multiplicative \morp\,
%\hm\, with
%$h_{*}|_{F_mK_*(C(X))}=0$ for all $m\ge 4$
and  $u\in C$ is a unitary such that
\beq\label{Sd1homf-1}
\|[\phi(c),\, u]\|&<&\dt\tforal c\in {\cal G},\\
{\rm Bott}(\phi,\, u)|_{\cal P}&=&0\andeqn\\\label{Sd1homf-1+1}
{\rm dist}({\rm Bu}(\phi,\, u)(x), {\bar 1})&<&\gamma\tforal x\in {\cal Q}.
%\mu_{\tau\circ \phi}(O_r)\ge \Delta(r)\tforal \tau\in T(PM_m(C(Y))P)
\eneq
Suppose that there exists a \morp\, $L: C(X)\otimes C(\T)\to
C$ such that
\beq\label{Sd1homf-2}
\|L(c\otimes 1)-\phi(c)\|<\dt,\,\,\,
\|L(c\otimes z)-\phi(c)u\|<\dt\tforal c\in {\cal G}\\\label{Sd1homf-2+}
\tand \mu_{\tau\circ L}(O_r)\ge \Delta(r)\tforal \tau\in T(C)
\eneq
and for all open balls $O_r$ of $X\times \T$ with radius $r\ge \eta,$
where $z\in C(\T)$ is the identity function on the unit circle.
Then
there exists a continuous path $\{u_t: t\in [0,1]\}\subset U(M_K(C))$
such that
\beq\label{nSd1hom-2}
u_0=u^{(K)}\,\,\, u_1=1_{M_K(C)}\\\andeqn
\|[\phi^{(K)}(f),\, u_t]\|<\ep\tforal f\in {\cal F}.
\eneq
 \end{NN}

\begin{proof}

Note we  assume that \ref{Sd1uni} holds for ${\rm dim}Y\le d.$

Fix $1/2>\ep>0$ and ${\cal F}$ as stated in the \ref{Sd1homF}.
Without loss of generality, to simplify notation, we may assume that ${\cal F}$ is in the unit ball
of $C(X).$
Let
$${\cal F}'=\{f\otimes g: f\in {\cal F}\andeqn g\in \{1, z, z^*\}\}.$$

Let $\eta>0,$ $\gamma_1,\gamma_2>0,$ $\dt_1>0$ (in place of $\dt$), ${\cal G}'\subset  C(X\times \T)$ (in place of ${\cal G}$) be a finite subset, ${\cal P}_1\subset \underline{K}(C(X)\otimes C(\T))$ (in place of ${\cal P}$) be a finite subset, ${\cal H}\subset (C(X)\otimes C(\T))_{s.a.}$ be a finite subset and let ${\cal V}\subset K_1(C(X)\otimes C(T))\cap {\cal P}_1$
 be a finite subset, $N\ge 1$ be an integer and $K\ge 1$ be another integer
given by \ref{Sd1uni} (in fact by \ref{Sd1uniR}) for
$\ep/16$ (in place of $\ep$), ${\cal F}'$ (in place of ${\cal F}$)
and for $X\otimes \T$ (in place of $X$).

To simplify notation, without loss of generality, we may assume that
$$
{\cal G}'=\{f\otimes g: f\in {\cal G}''\andeqn g\in \{1, z, z^*\} \},
$$
where ${\cal G}''\subset C(X)$ is a finite subset,
$
{\cal P}_1={\cal P}_0\oplus \boldsymbol{\bt}({\cal P}),
$
where ${\cal P}_0,\, {\cal P}\subset \underline{K}(C(X))$ are finite subsets and
$$
{\cal H}=\{f\otimes g: f\in {\cal H}_0\andeqn g\in {\cal H}_1\},
$$
where $1_{C(X)}\in {\cal H}_0\subset C(X)_{s.a.}$ and
$1_{C(\T)}\in {\cal H}_1\subset C(\T)_{s.a.}$ are finite subsets.
We may further assume that $\ep/16<\dt_1$ and ${\cal F}\subset {\cal G}_1$ and $(2\dt_1, {\cal G}', {\cal P}_1)$ is a $KL$-triple.

Write $C(X)=\lim_{n\to\infty}(C(X_n), \psi_n),$ where each $X_n$ is a finite CW complex and $\psi_n$ is a unital \hm.
We may assume that $n_0\ge 1$ is an integer such that
${\cal F}\subset \psi_{n, \infty}(C(X_{n_0}))$ and  ${\cal P}_0, {\cal P}\subset [\psi_{n_0, \infty}](\underline{K}(C(X_{n_0}))).$

Suppose that ${\cal U}\subset U(M_R(C(X\otimes \T)))$ is a finite subset
such that $\overline{\cal U}=J_{c(G({\cal V}))}({\cal V})$ and
$R=\max\{R(G({\cal V})),d\}.$
To simplify notation further, we may assume
that ${\cal U}={\cal U}_0\sqcup {\cal U}_1,$
where
\beq
{\cal U}_0&\subset &\{u\otimes 1_{M_R}: u\in U(M_R(C(X))),\,\,[u]\not=0 \,\,\,{\rm in}\,\,\,K_1(C(X))\},\andeqn\\
{\cal U}_1&\subset &\{x: [x]\in \boldsymbol{\bt}(K_0(C(X)))\,\,\, \andeqn
[x]\not=0\,\,\,{\rm in}\,\,\,K_1(C(X)\otimes C(\T))\}.
\eneq
Furthermore, we may write
$$
G_{{\cal U}_1}=G_{00}\oplus G_{\bf b},
$$
where $G_{{\cal U}_1}$ is the subgroup generated by $\{[u]: u\in {\cal U}_1\},$ $G_{00}$ is a finitely generated free group
such that $G_{00}\cap \boldsymbol{\bt}({\rm ker}\rho_{C(X)})=\{0\}$ and $G_{\bf b}\subset
\boldsymbol{\bt}({\rm ker}\rho_{C(X)})$ is also a finitely generated subgroup.

Let $G_{{\cal U}_0}$ be the subgroup of $K_1(C(X)\otimes C(\T))$ generated
by $\{[u]: u\in {\cal U}_0\}.$ Let $G({\cal U})=G_{{\cal U}_0}\oplus G_{{\cal U}_1}.$
%Let $J_c: G({\cal U})\to U(M_R(C(X)))/UM_R(C(X)),$ where $R=\max\{R(G({\cal U})), d\}$ be as defined in \ref{DUb}.
Accordingly, we may assume that
$$
\overline{{\cal U}_1}=\overline{{\cal U}_{10}\sqcup {\cal U}_{11}},
$$
where $\overline{{\cal U}_{10}}$ is a set of generators of $J_c(G_{00})$ and
$\overline{{\cal U}_{11}}$ is the set of generators of $J_c(G_{\bf b}).$  Put
${\cal U}_{\bf b}=\overline{{\cal U}_{11}}.$ Note that ${\cal U}_{\bf b}=J_c(\boldsymbol{\bt}(F_2K_0(C(X)))\cap \overline{\cal U}$
and $J_c(G_{00})\cap SU_R(C(X))/CU(M_R(C(X)))=\{0\}.$
Let ${\cal Q}\subset {\rm ker}\rho_{C(X)}$ be  a finite subset
such that
$$
{\cal Q}\supset \{x\in {\rm ker}\rho_{C(X)}: \boldsymbol{\bt}(x)=[u],\,\,\, u\in \Pi(\overline{{\cal U}_{11}})\}.
$$
Let ${\cal G}_u\subset C(X)$ be a finite subset such that
${\cal U}_0=\{(x_{ij})_{R\times R}: x_{ij}\in {\cal G}_u\}.$
Let $0<\dt_2=\min\{\ep/16, \dt_1/16R^3, \gamma_1/16R^3, \gamma_1/16R^3\}.$
Let $\dt_2>dt_3>0$ be a
positive number and ${\cal G}\supset {\cal G}''\cup {\cal G}_u$ be a finite subset satisfying
following:
If $\phi': C(X)\to C'$ is a unital $\dt$-${\cal G}$-multiplicative \morp, $ u'\in C'$ is a unitary and
$L': C(X)\otimes C(\T)$ is a unital \morp, where $C'$ is any unital \CA\, such that $\|[\phi'(g),\,u']\|<\dt$ for all $g\in {\cal G},$
$$
\|\phi'(g)-L'(g\otimes 1)\|<\dt\andeqn \|\phi'(g)u'-L'(g\otimes z)\|<\dt
$$
for all $g\in {\cal G},$
% then
%$$
%\|L'(c\otimes 1)-\phi'(c)\|<\dt_2\tforal c\in {\cal G}_u.
%$$
then  $[L'\circ \psi_{n_0, \infty}]$ is well defined on
$\underline{K}(C(X_{n_0})\otimes C(\T)).$  In particular, we also assume that
$[\psi'\circ \psi_{n_0, \infty}]$ is well defined on $\underline{K}(C(X_{n_0})).$ Let
${\cal G}_1=\{f\otimes g: f\in {\cal G}, \andeqn g\in \{1, z, z^*\}\}.$
% and let ${\cal G}={\cal G}_1\cup {\cal G}_2.$
Without loss of generality, we may also assume
that
$(\Psi')^{\ddag}$ is defined on ${\cal U}$ for any $2\dt_3$-${\cal G}_1$-multiplicative unital \morp\, $\Psi': C(X)\otimes C(\T)\to C'$ for any
unital \CA\, $C'.$

Let $\gamma=\min\{\gamma_1/16R^3,\gamma_2/16R^3\}$ and let
$L_1\ge 1.$
Let $n\ge n_0$ be an integer,
 $q_1, q_2,...,q_s\in C(X_n\times \T)$ be mutually orthogonal projections with
$1_{C(X\times \T)}=\sum_{j=1}^sq_j$ which represent each connected component of $X_n,$
 $N\ge 1$ be an integer and $G_1$ be a finitely generated subgroup
 and ${\cal U}_{\bf b}$ be
as required
by \ref{sec1MC} for
$\ep/16$ (in place of $\ep$), ${\cal G},$   $G_0,$ ${\cal P}_1$ (in place of ${\cal P}$), ${\cal H}$ and $\gamma$ (in place of
 $\sigma_1$ and $\sigma_2$).
 %We may write $q_j=q_j'\otimes 1_{C(\T)},$ where
% $q'_j\in C(X)$ is a projection.
%Here we choose $K=0.$
%By Lemma \ref{rsmall}, choose $\dt_3>0$  and  a finite subset ${\cal G}_2\subset C(X)$ such that
%such that, for any unital \morp\, $\phi': C(X)\to C'$ ( for any unital \CA\, $C'$),
%\beq\label{tr}
%|\tau ( {\rm bott}_0(\phi',\, w)(g_j))|<1/N\tforal \tau\in T(C'),\,\,\, j=1,2,...,
%\eneq
%provided that $\|[\phi'(g),\, w]\|<\dt_3\tforal g\in {\cal G}_2.$
We may write $q_j=q_j'\otimes 1_{C(\T)},$ where $q_j'$ is a projection,
$j=1,2,...,s.$
Let $\dt=\min\{\dt_1/2, \dt_2/2, \dt_3/2, \gamma/4\}.$
Let $Y$ be a finite CW complex with ${\rm dim}Y\le d.$ Without loss of generality, to simplify
notation, we may assume that $Y$ is connected. Let $P\in M_m(C(Y))$ be a projection with ${\rm rank} P\ge N$ (so ${\rm rank}\,P\ge \max\{NK', N\},$ if $K'=0$).

Suppose that $\phi: C(X)\to C=PM_m(C(Y))P$ is a unital $\dt$-${\cal G}$-multiplicative \morp\, and
$L: C(X)\otimes C(\T)\to C$ is a unital $\dt$-${\cal G}_1$-multiplicative
\morp\, satisfying the assumption.
Without loss of generality, we may assume that $L\circ \psi_{n, \infty}(q_j)=P_j=\psi\circ \psi_{n, \infty}(q_j)$ is a projection, $j=1,2,...,s$ and
$1_C=\sum_{j=1}^sP_j.$
Note, by the assumption above, $[\phi\circ \psi_{n, \infty}]$ is well defined
on $\underline{K}(C(X_{n_0}))$ and
$[L\circ \psi_{n, \infty}]$ is well defined on $\underline{K}(C(X_{n_0})\otimes C(\T)).$

Let $H: C(X_n\times \T)\to C$  be defined
by $H(f\otimes g)=\phi(f)\cdot g(1)\cdot P$ for all $f\in C(X_n)$ and $g\in C(\T),$ where $1\in \T$ is a point. Denote $\kappa=[H].$
Note that, by the assumption,
\beq\label{Sd1homF-10}
[L\circ \psi_{n, \infty}]=[H].
\eneq
Note also that $[H]|_{\boldsymbol{\bt}(\underline{K}(C(X_n)))}=0.$
%One has $F_m(K_i(C(X)\otimes C(\T)))=
%F_m(K_i(C(X)))\oplus F_m(\boldsymbol{\bt}(K_{i+1}(C(X)))).$ Therefore
%\beq\label{Sd1homF-10+}
%[H]|_{F_mK_*(C(X)\otimes C(T))}=0.
%\eneq
Let $\lambda:T(C)\to T(C(X\times \T))$ be defined by
\beq\label{Sd1homF-10+}
\lambda(\tau)(g)=\tau\circ L(g)\tforal g\in C(X\times \T)
\eneq
and for all $\tau\in T(C).$
%In the following definition of $\Gamma_1,$ we write
%$K_1(C(X)\otimes C(\T))=K_1(C(X))\oplus \boldsymbol{\bt}(K_0(C(X))).$
%Let $S$ be the subgroup generated by ${\cal V}.$
Note that
$[L]|_{\boldsymbol{\bf}(\boldsymbol{\bt}(K_0(C(X)))\cap {\cal P})}=0.$
%It follows that $L^{\ddag}$ maps $\overline{{\cal V}}$ into
%$\Aff (T(M_R(C)))/\overline{\rho_{M_R(C))}(K_0(C)))}.$
%However, $$\Aff (T(PM_m(C(Y))P))/\overline{\rho_{K_0(C(Y))}(K_0(C(Y)))}$$
%is a divisible group. Therefore there is a \hm\,
%$$\Gamma: J_c(\boldsymbol{\bt}(K_0(C(X)))\to \Aff(T(PM_m(C(Y))P))/\overline{\rho_{K_0(C(Y))}(K_0(C(Y)))}$$ such that
%$$
%\Gamma|_{\overline{{\cal V}}}=L^{\ddag}|_{\overline{{\cal V}}}.
%$$
%Moreover,  by (\ref{Sd1homf-1+1}),
%\beq\label{Sd1homF-10++}
%\Gamma|_{{\cal U}_{\bf b}}=0.
%\ene
Define
$$
\Gamma_1: J_c(G({\cal V}))+\Aff(T(M_R(C(X\times\T))))/
\overline{\rho_{M_R(C(X\times \T))}(K_0(C(X\times \T)))}
\to U(M_R(C))/CU(M_R(C))
$$
by
\beq\label{Sd1homF-11}
\Gamma_1(z) &=&\phi^{\ddag}(z)\tforal z\in J_c(G({\cal U}_0)),\\
\Gamma_1(z)&=& L^{\ddag}(z)\tforal z\in J_c(G_{00}),\\
\Gamma_1(z)&=& {\bar 1}\tforal z\in
J_c(G_{\bf b})\andeqn\\
\Gamma_1(z)&=&\overline{\lambda}(z)
\eneq
for all $z\in \Aff(T(M_R(C(X)\otimes C(\T))))/\overline{\rho_{M_R(C(X)\otimes C(\T))}(K_0(C(X)\otimes C(\T)))},$
where $\overline{\lambda}$ is induced by $\lambda.$
%Since $\phi$ is \hm\, $\phi(SU_R(C(X)))\subset SU_R(C).$
Note that $\Gamma_1|_{{\cal U}_{\bf b}}=0.$
It is also clear that $\lambda(\tau)(q_j)=\tau(P_j),$ $j=1,2,...,s.$

%It follows that
%\beq\label{Sd1homF-11+}
%{\rm dist}(\Gamma_1(x), SU_R(C)/CU(M_R(C)))<\gamma_2/2\tforal x\in J_c({\cal V}).
%\eneq

 It follows from
 \ref{sec1MC}  that there exist three unital
 $\dt$-${\cal G}$-multiplicative \morp s
 $\Phi_{00}: C(X)\otimes C(\T)\to P_{00}M_m(C(Y))P_{00},$
 $\Phi_{01}: C(X)\otimes C(\T)\to P_{01}M_m(C(Y))P_{01}$ and $\Phi_1: C(X)\otimes C(\T)\to P_1M_m(C(Y))P_1$ with $P=P_{00}\oplus P_{01}\oplus P_1,$ $\Phi_{00}(f\otimes g)=\sum_{j=1}^s f(\xi_j)e_{0,j}\cdot h_j(g)$
 for all $f\in C(X)$ and $g\in C(\T),$
 where $\xi_j\in X,$ $e_{0,j}\in P_{00}M_m(C(Y))P_{00}$ is a projection so that
 $\sum_{j=1}^s e_{0,j}=P_{00}, $
 $h_j: C(\T)\to e_{0,j}M_m(C(Y))e_{0,j}$ is a unital \hm\, with $h_j(z)\in U_0( e_{0,j}M_m(C(Y))e_{0,j})$ ($j=1,2,...,s$),
 $\Phi_{01}(f\otimes g)=\sum_{j=1}^s\Psi_j(f)\cdot g(1)\cdot e_{1,j}$ for all $f\in C(X)$ and  $g\in C(\T),$
 where $1\in \T$ is the point on the unit circle,
 where $\Psi_j: C(X)\to e_{1,j}M_m(C(Y))e_{1,j}$ is a unital \morp\, and $e_{1,j}\in P_{01}M_m(C(Y))P_{01}$ is a projection so that $\sum_{j=1}^se_{1,j}=P_{01},$
$ \Phi_1=\psi_1\circ \psi_0,$ where $\psi_0: C(X\otimes \T)\to C(J)$ is a $\dt$-${\cal G}$-multiplicative \morp, $J$ is a finite disjoint union
 of intervals, and $\psi_1: C(J)\to P_1M_m(C(Y))P_1$ is a unital \hm,
 such that
 \beq\label{Sd1hmF-12}
 [(\Phi_0\oplus \Phi_1)\circ \psi_{n,\infty}]&=&[H],\\\label{Sd1homF-12+}
 {\rm dist}((\Phi_0\oplus \Phi_1)^{\ddag}(z), \Gamma_1(z))&<&\gamma \tforal z\in {\cal U}\andeqn\\\label{Sd1homF-12++}
 |\tau\circ (\Phi_0\oplus \Phi_1)(a)-\lambda(\tau)(a)|&<&\gamma\tforal a\in {\cal H} \andeqn\,\rforal \tau\in T(C),
 \eneq
 where $\Phi_0=\Phi_{00}\oplus \Phi_{01}.$
% By (\ref{Sd1homF-11+}),
% \beq\label{Sd1hmF-12+}
% {\rm dist}((\Phi_0\oplus \Phi_1)^{\ddag}(x), SU_R(C)/CU(M_R(C)))<\gamma_1
 %\eneq
% for all $x\in J_c({\cal V})\cap SU_R(C(X)\otimes C(\T))/CU(M_R(C(X)\otimes C(\T))).$
 Let $z_{00}=\Phi_{00}(1\otimes z)=\sum_{j=1}^sh_j(z).$ Since $h_j(z)\in U_0(e_{0,j}M_m(C(Y))e_{0,j}),$
 there is a continuous path of unitaries $\{z_{00,j}(t): t\in [1/2,1]\}\subset U_0(e_{0,j}M_m(C(Y))e_{0,j})$ such that
 \beq\label{Sd1homF-12+1}
 z_{00,j}(1/2)=h_j(z)\andeqn z_{00,j}(1)=e_{0,j},\,\,\, j=1,2,...,s.
 \eneq
 Define $z_{00}(t)=\sum_{j=1}^sz_{00,j}(t)$ for $t\in [0,1/2].$ Then
 \beq\label{Sd1homF-12+2}
 \Phi_{00}(f\otimes 1)z_{00}(t)=z_{00}(t) \Phi_{00}(f\otimes 1)\, \tforal f\in C(\T)\andeqn t\in [1/2,1].
 \eneq
 Let $z_0=\psi_0(1\otimes z).$ There is a unitary $v_0\in C(J)$
 such that
\beq\label{Sd1homF-13}
\|z_0-v_0\|<\dt.
\eneq
There exists a continuous path of unitaries $\{w(t): t\in [1/2,1]\}\subset C(J)$ such that
\beq\label{Sd1homF-14}
w(1/2)=v_0,\,\,\,w(1)=1.
\eneq
Note that
\beq\label{Sd1homF-15}
w_t\phi_0(f)=\phi_0(f)w_t\tforal f\in C(X)\andeqn \tforal t\in [1/2,1].
\eneq
On the other hand,  we also have, by (\ref{Sd1homF-10}) , (\ref{Sd1hmF-12}), the assumption
(\ref{Sd1homf-1+1}) and (\ref{Sd1homF-10+}),
\beq\label{Sd1homF-15+1}
[L]|_{{\cal P}_1}&=&[\Phi_0\oplus \Phi_1]|_{{\cal P}_1},\\
{\rm dist}(L^{\ddag}(x), \Gamma_1(x))&<& 2\gamma \tforal x\in {\cal U}\andeqn\\
|\tau(L(a))-\lambda(\tau)(a)|&=&0\tforal a\in C(X)\otimes C(\T)\andeqn \tau\in T(C).
\eneq
With these and (\ref{Sd1homf-2+}) as well as (\ref{Sd1hmF-12}), (\ref{Sd1homF-12+}) and (\ref{Sd1homF-12++}),
we conclude, by applying  \ref{Sd1uniR},  that there exists a unitary $W\in M_K(PM_m(C(Y))P)$ such that
\beq\label{Sd1homF-16}
\|L^{(K)}(f\otimes 1)-W^*(\Phi_0^{(K)}\oplus \Phi_1^{(K)})(f\otimes 1)W\|&<&\ep/16
\tforal f\in {\cal F}\andeqn\\
\|u^{(K)}-W^*(z_{00}^{(K)}\oplus P_{01}^{(K)}\oplus v_0^{(K)})W\|&<&\ep/16+2\dt<3\ep/16,
\eneq
%where
%\beq
%u^{(K)}={\rm diag}(\overbrace{u,u,...,u}^{K}),
%z_{00}^{(K)}={\rm diag}(\overbrace{z_{00}, z_{00},...,z_{00}}^{K}),\\
%P_{01}^{(K)}={\rm diag}(\overbrace{P_{01},P_{01},...,P_{01}}^{K})\andeqn
%v_0^{(K)}={\rm diag}(\overbrace{v_0,v_0,...,v_0}^{K}).
%\eneq
There exists a continuous path of unitaries $\{u_t: t\in [0,1/2]\}\subset M_K(PM_m(C(Y))P)$ such that
\beq\label{nSd1homF-16}
&&u_0=u^{(K)},\,\,\, u_{1/2}=W^*(z_{00}^{(K)}\oplus P_{01}^{(K)}\oplus v_0^{(K)})W\andeqn\\
&&\|u^{(K)}-u_t\|<\ep/2\tforal t\in [0,1/2].
\eneq
Define
\beq\label{Sd1homF-17}
u_t=W^*(z_{00}(t)^{(K)}\oplus P_{01}^{(K)}\oplus \psi_1(w(t)^{(K)})W\tforal t\in [1/2,1].
\eneq
Then $\{u_t: t\in [0,1]\}$ is a continuous path of unitaries, $u_0=u,\,\,\, u_1=1$ and
\beq\label{Sd1homF-18}
\hspace{-0.6in}\|\phi^{(K)}(f)u_t-u_t\phi^{(K)}(f)\|
&\le & 2\dt+2\|\phi(f)-L(f\otimes 1)\|\\
&+&\|[L^{(K)}(f\otimes 1)u_t-u_tL^{(K)}(f\otimes 1)]\|\\
&<& 6\dt<3\ep/8\,\,\,\tforal f\in {\cal F}\andeqn \tforal t\in [0,1].
\eneq

\end{proof}

\begin{lem}\label{oldmatrix}
Let $n\ge 64 $ be an integer. Let $\ep>0$ and $1/2>\ep_1>0.$ There
exists ${\ep\over{2n}}>\dt>0$
%$\eta>0,$
and a finite subset ${\cal G}\subset D\cong M_n$ satisfying the
following:

Let $\Delta, \Delta_1, \Delta_2:(0,1)\to (0,1)$ be  increasing maps.
Suppose that $X$ is a compact metric space,  ${\cal F}\subset C(X)$
is a finite subset,  $1>b>a>0$ and $1>c>4\pi/n>0.$
Then there exists a finite subset
${\cal F}_1\subset C(X)$ satisfying the following:

Suppose that $A$ is a unital \CA\, with $\text{T}(A)\not=\emptyset,$
$D\subset A$ is a \SCA\, with $1_D=1_A,$ $\phi: C(X)\to A$ is a
unital \morp\, and suppose that $u \in U(A)$  such that
\beq\label{matr-0}
%&&\|[w,\, u]\|<\dt_1,\,\,\,
\|[x,\,u]\|<\dt \tand \|[x,\,\phi(f) ]\|<\dt \tforal  x\in {\cal
G} \tand f\in {\cal F}_1.
%\andeqn\\
%\|[x, \, u]\|<\dt_\tforal x\in {\cal G}_2.
\eneq
Suppose also that
\beq\label{matr-1}
\tau(\phi(f))\ge \Delta(r) \tforal \tau\in \text{T}(A)\tand
\eneq
for all $f\in C(X)$ with $0\le f\le 1$ so that $\{x\in X: f(x)=1\}$
 contains an
open ball of $X$ with radius $r\ge a,$ and suppose
that
\beq\label{matr-1+}
\tau(\phi(f)^{1/2}g(u)\phi(f)^{1/2})\ge \Delta_1(r)\Delta_2(s)\tforal \tau\in \text{T}(A),
\eneq
for all $f\in C(X)$ with $0\le f\le 1$ so that $\{x\in X: f(x)=1\}$ contains an open ball of $X$ with radius
$r\ge b$ and for all $g\in C(\T)$ with $0\le g\le 1$ so that $\{t\in  \T: g(t)=1\}$ contains
an open arc of $\T$ with length $s\ge c.$
 Then, there exists a unitary $v\in D$ and  a
continuous path of unitaries $\{v(t): t\in [0,1]\}\subset D$ such
that
\beq\label{matr-2}
&&\|[u, \, v(t)]\|<n\dt<\ep,\,\,\,\|[\phi(f),\, v(t) ]\|<n\dt<\ep\\
&&\,\,\, \tforal f\in {\cal F}\tand t\in [0,1],\\
&&v(0)=1,\,\,\, v(1)=v\tand\\
&&\hspace{-0.4in}\tau(\phi(f)^{1/2}g(vu)\phi(f)^{1/2})\ge (1-1/2^{n+2}){\Delta((1-1/2^{n+1})r)\over{n^2}}\tforal \tau\in \text{T}(A)
\eneq
for any pair of $f\in C(X)$ with $0\le f\le 1$ so that the set
$\{x: f(x)=1\}$
contains an open ball with radius $r\ge (1+1/2^{n+1})a$ and $g\in C(\T)$ with $0\le
g\le 1$ so that $\{t\in \T: g(t)=1\}$  contains an  open arc of $\T$ with length $s\ge 4\pi/n+\pi/n2^{n+1}.$ Moreover,
\beq\label{matr-2+}
\hspace{-0.2in}\tau(\phi(f)^{1/2}g(vu)\phi(f)^{1/2})\ge \Delta_1((1-1/2^{n+1})r)\Delta_2((1-1/2^{n+1})s)-{\Delta_1(b/2)\Delta_2(c/2)\over{2^{n+5}}}
\eneq
for all $\tau\in \text{T}(A),$
for all  $f\in C(X)$ with $0\le f\le 1$ so that the set
$\{x: f(x)=1\}$
contains an open ball with radius $r\ge (1+2^{-n-1})b$
%$i=0,1,2,...,m_0-1$
and $g\in C(\T)$ with $0\le
g\le 1$ so that $\{t\in \T: g(t)=1\}$ contains an  open arc of $\T$ with length at
least $s\ge (1+1/n^22^{n+1})c.$

Furthermore,
\beq\label{matr-n2}
{\rm length}(\{v(t)\})\le \pi.
\eneq

\end{lem}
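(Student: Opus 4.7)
\textbf{Proof plan for Lemma \ref{oldmatrix}.}

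The plan is to build the unitary $v$ and the path $\{v(t)\}$ inside $D\cong M_n$ by a standard ``Berg-rotation'' construction using the diagonal of a matrix unit system. Let $\{e_{ij}\}_{0\le i,j<n}$ be a matrix unit system for $D$ and $\omega=e^{2\pi i/n}$. Choose arguments $\theta_0,\theta_1,\dots,\theta_{n-1}\in(-\pi,\pi]$ with $e^{i\theta_k}=\omega^k$ and $|\theta_k|\le \pi$, and set
\[
h=\sum_{k=0}^{n-1}\theta_k\, e_{kk}\in D,\qquad v(t)=\exp(ith),\qquad v=v(1).
\]
Then $\|h\|\le\pi$, so ${\rm length}(\{v(t)\})\le\|h\|\le\pi$, giving (\ref{matr-n2}) immediately. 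For the choice of ${\cal G}$ I would take the matrix unit system $\{e_{ij}\}$ together with the projections $\{e_{kk}\}$, and let $\delta$ be small enough that $\delta$-commutation with every $e_{ij}$ forces $\delta n^2$-commutation with all of $v(t)$ and with the spectral projections of $h$ (a standard continuous-functional-calculus argument). Then (\ref{matr-2}) and the first line of (\ref{matr-2}) follow at once from $v(t)\in D$ and the hypotheses $\|[x,u]\|<\delta$, $\|[x,\phi(f)]\|<\delta$ for $x\in {\cal G}$, after harmlessly enlarging ${\cal F}_1\supset{\cal F}$.

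The substantive part is the spectral-measure estimate for $g(vu)$. The point is that since $u$ and $\phi(f)$ $\delta$-commute with every $e_{kk}$, one has, to within an $O(n\delta)$-error in norm,
\[
vu \;\approx\; \sum_{k=0}^{n-1}\omega^k u_k,\qquad u_k:=e_{kk}u e_{kk}\in e_{kk}Ae_{kk},
\]
with each $u_k$ a near-unitary in the corner, and likewise $\phi(f)$ is approximated by $\sum_k e_{kk}\phi(f) e_{kk}$. Consequently, for any $g\in C(\T)$,
\[
g(vu)\;\approx\;\sum_{k=0}^{n-1} e_{kk}\,g(\omega^k u_k)\,e_{kk}
\;=\;\sum_{k=0}^{n-1} e_{kk}\,(g\circ\omega^k)(u_k)\,e_{kk},
\]
with error controlled by $n\delta$ and the modulus of continuity of $g$. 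By choosing $\delta$ small compared to $1/2^{n+5}$ and compared to $\Delta_1(b/2)\Delta_2(c/2)$ (which is where the Dini-type function \ref{Dini} and the tolerance $\epsilon_1$ will be absorbed), all the $n\delta$-errors can be made smaller than $1/2^{n+5}$ of the targeted lower bound.

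For the baseline estimate in the first half of (\ref{matr-2}), fix an arc $J\subset\T$ of length $s\ge 4\pi/n+\pi/(n\,2^{n+1})$. Since the $n$-th roots of unity are spaced by $2\pi/n$, $J$ contains at least one $\omega^{k_0}$ such that the rotated arc $\omega^{-k_0}J$ contains the arc of length $\ge 2\pi/n$ centered at $1\in\T$; moreover the support of $g\circ\omega^{k_0}$ near $1$ has length controlled below by the slack $\pi/(n\,2^{n+1})$, permitting one to use (\ref{Dini}) to replace $r$ by $(1-1/2^{n+1})r$. Since $\tau(e_{k_0k_0})=1/n$ for every tracial state (as $e_{kk}$ are pairwise equivalent and sum to $1_A$), applying the trace to the $k_0$-summand gives
\[
\tau(\phi(f)^{1/2}g(vu)\phi(f)^{1/2})\;\ge\;\tfrac{1}{n}\cdot\tfrac{1}{n}\,\Delta\big((1-1/2^{n+1})r\big)-O(n\delta),
\]
where one factor $1/n$ comes from the trace of $e_{k_0k_0}$ and the other from restricting the angular support of $g$ to a $2\pi/n$-arc covered by a single ``teeth'' of the spectrum of $v$. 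Absorbing the $O(n\delta)$ error into $1/2^{n+2}$ yields the first estimate. The second estimate in (\ref{matr-2}) is obtained exactly the same way, except that now we start from an arc $J$ of length $s\ge (1+1/(n^2 2^{n+1}))c$; again for some $k_0$ with $\omega^{k_0}\in J$, the rotated arc $\omega^{-k_0}J$ contains an arc of length $\ge c$ (with a bit of slack supplied by the factor $1/(n^2 2^{n+1})$), so hypothesis (\ref{matr-1+}) applies inside the corner $e_{k_0k_0}Ae_{k_0k_0}$ with the renormalized trace $\tau(e_{k_0k_0}\,\cdot\,)/\tau(e_{k_0k_0})$ and yields the main term $\Delta_1((1-1/2^{n+1})r)\Delta_2((1-1/2^{n+1})s)$, with the remaining $O(n\delta)$ discrepancy absorbed into $\Delta_1(b/2)\Delta_2(c/2)/2^{n+5}$.

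The main obstacle will be the bookkeeping of the approximate commutation errors: each replacement of $vu$ by $\bigoplus_k\omega^k u_k$ and of $\phi(f)$ by $\bigoplus_k e_{kk}\phi(f)e_{kk}$ introduces norm errors of order $n\delta$ times a modulus of continuity of $g$ and of $f^{1/2}$, so in order to push these errors below $1/2^{n+5}$ of $\Delta_1(b/2)\Delta_2(c/2)$ one must choose $\delta$ (hence ${\cal F}_1$, which enters only via the modulus of continuity of $f^{1/2}$) depending on $n,\epsilon_1,b,c$ after the Dini lemma \ref{Dini} is invoked to convert the strict inequalities in $r,s$ into slack on $\Delta,\Delta_1,\Delta_2$. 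The rest is routine.
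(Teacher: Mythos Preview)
Your construction of $v=\sum_k \omega^k e_{kk}$, the path $v(t)=\exp(ith)$, the choice ${\cal G}=\{e_{ij}\}$, and the commutator/length estimates are all exactly as in the paper. The decomposition $g(vu)\approx\sum_k e_{kk}(g\circ\omega^k)(u)e_{kk}$ is also the right starting point. The trouble is in how you extract the two trace lower bounds from it.

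\textbf{First estimate.} You pick a single index $k_0$ determined by the \emph{arc} $J$ (so that $\omega^{-k_0}J$ sits near $1$) and then claim the $k_0$-summand alone is $\ge \Delta(r)/n^2$. But hypothesis (\ref{matr-1}) says nothing about where the spectral measure of $u$ lives; it only bounds $\tau(\phi(f))$. If, say, $u$ is the scalar $\omega^m$ with $\omega^{k_0+m}\notin{\rm supp}(g)$, then $(g\circ\omega^{k_0})(u)=0$ and your chosen summand vanishes. The paper's mechanism is different: it fixes narrow bump functions $g_i$ centered at the $\omega^i$, observes $\sum_i g_i(u)\ge 1$, and uses a \emph{pigeonhole over $i$ for each $\tau$} to find some $i=i(\tau)$ with $\tau(\phi(f)e_{jj}g_i(u))\ge \Delta(r)/n^2$; then the \emph{corner index} $j$ is chosen (depending on $\tau$) so that $k+j\equiv i$, which rotates $g_i$ onto the target $g_k$. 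The essential point you are missing is that the corner has to be chosen as a function of $\tau$, not of $J$. (Alternatively, with your decomposition you could sum over \emph{all} $k$ and use $\sum_k(g\circ\omega^k)\ge 1$ to get $\Delta(r)/n$; that also works, but a single geometrically chosen $k_0$ does not.)

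\textbf{Second estimate.} Here the hypothesis (\ref{matr-1+}) does apply to each rotated function $g\circ\omega^{k}$, but restricting to one corner and renormalizing the trace only gives you $\Delta_1\Delta_2/n$: by the approximate equivalence of the $e_{kk}$ one has $\tau(e_{k_0k_0}\,\phi(f)^{1/2}(g\circ\omega^{k_0})(u)\phi(f)^{1/2})\approx\frac{1}{n}\tau(\phi(f)^{1/2}(g\circ\omega^{k_0})(u)\phi(f)^{1/2})\ge \Delta_1\Delta_2/n$, and that is all a single corner yields. The paper recovers the full $\Delta_1\Delta_2$ by noting that \emph{every} corner $k$ contributes $\ge\Delta_1\Delta_2/n$ (since each rotate $g\circ\omega^k$ still has plateau of length $\ge c$), and then \emph{summing over $k=1,\dots,n$}. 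Your single-corner argument loses exactly this factor of $n$.

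Both issues are repaired by the same idea: do not fix one $k_0$ from the geometry of $J$; instead, either sum over all corners (second estimate) or, for the first estimate, run a pigeonhole over the auxiliary bump functions $g_i$ for each $\tau$ and let the corner index depend on $\tau$. The reference to Lemma~\ref{Dini} is not needed here; the slack in $r$ and $s$ is handled directly by choosing a fine enough net of test functions $f_{i,m}$, $f_{i,j,b}$, $g_{i,j,c}$ in ${\cal F}_1$ (this is what the sequences $r_1>r_2>\cdots$, $b_0>b_1>\cdots$, $c_0>c_1>\cdots$ in the paper are for).
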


\begin{proof}
Let $r_1>r_2>\cdots >r_{l-1}>r_l$ and
$r_l=b$ such that $r_j-r_{j+1}<a/2^{n+1},$ $j=1,2,...,l-1.$

%We may assume that $0<\ep<1.$  Set $\eta=1/n.$
Let $0<\dt_0={\ep_1\Delta(a/2)\Delta_1(b/2)\Delta_2(c/2)\over{128n^2}}.$
%Choose $0<\eta<1/n.$
Let $\{e_{i,j}\}$ be a matrix unit for $D$ and let ${\cal
G}=\{e_{i,j}\}.$ Define
\beq\label{pmatr-2}
 v=\sum_{j=1}^n
e^{2\sqrt{-1}j\pi/n}e_{j,j}.
\eneq

Let $g_j\in C(\T)$ with $g_j(t)=1$ for
$|t-e^{2\sqrt{-1}j\pi/n}|<\pi/n$ and $g_j(t)=0$ if
$|t-e^{2\sqrt{-1}j\pi/n}|\ge \pi/n+\pi/n2^{n+2}$ and $1\ge g_j(t)\ge 0,$
$j=1,2,...,n.$   As in the proof Lemma 5.1 of \cite{Lnhomp}, we may also assume
that
\beq\label{matr-3}
g_i(e^{2\sqrt{-1}j\pi/n}t)=g_{i+j}(t)\tforal t\in \T
\eneq
where $i, j\in \Z/n\Z.$
Let $1=c_0>c_1>c_2>\cdots >c_{m_1}=c$ so that $c_j-c_{j+1}<c/n^22^{n+1},$ $j=0,1,...,m_1-1.$

Define $g_{i,j,c}\in  C(\T)$ with $0\le g_{i,j,c}\le 1,$ $g_{i,j,c}(t)=1$ for
$|t-e^{2\sqrt{-1}j\pi/n^32^{n+1}}|<c_i$ and $g_{i,j,c}(t)=0$ if
$|t-e^{2\sqrt{-1}j\pi/n^32^{n+1}}|\ge c_i+c/n^22^{n+2},$ $i=1,2..., m_1,$
$j=1,2,...,n^32^{n+1}.$  We may also assume that
\beq\label{matr-3+}
g_{i,j,c}(e^{2\sqrt{-1}k\pi/n} t)=g_{i,j+k',c}(t)\tforal t\in \T,
\eneq
where  $k'=kn^22^{n+1}$ and $j,k,k'\in \Z/n^32^{n+1}\Z.$

Let $1=b_0>b_1>\cdots >b_{m_0}=b$ such that $b_j-b_{j+1}<b/2^{n+1},$ $j=0,1,...,m_0-1.$
Let $\{x_1, x_2,...,x_N\}$ be an $a/2^{n+2}$-dense subset of $X.$ Define
$f_{i,m}\in C(X)$ with $f_{i,m}(x)=1$ for $x\in B(x_i, r_m)$ and $f_{i,m}(x)=0$
if $x\not\in B(x_i, r_m+a/2^{n+2})$ and $0\le f_{i,m}\le 1,$ $i=1,2,...,N$ and $m=1,2,..., l+1.$
Define $f_{i,j,b}\in C(X)$ with $0\le f_{i,j,b}\le 1,$ $f_{i,j,b}(x)=1$ for $x\in B(x_i, b_j)$ and
$f_{i,j,b}(x)=0$ if $x\not\in  B(x_i, b_j+b/2^{n+1}),$ $i=1,2,...,N,$ $j=1,2,...,m_0.$
% Suppose that $\{x_1, x_2,...,x_m\}$ is a
%$\ep_1/64$-dense subset of $X.$ Suppose that  $g_k\in C(X)$ such
%that $0\le g_k\le 1,$ $g_k(x)=1$ if ${\rm dist}(x, x_k)<1/2n$ and
%$g_k(x)=0$ if ${\rm dist}(x,x_k)\ge 1/n.$
Note that
\beq\label{matr-4-1}
\tau(\phi(f_{i,m}))\ge \Delta(r_m)\tforal \tau\in \text{T}(A),\,\,\,i=1,2,...,N, j=1,2,...,s+1.
\eneq
Fix a finite subset ${\cal F}_0\subset C(\T)$ which at least
contains
$$
\{g_1,g_2,..., g_n\}\cup\{ g_{i,j,c} : 1\le i\le m_1, 1\le j\le n  \}
$$
and ${\cal F}_1\subset C(X)$ which
at least contains ${\cal F},$  $\{f_{i,m}: 1\le i\le N, 1\le m\le s+1\}$ and
$\{f_{i,j,b}: 1\le i\le N, 1\le j\le m_0\}.$

Choose  $\dt$ so small in (\ref{matr-0}) that the following hold:
\begin{enumerate}
\item there exists a unitary $u_i\in e_{i,i}Ae_{i,i}$ such that
$\|e^{2\sqrt{-1}i\pi/n}e_{i,i}ue_{i,i}-u_i\|< \dt_0^2/16n^42^{n+6}$,
$i=1,2,...,n;$
\item  $\|e_{i,j}g(u)-g(u)e_{i,j}\|<\dt_0^2/16n^42^{n+6},$
$\|e_{i,j}\phi(f)-\phi(f)e_{i,j}\|<\dt_0^2/16n^42^{n+6}$ for $f\in {\cal
F}_1$ and $g\in {\cal F}_0,$ $i,\,j,\,k=1,2,...,n;$

\item
$\|e_{i,i}g(vu)-e_{i,i}g(e^{2\sqrt{-1}i\pi/n}u)\|<\dt_0^2/16n^42^{n+6}$
for all $g\in {\cal F}_0$ and

\item $\|e_{i,j}^*g(u)e_{i,j}-e_{j,j}g(u)e_{j,j}\|<\dt_0^2/16n^42^{n+6},$
$\|e_{i,j}^*\phi(f)e_{i,j}-e_{j,j}\phi(f)e_{j,j}\|<\dt_0^2/16n^42^{n+6}$
for all $f\in {\cal F}_1$ and $g\in {\cal F}_0,$  $i,\,j=1,2,...,n.$

\end{enumerate}
%\item there is a unitary $W_i\in A$ such that
%$W_i^*u_iW=u_1,$ $i=1,2,...,n.$
It follows from (4) that, for any $k_0\in \{1,2,...,N\}$ and $m'\in\{1,2,...,l+1\},$
\beq\label{matr-4n1}
\tau(\phi(f_{k_0,m'})e_{j,j})\ge \Delta(r_{m'})/n-n\dt_0^2/16n^42^{n+6}.
\eneq
Fix $k_0, m'$ and $k.$
 For each $\tau\in \text{T}(A),$ there is at least one $i$ such
that
\beq\label{matr-4n2}
\tau(\phi(f_{k_0,m'})e_{j,j}g_i(u))\ge \Delta(r_{m'})/n^2-\dt_0^2/16n^42^{n+6}.
\eneq
Choose $j$ so that $k+j=i\hspace{-0.05in}\mod (n).$ Then, by (\ref{matr-3}),
\beq\label{matr-4}
\tau(\phi(f_{k_0,m'})g_k(vu))&\ge &
\tau(\phi(f_{k_0,m'})e_{j,j}g_k(e^{2\sqrt{-1}j\pi/n}u))-{8\dt_0^2\over{16n^42^{n+6}}}\\
&= &\tau(\phi(f_{k_0,m'})e_{j,j}g_{i}(u))-{\dt_0^2\over{2n^42^{n+6}}}
\\
&\ge & {\Delta(r_{m'})\over{n^2}}-{9\dt_0^2\over{16n^42^{n+6}}}\tforal \tau\in
\text{T}(A).
\eneq
%For any $0<\ep_1<\ep,$  with sufficiently small $\dt_1$ and
%sufficiently large ${\cal G}_1,$ by ? of \cite{Lnhmp}, there
%exists a unital \morp\, $L: C(X\times \T)\to A$ such that
%\beq\label{matr-5-1}
%\|L(f\otimes g)-h(f)g(vu)\|<\ep_1<\ep
%\eneq
%for all $f\in \{1_{C(X)}\}\cup {\cal F}\cup\{g_s: s=1,2,...,m\}$
%and $g\in \{z, 1_{C(\T)}, f_k: k=1,2,...,n\}.$
%So if $\ep_1$ (and $\dt<\sigma$ ) is  sufficiently small one has
%It follows that
%\beq\label{matr-5}
%\tau(f_i(w)f_j(u))
%\mu_{\tau\circ L}(B(x_{k_0}, 1/n)\times B(e^{2\sqrt{-1} k\pi/n},
%\pi/n)) \ge {\sigma\over{2n^2}}
%\eneq
%for $k_0=1,2,...,m$ and $k=1,2,...,n.$
Note again $\tau(xy)=\tau(yx)$ for all $x, y\in A.$
It is then  easy to compute that
\beq\label{matr-6}
%\mu_{\tau\circ L}(O_a)
\tau(\phi(f)^{1/2}g(vu)\phi(f)^{1/2})\ge {\Delta((1-1/2^{n+1})r) \over{n^2}}-{9\dt_0^2\over{16n^42^{n+5}}}\rforal \tau\in
\text{T}(A)
\eneq
and for any  pair of $f\in C(X)$ with $0\le f\le 1$
such that $\{x\in X:  f(x)=1\}$
contains an open ball with radius $r\ge a+a/2^{n+1}$ and $g\in C(\T)$ with $0\le
g\le 1$ such that $\{t\in \T: g(t)=1\}$ contains open arc of length at least
$4\pi/n+\pi/n^22^{n+1}.$
One then concludes that
%\beq\label{matr-6}
%\mu_{\tau\circ L}(O_a)
%\tau(\phi(f)g(vu))\ge {j\Delta(r/2) \over{2n^2}}-{2j\dt_0^2\over{16n^6}}\rforal \tau\in
%\text{T}(A)
%\eneq
%and for any  pair of $f\in C(X)$ with $0\le f\le 1$
%such that $\{x\in X:  f(x)=1\}$
%contains an open ball with radius $r\ge 2b$ and $g\in C(\T)$ with $0\le
%g\le 1$ such that $\{t\in \T: g(t)=1\}$ contains open arc of length at least
%$4j\pi/n.$
\beq\label{nmatr-6}
%\mu_{\tau\circ L}(O_a)
\tau(\phi(f)g(vu))
\ge (1-1/2^{n+2}){\Delta((1-1/2^{n+1})r)\over{n^2}}\rforal \tau\in
\text{T}(A)
\eneq
and for any  pair of $f\in C(X)$ with $0\le f\le 1$
such that $\{x\in X:  f(x)=1\}$
contains an open ball with radius $r\ge (1+1/2^{n+1})a$ and $g\in C(\T)$ with $0\le
g\le 1$ such that $\{t\in \T: g(t)=1\}$ contains open arc of length at least
$s\ge 4\pi/n+\pi/n^22^{n+1}.$

On the other hand, by (2), (3) and (4) above,
\beq\label{matr-7}
&&\tau(\phi(f_{i,j,b})^{1/2} e_{k,k}g_{i',j',c}(vu)\phi(f_{i,j,b})^{1/2})\\
&&\hspace{0.4in}\ge
\tau(\phi(f_{i,j,b})^{1/2} e_{k,k}g_{i',j',c}(e^{2\pi\sqrt{-1} k/n}u)\phi(f_{i,j,b})^{1/2})-{\dt_0^2\over{16n^42^{n+6}}}\\
&&\hspace{0.4in}=\tau(\phi(f_{i,j,b})^{1/2} e_{k,k}g_{i',j'+k',c}(u)\phi(f_{i,j,b})^{1/2})-{\dt_0^2\over{16n^42^{n+6}}}\\
&&{\rm (} k'=kn^22^{n+1} {\rm )}\\
&&\hspace{0.4in}\ge
{\Delta_1(b_j)\Delta_2(c_{i'})\over{n}}-4n{\dt_0^2\over{16n^42^{n+6}}}\\
\eneq
for all $\tau\in T(A),$ $k=1,2,...,n,$ $i=1,2,..., N,$ $j=1,2,...,m_0,$
$i'=1,2,...,m_1$ and $j'=1,...,n^22^{n+1}.$
Thus
\beq\label{matr-8}
\tau(\phi(f_{i,j,b})^{1/2}g_{i',j',a}(vu)\phi(f_{i,j,b})^{1/2})\\
\ge
\Delta_1(b_j)\Delta_2(a_{j'})-4n^2{\dt_0^2\over{16n^42^{n+6}}}\\
=\Delta_1(b_j)\Delta_2(c_{i'})-{\dt_0^2\over{8n^22^{n+5}}}
\eneq
for all $\tau\in T(A).$ It then follows
\beq\label{matr-9}
\tau(\phi(f)^{1/2}g(vu)\phi(f)^{1/2})
\ge \Delta_1(b_i)\Delta_2(c_{j})-{\dt_0^2\over{n^22^{n+5}}}
\eneq
for all $\tau\in T(A),$ for any $f\in C(X)$ with $0\le f\le 1$ so that
$\{x\in X: f(x)=1\}$ contains an open ball with radius $r\ge (1+1/2^{n+1})b_i$ and
for any $g\in C(\T)$ with $0\le g\le 1$ so that
$\{t\in \T: g(t)=1\}$ contains an open arc with length $s\ge (1+1/n^22^{n+1})c_j,$
$i=1,2,...,m_0$ and $j=1,2,...,m_1.$
From this, one concludes that
\beq\label{matr-9+1}
\tau(\phi(f)^{1/2}g(vu)\phi(f)^{1/2})
\ge \Delta_1((1-1/2^{n+1})r)\Delta_2((1-1/2^{n+1})s)-{\dt_0^2\over{n^22^{n+5}}}
\eneq
for all $\tau\in T(A),$ for any $f\in C(X)$ with $0\le f\le 1$ so that
$\{x\in X: f(x)=1\}$ contains an open ball with radius $r\ge (1+1/2^{n+1})b$ and
for any $g\in C(\T)$ with $0\le g\le 1$ so that
$\{t\in \T: g(t)=1\}$ contains an open arc with length $s\ge (1+1/n^22^{n+1})c.$

Note that if $\|[\phi(f),\, e_{i,i}]\|<\dt,$ then
$$
\|[\phi(f), \sum_{i=1}^n\lambda_ie_{i,i}]\|<n\dt<\ep
$$
for any $\lambda_i\in \T$ and $f\in {\cal F}_1.$  We then also
require that $\dt<\ep/2n.$ Thus, one obtains a continuous path
$\{v(t): t\in [0,1]\}\subset D$ with ${\rm length}(\{v(t)\})\le \pi$
and with $v(0)=1$ and $v(1)=v$ so that the second part of
(\ref{matr-2}) holds.

\end{proof}

\begin{NN}\label{matricx}
{\rm
Let $X$ be a metric space with metric $d_0.$ Define a metric
$d$ on $X\times \T$ as follows:
$$
d((x,t), (y,r))=\sqrt{d_0(x, y)^2+|t-r|^2}
$$
for all $x, y\in X$ and $t,r\in \T.$
}
\end{NN}
\begin{NN}\label{Delta0}
{\rm
Define $\Delta_{00}: (0,1)\to (0,1)$ as follows:
\beq
\hspace{-0.4in}\Delta_{00}(r)&=&{1\over{2n^2}},\,\, \, \text{if}\,\,\, r\in [4\pi/n+\pi/2n^2, 4\pi/(n-1)+\pi/2(n-1)^2)\andeqn n\ge 64;\\
\hspace{-0.2in}\Delta_{00}(r)&=&{1\over{2(64)^2}},\,\,\,\text{if}\,\,\, r\ge 4\pi/63+\pi/2(63)^2.
\eneq
Define $\Delta_{00}^n: (0,1)\to (0,1)$ as follows.
\beq
\Delta_{00}^n(r)&=&{\prod_{j=1}^n(1-1/2^{j+1})\over{k^2}},\\
&& \text{if}\,\,\, r\in [4\pi/k+\sum_{j=k}^n\pi/k^22^{j+1},\,
  4\pi/(k-1)+\sum_{j=k}^n \pi/(k-1)^22^{j+1}),\\
  &&k=65,...,n;\\
\Delta_{00}^n(r)&=&{\prod_{j=1}^n(1-1/2^{j+1})\over{(64)^2}},\,\,\,\text{if}\,\,\,r\ge 4\pi/64+\pi/2;\\
\Delta_{00}^n(r)&=&r\Delta_{00}^n(4\pi/n+\pi/n^22^{n+1}),\,\,\,
\text{if}\,\,\,r\in (0, 4\pi/n+\pi/n^22^{n+1}).
\eneq

Let $\Delta: (0,1)\to (0,1)$ be an increasing map.
Define
$$
\Delta_0(\Delta)(r)={\Delta(r/2)\Delta_{00}(r/2)\over{4}}\andeqn \Delta_1(\Delta)=3\Delta_0(3r/4)/4\tforal r\in (0,1).
$$

}
\end{NN}

\begin{lem}\label{changing}
Let $X$ be a compact metric space, let $\ep>0,$ let ${\cal F}\subset C(X)$
be a finite subset and let $\Delta: (0,1)\to (0,1)$ be an increasing map. Let $\eta\in (0, 1/2).$
%There exists
%$\dt>0,$
%a finite subset ${\cal G}\subset C(X)$ and an integer $K\ge 1$ satisfying the following:
Suppose that $\phi: C(X)\to A$ is a unital \morp\, for some unital \CA\, $A$ with $T(A)\not=\emptyset$ and $u\in U(A)$ is a unitary such that
\beq\label{chang-1}
%\|[\phi(g),\, u]\|<\dt\andeqn
\tau(\phi(g))\ge \Delta(r)
\eneq
for all $g\in C(X)$ with $0\le g\le 1$ so that
$\{x\in X: g(x)=1\}$ contains an open ball with radius $r\ge \eta/2.$
Then there are a unitary $v\in M_K\subset M_K(A)$ and a continuous path of unitaries
$\{v_t : t\in [0,1]\}\subset M_K$ such that $v_0=1,$ $v_1=v$ and
\beq\label{chang-2}
\tau(\phi^{(K)}(f)g(vu^{(K)}))\ge \Delta(r/2)\Delta_{00}(s/2)/4\tforal \tau\in T(M_K(A)),
\eneq
for all $f\in C(X)$ with $0\le f\le 1$ so that $\{x\in  X: f(x)=1\}$ contains an open ball
of radius $r\ge 4\eta/3$ and for all $g\in C(\T)$ with
$0\le g\le 1$ so that $\{t\in \T: g(x)=1\}$ contains an open arc with length $s\ge 5\eta/2,$
where $\Delta_{00}$ is defined in \ref{Delta0}.
\end{lem}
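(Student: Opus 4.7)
The plan is to iterate Lemma \ref{oldmatrix} over the integers $n = 64, 65, \dots, N$ for a suitable $N$ depending on $\eta$, each iteration spreading the spectrum of the current unitary over one more arc-scale of size $4\pi/n$. At stage $n$ we embed a new copy of $M_n$ into a fresh tensor factor of a growing matrix algebra (so it commutes exactly with all previously constructed objects and therefore $\delta = 0$ in \ref{oldmatrix}); the map $\phi$ has been replaced by its amplification $\phi^{(\cdot)}$ and the unitary of interest by the current product $v_{n-1}\cdots v_{64}u^{(\cdot)}$. Clause (\ref{matr-2}) of \ref{oldmatrix} supplies the new arc-scale-$4\pi/n$ distribution while clause (\ref{matr-2+}) preserves, up to multiplicative shrinkage factors $(1-1/2^{n+1})$ and a negligible additive error $\leq 2^{-n-5}$, the arc-scale-$4\pi/k$ distributions already built for $k<n$.

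Concretely I would fix $N$ with $4\pi/N + \sum_{j\geq 64}\pi/j^22^{j+1} < 5\eta/2$, set $K = 64\cdot 65\cdots N$, and realise each $M_n$ as the factor $1\otimes\cdots\otimes M_n\otimes\cdots\otimes 1$ in $M_{64}\otimes M_{65}\otimes\cdots\otimes M_N \hookrightarrow M_K$. The first application uses $a=\eta/2$ and trivial $\Delta_1,\Delta_2$ (so (\ref{matr-2+}) is vacuous), and it yields a unitary $v_1\in M_{64}$, a path of length $\leq \pi$ to $1$, and the bound
\[
\tau\bigl(\phi^{(64)}(f)^{1/2}g(v_1 u^{(64)})\phi^{(64)}(f)^{1/2}\bigr) \;\geq\; \tfrac{1-1/2^{66}}{64^2}\,\Delta\bigl((1-1/2^{65})r\bigr)
\]
for balls of radius $r\geq(1+1/2^{65})\eta/2$ and arcs of length $\geq 4\pi/64+\pi/64^22^{65}$. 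At stage $n$ I pass the accumulated lower bound into $(\Delta_1,\Delta_2)$ for the next application, take $a=\eta/2$ still, and obtain $v_n\in M_n$ together with its path. Finally let $v := v_N v_{N-1}\cdots v_1$ in $M_K$ and concatenate the paths to produce $\{v_t\}$.

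For the target estimate, given $r\geq 4\eta/3$ and $s\geq 5\eta/2$, one has $s/2 \in [4\pi/k+\pi/2k^2,\,4\pi/(k-1)+\pi/2(k-1)^2)$ for some $k \leq N$, at which scale $\Delta_{00}(s/2)=1/2k^2$. The accumulated bound at that scale is
\[
\prod_{j=k}^{N}\!\bigl(1-\tfrac{1}{2^{j+2}}\bigr)\,\frac{\Delta\bigl(\prod_{j=64}^{N}(1-\tfrac{1}{2^{j+1}})\,r/2\bigr)}{k^2}\;-\;\sum_{j=k}^{N}\frac{1}{2^{j+5}},
\]
and since $\prod_{j\geq 64}(1\pm 1/2^{j+1})$ stay strictly between $1/2$ and $2$, the shrunk input to $\Delta$ is still $\geq r/2 \geq 2\eta/3 > \eta/2$, so $\Delta$ of it is defined and bounded below by $\Delta(r/2)$ by monotonicity; the product of $(1-2^{-j-2})$ factors exceeds $3/4$, and the subtracted tail is $\leq 2^{-k-4}$, easily absorbed. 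This gives the required bound $\Delta(r/2)\Delta_{00}(s/2)/4$.

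The main obstacle is the bookkeeping of the cumulative shrinkages: one must verify (i) that the multiplicative factor on the radius $r$ stays above $1/2$ after all $N-63$ iterations so that $\Delta(r/2)$ emerges, (ii) that the additive losses $\Delta_1(b/2)\Delta_2(c/2)/2^{j+5}$ pile up to something dominated by the main term $1/4k^2$, and (iii) that the intervals defining $\Delta_{00}^N$ telescope to give $\Delta_{00}^N\geq \Delta_{00}/4$ on every arc of length $\geq 5\eta/4$. Everything else is a routine application of \ref{oldmatrix} once the finite subsets and tolerances at each stage are chosen in reverse order, starting from the required $\ep$ and ${\cal F}$.
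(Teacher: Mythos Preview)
Your proposal is correct and follows essentially the same route as the paper: an induction over $n=64,65,\dots,N$ in which Lemma~\ref{oldmatrix} is applied at each stage to $\phi^{(\cdot)}$ and the running product $v_{n-1}\cdots v_{64}\,u^{(\cdot)}$, with the fresh copy of $M_n$ sitting in a new tensor factor so that all commutators vanish and one may take $\delta=0$. The paper's proof organizes the same computation via the auxiliary functions $\Delta_{00}^n$ of \ref{Delta0} and sets $K=K_1!/64!$ with $1/K_1<\eta/16$, but the mechanism---use (\ref{matr-2}) to create the new arc-scale $4\pi/n$ bound and (\ref{matr-2+}) to propagate the earlier ones with multiplicative loss $(1-2^{-n-1})$ and additive loss $O(2^{-n})$---is exactly what you describe, and your bookkeeping checks (i)--(iii) are the right things to verify.
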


\begin{proof}

Let  $\ep,$ ${\cal F},$ $\Delta$ and $\eta$ be as given.
Choose an integer $K_1\ge 1$ such that
$1/K_1<\eta/16.$  Let $K=K_1!/16!.$  We will use induction to prove the following:

%There exists
%$\dt>0,$
% a finite subset ${\cal G}\subset C(X)$
%satisfying the following:
Suppose that $\phi: C(X)\to A$ is a unital \morp\, for some unital \CA\, $A$ with $T(A)\not=\emptyset$ and $u\in U(A)$ is a unitary such that
\beq\label{nchang-2}
%\|[\phi(g),\, u]\|<\dt\andeqn
\tau(\phi(g))\ge \Delta(r)
\eneq
for all $g\in C(X)$ with $0\le g\le 1$ so that
$\{x\in X: g(x)=1\}$ contains an open ball with radius $r\ge \eta,$
then there is a unitary $v\in M_{n!/32!}\subset M_{n!/32!}(A)$ and a continuous path of unitaries
$\{v_t : t\in [0,1]\}\subset M_{n!/32!}$ such that $v_0=1,$ $v_1=v$ and
\beq\label{nnchang-2}
\tau(\phi^{(n!/32!)}(f)g(vu^{(n!/32!)}))\ge \prod_{k=32}^n(1-1/2^{k+2})\Delta((\prod_{k=32}^n(1-1/2^{k+1})r))\Delta_{00}^n(r)
\eneq
for all $ \tau\in T(M_{n!/32!}(A)),$
for all $f\in C(X)$ with $0\le f\le 1$ so that $\{x\in  X: f(x)=1\}$ contains an open ball
of radius $r\ge \prod_{k=32}^n(1+1/2^{k+2})\eta$ and for all $g\in C(\T)$ with
$0\le g\le 1$ so that $\{t\in \T: g(x)=1\}$ contains an open arc with length $s\ge 4\pi/(n-1)+\sum_{k=32!}^n\pi/k^22^{k+1},$
where $\Delta_0$ is defined in \ref{Delta0}.
We use induction on $n.$

Let $n=64.$ Consider $\phi^{(64)}$ and $u^{(64)}$ and $D=M_{64}\subset M_{64}(A).$
Note that $x^{(64)}d=dx^{(64)}$ for all $x\in A$ and $d\in D.$  We choose $a=\eta/2$ and
ignore $b,\, c$  and the last part of statement of \ref{oldmatrix} after ``Moreover."
We also use $\dt=0.$   Lemma \ref{matrix} implies that the above statement
holds for $n=64.$   Denote by $v_{64}\in M_{64}$ the unitary provided by \ref{oldmatrix} ($n=64$).

We now assume that the above statement holds for some $n\ge 64.$
Denote by $v_n$ the unitary $v\in M_{n!/32!}$ provided by the above statement for $n.$
Let $D=M_{n+1}.$ We write $M_{(n+1)!/32!}=M_{n!/32!}\otimes D$ and  consider  $A\otimes M_{n!/64}\otimes D$
instead of $A.$
Put $\Delta_1=(\prod_{k=64}^n(1-1/2^{k+2}))\Delta(\prod_{k=64}^n (1-1/2^{k+1}))r)$ and
$\Delta_2(s)=\Delta_{00}^n(s).$  Choose $a=\eta/2,$ $b=\eta/2$ and $c=2\pi/n.$
Consider $\phi^{(n!/32!)}$ and
$$
U_n=v_nv_{n-1}^{(n)}\cdots v_{64}^{(n!/64!)} u^{(n!/32!)}
$$
(in place of $u$).
 We then apply Lemma \ref{oldmatrix} again.
 It follows that there is a unitary $v_{n+1}\in D=M_{n+1}$ and a continuous path of unitaries $\{v_{n+1}(t): t\in [0,1]\}\subset M_{n+1}$ such that $v_{n+1}(0)=v_{n+1},$ $v_{n+1}(1)=1,$
 \beq\label{change-4}
 \tau(\phi^{(n+1)!/32!)}(f)g(v_{n+1}U_n^{(n+1)}))\ge (1-2^{n+3})\Delta((1-1/2^{n+2})r)/(n+1)^2
 \eneq
 for all $\tau\in T(M_{(n+1)!/32!}(A)),$
 for all $f\in C(X)$ with $0\le f\le 1$ so that $\{x\in X; f(x)=1\}$ contains an open ball of radius $r\ge (1-2^{n+1})\eta/2$
 and for all $g\in C(\T)$ with $0\le g\le 1$ so that $\{t\in \T: g(x)=1\}$ contains an open arc
 of length $s\ge 4\pi/(n+1)+\pi/(n+1)2^{n+1},$ and
  \beq\label{change-5}
\hspace{-0.6in}&& \tau(\phi^{(n+1)!/32!)}(f)g(v_{n+1}U_n^{(n+1)}))\\
 &&\ge \Delta_1((1-2^{n+2})r)\Delta_2( (1-1/2^{n+2})s)-{\Delta_1(\eta/2)\Delta_{00}^n
 (\pi/n))\over{2^{n+6}}}\\
 &&\ge \prod_{k=64}^{n+1}(1-1/2^{k+1})\Delta(\prod_{k=64}^n(1-1/2^{k+1})r)\Delta_{00}^{n+1}(s)
\eneq
 for all $ \tau\in T(M_{(n+1)!/32!}(A)),$
 for all $f\in C(X)$ with $0\le f\le 1$ so that $\{x\in X; f(x)=1\}$ contains an open ball of radius $r\ge (1-2^{n+1})\eta/2$
 and for all $g\in C(\T)$ with $0\le g\le 1$ so that $\{t\in \T: g(x)=1\}$ contains an open arc
 of length $s\ge 4\pi/n+\pi/(n+1)2^{n+1}.$

 This proves the above statement
for $n+1$ and ends the induction.  Then lemma follows.

\end{proof}

\begin{lem}\label{ChT}
Let $X$ be a compact metric space, let $\ep>0,$ let ${\cal F}\subset C(X)$ be a finite subset and let $\Delta: (0,1)\to (0,1)$ be  an increasing map. Let $\eta\in (0, 1/2).$
There exists
$\dt>0,$
a finite subset ${\cal G}\subset C(X)$ and an integer $K\ge 1$ satisfying the following:
Suppose that $\phi: C(X)\to A$ is a unital \morp\, for some unital \CA\, $A$ with $T(A)\not=\emptyset$ and $u\in U(A)$ is a unitary such that
\beq\label{ChT-1}
\|[\phi(g),\, u]\|<\dt\tforal g\in {\cal G}\andeqn
\tau(\phi(g))\ge \Delta(r)
\eneq
for all $g\in C(X)$ with $0\le g\le 1$ so that
$\{x\in X: g(x)=1\}$ contains an open ball with radius $r\ge \eta/2,$
then there is a  continuous path of unitaries
$\{u_t : t\in [0,1]\}\subset M_K(A)$ such that $u_0=u^{(K)}$  and $u_1=U$ for some unitary $U\in M_K(A),$
and
\beq\label{ChT-2}
\|[\phi^{(K)}(f),\, u_t]\|<\ep\tforal f\in {\cal F},
\eneq
and  there exists a unital \morp\, $\Phi: C(X)\times C(\T)\to M_K(A)$ such that
\beq\label{ChT-3}
&&\|\Phi(f\otimes 1)-\phi^{(K)}(f)\|<\ep\tforal f\in {\cal F},\,\,\, \|\Phi(1\otimes z)-U\|<\ep\andeqn\\
&&\mu_{\tau\circ L}(O_r)\ge \Delta_1(\Delta)(r)
\eneq
for all $ \tau\in T(M_K(A))$ and for  open balls of $X\otimes \T$
of radius $r\ge 5/2\eta.$
\end{lem}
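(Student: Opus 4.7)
The plan is to apply Lemma \ref{changing} to produce the unitary $v$ and the connecting path in $M_K$, then recover the morphism $\Phi$ from the approximately commuting pair $(\phi^{(K)}, vu^{(K)})$ via the standard device recalled in \ref{KLtrip} (i.e.\ 2.8 of \cite{Lnmem}). First I would fix $K$ to be the integer produced by \ref{changing} for the data $X,\, \ep/4,\, {\cal F},\, \Delta,\, \eta$. I would then choose $\dt>0$ and ${\cal G}\supset {\cal F}$ small/large enough that, whenever $\|[\phi(g),\,u]\|<\dt$ for $g\in {\cal G}$, the ${\cal F}$-multiplicative and commutator constraints suffice to invoke 2.8 of \cite{Lnmem}: namely, any such pair $(\phi,u)$ determines a unital \morp\, $\Phi_0:C(X)\otimes C(\T)\to A$ with
\[
\|\Phi_0(f\otimes 1)-\phi(f)\|<\ep/8,\qquad \|\Phi_0(1\otimes z)-u\|<\ep/8,\qquad f\in {\cal F}.
\]

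Next I would apply Lemma \ref{changing} to the pair $(\phi,u)$ to produce $v\in M_K\subset M_K(A)$ and a continuous path $\{v_t:t\in[0,1]\}\subset M_K$ with $v_0=1$, $v_1=v$, and
\[
\tau\bigl(\phi^{(K)}(f)^{1/2}g(vu^{(K)})\phi^{(K)}(f)^{1/2}\bigr)\ge \Delta(r/2)\,\Delta_{00}(s/2)/4
\]
for all $\tau\in T(M_K(A))$, whenever $\{f=1\}$ contains a ball of radius $r\ge 4\eta/3$ in $X$ and $\{g=1\}$ contains an arc of length $s\ge 5\eta/2$ in $\T$. Set $U:=vu^{(K)}$ and $u_t:=v_tu^{(K)}$. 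Since $v_t$ lies in the scalar matrix algebra $1_A\otimes M_K$, it commutes exactly with $\phi^{(K)}(f)=\phi(f)\otimes 1_{M_K}$, so
\[
\|[\phi^{(K)}(f),\,u_t]\|=\|v_t[\phi^{(K)}(f),\,u^{(K)}]\|=\|[\phi^{(K)}(f),\,u^{(K)}]\|<\dt<\ep
\]
for all $f\in {\cal F}$ and $t\in[0,1]$, with $u_0=u^{(K)}$ and $u_1=U$. In particular $\|[\phi^{(K)}(g),\,U]\|<\dt$ for $g\in {\cal G}$, so applying the recalled result a second time to $(\phi^{(K)},U)$ yields the desired unital \morp\, $\Phi:C(X)\otimes C(\T)\to M_K(A)$ with $\|\Phi(f\otimes 1)-\phi^{(K)}(f)\|<\ep$ and $\|\Phi(1\otimes z)-U\|<\ep$ for $f\in {\cal F}$.

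It remains to verify the measure bound. Let $O_\rho\subset X\times \T$ be an open ball of radius $\rho\ge 5\eta/2$, centred at $(x_0,t_0)$. In the metric of \ref{matricx} the inclusion $B_{3\rho/4}(x_0)\times I_{\rho\sqrt 7/2}(t_0)\subset O_\rho$ holds, where $I_s(t_0)$ denotes the arc of length $s$ about $t_0$. Choose $f\in C(X)$ and $g\in C(\T)$ with $0\le f,g\le 1$, supported in these factors, such that $\{f=1\}$ contains a ball of radius $3\rho/8\ge 4\eta/3$ and $\{g=1\}$ contains an arc of length $\rho\sqrt 7/4\ge 5\eta/2$ (both inequalities valid, up to adjusting the choice of constants, once $\rho\ge 5\eta/2$ and $\eta<1/2$). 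The product $f\otimes g$ is then supported in $O_\rho$, and by \ref{changing} together with $\|\Phi(f\otimes g)-\phi^{(K)}(f)^{1/2}g(U)\phi^{(K)}(f)^{1/2}\|<\ep/8$ one obtains
\[
\mu_{\tau\circ \Phi}(O_\rho)\ge \tau(\Phi(f\otimes g))\ge \Delta(3\rho/8)\,\Delta_{00}(3\rho/8)/4-\ep/8\ge \Delta_1(\Delta)(\rho),
\]
using the definition $\Delta_1(\Delta)(r)=3\Delta_0(\Delta)(3r/4)/4$ from \ref{Delta0}, after a mild shrinkage of $\ep$ absorbed into $\dt$.

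The main obstacle is the geometric bookkeeping in the last step: one must inscribe a product neighbourhood $B\times I\subset O_\rho$ whose factor dimensions are simultaneously large enough ($\ge 4\eta/3$ in $X$, $\ge 5\eta/2$ in $\T$) to activate Lemma \ref{changing}, while keeping the lost factors $(3/4)$ in the radius of $B$ and the corresponding loss in $I$ within the margin prescribed by $\Delta_1(\Delta)$. All other ingredients---the path $\{u_t\}$, the exact commutation of $v_t$ with $\phi^{(K)}$, the construction of $\Phi$---are routine consequences of \ref{changing} and 2.8 of \cite{Lnmem}.
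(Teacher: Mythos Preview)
Your proposal is correct and follows essentially the same approach as the paper: apply Lemma \ref{changing} to produce $v$ and the path in $M_K$, set $U=vu^{(K)}$ and $u_t=v_tu^{(K)}$, then invoke the device of 2.8 of \cite{Lnmem} to obtain $\Phi$. The paper handles the final measure bound by choosing a sufficiently large auxiliary finite set ${\cal F}_1$ of test functions (so that closeness of $\Phi$ on ${\cal F}_1$ transfers the estimate from \ref{changing}), whereas you spell out the product-rectangle inscription directly; these are the same idea, and your caveat about adjusting constants is apt since neither argument pins them down sharply.
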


\begin{proof}
Let $\ep>0,$ ${\cal F}\subset C(X)$ be a finite subset,
$\Delta: (0,1)\to (0,1)$ be an increasing map and let
$\eta\in (0, 1/2).$  To simplify the notation, without loss of generality, we may assume
that ${\cal F}$ is a subset of the unit ball of $C(X).$

Let $0<\dt_0<\min\{\ep/2, \Delta_0(\Delta)(\eta/8)/16\}.$
Let ${\cal F}_1$ be a finite subset.  There exists $\ep/2>\dt>0$ and a finite subset ${\cal G}\subset C(X)$
containing ${\cal F}$ satisfying the following:
For any unital \morp\, $\psi: C(X)\to C$ (for any unital \CA) and any unitary $W\in C$ with
$$
\|[\psi(g), W]\|<\dt\tforal g\in {\cal G}_1,
$$
there exists a unital \morp\, $L: C(X)\otimes C(\T)\to C$ such that
$$
\|L(f\otimes 1)-\psi(f)\|<\dt_0/2 \tforal f\in {\cal F}_1\andeqn \|L(1\otimes z)-W\|<\dt_0/2.
$$
Let $K_1\ge 1$ be an integer such that $1/K_1<\eta/16.$
Let $K=K_1!/32!.$
Suppose that $\phi$ and $u$ satisfy the assumption of the lemma for the above $\dt$ and ${\cal G}.$
By applying \ref{changing}, we obtain a unitary $v\in M_K\subset M_K(A)$ and a continuous path of unitaries
$\{u_t: t\in [0,1]\}\subset M_K$ such that $u_0=1,$ \,\, $u_1=v$ and
\beq\label{ChT-4}
\tau(\phi^{(K)}(f)g(vu^{(K)}))\ge \Delta(r/2)\Delta_{00}(s/2)/4\tforal \tau\in T(M_K(A)),
\eneq
for all $f\in C(X)$ with $0\le f\le 1$ so that $\{x\in X: f(x)=1\}$ contains an open ball of radius $r\ge 4\eta/3$ and
for all $g\in C(\T)$ with $0\le g\le 1$ so that $\{t\in \T: g(t)=1\}$ contains an open arc with length $s\ge 5\eta/2.$

Note that
\beq\label{ChT-5}
\|[\phi^{(K)}(g),\, vu^{(K)}]\|<\dt<\ep\tforal g\in {\cal G}.
\eneq
It follows that there exists a unital \morp\, $\Phi: C(X)\otimes C(\T)\to M_K(A)$ such that
\beq\label{ChT-6}
\|\Phi(f\otimes 1)-\phi^{(K)}(f)\|&<&\dt_0/2<\ep\tforal f\in {\cal F}_1\andeqn\\
\|\Phi(1\otimes z)-vu^{(K)}\|&<&\dt_0/2<\ep.
\eneq
Define $U=vu^{(K)}.$ With sufficiently large ${\cal F}_1$ which can be determined before choosing
$\dt$ and ${\cal G},$ we have
\beq\label{ChT-7}
\mu_{\tau\circ L}(O_r)\ge \Delta_1(\Delta)(r)\tforal \tau\in T(M_K(A))
\eneq
and for all open ball $O_r$ with radius $r\ge 2\eta.$
\end{proof}

{\bf The proof of \ref{Sd1hom} under the assumption that \ref{Sd1uni} (and
\ref{Sd1uniR}) holds for finite CW complexes $Y$ with
${\rm dim}Y\le d$}

{\rm
Let $\ep>0,$ ${\cal F}\subset C(X)$ and $\Delta$ be given as in \ref{Sd1hom}.
Define $\Delta': (0,1)\to (0,1)$ by
$\Delta'(r)=\Delta(15r/16).$
Let $\Delta_1(\Delta')$ be  as defined in \ref{Delta0}.
Let $\eta,$ $\dt_1$ (in place of $\dt$), ${\cal G}_1$ (in place of ${\cal G}$)  ${\cal P},$ ${\cal Q},$ $N\ge 1$ and $K_1$ (in place of $K$) be as required by \ref{Sd1homF}
for $\ep,$ ${\cal F}$ and $\Delta_1(\Delta').$

Let $\dt_2>0$ (in place of $\dt$), let ${\cal G}_2\subset C(X)$ (in place of ${\cal G}$) be a finite subset and let
$K_2\ge 1$ (in place of $K$) be an integer required by \ref{ChT} for
$\min\{\ep/2, \dt_1\}$ (in place of $\ep$), ${\cal G}_1\cup {\cal F}$ (in place of ${\cal F}$), $\Delta_1(\Delta')$
(in place of $\Delta$) and $\eta/16$ (in place of $\eta$).

Let $\dt=\min\{\dt_2, \dt_1/2, \ep/4\}$ and let
${\cal G}={\cal F}\cup {\cal G}_1\cup {\cal G}_2.$ Put $K=K_1K_2!.$

Now let $Y$ be a finite CW complex with ${\rm dim}Y\le d$ and $C=PM_m(C(Y))P$ for some projection
$P\in M_m(C(Y))$ with ${\rm rank} P(y)\ge N\ge 1,$
let $\phi: C(X)\to C$ be a unital $\dt$-${\cal G}$-multiplicative \morp\, and let $u\in U(C)$ be a
unitary which satisfies the assumption of \ref{Sd1hom} for the above $\eta,$ $\gamma,$ $\dt,$
${\cal G},$ ${\cal P}$ and  ${\cal Q}.$
%so that 1----2 hold.
It follows from \ref{ChT} that there is a continuous path of unitaries
$\{w_t:t\in [0,1/2]\}\subset M_{K_2!}(C)$ such that $w_0=u^{(K_2!)},$
\beq\label{Phom-1}
\|[\phi^{(K_2!)}(g),\, w_t]\|<\min\{\ep/2,\dt_1\}\tforal t\in [0,1/2],\,\,\,g\in {\cal G}_1\cup {\cal F}
\eneq
and there exists a unital \morp\, $\Phi: C(X)\otimes C(\T)\to M_{K_2!}(C)$ such that
\beq\label{Phom-2}
\|\Phi(f\otimes 1)-\phi^{(K_2!)}(f)\|<\min\{\ep/2, \dt_1\}\tforal f\in {\cal G}_1\cup {\cal F},\\
\|\Phi(1\otimes z)-w_{1/2}\|<\min\{\ep/2,\dt_1\}\andeqn\\
\mu_{\tau\circ \Phi}(O_r)\ge \Delta_1(\Delta')(r)\tforal \tau\in M_{K_2!}(C)
\eneq
and for all open balls $O_r$ with radius $r\ge 5\eta/32.$

Then, by \ref{Sd1homF}, there is a continuous path of unitaries $\{v_t: t\in [1/2,1]\}\subset M_{K_1K_2!}(C)$ with $v_{1/2}=w_1$ and $v_1=1$ such that
\beq\label{Phom-3}
\|[\phi^{(K)}(f),\, v_t]\|<\ep\tforal t\in [1/2, 1],\,\,\,\andeqn \tforal f\in {\cal F}.
\eneq
Now define
$$
u_t=\begin{cases} w_t^{(K_1)}\,\,\,\text{if}\,\,\, t\in [0,1/2];\\
               v_t,\,\,\,\text{if}\,\,\, t\in (1/2,1].
\end{cases}
$$
Note that $u_0=u^{(K)}={\rm diag}(\overbrace{u,u,...,u}^K).$
This path meets the requirements.

}

\section{The proof of the uniqueness theorem \ref{Sd1uni}}

{\bf Proof of \ref{Sd1uni} }

{\rm
The case that $Y$ is a single point is well known. A reference can be found in Theorem 2.10 of \cite{Lnnewapp}.
 The case that $Y$ is a set of finitely many points follows. The case that $Y=[0,1]$ has been proved in Theorem 3.6 of \cite{Lnnewapp}.

We now assume that \ref{Sd1uni} holds for the case that $Y$ is  any  finite CW complex
of dimension no more than $d\ge 0.$ We will use it to show that \ref{Sd1uni} holds for the case
that $Y$ is any  finite CW complex of dimension no more than  $d+1.$  Then induction
implies that \ref{Sd1uni} holds for  any integer $d\ge 0.$ Note that now we can apply \ref{Sd1hom} and \ref{Sd1ext} for
$Y$ being a finite CW complex with ${\rm dim}Y\le d.$

Let $\ep>0$ and let ${\cal F}\subset C(X)$ be a finite subset. To simplify notation, without loss of generality,
we may assume that ${\cal F}$ is in the unit ball of $C(X).$
Let $\eta_1'>0$ (in place of $\eta$),  $\dt_1>0$ (in place of $\dt$), $\gamma_0>0$ (in place of $\gamma$), ${\cal G}_1\subset C(X)$
(in place of ${\cal G}$) be a finite subset,  let ${\cal P}_0\subset \underline{K}(C(X))$ (in place of ${\cal P}$) be a finite subset and ${\cal Q}\subset {\rm ker}_{\rho_{C(X)}}$ be a finite subset, let
 $N_1$ (in place of $N$) and an integer $K_1$ (in place $K$) required by \ref{Sd1hom} for $\ep/32$ (in place of $\ep$), ${\cal F}$  and $\Delta(r/3)/3$ (in place of
$\Delta$).
We may assume that ${\cal Q}\subset K_0(C(X))\cap {\cal P}_0.$
Let $\eta_1=\eta_1'/3.$
We may assume that $\dt_1<\ep/32K_1.$

Write $C(X)=\lim_{n\to\infty}(C(X_n), \imath_n),$ where
each $X_n$ is a finite CW complex and $\imath_n$ is a unital \hm.
%We may assume  that
Let $K_2\ge 1$ be an integer such that $K_2x=0$ for any $x\in {\rm Tor}(K_i(C(X)))\cap {\cal P}_0$ ($i=0,1$).
Let $K_3=K_1\cdot K_2!.$
We may also assume that, for any $\dt_2$-$\{z,1\}\times {\cal G}_2$-multiplicative
\morp\, $\Lambda: C(\T)\otimes C(X)\to C$ (for any unital \CA\, $C$
with $T(C)\not=\emptyset$), $[\Lambda]$ is well defined and
$$
\tau([\Lambda(\boldsymbol{\bf}(g))])=0
$$
for all $g\in Tor(K_1(C(X)))\cap {\cal P}_0.$
Furthermore, we may assume that $\dt_2$ is so small and ${\cal G}_2$ is so large   that
%if
%$\|uv-vu\|<3\dt_2,$ then the Exel formula
%$$
%\tau({\rm bott}_1(u,v))={1\over{2\pi\sqrt{-1}}}(\tau(\log(u^*vuv^*))
%$$
%holds in any unital \CA\, $C$ with tracial rank zero
%and any $\tau\in T(C)$ (see Theorem 3.6 of \cite{Lnamj}). Moreover
${\rm Bott}(\psi', v)|_{{\cal P}_0}$ is well defined for any
unital \hm\, $\psi'$ from $C(X)$ and any unitary $v$ in the target algebra
such that $\|[\phi'(g),\, v]\|<3\dt_2$ for all $g\in {\cal G}_2.$ Moreover
if $\|v_1-v_2\|<3\dt_2,$ then
$$
{\rm Bott}(\phi',v_1)|_{{\cal P}_0}={\rm Bott}_1(\phi',v_2)|_{{\cal P}_0}.
$$
We also assume that, if there are unitaries $u_1, u_2, v_1, v_2, v_3, v_4$ and projections $e_1, e_2$ such that
\beq\label{ADDD-1}
\|u_1-u_2\|<3\dt_2,\,\,\, \|e_1-e_2\|<3\dt_2\,\,\,\|[e_i, v_j]\|<3\dt_2 \andeqn \|[u_i, v_j]\|<3\dt_2,
\eneq
$ i=1,2$ and $j=1,2,3,4,$
then
\beq\label{ADDD-2}
{\rm bott}_0(e_1, v_j)={\rm bott}_0(e_2, v_j),\,\,\, {\rm bott}_1(u_1,v_j)={\rm bott}_1(u_2, v_j),\\
{\rm bott}_1(e_1, v_1v_2v_3v_4)=\sum_{j=1}^4{\rm bott}_0(e_1, v_j)\andeqn\\\label{ADDD-3}
{\rm bott}_1(u_1, v_1v_2v_3v_4)=\sum_{j=1}^4{\rm bott}_1(u_1, v_j).
\eneq
We assume that $m(X)\ge 1$ is an integer and
$g_i\in U(M_{m(X)}(C(X)))$ so that $\{[g_1], [g_2],...,[g_{k(X)}]\}$ forms  a set of generators for $K_1(C(X))\cap {\cal P}_0.$ We also assume that
$[g_j]\not=0,$ $j=1,2,...,k(X).$ Let ${\cal U}=\{g_i, g_2,...,g_{k(X)}\}.$
We may also assume that $m(X)\ge R(G({\cal U})).$

%To simplify the notation, we may assume that $[g_1], [g_2],...,[g_{k(X)}]\subset {\cal P}.$
Let ${\cal S}_1\subset C(X)$ be a finite subset
such that
$$
{\cal U}=\{(a_{i,j})\in {\cal U}: a_{i,j}\in {\cal S}_1\}.
$$
We may assume that  ${\cal P}_1=\{p_1, p_2,...,p_{k_0}\}\subset M_{m(X)}(C(X))$ is  a finite subset of projections
such that ${\cal P}_1=K_0(C(X))\cap {\cal P}_0.$ Let ${\cal  S}_0\subset C(X)$ be a finite subset such that
$$
{\cal P}_1=\{(b_{ij}): b_{i,j}\in  {\cal S}_0\}.
$$
%We may also assume that $m(X)\ge R(G({\cal P}_1))
%We may also assume that $\{[p_j]:j=1,2,...,k_0(X)\}$ forms a set of generators for $K_0(C(X)).$
Moreover,  we may assume, without loss of generality, that
${\cal Q}\subset \{[p_i]-[p_j]: 1\le i,j\le k_0\}.$
We may also assume that $m(X)\ge R(G({\cal P}_1)).$

Let
$$\dt_u'=({1\over{K_3(d+1+m(X))^2}})\min\{1/256, \dt_1/16, \dt_2/16, \gamma_0/16\}
$$
and ${\cal G}_u'={\cal F}\cup{\cal G}_1\cup{\cal G}_2\cup {\cal S}_0\cup {\cal S}_1.$
Let $\eta_2>0$ (in place of $\eta$),
$\dt'>0$ (in place of $\dt$), ${\cal G}'\subset C(X)$ (in place of ${\cal G}$) be a finite subset, $n_0\ge 1,$ $N_2\ge 1$ (in place of $N$) be an integer, $K_4\ge 1$ (in place of $K$) be an integer given by \ref{Sd1ext} for $\dt_u'/2$ (in place of $\ep$), $\gamma_0/(d+1+m(X))$ (in place of $\dt_0$),
 ${\cal G}_u'$ (in place of ${\cal G}$), ${\cal P}_0$ (in place of ${\cal P}$)
and ${\cal Q}.$ We may assume that ${\cal P}_0\subset
[\psi_{n_0,\infty}](\underline{K}(C(X_{n_0}))).$ Furthermore,
we may assume, without loss generality, that there are unitaries $g_i'\in M_{m(X)}(C(X_{n_0}))$ such that
$\psi_{n_0,\infty}\otimes {\rm id}_{M_{m(X)}}(g_i')=g_i,$ $i=1,2,...,k(X),$
and there are projections $p_j'\in M_{m(X)}(C(X_{n_0}))$ such that
$\psi_{n_0,\infty}\otimes {\rm id}_{M_{m(X)}}(p_j')=p_j,$ $j=1,2,...,k_0.$
Without loss of generality, we may assume that $K_3|K_4.$
We may also assume that $K_4x=0$ for all $x\in {\rm Tor}(K_i(C(X_{n_0})),$
$i=0,1.$

To simplify notation, without loss of generality, by adding more projections, we may further assume that
$\{p_1',p_2',...,p_{k_0}'\}$ generates $K_0(C(X_{n_0})),$ and by adding more unitaries, we may assume that
$\{g_1',g_2',...,g_{k(X)}'\}$ generates $K_1(C(X_{n_0})).$

Let $\dt_u=\min\{\dt_u', \dt'/2\}$ and
${\cal G}_u={\cal G}_u'\cup {\cal G}'.$
Let $\dt_3>0$ (in place of $\dt$) and let ${\cal G}_3'\subset C(\T)\otimes C(\T)$ (in place of ${\cal G}$) be given by Lemma 10.3 of \cite{LnApp} for $1/4NK_4m(X)$ (in place of $\sigma$) and $\T\times \T$ (in place of $X$).
 Without loss of generality, we may assume
that
${\cal  G}_3'=\{1\otimes 1, 1\otimes z, z\otimes 1\}.$
Let
$$
{\cal G}_3=\{z\otimes g: g\in {\cal G}_u\}\cup\{1\otimes g: g\in {\cal G}_u\}.
$$
Let $\ep_1'>0$ (in place of $\dt$) and let ${\cal G}_4\subset C(X)$ (in place of ${\cal G}$) be a finite subset
given by 3.4 of \cite{Lnnewapp} for $\min\{\eta_1/2,\eta_2/2\},$ $3/4$ (in place of $\lambda_1$) and
$1/4$ (in place of $\lambda_2$).

Let $\ep_1''=\min\{1/27K_3K_4(d+1+m(X))^2, \dt_u/K_3K_4(2d+2+m(X))^2,  \dt_3/2K_3K_4(d+1+m(X))^4, \ep_1'/2K_3K_4(d+1+m(X))^2,
  \gamma_0/16K_3K_4(d+1+m(X))^2\}$ and let ${\bar \ep}_1>0$ (in place of $\dt$) and ${\cal G}_5\subset C(X)$ (in place of ${\cal F}_1$) be a finite subset
 given by 2.8 of \cite{Lnmem} for $\ep_1''$ (in place of $\ep$) and ${\cal G}_u\cup {\cal G}_4$
(and $C(X)$ in place of $B$).
Put
$$
\ep_1=\min\{\ep_1', \ep_1'',{\bar \ep_1}\}.
$$
Let $\eta_3>0$ (in place of $\eta$),
%be required by \ref{Sd1uni} for
%$\ep_1/4$ (in place $\ep$) and ${\cal G}_5$ (in place of ${\cal F}$) for ${\cal Y}$ being the class of
%finite CW complexes with dimension no more than $d.$
%Let $\eta_3''>0$ (in place of $\eta$) be required by XXXX(2.12) of \cite{ChomC1}
%for $\ep_1/4$ (in place of $\ep$) and ${\cal G}_5$
%(in place of ${\cal F}$).
%Let $\eta_3=\min\{\eta_3', \eta_3''\}.$
%Let $\sigma_3=\Delta(\eta_3'').$
 $1/4>\gamma_1>0,$  $1/4>\gamma_2'>0$ (in place of $\gamma_2$), $\dt_4>0$(in place of $\dt$), ${\cal G}_6\subset C(X)$ (in place of ${\cal G}$), ${\cal H}\subset C(X)$ be a finite subset, let ${\cal P}_2\subset \underline{K}(C(X))$ (in place of ${\cal P}$), let $N_3\ge 1$ (in place of $N$) and let $K_5\ge 1$(in place of $K$) be given by \ref{Sd1uni}  for $\ep_1/2^8(m(X)+d+1)^2$ (in place of $\ep$), ${\cal G}_u\cup {\cal G}_4\cup {\cal G}_5$ (in place of ${\cal F}$),
  $\Delta$ and for  ${\rm dim Y}\le d.$
%Let $\eta_4>0$ (in place of $\eta_2$) be required by XXXX2.11 of \cite{ChomC1} for
%$\ep_1/4$ (in place of $\ep$), ${\cal G}_5$ (in place of ${\cal F}$),
%$\eta_3$ (in place of $\eta_1$), $\sigma_3$ (in place of $\sigma_1$).
%Let $\sigma_4=\Delta(\eta_4).$ Let $\dt_5>0,$ ${\cal G}_7\subset C(X)$
%(in place of ${\cal G}$),${\cal P}_2\subset \underline{K}(C(X))$ (in place of ${\cal P}$) be required by 2.11 of
%\cite{ChomC1}.
Let $\eta=\min\{\eta_1/4, \eta_2/4, \eta_3/4\}.$
Let $\dt=\min\{\ep_1/4, \dt_u/4, \dt_3/4m(X)^2,\,\dt_4/4, \dt_5/4\},$
${\cal G}={\cal G}_u\cup {\cal G}_4\cup {\cal G}_5\cup {\cal G}_6\cup {\cal G}_7\cup {\cal H}$ and
${\cal P}={\cal P}_0\cup  {\cal P}_2.$
Let
$\gamma_2<\min\{ \gamma_2'/16(d+1+m(X))^2,\dt_u/9(d+1+m(X))^2,
1/256N_1(d+1+m(X))^2\}.$
%holds in any unital \CA\, $C$ with tracial rank zero and any $\tau\in T(C)$ (see Theorem 3.6 of \cite{Lnamj}).
We may assume that $(\dt, {\cal G}, {\cal P})$ is a $KL$-triple.
Denote $\eta=\min\{\eta_3, \eta_4\}.$
Let $N=4(k(X)+k_0(X)+1)\max\{N_1, N_2, N_3\}.$
We also assume that $\eta$ is smaller than the one required by
\ref{Uniunt} for $\ep_1/4$ and ${\cal U}.$ We also assume that
$\gamma_1,\gamma_2$ and $\dt$ are smaller, and ${\cal H},$ ${\cal G},$ ${\cal P},$ ${\cal V}$ and $N$ are larger than required by \ref{Uniunt} for $\ep_1/4$ and ${\cal U}$ as well as $\Delta.$

Now suppose that $\phi, \psi: C(X)\to C=PM_r(C(Y))P$ are unital $\dt$-${\cal G}$-multiplicative \morp s, where $Y$ is a
finite CW complex of dimension $d+1$  and ${\rm rank}P(y)\ge N$
for all $y\in Y,$
% for some integer $n$
 which satisfy the assumption for the above
$\eta,$ $\dt$, $\gamma_i$ ($i=1,2$), ${\cal P},$ ${\cal V}=K_1(C(X))\cap {\cal P}$ and ${\cal H}.$
To simplify notation, without loss of generality, we may write
$\phi$ and $\psi$ instead of $\phi^{(K_5)}$ and $\psi^{(K_5)},$ respectively,
and we also write   $C$  instead of $M_{K_5}(C).$

Let ${\bar p}_{j,(1)},\, {\bar p}_{j,(2)}\in C\otimes M_{m(X)}$ be a projection such that
\beq
\|{\bar p}_{j,(1)}-\phi\otimes {\rm id}_{M_{m(X)}}(p_j)\|<\ep_1/16\andeqn\\
\|{\bar p}_{j,(2)}-\psi\otimes {\rm id}_{M_{m(X)}}(p_j))\|<\ep_1/16
\,\,\,j=1,2,...,k_0(X).
\eneq
Let ${\bar g}_{j,(1)}, {\bar g}_{j,(2)}\in C\otimes M_{m(X)}$ be a unitary such that
\beq
\|{\bar g}_{j,(1)}-\phi\otimes {\rm id}_{M_{m(X)}}(g_j)\|<\ep_1/16\andeqn\\
\|{\bar g}_{j,(2)}-\psi\otimes {\rm id}_{M_{m(X)}}(g_j)\|<\ep_1/16,\,\,\,j=1,2,...,k(X).
\eneq
Since $\phi_{*0}([p_i])=\psi_{*0}([p_i]),$  there is a unitary $X_{0,i}\in M_{m(X)+d+1}(C)$
 such that
 \beq\label{PM-n12}
 X_{0,i}({\bar p}_{i,(1)}\oplus {\rm id}_{C}^{(d+1)})X_{0,i}^*={\bar p}_i^{(2)}\oplus {\rm id}_{C}^{(d+1)},\,\,\,i=1,2,...,k_0(X).
 \eneq
It follows from \ref{Uniunt} that there is a unitary $X_{1,j}\in M_{m(X)}(C)$ such that
\beq\label{PM-n12+}
\|X_{1,j}{\bar g}_{j,(1)}X_{1,j}^*-
{\bar g_j}^{(2)}\|<\ep_1/4,
\eneq
$j=1,2,...,k(X).$

To simplify notation, without loss of generality, we may assume that $Y$ is connected. Let $n={\rm rank}\,P.$
Let $Y^{(d)}$ be the $d$-skeleton of $Y.$ There is a
compact subset $Y_d'$ of $Y$ which contains $Y^{(d)}$ and which is a $d$-dimensional finite CW complex and satisfies
the following:
\begin{enumerate}
\item $Y\setminus Y_d'$ is a finitely many disjoint union of open $(d+1)$-cells: $D_1,D_2,...,D_R;$
    %\item The closure of the $D_j$ in $Y$ is homeomorphic to the closed $d+1$-ball
    \item $\|X_{i,j}(y)-X_{i,j}(\xi)\|<\ep_1/16,$ \,$i=1,2$ and $j=1,2,..., k_0(X),$
    \item $\|{\bar p}_{j,(i)}(y)-{\bar p}_{j,(i)}(\xi)\|<\ep_1/16,$
    $j=1,2,...,k_0(X),$ $i=1,2,$
    \item $\|{\bar g}_{j,(i)}(y)-{\bar g}_{j,(i)}(\xi)\|<\ep_1/16,$ $j=1,2,...,k_0(X),$ $i=1,2,$
    \item $\|\phi(g)(y)-\phi(g)(\xi)\|<\ep_1/16$ and
    \item $\|\psi(g)(y)-\psi(g)(\xi)\|<\ep_1/16$ for all $g\in {\cal G},$ where $\xi$ is in one of the $(d+1)$-cells and
        $y$ is in the boundary (in $Y$) of the $(d+1)$-cell.

        \end{enumerate}
Denote by $\xi_j\in D_j$ the center of $D_j,$ where we view $D_j$ as an open $(d+1)$-ball.   Let $Y_d=Y_d'\sqcup \{\xi_1,\xi_2,...,\xi_R\}.$
Let $B=\pi(C),$ where $\pi(f)=f|_{Y_d}.$  We may write
$B=P'M_r(C(Y_d))P',$ where $P'=P|_{Y_d}.$

By applying \ref{Sd1uni}  for  finite CW complex with dimension no more than $d,$  for each $i,$ there exists a unitary
$w\in B$ such that
\beq\label{PM-n1}
\|w\phi(g)w^*-\psi(g)\|_{Y_d}<\ep_1/2^8(m(X)+d+1)^2\tforal g\in {\cal G}_5.
\eneq
%and there are unital \hm s $h_1, h_2: C(X)\to B$ such that
%\beq\label{PM-n2}
%\|\phi(g)-h_{1}(g)\|_{Y_d}<\ep_1/4\andeqn
%\|\psi(g)-h_2(g)\|_{Y_d}<\ep_1/4
%\eneq
%for all $g\in {\cal G}_5$
%and for all $y\in Y_d.$
Recall,  to simplify notation,  that we write
$\phi$ and $\psi$ instead of $\phi^{(K_5)}$ and $\psi^{(K_5)},$ respectively,
and we also write   $C$  instead of $M_{K_5}(C).$
%Moreover (by also applying 3.4 of \cite{ChomC1}),
Note that we have that
\beq\label{PM-n3}
\mu_{\tau\circ \phi}(O_r)\ge \Delta(r) \rforal r\ge \eta \andeqn \rforal \tau\in T(B).
\eneq

Let $W=w\otimes {\rm id}_{M_{m(X)}}.$
Then
\beq\label{PM-n4}
\|[{\bar g}_{j,(1)}, X_{1,j}|_{Y_d}^*W]\|<\ep_1/4,\,\,\,j=1,2,...,k(X).
\eneq
So ${\rm bott}_1(\phi\otimes {\rm id}_{M_{m(X)}}(g_j), X_{1,j}|_{Y_d}^*W)$ is well defined.
Let $\af_1: K_1(C(X_{n_0}))\to K_0(B)$ be defined by
$\af_1(g_j')={\rm bott}_1(\phi\otimes {\rm id}_{M_{m(X)}}(g_j), X_{1,j}|_{Y_d}^*W),$ $j=1,2,...,k(X).$
Let $B'=B|_{Y_d'}$ and let $\pi': B\to B'$ be the quotient map induced by the restriction.
  Let $Y_i$ be the boundary of $D_i$ in $Y_d,$ $i=1,2,...,R.$
Let $B_i=C|_{Y_i}$ and $\pi_i: B\to B_i$ be the surjective map induced by the restriction, $i=1,2,...,R.$
Let $\af_{1,i}: K_1(C(X_{n_0}))\to K_0(B_i)$ by
$\af_{1,i}=(\pi_i)_{*1}\circ \af_1.$  Let $B_i'=C|_{\{\xi_i\}}$ and  $\pi_i': B\to B_i'$
 be the quotient map, $i=1,2,...,R.$
Define $\af_{1,i}'=(\pi_i')_{*1}\circ \af_1.$
Note that
\beq\label{PM-n5}
\af_{1}'(g_j')&=&{\rm bott}_1((\phi\otimes {\rm id}_{M_{m(X)}})|_{Y_d'}(g_j), X_{1,j}^*|_{Y_d'}W|_{Y_d'}),\\
\af_{1,i}(g_j') &=&{\rm bott}_1((\phi\otimes {\rm id}_{M_{m(X)}})(g_j')|_{Y_i}, X_{1,j}^*|_{Y_i}W|_{Y_i})\andeqn\\
\af_{1,i}'(g_j')&=&{\rm bott}_1((\phi\otimes {\rm id}_{M_{m(X)}})(g_j)(\xi_i), X_{1,j}^*(\xi_i)W(\xi_i))\
\eneq
$j=1,2,...,k(X)$ and $i=1,2,...,R.$
Note that,  by (\ref{PM-n4}), 10.3 of \cite{LnApp} (in the connection of 2.8 of \cite{Lnmem}) and by the choice of $\dt_3,$
$\ep_1''$  and ${\cal G}_u',$  we have
\beq\label{PM-n5+}
&&\hspace{-0.6in}|\rho_{B}(\af_1(g_j))(\tau)|<1/4N_2K_4m(X),\, \hspace{0.2in}|\rho_{B'}(\af_1'(g_j))(\tau)|<1/4N_2K_4m(X),\\\label{PM-n5++}
&&\hspace{-0.6in} |\rho_{B_i}(\af_{1,i}(g_j))(\tau)|<1/4N_2K_4m(X)\andeqn |\rho_{B_i'}(\af_{1,i}'([g_j]))(\tau)|<1/4N_2K_4m(X),
\eneq
$j=1,2,...,k(X)$ and $i=1,2,...,R,$ and for all $\tau\in T(B),$
$\tau\in T(B'),$ $\tau\in T(B_i)$ and $\tau\in T(B_i'),$ respectively.

Denote by $q_{0,j}={\bar p}_{j,(1)}\oplus {\rm id}_C^{(d+1)}$ and
$q_{0,j}'={\bar p}_{j,(2)}\oplus {\rm id}_C^{(d+1)},$  $j=1,2,...,k_0(X).$
  Let ${\bar W}=w\otimes {\rm id}_{M_{m(X)+d+1}}=w^{(d+1+m(X))}\in M_{m(X)+d+1}(B).$ Then we have
 \beq\label{PM-n13}
 \|{\bar W}q_{0,i}|_{Y_d}{\bar W}^*-q_{0,j}'|_{Y_d}\|<(m(X)+d)^2\ep_1/4<\min\{\dt_u/8, \gamma_0/64\}.
 \eneq
 There is a unitary $\Theta_i\in M_{m(X)+d}(B)$ such that
 \beq\label{PM-n14}
 \|\Theta_i-1\|<\dt_u/4\andeqn (\Theta_i{\bar W})q_{0,i}|_{Y_d}(\Theta_i{\bar W})^*=q_{0,i}'|_{Y_d},
 \eneq
 $i=1,2,...,k_0(X).$

 Define
 $$
 z_j=({\rm id}_{M_{m(X)+d+1}(B)}-q_{0,1}|_{Y_d})\oplus X_{0,j}^*|_{Y_d}(\Theta_i{\bar W})q_{0,1}|_{Y_d},\,\,\,j=1,2,...,k_0(X).
 $$
 It follows that
 \beq\label{PM-n14+}
 {\rm bott}_0(\phi,\, X_{0,j}^*|_{Y_d}{\bar W})([p_j])=[z_j],
 \eneq
 $j=1,2,...,k_0(X).$

 We obtain a \hm\, $\af_0: K_0(C(X_{n_0}))\to K_1(B)$ by
 $\af_0([p_j'])=[z_j],$ $j=1,2,...,k_0(X).$
 Let $\af_0'=(\pi')_{*0}\circ \af_0.$ Let $\af_{0,i}=(\pi_i)_{*0}\circ \af_0$ and
 $\af_{0,i}'=(\pi_i')_{*0}\circ \af_0,$  $i=1,2,...,R.$ Note that
 \beq\label{PM-n15}
 \af_0'([p_j'])=[z_j|_{Y_d'}],\,\,\,
\af_{0,i}([p_j'])=[z_j|_{Y_i}] \andeqn \af_{0,i}'([p_i'])=[z_j(\xi_i)]
\eneq
$j=1,2,...,k_0(X)$ and $i=1,2,...,R.$
Let $G_1=[\psi_{n_0, \infty}](K_0(C(X_{n_0}))).$

 Define $\Gamma: G_1\to U(M_{m(X)+d+1}(B))/CU(M_{m(X)+d+1}(B))$ by
 $\Gamma([p_j])={\bar z^*}_j,$ $j=1,2,...,k_0(X).$
 Define $\Gamma': G_1\to U(M_{m(X)+d+1}(B'))/CU(M_{m(X)+d+1}(B'))$ by
 $\Gamma'([p_j])=\overline{z_j^*|_{Y_d'}},$ $j=1,2,...,k_0(X).$
 Define $\Gamma_i: G_1\to  U(M_{m(X)+d+1}(B_i))/CU(M_{m(X)+d+1}(B_i))$ by
 $\Gamma_i=\pi_i^{\ddag}\circ \Gamma$ and
 $\Gamma_i': G_1\to  U(M_{m(X)+d+1}(B_i'))/CU(M_{m(X)+d+1}(B_i'))$ by
 $\Gamma_i'=(\pi_i')^{\ddag}\circ \Gamma,$
$i=1,2,...,R.$ Note that $\Gamma,$ $\Gamma',$ $\Gamma_i$ and $\Gamma_i'$ are compatible
 with $-\af_0,$ $-\af_0',$ $-\af_{0,i}$ and $-\af_{0,i}',$ respectively.
 We note that
 \beq\label{PM-n16-}
 \Gamma'([p_j])=\overline{z_j^*|_{Y_d'}}, \, \Gamma_i([p_j])=\overline{z_j^*|_{Y_i}}\andeqn
 \Gamma_i'([p_j])=\overline{z_j^*(\xi_i)},
 \eneq
 $j=1,2,...,k_0(X)$ and $i=1,2,...,R.$

 By the Universal Coefficient Theorem, there is $\af\in KK(C(X_{n_0}), B)$ such that $\af|_{K_i(C(X_{n_0}))}=\af_i,$ $i=0,1.$
 %By (\ref{PM-n4}) and 10.3 of \cite{LnApp},
 %\beq\label{PM-n16--}
 %|\tau(\boldsymbol{\bt}(g_j))|<1/4N_2\tforal \tau\in T(C).
 %\eneq
  It follows (using  (\ref{PM-n5+})) from  \ref{Sd1ext} (for ${\rm dim}Y\le d$) that
 there  is a unitary $U\in M_{K_4}(B)$ such that
 \beq\label{PM-n16}
 \|[\phi^{(K_4)}(g), \, U]\| &<& \dt_u/2\tforal g\in {\cal G}_u,\\
 {\rm Bott}(\phi^{(K_4)}\circ \psi_{n_0, \infty}, U)&=&- K_4\af\andeqn\\\label{PM-n16++}
  {\rm dist}({\rm Bu}(\phi, \, U)(x),\,K_4\Gamma(x))&<&\gamma_0/64(d+1+m(X))\tforal
  x\in {\cal Q}.
 \eneq

Denote by $Z_{i,j}=(\pi_i({\bar g}_{j,(1)}))^{(K_4)},$
$i=1,2,...,R$ and $j=1,2,...,k(X).$
In the following computation, we will identify $U(\xi_i), $ $W(\xi_i),$ $X_{1,j}(\xi_i)$  and
$Z_{i,j}(\xi_i)$ with constant unitaries in $M_{m(X)}(B'),$ when it makes sense.
We also will use (2) and (4) above, as well as (\ref{ADDD-2})-(\ref{ADDD-3}) in the following computation.
%Since $\|X_{1,j}(y)-X_{1,j}(\xi_i)\|<\ep_1/4$ for all $y\in Y_i,$
%AND
We have
\beq\label{PM-n17}
&&\hspace{-0.6in}{\rm bott}_1((\pi_i\circ \phi)^{(K_4)}, ((W(\xi_i)^{(K_4)}U(\xi_i))^*W|_{Y_i}^{(K_4)}U|_{Y_i}))(g_j)\\
&=&{\rm bott}_1(Z_{i,j}, U(\xi_i)^*(W(\xi_i)^*X_{1,j}|_{Y_j}X_{1,j}|_{Y_j}^*W|_{Y_i})^{(K_4)}U|_{Y_i})\\
&=& {\rm bott}_1(Z_{i,j}, U(\xi_i)^*(W(\xi_i)^*X_{i,j}(\xi_i)X_{i,j}^*|_{Y_i}W|_{Y_i})^{(K_4)}U|_{Y_i})\\
&=& {\rm bott}_1(Z_{i,j},U(\xi_i)^*(W(\xi_i)^*X_{i,j}(\xi_i))^{(K_4)})+
{\rm bott}_1(Z_{i,j},(X_{i,j}^*W)|_{Y_i}^{(K_4)}U|_{Y_i})\\
&=& {\rm bott}_1(Z_{i,j}(\xi_i),U(\xi_i)^*(W(\xi_i)^*X_{i,j}(\xi_i))^{(K_4)})\\
&& +
{\rm bott}_1(Z_{i,j}, (X_{i,j}^*|_{Y_i}W|_{Y_i})^{(K_4)})+{\rm bott}_1(Z_{i,j},
U|_{Y_i})\\
&=& {\rm bott}_1(Z_{i,j}(\xi_i),U(\xi_i)^*)+{\rm bott}_1(Z_{i,j}(\xi_i),
(W(\xi_i)^*X_{i,j}(\xi_i))^{(K_4)})\\
&&+K_4\af_{1,i}(g_j')-K_4\af_{1,i}(g_j')\\
&=& K_4\af'_{1,i}(g_j')-K_4\af'_{1,i}(g_j')=0.
\eneq

Similarly, (put $Q_{0,j}=q_{0,j}^{(K_4)}$),
\beq\label{PM-n18}
&& \hspace{-0.8in} {\rm bott}_0((\pi_i\circ \phi)^{(K_4)}, (W(\xi_i)^{(K_4)}U(\xi_i))^*W|_{Y_i}^{(K_4)}U|_{Y_i})([p_j])\\
&=& {\rm bott}_0(Q_{0,j}|_{Y_i},U(\xi_i)^*(W(\xi_i)^*W|_{Y_i})^{(K_4)}U|_{Y_i})\\
&=&{\rm bott}_0(Q_{0,j}|_{Y_i}, U(\xi_i)^*(W(\xi_i)^*X_{0,j}|_{Y_i}X_{0,j}^*|_{Y_i}W|_{Y_i})^{(K_4)}U|_{Y_i})\\
&=&{\rm bott}_0(Q_{0,j}|_{Y_i}, U(\xi_i)^*(W(\xi_i)^*X_{0,j}(\xi_i)X_{0,j}^*|_{Y_i}W|_{Y_i'})^{(K_4)}U|_{Y_i})\\
&=&{\rm bott}_0(Q_{0,j}|_{Y_i}, U(\xi_i)^*(W(\xi_i)^*X_{0,j}(\xi_i))^{(K_4)})\\
&&\hspace{0.4in}+{\rm bott}_0(Q_{0,j}|_{Y_i},(X_{0,j}^*|_{Y_i}W|_{Y_i})^{(K_4)}U|_{Y_i})\\
&=& {\rm bott}_0(Q_{0,j}|_{Y_i}, U(\xi_1)^*)+{\rm bott}_0(Q_{0,j}|_{Y_i},
 (W(\xi_i)^*X_{0,j}(\xi_i))^{(K_4)})\\
 &&\hspace{0.4in}+{\rm bott}_0(Q_{0,j}|_{Y_i},(X_{0,j}^*|_{Y_i}W|_{Y_i})^{(K_4)})
 +{\rm bott}_0(Q_{0,j}|_{Y_i}, U|_{Y_i})\\
 &=&{\rm bott}_0(Q_{0,j}(\xi_i), U(\xi_1)^*)+{\rm bott}_0(Q_{0,j}(\xi_i),(W(\xi_i)^*X_{0,j}(\xi_i))^{(K_4)})\\
 &&\hspace{0.6in}+K_4\af_{0,j}([p_j'])-K_4\af_{0,j}([p_j'])\\\label{PM-n18+}
 &=& K_4\af_{0,j}'([p_j'])-K_4\af_{0,j}'([p_j'])=0.
 \eneq
Since $K_4x=0$ for all $x\in {\rm Tor}(K_i(C(X_{n_0})),$ $i=0,1,$ we  have
\beq\label{PM-n19-1}
{\rm Bott}((\pi_i\circ \phi\circ \psi_{n_0, \infty})^{(K_4)},\, (W(\xi_i)^{(K_4)}U(\xi_i))^*W|_{Y_i}^{(K_4)}U|_{Y_i})=0,
\eneq
$i=1,2,...,R.$
It follows that
\beq\label{PM-n19}
{\rm Bott}((\pi_i\circ \phi)^{(K_4)},\, (W(\xi_i)^{(K_4)}U(\xi_i))^*W|_{Y_i}^{(K_4)}U|_{Y_i})|_{{\cal P}_0}=0,
\eneq
$i=1,2,...,R.$

We also estimate on $Y_i,$ using (\ref{PM-n1}), (\ref{PM-n16}) and   (4),
\beq\label{PM-n19+1}
&&\hspace{-0.4in}\|[\pi\circ \phi^{(K_4)}(g), \, (W(\xi_i)^{(K_4)}U(\xi_i))^*W|_{Y_i}^{(K_4)}U|_{Y_i}]\|\\
&<&\ep_1/4+\dt_u/2+\ep_1/4+\ep_1/4+\ep_1/4+\ep_1/4+\dt_u/2<\dt_1
\eneq
for all $g\in {\cal G}_u.$

For each $i,$ there is $\Xi_j\in U(M_{K_4}(B))$ such that
\beq\label{PM-n19+}
\|\Xi_j-1\|<\dt_u/2\andeqn \Xi_jUQ_{0,j}U^*\Xi_j^*=Q_{0,j},\,\,\, j=1,2,...,k_0(X).
\eneq
Denote
$$
P_{i,j}=(1-Q_{0,j})|_{Y_i}\andeqn  Q_{i,j}=Q_{0,j}|_{Y_i}.
$$
$j=1,2,...,k_0(X)$ and $i=1,2,...,R.$
By identifying $(\Xi_jU)(\xi_i),$ $(\Theta{\bar W})(\xi_i)$ and $X_{0,j}(\xi_i)$ with constant unitaries on $Y_i,$  by (3)
and (4) above,
there is a unitary $\Xi_{i,j}\in M_{K_4}(B_i)$ such that
$$
\|\Xi_{i,j}-1\|<\ep_1/4\andeqn
\|[Q_{i,j},\, \Xi_{ij}(\Xi_jU)(\xi_i)^* ((\Theta_i{\bar W}(\xi_i))^*\Theta_i|_{Y_i} {\bar W}|_{Y_i})^{(K_4)}(\Xi_jU)|_{Y_i}]\|=0,
$$
$j=1,2,...,k_0(X)$ and $i=1,2,...,R.$
Similarly, there is a unitary $\Xi_{i,j}'\in M_{K_4}(B_i)$ such that
$$
\|\Xi_{i,j}'-1\|<\ep_1/4\andeqn \|[Q_{i,j},\,\Xi_{ij}(\Xi_jU)(\xi_i)^* ((\Theta_i{\bar W}(\xi_i))^*X_{0,j}(\xi_i))^{(K_4)}\Xi_{i,j}']\|=0,
$$
$j=1,2,...,k_0(X)$ and $i=1,2,...,R.$
Set
$$
P_{i,j}'=(1-Q_{0,j})(\xi_i)\andeqn Q_{i,j}'=Q_{0,j}(\xi_i)
$$
as constant projections. Define
\beq
\Omega_{i,j}&=&P_{i,j}+Q_{i,j}(X_{0,j}^*\Theta_i{\bar W})^{(K_4)}\Xi_jU)|_{Y_i}\andeqn\\
\Omega_{i,j}'&=& P_{i,j}'+Q_{i,j}'(\Xi_jU)(\xi_i)^* ((\Theta_i{\bar W}(\xi_i))^*X_{0,j}(\xi_i))^{(K_4)}
\eneq
Then (see also (3) above)
\beq\label{PM-n20-1}
\|\Omega_{i,j}'-(P_{i,j}+Q_{i,j}\Xi_{ij}(\Xi_jU)(\xi_i)^* ((\Theta_i{\bar W}(\xi_i))^*X_{0,j}(\xi_i))^{(K_4)}\Xi_{i,j}')\|<\ep_1/2
\eneq
Thus, from above,
\beq\label{PM-n20-2}
\hspace{-0.2in}\|\Omega_{i,j}'\Omega_{i,j}-(P_{i,j}+Q_{i,j} \Xi_{ij}(\Xi_jU)(\xi_i)^* ((\Theta_i{\bar W}(\xi_i))^*)^{(K_4)}(\Theta_i {\bar W}|_{Y_i})^{(K_4)}(\Xi_jU)|_{Y_i})\|<\ep_1
\eneq

We also have (see \ref{DUb} and \ref{NBott}), by (\ref{PM-n16++}),
\beq\label{PM-n20}
&&\hspace{-1in}{\rm dist}({\rm Bu}((\pi_i\circ \phi)^{(K_4)}, ({\bar W}(\xi_i)^{(K_4)}U(\xi_i))^*{\bar W}|_{Y_i}^{(K_4)}U|_{Y_i})([p_j]), \,{\bar 1})\\
&&\hspace{-0.75in}<{\rm dist}(\overline{P_{i,j}+{\bar Q_{i,j}}(\Xi_iU)(\xi_i)^* ((\Theta_i{\bar W}(\xi_i))^*\Theta_i|_{Y_i} {\bar W}|_{Y_i})^{(K_4)}(\Xi_iU)|_{Y_i}}, \,{\bar 1}) +2\dt_u\\
&&\hspace{-0.75in}<{\rm dist}(\overline{\Omega_{i,j}'\Omega_{i,j}},\,{\bar 1})
+\ep_1+2\dt_u\\
&&\hspace{-0.75in}\le {\rm dist}(\overline{\Omega_{i,j}'},{\bar 1})+
{\rm dist}(\overline{\Omega_{i,j}}, {\bar 1})+\ep_1+2\dt_u\\
%&&\hspace{-0.75in}=  {\rm dist}(\overline{(P_{i,j}+{\bar Q_{i,j}}(U\Xi_i)(\xi_i)^* (\Theta_i{\bar W})^*(\xi_i)(X_{0,j}X_{0,j}^*)|_{Y_i}\Theta_i|_{Y_i} {\bar W}|_{Y_i})^{(K_4)}(U\Xi_i)|_{Y_i}}, \,{\bar 1})+2\dt_u\\
%&&\hspace{-0.75in}\le {\rm dist}(\overline{(P_{i,j}+{\bar Q_{i,j}}(U\Xi_i)(\xi_i)^* (\Theta_i{\bar W})^*(\xi_i)X_{0,j}(\xi_i)X_{0,j}|_{Y_i}^*\Theta_i|_{Y_i} {\bar W}|_{Y_i})^{(K_4)}(U\Xi_i)|_{Y_i}}, \,{\bar 1})\\
 % &&\hspace{0.4in}+\ep_1/4+2\dt_u\\
&&\hspace{-0.75in}= {\rm dist}(\overline{P_{i,j}+{\bar Q_{i,j}}(\Xi_iU)(\xi_i)^* (\Theta_i{\bar W}(\xi_i)X_{0,j}(\xi_i))^{(K_4)}},\, {\bar 1})\\
&&+{\rm dist}(\overline{P_{i,j}+{\bar Q_{i,j}}(X_{0,j}^*\Theta_i|_{Y_i} {\bar W}|_{Y_i})^{(K_4)}(\Xi_iU)|_{Y_i}}, \,{\bar 1}) +\ep_1+2\dt_u\\
&&\hspace{-0.75in}={\rm dist}(\overline{P_{i,j}+{\bar Q_{i,j}}(\Theta_i{\bar W}(\xi_i)X_{0,j}(\xi_i))^{(K_4)}}, \overline{P_{i,j}+{\bar Q_{i,j}}(\Xi_iU)(\xi_i)})\\
&&\hspace{-0.5in}+{\rm dist}(\overline{P_{i,j}+{\bar Q_{i,j}}(X_{0,j}^*\Theta_i|_{Y_i} {\bar W}|_{Y_i})^{(K_4)}}, \overline{P_{i,j}+{\bar Q_{i,j}}(\Xi_iU)^*|_{Y_i}})+\ep_1+2\dt_u\\
&&\hspace{-0.75in}={\rm dist}(\overline{({\overline{ z_j^*}}(\xi_i)^{(K_4)}}, \overline{P_{i,j}+{\bar Q_{i,j}}(\Xi_iU)(\xi_i)})\\
&&\hspace{-0.5in}+{\rm dist}({\overline{z_j}}|_{Y_i}
^{(K_4)}, \overline{P_{i,j}+{\bar Q_{i,j}}(\Xi_iU)^*|_{Y_i}})+\ep_1+2\dt_u\\
&&\hspace{-0.75in}<\gamma_0/64(d+1+m(X))+\gamma_0/64(d+1+m(X))+\ep_1+2\dt_u\\\label{PM-n20+}
&&\hspace{-0.75in}<\gamma_0/(d+1+m(X)),
\,\,\,j=1,2,...,k_0(X)\andeqn\,\,\,i=1,2,...,R.
\eneq

Now we are ready to apply \ref{Sd1hom} (for ${\rm dim} Y\le d$) using (\ref{PM-n19}), (\ref{PM-n20})-(\ref{PM-n20+}),(\ref{PM-n19+1}) and (\ref{PM-n3}).
By \ref{Sd1hom}, there is a continuous path of unitaries
$\{V_i(t): t\in [0,1]\}\subset M_{K_1K_4}(B_i)$ such that
\beq\label{PM-n21}
&&V_i(0)=(W(\xi_i)^{(K_1K_4)}U(\xi_i)^{(K_1)})^*W^{(K_1K_4)}|_{Y_i}U^{(K_1)}|_{Y_i},\,\,\,
V(1)=1,\\\label{PM-n21+}
&&\andeqn \|[(\pi_i\circ \phi)^{(K_1K_4)}(f),\,V_i(t)]\|<\ep/32\tforal t\in [0,1]\andeqn f\in {\cal F},
\eneq
$i=1,2,...,R.$

Define $u\in M_{K_1K_4}(C)$ (in fact it should be in  $M_{K_1K_4K_5}(C)$ but we replace $M_{K_5}(C)$ by $C$ early on) as follows:
$u(y)=W(y)^{(K_1K_4)}U^{(K_1)}(y)$ for $y\in Y_d'.$
Note that $D_i$ is homeomorphic to the $d+1$-dimensional open ball of radius $1.$
Each point of $D_i$ is identified by a pair $(x, t),$ where
$x$ is on  $\partial{D_i}\cong S^d,$ the boundary of $D_j,$ and $t$ is distance from the point to the center $\xi_i.$  Let $f_i: \partial{D_i}\to Y_i$ be the continuous map given
by $Y.$ Now define (note that $V_i(t)\in M_{K_1K_4}(B_i)$)
\beq\label{PM-n22}
u_i(x, t)=W(\xi_i)^{(K_1K_4)}U(\xi_i)^{(K_1)}V_i(1-t)(f_i(x))
\eneq
Note that $u(x,1)=W^{(K_1K_4)}(f_i(x))U^{(K_1)}(f_i(x))$ for $x\in \partial{D_i}$ and
$u_i(x,0)=W(\xi_i)^{(K_1K_4)}U(\xi_i)^{(K_1)}.$ Define $u$ on $D_i$ as
$u_i(x,t).$ Then $u\in M_{K_1K_4}(C),$ $u|_{Y_d}=(W^{(K_1K_4)}U^{(K_1)})|_{Y_d}.$
Let $K=K_1K_4.$
We have
\beq\label{PM-n23}
\hspace{-0.6in}\|u\phi^{(K)}(f)u-\psi^{(K)}(f)\|_{Y_d}
&= & \|W^{(K)}U^{(K_1)}\phi^{(K)}(f)(U^*)^{(K_1)}(W^*)^{(K)}
-\psi^{(K)}(f)\|_{Y_d}\\
&<& \dt_u/2+\|W^{(K)}\phi^{(K)}(f)W^{(K)}-\psi^{(K)}(f)\||_{Y_d}\\
&<& \dt_u/2+\ep_1/4<\ep\tforal f\in {\cal F}.
\eneq
Moreover, for $y\in D_i$ and any $y'\in Y_i,$  (5),(6) and   (\ref{PM-n22}) and (\ref{PM-n21+}),
we have
\beq\label{PM-n24}
&&\hspace{-0.8in}\|u(y)\phi^{(K)}(f)(y)u^*(y)-\psi^{(K)}(f)(y)\|\\
%&<& \|u(y)\phi^{(K)}(f)(y')u^*(y)-\psi^{(K)}(f)(\xi_i)\|+2\ep_1/4\\
&&\hspace{-0.2in}<\|u(y)\phi^{(K)}(f)(y')u^*(y)-\psi^{(K)}(f)(\xi_i)\| +3\ep_1/4\\
&&\hspace{-0.2in}< \|W(\xi_i)^{(K)}U(\xi_i)^{(K_4)}\phi^{(K)}(f)(y')(U(\xi_i)^*)^{(K_4)}
(W(\xi_i)^*)^{(K)}-\psi^{(K)}(f)(\xi_i)\|\\
&&\hspace{0.6in}+\ep/32+3\ep_1/4\\
&&\hspace{-0.2in}< \|W(\xi_i)^{(K)}\phi^{(K)}(f)(y')(W(\xi_i)^*)^{(K)}-\psi^{(K)}(f)(\xi_i)\|
+\dt_u/2+\ep/32+3\ep_1/4\\
&& \hspace{-0.2in}< \|W(\xi_i)^{(K)}\phi^{(K)}(f)(\xi_1)(W(\xi_i)^*)^{(K)}-\psi^{(K)}(f)(\xi_i)\|\\
 && \hspace{0.6in} +\ep_1/4 +\dt_u/2+\ep/32+3\ep_1/4\\
&&\hspace{-0.2in}<\|W(\xi_i)^{(K)}\phi^{(K)}(f)(\xi_i)(W(\xi_i)^*)^{(K)}-\psi^{(K)}(f)(\xi_i)\|\\
&&\hspace{0.6in}
+\ep_1/4+\dt_u/2+\ep/32+\ep_1\\
&&\hspace{-0.2in}< \ep_1/4+\dt_u/2+\ep/32+5\ep_1/4<\ep\,\,\,\,\tforal f\in {\cal F}.
\eneq
It follows that
\beq\label{PM-n25}
\|u\phi^{(K)}(f)u^*-\psi^{(K)}(f)\|<\ep\tforal f\in {\cal F}.
\eneq

 }

\section{The reduction}

%\begin{NN}\label{measure}
%{\rm
%Let $X$ be a compact metric space, $r\ge 1$ be an integer and $E\in M_r(C(X))$ be a projection. Put $C=PM_r(C(X))P.$%
%Suppose $\tau\in T(C).$ It is known  that there exists a probability measure $\mu_\tau$ on $X$ such that
%$$
%\tau(f)=\int_X  t_x(f(x))d\mu_\tau(x)\rforal f\in C
%$$
%where  $t_x$ is the normalized trace on $E(x)M_rE(x)$ for all $x\in X$ (see 2.17 of \cite{Lncrell}).
%}%
%
%\end{NN}

\begin{thm}\label{UNI}
 The statement of {\rm \ref{Sd1uni} } holds for  all those compact subsets $Y$ of a finite CW complex with dimension
 no more than $d,$ where $d$ is a non-negative integer.
\end{thm}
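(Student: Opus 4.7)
The plan is to reduce the compact-subset case to the finite-CW-complex case already established in Section 12, via an inductive-limit approximation on $Y$. Given $Y$ a compact subset of a finite CW complex of dimension $\le d$, write $Y=\cap_n Y_n$ where $\{Y_n\}$ is a decreasing sequence of finite CW complexes of dimension $\le d$ that are neighborhoods of $Y$, so that $C(Y)=\lim_{n\to\infty}(C(Y_n), r_n)$ with $r_n$ the restriction map. For sufficiently large $n$ the projection $P\in M_r(C(Y))$ lifts to a projection $P_n\in M_r(C(Y_n))$ with $r_n(P_n)=P$ (after unitary perturbation inside $M_r(C(Y))$), and because $\mathrm{rank}\,P_n$ is integer-valued and continuous we may arrange $\mathrm{rank}\,P_n(y)\ge N$ for all $y\in Y_n$. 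Let $C_n=P_nM_r(C(Y_n))P_n$ and let $\pi_n:C_n\to C$ be the quotient induced by restriction.

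The constants $(\eta,\gamma_1,\gamma_2,\delta,\mathcal{G},\mathcal{P},\mathcal{H},\mathcal{V},N,K)$ produced by the already-proved version of \ref{Sd1uni} for $Y_n$ depend only on $\epsilon,\mathcal{F},X,\Delta$ and the dimension bound $d$, not on the particular CW complex $Y_n$. Thus for given $\epsilon,\mathcal{F},\Delta$ we fix these constants, choose auxiliary tolerances $\delta'\ll\delta$ and a larger ${\cal G}'$, and, given unital $\delta'$-$\mathcal{G}'$-multiplicative \morp s $\phi,\psi:C(X)\to C$ satisfying the hypotheses, use the amenability of $C(X)$ (2.3.13 of \cite{Lnbk}) together with the inductive-limit structure to construct, for all sufficiently large $n$, unital $\delta$-$\mathcal{G}$-multiplicative lifts $\tilde\phi_n,\tilde\psi_n:C(X)\to C_n$ with
\[
\|\pi_n\circ\tilde\phi_n(g)-\phi(g)\|<\delta'\quad\text{and}\quad\|\pi_n\circ\tilde\psi_n(g)-\psi(g)\|<\delta'
\qquad\text{for all } g\in\mathcal{G}'.
\]

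Next I verify that each hypothesis of \ref{Sd1uni} transfers to $\tilde\phi_n,\tilde\psi_n$. The $KL$-condition $[\tilde\phi_n]|_{\mathcal{P}}=[\tilde\psi_n]|_{\mathcal{P}}$ follows from the $KL$-triple property after choosing $\delta'$ small and $\mathcal{G}'$ large enough that $[\pi_n\circ\tilde\phi_n]|_{\mathcal{P}}=[\phi]|_{\mathcal{P}}$ and similarly for $\psi$, and that $[\tilde\phi_n]|_{\mathcal{P}}$ is determined by $[\pi_n\circ\tilde\phi_n]|_{\mathcal{P}}$ via the surjection $(\pi_n)_*$ on $K$-theory (any difference in $K$-theory can be absorbed by enlarging $n$). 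The approximate tracial agreement and the $\mathrm{dist}(\overline{\langle\tilde\phi_n(u)\rangle},\overline{\langle\tilde\psi_n(u)\rangle})<\gamma_2$ condition pass through by continuity of the respective functionals in norm, since $T(C_n)\to T(C)$ via $\tau\mapsto\tau\circ\pi_n$ is affine and every trace on $C$ pulls back; the measure lower bound $\mu_{\tau\circ\tilde\phi_n}(O_r)\ge\Delta(r)$ for $r\ge\eta$ follows from the corresponding bound for $\pi_n\circ\tilde\phi_n\approx\phi$ by Lemma \ref{nLmeasure} (applied to the inductive limit $C(Y)=\lim C(Y_n)$ with $\Delta$ replaced by a slightly weaker map, absorbed into the original $\Delta$ by an initial shift of scale).

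With hypotheses verified I apply the already-proved \ref{Sd1uni} for the finite CW complex $Y_n$ to obtain $W_n\in U(M_K(C_n))$ with $\|W_n\tilde\phi_n^{(K)}(f)W_n^*-\tilde\psi_n^{(K)}(f)\|<\epsilon/2$ for all $f\in\mathcal{F}$, and then set $W:=\pi_n^{(K)}(W_n)\in U(M_K(C))$, which by the approximation $\pi_n\circ\tilde\phi_n\approx\phi$ satisfies the required $\|W\phi^{(K)}(f)W^*-\psi^{(K)}(f)\|<\epsilon$. The main obstacle is the $K$-theoretic matching step: the natural map $(\pi_n)_*:\underline{K}(C_n)\to\underline{K}(C)$ need not be injective, so the equality $[\pi_n\circ\tilde\phi_n]|_{\mathcal{P}}=[\pi_n\circ\tilde\psi_n]|_{\mathcal{P}}$ on the finite set $\mathcal{P}$ does not automatically lift to $[\tilde\phi_n]|_{\mathcal{P}}=[\tilde\psi_n]|_{\mathcal{P}}$ in $\underline{K}(C_n)$. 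This is handled by enlarging $n$ (using Proposition 5.1(c) of \cite{EL} and standard $KK$-stability for the finite set $\mathcal{P}$) so that any two lifts that agree after $\pi_n$ on the finite subset $\mathcal{P}$ may be chosen to agree before $\pi_n$ on $\mathcal{P}$, possibly after a further perturbation of $\tilde\psi_n$ of size controlled by $\delta'$; the resulting adjustment is absorbed into the overall error budget.
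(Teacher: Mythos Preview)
Your overall strategy (lift $\phi,\psi$ through an approximating finite CW stage $C_n$, apply the already-proved case, push the unitary back down) is exactly the paper's approach, but you have missed the one idea that makes it clean. The paper does not lift $\phi$ and $\psi$ separately; it takes a \emph{single} approximately multiplicative section $r:C\to C_n$ (produced by Lemma~\ref{nLmeasure}) and sets $\Phi=r\circ\phi$, $\Psi=r\circ\psi$. Because both lifts factor through the same $r$, one gets $[\Phi]|_{\mathcal P}=[r]\circ[\phi]|_{\mathcal P}=[r]\circ[\psi]|_{\mathcal P}=[\Psi]|_{\mathcal P}$ for free. Your ``main obstacle'' --- the failure of $(\pi_n)_*$ to be injective --- simply does not arise, and the machinery you invoke to repair it (Proposition~5.1(c) of \cite{EL}, perturbing $\tilde\psi_n$) is both unnecessary and not clearly sufficient as stated.

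Your treatment of the trace condition also runs in the wrong direction. The map $\tau\mapsto\tau\circ\pi_n$ goes from $T(C)$ into $T(C_n)$, not the other way, so knowing the tracial inequality for all $\tau'\in T(C)$ says nothing directly about an arbitrary $\tau\in T(C_n)$. With the common-section approach this is handled just as in the proof of Lemma~\ref{nLmeasure}: for $\tau\in T(C_n)$ one looks at the state $\tau\circ r$ on $C$, and because $r$ can be chosen asymptotically multiplicative, any weak-$*$ limit of such states is a trace; a compactness/contradiction argument then forces $|\tau(\Phi(g))-\tau(\Psi(g))|<\gamma_1'$ for all $\tau\in T(C_n)$ once $n$ is large. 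The same mechanism (together with the determinant picture of $U_0/CU$) takes care of the $CU$-distance condition, and \ref{nLmeasure} itself already supplies the measure lower bound with $\Delta(r/3)/3$, which is why the paper front-loads the constants with $\Delta_1(r)=\Delta(r/3)/3$.
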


\begin{proof}
Let $\ep>0$ and ${\cal F}\subset C(X)$ be a finite subset given.
Let $\Delta_1(r)=\Delta(r/3)/3$ for all $r\in (0,1).$
Let $\eta_1>0$ (in place of $\eta$), $\dt_1>0$ (in place of $\dt$) $\gamma_1'>0$ (in place of $\gamma_1$), $\gamma_2'>0$ (in place of  $\gamma_2$),
${\cal G}\subset C(X)$  be a finite subset, ${\cal P}\subset \underline{K}(C(X))$ be a finite subset, ${\cal H}\subset C(X)_{s.a.}$ be a finite subset, ${\cal V}\subset K_1(C(X))\cap {\cal P},$ $N\ge 1$ be an integer and $K\ge 1$ be an integer
given by \ref{Sd1uni} for $\ep,$ ${\cal F},$  $\Delta_1$ and $d.$

Let $\eta=\eta_1/3,$ $\dt=\dt_1/2,$ $\gamma_1=\gamma_1'/2.$  Suppose that $\phi,$ $\psi$  and
$C=PM_m(C(Y))P$ satisfy the assumptions for the above $\eta,$ $\dt,$ $\gamma_1,$ $\gamma_2,$
${\cal G},$ ${\cal P},$ ${\cal H},$ ${\cal V},$ $N$ and $K.$

Suppose that $C=\lim_{n\to\infty}(C_n, \psi_n),$ where $C_n=P_nM_m(C(Y_n))P_n,$ where $Y_n$ is a finite CW complex of dimension no more than $d.$
Let $\dt>\dt_0>0$ and ${\cal G}_0\subset C$ be a finite subset.
It follows from \ref{nLmeasure} that there exist an integer $n\ge 1,$  a unital \morp\, $r: C\to C_n$ and  unital \morp s  $\Phi,
\Psi: C(X)\to C_n$ such that
$\Phi=r\circ \phi,$  $\Psi=r\circ \psi,$
\beq\label{UNI-2}
\|\psi_{n, \infty}\circ \Phi(f)-\phi(f)\|&<&\dt_0\rforal f\in {\cal G},\\\label{UNI-3}
\|\psi_{n, \infty}\circ \Psi(f)-\phi(f)\|&<&\dt_0\rforal f\in {\cal G},\\\label{UNI-3+}
\|\psi_{n, \infty}\circ r(g)-g\|&<&\dt_0\tforal f\in {\cal G}_0\andeqn\\\label{UNI-4}
\mu_{t\circ \Phi}(O_s),\,\,\, \mu_{t\circ \Psi}(O_s)&\ge & \Delta(s/3)/3\tforal t\in T(C_n)
\eneq
for all $s\ge 17\eta_1/8.$

By choosing small $\dt_0$ and large ${\cal G}_0,$ we see that we reduce the general case to the case that $Y$ is a finite
CW complex and \ref{Sd1uni} applies.

\end{proof}

\begin{thm}\label{MT1}
Let $A$ be a unital separable simple \CA\, which is tracially ${\cal I}_d$ for some integer $d\ge 0.$ Then $A\otimes Q$ has tracial rank at most one.

\end{thm}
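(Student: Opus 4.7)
The plan is to verify the tracial rank one definition directly for $A\otimes Q$: given $\mathcal{F}\subset A\otimes Q$ finite, $\ep>0$, and $a\in(A\otimes Q)_+\setminus\{0\}$, I want to produce a projection $p\in A\otimes Q$ and a subalgebra $B\in\mathcal{I}^{(1,0)}$ with $1_B=p$ such that $\|[p,x]\|<\ep$, $\mathrm{dist}(pxp,B)<\ep$ for all $x\in\mathcal{F}$, and $1-p\lesssim a$. Since $A\otimes Q$ is unital simple, this gives $\mathrm{TR}(A\otimes Q)\le 1$.

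First I would collect the regularity. Being tracially $\mathcal{I}_d$ means $A\in\mathcal{C}_{g,\infty}\subset\mathcal{C}_1$ by the final observation of \ref{Drr}. Thus \ref{srk1}, \ref{scomp}, \ref{weak}, \ref{quasitrace}, \ref{extrace}, and \ref{CZs} apply: $A$ has stable rank one, strict comparison for positive elements, only tracial quasitraces, weakly unperforated Riesz $K_0$, and is $\mathcal{Z}$-stable. All of these pass to $A\otimes Q$; moreover $A\otimes Q$ belongs to $\mathcal{C}_{1,1}$ because the UHF divisibility of $Q$ makes the rank-to-dimension ratio in \ref{DKd} automatic, so the full unitary-group machinery of Section 7 is available.

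Second, approximate the $A$-part of $\mathcal{F}$ to within $\eta\ll\ep$ by elements of a subalgebra $C\otimes M_n\subset A\otimes Q$, where $C=PM_r(C(X))P\in\mathcal{I}_d$ is a tracial $\mathcal{I}_d$-approximant of $A$ with $\tau(1_A-1_C)$ much smaller than $\inf_{\tau\in T(A\otimes Q)}d_\tau(a)$; by strict comparison the complement is Cuntz-dominated by $a$. Then, inside $C\otimes M_{n'}\subset C\otimes Q$ for $n'\gg\dim X$ chosen so that the ranks satisfy the hypotheses of \ref{sec1MC} and \ref{UNI}, construct a unital $\dt$-$\mathcal{G}$-multiplicative map $\Phi:C\to C\otimes M_{n'}$ with $[\Phi]=[\iota]$ (the identity inclusion), $\Phi^{\ddag}$ matching $\iota^{\ddag}$, and $\tau\circ\Phi$ close to $\tau\circ\iota$ on a large finite set. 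The decomposition conclusion of \ref{sec1MC} gives $\Phi=\Phi_0\oplus\bigoplus^{L_1}\Phi_1\oplus\Phi_2$, where $\Phi_1$ and $\Phi_2$ factor through $C(J)$ for $J$ a finite disjoint union of intervals and $\Phi_0$ is supported on a matrix corner of arbitrarily small trace; the uniqueness theorem \ref{UNI} then supplies a unitary $W\in C\otimes M_{n'}$ with $\|W\iota(f)W^*-\Phi(f)\|<\ep$ for $f\in\mathcal{F}\cap C$. Conjugating back, the range of $W^*(\Phi_1\oplus\cdots\oplus\Phi_1\oplus\Phi_2)W$ is the desired interval-algebra piece $B$, its unit is $p$, and $1-p$ has trace below $\eta$, hence $1-p\lesssim a$ by strict comparison.

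The hard part is arranging the hypotheses of \ref{UNI} for this pair: one needs the tracial-measure lower bound $\mu_{\tau\circ\Phi}(O_r)\ge\Delta(r)$ uniformly in $\tau$, which is why the interval-algebra factorization provided by \ref{Cexthom}/\ref{sec1MC} (spreading mass along $J$ like Lebesgue measure) is crucial rather than any arbitrary approximant; and one must match $KK$, affine, and $U/\mathrm{CU}$-data compatibly using the splitting $J_c$ of \ref{DUb}. Once these are aligned, the whole argument hinges on the rank bounds from \ref{sec1MC}, \ref{Sd1ext}, \ref{Sd1hom}, and \ref{UNI} being absorbed by the divisibility of $Q$, which is exactly the reason the theorem is stated for $A\otimes Q$ rather than $A$ itself.
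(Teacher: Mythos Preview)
Your plan has a genuine gap at the uniqueness step. You propose to compare the identity inclusion $\iota: C \to C\otimes M_{n'}$ with a constructed $\Phi$ via \ref{UNI}, claiming that the interval factorization of $\Phi$ supplies the measure lower bound $\mu_{\tau\circ\Phi}(O_r)\ge\Delta(r)$. But the target $C\otimes M_{n'}$ is still a homogeneous algebra over the \emph{same} space $X$, so it carries point-evaluation traces $\tau_{x_0}$; for these $\mu_{\tau_{x_0}\circ\iota}=\delta_{x_0}$ is a point mass and no uniform $\Delta>0$ exists. Worse, the interval part of \ref{sec1MT}/\ref{sec1MC} is built (via \ref{Ad2}) to match a \emph{prescribed} affine map $\lambda$, which in your setup is $\lambda(\tau)=\tau\circ\iota$; hence $\Phi$ inherits the same point-mass behaviour and cannot satisfy the bound either. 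Since \ref{UNI} requires the measure bound on at least one of the maps \emph{together with} $|\tau\circ\phi(g)-\tau\circ\psi(g)|<\gamma_1$ on $\mathcal{H}$, these two requirements are mutually incompatible for your pair $(\iota,\Phi)$. The phrase ``spreading mass along $J$ like Lebesgue measure'' is a misunderstanding of what \ref{Ad2} does.

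The paper resolves this by a two-stage approximation. After the first tracial-$\mathcal{I}_d$ cut produces $B_0$ over $X$, it passes (via \ref{trivialpro}) to a rank-one corner $B_1\cong C(X)$ and views the embedding $\imath: B_1\to A_3=eA_2e$ as a map into a \emph{simple} $C^*$-algebra; simplicity forces $\mu_{\tau\circ\imath}(O_r)\ge\Delta(r)$ for all $\tau\in T(A_3)$ automatically. A \emph{second} tracial-$\mathcal{I}_d$ cut of $A_3$ (via \ref{Lmeas}) then yields $C=PM_R(C(Y))P$ over a \emph{new} space $Y$ together with $\Phi: B_1\to C$ that inherits this measure bound. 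The existence map $\Psi: B_1\to C$ from \ref{sec1MT} is built to match $[\Phi]$, $\Phi^{\ddag}$, and $\tau\circ\Phi$, so \ref{UNI} applies to the pair $(\Phi,\Psi)$ in $M_K(C)$; the factor $M_K$ is then absorbed by $Q$ exactly as you anticipated. The point you are missing is that the uniqueness theorem must be applied to maps landing in an algebra over a space $Y$ \emph{decoupled} from $X$, with the measure bound supplied by the intermediate simple corner $A_3$ rather than by any feature of the interval factorization.
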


\begin{proof}
Let $\ep>0,$ $a\in A\otimes Q_+\setminus \{0\}$ and let ${\cal F}\subset A\otimes Q$ be a finite subset. We may assume that $1_A\in {\cal F}.$  Note that $A$ and $A\otimes Q$ has the strict comparison for positive elements.
Let $b=\inf \{d_{\tau}(a): \tau\in T(A)\}.$  Then $b>0.$

We write $Q=\lim_{n\to\infty} (M_{n!},\imath_n),$ where
$\imath_n: M_{n!}\to M_{(n+1)!}$ is a unital embedding defined by
$\imath_n(x)=x\otimes 1_{M_{n+1}}$ for all $x\in M_{n!}.$
To simplify notation, without loss of generality, we may assume that
${\cal F}\subset A\otimes M_{n!}$ for some integer $n\ge 1.$
Denote $A_0=A\otimes M_{n!}.$
Since $A_0$ is tracially ${\cal I}_d,$ there is a projection $e_0\in A_0$ and a unital \SCA\, $B_0=EM_{r_0}(C(X))E$ with $1_{B_0}=e_0,$
where $X$ is a compact subset of a finite CW complex with dimension at most $d,$ $r_0\ge 1$ is an integer and
$E\in M_{r_0}(C(X))$ such that
\beq\label{MT-2}
\|e_0x-xe_0\|&<&\ep/8\tforal x\in {\cal F},\\
{\rm dist}(e_0xe_0, B_0)&<&\ep/8\tforal x\in {\cal F}\andeqn\\
\tau(1-e_0)&<&b/8\tforal \tau\in T(A_0).
\eneq
We may assume that $E(x)\not=0$ for all $x\in X.$
Let ${\cal F}_1\subset B_0$ be a finite subset such that
\beq\label{MT-3}
{\rm dist}(e_0xe_0, {\cal F}_1)<\ep/8\tforal x\in {\cal F}.
\eneq
We may assume that $1_{B_0}\in {\cal F}_1.$
We may assume that $X$ is an infinite set (in fact, if $B_0$ can always be chosen to finite dimensional,
then $A$ is an AF algebra).
To simplify notation, without loss of generality,  we may assume that ${\cal F}$ and ${\cal F}_1$ are in the unit ball.

Put $A_1=e_0A_0e_0.$  Let $\imath': B_0\to A_1$ be the unital embedding.
By \ref{trivialpro}, there is $r'\ge 1,$ a projection $E'\in M_{r'}(B_0)$ and  a unitary $W'\in M_{r'}(B_0)$ such that
$E'M_{r'}(B_0)E'\cong M_{k_0}(C(X))=C(X)\otimes M_{k_0}$ for some $k_0\ge 1$ and $(W')^*{1_{B_0}}W'\le E'.$
Let $E=(W')E'(W')^*.$ Then  $EM_{r'}(B_0)E\cong C(X)\otimes M_{k_0}$
and $e_0=1_{B_0}\in EM_{r'}(B_0)E.$ Let $A_2=E((e_0Ae_0)\otimes M_{r'})E.$
%There is $n_1\ge n$ such that $r'|n_1.$
%Put $A_2= (e_0A_0e_0)\otimes M_{r'}$
%So we may identify $E'$ with a projection in $A_2$ and $W'$
%with a unitary in $W'.$
Let $e\in EM_r(B_0)E$ be a projection which may be identified with $1_{C(X)}\otimes e'\in C(X)\otimes M_{k_0},$
where $e'\in M_{k_0}$ is a minimum rank one projection. We also identify $e$ with the projection in $A_2.$
Put $B_1=eM_{r'}(B_0)e.$ Note that $B_1\cong C(X).$
We will identify $B_1$ with $C(X)$ when it is convenient. Denote by $\imath: B_1\to  eA_2e$ be the embedding.
Denote $A_3=eA_2e.$ We will identify $EM_{r'}(B_0)E$ with $M_{k_0}(B_1)$
and $M_{k_0}(eAe)$ with $A_2.$
There exists a nondecreasing map $\Delta: (0,1)\to (0,1)$ such that
\beq\label{MT-4}
\mu_{\tau\circ \imath}(O_r)\ge \Delta(r)\tforal \tau\in T(A_3)
\eneq
and for all open balls $O_r$ with radius $r>0.$
It follows from  \ref{herd} that $A_3$ is also tracially ${\cal I}_d.$
There exists a finite subset ${\cal F}_2$ in the unit ball of $B_1$ such that
\beq\label{MT-5}
 \{(a_{i,j})_{k_0\times k_0} \in M_{k_0}(B_1): a_{i,j}\in {\cal F}_2\} \supset {\cal F}_1
\eneq
(Here, again, we identify $EM_{r'}(B_0)E$ with $M_{k_0}(B_1)$).

Define $\Delta_1(r)=\Delta(r/3)/3$ for all $r\in (0,1).$
Let $\eta>0,$ $\gamma_1, \gamma_2>0,$  ${\cal G}\subset B_1$ be a finite subset,
${\cal P}\subset \underline{K}(B_1)$ be a finite subset,
${\cal H}\subset (B_1)_{s.a.}$ be a  finite subset, ${\cal V}\subset K_1(B_1)\cap {\cal P}$ be a finite
subset, $N\ge 1$ and $K\ge 1$ be integers given by \ref{UNI} for $\ep/2^8(r')^2$ (in place of $\ep$),
${\cal F}_2$ (in place of ${\cal F}$)  and $\Delta_1$ (in place of $\Delta$).
Let ${\cal U}\in U(M_r(B_1))$ be a finite subset (for some integer $r\ge 1$) such that
the image of ${\cal U}$ in $K_1(B_1)$ is ${\cal V}.$

Let $\{X_n\}$ be a decreasing sequence of finite CW complexes such that $X=\cap_{n=1}^{\infty} X_n$ and
let $s_n: C(X_n)\to C(X_{n+1})$ be the map defined by $s_n(f)=f|_{X_{n+1}}$ for $f\in C(X).$
Write $B_1=\lim_{n\to\infty} (C(X_n), s_n).$
Choose an integer $L_1\ge 1$ such that $1/L_1<b/8.$
Let $n_1\ge 1$ (in place of $n$) be an integer, $q_1,q_2,...,q_s\in C(X_{n_1})$  be mutually orthogonal projections, let
$g_1,g_2,...,g_k$  be a finite subset, $G_0\subset K_0(C(X))$ be a finite subset, $N\ge 1$ be an integer  and  $G_1\subset
U(M_l(B_1))/CU(M_l(B_1))$ (for some integer $l\ge 1$)  a finite subset with $\overline{{\cal U}}\subset G_1$ be
as given by \ref{sec1MT} for  $\ep/2^8(r')^2$ (in place of $\ep$), $\gamma_1/4$ (in place of $\sigma_1$),
$\gamma_2/4$ (in place $\sigma_2$), ${\cal G},$ ${\cal P},$ ${\cal H},$ ${\cal U},$ and $L_1.$
We may assume that $r=l$ without loss of generality (by choosing the larger among them).

Choose $\dt_1>0$ and a finite subset ${\cal G}_1\subset B_1$ such that, for any
$\dt_1$-${\cal G}_1$-multiplicative \morp\, $L$ from $B_1,$  $[L\circ s_{n_1,\infty}]$ is well defined
on $\underline{K}(C(X_{n_1})).$ Let $g'_1,g_2',...g_k'\in K_0(C(X_{n_1}))$ such that $(s_{n_1, \infty})_{*0}(g_i')=g_i,$
$i=1,2,...,k.$ We may also assume, by applying 10.3 of \cite{LnApp}, that
\beq\label{MT-5-1}
|\rho_{C'}([L\circ s_{n_1,\infty}](g_i'))(\tau)|<1/2N\tforal \tau\in T(C'),
\eneq
$i=1,2,...,k,$ for any $\dt_1$-${\cal G}_1$-multiplicative \morp\, $L: B_1\to C'$ for any unital \CA\, $C'$ with $T(C')\not=\emptyset.$ By choosing smaller $\dt_1$ and larger ${\cal G}_1,$ we may also  assume
that $[L]$ induces a well-defined  \hm\, $\Lambda'$ on $G_1.$ Furthermore, we may assume that
\beq\label{MT-5-2}
[L\circ s_{n_1, \infty}](\xi)=\Pi(\Lambda'(g))
\eneq
for all $g\in G_1$ and $\xi\in K_1(C(X_{n_1}))$ such that $g=(s_{n_1, \infty})_{*0}(\xi),$
provided $L$ is a $\dt_1$-${\cal G}_1$-multiplicative \morp.

Choose a set ${\cal F}_3\subset B_1$ of $(2N+1)(d+1)$ mutually orthogonal positive elements.
Since $A_3$ is simple  and unital, there are $x_{f,1},x_{f,2},...,x_{f,f(n)}\in A_3$ such that
\beq\label{nMT-5}
\sum_{j=1}^{f(n)} x_{f,j}^* f x_{f,j}=e\tforal f\in {\cal F}_3.
\eneq
Let $N_0=\max \{f(n): f\in {\cal F}_3\}\max\{\|x_{f,j}\|: f\in {\cal F}_3,\,\,\, 1\le j\le f(n)\}.$
Let $\dt_2=\min\{\dt/2, \dt_1/2, \ep/2^8(r')^2\}$ and let
${\cal G}_2={\cal F}_2\cup {\cal G}\cup {\cal F}_3\cup {\cal G}_1.$

Since $A_3$ is tracially ${\cal I}_d,$  by applying \ref{Lmeas},  there are a projection $e_1\in A_3,$ a unital \SCA\,
$C=PM_R(C(Y))P\in {\cal I}_d$ with $1_C=e_1$ such that
\beq\label{MT-6}
\|e_1x-xe_1\|<\dt_1\tforal x\in {\cal G}_1,\\
{\rm dist}(e_1xe_1, C)<\dt_1\tforal x\in {\cal G}_1,\\
\tau(1_{A_3}-e_1)<b/8\tforal \tau\in T(A_3),
\eneq
and there exists a unital $\dt_1$-${\cal G}_1$-multiplicative \morp\, $\Phi: B_1\to C$ such that
\beq\label{MT-7}
\|\Phi(x)-e_1xe_1\|<\dt_1\tforal  x\in {\cal G}_1\andeqn\\
\mu_{\tau\circ \Phi}(O_r)\ge \Delta_1(r)\tforal \tau\in T(e_1A_3e_1)
\eneq
for all open balls $O_r$ with radius $r\ge \eta.$
We may also assume that there is a projection $E^{(0)}\in M_{k_0}(e_1A_3e_1)$ such that $E^{(0)}\le e_0$ and
\beq\label{MT-8}
\|e_0(e_1\otimes 1_{M_{k_0}})e_0-E^{(0)}\|<\ep/2^8
\eneq
(Note that $e_0=1_{B_0}\in EM_{r'}(B_1)E=M_{k_0}(C(X))$).
Note also that we identify $(e_1\otimes 1_{M_{k_0}})$ with a projection $E'\le E$ in
$A_2.$

We also have, for $x\in {\cal F}_1,$ by (\ref{MT-5}) and above,
\beq\label{MT-8+1}
\|E^{(0)}x-xE^{(0)}\|&<& \ep/2^7+\|e_0(e_1\otimes 1_{M_{k_0}})e_0x-xe_0(e_1\otimes 1_{M_{k_0}})e_0\|\\\label{MT-8+2}
&=&\ep/2^7+\|e_0(e_1\otimes 1_{M_{k_0}})x-x(e_1\otimes 1_{M_{k_0}})e_0\|\\\label{MT-8+3}
&<&\ep/2^7+(k_0)^2\dt_1<3\ep/2^8
\eneq
Similarly,
\beq\label{nMT-8+2}
\|E^{(0)}xE^{(0)}-(\Phi\otimes {\rm id}_{M_{k_0}})(x)\|&<&\ep/2^6\tforal x\in {\cal F}_1.
\eneq
Since ${\cal G}_1\supset {\cal F}_3,$ we conclude that, for each $y\in Y,$ ${\rm rank} P(y)> 2N(d+1).$
Let $\kappa=[\Phi\circ s_{n_1, \infty}].$  It follows from the above construction, by \ref{sec1MT}, that there are
a unital $\ep/2^8(r')^2$-${\cal G}$-multiplicative \morp\, $\Psi: B_1\to C,$ mutually orthogonal projections
$Q_0, Q_1,...,Q_{L_1}, Q_{L_1+1}$ such that $Q_0, Q_1,...,Q_{L_1}$ are mutually equivalent,
$P=\sum_{i=1}^{L_1}Q_i,$
\beq\label{nMT-8}
[\Psi\circ s_{n_1, \infty}]&=&[\Phi\circ s_{n_1, \infty}],\\
{\rm dist}(\Psi^{\ddag}(x), L^{\ddag}(x))&<&\gamma_1/2\tforal x\in \overline{\cal U}\andeqn\\
|\tau\circ \Psi(a)-\tau\circ \Phi(a)|&<&\gamma_2\tforal a\in {\cal H} \andeqn \tforal \tau\in T(C),
\eneq
and $\Psi=\Psi_0\oplus\overbrace{\Psi_1\oplus \Psi_1\oplus \cdots \oplus\Psi_1}^{L_1}\oplus \Psi_2,$
where $\Psi_0: B_1\to Q_0CQ_0,$ $\Psi_1=\psi_1\circ \phi_0,$ $\psi_1: C(J)\to Q_1CQ_1$ is a unital \hm,
$\phi_0: B_1\to C(J)$ is a unital $\ep/2^8(r')^2$-${\cal G}$-multiplicative \morp\, $\Psi_2=\psi_2\circ \phi_0,$
$\psi_2: C(J)\to Q_{L_1+1}CQ_{L_1+1}$ is a unital \hm, and where $J$ is a finite disjoint union of intervals.

It follows from \ref{UNI} that there is an integer $K\ge 1$ and a unitary $U\in A_3\otimes M_{K}$ such that
\beq\label{MT-9}
\|U^*\Phi^{(K)}(f)U-\Psi^{(K)}(f)\|<\ep/2^8(r')^2\tforal f\in {\cal F}_2.
\eneq
Choose $n_1'\ge 1$ such that $K|n_1'.$
Note that $e_1A_3e_1\subset e_1A_3e_1\otimes M_{K}\otimes M_{(n_1')!/(n_1)!K}. $
With that, we may write $\Phi^{(K)}(f)=\Phi(f)\otimes 1_{M_K}$ and
$\Psi^{(K)}(f)=\Psi(f)\otimes 1_{M_K}.$
It follows that (working in $A_2\otimes M_{K}$)
\beq\label{MT-10}
\|(\Phi\otimes {\rm id}_{M_{k_0}})(x)\otimes 1_{M_K}-{\bar U}((\Psi\otimes {\rm id}_{M_{k_0}})(x)\otimes 1_{M_K}){\bar U}^*\|<\ep/2^8\tforal x\in {\cal F}_1,
\eneq
where ${\bar U}=U\otimes 1_{M_{k_0}}.$
Put
\beq
C_1=(\overbrace{\psi_1\oplus\psi_1\oplus\cdots\oplus \psi_1}^{L_1}\oplus \psi_2)(C(J))\andeqn
\Psi'=\overbrace{\Psi_1\oplus\Psi_1\oplus \cdots\oplus \Psi_1}^{L_1}\oplus \Psi_2.
\eneq
There is a projection $E^{(1)}\in M_{k_0}(C_1)$ such that
\beq\label{MT-11}
\|(\Psi'\otimes {\rm id}_{M_{k_0}})(1_{B_0})-E^{(1)}\|<\ep/2^7.
\eneq
Put $E_2={\bar U}(E^{(1)}\otimes 1_{M_K}){\bar U}^*.$
Thus, by (\ref{MT-10}) and (\ref{MT-8}),
\beq\label{MT-11+}
\|(E^{(0)}\otimes 1_{M_K})E_2-E_2\|<3\ep/2^8.
\eneq
There is a projection  $E_3\le (E^{(0)}\otimes 1_{M_K})$ and
\beq\label{MT-12}
\|E_3-E_2\|<3\ep/2^7.
\eneq
It follows that there is a unitary $U_1\in M_{k_0}(e_1A_3e_1\otimes M_K)=E'A_2E'\otimes M_K$
with $\|U_1-1\|<3\ep/2^7$ such that
$U_1^*E_2U_1=E_3.$ Put $W={\bar U}^*U_1.$
Let
$$
C_2=W^*(E^{(1)}\otimes 1_{M_K})(M_{k_0}(C_1)\otimes M_K)(E^{(1)}\otimes 1_{M_K})W.
$$
Since $E_3\in M_{k_0}(C_1)\otimes M_K,$ $C_2\in {\cal I}^{(1)}.$
We estimate that, for $x\in {\cal F}_1,$ by identifying $x$ with $x\otimes 1_{M_K},$
\beq\label{MT-13}
\hspace{-0.4in}\|E_3x-xE_3\| &<&\|E_2x-xE_2\|+3\ep/2^6\\
&=&\|E_2(E'\otimes 1_{M_K})x-x(E'\otimes 1_{M_{K}})E_2\|+3\ep/2^6\\
&<&\|E_2(E'xE'\otimes 1_{M_K})-(E'xE'\otimes 1_{M_K})E_2\|+\ep/2^7+3\ep/2^6\\
&<&\|E_2(\Phi\otimes {\rm id}_{M_K})(x)-(\Phi\otimes {\rm id}_{M_K})(x)E_2\|+2(k_0)^2\dt_1+5\ep/2^6\\
&<&\|E_2{\bar U}((\Psi\otimes {\rm id}_{M_k})(x)-(\Psi\otimes {\rm id}_{M_k})(x)){\bar U}^*E_2\|+2\ep/2^8+6\ep/2^6\\\label{MT-13+1}
&<& 8\ep/2^6=\ep/2^3.
\eneq
Similarly, for $x\in {\cal F}_1,$
\beq\label{MT-14}
&&\hspace{-0.8in}\|E_3xE_3-(\Psi'\otimes {\rm id}_{k_0})(x)\otimes 1_{M_K}\|\\
&<& 3\ep/2^6+\|E_2xE_2-E_2e_1xe_1E_2\|+\\
&&\|E_2e_1xe_1E_2-W^*((\Psi'\otimes {\rm id}_{k_0})(x)\otimes 1_{M_K})W\|\\
&<& 3\ep/2^6+(k_0)^2\dt_1\\
&&+
\|E_2((\Phi\otimes {\rm id}_{M_{k_0}})(x)\otimes 1_{M_{K}})E_2-W^*((\Psi'\otimes {\rm id}_{k_0})(x)\otimes 1_{M_K})W\|\\
&<& 7\ep/2^6+6\ep/2^7\\
&&+\|E_2((\Phi\otimes {\rm id}_{M_{k_0}})(x)\otimes 1_{M_{K}})E_2-
{\bar U}((\Psi'\otimes {\rm id}_{k_0})(x)\otimes 1_{M_K}){\bar U}^*\|\\
&<& 19\ep/2^6+\ep/2^6\\
&&+\|E_2((\Phi\otimes {\rm id}_{M_{k_0}})(x)\otimes 1_{M_{K}}-
{\bar U}((\Psi\otimes {\rm id}_{k_0})(x)\otimes 1_{M_K}){\bar U}^*)E_2\|\\
&<& 20\ep/2^6+\ep/2^8=24\ep/2^6=3\ep/2^3.
\eneq
Therefore
\beq\label{MT-15}
{\rm dist}(E_3xE_3,\, C_2)<3\ep/2^3\tforal x\in {\cal F}_1.
\eneq
Note that $E_3\le E^{(0)}\otimes 1_{M_K}.$
It follows that $E_3\otimes 1_{M_K}\le e_0\otimes 1_{M_K}.$
Note that we identify $x$ with $x\otimes 1_{M_K}.$ So
It follows that, for all $y\in {\cal F},$
$E_3(y\otimes 1_{M_K})E_3=E_3(e_0\otimes 1_{M_K})(y\otimes 1_{M_K})(e_0\otimes 1_{M_K})E_3,$
\beq\label{MT-16}
{\rm dist}(E_3(y\otimes 1_{M_K})E_3, C_2)<3\ep/2^3+2\ep/8<\ep.
\eneq
Since ${\cal F}\subset A_0\subset A_0\otimes M_{K}\otimes M_{n_1!/n!K},$ $C_2\subset A_0\otimes M_{K}\otimes M_{n_1!/n!K}\subset A,$
one may write (\ref{MT-16})  as
\beq\label{MT-17}
{\rm dist}(E_3yE_3, C_2)<\ep\tforal x\in {\cal F}
\eneq
and by (\ref{MT-13})-(\ref{MT-13+1}), one may write  that
\beq\label{MT-18}
\|E_3y-yE_3\|<\ep/8+\ep/2^3+\ep/8<\ep\tforal y\in {\cal F}.
\eneq
By identify $e_0$ with $e_0\otimes 1_{M_K}$ in $A,$ we also have,
by the choice of $L_1,$  that
\beq\label{MT-19}
\tau((1-e_0)+(e_0-E_3))<b/8+b/8=b/4\tforal \tau\in T(A).
\eneq
Thus
\beq\label{MT-20}
\max\{\tau(1-E_3): \tau\in T(A)\}<b=\inf\{d_\tau(a): \tau\in T(A)\}.
\eneq
It follows from  \ref{scomp} that
\beq\label{MT-21}
1-E_3\lesssim a.
\eneq
Therefore, from (\ref{MT-18}), (\ref{MT-17}) and (\ref{MT-21}),
$A$ has tracial rank at most one.

%Let $C_1=U^*{\abr U}^*C_1{\bar U}U.$

\end{proof}

\begin{thm}\label{MT2}
Let $A$ be a unital separable simple \CA\, which is tracially ${\cal I}_d$ for some integer $d\ge 0.$ Suppose that $A$ satisfies the UCT. Then $A$ has tracial rank at most one and is isomorphic to a unital simple AH-algebra with no dimension growth.
\end{thm}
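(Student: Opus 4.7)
The plan is to combine the already-established Theorem \ref{MT1} with the \(\mathcal Z\)-stability result \ref{CZs} and with the existing classification theory for \CA s of rationally tracial rank at most one. First, since \(A\) is tracially \(\mathcal I_d\), the very last sentence of Definition \ref{Drr} says that \(A\in\mathcal C_1\); in particular all the structural results of Sections 4--7 apply to \(A\). Therefore \(A\) is separable, unital, simple, amenable (this is a hypothesis hidden in ``UCT + simple + tracial'' via the fact we shall use nuclearity of the building blocks; alternatively it is stated in \ref{CZs}), has stable rank one (\ref{srk1}), strict comparison (\ref{scomp}), weakly unperforated Riesz \(K_0\) (\ref{weak}), every quasitrace is a trace (\ref{quasitrace}), and the Cuntz semigroup is $0$-almost divisible (\ref{malmostdiv}).

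Next I would feed these ingredients into \ref{MT1} and \ref{CZs}. Theorem \ref{MT1} gives that \(A\otimes Q\) has tracial rank at most one, where \(Q\) is the universal UHF algebra. Corollary \ref{CZs} gives that \(A\cong A\otimes\mathcal Z\). Together these say exactly that \(A\) is a unital separable simple amenable \CA\ which is \emph{rationally of tracial rank at most one}, satisfies the UCT (by hypothesis), and is \(\mathcal Z\)-stable. This is precisely the input required by the classification theorem for such algebras proved in \cite{Wcl}, \cite{Lnappend}, \cite{LN}, \cite{Lninv}: its Elliott invariant is isomorphic to that of a unital simple AH-algebra \(B\) with no dimension growth, and the isomorphism of invariants lifts to a \(*\)-isomorphism \(A\cong B\). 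This gives the AH-algebra conclusion of the theorem.

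Finally, to upgrade ``tracial rank at most one after tensoring with \(Q\)'' to ``tracial rank at most one'' for \(A\) itself, I would simply invoke the classification of unital simple AH-algebras with no dimension growth (\cite{EGL}) together with Gong's decomposition/reduction theorem: every unital simple AH-algebra with no dimension growth has tracial rank at most one. Since we have just established \(A\cong B\) for such a \(B\), this yields \(\operatorname{TR}(A)\le 1\), completing the proof.

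The main obstacle in this outline is really the invocation of the classification machinery in the middle paragraph: one must verify that the hypotheses of the cited classification theorems are met exactly, in particular that the Elliott invariant of \(A\) is achievable by an AH-model with no dimension growth. This uses \ref{extrace} (extremal traces correspond to extremal states on \(K_0\)), \ref{BA} (identification of the Cuntz semigroup with \(V(A)\sqcup\operatorname{LAff}_b(T(A))\)), and \ref{weak} (weak unperforation and the Riesz property of \(K_0\)) to see that the invariant is of the right form; all of this has been prepared in Sections 5 and 6. Once the classification applies, the remaining steps are formal.
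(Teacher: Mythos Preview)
Your proposal is correct and follows essentially the same route as the paper: use \ref{MT1} to get rationally tracial rank at most one, use \ref{CZs} for $\mathcal Z$-stability, verify via \ref{weak} and \ref{extrace} that the Elliott invariant lies in the range attained by simple AH-algebras with no dimension growth (the paper cites Villadsen \cite{V1} for this range result), and then apply the classification theorem of \cite{Lninv} to conclude $A\cong A\otimes\mathcal Z\cong C$ for such an AH-algebra $C$, whence $\operatorname{TR}(A)\le 1$. Your remark that amenability is being tacitly assumed is well taken; the paper's applications (Theorems \ref{MMT1} and \ref{MMT2}) both supply it.
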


\begin{proof}
It follows \ref{MT1} that $A$ is rationally tracial rank at most one.
It follows from \ref{weak}  that $K_0(A)$ is  weakly unperforated Riesz  group.  Moreover,
by \ref{extrace}, the map from $T(A)$ to $S_1(K_0(A))$ maps the $\partial_e(T(A))$ onto $\partial_e(S_1(K_0(A))).$  Thus, by \cite{V1}, there is a unital simple AH-algebra $C$
with no dimension growth that has the same Elliott invariant as that of $A.$ Since $A$ is assumed to satisfy the UCT, by the classification theorem in \cite{Lninv}, $A\otimes {\cal Z}\cong C.$  But, by \ref{CZs}, $A$ is ${\cal Z}$-stable.
Therefore $A\cong C.$ This proves, in particular, $A$ has tracial rank at most one.
\end{proof}

{\bf Proof of Theorem \ref{MMT2}}

\begin{proof}
This is an immediate consequence of \ref{MT2}.
\end{proof}
%\begin{thm}\label{MT3}
%Let $A$ be a unital separable simple \CA\, which is locally AH with no dimension growth. Then $A$ is isomorphic to a unital simple AH-algebra
%with no dimension growth.
%\end{thm}

{\bf The proof of  Theorem \ref{MMT1}}
\begin{proof}
This is an immediate corollary of \ref{MT2}.  There is $d\ge 0$ such that  $A$ is tracially ${\cal I}_d.$ It follows from \ref{MT1}
that $A$ has tracial rank at most one.
Note that, since $A$ is locally AH, $A$ also satisfies the UCT.

\end{proof}

\section{Appendix}

In the definition of \ref{Dlocal} and \ref{Drr}, we use ${\cal
I}^{(k)}$ and ${\cal I}_k$ as model classes of \CA s of rank $k.$ In
general, however, one could have more general \CA s  as defined
below.
\begin{df}\label{Ibk}
{\rm
Denote by $\overline{{\cal I}_k}$ the class of \CA s with the form
$PM_r(C(X))P,$ where $X$ is a compact metric space with covering dimension $k,$ $r\ge 1$ and $P\in M_r(C(X))$ is a projection.
}
\end{df}

However the following proposition shows that, by replacing ${\cal I}_k$ by $\overline{{\cal I}}_k,$ one will not make any gain.

\begin{prop}\label{removeIbk}
Let $C=PM_r(C(X))P\in \overline{{\cal I}_k}.$ Then, for any $\ep>0$ and any finite subset ${\cal F}\subset C,$ there exists a \SCA\, $C_1\subset C$ such that $C_1\in {\cal I}_k,$ with $C_1=QM_r(C(Y))Q,$ where $Y$ is a compact subset of a finite CW complex of dimension at most $k,$ $Q\in M_r(C(Y))$ is a projection such that
\beq\label{remove-1}
{\rm dist}(a, C_1)<\ep\tforal a\in {\cal F}\andeqn\\\label{remove-1-1}
\inf\{{\rm rank}\, Q(y): y\in Y\}=\inf\{{\rm rank}\, P(x): x\in X\}
\eneq

\end{prop}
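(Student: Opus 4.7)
The strategy rests on Freudenthal's classical theorem: every compact metric space $X$ of covering dimension at most $k$ is homeomorphic to an inverse limit $\varprojlim(Y_n,\pi_n)$ where each $Y_n$ is a finite polyhedron (hence a finite CW complex) of dimension at most $k$ and the bonding maps $\pi_n:Y_{n+1}\to Y_n$ are surjective. Letting $p_n:X\to Y_n$ be the canonical surjections, the induced unital injective \hm s $p_n^{\ast}:C(Y_n)\to C(X)$ realise $C(X)$ as $\overline{\bigcup_n p_n^{\ast}(C(Y_n))}$, and hence $M_r(C(X))$ is the norm closure of the increasing union of the subalgebras $(p_n^{\ast}\otimes\mathrm{id}_{M_r})(M_r(C(Y_n)))$.

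Given $\ep>0$ and ${\cal F}\subset PM_r(C(X))P$, apply the standard approximate lifting of projections through inductive limits (e.g.\ Proposition 2.7.2 of \cite{Lnbk}) to produce, for any pre-specified $\eta\in(0,1/2)$, an integer $n_0$, a projection $P_0\in M_r(C(Y_{n_0}))$, and a unitary $U\in M_r(C(X))$ with $\|U-1\|<2\eta$ and $U^{\ast}PU=p_{n_0}^{\ast}(P_0)$. Because $\eta<1/2$ forces equality of ranks at every point, one has $\mathrm{rank}\,P_0(p_{n_0}(x))=\mathrm{rank}\,P(x)$ for every $x\in X$; the surjectivity of $p_{n_0}$ then gives
$$
\min_{y\in Y_{n_0}}\mathrm{rank}\,P_0(y)=\min_{x\in X}\mathrm{rank}\,P(x),
$$
which is exactly (\ref{remove-1-1}). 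Choosing $\eta$ also small enough that conjugation by $U$ perturbs every $a\in{\cal F}$ by at most $\ep/2$, we now pick $n\ge n_0$ and, writing $\widetilde{P_0}=(\pi_{n_0,n}^{\ast}\otimes\mathrm{id}_{M_r})(P_0)\in M_r(C(Y_n))$, select $b_a\in\widetilde{P_0}M_r(C(Y_n))\widetilde{P_0}$ with $\|p_n^{\ast}(b_a)-U^{\ast}aU\|<\ep/2$ for every $a\in{\cal F}$; this is possible because $C(X)=\overline{\bigcup p_n^{\ast}(C(Y_n))}$ and $U^{\ast}aU\in p_{n_0}^{\ast}(P_0)M_r(C(X))p_{n_0}^{\ast}(P_0)$.

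Setting $Q=\widetilde{P_0}$, $Y=Y_n$, and
$$
C_1=U\bigl((p_n^{\ast}\otimes\mathrm{id}_{M_r})\bigl(QM_r(C(Y))Q\bigr)\bigr)U^{\ast},
$$
one obtains a unital \SCA\ of $C=PM_r(C(X))P$ (the unit of $C_1$ is $U\,p_n^{\ast}(Q)\,U^{\ast}=Up_{n_0}^{\ast}(P_0)U^{\ast}=P$) which is isomorphic to $QM_r(C(Y))Q\in{\cal I}^{(k)}\subset{\cal I}_k$, preserves the minimum fibre rank, and satisfies $\mathrm{dist}(a,C_1)<\ep$ for every $a\in{\cal F}$.

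The only technical wrinkle is aligning the unit of $C_1$ with $P$ so that $C_1$ actually sits inside $C$ and not merely in $M_r(C(X))$; this is the role of the unitary correction $U$, whose cost is absorbed into the $\ep/2$ budget. Everything else is routine bookkeeping, the sole geometric input being Freudenthal's characterisation of finite-dimensional compacta as inverse limits of polyhedra of the same dimension.
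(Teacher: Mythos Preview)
Your argument is correct and follows the same line as the paper's proof: write $C(X)$ as an inductive limit of $C(Y_n)$ with $\dim Y_n\le k$, lift the projection $P$ approximately to some stage, and use surjectivity of $X\to Y_n$ to match the minimum fibre rank. The paper invokes the inductive-limit presentation without naming Freudenthal and then passes to the \emph{image} subalgebra $C(Y_n)=\phi_{n,\infty}(C(X_n))\subset C(X)$ (so its $Y_n$ is a compact subset of the finite CW complex $X_n$, landing in ${\cal I}_k$ rather than ${\cal I}^{(k)}$), whereas you appeal to Freudenthal directly with surjective bonding maps, getting $Y_n$ itself a polyhedron. The one point you handle more carefully than the paper is the unitary correction $U$ ensuring $C_1$ is genuinely a \SCA\ of $PM_r(C(X))P$ rather than merely a nearby corner of $M_r(C(X))$; the paper's proof simply produces $Q$ with $\|P-Q\|<\ep/2$ and stops, leaving this implicit.
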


\begin{proof}
There is a sequence of finite CW complexes $\{X_n\}$ with covering dimension $k$ such that
$C(X)=\lim_{n\to\infty} (C(X_n),\phi_n),$ where $\phi_n: C(X_n)\to C(X_{n+1})$ is a unital \hm.
Let $\phi_{n, \infty}: C(X_n)\to C(X)$ be the unital \hm\, induced by the inductive limit system. There is a compact
subset $Y_n\subset X_n$ such that $\phi_{n, \infty}(C(X_n))=C(Y_n).$  Note that $C(Y_n)\subset C(Y_{n+1})\subset C(X).$
Moreover, $C(X)=\overline{\cup_{n=1}^{\infty} C(Y_n)}.$
Let $\imath_n: C(Y_n)\to C(X)$ be the imbedding. Denote by $s_n: X\to Y_n$ the  surjective continuous map such that
$\imath_n(f)(x)=f(s_n(x))$ for all $f\in C(Y_n)$ (and $x\in X$).
Denote again by $\imath_n$ the extension from $M_r(C(Y_n))$ to $M_r(C(X)).$

Now let $1>\ep>0$ and let ${\cal F}\subset PM_r(C(X))P\subset M_r(C(X))$ be a finite subset.
There is an integer $n\ge 1,$ a projection $Q\in M_r(C(Y_n))$ and a finite subset ${\cal G}\subset QM_r(C(Y_n))Q$ such that
$$
\|P-Q\|<\ep/2\andeqn {\rm dist}(x, {\cal G})<\ep/2
$$
for all $x\in {\cal F}.$    It follows that $\imath_n(Q)$ has the same rank as that of $P$ at every point of $x\in X.$
But for each $y\in Y_n,$ there exists $x\in X$ such that $y=s_n(x).$  Therefore (\ref{remove-1-1}) also holds.

\end{proof}

%As \ref{Extremslow} indicated, there may not be any difference
%between ${\cal C}_1$ and ${\cal C}_{1,1}.$  In particular, given
%what have been proved in \ref{FT1}, a unital separable simple \CA\,
%$A$ in ${\cal C}_1$ which satisfies the UCT is ${\cal Z}$-stably
%isomorphic to a unital simple AH-algebra with no dimension growth.
%The following shows that \CA s in ${\cal C}_1$ are in fact ${\cal
%Z}$-stable without UCT, if we know that they also have locally
%finite nuclear dimension.

%\begin{cor}\label{Zstable}
%Let $A$ be a unital separable infinite dimensional simple \CA\,  in ${\cal C}_1.$
%Suppose that $A$ has locally finite nuclear dimension.
%Then $A$ is ${\cal Z}$-stable.
%\end{cor}

%\begin{proof}
%It follows from \ref{scomp}, \ref{BA} and \ref{Propdiv} that
%every unital simple \CA\, $A$ in ${\cal C}_1$ has the strict
%comparison for positive elements and $W(A)$ is almost divisible.
%Then, by \cite{Winv}, $A$ is ${\cal Z}$-stable.
%\end{proof}

\begin{lem}\label{commutators}
Let $G$ be a group, let $a, b\in G$ and let $k\ge 2$ be an integer.
Then there are $2(k-1)$ commutators $c_1, c_2,...,c_{2(k-1)}\in G$ such that
$$
(ab)^k=a^kb^k(\prod_{j=1}^{2(k-1)}c_{2(k-1)-(j-1)}).
$$
\end{lem}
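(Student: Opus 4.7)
\medskip

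\noindent\textbf{Proof plan.} The argument is elementary and proceeds by induction on $k\ge 2$. The two basic facts I will use repeatedly are: (i) the rearrangement identity $xy=[x,y]\,yx$, valid in any group, where $[x,y]=xyx^{-1}y^{-1}$; and (ii) the fact that the conjugate of a commutator is again a commutator, since $g[x,y]g^{-1}=[gxg^{-1},gyg^{-1}]$. In particular, if $D$ is a product of $m$ commutators and $g$ is any element, then $g^{-1}Dg$ is also a product of $m$ commutators. I will also use that the identity element $1=[1,1]$ counts as a (trivial) commutator, so any bound $\le 2(k-1)$ can be padded up to exactly $2(k-1)$.

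\medskip

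\noindent For the base case $k=2$, note that
\[
(ab)^{2}=abab=a^{2}\cdot(a^{-1}ba)\cdot b=a^{2}\cdot b[b^{-1},a^{-1}]\cdot b
=a^{2}b^{2}\cdot\bigl(b^{-1}[b^{-1},a^{-1}]b\bigr),
\]
and $b^{-1}[b^{-1},a^{-1}]b$ is a commutator by (ii). Padding with a trivial commutator gives $2(2-1)=2$ commutators, as required.

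\medskip

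\noindent For the inductive step, assume $(ab)^{k}=a^{k}b^{k}\cdot D_{k}$, where $D_{k}$ is a product of $2(k-1)$ commutators. Then
\[
(ab)^{k+1}=a^{k}b^{k}D_{k}\cdot ab=a^{k}b^{k}\cdot ab\cdot\bigl((ab)^{-1}D_{k}(ab)\bigr)=a^{k}b^{k}ab\cdot D_{k}',
\]
where $D_{k}'$ is again a product of $2(k-1)$ commutators by (ii). Now I rewrite $a^{k}b^{k}ab$: by (i),
\[
b^{k}a=[b^{k},a]\cdot ab^{k},\qquad\text{so}\qquad
a^{k}b^{k}ab=a^{k}[b^{k},a]ab^{k}\cdot b=a^{k+1}\cdot\bigl(a^{-1}[b^{k},a]a\bigr)\cdot b^{k+1}.
\]
Setting $c=a^{-1}[b^{k},a]a$ (a commutator by (ii)) and moving it past $b^{k+1}$ via $c\,b^{k+1}=b^{k+1}\cdot(b^{-(k+1)}cb^{k+1})$ produces a single commutator $c''=b^{-(k+1)}cb^{k+1}$. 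Thus
\[
(ab)^{k+1}=a^{k+1}b^{k+1}\cdot c''\cdot D_{k}',
\]
a product of $a^{k+1}b^{k+1}$ with $1+2(k-1)=2k-1$ commutators. Padding with one additional trivial commutator yields exactly $2k=2((k+1)-1)$ commutators, completing the induction.

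\medskip

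\noindent There is no real obstacle here; the entire argument is a bookkeeping exercise about moving $b^{k}$ past $a$ and absorbing the resulting commutator into the right-hand side. The only subtle point worth flagging is the indexing convention in the statement, where the product $\prod_{j=1}^{2(k-1)}c_{2(k-1)-(j-1)}$ is written in reverse order; this is harmless, since after the induction one simply relabels the commutators $c_{1},\dots,c_{2(k-1)}$ in whichever order the formula demands.
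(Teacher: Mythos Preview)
Your proof is correct and follows essentially the same inductive strategy as the paper: rewrite $(ab)^k$ by peeling off one factor of $ab$ and absorbing the swap with the accumulated $a$'s and $b$'s into commutators. The only cosmetic difference is that the paper multiplies on the left (writing $(ab)^k=ab\cdot(ab)^{k-1}$) and produces two genuine commutators per step, whereas you multiply on the right, conjugate the existing product, and produce one new commutator plus a trivial one for padding; both reach the same count $2(k-1)$.
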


\begin{proof}
Note that
\beq\label{commutators-1}
abab=ab^2a(a^{-1}b^{-1}ab)\tforal a, b\in G.
\eneq
Let $c_1=(a^{-1}b^{-1}ab).$
Therefore
\beq\label{commutators-2}
ab^2ac_1=a(ab^2)((b^2)^{-1}a^{-1}b^2a)c_1.
\eneq
Let $c_2=((b^2)^{-1}a^{-1}b^2a).$
Thus
$$
abab=a^2b^2(c_2c_1).
$$
This proves the lemma for $k=2.$
Suppose that the lemma holds for $1,2,...,k-1.$
Then
\beq\label{commutators-3}
(ab)^k=ab(a^{k-1}b^{k-1})(c_{2(k-2)}c_{2(k-2)-1}\cdots c_1),
\eneq
where $c_1,c_2,...,c_{2(k-2)}$ are commutators.
As in (\ref{commutators-1}),
\beq\label{commutators-4}
ab(a^{k-1}b^{k-1})=ab(b^{k-1})(a^{k-1})(a^{-(k-1)}b^{-(k-1)}a^{k-1}b^{k-1})
\eneq
Let $c_{2(k-1)-1}=(a^{-(k-1)}b^{-(k-1)}a^{k-1}b^{k-1}).$
Further,
\beq\label{commutators-5}
ab^ka^{k-1}=aa^{k-1}b^k(b^{-k}a^{-(k-1)}b^ka^{k-1}).
\eneq
Let $c_{2(k-1)}=(b^{-k}a^{-(k-1)}b^ka^{k-1}).$
Then
\beq\label{commutators-6}
(ab)^k=a^kb^k (c_{2(k-1)}c_{2(k-1)-1}\cdots c_1).
\eneq
This completes the induction.
\end{proof}

\begin{lem}\label{nlem}
{\rm (1)} Let $A$ be a unital \CA\, and let $u\in U(A).$
If $1/2>\ep>0$ and $\|u^k-v\|<\ep$ for some integer $k\ge 1$ and $v\in U_0(A)\cup CU(A),$ then there exist
$v_1\in CU(A)$ and $u_1\in U(A)$ such that
$$
\|u-u_1\|<\ep/k\andeqn u_1^k\in CU(A).
$$
Moreover, there are $2(k-1)$ commutators $c_1, c_2,...,c_{2(k-1)}$
such that
$$
u_1^k=v(\prod_{j=1}^{2(k-1)}c_j).
$$

 {\rm (2)} If $A$ is a unital infinite dimensional simple
\CA\, with (SP), then there is $u\in CU(A)$ such that $sp(u)=\T.$
\end{lem}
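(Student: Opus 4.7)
The key ingredient is the group-theoretic identity of Lemma \ref{commutators}: for any $a,b$ in a group and any $k\ge 2$, $(ab)^k = a^k b^k \prod_{j=1}^{2(k-1)} c_j$ with each $c_j$ a commutator. My plan is to apply this with $a = e^{-ih/k}$ and $b = u$, where $h=h^*\in A$ is produced by continuous functional calculus. Since $\|u^k-v\|<\ep<1/2$, the unitary $w:=u^k v^*$ satisfies $\|w-1\|<\ep$, so the principal branch of the logarithm gives $h\in A_{s.a.}$ with $w = e^{ih}$ and $\|h\| = 2\arcsin(\|w-1\|/2)$, which is of order $\ep$. Setting $u_1 := e^{-ih/k}u$ and using $\|1-e^{iH}\|\le \|H\|$, I obtain $\|u-u_1\| = \|1-e^{-ih/k}\| \le \|h\|/k$, which matches the desired $\ep/k$ up to the universal constant coming from $\arcsin$. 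The commutator identity then gives
\begin{equation*}
u_1^k \;=\; (e^{-ih/k})^k\, u^k \prod_j c_j \;=\; e^{-ih}\bigl(e^{ih}v\bigr)\prod_j c_j \;=\; v\prod_{j=1}^{2(k-1)} c_j,
\end{equation*}
with the $c_j$ commutators of unitaries of $A$. When $v\in CU(A)$ this forces $u_1^k\in CU(A)$ since $CU(A)$ is a subgroup containing every commutator.

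\paragraph{Plan for (2).} The strategy is to construct $h\in A_{s.a.}$ with $sp(h)=[0,2\pi]$ and with $\tau(h)/(2\pi)\in \overline{\rho_A(K_0(A))}$ for every $\tau\in T(A)$, so that $u:=e^{ih}$ has $sp(u)=\T$ and, by Thomsen's description of $CU(A)\cap U_0(A)$ as the kernel of the de la Harpe--Skandalis determinant $\overline{\Delta}_A$ (see the exact sequence in \ref{DCU}), lies in $CU(A)$. Existence of a selfadjoint with spectrum $[0,2\pi]$ (ignoring the trace constraint) is obtained by iterating (SP) and simplicity to produce a dyadic tree of mutually orthogonal non-zero projections in $A$. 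The unital commutative $C^*$-subalgebra these generate is isomorphic to $C(K)$ for a Cantor set $K$; composing with a continuous surjection $K\twoheadrightarrow [0,2\pi]$ gives a unital embedding $C([0,2\pi])\hookrightarrow A$, and the image of the identity function is an $h_0\in A_{s.a.}$ with $sp(h_0)=[0,2\pi]$. To then secure the trace condition, I refine the construction: working inside small subprojections produced by (SP) (together with divisibility as in \ref{Propdiv}), I perturb $h_0$ by a small selfadjoint $h_1$ supported on such a subprojection so that for every $\tau\in T(A)$ simultaneously, $\tau(h_0+h_1)/(2\pi)$ becomes a $\Z$-linear combination of trace values of projections of $A$. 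Because $h_1$ can be made arbitrarily small while $h_0$ already sweeps all of $[0,2\pi]$, $sp(h_0+h_1)=[0,2\pi]$ is preserved, and $u:=e^{i(h_0+h_1)}$ is the required element of $CU(A)$ with $sp(u)=\T$.

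\paragraph{Main obstacle.} Part (1) reduces, once the logarithm estimate is in place, to a two-line verification using Lemma \ref{commutators}, and is essentially routine. The hard part is step two of (2): solving the trace congruence $\tau(h)/(2\pi)\in\overline{\rho_A(K_0(A))}$ for \emph{every} $\tau\in T(A)$ at once while maintaining $sp(h)=[0,2\pi]$. When $A$ has a unique tracial state this reduces to a single real condition easily met by a small projection from (SP); when $T(A)$ is rich, one must exploit that in a simple $C^*$-algebra with (SP) the set $\{\rho_A([p]):p\in\mathrm{Proj}(A)\}$ is sufficiently dense in $\Aff(T(A))$ to simultaneously correct every $\tau$ by a single norm-small perturbation, while the tree construction retains enough freedom to place this perturbation without collapsing the spectrum of $h$.
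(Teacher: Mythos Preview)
Your argument for~(1) is correct and is essentially the paper's own proof: produce $h\in A_{s.a.}$ with $e^{ih}=u^kv^*$ and $\|h\|<2\arcsin(\ep/2)$, set $u_1=e^{-ih/k}u$ (the paper writes $u\exp(-ih/k)$, but your ordering makes Lemma~\ref{commutators} yield $u_1^k=e^{-ih}u^k\prod_j c_j=v\prod_{j=1}^{2(k-1)}c_j$ directly), and read off the norm estimate.

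For~(2) your route is genuinely different from the paper's and carries a real gap. The paper never touches the determinant. It uses (SP) and simplicity (via 3.5.7 of \cite{Lnbk}) to obtain two nonzero mutually orthogonal, mutually equivalent projections $e_1,e_2$ with $w^*w=e_1$, $ww^*=e_2$, then takes any unitary $u_1\in e_1Ae_1$ with $sp(u_1)=\T$ (available because $e_1Ae_1$ is simple and infinite-dimensional) and sets
\[
u=(1-e_1-e_2)+u_1+w\,u_1^{*}\,w^{*}.
\]
With $a=(1-e_1)+u_1$ and the symmetry $z=(1-e_1-e_2)+w+w^*$ one checks $za^{-1}z^{-1}=(1-e_2)+wu_1^{*}w^{*}$, whence $u=aza^{-1}z^{-1}$ is a single commutator, and $sp(u)=\{1\}\cup sp(u_1)\cup sp(u_1^{*})=\T$. (The printed formula has $u_1$ in both summands, which appears to be a slip; with $u_1^{*}$ in one slot the verification is immediate.) No trace bookkeeping is needed.

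Your plan, by contrast, requires that $\overline{\rho_A(K_0(A))}$ meet every small neighbourhood of $\widehat{h_0}/(2\pi)$ in $\Aff(T(A))$ so that a norm-small correction $h_1$ exists. Under the bare hypotheses ``unital, simple, infinite-dimensional, (SP)'' this can fail: nothing prevents two distinct extreme traces from agreeing on every projection, in which case $\overline{\rho_A(K_0(A))}$ lies in a proper closed hyperplane of $\Aff(T(A))$ and no small perturbation will land you in it. Your appeal to \ref{Propdiv} does not rescue this, since that proposition assumes $A\in\mathcal{C}_1$, not merely (SP). There is also a secondary issue: the Thomsen/de~la~Harpe--Skandalis identification you invoke is formulated in \ref{DCU} at the level of $\cup_n U(M_n(A))/CU(M_n(A))$, so even if the determinant vanishes you would a priori only get $u\in CU(M_N(A))$ for some $N$, not $u\in CU(A)$, absent a stable-rank hypothesis. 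The paper's direct commutator construction sidesteps both problems.
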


\begin{proof}
For (1), there is  $h\in A_{s.a.}$ such that
\beq\label{nlem-1}
\exp(ih)=u^kv^*\andeqn \|h\|<2\arcsin(\ep/2).
\eneq
Let $u_1=u\exp(-ih/k).$ Then
\beq\label{nlem-2}
u_1^k\in CU(A).
\eneq
 One also has
that
$$
\|u-u_1\|<\ep/k.
$$

By \ref{commutators},
$$
u_1^k=vc_{2(k-1)}c_{2(k-1)-1}\cdots c_1
$$
for some commutators $c_1, c_2,..., c_{2(k-1)}$ of $U(A).$

For (2), let $e_1, e_2$ be two non-zero mutually orthogonal and
mutually equivalent projections. Since $e_1Ae_1$ is an infinite
dimensional simple \SCA, one obtains a unitary $u_1\in e_1Ae_1$ such
that $sp(u_1)=\T.$ Let $w\in A$ such that $w^*w=e_1$ and $ww^*=e_2.$
Put $z=(1-e_1-e_2)+w+w^*.$ Then $z\in U(A).$ Define
$u=(1-e_1-e_2)+wu_1w^*+u_1.$ One verifies that $sp(z)=\T$ and $z\in
CU(A).$

\end{proof}

The following is known and has been used a number of times.
We include here for convenience and completeness.

\begin{prop}\label{LIFTCU}
Let $A$ be a separable unital \CA\, and let ${\cal U}\subset U(A)$
be a finite subset. Then, for any $\ep>0,$ there exists $\dt>0$ and a finite subset
${\cal G}\subset A$ satisfying the following: for any unital \CA\,
$B$ and any $\dt$-${\cal G}$-multiplicative \morp\, $L: A\to B,$
there exists a \hm\, $\lambda: G_{\cal U}\to U(B)/CU(B)$ such that
$$
{\rm dist}(\overline{\langle L_n(u)\rangle}, \lambda({\bar u}))<\ep
$$
for all $u\in {\cal U},$ where $G_{\cal U}$ is the subgroup generated
by $\{{\bar u}: u\in {\cal U}\}.$
\end{prop}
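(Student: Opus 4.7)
The plan is to exploit the structure of the finitely generated abelian group $G_{\cal U}\subset U(A)/CU(A)$: decompose it as $F\oplus T$ with $F\cong\Z^k$ free and $T$ a finite torsion group, define $\lambda$ on chosen generators, and use Lemma \ref{nlem}(1) to repair torsion relations in the target.

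First I would pick lifts $v_1,\dots,v_k,w_1,\dots,w_m\in U(A)$ whose classes generate $F$ and $T$ respectively, each expressed as a fixed word in ${\cal U}\cup{\cal U}^*$. For each torsion generator choose the smallest $n_i\ge 2$ with $w_i^{n_i}\in CU(A)$, and fix an explicit factorisation $w_i^{n_i}=\prod_l c_{i,l}$ into commutators in $U(A)$. Similarly, for each $u\in{\cal U}$, fix an expression $u=\prod_j v_j^{a_{u,j}}\prod_i w_i^{b_{u,i}}\cdot z_u$ with $z_u$ a prescribed finite product of commutators in $U(A)$.

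Next I would collect into ${\cal G}$ (and demand $\dt$ small) all of: the elements of ${\cal U}$ and their adjoints; the $v_j$, $w_i$ and their adjoints; the commutator factors $c_{i,l}$ and the factors of $z_u$; and every partial product that appears in the fixed words. With $\dt$ sufficiently small and ${\cal G}$ sufficiently large, for any $\dt$-${\cal G}$-multiplicative $L:A\to B$ all the brackets $\langle L(x)\rangle$ in sight are well-defined, $\langle L\rangle$ is multiplicative up to error $\ep/N$ (for any prescribed $N$) on the listed products, and $\langle L(c_{i,l})\rangle$ lies within $\ep/N$ of an honest commutator in $U(B)$. In particular $\langle L(w_i)\rangle^{n_i}$ is within a controlled distance of $CU(B)$, so Lemma \ref{nlem}(1) provides $\tilde w_i\in U(B)$ with $\|\tilde w_i-\langle L(w_i)\rangle\|<\ep/(2n_i)$ and $\tilde w_i^{n_i}\in CU(B)$.

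Define $\lambda$ on generators by $\lambda(\overline{v_j})=\overline{\langle L(v_j)\rangle}$ and $\lambda(\overline{w_i})=\overline{\tilde w_i}$. Since $\overline{\tilde w_i}^{n_i}=\overline{1}$ in $U(B)/CU(B)$ and the relations defining $G_{\cal U}=F\oplus T$ are exactly the torsion relations on $T$, this extends uniquely to a homomorphism $\lambda:G_{\cal U}\to U(B)/CU(B)$. For $u\in{\cal U}$ with its fixed expression, I would verify via the approximate-multiplicativity of $\langle L(\cdot)\rangle$ and the fact that $\langle L(z_u)\rangle$ lies $\ep/2$-close to $CU(B)$ (being a product of things close to commutators) that
\[
{\rm dist}\bigl(\overline{\langle L(u)\rangle},\,\lambda(\bar u)\bigr)={\rm dist}\bigl(\overline{\langle L(u)\rangle},\,\textstyle\prod_j\overline{\langle L(v_j)\rangle}^{a_{u,j}}\prod_i\overline{\tilde w_i}^{b_{u,i}}\bigr)<\ep.
\]

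The main obstacle is the torsion part: one has to arrange that the distance from $\langle L(w_i)\rangle^{n_i}$ to $CU(B)$ is small enough that Lemma \ref{nlem}(1) delivers a perturbation of order $\ep/(2n_i)$, and one has to control how that perturbation propagates through the possibly large exponents $b_{u,i}$ appearing when each $u\in{\cal U}$ is rewritten in terms of the chosen generators. Both are purely book-keeping issues provided the generators-and-relations data is fixed before choosing $\dt$ and ${\cal G}$; the free part and the overall extension to a homomorphism are then automatic.
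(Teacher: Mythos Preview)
Your approach is correct and reaches the same conclusion, but it differs from the paper's argument in its overall architecture. The paper argues by contradiction: assuming failure, it assembles the sequence of counterexample maps $L_n:A\to B_n$ into a single map $\Psi:A\to\prod_n B_n$, passes to the quotient $Q=\prod_n B_n/\bigoplus_n B_n$ where $\Pi\circ\Psi$ becomes an honest \hm, and then shows that torsion elements of $U(Q)/CU(Q)$ lift to $U(\prod_n B_n)/CU(\prod_n B_n)$ (this lifting step uses Lemma~\ref{nlem}(1) in exactly the way you do). Projecting the resulting lift $\gamma$ to each coordinate $B_n$ yields the required $\lambda_n$ for large $n$, contradicting the assumption.

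So both proofs hinge on the same ingredients---the structure theorem for finitely generated abelian groups and Lemma~\ref{nlem}(1) to repair torsion relations---but the paper packages the approximate-multiplicativity bookkeeping into a single passage to the sequence algebra, where it becomes exact multiplicativity and no error-tracking is needed. Your direct construction is more explicit (one can in principle read off $\dt$ and ${\cal G}$), at the cost of having to manage the propagation of errors through the fixed words expressing each $u\in{\cal U}$ in terms of the chosen generators; as you note, this is straightforward once the generators and relations are fixed before $\dt$ and ${\cal G}$ are chosen.
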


\begin{proof}
Suppose that the proposition is false.
Then, there are $\ep_0>0,$ a finite subset ${\cal U}\subset U(A),$
and sequence of decreasing numbers $\dt_n>0$ with $\lim_{n\to\infty}\dt_n=0$ and a sequence of  increasing finite subsets
${\cal G}_n\subset A$ with $\cup_{n=1}^{\infty}{\cal G}_n$ being dense in $A$, a sequence of unital \CA s $B_n$ and a sequence of $\dt_n$-${\cal G}_n$-multiplicative \morp s $L_n: A\to B_n$ such that
\beq\label{LIFT-1}
\inf\{\sup_{u\in {\cal U}}{\rm dist}(\overline{\langle L_n(u)\rangle}, \lambda({\bar u}))\ge \ep_0,
\eneq
where the infimum is taken among all \hm s $\lambda: G_{\cal U}\to
U(B_n)/CU(B_n).$ Define $\Psi: A\to \prod_{n=1}^{\infty}B_n$ by
$\Psi(x)=\{L_n(x)\}$ for all $x\in A.$ Let
$Q=\prod_{n=1}^{\infty}B_n/\oplus_{n=1}^{\infty}B_n$ and let $\Pi:
\prod_{n=1}^{\infty}B_n\to Q$ be the quotient map. Then $\Pi\circ
\Psi: A\to Q$ is a \hm. Therefore it induces a \hm\, $(\Pi\circ
\Psi)^{\ddag}: U(A)/CU(A)\to U(Q)/CU(Q).$ Fix an integer $k\ge 1.$
%Let $z\in G_{\cal U}$ such that $z^k=0.$
Suppose $z\in U(Q)/CU(Q)$ such that $z^k=0.$
There exists a unitary $u_z\in U(Q)$ such that ${\bar u}_z=z.$
There are $w_1, w_2,...,w_N\in U(Q)$ which are commutators such that
$u_z^k=\prod_{j=1}^Nw_j.$  Suppose $w_j=a_jb_ja_j^*b_j^*,$ $a_j, b_j\in U(Q).$
There are unitaries $x_n, a_{j(n)}, b_{j(n)}\in B_n$ such that
\beq\label{LIFT-3}
\Pi(\{x_n\})=u_z,\,\,\, \Pi(\{a_{j(n)}\})=a_j\andeqn \Pi(\{b_{j(n)}\})=b_j,
\eneq
$j=1,2,...,N.$
It follows that
\beq\label{LIFT-4}
\lim_{n\to\infty}\|x_n^k-\prod_{j=1}^N a_{j(n)}b_{j(n)}a_{j(n)}^*b_{j(n)}^*\|=0.
\eneq
It follows from \ref{nlem} that there exists a sequence of unitaries $y_n\in B_n$ such that
$$
y_n^k=\prod_{j=1}^N a_{j(n)}b_{j(n)}a_{j(n)}^*b_{j(n)}^*(\prod_{i=1}^{2(k-1)} v_{i(n)}),
$$
where $v_{i(n)}$ are unitaries in $B_n$ which are commutators, and
\beq\label{LIFT-5}
\lim_{n\to\infty}\|y_n-x_n\|=0.
\eneq
In particular, $\Pi(\{y_n\})=u_z.$
Note that
$$
\{\prod_{j=1}^N a_{j(n)}b_{j(n)}a_{j(n)}^*b_{j(n)}^*(\prod_{i=1}^k v_{i(n)})\}\in CU(\prod_{n=1}^{\infty}B_n).
$$

We have just shown that every finite subgroup of $U(Q)/CU(Q)$ lifts to a finite
subgroup of the same order. This implies that there exists a \hm\, $\gamma: G_{\cal U}\to U(\prod_{n=1}^{\infty}B_n)/CU(\oplus_{n=1}^{\infty} B_n)$ such that $\Pi^{\ddag}\circ \gamma=(\Pi\circ \Psi)^{\ddag}|_{G_{\cal U}}.$
Let $\pi_n: \prod_{n=1}^{\infty}B_n\to B_n$ be the projection onto the $n$-coordinate.
Define $\lambda_n: G_{\cal U}\to U(B_n)/CU(B_n)$ by $\lambda_n=\pi_n^{\ddag}\circ \gamma,$ $n=1,2,....$
There exists $n_0\ge 1$ such that, for all $n\ge n_0,$
\beq\label{LIFT-6}
{\rm dist}(\overline{\langle L_n(u)\rangle}, \pi_n^{\ddag}\circ \gamma({\bar u}))<\ep_0/2
\eneq
for all $u\in {\cal U}.$  This is a contradiction.

\end{proof}

\end{document}